\documentclass[envcountsame,envcountchap]{svmono}

\usepackage{pgfplots}
\pgfplotsset{compat=1.17}
\usepackage{color}
\usepackage{amsmath}
\usepackage{amssymb}
\usepackage{amsthm}
\usepackage{bbm}
\usepackage{enumitem}
\usepackage{imakeidx}
\usepackage{mathrsfs}
\usepackage{mathtools}
\usepackage{microtype}
\usepackage{eucal}
\usepackage{hyperref}

\usepackage{makeidx}          
\usepackage{graphicx}         
\usepackage{multicol}         
\usepackage[bottom]{footmisc} 

\usepackage{tikz}

\hypersetup{citecolor=violet, colorlinks=true, linkcolor=blue}

\makeindex             
\smartqed

\newtheorem{assumption}[theorem]{Assumption}

\usepackage{fix-cm}
\makeatletter
\newcommand\semiHuge{\@setfontsize\semiHuge{22.72}{27.38}}

\def\curv{\mathop{\mathrm{curv}}}
\def\peri{\mathop{\mathrm{peri}}}
\def\cov{\mathop{\mathrm{cov}}}
\def\var{\mathop{\mathrm{var}}}
\definecolor{MIT}{cmyk}{.24, 1.00, .78, .17}

\newcommand{\unif}{\mathsf{Unif}}

\newcommand{\bs}{b^\star}	
\newcommand{\supp}{\operatorname{supp}}

\newcommand{\bcdot}{\,\begin{picture}(-1,1)(-1,-3)\circle*{3}\end{picture}\ \,}


\newcommand{\cA}{\mathcal{A}}

\newcommand{\cD}{\mathcal{D}}

\newcommand{\cF}{\mathcal{F}}
\newcommand{\cG}{\mathcal{G}}
\newcommand{\cH}{\mathcal{H}}

\newcommand{\cK}{\mathcal{K}}
\newcommand{\cL}{\mathcal{L}}
\newcommand{\cM}{\mathcal{M}}
\newcommand{\cN}{\mathcal{N}}

\newcommand{\cP}{\mathcal{P}}
\newcommand{\cQ}{\mathcal{Q}}

\newcommand{\cS}{\mathcal{S}}
\newcommand{\cT}{\mathcal{T}}

\newcommand{\cV}{\mathcal{V}}
\newcommand{\cW}{\mathcal{W}}
\newcommand{\cX}{\mathcal{X}}
\newcommand{\cY}{\mathcal{Y}}

\newcommand{\bH}{\mathbf{H}}

\newcommand{\deq}{\coloneqq}
\newcommand{\one}{\mathbbm{1}}

\newcommand{\1}{{\rm 1}\kern-0.24em{\rm I}}

\newcommand{\R}{\mathbb{R}}
\newcommand{\p}{\mathbb{P}}
\renewcommand{\E}{\mathbb{E}}
\newcommand{\Z}{\mathbb{Z}}
\newcommand{\N}{\mathbb{N}}

\newcommand{\field}[1]{\mathbb{#1}}

\newcommand{\Y}{\field{Y}}

\newcommand{\x}{\mathcal{X}}

\newcommand{\pn}{\p_{\kern-0.25em n}}
\newcommand{\pnm}{\p_{\kern-0.25em n,m}}
\newcommand{\psubm}{\p_{\kern-0.25em m}}
\newcommand{\psubp}{\p_{\kern-0.25em p}}
\newcommand{\cfi}{\cF_{\kern-0.25em \infty}}

\newcommand{\Lipone}{\mathrm{\overline{Lip}}_1}

\newcommand{\ima}{\textbf{i}}
\newcommand{\mmid}{\mathbin{\|}}
\newcommand{\simiid}{\overset{\text{i.i.d.}}{\sim}}
\newcommand{\Sob}{\mathcal H}

\newcommand{\torus}{\mathbb T^\dd}

\newcommand{\comp}{\mathsf{c}}
\newcommand{\argmin}{\mathop{\mathrm{arg\,min}}}

\newcommand{\ud}{\mathrm{d}}

\newcommand{\eps}{\varepsilon}

\newcommand{\T}{\mathsf{T}}

\DeclareMathOperator{\interior}{int}
\DeclareMathOperator{\law}{law}
\DeclareMathOperator{\tr}{tr}



\newlength{\minipagewidth}
\setlength{\minipagewidth}{\textwidth}
\setlength{\fboxsep}{3mm}
\addtolength{\minipagewidth}{-\fboxrule}
\addtolength{\minipagewidth}{-\fboxrule}
\addtolength{\minipagewidth}{-\fboxsep}
\addtolength{\minipagewidth}{-\fboxsep}

%




\hyphenation{semi--su-per-vis}
\hyphenation{in-d} 
\hyphenation{pen-dant}
\hyphenation{solve--the--equa-tion}

\newcommand{\mb}[1]{\mathbf{#1}}
\newcommand{\msf}[1]{\mathsf{#1}}
\newcommand{\qtxt}[1]{\quad \text{#1} \quad}

\newcommand{\bpi}{\boldsymbol{\pi}}
\newcommand{\BW}{\mathsf{BW}}
\DeclareMathOperator{\cone}{cone}
\DeclareMathOperator{\diag}{diag}
\DeclareMathOperator{\divergence}{div}
\DeclareMathOperator{\dom}{dom}
\DeclareMathOperator{\prox}{prox}
\DeclareMathOperator{\KL}{\mathsf{KL}}
\newcommand{\dd}{d}
\newcommand{\dFR}{\msf d_{\msf{FR}}}
\newcommand{\gradFR}{\nabla_{\msf{FR}}}
\newcommand{\gradW}{\nabla\mkern-10mu\nabla}
\newcommand{\gradWFR}{\gradW_{\msf{FR}}}
\newcommand{\id}{\mathrm{id}}
\newcommand{\md}{\mathsf d}
\newcommand{\mg}{\mathfrak g}
\newcommand{\Cb}{C_{\mathsf{b}}}
\newcommand{\MMD}{\mathsf{MMD}}
\newcommand{\SW}{\mathsf{SW}}

\newcommand{\bbar}[1]{\overline{\overline{#1}}}

\renewcommand{\boxed}[1]{#1}
\renewcommand{\emptyset}{\varnothing}
\renewcommand{\sharp}{\#}
\renewcommand{\top}{\T}
\renewcommand{\Delta}{\varDelta}
\renewcommand{\Gamma}{\varGamma}
\renewcommand{\Omega}{\varOmega}
\renewcommand{\Phi}{\varPhi}
\renewcommand{\Sigma}{\varSigma}
\renewcommand{\Theta}{\varTheta}
\renewcommand{\Upsilon}{\varUpsilon}

\newcommand{\mbb}[1]{\mathbb #1}

\makeatother

\allowdisplaybreaks{}
\usetikzlibrary{arrows, cd, positioning, quotes}

\makeindex[intoc, title=Index]

\begin{document}

\frontmatter

\thispagestyle{empty}

\vskip3cm
\begin{center}
    \huge \bfseries\sffamily  Statistical Optimal Transport
\end{center}
\vskip2cm
\begin{center}
    \large \begin{tabular}{c}Sinho Chewi \\ Yale \end{tabular} \hspace{2em} \begin{tabular}{c} Jonathan Niles-Weed \\ NYU \end{tabular} \hspace{2em} \begin{tabular}{c} Philippe Rigollet \\ MIT \end{tabular}
\end{center}
\vskip3cm
\begin{center}
    \large\emph{{E}cole d'Et\'{e} de Probabilit\'{e}s de Saint-Flour XLIX}
\end{center}

\setcounter{tocdepth}{1}
\tableofcontents

\mainmatter

\Preface{}
\label{chap:preface}
The history of optimal transport begins in 1781 with a memoir by Gaspard Monge that he submitted to the Acad\'emie des Sciences~\cite{Mon81}. Since then, it has grown into a mature mathematical field with many important discoveries, such as  Kantorovich's duality theory, Brenier's theorem, Otto's calculus, the JKO scheme, and the Lott--Sturm--Villani definition of the Ricci curvature of geodesic spaces, to name a few. The first comprehensive treatment of optimal transport dates back to the seminal volumes of Rachev and R\"uschendorf~\cite{RacRus98a,RacRus98b}. We also refer the reader to the excellent texts of Villani~\cite{Vil03, Vil09}, Ambrosio, Gigli, and Savar\'e~\cite{AmbGigSav08}, and more recently, Santambrogio~\cite{San15} for a comprehensive treatment of this subject from the mathematical perspective, and to the notes of Ambrosio and Gigli~\cite{AmbGig13}, Ambrosio, Bru\'e, and Semola~\cite{AmbBruSem21}, and the short monograph of Figalli and Glaudo~\cite{FigGla23} for quicker introductions.
Even a quick inspection of their tables of contents reveals that Monge's question was the gateway to many more, and that the field of optimal transport has many unexpected connections, ranging from geometry to partial differential equations.

More recently, optimal transport has made a resounding entrance into the field of machine learning under the impetus of Marco Cuturi, who showed that Wasserstein distances could be computed efficiently using the Sinkhorn algorithm~\cite{Cut13}. This initial spark was followed by an extensive toolbox, built on optimal transport, that covered multiple tasks across various areas of machine learning and graphics.  The development of this toolbox, now called \emph{computational optimal transport}, was led by Marco Cuturi and Gabriel Peyr\'e and collected in their inspiring manuscript~\cite{PeyCut19}. The far-reaching scope of optimal transport across machine learning and data science rests on the fact that many objects such as point clouds, polygonal meshes, or even documents can be encoded as probability measures. In turn, the Wasserstein metric between these probability measures offers a semantically meaningful notion of distance.

So what is \emph{statistical} optimal transport? This modifier comes largely as an echo to Peyr\'e and Cuturi's \emph{computational} optimal transport. It is an umbrella term that captures the remarkably diverse points of contact between statistics and optimal transport. The aim of the present monograph is to provide an introduction to a selection of topics within this burgeoning field according to our tastes (see~\cite{PanZem20Invitation} for a complementary treatment).

Historically, Wasserstein distances have been employed in statistics as a tool to quantify the rate of convergence of empirical probability measures $\mu_n$ to their limit $\mu$.
This line of work was inaugurated in a celebrated work of Dudley~\cite{Dud69}, who provided bounds for $W_1(\mu_n, \mu)$.  Wasserstein distances are particularly well-suited for
quantifying this convergence for several reasons. First, unlike the total variation distance or Kullback--Leibler divergence, the Wasserstein distance between a discrete distribution $\mu_n$ and a potentially continuous one $\mu$ remains finite and informative. Second, by definition, bounding the Wasserstein distance amounts to exhibiting a coupling between the two measures. Third and finally, thanks to Kantorovich duality, a bound on the Wasserstein distance translates into a strong uniform bound on test functions. For example, when $p=1$, $W_1(\mu_n, \mu)\le \eps$ implies  that $|\int f\, \ud \mu_n - \int f\, \ud \mu|\le \eps$  for all functions $f$ that are $1$-Lipschitz; in fact the two statements are equivalent (see Section~\ref{sec:duality_w1} for details).

In the last decade, following the impetus of machine learning, optimal transport has percolated to many more aspects of statistics. One of the most exciting directions is a new avenue of research that had largely been out of reach of classical methods in the past. In this class of problems, the \emph{coupling} of data is the main obstacle to statistical analysis. More concretely, consider a classical statistical setup where one observes independent copies of a pair of random variables $(X,Y)$ where $X$ is thought of as input and $Y$ output. Regression falls in this framework, as does, more generally, all of supervised learning. In particular, the observed $X$ and $Y$ are coupled.
A more challenging model arises when $X$ and $Y$ are observed in an \emph{uncoupled} fashion: independent copies of $X$ and independent copies of $Y$. Such a setup arises naturally in single-cell genomics where the destructive nature of the prevailing sequencing  process does not allow for taking multiple measurements of the same cell. This conundrum is a key obstacle to cellular trajectory reconstruction where one aims at reconstructing the time evolution of a cell in a genetic landscape; see~\cite{SchShuTab19,bunne2022proximal} for more details.
However, under a natural physical hypothesis that the distribution of $Y$ is obtained from the distribution of $X$ by evolving it in a conservative vector field for a short time, it is natural to model the unobserved coupling between $X$ and $Y$ as arising from optimal transport.
These problems and more raise fundamental questions about the estimation of Wasserstein distances and the corresponding couplings, which are taken up in the first part of this monograph.

The aforementioned applications make use of the role of optimal transport in endowing the space of probability measures with an interpretable and useful notion of distance.
A deeper study of this space, however, uncovers a rich underlying geometrical structure admitting descriptions of curvature, geodesics, gradient flows, etc.
This geometric perspective, first advocated by Felix Otto in his seminal article~\cite{Ott01}, provides statisticians with powerful new tools for the design and analysis of algorithms for manipulating probability distributions.
A key development in this regard was the edifying interpretation, by Jordan, Kinderlehrer, and Otto~\cite{JorKinOtt98}, of the Langevin diffusion as a Wasserstein gradient flow of the KL divergence. Since the Langevin diffusion is popularly employed as a sampling algorithm in Bayesian statistics, this discovery has ushered in a decade of research linking sampling to optimization over the Wasserstein space.
More broadly, Wasserstein gradient flows and their variants yield new algorithmic paradigms and fresh perspectives for diverse problems including the nonparametric MLE, the dynamics and training of neural networks, and variational inference (see Chapter~\ref{chap:appWGF}).
Geometric considerations also lead to novel applications, such as the geometric averaging of data which can be cast as probability measures, e.g., images or speech.
The subject of Wasserstein geometry is studied in the second half of this monograph.

\paragraph*{How to read this book.}

This monograph aims to offer a concise introduction to optimal transport, quickly transitioning to its applications in statistics and machine learning. It is primarily tailored for students and researchers in these fields, yet it remains accessible to a broader audience of applied mathematicians and computer scientists.

Chapter 1 serves as the gateway to the subsequent chapters by presenting the foundational concepts of optimal transport that will be used throughout. The remaining chapters are largely independent, with the exceptions of chapters 5 and 6, and chapters 7 and 8, which should be studied together. Figure~\ref{fig:chapter_dependencies} illustrates the dependencies between the various chapters.

\begin{figure}[h!]
\centering
\begin{tikzpicture}[node distance=2cm, rounded corners, >=Stealth]

\coordinate (A) at (0,0);
\coordinate (B) at (-2,-1);
\coordinate (C) at (-1,-1);
\coordinate (D) at (0,-1);
\coordinate (E) at (1,-1);
\coordinate (F) at (1,-2);
\coordinate (G) at (2,-1);
\coordinate (H) at (2,-2);

\node[draw, rectangle, rounded corners] (1) at (A) {1};
\node[draw, rectangle, rounded corners] (2) at (B) {2};
\node[draw, rectangle, rounded corners] (3) at (C) {3};
\node[draw, rectangle, rounded corners] (4) at (D) {4};
\node[draw, rectangle, rounded corners] (5) at (E) {5};
\node[draw, rectangle, rounded corners] (6) at (F) {6};
\node[draw, rectangle, rounded corners] (7) at (G) {7};
\node[draw, rectangle, rounded corners] (8) at (H) {8};

\draw[->] (1) -- (2);
\draw[->] (1) -- (3);
\draw[->] (1) -- (4);
\draw[->] (1) -- (5);
\draw[->] (1) -- (7);
\draw[->, dotted] (2) -- (3);
\draw[->, dotted] (3) -- (4);
\draw[->] (5) -- (6);
\draw[->, dotted] (5) -- (7);

\draw[->] (7) -- (8);

\end{tikzpicture}
    \caption{Dependencies between chapters. Solid arrows show prerequisites; dotted arrows indicate references.
}\label{fig:chapter_dependencies}
\end{figure}
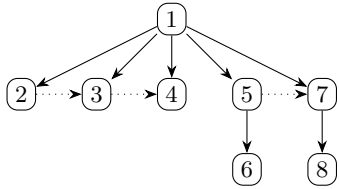

Each chapter concludes with a series of exercises, allowing readers to apply the concepts learned to questions not addressed in the main text.

\paragraph*{Acknowledgments.}

\noindent{\bf Sinho Chewi.}
Optimal transport was not what I originally planned to study as a graduate student, but my research career has been infinitely richer as a result.
For this, I am grateful to my advisor and co-author Philippe for fearlessly inducting me into this beautiful area, and my academic sibling and co-author Jon for paving the way and constantly acting as my source of inspiration.

I owe my understanding of optimal transport to many sources, beginning with Villani's expertly written monograph~\cite{Vil03} and including countless hours of discussion with collaborators and members of Philippe's group. I am happy to have the opportunity to contribute to the growth of the community through this pedagogical text.

I would like to acknowledge the Institute for Advanced Study for the hospitable working environment and support through the Eric and Wendy Schmidt Fund.

\smallskip{}
\begin{flushright}
    \emph{New Haven, CT, July 2024.}
\end{flushright}

\medskip

\noindent{\bf Jonathan Niles-Weed.}
I have been lucky to learn about optimal transport from many brilliant collaborators; among them, Jason Altschuler, Francis Bach, Sivaraman Balakrishnan, Quentin Berthet, Marco Cuturi, Vincent Divol, Alberto Gonz\'alez-Sanz, Tudor Manole, Aram-Alexandre Pooladian, Austin Stromme, and Larry Wasserman have had an especially large impact on me. I am deeply grateful to them for their knowledge and insight.
It is an honor to be able to share some of their work in this book.

I am indebted to students at NYU who participated in a graduate seminar on portions of this material in 2021 and whose feedback significantly improved the exposition.
I also thank Sivaraman Balakrishnan (again) for his perceptive comments on an early version of this manuscript.

Of course, I would be nowhere without Sinho Chewi and Philippe Rigollet, my colleagues and friends.
You both have inspired me beyond measure.
Collaborating with you has been one of the highlights of my career, and I hope to have the privilege of many more collaborations ahead.

My work on these notes was primarily supported by an National Science Foundation CAREER award DMS-2339829 and several other NSF grants (DMS-2015291, DMS-2210583), along with a Sloan Foundation fellowship and gifts from Apple and Google.

\smallskip{}
\begin{flushright}
    \emph{New York, NY, July 2024.}
\end{flushright}

\medskip

\noindent{\bf Philippe Rigollet.} These notes have grown significantly since I was honored to give the 2019 St.\ Flour lecture series, which initially covered roughly Chapters 1, 7, and 8. Much of the material presented here was unfamiliar to me when I first delivered the lectures.

First and foremost, I extend my heartfelt thanks to the organizers (espectially Christophe Bahadoran) and attendees of the 2019 St.\ Flour Summer School. The valuable feedback I've received from the audience has been instrumental in motivating the extensive expansion of topics covered in these notes. My only regret is the delay in producing these notes, which prevents them from being published concurrently with those of my wonderful co-lecturers, Nicolas Curien and Elchanan Mossel. Though we won't be sharing a volume, we will forever be bonded as Chevaliers du Taste-fourme. 

I have learned most from others and would like to thank my collaborators who have taught me so much on this topic: Jason Altschuler, Julien Clancy, Max Daniels, Aiden Forrow, Borjan Geshkovski, Florian Gunsilius, Jan-Christian H\"utter, Cyril Letrouit, Jean-Michel Loubes, Chen Lu, Tyler Maunu, Mor Nitzan, Quentin Paris, Geoff Schiebinger, Austin Stromme, George Stepaniants, Felipe Suarez, William Torous, Kaizheng Wang, Yuling Yan, Aleksandr Zimin.  In particular, for enlightening discussions on optimal transport and other topics, I would like to specifically acknowledge  S\'ebastien Bubeck, Ramon van Handel, Thibaut Le Gouic, Vianney Perchet,  Yury Polyanskiy, Maxim Raginsky, and Justin Solomon. Many of the ideas in Chapters 5 and 6 were core discussion topics during the program on \emph{Geometric Methods in Optimization and Sampling} at the Simons Institute in Berkeley during Fall 2021. I extend my gratitude to all the participants, my co-organizers, and Peter Bartlett, who made this program so wonderful and stimulating.

Since then, the community around statistical optimal transport has grown significantly, and these notes have benefited from interactions with many people. Starting with Marco Cuturi, who introduced me to optimal transport when we were in Princeton together, I also learned a lot of optimal transport from  Guillaume Carlier, Victor Chernozhukov, Lena\"ic Chizat, Simone Di Marino, Augusto Gerolin, Promit Ghosal, Marc Hallin, Zaid Harchaoui, Kengo Kato, Anna Korba, Alexei Kroshnin, Qin Li, Jan Maas, Axel Munk, Robert McCann, Dan Mikulincer, Youssef Marzouk, Soumik Pal, Victor Panaretos, Gabriel Peyr\'e, Filippo Santambrogio, Bodhi Sen, Vladimir Spokoiny, Yair Shenfeld, and Jia-Jie Zhu among others.

 Part of this material has been taught at MIT in 2023 and 2024, at Universit\'e Paris Sorbonne in 2022, and two other summer schools in 2023: The Princeton Machine Learning Theory Summer School and the CIME summer school in Cetraro, Italy. The audience has given me excellent feedback, contributing to the improvement of these notes. Special thanks to Th\'eo Dumont, Max Daniels, and Giulia Bertagnolli for typing up the respective material.
 
 Some typos and errors were fixed by Michael Diao, Haruki Kono, Hugo Lavenant, Aimee Maurais, Yaroslav Mukhin, Madhav Sankaranarayanan, Sabarish Sainathan, Yucheng Shang, Vishwak Srinivasan, Panos Tsimpos, Oliver Wang, Liane Xu, and Julie Zhu. We thank them for helping make this manuscript more readable.

Last but not least, I would like to thank my wonderful co-authors, Sinho Chewi and Jonathan Niles-Weed. Thank you both for joining me on this epic adventure. I've learned more from you than anyone else, and you have been as much students as you have been teachers to me.

My work on these notes was primarily supported by NSF grant CCF-1838071 and several other grants from the National Science Foundation during the period 2019-2024 (DMS-1712596, CCF-1740751, DMS-2022448, CCF-2106377). I am also thankful for a gift from Apple. A first draft was conceived in Spring 2019, when I was supported by the Eric and Wendy Schmidt Fund at the Institute for Advanced Study.

\smallskip{}
\begin{flushright}
    \emph{Cambridge, MA, July 2024.}
\end{flushright}

\chapter{Optimal transport}
\label{chap:OT}

\section{The optimal transport problem}\label{sec:ot_problem}

In his 1781 memoir~\cite{Mon81}, Monge formulated the following problem:  how can one transport a given pile of sand to fill a given ditch so as to minimize the cost of transporting the sand? This problem can be modeled using probability distributions. Indeed, note first that for this task to be solvable, the pile and the ditch must occupy the same volume. Without loss of generality, let us normalize this volume to be 1. We are therefore led to consider two probability measures, $\mu$ and $\nu$ over $\R^\dd$. It is often convenient to reason about two random variables, $X \sim \mu$, $Y\sim \nu$. This is our \textbf{input} to  a \textbf{constrained optimization problem}.

\subsection{The Monge and Kantorovich problems}

Back to our sand analogy, transporting the pile means finding a (measurable) function, called a \emph{transport map}  $T: \R^\dd \to \R^\dd$, which indicates that the sand located at $x \in \R^\dd$ should be moved to $T(x) \in \R^\dd$. For the transport map to actually complete the job (filling the ditch), one needs to ensure that $T(X)\sim \nu$ whenever $X \sim \mu$. We say that $T$ \emph{pushes} $\mu$ to $\nu$ or that $\nu$ is the \emph{pushforward measure} of $\mu$ (through $T$) and write $T_{\#}\mu=\nu$. This is our \textbf{constraint}.

Turning now to our \textbf{objective} function, recall that Monge's question involved minimizing the cost of transporting the sand. There are many ways to measure this cost (effort, fuel consumption, etc.) so to simplify our exposition, we simply measure it in terms of the Euclidean distance travelled by the sand. The sand at location $x$ travels a distance of $\|T(x)-x\|$. Therefore, the average distance travelled is
$$
\int \|T(x)-x\|\, \mu ( \ud x)\,.
$$

The Monge formulation\index{Monge problem} of the optimal transport problem is therefore to  minimize the above objective subject to the constraint that $T$ pushes $\mu$ to $\nu$:
$$
\inf_{T: T_\# \mu =\nu} \int \|T(x)-x\|\, \mu ( \ud x)\,.
$$
Note that many choices for the transport cost may be considered. In full generality, it is customary to consider a general cost $c(X, T(X))$, where $c(x, y)$ measures the cost of transporting $x \in \R^\dd$ to $y \in \R^\dd$. In this general framework, we may even allow $X$ and $Y$ to be defined on two different spaces, not necessarily $\R^\dd$. In these notes, we focus primarily on the cases where $c(x, y) = \|x - T(x)\|$ or $c(x, y)=\|x-T(x)\|^2$, which give rise to the \emph{Wasserstein distances}. The space $\R^\dd$ may also be replaced with more complex spaces such as Riemannian manifolds, but this is generally beyond the scope of these lectures (with the exception of Section~\ref{sec:gaussian_mixtures}).

While the Monge problem is easy to formulate, we need to ask several questions:
\begin{itemize}
\item Does there always exists such a valid transport map or, conversely, is the constraint set empty?
\item If there is a minimizer, is it unique? How to characterize it? Note that our constraint is not convex, which makes finding an answer to this question rather difficult.
\end{itemize}

A simple example gives an answer to the first question. Indeed, take $d=1$, assume that $\mu=\delta_0$ is a point mass at $0$, and that $\nu= \frac{1}{2}\, \delta_{-1} + \frac{1}{2} \,\delta_{1}$ is a mixture of two point masses. Whatever our choice of the transport map $T$, the pushforward $T_\# \mu$ is the point mass $\delta_{T(0)}$ at $T(0)$, so we cannot achieve the transport at all, at least with a deterministic map.

Intuitively, we would like:
$$
T(0)=
\begin{cases}
-1& \text{w.p.} \ \frac{1}{2}\\[0.25em]
1& \text{w.p.} \ \frac{1}{2}\\
\end{cases}
\qquad \text{and} \qquad T(x)=x\,,\ \forall\ x \neq 0\,.
$$
Such a $T$ is not a function but a Markov kernel: it assigns a probability distribution to each point $x \in \R$.

The second question remained without a satisfactory answer for almost two centuries until the Soviet mathematician Leonid Kantorovich~\cite{Kan42} introduced a relaxation of the problem that exactly allows for Markov kernels, as discussed in the example above, in a groundbreaking two-pager.\index{Kantorovich problem} Equivalently, this formulation involves \emph{couplings} as opposed to maps. 

Let $\mu, \nu$ be two probability measures over $\R^\dd$ and let $\gamma$ be a \emph{coupling} between these two distributions, that is, a joint distribution over $\R^\dd \times \R^\dd$ such that its first marginal is $\mu$ and its second marginal is $\nu$: for any Borel set $A \in \R^\dd$, we have
$$
\gamma(A \times \R^\dd)= \mu(A) \qquad \text{and} \qquad  \gamma(\R^\dd \times A)= \nu(A)\,.
$$
The terminology \emph{coupling} comes from the fact that while $X\sim \mu$ and $Y\sim \nu$ were random variables that had nothing to do with each other, the coupling forces them to live on the same probability space by describing their probabilistic dependence. 
Here and throughout these notes, we use the notation $\Gamma_{\mu,\nu}$ for the set of couplings of $\mu$ and $\nu$.

Let $c: \R^\dd\times \R^\dd \to [0, \infty)$ be a measurable cost function. The general Kantorovich formulation of the optimal transport problem consists of the following optimization problem:
\begin{equation}
\label{KOT}\tag{\textsf{KOT}}
\inf_{\gamma \in \Gamma_{\mu, \nu}} \int c(x, y)\, \gamma(\ud x,\ud y)\,.
\end{equation}

\subsection{Couplings}

To get a better understanding of the Kantorovich problem, it is informative to explore the set $\Gamma_{\mu, \nu}$.

Perhaps the simplest coupling is the independent coupling $\gamma=\mu \otimes \nu$ where $X\sim \mu$ and $Y \sim \nu$ are simply assumed to be independent: for any Borel sets $A, B \subseteq \R^\dd$, 
$$
\gamma(A \times B)=\mu(A) \cdot \nu(B)\,.
$$

The next proposition collects preliminary facts about $\Gamma_{\mu, \nu}$.

\begin{proposition}\label{prop:couplings}
    Let $\mu, \nu$ be two probability measures on $\R^d$. The set $\Gamma_{\mu, \nu}$ of couplings between $\mu$ and $\nu$ is \emph{non-empty}, \emph{convex}, and \emph{compact} with respect to the topology of weak convergence.
\end{proposition}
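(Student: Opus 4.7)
The plan is to verify the three properties in the order they are stated, keeping compactness for last since it requires the most work.

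For \emph{non-emptiness}, I would simply exhibit the independent (product) coupling $\mu \otimes \nu$ introduced just before the proposition; by Fubini its marginals are exactly $\mu$ and $\nu$, so $\Gamma_{\mu,\nu}$ is never empty. For \emph{convexity}, given $\gamma_0, \gamma_1 \in \Gamma_{\mu,\nu}$ and $\lambda \in [0,1]$, I would set $\gamma_\lambda \deq \lambda \gamma_1 + (1-\lambda) \gamma_0$ and check the marginal constraints directly: for any Borel $A \subseteq \R^\dd$, $\gamma_\lambda(A \times \R^\dd) = \lambda\mu(A) + (1-\lambda)\mu(A) = \mu(A)$, and similarly for the second marginal. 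Both of these steps are quick.

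The main work is \emph{compactness}, which I would obtain via Prokhorov's theorem by proving (i) tightness of $\Gamma_{\mu,\nu}$ and (ii) closedness in the weak topology. For tightness, given $\eps > 0$, use tightness of the individual measures $\mu$ and $\nu$ on $\R^\dd$ to pick compact sets $K_\mu, K_\nu \subseteq \R^\dd$ with $\mu(K_\mu^\comp) \le \eps/2$ and $\nu(K_\nu^\comp) \le \eps/2$. Then $K_\mu \times K_\nu$ is compact in $\R^\dd \times \R^\dd$, and for every $\gamma \in \Gamma_{\mu,\nu}$, a union bound on the complement gives
\[
\gamma\bigl((K_\mu \times K_\nu)^\comp\bigr) \le \gamma(K_\mu^\comp \times \R^\dd) + \gamma(\R^\dd \times K_\nu^\comp) = \mu(K_\mu^\comp) + \nu(K_\nu^\comp) \le \eps\,.
\]
This is uniform in $\gamma$, so $\Gamma_{\mu,\nu}$ is tight, and Prokhorov yields relative compactness.

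For closedness, suppose $\gamma_n \in \Gamma_{\mu,\nu}$ converges weakly to some probability measure $\gamma$ on $\R^\dd \times \R^\dd$. For any $f \in \Cb(\R^\dd)$, the function $(x,y) \mapsto f(x)$ lies in $\Cb(\R^\dd \times \R^\dd)$ (it is bounded and continuous in both coordinates), so weak convergence gives
\[
\int f(x)\, \gamma(\ud x, \ud y) = \lim_{n \to \infty} \int f(x)\, \gamma_n(\ud x, \ud y) = \int f\, \ud \mu\,,
\]
showing that the first marginal of $\gamma$ agrees with $\mu$ on $\Cb$ and hence equals $\mu$; the second marginal is handled identically. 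Combining tightness with this closedness yields compactness. The only subtlety to keep an eye on is invoking the right version of Prokhorov (on the Polish space $\R^\dd \times \R^\dd$), but this is standard rather than a genuine obstacle.
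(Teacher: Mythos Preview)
Your proposal is correct and follows essentially the same route as the paper: independent coupling for non-emptiness, direct marginal check for convexity, and Prokhorov via tightness-plus-closedness for compactness, with the same union-bound tightness estimate and the same $\Cb$-test-function argument for closedness. The only cosmetic difference is that the paper picks a single compact $K$ for both marginals rather than two sets $K_\mu$, $K_\nu$.
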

\begin{proof}
    Because the independent coupling always exists, we know that $\Gamma_{\mu, \nu} \neq \emptyset$. 

    To show that $\Gamma_{\mu, \nu}$ is convex, consider two couplings $\gamma_0, \gamma_1 \in \Gamma_{\mu, \nu}$ and for any $\lambda \in (0,1)$ define the mixture $\gamma_\lambda=(1-\lambda)\, \gamma_0+\lambda\, \gamma_1$. Observe that for any Borel set $A \in \R
^d$,
\begin{align*}
    \gamma_\lambda (A \times \R^d)&= (1-\lambda)\,\gamma_0 (A \times \R^d)+ \lambda\,\gamma_1 (A \times \R^d)\\
    &=(1-\lambda)\,\mu(A)+ \lambda\,\mu (A)=\mu(A)\,.
\end{align*}
Hence the first marginal of $\gamma_\lambda$ is given by $\mu$ and by the same argument its second marginal is given by $\nu$. Thus $\gamma_\lambda \in \Gamma_{\mu, \nu}$ for any $\lambda \in (0,1)$, whence $\Gamma_{\mu, \nu}$ is convex.

To complete the proof of our proposition, we show that 
$\Gamma_{\mu,\nu}$ is compact. By Prokhorov's theorem (Theorem~\ref{thm:prokhorov}), it is sufficient to show that $\Gamma_{\mu, \nu}$ is closed and (uniformly) tight. To that end, recall that from Prokhorov's theorem, the constant sequences $(\mu)_n, (\nu)_n$ are both tight, so that for any $\eps>0$, there exists a compact set $K \subset \R^\dd$  such that $\mu(K^\comp)+ \nu(K^\comp)<\eps$. Then the set $K\times K$ is also compact and for any $\gamma \in \Gamma_{\mu,\nu}$,
$$
\gamma((K \times K)^\comp)\le \gamma(\R^\dd \times K^\comp)+\gamma(K^\comp \times \R^\dd)=\mu(K^\comp)+ \nu(K^\comp)<\eps\,.
$$
Hence, $\Gamma_{\mu,\nu}$ is tight.
Moreover, since $\gamma \in \Gamma_{\mu,\nu}$ is equivalent to
\begin{align*}
    \int f(x) \, \gamma(\ud x,\ud y) = \int f\,\ud \mu\qquad\text{and}\qquad\int f(y)\,\gamma(\ud x,\ud y) = \int f \,\ud \nu
\end{align*}
for all bounded continuous $f : \R^\dd\to\R$, by the definition of weak convergence (Theorem~\ref{thm:portmanteau}) it follows that $\Gamma_{\mu,\nu}$ is closed.
Therefore, Prokhorov's theorem yields that $\Gamma_{\mu, \nu}$ is compact.
\end{proof}

\begin{figure}[h]
    \centering
    \includegraphics[width=0.45\textwidth]{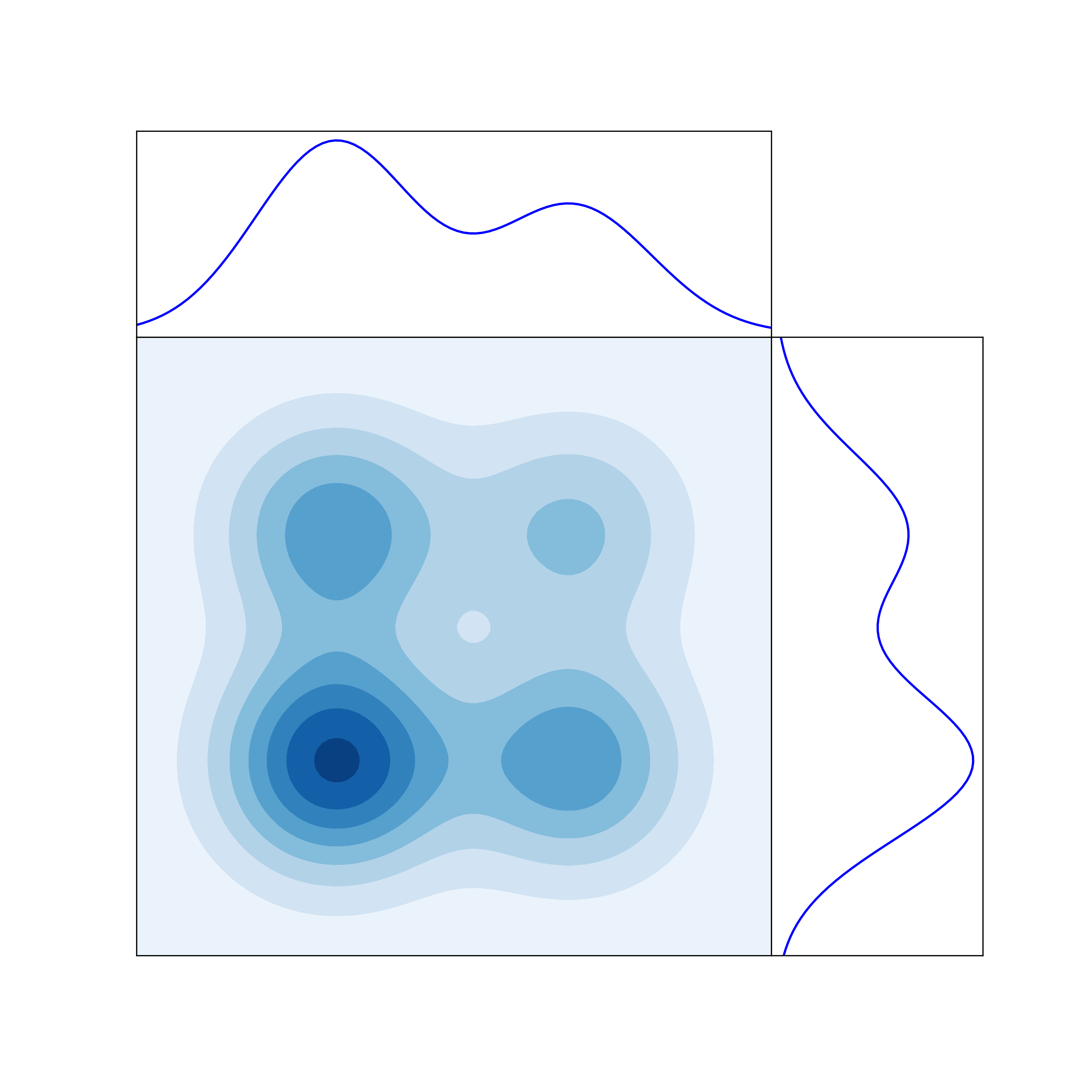}
    \includegraphics[width=0.45\textwidth]{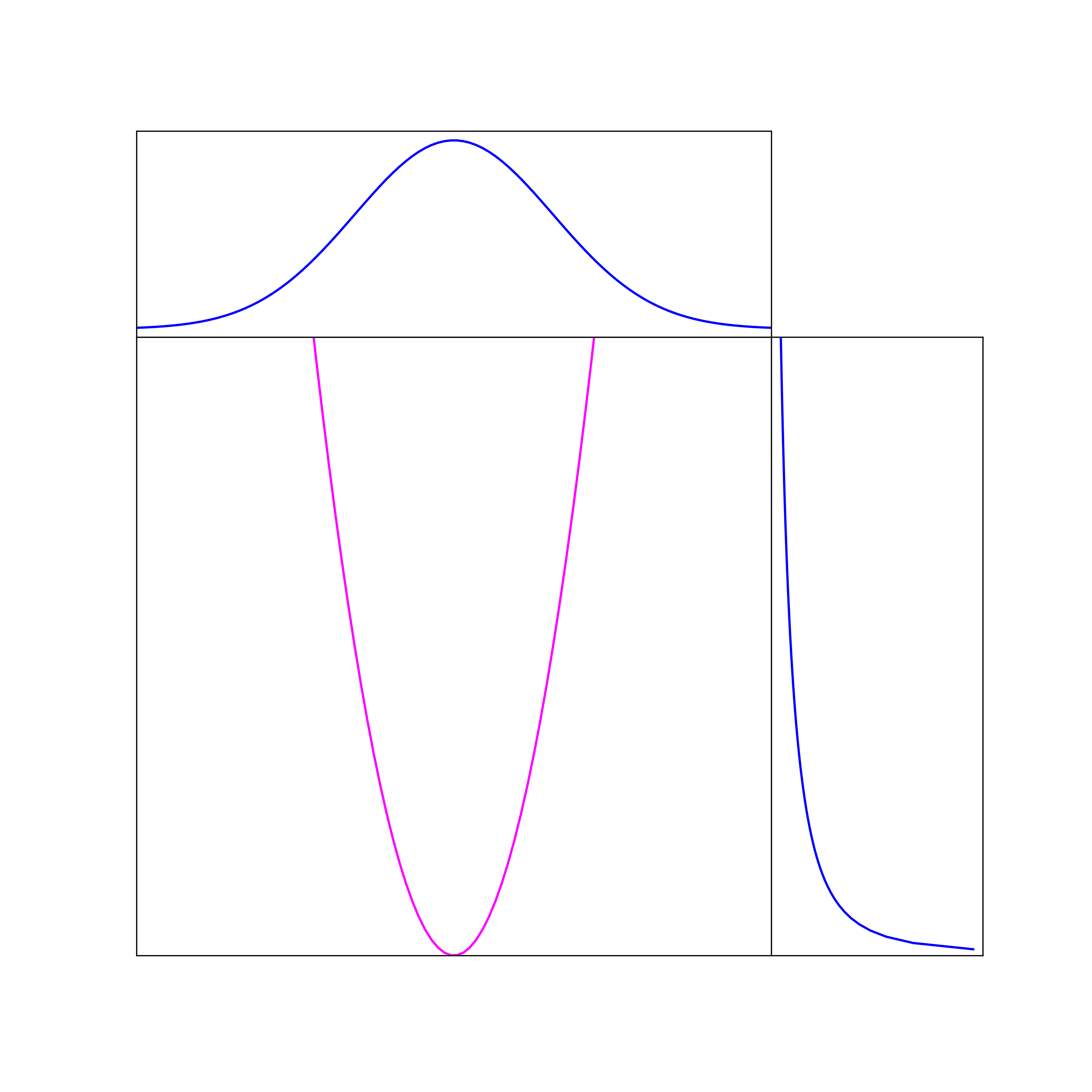}
    \caption{(Left) Independent coupling of a mixture of two Gaussians. (Right) Deterministic coupling of $X\sim \cN(0,1)$ with $Y \sim \chi_1^2$.}\label{fig:coupling}
\end{figure}

A coupling $\gamma \in \Gamma_{\mu, \nu}$ captures the dependence between two random variables  $X \sim \mu$ and $Y \sim \nu$. 
As mentioned above, one option is to assume that $X$ and $Y$ are independent, which gives rise to the independent coupling.
In Figure~\ref{fig:coupling} (Left), we plot the independent coupling between two mixtures of Gaussians.
At the opposite extreme of the independent coupling, suppose by way of example that $X \sim \cN(0,1)$ and $Y \sim \chi^2_1$ and observe that $Y$ has the same distribution as $X^2$. Then we can take the deterministic coupling such that $Y=X^2$:
    $$
    \gamma(\ud x, \ud y)=\mu(\ud x)\, \delta_{x^2}(\ud y)\,.
    $$
    We plot this coupling in Figure~\ref{fig:coupling} (Right); observe that it is degenerate.
    
To continue our exploration of couplings, assume that $X \sim \cN(0,1)$ and $Y \sim \cN(0,1)$, then we can take any coupling where
    \begin{align}\label{eq:bivariate_gaussian}
        \begin{pmatrix} X \\Y\end{pmatrix}\sim \cN\biggl(\begin{pmatrix}0 \\0\end{pmatrix}, \begin{pmatrix}1 & \rho \\\rho & 1\end{pmatrix}\biggr)
    \end{align}
    and $\rho \in [-1, 1]$ is the correlation between $X$ and $Y$. See Figure~\ref{fig:bivariate_gaussian}.

    \begin{figure}[h]
        \centering
        \includegraphics[width=0.95\textwidth]{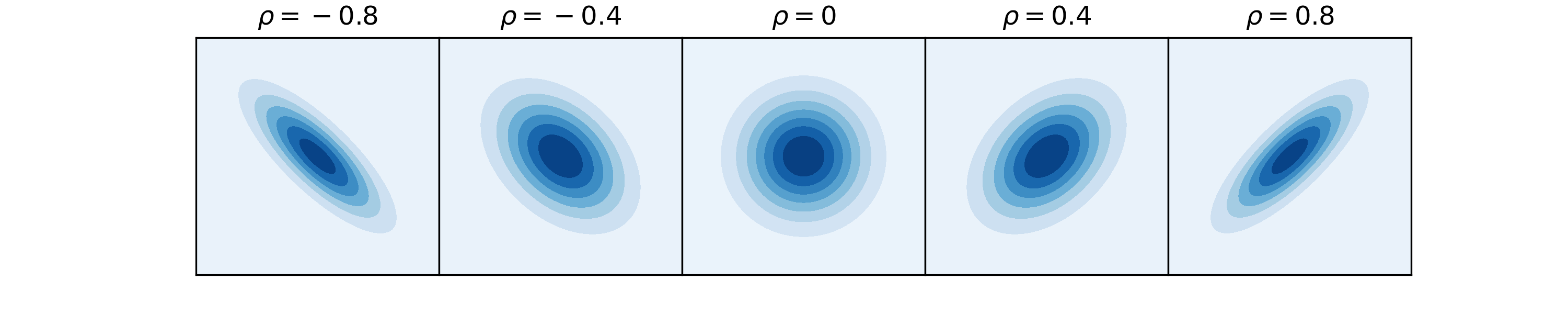}
        \caption{The bivariate Gaussian coupling~\eqref{eq:bivariate_gaussian} for five different values of $\rho$.}\label{fig:bivariate_gaussian}
    \end{figure}

The content of Brenier's theorem later in this chapter is that under mild regularity conditions, the solution of the Kantorovich problem with quadratic cost is achieved by a deterministic coupling. 
As we discuss next, when $\mu$ and $\nu$ are discrete measures, this fact can be understood by examining the geometry of the set $\Gamma_{\mu, \nu}$.

\subsection{Discrete optimal transport}\label{sec:discreteOT}

The case where $\mu$ and $\nu$ are two discrete distributions is of special practical relevance. For example, $\mu, \nu$ can be empirical measures on a point cloud. Consider the case where 
$$
\mu = \sum_{i=1}^m p_i \delta_{x_i}\,, \qquad \text{and} \qquad \nu =  \sum_{j=1}^n q_j \delta_{y_j}\,.
$$
In this case, a coupling $\gamma$ of $X \sim \mu$ and $Y \sim \nu$ is characterized by a non-negative matrix $P \in \R^{m\times n}$ where $P_{i,j}=\gamma(X=x_i, Y=y_j)$. The marginal constraints on $\gamma \in \Gamma_{\mu, \nu}$ readily translate into
    \begin{align*}
        \forall i\in [m]\,, \;\sum_{j\in [n]} P_{i,j} = p_i\,, \qquad\text{and}\qquad\forall j\in [n]\,, \;\sum_{i\in[m]} P_{i,j} = q_j\,.
    \end{align*}
Introducing $\mb 1_m$, $\mb 1_n$ for the all-ones vectors of sizes $m$ and $n$, respectively, these constraints can be represented concisely as $P \mb 1_n = p$, $P^\T \mb 1_m = q$, where $p=(p_1, \ldots, p_n)^\T$ and $q=(q_1, \ldots, q_m)^\T$.

Like the coupling, the cost $c$ can also be captured by an $m \times n$ matrix $C$ where $C_{i,j}=c(x_i, y_j)$ for $i\in [m]$, $j\in [n]$. The Kantorovich optimal transport problem~\eqref{KOT} is therefore equivalent to
$$
 \min_{P \in \R^{m\times n}_+} \sum_{i, j \in [n]} C_{i, j} P_{i, j}\qquad\text{s.t.}\qquad P\mb 1_n = p\,,\; P^\T \mb 1_m = q\,,
$$
which can also be written more concisely as
\begin{align*}
     \min_{P \in \R^{m\times n}_+}{\langle C,P\rangle}\qquad\text{s.t.}\qquad P\mb 1_n = p\,,\; P^\T \mb 1_m = q\,,
\end{align*}
where $\langle C,  P \rangle = \tr(C^\T P)$ is the Frobenius inner product on the set of $m \times n$ real matrices.

In particular, when $m = n$ and all of the weights $p_i$, $q_j$ are equal to $1/n$, the set of valid coupling matrices $P$ is (a multiple of) the set of doubly stochastic matrices, also known as the \emph{Birkhoff polytope}\index{Birkhoff polytope}:
\begin{align}\label{eq:birkhoff}
    \msf{Birk}
    &\deq \{\gamma \in \R_+^{n\times n} : \gamma\mb 1_n = \mb 1_n, \; \mb 1_n^\T \gamma =  \mb 1_n^\T\}\,.
\end{align}
Then,~\eqref{KOT} reduces to
\begin{equation}
    \label{eq:discreteOT}
     \min_{P \in n^{-1}\msf{Birk}} {\langle C,  P \rangle} \,.
\end{equation}
The extreme points of the Birkhoff polytope are permutation matrices: they are binary matrices  $\pi \in \{0, 1\}^{n \times n}$ with exactly one non-zero entry in each row and column.
In particular, general principles of convex geometry imply that the solution to any linear program of the form~\eqref{eq:discreteOT}
can be taken to be a matrix of the form $n^{-1} \pi$.
A transport plan of this form is induced by a deterministic map (the permutation), and hence in this case there is a solution to the Monge problem.
As we shall see in the subsequent sections, extreme points of $\Gamma_{\mu,\nu}$ play a special role more generally in the geometry of the optimal transport problem.

\section{Wasserstein distances}\label{sec:wp_distances}

The Kantorovich problem~\eqref{KOT} makes sense for a wide variety of cost functions, with different interpretations in each case.
For instance, one natural example comes from taking $c(x,y)=\one_{x \neq y}$ to be the trivial metric. In this case,~\eqref{KOT} gives:
$$
\inf_{\gamma \in \Gamma_{\mu,\nu}} \gamma(X \neq Y)
$$
which is a well-known formulation of the total variation distance. (See Exercise~\ref{qu:tv}.)
Note, however, that the trivial distance $\one_{x\neq y}$ is unrelated to the geometry of $\R^\dd$. In particular, it does not say whether $x$ and $y$ are far from each other but only if they are different. This limitation manifests itself in the total variation. Indeed, if $\mu=\delta_x$ and $\nu=\delta_y$, then the objective of~\eqref{KOT} is equal to 1 as soon as $x \neq y$.

To obtain a geometrically meaningful quantity from the Kantorovich problem, we need to choose a cost that reflects the actual distance between $x$ and $y$.
This idea gives rise to the \emph{Wasserstein distances}.\index{Wasserstein distance}
For any $p \ge 1$, let $\cP_p(\R^\dd)$ be the set of probability measures over $\R^\dd$ equipped with the Euclidean norm $\|\cdot\|$ that have finite $p$-th moment: 
$$
\mu \in \cP_p(\R^\dd) \quad \Leftrightarrow \quad \int \|x\|^p\, \mu(\D x)<\infty\,.
$$

The $p$-Wasserstein distance between two probability measures $\mu, \nu \in \cP_p(\R^\dd)$ is defined by
\begin{equation*}
W_p(\mu, \nu)=\inf_{\gamma \in \Gamma_{\mu, \nu}}\left( \int \|x-y\|^p\, \gamma(\D x,\D y)\right)^{1/p}\,,
\end{equation*}
where we recall that $\Gamma_{\mu, \nu}$ is the set of couplings between $\mu$ and $\nu$. 

We first show that in fact the above infimum is attained.  To that end, define 
$$
I(\gamma) \deq \int \|x-y\|^p\, \gamma(\D x,\D y)
$$
and observe that by definition, there exists a sequence $(\gamma_n)_n$ in $\Gamma_{\mu,\nu}$ such that $I(\gamma_n) \to W_p^p(\mu, \nu)$.
Since $\Gamma_{\mu,\nu}$ is compact (Proposition~\ref{prop:couplings}),
there is a subsequence of $(\gamma_n)_n$ which converges to some $\bar\gamma \in \Gamma_{\mu,\nu}$. By definition $W_p(\mu, \nu) \le I(\bar \gamma)$.
Since $(x,y)\mapsto \|x-y\|^p$ is unbounded, $I$ is not continuous, but it is lower semicontinuous, so $I(\bar\gamma) \le \liminf_{n\to\infty} I(\gamma_n) = W_p^p(\mu,\nu)$ (part three of the portmanteau theorem, Theorem~\ref{thm:portmanteau}). Hence $I(\bar \gamma)=W_p^p(\mu,\nu)$.
Note that the only property of the cost function we used in this proof is lower semicontinuity so this argument readily extends to more general costs.

We can therefore adopt the following definition of Wasserstein distances. Note that these distances should really be called Kantorovich--Rubinstein distances\index{Kantorovich--Rubinstein distance|see{Wasserstein distance}} but we stick to the modern trend of ``Wassersteinification''.
\begin{definition}\label{def:wass}
The $p$-Wasserstein\index{Wasserstein distance} distance between two probability measures $\mu, \nu \in \cP_p(\R^\dd)$ is defined by
\begin{equation*}
W_p(\mu, \nu)=\min_{\gamma \in \Gamma_{\mu, \nu}}\left( \int \|x-y\|^p\, \gamma(\D x,\D y)\right)^{1/p}\,.
\end{equation*}
\end{definition}

\begin{proposition}\label{prop:wp_is_metric}
The $p$-Wasserstein distance defines a metric over $\cP_p(\R^\dd)$, that is for every $\mu, \nu \in \cP_p(\R^\dd)$, it holds
\begin{enumerate}
\item $W_p(\mu, \nu)\ge 0$
\item $W_p(\mu, \nu)=W_p(\nu, \mu)$
\item $W_p(\mu, \nu)=0$ iff $\mu=\nu$
\item $W_p(\mu, \nu)\le W_p(\mu, \rho)+W_p(\rho, \nu)$ for any $\rho \in \cP_p(\R^\dd)$.
\end{enumerate}
\end{proposition}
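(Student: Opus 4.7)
The plan is to verify the four properties in turn. Properties (1) and (2) are immediate: non-negativity holds because the integrand $\|x-y\|^p$ is non-negative, and symmetry follows since the map $(x,y) \mapsto (y,x)$ pushes forward couplings in $\Gamma_{\mu,\nu}$ to couplings in $\Gamma_{\nu,\mu}$ bijectively, while leaving the cost $\|x-y\|^p$ invariant.

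For property (3), the easy direction is that if $\mu = \nu$, the diagonal coupling $\gamma = (\id,\id)_{\#}\mu$ (i.e., the law of $(X,X)$ for $X \sim \mu$) lies in $\Gamma_{\mu,\mu}$ and is concentrated on $\{x=y\}$, so it yields $W_p(\mu,\mu) = 0$. For the converse, take $\bar\gamma \in \Gamma_{\mu,\nu}$ achieving the minimum (Definition~\ref{def:wass}). If $W_p(\mu,\nu) = 0$ then $\|x-y\| = 0$ for $\bar\gamma$-a.e.\ $(x,y)$, so $\bar\gamma$ is concentrated on the diagonal. Reading off the marginals of a diagonally supported measure shows they must agree, hence $\mu = \nu$.

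The main work is property (4), the triangle inequality, which I would establish via a gluing argument. Let $\gamma_{12} \in \Gamma_{\mu,\rho}$ and $\gamma_{23} \in \Gamma_{\rho,\nu}$ be optimal couplings for $W_p(\mu,\rho)$ and $W_p(\rho,\nu)$ respectively. Disintegrating each with respect to its $\rho$-marginal, I would write $\gamma_{12}(\ud x, \ud z) = K_1(z, \ud x)\, \rho(\ud z)$ and $\gamma_{23}(\ud z, \ud y) = K_2(z, \ud y)\, \rho(\ud z)$, then define the three-variable measure
\begin{equation*}
\gamma_{123}(\ud x, \ud z, \ud y) \deq K_1(z, \ud x)\, K_2(z, \ud y)\, \rho(\ud z)
\end{equation*}
on $\R^d \times \R^d \times \R^d$. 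By construction, the $(x,z)$-marginal of $\gamma_{123}$ is $\gamma_{12}$ and its $(z,y)$-marginal is $\gamma_{23}$, so its $(x,y)$-marginal $\gamma_{13}$ lies in $\Gamma_{\mu,\nu}$.

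To conclude, I would use Minkowski's inequality in $L^p(\gamma_{123})$ applied to the functions $(x,z,y) \mapsto \|x-z\|$ and $(x,z,y)\mapsto \|z-y\|$, together with the pointwise triangle inequality $\|x-y\| \le \|x-z\| + \|z-y\|$ for the Euclidean norm. This yields
\begin{align*}
W_p(\mu,\nu) &\le \Bigl(\int \|x-y\|^p\, \gamma_{13}(\ud x, \ud y)\Bigr)^{1/p} \\
&\le \Bigl(\int (\|x-z\| + \|z-y\|)^p\, \gamma_{123}(\ud x, \ud z, \ud y)\Bigr)^{1/p} \\
&\le \Bigl(\int \|x-z\|^p\, \gamma_{12}(\ud x, \ud z)\Bigr)^{1/p} + \Bigl(\int \|z-y\|^p\, \gamma_{23}(\ud z, \ud y)\Bigr)^{1/p} \\
&= W_p(\mu,\rho) + W_p(\rho,\nu)\,.
\end{align*}
The main obstacle is justifying the gluing construction, which requires the existence of regular conditional distributions $K_1, K_2$; this is standard on $\R^d$ (a Polish space), so I would simply invoke the disintegration theorem. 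Finiteness of all three distances on $\cP_p(\R^d)$ follows from the moment condition and the inequality $\|x-y\|^p \lesssim \|x\|^p + \|y\|^p$ applied to the independent coupling.
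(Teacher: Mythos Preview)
Your proof is correct and follows essentially the same approach as the paper. The paper packages your disintegration-based gluing construction into a separate lemma (the gluing lemma, stated in the appendix and proved exactly via conditional distributions as you do), and then applies Minkowski's inequality in $L^p$ just as you have; your argument simply inlines that lemma.
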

\begin{proof}
Note first that 1. and 2. hold trivially.

We now turn to the proof of 3. If $\mu=\nu$, then the measure $\gamma(\ud x, \ud y)=\mu(\ud x)\,\delta_x(\ud y)$ is a valid coupling: $\gamma \in \Gamma_{\mu, \nu}$.
Concretely, $\gamma$ is the law of $(X, X)$ for $X\sim \mu$. Therefore
$$
0\le W_p^p(\mu, \mu)\le \int \|x-y\|^p\,\gamma(\ud x,\ud y)= \int \|x-x\|^p\,\mu(\ud x)=0\,.
$$  
To show the other direction of 3., observe that if $W_p(\mu,\nu)=0$, there exists $\bar \gamma \in \Gamma_{\mu,\nu}$ such that $(X,Y) \sim \bar \gamma$, and $X=Y$ almost surely; in particular, they must have the same distribution: $\mu=\nu$.

To complete the proof, we check the triangle inequality 4. To that end, we employ the \emph{gluing lemma} (Lemma~\ref{lem:gluing}) which ensures that there exists $X,Y,Z$ such that $X \sim \mu$, $Y\sim \nu$, $Z \sim \rho$ and such that $(X,Z)$ and $(Z,Y)$ are optimally coupled.

Then
\begin{align*}
W_p(\mu, \nu) &\le \big(\E\|X-Y\|^p\big)^{1/p}\\
&=\big(\E\|X-Z+Z-Y\|^p\big)^{1/p}\\
&\le \big(\E\|X-Z\|^p\big)^{1/p}+\big(\E\|Z-Y\|^p\big)^{1/p}\\
&=W_p(\mu, \rho)+ W_p(\rho, \nu)\,,
\end{align*}
where in the first line we used the suboptimality of the coupling  $(X,Y)$, in the third line we used the triangle inequality for $L^p$ norms, and in the last line, we used the optimality of the couplings $(X,Z)$ and $(Z,Y)$.
\end{proof}

\begin{example}[Wasserstein distances in simple cases]
\begin{enumerate}
\item Fix $x, y \in \R^\dd$. Then 
$$
W_p(\delta_x, \delta_y)=\|x-y\|\,.
$$ 
Therefore, $(\R^\dd, \|\cdot\|)$ is isometrically embedded in $(\cP_p(\R^\dd), W_p)$ via $x \mapsto \delta_x$.
\item Fix $x, y \in \R^\dd$ and $0\le \lambda,\tau\le 1$. Then 
$$
W_p\big(\lambda\, \delta_x+ (1-\lambda)\, \delta_y,\, \tau\, \delta_x+ (1-\tau)\, \delta_y\big)=|\lambda-\tau|^{1/p}\,\|x-y\|\,.
$$
\end{enumerate}
\end{example}

Note that it follows from ordering of the $L^p$ norms that $W_p(\mu, \nu) \le W_q(\mu, \nu)$ whenever $p \le q$. In particular, the smallest of the Wasserstein distances is $W_1$.

Wasserstein distances induce a useful topology on random variables: they \emph{metrize weak convergence} on compact spaces; see Appendix~\ref{app:proba} for background. More specifically, a sequence $(\mu_n)_n$ satisfies $W_p(\mu_n,\mu) \to 0$ if and only if it converges weakly to $\mu$, denoted $\mu_n \hookrightarrow \mu$, and the $p$-th moment converges: $\int \|\cdot\|^p \, \ud \mu_n \to \int \|\cdot\|^p \, \ud \mu$.  This ``metrization'' property can be found in all of the main texts on optimal transport and has often been employed as a justification for the use of Wasserstein distance as opposed to other distances. This is hardly a discriminating feature, however, and many other distances (L\'evy--Prokhorov, Fortet--Mourier, etc.) also have this property; see \cite[Chapter~6]{Vil09}. In fact, this folklore result does not do justice to the quantitative meaning of $W_p(\mu, \nu)\le \eps$ for some $\eps$. 

For example, the following statement implies that if two random variables with sufficiently light tails are close in $p$-Wasserstein distance for any $p > 1$, then all of their moments must also be close.
We formalize the assumption that the tails are light by considering \emph{sub-exponential} random variables, that is, random variables $Z$ satisfying 
\begin{equation}
	\E e^{|Z|} \leq 2\,.
\end{equation}
For such random variables, we have the following bound.

\begin{proposition}\label{prop:wass_controls_moments}
Let $X\sim \mu$ and $Y\sim \nu$ be two sub-exponential random variables. Then, for any $p > 1$, there exists a constant $C_p>0$ such that for any integer $\ell \ge 1$, it holds
$$
\big| \E|X|^\ell-\E|Y|^\ell\big| \le (C_p \ell)^{\ell}\, W_p(\mu, \nu)\,.
$$
\end{proposition}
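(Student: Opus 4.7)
The plan is to begin from an optimal coupling $\gamma \in \Gamma_{\mu,\nu}$ achieving $W_p(\mu,\nu)$, whose existence is guaranteed by Definition~\ref{def:wass}, and write $(X,Y) \sim \gamma$. Passing absolute values inside the expectation, $|\E|X|^\ell - \E|Y|^\ell| \le \E\bigl||X|^\ell - |Y|^\ell\bigr|$, so it suffices to bound the right-hand side. For $a,b \ge 0$, the mean value theorem applied to $t \mapsto t^\ell$ gives $|a^\ell - b^\ell| \le \ell\,(a \vee b)^{\ell-1}\,|a-b| \le \ell\,(a+b)^{\ell-1}\,|a-b|$, and combined with the reverse triangle inequality $\bigl||X|-|Y|\bigr| \le |X-Y|$ this yields
$$\E\bigl||X|^\ell - |Y|^\ell\bigr| \le \ell \, \E\bigl[(|X|+|Y|)^{\ell-1} \,|X-Y|\bigr].$$

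Next I would apply Hölder's inequality with conjugate exponents $p$ and $q \deq p/(p-1)$, using the optimality of $\gamma$ to identify $\E|X-Y|^p = W_p^p(\mu,\nu)$:
$$\E\bigl[(|X|+|Y|)^{\ell-1}\,|X-Y|\bigr] \le \bigl(\E[(|X|+|Y|)^{q(\ell-1)}]\bigr)^{1/q}\, W_p(\mu,\nu).$$
The task thereby reduces to controlling high moments of $|X|+|Y|$. Here the sub-exponential hypothesis enters: expanding $e^{|X|}$ term by term, $\E e^{|X|} \le 2$ forces $\E|X|^k \le 2\,k!$ for every $k \ge 0$, and similarly for $|Y|$. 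Combining this with the convexity bound $(a+b)^m \le 2^{m-1}(a^m + b^m)$ for $m \ge 1$ and with Stirling's estimate $k! \le k^k$, a direct computation produces
$$\bigl(\E[(|X|+|Y|)^{q(\ell-1)}]\bigr)^{1/q} \le K\,(q\ell)^{\ell-1}$$
for a universal constant $K$.

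Plugging this into the Hölder estimate and multiplying by $\ell$ absorbs into a power of the form $(C_p \ell)^\ell$ with $C_p$ depending only on $q$, hence only on $p$. The main bookkeeping obstacle is extracting the clean exponent $(C_p \ell)^\ell$: one must convert the residual $\ell-1$ exponent into $\ell$ and absorb all multiplicative constants uniformly in $\ell \ge 1$, which is routine but not automatic. Note also that the hypothesis $p > 1$ is essential at the Hölder step, since $p = 1$ would force $q = \infty$ and the moment bound would degenerate; this is consistent with the statement of the proposition.
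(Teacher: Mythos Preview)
Your proof is correct and follows essentially the same route as the paper: a pointwise bound $\bigl||X|^\ell - |Y|^\ell\bigr| \le \ell\,(|X|\vee|Y|)^{\ell-1}\,|X-Y|$, then H\"older with exponents $p$ and $q=p/(p-1)$, then the sub-exponential moment growth $(\E|Z|^k)^{1/k}\lesssim k$. The only cosmetic differences are that the paper uses $(|X|\vee|Y|)^{\ell-1}$ in place of your $(|X|+|Y|)^{\ell-1}$ and invokes the moment bound directly from~\cite{Ver18} rather than deriving it via the Taylor expansion and Stirling; note that your expansion gives $\E|X|^k\le 2k!$ only for integer $k$, so for the possibly non-integer exponent $q(\ell-1)$ you should either round up and use Jensen or simply quote the real-exponent version of the sub-exponential moment bound as the paper does.
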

\begin{proof}
By convexity of the function $x \mapsto |x|^\ell$, $\ell \ge 1$, it holds
$$
|X|^\ell-|Y|^\ell \le \ell\, |X-Y|\,(|X|^{\ell-1}\vee |Y|^{\ell-1})\,.
$$
Taking expectation on both sides and applying H\"older's inequality yields for any coupling $(X,Y)$, 
$$
\big| \E|X|^\ell-\E|Y|^\ell\big| \le \ell\, \big(\E|X-Y|^p\big)^{1/p}\,\big(\E(|X|^{\ell-1}\vee |Y|^{\ell-1})^q\big)^{1/q}\,,\;\; \frac1p+\frac1q=1\,.
$$
Taking the optimal coupling between $X$ and $Y$ yields
$$
\big| \E|X|^\ell-\E|Y|^\ell\big| \le\ell\, W_p(\mu, \nu)\,\big(\E(|X|^{\ell-1}\vee |Y|^{\ell-1})^q\big)^{1/q}\,.
$$
To conclude, recall that that it is a standard property of sub-exponential random variables~\cite{Ver18} that if $Z$ is sub-exponential, then $(\E[|Z|^k])^{1/k}\le k$ for all $k \ge1$. Thus
\begin{align*}
\big(\E(|X|^{\ell-1}\vee |Y|^{\ell-1})^q\big)^{\frac{1}{(\ell-1)\,q}}
&\le 2(\ell-1)\,q\le 2{\frac{\ell p}{p-1}}\,.
\end{align*}
\end{proof}

The above result is encouraging: obtaining bounds on the Wasserstein distance between two measures implies quantitative bounds on the distance between their moments. It could be the case, though, that the Wasserstein distance tends to be quite large compared to other commonly used distances or divergences such as total variation or the Kullback--Leibler divergence; see \cite[Chapter~2]{Tsy09} for a list of such distances, their comparison, and relevance to statistical problems.

It turns out, however, that the Wasserstein distance can often be controlled by other commonly used distances.
For example,  the next result shows that on a bounded domain, the Wasserstein distance is dominated by the total variation distance (see Exercise~\ref{qu:tv} for background). Moreover, its proof is our first illustration of how to bound Wasserstein distances---it suffices to exhibit a (suboptimal) coupling $\gamma$ such that $\E_\gamma\|X-Y\|^p$ is controlled appropriately.

\begin{theorem}\label{thm:wpTV}
    Let $\mu, \nu \in \cP_p(\R^\dd)$ be two distributions with densities $f$ and $g$ respectively. Then, for any $p \ge 1$, it holds
    $$
W_p^p(\mu, \nu) \le 2^{p-1} \inf_{x_0 \in \R^\dd}\int \|x-x_0\|^p\, |f(x)-g(x)| \,\ud x \,.
    $$
    In particular, if  the supports of both $\mu$ and $\nu$ are included in the same ball of diameter $D$, then
    $$
W_p^p(\mu, \nu) \le D^p\,d_{\sf TV}(\mu, \nu)\,,
    $$
    where $d_{\sf TV}(\mu, \nu)$ is the total variation distance between $\mu$ and $\nu$ and is defined by
    $$
d_{\sf TV}(\mu, \nu)=\frac12\int |f(x)-g(x)|\, \ud x\,.
    $$
\end{theorem}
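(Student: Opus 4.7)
The plan is to upper bound $W_p^p(\mu,\nu)$ by exhibiting an explicit (generally suboptimal) coupling, as suggested in the paragraph preceding the statement. The natural candidate is the so-called maximal coupling associated with $\mu$ and $\nu$: set $h \deq f\wedge g$ pointwise, let $m \deq \int h\,\ud x \in [0,1]$, and observe that the positive parts $(f-g)_+$ and $(g-f)_+$ both integrate to $1-m$. Assuming $m<1$ (the case $m=1$ forces $\mu=\nu$, for which the bound is trivial), define the coupling $\gamma$ as the law of $(X,Y)$ where, with probability $m$, we sample $X=Y \sim h/m$, and with probability $1-m$ we sample $X$ and $Y$ \emph{independently} from the normalized excess densities $(f-g)_+/(1-m)$ and $(g-f)_+/(1-m)$. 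A quick marginal check confirms $\gamma \in \Gamma_{\mu,\nu}$.

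Next, I would compute $\E_\gamma\|X-Y\|^p$. The diagonal contribution vanishes, leaving
\[
\E_\gamma\|X-Y\|^p = \frac{1}{1-m}\iint \|x-y\|^p\,(f-g)_+(x)\,(g-f)_+(y)\,\ud x\,\ud y\,.
\]
Applying the elementary inequality $\|x-y\|^p \le 2^{p-1}\bigl(\|x-x_0\|^p+\|y-x_0\|^p\bigr)$ for any $x_0 \in \R^\dd$ and splitting the double integral into a product of single integrals (using $\int (f-g)_+ = \int (g-f)_+ = 1-m$), the factors of $1-m$ cancel and I obtain
\[
\E_\gamma\|X-Y\|^p \le 2^{p-1}\!\int \|x-x_0\|^p\,\bigl[(f-g)_+(x) + (g-f)_+(x)\bigr]\,\ud x = 2^{p-1}\!\int \|x-x_0\|^p\,|f(x)-g(x)|\,\ud x\,.
\]
Since $\gamma$ is a feasible coupling, the left-hand side dominates $W_p^p(\mu,\nu)$, and taking the infimum over $x_0$ yields the first claim.

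For the second assertion, I would simply choose $x_0$ to be the center of the ball of diameter $D$ containing $\supp\mu\cup\supp\nu$. Then $\|x-x_0\|\le D/2$ on the joint support, so $\|x-x_0\|^p \le (D/2)^p$, and using $\int|f-g|\,\ud x = 2\,d_{\sf TV}(\mu,\nu)$ the constants combine as $2^{p-1}\cdot(D/2)^p\cdot 2 = D^p$, giving $W_p^p(\mu,\nu)\le D^p\,d_{\sf TV}(\mu,\nu)$.

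The only subtle point is the degenerate case $m=1$, which is handled separately, and the verification that the marginals of the proposed coupling are correct (an elementary bookkeeping computation). Beyond that, the argument is entirely driven by the construction of the maximal coupling plus the convexity-type inequality $\|x-y\|^p\le 2^{p-1}(\|x-x_0\|^p+\|y-x_0\|^p)$; no appeal to duality or to Brenier-type results is needed.
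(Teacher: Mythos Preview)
Your proof is correct and follows essentially the same approach as the paper: both construct the maximal (total variation) coupling by matching the common mass $f\wedge g$ on the diagonal and coupling the excesses $(f-g)_+$ and $(g-f)_+$ independently, then apply the convexity inequality $\|x-y\|^p\le 2^{p-1}(\|x-x_0\|^p+\|y-x_0\|^p)$. The only differences are cosmetic (your $m$ is the paper's $1-t$, and the paper spells out the coupling via auxiliary random variables $Z_+,Z_-,Z_\wedge$ and a Bernoulli switch rather than describing it in words).
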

\begin{proof}
  Assume that $\mu \neq \nu$ as otherwise the statement is trivial.
  As mentioned before the statement of the theorem, we construct an explicit coupling between $\mu$ and $\nu$. To that end, consider the three positive functions $(f-g)_+$, $(f-g)_-$, and $f\wedge g$ (see Figure~\ref{fig:weighted_tv} for reference) and observe that:

  \begin{figure}[ht]
  \centering
\begin{tikzpicture}
\begin{axis}[
  no markers,
  domain=0:10,
  samples=100,
  ymin=0,
  ymax=0.5, 
  height=6cm, 
  width=10cm,
  xtick=\empty,
  ytick=\empty,
  enlargelimits=false,
  clip=false,
  axis on top,
  grid = major,
  axis lines=none, 
  enlarge y limits={upper,value=0.2} 
  ]

  \pgfmathdeclarefunction{gauss}{2}{%
    \pgfmathparse{1/(#2*sqrt(2*pi))*exp(-((x-#1)^2)/(2*#2^2))}
  }
  
  \addplot [fill=blue!40, draw=none, domain=0:10, opacity=0.5] {gauss(4,1)} \closedcycle;

  \addplot [fill=red!40, draw=none, domain=0:10, opacity=0.5] {gauss(6,sqrt(2))} \closedcycle;

  \addplot [fill=orange!60, draw=none, domain=0:10, opacity=0.5] {min(gauss(4,1),gauss(6,sqrt(2)))} \closedcycle;
  
  \addplot [blue!50, thick] {gauss(4,1)};
  \addplot [red!50, thick] {gauss(6,sqrt(2))};
\end{axis}
\end{tikzpicture}
      \caption{The integrals $\int(f-g)_+$, $\int (f-g)_-$, and $\int f\wedge g$ correspond to the blue, red, and orange regions, respectively.}\label{fig:weighted_tv}
  \end{figure}
    $$
\int \bigl((f-g)_+ - (f-g)_-\bigr) = \int f -\int g =1-1=0
    $$
    so that
    $$
\int (f-g)_+= \int(f-g)_- \eqqcolon t>0
    $$
and
$$
\int f\wedge g =\frac12 \left( \int f  + \int g - \int (f-g)_+-\int(f-g)_-\right)=1-t\,.
$$
Next, we normalize these functions to obtain three densities
    $$
     h_+= \frac1t\,(f-g)_+\,, \quad
     h_-= \frac1t\,(f-g)_-\,, \quad
     h_\wedge=\frac1{1-t}\,f\wedge g\,.
    $$
We can rewrite $f$ and $g$ as mixtures of the above densities:
$$
f=th_++ (1-t) h_\wedge\,, \qquad g= th_-+ (1-t) h_\wedge\,.
$$
Next, let $Z_+, Z_-$, and $Z_\wedge$ be three independent random variables with densities $h_+, h_-$, and $h_\wedge$ respectively and let $B$ be a Bernoulli random variable with parameter $t \in (0,1]$, independent of $Z_+, Z_-$, and $Z_\wedge$.

We are now in a position to define our coupling between $\mu$ and $\nu$. To that end, let $(X,Y)$ be a random pair such that
\begin{align*}
    X&=BZ_++(1-B)Z_\wedge\,,\\
    Y&=BZ_-+(1-B)Z_\wedge\,,
\end{align*}
and observe that the distribution $\gamma$ of $(X,Y)$ is indeed a valid coupling between $\mu$ and $\nu$. Using this fact together with the inequality $\|x-y\|^p \le 2^{p-1}\,(\|x-x_0\|^p + \|y -x_0\|^p)$, we get
\begin{align*}
    W_p^p(\mu, \nu)&\le \E_\gamma\|X-Y\|^p\\
    &= \p(B=0)\cdot 0 + \p(B=1)\int \|x-y\|^p\, h_+(x)\,h_-(y)\,\ud x\, \ud y\\
    &\le t2^{p-1} \,\left(\int  \|x-x_0\|^p\, h_+(x)\,\ud x+ \int\|y-x_0\|^p \, h_-(y)\,\ud y \right)\\
    &=t2^{p-1} \,\left(\int  \|x-x_0\|^p\, (h_+(x)+ h_-(x))\,\ud x \right)\\
    &=2^{p-1}\, \left(\int  \|x-x_0\|^p\, \bigl((f-g)_+(x)+ (f-g)_-(x)\bigr)\,\ud x \right)\\    
    &=2^{p-1}\, \left(\int  \|x-x_0\|^p\, |f(x)-g(x)|\,\ud x \right)
    \end{align*}
and the result follows by minimizing the right-hand side with respect to $x_0$.  The second statement follows easily by taking $x_0$ to be the center of said ball.
\end{proof}

The assumption that $\mu$ and $\nu$ are absolutely continuous is superfluous and the exact same proof follows by manipulating measures rather than densities, albeit with slightly more opaque notation; see \cite[Theorem~6.15]{Vil09}.

\section{Optimal transport in one dimension}\label{sec:ot_1d}

To gain a bit of insight into optimal transport, we look at the simpler case where $\mu$ and $\nu$ are probability measures on the real line. In this case, we may define their associated cumulative distribution functions.

Recall that the \emph{cumulative distribution function} (CDF) of a random variables $Z$ is the function $F:\R \to [0,1]$ defined by
$$
F(t) \deq \p(Z \le t)\,,\qquad t \in \R\,.
$$ 
Since $F$ is monotonically non-decreasing, we may define its \emph{pseudo-inverse}  $F^\dagger$ by
$$
F^\dagger(u)=\inf\{t \in \R\,:\, F(t) \ge u\}\,, \qquad u \in [0,1]\,,
$$
with the convention that $\inf \emptyset=\infty$. While $F^\dagger$ is not an inverse per se, it does satisfy the following property:
\begin{equation}\label{EQ:pseudoinverse}
F^\dagger(u) \le t \ \Leftrightarrow \ u \le F(t)
\end{equation}

This property is often used to simulate random variables. Let $U\sim\unif([0,1])$ be a uniform random variable, then $Z\sim F^{\dagger}(U)$ has CDF $F$. Indeed, for any $t \in \R$, 
\begin{equation}\label{EQ:unifCDF}
\p(Z \le t)=\p(F^{\dagger}(U) \le t)=\p(U \le F(t))=F(t)\,.
\end{equation}

The following theorem characterizes optimal transport in one dimension in terms of CDFs.

\begin{theorem}\label{thm:W1_1D}
Let $\mu, \nu \in \cP_1(\R)$ be two probability distributions with  CDFs $F_\mu$ and $F_\nu$ respectively. Let $U\sim\unif([0,1])$ be a uniform random variable and denote by $\bar \gamma$ the distribution of $(F_\mu^{\dagger}(U), F_{\nu}^{\dagger}(U))$. Then $\bar \gamma \in \Gamma_{\mu, \nu}$ is a valid coupling between $\mu$ and $\nu$ and it is optimal:
$$
W_{1}(\mu, \nu)=\int |x-y|\, \bar \gamma(\ud x,\ud y)=\min_{\gamma \in \Gamma_{\mu, \nu}}\int |x-y|\, \gamma(\ud x,\ud y)\,.
$$
Moreover, 
$$
W_{1}(\mu, \nu)=\int_{-\infty}^\infty \big|F_{\mu}(t) -F_{\nu}(t)\big|\,\ud t\,.
$$
\end{theorem}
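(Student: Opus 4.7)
The plan is to proceed in three stages: (i) verify that $\bar\gamma$ is a valid coupling, (ii) establish the integral-of-CDFs formula for the cost under $\bar\gamma$, and (iii) show that this value is in fact a lower bound for every coupling, hence $\bar\gamma$ is optimal.

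For step (i), the identity~\eqref{EQ:unifCDF} already tells us that $F_\mu^\dagger(U)\sim \mu$ and $F_\nu^\dagger(U)\sim \nu$ when $U\sim\unif([0,1])$, so the two marginals of $\bar\gamma$ are correct. This gives $\bar\gamma\in\Gamma_{\mu,\nu}$ immediately.

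The technical heart of the argument is the pointwise identity
\begin{equation*}
|x-y| \;=\; \int_{-\infty}^\infty \bigl|\one_{x\le t}-\one_{y\le t}\bigr|\,\ud t\,,\qquad x,y\in\R\,,
\end{equation*}
which follows by checking (say $x\le y$) that the integrand equals $\one_{[x,y)}(t)$. Integrating this against any coupling $\gamma\in\Gamma_{\mu,\nu}$ with $(X,Y)\sim\gamma$ and applying Fubini yields
\begin{equation*}
\int |x-y|\,\gamma(\ud x,\ud y) \;=\; \int_{-\infty}^\infty \E\bigl|\one_{X\le t}-\one_{Y\le t}\bigr|\,\ud t\,.
\end{equation*}
For the specific coupling $\bar\gamma$, using~\eqref{EQ:pseudoinverse} I can rewrite the events as $\{U\le F_\mu(t)\}$ and $\{U\le F_\nu(t)\}$; since $U$ is uniform, the symmetric difference of these two nested intervals has length exactly $|F_\mu(t)-F_\nu(t)|$, giving
\begin{equation*}
\int |x-y|\,\bar\gamma(\ud x,\ud y) \;=\; \int_{-\infty}^\infty \bigl|F_\mu(t)-F_\nu(t)\bigr|\,\ud t\,.
\end{equation*}

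For step (iii), the same Fubini identity combined with the elementary bound $\E|A-B|\ge |\E A-\E B|$ gives, for an arbitrary $\gamma\in\Gamma_{\mu,\nu}$,
\begin{equation*}
\int |x-y|\,\gamma(\ud x,\ud y) \;\ge\; \int_{-\infty}^\infty \bigl|\p(X\le t)-\p(Y\le t)\bigr|\,\ud t \;=\; \int_{-\infty}^\infty \bigl|F_\mu(t)-F_\nu(t)\bigr|\,\ud t\,.
\end{equation*}
Thus $\bar\gamma$ attains the infimum in Definition~\ref{def:wass} with $p=1$, proving both optimality of $\bar\gamma$ and the closed-form expression for $W_1(\mu,\nu)$. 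The only mild subtlety I anticipate is ensuring the integrals are finite (using $\mu,\nu\in\cP_1(\R)$, which makes $\int|F_\mu(t)-F_\nu(t)|\,\ud t<\infty$ by a tail estimate) and handling the measurability needed to apply Fubini, but neither presents a real obstacle.
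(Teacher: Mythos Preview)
Your proof is correct and follows essentially the same route as the paper's: both hinge on the identity $|x-y|=\int_{-\infty}^\infty|\one_{x\le t}-\one_{y\le t}|\,\ud t$ and Fubini, with the lower bound coming from $\gamma(X\le t,Y\le t)\le F_\mu(t)\wedge F_\nu(t)$ (which is exactly your $\E|A-B|\ge|\E A-\E B|$ specialized to indicators) and equality for $\bar\gamma$ verified via $\{F_\mu^\dagger(U)\le t\}=\{U\le F_\mu(t)\}$. The only cosmetic difference is that the paper expands $\E|\one_{X\le t}-\one_{Y\le t}|$ as $F_\mu(t)+F_\nu(t)-2\gamma(X\le t,Y\le t)$ explicitly, while you invoke the triangle inequality for expectations directly.
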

\begin{proof}
It follows from~\eqref{EQ:unifCDF} that $\bar \gamma \in \Gamma_{\mu, \nu}$ and it remains to check that it is optimal. To that end, observe that for any $\gamma \in \Gamma_{\mu, \nu}$, it follows from Fubini's theorem that for $(X,Y) \sim \gamma$,
\begin{align*}
&\int |x-y|\, \gamma(\ud x,\ud y)
=\iint_{-\infty}^\infty \big(\1_{x \le t < y} + \1_{y \le t < x}\big)\,\ud t\,  \gamma(\ud x,\ud y)\\
&\qquad =\int_{-\infty}^\infty \big(\gamma(X \le t < Y) + \gamma(Y \le t < X)\big)\,\ud t \\
&\qquad =\int_{-\infty}^\infty \big(\gamma(X \le t) + \gamma(Y \le t) -2\gamma(X\le t, Y \le t)\big)\,\ud t \\
&\qquad \ge \int_{-\infty}^\infty \big(F_{\mu}(t) +F_{\nu}(t) -2\,(F_\mu(t)\wedge F_{\nu}(t))\big)\,\ud t\\
&\qquad =\int_{-\infty}^\infty \big|F_{\mu}(t) -F_{\nu}(t)\big|\,\ud t\,.
\end{align*}
To show that the above inequality becomes an equality when $\gamma=\bar \gamma$, observe that
\begin{align*}
\bar \gamma(X\le t, Y \le t)&=\p(F_\mu^{\dagger}(U)\le t, F_{\nu}^{\dagger}(U)\le t)\\
&=\p(U \le F_\mu(t),U \le F_\nu(t))\\
&=\p(U \le F_\mu(t)\wedge  F_\nu(t))\\
&=F_\mu(t)\wedge F_{\nu}(t)\,.
\end{align*}
We have proved
$$
\int |x-y|\, \gamma(\ud x,\ud y) \ge \int_{-\infty}^\infty \big|F_{\mu}(t) -F_{\nu}(t)\big|\,\ud t=\int |x-y|\, \bar\gamma(\ud x,\ud y)
$$
so that $\bar \gamma$ is an optimal coupling.
\end{proof}

If $Z$ admits a density, then its CDF $F$ is actually a left inverse of $F^\dagger$, i.e., $F \circ F^\dagger = \mathrm{Id}$.
If $\mu$ has a density, this fact implies that the optimal coupling $\bar \gamma$ takes the following special form. If $X \sim \mu$, then
\begin{align}\label{eq:1d_opt_coupling}
    (X, F^{\dagger}_\nu\circ F_\mu(X)) \sim \bar \gamma\,.
\end{align}
In other words, the solution to the Monge problem and the Kantorovich problem coincide since we have found a transport \emph{map} $\bar T=F^{-1}_\nu\circ F_\mu$ such that $\bar T_{\#}\mu=\nu$ and
\begin{align*}
    \int |x-\bar T(x)|\,\mu(\ud x)
    &= \min_{\gamma \in \Gamma_{\mu, \nu}}\int |x-y|\, \gamma(\ud x,\ud y) \\
    &= \min_{T: T_{\#}\mu=\nu}\int |x-T(x)|\,\mu(\ud x)\,.
\end{align*}

Although we have focused on the $W_1$ distance in this section, the coupling $\bar\gamma$ given in~\eqref{eq:1d_opt_coupling} turns out to be universally optimal, in the sense that it is optimal for the Kantorovich problem for any strictly convex cost (a cost of the form $c(x,y) = h(x-y)$ where $h : \R\to\R$ is strictly convex); this includes all $W_p$ distances for $p > 1$.
See Exercise~\ref{ex:1d_ot}.

Note that $\bar T$ is a \emph{monotone} increasing function as the composition of two increasing functions. Continuous monotone increasing functions in one dimension are derivatives of convex functions, suggesting that this property may be generalized to higher dimension by considering gradients of convex functions. Existence of such monotone transport maps in higher dimensions is the content of the influential result of Brenier~\cite{Bre87}, which we explore next.

\section{Brenier's theorem}\label{sub:brenier}

The $p$-Wasserstein distance is a natural object for any $p \geq 1$.
However, the cases $p = 1, 2$ possess remarkable special structure, and we focus on them in much of what follows.
We first explore the case $p = 2$, which is notable for its close connection to convex analysis.

Recall that
\begin{equation}\label{W2}\tag{$\mathsf{W_2^2}$}
W_2^2(\mu, \nu)=\min_{\gamma \in \Gamma_{\mu, \nu}}\int \|x-y\|^2\, \gamma(\ud x,\ud y)\,.
\end{equation}

\begin{theorem}[Brenier]\label{thm:brenier}\index{Brenier's theorem}
Let $\mu, \nu \in \cP_2(\R^\dd)$ be two probability measures such that $\mu$ has a density and let $X \sim \mu$. If $\bar \gamma$ is an optimal coupling for~\eqref{W2},
$$
\int \|x-y\|^2\, \bar \gamma(\ud x,\ud y)=\min_{\gamma \in \Gamma_{\mu, \nu}}\int \|x-y\|^2 \,\gamma(\ud x,\ud y)=W_2^2(\mu, \nu)\,,
$$
then there exists a convex function $\varphi:\R^\dd \to \R$ such that $(X, \nabla \varphi(X))\sim \bar \gamma \in \Gamma_{\mu, \nu}$.
\end{theorem}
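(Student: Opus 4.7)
The plan is to prove the theorem in two stages: first, show that the support of any optimal plan is \emph{cyclically monotone}; second, use Rockafellar's theorem to realize a cyclically monotone set inside the graph of a convex subdifferential, then pass to a single-valued map via almost-everywhere differentiability of convex functions.

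For the first stage, I would expand
$$\|x-y\|^2 = \|x\|^2 + \|y\|^2 - 2\,\langle x, y\rangle.$$
Since $\mu, \nu \in \cP_2(\R^\dd)$ and their marginals are fixed under $\gamma \in \Gamma_{\mu,\nu}$, minimizing $\int \|x-y\|^2\,\ud\gamma$ is equivalent to \emph{maximizing} $\int \langle x,y\rangle\,\ud\gamma$. I would then argue that $\supp \bar\gamma$ is cyclically monotone: for any $(x_1,y_1),\ldots,(x_n,y_n) \in \supp \bar\gamma$ and any permutation $\sigma$ of $[n]$,
$$\sum_{i=1}^n \langle x_i, y_i\rangle \;\geq\; \sum_{i=1}^n \langle x_i, y_{\sigma(i)}\rangle.$$
If this inequality failed at some tuple, one could pick small product neighborhoods $U_i \times V_i \ni (x_i,y_i)$ each carrying positive $\bar\gamma$-mass and perform a local surgery that shuffles the $y$-coordinate according to $\sigma$; the resulting competitor still lies in $\Gamma_{\mu,\nu}$ (using compactness of $\Gamma_{\mu,\nu}$ from Proposition~\ref{prop:couplings} to guarantee well-definedness) but strictly improves the inner-product integral, contradicting optimality of $\bar\gamma$.

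For the second stage, I would invoke Rockafellar's theorem: any cyclically monotone $\Gamma \subset \R^\dd \times \R^\dd$ lies inside the subdifferential graph of a convex, lower semicontinuous function $\varphi$, constructed explicitly after fixing a base point $(x_0,y_0) \in \Gamma$ by
$$\varphi(x) \deq \sup\Big\{\langle y_m, x-x_m\rangle + \sum_{i=0}^{m-1}\langle y_i, x_{i+1}-x_i\rangle \,:\, m \in \N,\ (x_i,y_i) \in \Gamma\Big\}.$$
This $\varphi$ is convex as a supremum of affine functions; extending any near-optimal chain in the definition of $\varphi(x)$ by a pair $(x,y) \in \Gamma$ and evaluating at a test point $z$ yields $\varphi(z) \ge \varphi(x) + \langle y, z-x\rangle$, so $y \in \partial\varphi(x)$ whenever $(x,y) \in \Gamma$.

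Finally, convex functions on $\R^\dd$ are differentiable Lebesgue-a.e., and since $\mu$ has a density this holds $\mu$-a.e.; on the set of differentiability, $\partial\varphi(x) = \{\nabla\varphi(x)\}$ is a singleton. Disintegrating $\bar\gamma$ along its first marginal $\mu$ then gives $\bar\gamma_x = \delta_{\nabla\varphi(x)}$ for $\mu$-a.e.\ $x$, which is precisely the statement $(X, \nabla\varphi(X)) \sim \bar\gamma$. I expect the main obstacle to be the local swap argument underpinning cyclical monotonicity: the measure-theoretic surgery exchanging mass across small neighborhoods while preserving both marginals requires care with the choice of neighborhoods of positive $\bar\gamma$-measure. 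The remaining convex-analytic ingredients (Rockafellar's construction and the Lebesgue-a.e.\ differentiability of convex functions) are then classical and can be cited.
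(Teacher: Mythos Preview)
Your proposal is correct and follows essentially the same route as the paper: show $\supp\bar\gamma$ is cyclically monotone via a local mass-swapping argument (Proposition~\ref{prop:suppCM}), invoke Rockafellar's theorem (Theorem~\ref{thm:rock}, with exactly the construction you wrote) to place the support inside $\partial\varphi$, then use a.e.\ differentiability of convex functions together with absolute continuity of $\mu$. The only cosmetic difference is that you first pass to the inner-product maximization, whereas the paper works directly with the squared-distance cost (the two formulations of cyclical monotonicity being equivalent via~\eqref{eq:cyclically_monotone}); your aside about compactness of $\Gamma_{\mu,\nu}$ is not actually needed for the surgery step, and you should be slightly careful that $\varphi$ from Rockafellar may take the value $+\infty$, so differentiability is on the interior of its domain (whose boundary has Lebesgue measure zero).
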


The crucial assumption of Brenier's theorem is that $\mu$ has a density, that is, that $\mu$ is absolutely continuous with respect to the Lebesgue measure.
In particular, since convex functions are differentiable Lebesgue-almost everywhere, this assumption guarantees that $\nabla \varphi(X)$ makes sense.

Before turning to the proof, we first consider a first statistical implication of Brenier's theorem.
Brenier's theorem asserts that, as long as $\mu$ has a density, for any $\nu \in \cP_2(\R^\dd)$, there exists a convex function $\phi$ so that $\nabla \phi_\#\mu = \nu$.
Since gradients of convex functions are natural analogues of monotone functions in higher dimensions, this theorem is therefore a significant generalization of the classical univariate fact mentioned in Section~\ref{sec:ot_1d}, that if $U \sim \mathsf{Unif}([0,1])$, then $F^\dagger_\nu(U) \sim \nu$.

In one dimension, the function $F^\dagger_\nu$ is known as the quantile function of $\nu$, and is of fundamental statistical significance.
Brenier's theorem therefore can be used to define a multivariate notion of quantiles~\cite{CheGalHalHen17,HalBarCueMat21}.
This point of view has proven to be extremely fruitful and has led to a wide range of statistical applications~\cite{Hal22}.
(For more details, see the discussion section.)

Returning to the content of Brenier's theorem, at this point it is not obvious what optimal transport has to do with gradients of convex functions.
We therefore begin by studying such gradients to gain intuition.

\subsection{Gradients of convex functions}\label{sec:grad_cvx_fn}

Note first that a continuous function $f:\R\to \R$ is such that $f=\varphi'$ for some differentiable convex function $\varphi$ if and only if $f$ is non-decreasing. Indeed, convexity of $\varphi$ implies that for any $x,y \in \R$:
\begin{align}
\varphi(x)-\varphi(y)&\le (x-y)\,\varphi'(x)\,, \label{EQ:2point1}\\
\varphi(y)-\varphi(x)& \le (y-x)\,\varphi'(y)\,.\label{EQ:2point2}
\end{align}
Summing the above two inequalities yields
$$
(x-y)\,(\varphi'(x)-\varphi'(y))\ge 0\,,
$$
so that $\varphi'$ is non-decreasing. 

Is there an analogue of this statement for functions on $\R^\dd$? Of course we immediately get that for any $x,y \in \R^\dd$
$$
\langle x-y, \tilde \nabla \varphi(x)-\tilde \nabla \varphi(y)\rangle\ge 0\,,
$$
where $\tilde \nabla \varphi(x) \in \partial \varphi(x)$ denotes a subgradient of $\varphi$ at $x$ (see Appendix~\ref{app:convex} for preliminaries on convex analysis). Unfortunately, while in dimension 1, the two-point inequalities~\eqref{EQ:2point1}--\eqref{EQ:2point2} imply inequalities for any arrangement of points, in higher dimension this is no longer the case and we need to capture additional information.
 
In fact, convexity implies many such inequalities: for any integer $k\ge 2$, and any collection of points $x_1, \ldots, x_k \in \R^\dd$, we have
\begin{align*}
\varphi(x_i)-\varphi(x_{i+1}) &\le \langle x_i-x_{i+1},\tilde \nabla \varphi(x_{i})\rangle\,, \qquad i=1, \ldots, k-1\,,\\
\varphi(x_{k})-\varphi(x_{1}) &\le \langle x_{k}-x_1,\tilde \nabla \varphi(x_{k})\rangle\,.
\end{align*}
Summing these inequalities yields:
\begin{equation}
\label{EQ:gradcycm}
\sum_{i=1}^{k}\langle x_i-x_{i+1},\tilde \nabla \varphi(x_{i})\rangle\ge 0\,,
\end{equation}
with the convention that $x_{k+1}=x_1$.\footnote{With this convention, the points $x_1 \to x_2 \to \dots \to x_k \to x_1$ form a ``cycle.''}

\subsection{Cyclical monotonicity}

Since there may exist several points in the subdifferential of $\varphi$ at $x$, we first  describe the graph $\{(x,\tilde \nabla \varphi(x))\,:\, x\in \R^d\}$ before thinking about $\tilde \nabla \varphi (\cdot)$ as a map from $\R^d$ to $\R^d$. We first define an important property of such graphs.

\begin{definition}
A set $A \subset \R^\dd\times \R^\dd$ is said to be \emph{cyclically monotone}\index{cyclical monotonicity} if for any integer $k \ge 2$, and points $(x_i, y_i) \in A$, $i=1, \ldots, k$, it holds
\begin{equation}
\label{EQ:CM}
\sum_{i=1}^{k}\langle x_i-x_{i+1}, y_{i}\rangle\ge 0\,,
\end{equation}
with the convention that $x_{k+1}=x_1$.
\end{definition}

In light of~\eqref{EQ:gradcycm}, the set $\partial \varphi \subset \R^\dd\times \R^\dd$ is cyclically monotone whenever $\varphi$ is convex. It turns out that all cyclically monotone subsets of $\R^\dd\times \R^\dd$ are of this form.

\begin{theorem}[Rockafellar]\label{thm:rock}\index{Rockafellar's theorem}
A set $A \subset \R^d\times \R^d$ is cyclically monotone if and only if there exists a closed convex function $\varphi:\R^d \to \R\cup\{\infty\}$ such that
$$
A\subseteq \partial\varphi\,.
$$
\end{theorem}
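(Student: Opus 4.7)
The easy ($\Leftarrow$) direction is essentially already in the preceding paragraphs: if $A \subseteq \partial\varphi$ for a convex $\varphi$, then for any chain $(x_1,y_1),\dots,(x_k,y_k) \in A$ (with $x_{k+1}=x_1$), the subgradient inequality gives $\varphi(x_{i+1}) - \varphi(x_i) \ge \langle y_i, x_{i+1}-x_i\rangle$ for each $i$; summing over $i$ telescopes the left-hand side to $0$, yielding $\sum_i \langle x_i - x_{i+1}, y_i\rangle \ge 0$, which is exactly~\eqref{EQ:CM}.

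The substantive direction is ($\Rightarrow$). The plan is to construct $\varphi$ explicitly as the \emph{Rockafellar potential}. Assuming $A \neq \emptyset$ (otherwise take $\varphi \equiv 0$), fix a basepoint $(x_0, y_0) \in A$ and define, for $x \in \R^d$,
$$
\varphi(x) = \sup \Bigl\{ \langle y_0, x_1 - x_0\rangle + \langle y_1, x_2 - x_1\rangle + \cdots + \langle y_k, x - x_k\rangle \Bigr\},
$$
where the supremum runs over all $k \geq 0$ and all chains $(x_1,y_1),\dots,(x_k,y_k) \in A$ (with the $k=0$ convention that the expression is $\langle y_0, x-x_0\rangle$). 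Each chain contributes an affine function of $x$, so $\varphi$ is a supremum of affine functions, hence convex and lower semicontinuous, i.e.\ closed.

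Next I would verify that $\varphi$ is proper. Applying~\eqref{EQ:CM} to the cycle $(x_0,y_0),(x_1,y_1),\dots,(x_k,y_k)$ (which closes back to $x_0$) shows that evaluating any chain at $x = x_0$ gives a value $\leq 0$, while the trivial $k=0$ chain gives $0$; hence $\varphi(x_0) = 0$. Moreover, for any $(x, y) \in A$, appending $(x,y)$ to an arbitrary chain and closing back to $x_0$ gives a cycle whose total, by~\eqref{EQ:CM}, is $\leq 0$; rearranging yields the uniform bound $\varphi(x) \leq \langle y, x - x_0\rangle < \infty$.

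Finally, I would show $y \in \partial\varphi(x)$ for every $(x, y) \in A$. Given any $x' \in \R^d$ and any $\eps > 0$, select a chain witnessing $\varphi(x)$ up to additive error $\eps$, then extend it by appending the pair $(x, y)$: this extended chain is admissible for $\varphi(x')$ and contributes exactly one new term $\langle y, x' - x\rangle$, whence
$$
\varphi(x') \ge \varphi(x) - \eps + \langle y, x' - x\rangle.
$$
Letting $\eps \downarrow 0$ yields the subgradient inequality. The main conceptual hurdle is spotting the right definition of $\varphi$: the chain (or discrete ``path-integral'') structure is engineered precisely so that cyclical monotonicity translates into the telescoping estimate needed both for properness (closing a chain into a cycle) and for the subgradient property (extending a chain by one pair).
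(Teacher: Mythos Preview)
Your proof is correct and follows essentially the same approach as the paper's: both construct the Rockafellar potential as a supremum over chains anchored at a fixed $(x_0,y_0)\in A$, use cyclical monotonicity to show $\varphi(x_0)=0$ (hence properness), and establish the subgradient property by extending chains. The only cosmetic differences are that you include the redundant (though correct) finiteness bound $\varphi(x)\le\langle y,x-x_0\rangle$ for $(x,y)\in A$, and you phrase the subgradient step via an $\eps$-approximation whereas the paper takes the supremum directly.
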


The proof of this classical theorem of convex analysis can be found in Appendix~\ref{app:convex}.

Note that  condition~\eqref{EQ:CM} is equivalent to the requirement that
\begin{align}\label{eq:cyclically_monotone}
    \sum_{i=1}^k\|x_{i}-y_i\|^2 \le \sum_{i=1}^k\|x_{i+1}-y_i\|^2
\end{align}
for any points $(x_i, y_i) \in A$, $i = 1, \dots, k$.
This formulation enables us to see the connection with optimal transport. 
Indeed, consider the following example for illustration purposes. Let 
$$
\mu=\frac{1}{n}\sum_{j=1}^n\delta_{a_j}\,,\qquad \nu=\frac{1}{n}\sum_{j=1}^n\delta_{b_j}\,.
$$

In this discrete case, the set of all couplings between $\mu$ and $\nu$ can be identified with the Birkhoff polytope (see Section \ref{sec:discreteOT}), whose extreme points are rescaled permutation matrices.
Since the discrete optimal transport problem is a linear program, solutions can be taken to be extreme points.
We may therefore restrict our attention to couplings given by permutations.
Concretely, a permutation $\sigma$ of $\{1, \dots, n\}$ corresponds to the coupling
\begin{equation*}
    \gamma = \frac 1n \sum_{j=1}^n \delta_{(a_{j}, b_{\sigma(j)})}\,.
\end{equation*}
Such a coupling is optimal if its cost is minimal among all permutations, that is, if
\begin{equation}
  \label{eq:discrete_opt}
\sum_{j=1}^n\|a_{j}-b_{\sigma(j)}\|^2 \le \sum_{j=1}^n\|a_{j}-b_{\tau(j)}\|^2\,, \qquad \forall \tau\,.  
\end{equation}

This condition is precisely equivalent to the support of $\gamma$ being cyclically monotone.
Indeed, by relabeling the atoms of $\nu$, we may assume without loss of generality that $\sigma$ is the identity permutation.
Then the support of $\gamma$ consists of the pairs $(a_{j}, b_j)$, $j \in \{1, \dots, n\}$.
Given any subset of $k$ distinct points $(x_i, y_i) = (a_{j_i}, b_{j_i}) \in \supp(\gamma)$, $i = 1, \dots, k$, let $\tau$  be the cyclic permutation of $\{j_1, \ldots, j_k\}$ that leaves other indices unchanged
\begin{equation*}
\tau(j) = 
    \begin{cases}
        j_{i-1} & \text{if $j = j_i$, $i \in \{2, \dotsc, k\}$} \\
        j_k & \text{if $j = j_1$} \\
        j & \text{otherwise.}
    \end{cases}
\end{equation*}
Then~\eqref{eq:discrete_opt} implies~\eqref{eq:cyclically_monotone}.
In fact, since~\emph{any} permutation $\tau$ can be decomposed into cycles, similar reasoning then shows that~\eqref{eq:cyclically_monotone} is also a sufficient condition for~\eqref{eq:discrete_opt} to hold.

The preceding discussion indicates that, in the discrete case, the support of an optimal coupling is cyclically monotone.
A similar phenomenon holds in the general case; however, the argument given above is not valid when $\gamma$ does not assign positive mass to points in its support.
Nevertheless, the following result shows that a similar strategy can be made to work by reasoning about small neighborhoods (e.g., balls) rather than individual points.
Some care is required to ensure that it is possible to modify $\gamma$ on such neighborhoods while maintaining the constraint $\gamma \in \Gamma_{\mu, \nu}$.

\begin{proposition}\label{prop:suppCM}
Let  $\bar \gamma \in \Gamma_{\mu, \nu}$ be an optimal coupling between $\mu$ and $\nu$ in the sense that
$$
\int \|x-y\|^2\, \bar \gamma(\ud x,\ud y)=\min_{\gamma \in \Gamma_{\mu, \nu}}\int \|x-y\|^2\, \gamma(\ud x,\ud y)=W_2^2(\mu, \nu)\,.
$$
Then $\supp(\bar \gamma)$ is cyclically monotone.
\end{proposition}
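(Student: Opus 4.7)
The plan is to argue by contradiction: assume the support is not cyclically monotone, and use the violating configuration to construct a measure $\tilde\gamma \in \Gamma_{\mu,\nu}$ with strictly smaller cost than $\bar\gamma$, contradicting optimality. The idea is to locally ``rewire'' a small amount of mass in $\bar\gamma$ along a cycle, mimicking the finite-support argument given in the text just before the proposition.

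Concretely, suppose there exist points $(x_1,y_1),\dots,(x_k,y_k)\in\supp(\bar\gamma)$ such that
$$\sum_{i=1}^k \|x_i-y_i\|^2 \;>\; \sum_{i=1}^k \|x_{i+1}-y_i\|^2\,,$$
with $x_{k+1}=x_1$. By continuity of $(x,y)\mapsto\|x-y\|^2$, I can pick small open neighborhoods $U_i\ni x_i$ and $V_i\ni y_i$, pairwise disjoint (after first consolidating any repeated pairs in the cycle), such that the strict inequality still holds ``uniformly'' for all choices $(x_i',y_i')\in U_i\times V_i$. By definition of the support, each $m_i\deq\bar\gamma(U_i\times V_i)>0$. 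Set $\gamma_i\deq \bar\gamma|_{U_i\times V_i}/m_i$, a probability coupling with marginals $\alpha_i$ on $U_i$ and $\beta_i$ on $V_i$.

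Next I define the perturbation. For $\eps\in(0,\min_i m_i]$, let
$$\tilde\gamma \;\deq\; \bar\gamma \;-\; \eps\sum_{i=1}^k \gamma_i \;+\; \eps\sum_{i=1}^k \alpha_{i+1}\otimes \beta_i\,,$$
with cyclic indexing. Since the $U_i\times V_i$ are disjoint and $\eps\gamma_i\le \bar\gamma|_{U_i\times V_i}$, the measure $\tilde\gamma$ is nonnegative. The first marginal loses $\eps\sum_i\alpha_i$ and gains $\eps\sum_i\alpha_{i+1}$, which cancel; similarly for the second marginal. So $\tilde\gamma\in\Gamma_{\mu,\nu}$. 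Computing the cost,
\begin{align*}
\int\|x-y\|^2\,\tilde\gamma(\ud x,\ud y) - \int\|x-y\|^2\,\bar\gamma(\ud x,\ud y)
 = \eps\sum_{i=1}^k\Bigl(&\int\|x-y\|^2\,\ud\alpha_{i+1}(x)\,\ud\beta_i(y)\\
&- \int\|x-y\|^2\,\ud\gamma_i(x,y)\Bigr)\,.
\end{align*}
Since $\alpha_{i+1}$ and $\beta_i$ are concentrated in $U_{i+1}$ and $V_i$ respectively (near $x_{i+1}$ and $y_i$), and $\gamma_i$ in $U_i\times V_i$ (near $(x_i,y_i)$), shrinking the neighborhoods makes the right-hand side arbitrarily close to $\eps\bigl(\sum_i\|x_{i+1}-y_i\|^2 - \sum_i\|x_i-y_i\|^2\bigr)<0$. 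Hence $\tilde\gamma$ has strictly smaller cost than $\bar\gamma$, contradicting optimality.

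The main obstacle is bookkeeping: one must verify that $\tilde\gamma\ge 0$, that the marginals are preserved exactly, and that the neighborhoods can be chosen simultaneously small enough for the local cost expansion to beat the positive gap. The disjointness of the $U_i\times V_i$ (achievable by shrinking, after consolidating repeated points in the cycle) is what makes the nonnegativity step clean. Everything else is local approximation by continuity of the quadratic cost.
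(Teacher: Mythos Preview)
Your proof is correct and follows essentially the same approach as the paper's: argue by contradiction, localize the violating cycle to small product neighborhoods $U_i\times V_i$ on which the strict inequality persists uniformly, and rewire a small amount of mass from the conditioned measures $\gamma_i$ to the product measures $\alpha_{i+1}\otimes\beta_i$ to obtain a cheaper coupling. The only cosmetic difference is how nonnegativity of the perturbed measure is secured: you enforce pairwise disjointness of the $U_i\times V_i$ (after consolidating repeated pairs in the cycle, which is a valid reduction), whereas the paper instead scales the perturbation by $c/k$ with $c<\min_i\bar\gamma(U_i\times V_i)$ and bounds $\gamma(A)\ge\bar\gamma(A)\bigl(1-\tfrac{c}{k}\sum_i 1/\bar\gamma(U_i\times V_i)\bigr)$ without any disjointness assumption---both devices work equally well.
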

\begin{proof}
Suppose that $S \deq \supp(\bar \gamma)$ is \emph{not} cyclically monotone. Then there exists $k \ge 2$ and $(x_i,y_i) \in S$, $i=1, \ldots, k$ such that
$$
\sum_{i=1}^k\|x_{i}-y_i\|^2 > \sum_{i=1}^k\|x_{i+1}-y_i\|^2\,,
$$
and by continuity of the Euclidean norm, there exist neighborhoods $U_i, V_i$ of $x_i, y_i$ respectively for $i=1, \ldots, k$ such that $\bar \gamma (U_i \times V_i)>0$ and
\begin{equation}\label{EQ:noCM}
\sum_{i=1}^k\|\tilde x_{i}-\tilde y_i\|^2 > \sum_{i=1}^k\|\tilde x_{i+1}'-\tilde y_i'\|^2 \,,
\end{equation}
for all $\tilde x_i, \tilde x_i' \in U_i, \tilde y_i, \tilde y_i' \in V_i$, $i=1, \ldots, k$. 

Now, let $\gamma_i, i=1 \ldots, k$ be a family of (conditional) probability distributions on $\R^\dd\times \R^\dd$ defined such that $\gamma_i(A)=\bar \gamma(A \mid U_i \times V_i)$ for any Borel set $A \subset \R^\dd\times \R^\dd$. Next, let $\gamma_i^{(1)}$ and $\gamma_i^{(2)}$ denote the first and second marginal of $\gamma_i$ respectively and define the mixture:
$$
\gamma  = \bar \gamma + \frac{c}{k}\sum_{i=1}^k(\gamma_{i+1}^{(1)} \otimes \gamma_i^{(2)} - \gamma_i)\,,
$$
where $c>0$ is to be chosen later and with the convention that $\gamma_{k+1}=\gamma_1$.

Note that for any Borel set $A \subset \R^\dd\times \R^\dd$, it holds
\begin{align*}
\gamma(A)&\ge \bar \gamma(A) - \frac{c}{k}\sum_{i=1}^k \gamma_i(A)\\
&=\bar \gamma(A) - \frac{c}{k}\sum_{i=1}^k \bar \gamma(A\mid U_i \times V_i)\\
&=\bar \gamma(A) - \frac{c}{k}\sum_{i=1}^k \frac{\bar \gamma(A\cap (U_i \times V_i))}{\bar \gamma( U_i \times V_i)}\\
&\ge \bar \gamma(A) - \frac{c\bar \gamma(A)}{k}\sum_{i=1}^k \frac{1}{\bar \gamma( U_i \times V_i)}\,.
\end{align*}
Thus $\gamma(A)\ge 0$ if $c < \min_{i\in[k]} \bar \gamma( U_i \times V_i)$.  Moreover, $\gamma(\R^\dd \times \R^\dd)=1$ so that $\gamma$ is indeed a probability distribution over $\R^\dd \times \R^\dd$. 

To check that $\gamma \in \Gamma_{\mu,\nu}$ observe that for any Borel set $B \subset \R^\dd$,
\begin{align*}
\gamma(B\times \R^\dd)&=\mu(B)+\frac{c}{k}\sum_{i=1}^k(\gamma_{i+1}^{(1)}(B) - \gamma_i(B \times \R^\dd))\\
&=\mu(B)+\frac{c}{k}\sum_{i=1}^k(\gamma_{i+1}^{(1)}(B) - \gamma_i^{(1)}(B))\\
&=\mu(B)+\frac{c}{k}\,(\gamma_{k+1}^{(1)}(B)-\gamma_1^{(1)}(B))=\mu(B)\,.
\end{align*}
Similarly
\begin{align*}
\gamma(\R^\dd\times B)&=\nu(B)+\frac{c}{k}\sum_{i=1}^k(\gamma_{i}^{(2)}(B) - \gamma_i(\R^\dd\times B))\\
&=\nu(B)+\frac{c}{k}\sum_{i=1}^k(\gamma_{i}^{(2)}(B) - \gamma_i^{(2)}(B) )=\nu(B)\,.
\end{align*}
Next observe that
\begin{align*}
\int \|x-y\|^2\, & \gamma(\ud x,\ud y)-\int \|x-y\|^2\, \bar \gamma(\ud x,\ud y)\\
&=\frac{c}{k}\sum_{i=1}^k\biggl(\int_{U_{i+1}\times V_i}\|x-y\|^2\,\gamma_{i+1}^{(1)}(\ud x)\, \gamma_{i}^{(2)}(\ud y)\\
&\qquad\qquad\qquad\qquad{} -\int_{U_i\times V_i}\|x-y\|^2\,\gamma_i(\ud x,\ud y)\biggr)\\
&<0\,,
\end{align*}
by~\eqref{EQ:noCM}. This contradicts optimality of $\bar \gamma$. 
\end{proof}

\subsection{Proof of Brenier's theorem}

We are now in a position to prove Brenier's theorem. 

Let $\bar \gamma$ be an optimal coupling. In light of Proposition~\ref{prop:suppCM}, $\supp(\bar \gamma)$ is cyclically monotone. By Rockafeller's Theorem~\ref{thm:rock}, this implies that there exists a convex function $\varphi :\R^\dd \to \R \cup \{\infty\}$ such that $\bar \gamma(Y \in \partial \varphi(X))=1$. But since $\varphi$ is convex, it is locally Lipschitz on the interior of its domain, and the boundary of the domain has measure zero by convexity. Hence, $\varphi$ is almost everywhere differentiable with respect to the Lebesgue measure over its domain by Rademacher's theorem. Since $\mu$ has a density, this implies that $\varphi$ is differentiable $\mu$ almost everywhere. Therefore, $\bar \gamma(Y = \nabla \varphi(X))=1$ or in other words, if $X \sim \mu$, then $(X, \nabla\varphi(X)) \sim \bar \gamma$.

\section{Kantorovich duality}\label{sec:kantorovich_duality}

Brenier's theorem shows that an optimal coupling for \eqref{W2} if $\mu$ has a density is a deterministic coupling given by the gradient of a convex function $\varphi$.
This result raises the question of whether it is possible to solve an optimization problem to find $\varphi$ directly, or whether it is possible to certify that a convex function $\varphi$ corresponds to an optimal coupling.
These questions can be answered by employing tools from convex duality.

In the fully discrete setting (see Section~\ref{sec:discreteOT}),~\eqref{W2} is a linear program or LP (linear objective \& linear constraints), which admits a useful theory of duality.
This intuition carries over to the general setting (and the link can be made precise through approximation arguments, see~\cite[Chapter 11]{Dud02}).
In fact, it was through optimal transport that Kantorovich actually introduced LP duality, which has furnished algorithmic advances continuously since its inception.

\subsection{The dual Kantorovich problem}\label{subsec:dualK}

The dual problem to~\eqref{W2} is a maximization problem. To find its expression, encode the constraint $\gamma \in \Gamma_{\mu, \nu}$ as
\begin{align*}
    &\sup_{f,g \in \Cb}\left\{ \int f(x)\, \mu(\ud x) + \int g(y) \,\nu(\ud y) - \int \big(f(x)+g(y)\big)\, \gamma(\ud x,\ud y)\right\} \\
    &\qquad =\begin{cases}
    0\,, & \text{if } \gamma \in \Gamma_{\mu, \nu}\,,\\
    \infty\,,  & \text{otherwise}\,,
    \end{cases}
\end{align*}
where the supremum is taken over the set $\Cb$ of continuous and bounded functions over $\R^\dd$. Thus,~\eqref{W2} is equivalent to
\begin{align*}
    &\inf_{\gamma \in \cM_+} \biggl\{\int \|x-y\|^2\, \gamma(\ud x,\ud y) \\
    &\,\,\, + \sup_{f,g \in \Cb} \int f(x)\, \mu(\ud x) + \int g(y)\, \nu(\ud y) - \int \big(f(x)+g(y)\big)\, \gamma(\ud x,\ud y)\biggr\}\,,
\end{align*}
where the infimum is taken over the set $\cM_+$ of all positive measures on $\R^\dd \times \R^\dd$ (unrestricted). Note that for $\gamma\notin \Gamma_{\mu,\nu}$ this new objective is infinite so the problem is strictly equivalent.

Next, we switch the $\inf$ and $\sup$ to get the following lower bound on the value of~\eqref{W2}:
\begin{equation}\label{EQ:kot2}
\begin{aligned}
&\sup_{f,g\in \Cb}  \biggl\{ \int f(x)\, \mu(\ud x) + \int g(y)\, \nu(\ud y) \\
&\qquad\qquad\qquad{} + \inf_{\gamma \in \cM_+}\left\{\int  \big(\|x-y\|^2- f(x)-g(y)\big) \,\gamma(\ud x,\ud y)\right\}\biggr\}
\end{aligned}
\end{equation}
Next observe that since $\gamma$ is a positive measure, it holds,
\begin{align*}
&\inf_{\gamma \in \cM_+}\left\{\int  \big(\|x-y\|^2- f(x)-g(y)\big) \,\gamma(\ud x,\ud y)\right\}\\
&\qquad =
\begin{cases}
0\,, & \text{if } f(x)+g(y)\le \|x-y\|^2\,,\ \forall\, x, y\in \R^\dd\,,\\
-\infty\,, & \text{otherwise}\,.
\end{cases}
\end{align*}
Indeed, if there exists a pair $(x,y)$ that violates the above constraint then we can take the sequence of measures $\gamma_n=n \delta_{(x,y)}$ and the integral would converge to $-\infty$. 

Hence we have shown that~\eqref{W2} is bounded below by
$$
\sup_{\substack{ f ,  g\in \Cb\\f(x) +g(y)\le \|x-y\|^2} } \biggl\{ \int f\, \ud\mu + \int g(y)\, \ud\nu\biggr\}\,.
$$

Though the choice $f, g \in \Cb$ above is motivated by reference to the weak topology on the set of probability measures, this lower bound on $W_2^2(\mu, \nu)$ holds for any pair $(f, g)$ of integrable functions satisfying $f(x) + g(y) \leq \|x - y\|^2$ almost everywhere.
\begin{lemma}\label{lem:dualK}
Let $\mu, \nu \in \cP_2(\R^\dd)$, then
\begin{align*}
    W_2^2(\mu,\nu)
    &=\inf_{\gamma \in \Gamma_{\mu, \nu}} \int \|x-y\|^2\, \gamma(\ud x,\ud y) \\
    &\ge \sup_{\substack{ f \in L^1(\mu),\, g\in L^1(\nu)\\f(x) +g(y)\le \|x-y\|^2} }\left\{\int f\, \ud \mu + \int g\, \ud \nu\right\}\,.
\end{align*}
\end{lemma}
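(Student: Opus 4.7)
The plan is to recognize Lemma~\ref{lem:dualK} as a clean statement of \emph{weak duality}, whose proof reduces to a one-line integration against any coupling. The heuristic sup-inf exchange leading to~\eqref{EQ:kot2} in the text is useful for guessing the form of the dual problem, but the weak duality bound itself requires neither compactness nor any interchange of infima and suprema, nor even the restriction to continuous bounded test functions.

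Concretely, I would fix an arbitrary coupling $\gamma \in \Gamma_{\mu, \nu}$ and an arbitrary admissible dual pair $(f,g)$ with $f \in L^1(\mu)$, $g \in L^1(\nu)$, and $f(x) + g(y) \le \|x-y\|^2$ everywhere. The assumption $\mu, \nu \in \cP_2(\R^\dd)$ combined with $\|x-y\|^2 \le 2\,(\|x\|^2 + \|y\|^2)$ ensures $\|x-y\|^2$ is $\gamma$-integrable, so integrating the pointwise constraint against $\gamma$ yields
\begin{align*}
\int \|x-y\|^2\, \gamma(\ud x, \ud y)
&\ge \int f(x)\, \gamma(\ud x, \ud y) + \int g(y)\, \gamma(\ud x, \ud y) \\
&= \int f\, \ud\mu + \int g\, \ud\nu\,,
\end{align*}
where the last step uses the marginal identities $\gamma(\,\cdot\, \times \R^\dd) = \mu$ and $\gamma(\R^\dd \times \,\cdot\,) = \nu$. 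The $L^1$ assumptions on $f$ and $g$ guarantee that the two pieces of the integral against $\gamma$ are finite and may be split without any $\infty - \infty$ ambiguity.

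Finally, I would take the infimum over $\gamma \in \Gamma_{\mu, \nu}$ on the left-hand side (which equals $W_2^2(\mu, \nu)$ by Definition~\ref{def:wass}) and then the supremum over all admissible pairs $(f, g)$ on the right-hand side, producing the stated inequality. There is no genuine obstacle: the only care needed is the verification of integrability, which follows immediately from the second-moment hypothesis and the $L^1$ assumptions built into the dual feasible set. The argument extends verbatim to any nonnegative measurable cost $c(x,y)$ in place of $\|x-y\|^2$, which is a sign that this really is the robust, general form of weak duality for~\eqref{KOT}.
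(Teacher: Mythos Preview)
Your proof is correct and essentially identical to the paper's: both fix an arbitrary coupling $\gamma$ and an admissible pair $(f,g)$, integrate the pointwise constraint $f(x)+g(y)\le\|x-y\|^2$ against $\gamma$, use the marginal conditions to split $\int(f\oplus g)\,\ud\gamma=\int f\,\ud\mu+\int g\,\ud\nu$, and then take the infimum over $\gamma$ and the supremum over $(f,g)$. Your added remarks on integrability are a welcome clarification but do not change the argument.
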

\begin{proof}
Let $f \in L^1(\mu), g\in L^1(\nu)$ be such that $f(x) +g(y)\le \|x-y\|^2$ for $\mu$-a.e.\ $x$, $\nu$-a.e.\ $y$, and fix $\gamma \in \Gamma_{\mu, \nu}$. Then
\begin{align*}
    \int f(x)\, \mu(\ud x) + \int g(y)\, \nu(\ud y)
    &=\int\big(f(x)+g(y)\big)\,\gamma(\ud x,\ud y) \\
    &\le \int \|x-y\|^2\, \gamma(\ud x,\ud y)\,.
\end{align*}
The proof follows by taking the supremum on the left-hand side and the infimum on the right-hand side.
\end{proof}

The dual Kantorovich problem is given by
\begin{equation}
\label{DK}\tag{\textsf{D-}$\mathsf{W_2^2}$}
\sup_{\substack{ f \in L^1(\mu),\, g\in L^1(\nu)\\f(x) +g(y)\le \|x-y\|^2} }\left\{\int f\, \ud \mu + \int g\, \ud \nu\right\}\,.
\end{equation}
It is the dual problem to the \emph{primal} problem~\eqref{W2}.

In particular, Lemma~\ref{lem:dualK} describes a phenomenon known as \emph{weak} duality, in which the dual is only shown to be a lower bound on the primal problem. This terminology is to be contrasted with \emph{strong} duality, where the inequality becomes an equality so that the primal and dual objectives take the same optimal value. While strong duality is, strictly speaking, only a statement about objective values, it is often the case that the solutions to the primal and dual problems are related to each other; see~\cite[Chapter~5]{BoyVan04} for a treatment of duality in the context of convex optimization.

We show in Subsection~\ref{subsec:fundamental_ot} that strong duality in fact holds and it leads to important consequences for our problem of interest.

\subsection{The semidual}\label{subsec:semidual}\index{semidual}

 Before moving to strong duality, we make a quick detour to define the \emph{semidual} problem, a partially solved version of the dual problem~\eqref{DK}. It plays an important role in the estimation of optimal transport maps (Chapter~\ref{chap:maps}).

Consider~\eqref{DK} and suppose that we hold the first dual potential $f$ fixed; given this choice of $f$, what is the optimal choice of $g$?
Since the dual problem is a supremum, we want to make $g$ as large as possible, but we must respect the constraint $f(x) + g(y) \le \|x-y\|^2$.
The optimal function $g$ is therefore given by
\begin{align}\label{eq:c_conjugate}
    g(y) = \inf_{x\in\R^d}\{\|x-y\|^2 - f(x)\}\,.
\end{align}
The function defined in~\eqref{eq:c_conjugate} is called the \emph{$c$-conjugate} or \emph{$c$-transform} of $f$, denoted $f^c$, associated with the cost $c(x,y) = \|x-y\|^2$.
This reasoning shows that we can reformulate the dual as
\begin{align}
    \eqref{DK}
    &= \sup_{f\in L^1(\mu)}\biggl\{\int f\,\ud \mu + \int f^c\,\ud \nu\biggr\}\,.\label{eq:general_semidual}
\end{align}
This is a version of the \emph{semidual} problem, and it is applicable to optimal transport for any cost function $c$ provided that we replace $\|x-y\|^2$ with $c(x,y)$ in~\eqref{eq:c_conjugate}.

However, for the quadratic cost, we can go one step further and explicitly link the semidual with convex analysis. In this case, the semidual is given by
\begin{equation}\label{SD}\tag{\textsf{SD}}
       \inf_{{\phi \in L^1(\mu)}} {\biggl\{\int \phi\,\ud \mu + \int \phi^* \,\ud \nu\biggr\}}
    \end{equation}
where $\phi^*$ denotes the \emph{convex conjugate} of  $\phi$; see Appendix~\ref{app:convex}. 

\begin{proposition}\label{prop:semidual} 
   Let $\mu,\nu\in \cP_2(\R^\dd)$ be probability measures.
   Then, the dual problem~\eqref{DK} is equivalent to the semidual problem~\eqref{SD} in the following sense:
    \begin{enumerate}
        \item {\bf Objective values:} Write ${\sf S}$ and ${\sf D}$ for the optimal objective values of~\eqref{SD} and \eqref{DK} respectively. Then 
        $$
        {\sf D}= \int\|\cdot\|^2 \, \ud \mu + \int \|\cdot\|^2 \, \ud \nu - 2\cdot{\sf S}\,.
        $$
        \item {\bf Solutions:} A pair of functions $(f,g)$ is optimal for~\eqref{DK} if and only if $f = \|\cdot\|^2 - 2\varphi$ and $g = \|\cdot\|^2 - 2\varphi^*$ where $\varphi$ is optimal for~\eqref{SD}.
    \end{enumerate}
\end{proposition}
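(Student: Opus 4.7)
The plan is to prove both parts of the proposition simultaneously by recognizing that the semidual is nothing but a change of variables in the dual problem that exploits the identity
\[
\|x-y\|^2 = \|x\|^2 + \|y\|^2 - 2\,\langle x,y\rangle\,.
\]
First, I would perform the substitution $f(x) = \|x\|^2 - 2\varphi(x)$ and $g(y) = \|y\|^2 - 2\psi(y)$, noting that because $\mu,\nu\in\cP_2(\R^\dd)$ the map $\varphi\leftrightarrow f$ is a bijection from $L^1(\mu)$ to itself, and similarly for $\psi\leftrightarrow g$. Under this substitution the constraint $f(x)+g(y)\le \|x-y\|^2$ becomes
\[
\varphi(x) + \psi(y) \ge \langle x, y\rangle \quad \text{for all } x,y\in\R^\dd,
\]
and the dual objective becomes
\[
\int\!\|\cdot\|^2\,\ud\mu + \int\!\|\cdot\|^2\,\ud\nu \;-\; 2\biggl(\int\!\varphi\,\ud\mu + \int\!\psi\,\ud\nu\biggr).
\]
So maximizing~\eqref{DK} is equivalent to minimizing $\int\varphi\,\ud\mu + \int\psi\,\ud\nu$ over pairs $(\varphi,\psi)$ satisfying $\varphi(x)+\psi(y) \ge \langle x,y\rangle$.

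Next I would perform partial optimization over $\psi$ with $\varphi$ fixed (analogous to the $c$-conjugate step in~\eqref{eq:general_semidual}). The pointwise constraint is tightest for each $y$ when $\psi(y) = \sup_{x}\{\langle x,y\rangle - \varphi(x)\} = \varphi^*(y)$, which is exactly the convex conjugate. Hence among all admissible $\psi$ we have $\psi\ge \varphi^*$ pointwise, and $\varphi^*$ itself is admissible (it is convex and lower semicontinuous, so measurable). Substituting $\psi = \varphi^*$ yields the objective $\int \varphi\,\ud\mu + \int \varphi^*\,\ud\nu$, i.e.\ exactly~\eqref{SD}. The equality of values in part~1 is then immediate:
\[
{\sf D} = \int\!\|\cdot\|^2\,\ud\mu + \int\!\|\cdot\|^2\,\ud\nu - 2\,{\sf S}.
\]

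For part~2, I would argue the two directions of the bijection of optimizers. If $\varphi$ is optimal for~\eqref{SD}, then setting $f = \|\cdot\|^2 - 2\varphi$ and $g = \|\cdot\|^2 - 2\varphi^*$ is feasible for~\eqref{DK} (by the inequality $\varphi(x)+\varphi^*(y)\ge \langle x,y\rangle$, the Fenchel--Young inequality) and achieves value $\sf D$, hence is optimal. Conversely, if $(f,g)$ is optimal for~\eqref{DK}, define $\varphi \coloneqq (\|\cdot\|^2 - f)/2$ and $\psi \coloneqq (\|\cdot\|^2 - g)/2$; then $\psi\ge\varphi^*$ and the objective can only increase on replacing $\psi$ by $\varphi^*$, but by optimality no strict increase is possible, so $\int(\psi-\varphi^*)\,\ud\nu = 0$ and we may take $g = \|\cdot\|^2-2\varphi^*$; the resulting $\varphi$ achieves value $\sf S$ and is optimal for~\eqref{SD}.

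The only delicate point (and the main obstacle) is the integrability bookkeeping: one must check that $\varphi\in L^1(\mu)$ iff $f\in L^1(\mu)$, which follows since $\|\cdot\|^2 \in L^1(\mu)\cap L^1(\nu)$, and that $\varphi^*$ is $\nu$-measurable and $\int \varphi^*\,\ud\nu$ is well-defined (not $-\infty$); this last point follows from the lower bound $\varphi^*(y)\ge \langle x_0,y\rangle-\varphi(x_0)$ for any fixed $x_0$ in the domain of $\varphi$, together with $\nu\in\cP_2$. Everything else is a routine substitution and minimization argument.
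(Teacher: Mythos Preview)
Your proposal is correct and follows essentially the same approach as the paper: the substitution $f=\|\cdot\|^2-2\varphi$, $g=\|\cdot\|^2-2\psi$, the translation of the constraint to $\varphi(x)+\psi(y)\ge\langle x,y\rangle$, and the partial optimization yielding $\psi=\varphi^*$ are exactly the steps the paper uses. You actually provide more detail than the paper on part~2 (the bijection of optimizers) and on the integrability bookkeeping, which the paper leaves largely implicit.
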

\begin{proof}
    Let us write $f = \|\cdot\|^2 - 2\phi$ and $g = \|\cdot\|^2 - 2\psi$; this is simply a reparametrization of the dual potentials.
    Then,
    \begin{align*}
        \int f\,\ud \mu + \int g\,\ud \nu
        &= \int \|\cdot\|^2 \,\ud \mu + \int \|\cdot\|^2\,\ud \nu - 2 \,\Bigl( \int \phi\,\ud \mu + \int \psi \,\ud \nu\Bigr)\,.
    \end{align*}
    The constraint $f(x) + g(y) \le \|x-y\|^2$ translates into
    \begin{align*}
        \|x\|^2 - 2\phi(x) + \|y\|^2 - 2\psi(y) \le \|x-y\|^2
        \Leftrightarrow \phi(x) + \psi(y) \ge \langle x,y\rangle\,.
    \end{align*}
    Hence,~\eqref{DK} is equivalent to
    \begin{align*}
        \inf_{\substack{\phi \in L^1(\mu),\, \psi\in L^1(\nu) \\ \phi(x) + \psi(y) \ge \langle x,y\rangle}} \biggl\{\int \phi\,\ud \mu + \int \psi\,\ud \nu\biggr\}\,.
    \end{align*}

    Next, let us apply the same trick as described above: for fixed $\phi$, the optimal choice of $\psi$ obeying the constraint is given by
    \begin{align*}
        \psi(y) = \sup_{x\in\R^d}\{\langle x,y\rangle - \phi(x)\}\,,
    \end{align*}
    which is precisely the definition of the convex conjugate $\phi^*$.
    Substituting this in yields the equivalence.
\end{proof}

In the preceding proof, we showed that for fixed $\phi_0$, the optimal choice of $\psi$ is $\psi = \phi_0^*$.
Due to the symmetry of the problem, for fixed $\psi = \phi_0^*$, the optimal choice of $\phi$ is then $\psi^* = \phi_0^{**}$.
One could imagine iterating this process, obtaining better and better dual potentials, but actually the process halts here.
Since $\phi_0^*$ is a closed convex function, it is self-dual, so that $\phi_0^{***} = \phi_0^*$; see Appendix~\ref{app:convex}.
In the end, this argument shows that the optimal potential $\varphi$ in~\eqref{SD} can be taken to be a closed convex function.

Thus far, we have seen two convex functions $\varphi$ arise from the optimal transport problem.
From the primal standpoint, Brenier's Theorem~\ref{thm:brenier} shows that the optimal transport plan is supported on the subdifferential of a convex function.
From the dual standpoint, a minimizer of~\eqref{SD} can be taken to be convex.
In the next section, we show that these two convex functions are one and the same.

\subsection{The fundamental theorem of optimal transport}\label{subsec:fundamental_ot}

Recall from Brenier's theorem that if a measure $\mu$ has a density then any optimal coupling between $\mu$ and $\nu$ is supported on the graph of the gradient of a convex function. It turns out that the converse holds: any coupling $\gamma \in \Gamma_{\mu, \nu}$ supported on the graph of the gradient of a convex function has to be optimal. This equivalence follows from the fundamental theorem of optimal transport stated below. In fact, this theorem contains another fundamental result about strong duality between the primal problem~\eqref{W2} and its dual~\eqref{DK} which is the key to establishing this equivalence.

\begin{theorem}[Fundamental theorem of optimal transport]\index{fundamental theorem of optimal transport}\label{thm:fundOT}
Let $\mu, \nu \in \cP_2(\R^\dd)$ be two probability measures such that $\mu$ has a density and let $X \sim \mu$. Then the following are equivalent:
\begin{enumerate}[label=(\roman*)]
\item $\bar \gamma \in \Gamma_{\mu,\nu}$ is an optimal coupling in the sense that:
$$
\int \|x-y\|^2 \,\bar \gamma(\ud x,\ud y)=W_2^2(\mu, \nu)\,.
$$
\item There exists a proper convex function $\varphi$ such that $(X, \nabla \varphi(X))\sim \bar \gamma \in \Gamma_{\mu, \nu}$.
\item Strong duality holds between~\eqref{W2} and~\eqref{DK}:
$$
\int \|x-y\|^2 \,\bar \gamma(\ud x,\ud y) = \sup_{\substack{ f \in L^1(\mu), \,g\in L^1(\nu)\\f(x) +g(y)\le \|x-y\|^2} }\left\{\int f \,\ud \mu + \int g\, \ud \nu\right\}\,.
$$
Moreover, the above supremum is achieved for 
$$
\bar f(x) \deq \|x\|^2-2\varphi(x)\qquad \text{and} \qquad \bar g(y) \deq \|y\|^2-2\varphi^*(y)\,.
$$
\end{enumerate}
\end{theorem}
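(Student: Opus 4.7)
The theorem asserts the equivalence of three conditions, so the plan is to close a cycle of implications (i)$\Rightarrow$(ii)$\Rightarrow$(iii)$\Rightarrow$(i). The implication (i)$\Rightarrow$(ii) is exactly the content of Brenier's Theorem~\ref{thm:brenier} established just above, so nothing new is needed there. The implication (iii)$\Rightarrow$(i) is a one-line consequence of weak duality (Lemma~\ref{lem:dualK}): for any $\gamma\in\Gamma_{\mu,\nu}$,
\[
\int\|x-y\|^2\,\gamma(\ud x,\ud y) \;\ge\; \sup_{f,g} \int f\,\ud\mu + \int g\,\ud\nu\,,
\]
and (iii) says this supremum already equals $\int\|x-y\|^2\,\bar\gamma(\ud x,\ud y)$, making $\bar\gamma$ a minimizer of the primal. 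The heart of the proof is therefore (ii)$\Rightarrow$(iii).

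For (ii)$\Rightarrow$(iii), the candidate dual pair is precisely the one displayed in the statement, motivated by the semidual reparametrization of Section~\ref{subsec:semidual}: set $\bar f(x) = \|x\|^2 - 2\varphi(x)$ and $\bar g(y) = \|y\|^2 - 2\varphi^*(y)$. The key tool is the Fenchel--Young inequality $\varphi(x) + \varphi^*(y) \ge \langle x,y\rangle$, with equality iff $y\in\partial\varphi(x)$ (Appendix~\ref{app:convex}). Expanding $\|x-y\|^2 = \|x\|^2 + \|y\|^2 - 2\langle x,y\rangle$, this translates directly into $\bar f(x)+\bar g(y) \le \|x-y\|^2$, with equality precisely on the graph of $\partial\varphi$. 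Since by (ii) the coupling $\bar\gamma$ is concentrated on $\{(x,\nabla\varphi(x))\}$, the Fenchel--Young inequality is tight $\bar\gamma$-almost surely, and integrating against $\bar\gamma$ gives
\[
\int \|x-y\|^2\,\bar\gamma(\ud x,\ud y)
= \int \bigl(\bar f(x)+\bar g(y)\bigr)\,\bar\gamma(\ud x,\ud y)
= \int \bar f\,\ud\mu + \int \bar g\,\ud\nu\,,
\]
using that $\bar\gamma$ has marginals $\mu$ and $\nu$. Combined with the weak duality inequality of Lemma~\ref{lem:dualK}, this sandwich forces the primal value, the dual value, and the value at $(\bar f,\bar g)$ all to coincide, which is exactly the content of (iii) together with the claimed optimality of the pair $(\bar f,\bar g)$.

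The main technical obstacle I anticipate is verifying that $\bar f\in L^1(\mu)$ and $\bar g\in L^1(\nu)$, equivalently $\varphi\in L^1(\mu)$ and $\varphi^*\in L^1(\nu)$, so that the pair is admissible in~\eqref{DK}. The quadratic parts are integrable because $\mu,\nu\in\cP_2(\R^\dd)$. For $\varphi$, fixing a point $x_0$ where $\varphi$ is subdifferentiable, the subgradient inequalities $\langle \tilde\nabla\varphi(x_0),x-x_0\rangle \le \varphi(x)-\varphi(x_0) \le \langle \nabla\varphi(x),x-x_0\rangle$ together with Cauchy--Schwarz bound $\int|\varphi|\,\ud\mu$ by the second moments of $\mu$ and of $\nabla\varphi_\#\mu = \nu$. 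For $\varphi^*$, the tightness of Fenchel--Young on $\supp\bar\gamma$ gives the identity $\varphi^*(\nabla\varphi(x)) = \langle x,\nabla\varphi(x)\rangle - \varphi(x)$ $\mu$-a.s., so that $\int\varphi^*\,\ud\nu = \int \langle x,\nabla\varphi(x)\rangle\,\ud\mu - \int\varphi\,\ud\mu$, again finite by Cauchy--Schwarz. Once integrability is secured, the sandwich argument completes the proof.
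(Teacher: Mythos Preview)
Your proof is correct and follows essentially the same cycle (i)$\Rightarrow$(ii)$\Rightarrow$(iii)$\Rightarrow$(i) as the paper, with the same Fenchel--Young sandwich for (ii)$\Rightarrow$(iii) and weak duality for (iii)$\Rightarrow$(i). The only minor difference is in the integrability check: the paper argues that $\varphi=\varphi^{**}$ and $\varphi^*$ are each lower bounded by affine functions (so $\bar f_+\in L^1(\mu)$, $\bar g_+\in L^1(\nu)$) and then uses $\int \bar f\,\ud\mu+\int \bar g\,\ud\nu\ge 0$ to conclude, whereas you bound $\varphi$ directly via two-sided subgradient inequalities and handle $\varphi^*$ through the Fenchel--Young equality on $\supp\bar\gamma$; both routes are valid.
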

\begin{proof}
We have already proved that $(i)\Rightarrow(ii)$ in Subsection~\ref{sub:brenier} so it remains to prove that $(ii) \Rightarrow (iii)$ and $(iii) \Rightarrow (i)$.

We first prove that $(ii) \Rightarrow (iii)$. To that end, observe that for $\mu$ almost every $x$
$$
\|x-\nabla \varphi(x)\|^2=\|x\|^2+\|\nabla \varphi(x)\|^2-2\,\langle x, \nabla \varphi(x)\rangle\,.
$$
Moreover, the convex conjugate $\varphi^*$ of $\varphi$ satisfies for $\mu$ almost every $x$,
$$
\varphi(x)+\varphi^*(\nabla\varphi(x))=\langle \nabla \varphi(x), x \rangle\,.
$$
This is the optimality condition for the Fenchel{--}Young inequality (Theorem~\ref{thm:fenchel_young}).
The above two displays yield
$$
\|x-\nabla \varphi(x)\|^2=\underbrace{\|x\|^2-2\varphi(x)}_{\bar f(x)} +\underbrace{\|\nabla \varphi(x)\|^2-2\varphi^*(\nabla \varphi(x))}_{\bar g(\nabla \varphi(x))}\,.
$$
Integrating with respect to $\mu$ yields
\begin{align}
\int \|x-y\|^2\,\bar \gamma(\ud x,\ud y) &=\int\|x-\nabla \varphi(x)\|^2\, \mu(\ud x)\nonumber\\
&= \int \bar f(x)\, \mu(\ud x) + \int \bar g(\nabla \varphi(x))\, \mu(\ud x)\nonumber \\
&= \int \bar f(x)\, \mu(\ud x) + \int \bar g(y)\, \nu(\ud y)\,.\label{EQ:prfundOT1}
\end{align}
We now check that the pair $(\bar f,\bar g)$ satisfies the constraints of~\eqref{DK}. It follows from the Fenchel--Young inequality (Theorem~\ref{thm:fenchel_young}) that for any $x,y \in \R^\dd$, 
\begin{align}
\bar f(x)+\bar g(y)&=\|x\|^2+\|y\|^2-2\,\big(\varphi(x)+\varphi^*(y)\big) \nonumber\\
&\le \|x\|^2+\|y\|^2-2\,\langle x,y\rangle=\|x-y\|^2\,.\label{EQ:fenchelyoung}
\end{align}
To check integrability, note that from the definition of convex conjugation, $\varphi=\varphi^{**}$ and $\varphi^*$ are both lower bounded by affine functions. Therefore, since $\mu,\nu \in \cP_2(\R^\dd)$ we have that $f_+\in L^1(\mu)$ and $g_+ \in L^1(\nu)$. Moreover,~\eqref{EQ:prfundOT1} yields $\int f\,\ud \mu+ \int g\, \ud \nu \ge 0$ 
so that $\int f\,\ud \mu>-\infty$ and $\int g\, \ud \nu >-\infty$. It yields 
$$
\int |f|\,\ud \mu=2\int f_+\,\ud \mu-\int f\,\ud \mu<\infty
$$
so that $f \in L^1(\mu)$ and similarly $g \in L^1(\nu)$. This completes the proof of $(ii) \Rightarrow (iii)$.

We now turn to the proof of  $(iii) \Rightarrow (i)$. We have by~\eqref{EQ:prfundOT1} that for any $\gamma \in \Gamma_{\mu, \nu}$
\begin{align*}
\int \|x-y\|^2\, \bar \gamma(\ud x,\ud y) &=\int \bar f\, \ud \mu + \int \bar g\, \ud \nu\\
&=\int \big(\bar f(x)+\bar g(y))\,\gamma(\ud x,\ud y) \\
&\le \int \|x-y\|^2\, \gamma(\ud x,\ud y)\,,
\end{align*}
where in the last inequality, we used~\eqref{EQ:fenchelyoung}. Therefore, $\bar \gamma$ is an optimal coupling and $(i)$ follows.
\end{proof}

Some implications of this theorem are still true even when $\mu$ does not have a density. In particular, it can be shown using these tools that strong duality still holds, and moreover that the converse of Proposition~\ref{prop:suppCM} holds: for any two measures $\mu, \nu \in \cP_2(\R^\dd)$, if $\gamma \in \Gamma_{\mu, \nu}$ is supported on a cyclically monotone set, then it must be an optimal coupling (see~\cite{AmbGig13} for example).

The optimal $f$ and $g$ arising in Theorem~\ref{thm:fundOT} play a central role in the sequel.

\begin{definition}[Kantorovich potentials]\label{def:potentials}\index{Kantorovich potentials}
The functions
$$
\bar f(x)=\|x\|^2-2\varphi(x)\qquad \text{and} \qquad \bar g(y)=\|y\|^2-2\varphi^*(y)
$$
that realize the optimum of the dual Kantorovich problem~\eqref{DK} are called \emph{Kantorovich potentials} for the pair $(\mu, \nu)$.
\end{definition}

Even though \emph{a priori} solutions to~\eqref{DK} are only defined almost everywhere, $\bar f$ and $\bar g$ are \emph{bona fide} functions defined everywhere on $\R^d$.
Note that by symmetry of~\eqref{DK} and the form of the Kantorovich potentials, it is easy to check that if $\nu$ admits a density, then $\nabla \varphi^*$ is an optimal transport map from $\nu$ to $\mu$.

\subsection{An improved Brenier theorem}

With the fundamental theorem, we can state an improved version of Brenier's theorem, which is often useful. 

\begin{theorem}[Improved Brenier]\label{thm:improvedBrenier}\index{Brenier's theorem!improved}
Let $\mu, \nu \in \cP_2(\R^\dd)$ be two probability measures such that $\mu$ has a density and let $X \sim \mu$. Then there exists a convex function $\varphi:\R^\dd \to \R$ such that $(X, \nabla \varphi(X))\sim \bar \gamma \in \Gamma_{\mu, \nu}$ and $\bar \gamma$ is an optimal coupling for~\eqref{W2}:
$$
\int \|x-y\|^2 \,\bar \gamma(\ud x,\ud y)=\min_{\gamma \in \Gamma_{\mu, \nu}}\int \|x-y\|^2\, \gamma(\ud x,\ud y)=W_2^2(\mu, \nu)\,.
$$
Moreover, $\nabla \varphi$ is unique in the sense that if there exists a convex function $\psi$ such that $\nabla\psi(X) \sim \nu$, then $\nabla \psi(X) = \nabla \varphi(X)$, almost surely.

In particular, any valid coupling $\gamma \in \Gamma_{\mu, \nu}$ of the form $(X, \nabla \psi(X))\sim \gamma$ for some convex function $\psi$, must be the \emph{unique} optimal coupling between $\mu$ and $\nu$. 
\end{theorem}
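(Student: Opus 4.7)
The existence of $\varphi$ and the optimality of $\bar\gamma$ are essentially free: a minimizer $\bar\gamma$ of~\eqref{W2} exists by the argument just after Definition~\ref{def:wass}, and the implication $(i) \Rightarrow (ii)$ of the Fundamental Theorem (Theorem~\ref{thm:fundOT}) immediately yields a convex $\varphi$ with $(X,\nabla\varphi(X)) \sim \bar\gamma$. So the only real content is uniqueness, and the ``in particular'' statement is then a one-line corollary via $(ii) \Rightarrow (i)$ of the Fundamental Theorem.

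\textbf{Plan for uniqueness.} Suppose $\psi$ is convex with $\nabla\psi(X) \sim \nu$, and let $\gamma_\psi \deq \law(X,\nabla\psi(X)) \in \Gamma_{\mu,\nu}$. By the Fundamental Theorem applied in the direction $(ii) \Rightarrow (i)$, both $\bar\gamma$ and $\gamma_\psi$ achieve the value $W_2^2(\mu,\nu)$. The key device is then a convex combination: I form
\begin{equation*}
  \gamma_{1/2} \deq \tfrac12\,\bar\gamma + \tfrac12\,\gamma_\psi\,,
\end{equation*}
which lies in $\Gamma_{\mu,\nu}$ by Proposition~\ref{prop:couplings} and, by linearity of the cost, again attains the value $W_2^2(\mu,\nu)$. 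Applying $(i) \Rightarrow (ii)$ of the Fundamental Theorem to $\gamma_{1/2}$ furnishes a convex function $\chi$ such that $\gamma_{1/2}$ is concentrated on the graph of $\nabla\chi$, i.e., $\gamma_{1/2}(\{(x,y) : y = \nabla\chi(x)\}) = 1$.

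\textbf{Extracting the conclusion.} Since $\gamma_{1/2} = \tfrac12 \bar\gamma + \tfrac12 \gamma_\psi$, the identity $\gamma_{1/2}(A) = 1$ for $A \deq \{(x,y) : y = \nabla\chi(x)\}$ forces both $\bar\gamma(A) = 1$ and $\gamma_\psi(A) = 1$. Because $(X,\nabla\varphi(X)) \sim \bar\gamma$ and $(X,\nabla\psi(X)) \sim \gamma_\psi$, this reads $\nabla\varphi(X) = \nabla\chi(X)$ and $\nabla\psi(X) = \nabla\chi(X)$ almost surely, hence $\nabla\varphi(X) = \nabla\psi(X)$ a.s. For the final statement, if $\gamma \in \Gamma_{\mu,\nu}$ has the form $(X,\nabla\psi(X)) \sim \gamma$ with $\psi$ convex, then $\nabla\psi(X) \sim \nu$ is automatic, so the uniqueness statement applies and $\gamma = \bar\gamma$.

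\textbf{Expected difficulty.} Nothing here is very deep once the Fundamental Theorem is in hand---the whole argument is a convex-combination trick combined with the equivalence $(i) \Leftrightarrow (ii)$. The only subtle point to verify is that $\nabla\chi$, $\nabla\varphi$, and $\nabla\psi$ are all well-defined $\mu$-almost everywhere; this is guaranteed because $\mu$ has a density and convex functions are differentiable Lebesgue-almost everywhere by Rademacher's theorem (the same observation used to prove Brenier's theorem in Subsection~\ref{sub:brenier}), so the equalities between gradients make sense $\mu$-a.s.
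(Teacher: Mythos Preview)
Your proof is correct and follows essentially the same approach as the paper: both form the convex combination $\tfrac12\bar\gamma + \tfrac12\gamma_\psi$, observe it is optimal, invoke Brenier (equivalently, $(i)\Rightarrow(ii)$ of the Fundamental Theorem) to see it is supported on the graph of a single map, and conclude the two original maps agree. The only cosmetic difference is that the paper phrases the extraction step via conditional distributions (a mixture of two Diracs equals a single Dirac only if the atoms coincide), whereas your set-theoretic version ($\gamma_{1/2}(A)=1$ forces each summand to have full measure on $A$) is arguably more direct.
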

\begin{proof}
We have already proved the existence of $\varphi$ in the previous subsection and we need to prove uniqueness of $\nabla \varphi$. 

It turns out that as soon as every optimal coupling is induced by a transport map, this transport map (and hence the optimal coupling) must be unique.

To see this, let $\gamma_1$ and $\gamma_2$ be two optimal couplings induced by the transport maps $T_1$ and $T_2$ respectively:
$$
\gamma_1(Y = T_1(X))=1\,, \qquad \text{and} \qquad \gamma_2(Y = T_2(X))=1\,.
$$
Then consider the coupling $\bar \gamma=(\gamma_1 + \gamma_2)/2 \in \Gamma_{\mu,\nu}$. Then $\bar \gamma$ is also optimal since
\begin{align*}
    &\int \|x-y\|^2\, \bar \gamma(\ud x,\ud y)\\
    &\quad =\frac{1}{2}\int \|x-y\|^2\, \gamma_1(\ud x,\ud y)+\frac12\int \|x-y\|^2\, \gamma_2(\ud x,\ud y)
    =W_2^2(\mu,\nu)\,.
\end{align*}
In particular, it follows from Brenier's theorem that $\bar \gamma$ is also induced by a transport map $T$ (which happens to be the gradient of a convex function but we do not need this fact here). Therefore, if $(X,Y) \sim \bar \gamma$, it must be the case that the conditional distribution of $Y$ given $X$ is the Dirac $\delta_{T(X)}$. But by construction this conditional distribution is the mixture of two Diracs $(\delta_{T_1(X)}+\delta_{T_2(X)})/2$ and the two may only be the same when $T_1(X)=T_2(X)=T(X)$, almost surely. 

Therefore, if there exists a convex function $\psi$ such that $\nabla \psi(X)\sim \nu$, then by   Theorem~\ref{thm:fundOT}, it must be that $\gamma$ such that $(X,\nabla \psi(X))\sim \gamma \in \Gamma_{\mu, \nu}$ is an optimal coupling and therefore that $\nabla \psi(X)=\nabla \varphi(X)$, almost everywhere in light of the above discussion.

The last statement of the theorem follows by observing that the equivalence $(ii) \Leftrightarrow (i)$ in Theorem~\ref{thm:fundOT} implies that optimal couplings are supported on the graph of the gradient of a convex function, which has to be unique from the above argument.
\end{proof}

The improved Brenier theorem is very useful since it characterizes optimality of a transport map: if a transport map is the gradient of a convex function, then it is optimal and is unique! We call this map the \emph{Brenier map}. We can use Theorem~\ref{thm:improvedBrenier} to characterize optimal transport maps in two fundamental instantiations of the optimal transport problem: the one-dimensional case and the Gaussian case.

\begin{example}[One-dimensional optimal transport]\label{ex:1DOT}
    Recall that the cumulative distribution function (CDF) $F$ of a random variable $X$ is given by the map $t \mapsto \p(X\le t)$.
    
    \begin{proposition}\label{prop:w21d}
Let $\mu, \nu \in \cP_2(\R)$ be two univariate distributions with CDFs, $F_\mu$ and $F_\nu$ respectively and such that $\mu$ admits a density. Then
$$
W_2^2(\mu, \nu)=\int_0^1|F^{\dagger}_\mu(u)-F^{\dagger}_\nu(u)|^2\,\ud u
$$
and the optimal coupling between $\mu$ and $\nu$ is induced by the Brenier map $F_\nu^{\dagger}\circ F_\mu$.
\end{proposition}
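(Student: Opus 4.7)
My plan is to invoke the Improved Brenier Theorem (Theorem~\ref{thm:improvedBrenier}). Since $\mu$ has a density, it suffices to exhibit a transport map from $\mu$ to $\nu$ that is the (a.e.) derivative of a convex function; any such map must then coincide with the unique Brenier map, and evaluating the quadratic cost along it will give the stated integral formula. The natural candidate is $T \deq F_\nu^\dagger \circ F_\mu$.

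The first step is to verify that $T_{\#}\mu=\nu$. For this, I would use the continuity of $F_\mu$ (which holds because $\mu$ has a density) to conclude via the probability integral transform that if $X \sim \mu$ then $U \deq F_\mu(X) \sim \unif([0,1])$; combined with~\eqref{EQ:unifCDF}, this gives $T(X) = F_\nu^\dagger(U) \sim \nu$. The second step is to observe that both $F_\mu$ and $F_\nu^\dagger$ are non-decreasing, so $T$ is non-decreasing on $\R$. Setting $\varphi(x) \deq \int_0^x T(s)\,\ud s$ yields a convex function with $\varphi' = T$ almost everywhere (and $T(x) \in \partial\varphi(x)$ for every $x$ with appropriate left/right continuous modifications), so that $T$ is a bona fide gradient of a convex function in the sense required by the improved Brenier theorem. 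Applying Theorem~\ref{thm:improvedBrenier} then shows that $(X, T(X))$ is the unique optimal coupling between $\mu$ and $\nu$.

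It remains to compute the cost. Since $\mu$ has a density, $F_\mu$ is continuous, and one checks that $F_\mu^\dagger(F_\mu(X)) = X$ almost surely (the inequality $F_\mu^\dagger \circ F_\mu \le \id$ is automatic, and strict inequality can occur only on the at most countable set of flat intervals of $F_\mu$, which have zero $\mu$-measure). Therefore, with $U = F_\mu(X) \sim \unif([0,1])$,
\begin{align*}
W_2^2(\mu,\nu)
&= \E\bigl[(X - T(X))^2\bigr]
= \E\bigl[(F_\mu^\dagger(U) - F_\nu^\dagger(U))^2\bigr] \\
&= \int_0^1 \bigl(F_\mu^\dagger(u) - F_\nu^\dagger(u)\bigr)^2 \,\ud u\,,
\end{align*}
which is the desired identity.

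The only real obstacle is the technical handling of the pseudo-inverse: one must carefully justify $F_\mu(X) \sim \unif([0,1])$ and $F_\mu^\dagger \circ F_\mu(X) = X$ almost surely under the density assumption, and one should be mildly careful in invoking ``gradient of a convex function'' for a merely non-decreasing $T$ (this is immediate in 1D since antidifferentiation of a non-decreasing function produces a convex function). Everything else is routine given Theorem~\ref{thm:improvedBrenier}.
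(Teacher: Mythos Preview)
Your proposal is correct and follows essentially the same approach as the paper: both arguments identify $T=F_\nu^\dagger\circ F_\mu$ as a non-decreasing (hence derivative-of-convex) map pushing $\mu$ to $\nu$ and then invoke Theorem~\ref{thm:improvedBrenier}. The only cosmetic difference is that the paper starts from $U\sim\unif([0,1])$ and sets $X=F_\mu^\dagger(U)$, using $F_\mu\circ F_\mu^\dagger=\id$ to obtain $Y=F_\nu^\dagger(U)=T(X)$, whereas you start from $X\sim\mu$ and use the probability integral transform $U=F_\mu(X)$ together with $F_\mu^\dagger\circ F_\mu(X)=X$ a.s.; these are two equivalent ways of parameterizing the same coupling.
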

\begin{proof}
Since $\mu$ has a density, $F_\mu \circ F_\mu^\dagger = \id$. Let $U\sim\unif([0,1])$ be a uniform random variable and define $X=F_\mu^{\dagger}(U), Y=F_\nu^{\dagger}(U)$ so that $X \sim \mu$ and $Y \sim \nu$. Next observe that $Y=F_\nu^{\dagger}\circ F_\mu(X)$ and that $F_\nu^{\dagger}\circ F_\mu$ is an increasing function. In light of Theorem~\ref{thm:improvedBrenier}, this defines the unique optimal coupling between $\mu$ and $\nu$.
\end{proof}

The geometric consequence of this identity, as explored further in Chapter~\ref{chap:geometry}, is that the metric space $\cP_2(\R)$ equipped with the 2-Wasserstein distance is flat. Indeed the map $\mu \mapsto F_\mu^\dagger$ is an isometric embedding of $(\cP_2(\R), W_2)$ into the (flat) Hilbert space $L^2([0,1])$.
\end{example}

\begin{example}[Gaussian optimal transport]\label{ex:ot_gaussian}
We can also used the improve Brenier theorem to derive the optimal transport map between two Gaussian measures.
Let $m_1,m_2 \in \R^\dd$ and let $\Sigma_1$, $\Sigma_2$ be positive definite $\dd\times \dd$ matrices.
Let $\mu_1 = \cN(m_1,\Sigma_1)$ and $\mu_2 = \cN(m_2,\Sigma_2)$.

Recall that affine maps preserve Gaussianity.
Namely, if $X_1\sim\mu_1$ and $T(x) = Ax+b$, where $A \in \R^{\dd\times\dd}$ and $b\in\R^\dd$, then $T(X_1)$ is also Gaussian.
To calculate the distribution of $T(X_1)$, it suffices to compute the mean and covariance, and we find that
\begin{align*}
    \E T(X_1) = Am_1+b\,, \qquad \cov T(X_1) = A \Sigma_1 A^\T\,.
\end{align*}
It is therefore a reasonable guess that the optimal transport map from $\mu_1$ to $\mu_2$ is affine, and this can be verified using Brenier's Theorem~\ref{thm:improvedBrenier}.
For this, we require that $Am_1 + b = m_2$ and $A\Sigma_1 A^\T = \Sigma_2$, representing the constraint that $T_\# \mu_1 = \mu_2$.
We also require $T$ to be the gradient of a convex function.
If we set
\begin{align*}
    \varphi(x) = \frac{1}{2}\,\langle x, A\,x\rangle + \langle b, x\rangle\,,
\end{align*}
then $\nabla \varphi(x) = \frac{1}{2}\,(A+A^\T)\,x + b$, and $\varphi$ is convex provided $A+A^\T \succeq 0$.
We conclude that $T = \nabla \varphi$ is the gradient of a convex function (and therefore optimal) provided that $A$ is symmetric and positive definite.

How do we choose $A$ and $b$ so that the pushforward constraints and the PSD constraint on $A$ are simultaneously met?
The most na\"{\i}ve choice for $A$, namely $A = \Sigma_1^{-1/2} \Sigma_2^{1/2}$, works when $\Sigma_1$ and $\Sigma_2$ commute, but in general this choice of $A$ is not even symmetric.
It takes a little ingenuity to find a PSD choice for $A$ that works, but it can be done as follows. Starting with the idea that $A$ is PSD and satisfies $A\Sigma_1 A = \Sigma_2$, then by squaring $\Sigma_1^{1/2} A \Sigma_1^{1/2}$ we find that
\begin{align*}
    {(\Sigma_1^{1/2} A \Sigma_1^{1/2})}^2
    &= \Sigma_1^{1/2} A \Sigma_1 A \Sigma_1^{1/2}
    = \Sigma_1^{1/2} \Sigma_2 \Sigma_1^{1/2}\,.
\end{align*}
Taking square roots and solving, we obtain
\begin{align*}
    A = \Sigma_1^{-1/2}\,(\Sigma_1^{1/2}\Sigma_2\Sigma_1^{1/2}){}^{1/2}\,\Sigma_1^{-1/2}\,.
\end{align*}
It is seen that this is the matrix $A$ that we are looking for.
By using the other constraint $Am_1 + b = m_2$, we find that
\begin{align*}
    T(x)
    &= \Sigma_1^{-1/2}\,(\Sigma_1^{1/2}\Sigma_2\Sigma_1^{1/2}){}^{1/2}\,\Sigma_1^{-1/2}\,(x-m_1) + m_2\,.
\end{align*}
By Theorem~\ref{thm:improvedBrenier}, this is the unique optimal transport map from $\mu_1$ to $\mu_2$.
Moreover, by substituting this into the definition of the Wasserstein distance, we find that (exercise!)
\begin{align}\label{eq:W2_Gaussians}
    W_2^2(\mu_1,\mu_2)
    &= \|m_1-m_2\|^2 + \tr\bigl[\Sigma_1 + \Sigma_2 -2\,(\Sigma_1^{1/2}\Sigma_2\Sigma_1^{1/2}){}^{1/2}]\,.
\end{align}
\end{example}

\section{Duality for \texorpdfstring{$p=1$}{p = 1}}\label{sec:duality_w1}

In the case of the 1-Wasserstein metric, the dual takes a remarkably simple form. Most textbooks derive this result as a specific instantiation of strong duality for optimal transport with a general cost $c$, which requires tools to generalize Theorem~\ref{thm:fundOT} beyond the case of the quadratic cost $c(x,y)=\|x-y\|^2$. The tools include a generalized notion of cyclical monotonicity and of Legendre transform, and the reader is invited to become familiar with them (see the discussion section for a brief overview). When specialized to the $p$-Wasserstein distance, they yield the following result; see, e.g., \cite[Section 3.1.1]{San15} for a proof.

\begin{theorem}\label{thm:Wpduality}
Fix $p \ge 1$ and let $\mu, \nu \in \cP_p(\R^\dd)$ be two probability measures. Then the following holds:
\begin{equation}
\label{eq:Wpduality}
W_p^p(\mu, \nu) =\sup_{\substack{f \in  L^1(\mu),\ g \in L^1(\nu
)\\f(x)+g(y) \le \|x-y\|^p }}\left\{\int f\, \ud \mu + \int g\, \ud \nu\right\}\,.
\end{equation}
\end{theorem}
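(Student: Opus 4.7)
The plan is to establish both inequalities in the duality. Weak duality ($\ge$) is routine: for any $\gamma \in \Gamma_{\mu,\nu}$ and any admissible pair $(f,g)$ with $f(x)+g(y)\le \|x-y\|^p$, integrating against $\gamma$ yields
$$\int f\,\ud\mu + \int g\,\ud\nu = \int \bigl(f(x)+g(y)\bigr)\,\gamma(\ud x,\ud y) \le \int \|x-y\|^p\,\gamma(\ud x,\ud y)\,,$$
and taking the supremum over $(f,g)$ on the left and the infimum over $\gamma$ on the right gives the bound. This is a direct analogue of Lemma~\ref{lem:dualK}.

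For strong duality, my approach is to mimic the argument behind Theorem~\ref{thm:fundOT}, generalized to the cost $c(x,y)=\|x-y\|^p$. The key algebraic tool is the $c$-transform: given $f$, define $f^c(y) \deq \inf_{x\in\R^\dd}\{\|x-y\|^p - f(x)\}$. This is the largest admissible partner to $f$, so the dual is equivalent to its semidual $\sup_f\{\int f\,\ud\mu+\int f^c\,\ud\nu\}$, and one may restrict attention to \emph{$c$-concave} functions $f$ (those of the form $g^c$ for some $g$), which are automatically upper semicontinuous.

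Next, I invoke compactness of $\Gamma_{\mu,\nu}$ (Proposition~\ref{prop:couplings}) together with lower semicontinuity of $(x,y)\mapsto \|x-y\|^p$ to produce an optimal coupling $\bar\gamma$, exactly as in the paragraph before Definition~\ref{def:wass}. I then show that $\supp(\bar\gamma)$ is \emph{$c$-cyclically monotone}: for any $(x_i,y_i)\in \supp(\bar\gamma)$, $i=1,\ldots,k$,
$$\sum_{i=1}^k \|x_i-y_i\|^p \le \sum_{i=1}^k \|x_{i+1}-y_i\|^p\,,\qquad x_{k+1}\deq x_1\,,$$
by the same mass-swapping construction used in Proposition~\ref{prop:suppCM}; that proof only used continuity of the cost, not its specific quadratic form, so it transfers verbatim.

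The final step is a Rockafellar-type theorem for $c$-concavity: any $c$-cyclically monotone set is contained in the $c$-superdifferential of some $c$-concave function $\bar f$, i.e.\ $\bar f(x)+\bar f^c(y)=\|x-y\|^p$ on that set. Setting $\bar g \deq \bar f^c$, the pair $(\bar f,\bar g)$ is admissible and saturates the constraint on $\supp(\bar\gamma)$, so integrating against $\bar\gamma$ gives
$$\int \bar f\,\ud\mu + \int \bar g\,\ud\nu = \int \|x-y\|^p\,\bar\gamma(\ud x,\ud y) = W_p^p(\mu,\nu)\,,$$
closing the gap with the weak bound. I expect the main obstacle to be verifying $\bar f\in L^1(\mu)$ and $\bar g\in L^1(\nu)$: one must control the growth of $c$-concave potentials by showing bounds of the form $|\bar f(x)|\le A+B\,\|x\|^p$ (and similarly for $\bar g$) by anchoring the infimum in the definition of the $c$-transform at a well-chosen point, then invoking $\mu,\nu \in \cP_p(\R^\dd)$. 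This parallels the use of affine lower bounds on convex conjugates in the proof of Theorem~\ref{thm:fundOT}, but is more delicate since $c$-concave functions lack the clean two-variable linear structure exploited by the Fenchel–Young inequality in the quadratic case.
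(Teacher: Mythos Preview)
The paper does not actually prove this theorem: it states the result and refers the reader to \cite[Section~3.1.1]{San15}, noting just before the statement that the proof requires ``a generalized notion of cyclical monotonicity and of Legendre transform'' and pointing to the discussion section for a sketch. Your proposal is exactly that standard route (weak duality, $c$-cyclical monotonicity of $\supp(\bar\gamma)$ via the mass-swapping argument, a $c$-concave Rockafellar-type construction, then integrability from the $p$-th moment assumption), and it matches what the paper outlines in its discussion of \S\ref{sec:kantorovich_duality}. So there is no discrepancy to flag; you have identified the intended argument and its one genuinely delicate point (integrability of the potentials).
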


In the case, where $p=1$, this result can be simplified to eliminate one of the dual functions.

\begin{theorem}\label{thm:W1duality}
Let $\mu, \nu \in \cP_1(\R^\dd)$ be two probability measures. Then the following holds:
\begin{equation}\label{eq:W1duality}
W_1(\mu, \nu) =\sup_{f \in \, \mathrm{Lip}_1}\left\{\int f\, \ud \mu - \int f\, \ud \nu\right\}\,,
\end{equation}
where $\mathrm{Lip}_1$ is the set of 1-Lipschitz functions.
\end{theorem}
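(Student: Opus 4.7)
The plan is to derive Theorem~\ref{thm:W1duality} from the general dual formulation in Theorem~\ref{thm:Wpduality} (with $p=1$) by performing two ``$c$-transform'' operations under the cost $c(x,y)=\|x-y\|$, which will collapse the two dual potentials $(f,g)$ into a single $1$-Lipschitz function. This parallels the convex-conjugation trick used in Subsection~\ref{subsec:semidual} for the quadratic cost.

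First, I would establish the easy direction. For any $f \in \mathrm{Lip}_1$, the pair $(f,-f)$ satisfies $f(x) + (-f)(y) = f(x)-f(y) \le \|x-y\|$ and is admissible in~\eqref{eq:Wpduality} with $p=1$, so plugging it in gives
$$
\int f\,\ud\mu - \int f\,\ud\nu \le W_1(\mu,\nu)\,.
$$
Taking the supremum over $f\in\mathrm{Lip}_1$ yields one inequality.

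For the reverse inequality, I would start from an arbitrary feasible pair $(f,g)$ in~\eqref{eq:Wpduality} and improve it in two steps. Define the $c$-transform
$$
g^c(x) \deq \inf_{y\in\R^\dd}\{\|x-y\| - g(y)\}\,.
$$
Three observations: (i) $(g^c,g)$ remains feasible by the very definition of $g^c$; (ii) since $f(x) \le \|x-y\|-g(y)$ for all $y$, we have $f \le g^c$ pointwise, so the dual objective does not decrease when $f$ is replaced by $g^c$; and (iii) as an infimum of the $1$-Lipschitz (in $x$) family $y\mapsto \|x-y\|-g(y)$, $g^c$ is itself $1$-Lipschitz. Next, with $h \deq g^c \in \mathrm{Lip}_1$ held fixed, the best admissible second potential is $h^c(y) = \inf_x\{\|x-y\|-h(x)\}$. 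But $h$ is $1$-Lipschitz, so $\|x-y\| - h(x) \ge -h(y)$ with equality at $x=y$; hence $h^c = -h$. Thus the supremum in~\eqref{eq:Wpduality} is not decreased by restricting to pairs of the form $(h,-h)$ with $h \in \mathrm{Lip}_1$, giving the matching inequality $W_1(\mu,\nu) \le \sup_{h\in \mathrm{Lip}_1}\{\int h\,\ud\mu - \int h\,\ud\nu\}$.

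The only step requiring a bit of care is integrability: we need $g^c \in L^1(\mu)$ so the improved objective is well-defined, and one must handle the possibility that $g^c \equiv -\infty$ or $+\infty$. Since $g^c$ is $1$-Lipschitz once it is finite at one point, and $\mu \in \cP_1(\R^\dd)$, integrability follows immediately from the bound $|g^c(x)| \le |g^c(x_0)| + \|x-x_0\|$ as soon as $g^c(x_0)$ is finite for some $x_0$; the degenerate cases where $g^c \equiv \pm\infty$ correspond to pairs $(f,g)$ that can be ignored since they either violate feasibility or are dominated by a trivial choice such as $f\equiv g\equiv 0$. This is a routine but not entirely trivial verification, and represents the only real subtlety in an otherwise direct argument.
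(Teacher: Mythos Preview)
Your proposal is correct and follows essentially the same approach as the paper: both arguments reduce the two-potential dual~\eqref{eq:Wpduality} to a single $1$-Lipschitz potential via $c$-transforms, using the two key facts that any $c$-transform under the cost $\|x-y\|$ is $1$-Lipschitz and that $h^c=-h$ for $h\in\mathrm{Lip}_1$. The paper packages these into Lemma~\ref{lem:1lipctrans} and phrases the argument through the semidual, while you prove them inline and treat the two inequalities separately, but the content is the same.
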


Before proceeding to the proof of this theorem, let us see where Lipschitz functions come from. Recall that the semidual in Section~\ref{subsec:semidual} was also removing one of the dual functions.
In particular, when $p = 1$, we can replace the function $g$ in \eqref{eq:Wpduality} with the $c$-transform
$$
f^c(y)=\inf_{x \in \R^\dd}\{\|x- y\| -f(x)\}\,.
$$
The following lemma holds.

\begin{lemma}\label{lem:1lipctrans}
    For $c(x,y) = \|x-y\|$, a function $g: \R^\dd\to\R$ is a $c$-transform $g=f^c$ if and only it is 1-Lipschitz. Moreover, any 1-Lipschitz function satisfies $g^c=-g$.
\end{lemma}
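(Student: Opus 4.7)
The plan is to prove the equivalence by a symmetric pair of implications, where the ``only if'' direction follows from a standard triangle-inequality argument on the infimum defining $f^c$, and the ``if'' direction is established by exhibiting an explicit $f$ that realizes $g$ as its $c$-transform. The last claim, $g^c=-g$ for $1$-Lipschitz $g$, will then fall out as a byproduct of the computation used in the ``if'' direction.

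For the first direction, I would fix any $f : \R^\dd \to \R$ and show that $f^c$ is automatically $1$-Lipschitz whenever it is real-valued. Given $y_1, y_2 \in \R^\dd$ and $\varepsilon > 0$, I would pick a near-minimizer $x_\varepsilon$ in the definition of $f^c(y_2)$, so that $f^c(y_2) + \varepsilon \ge \|x_\varepsilon - y_2\| - f(x_\varepsilon)$. Plugging $x_\varepsilon$ into the infimum defining $f^c(y_1)$ and applying the triangle inequality $\|x_\varepsilon - y_1\| \le \|x_\varepsilon - y_2\| + \|y_1 - y_2\|$ gives $f^c(y_1) - f^c(y_2) \le \|y_1 - y_2\| + \varepsilon$, and letting $\varepsilon \downarrow 0$ and swapping the roles of $y_1, y_2$ yields $|f^c(y_1) - f^c(y_2)| \le \|y_1 - y_2\|$.

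For the converse direction, suppose $g$ is $1$-Lipschitz, and I claim $g = (-g)^c$, which exhibits $g$ as a $c$-transform. By definition,
\begin{equation*}
(-g)^c(y) = \inf_{x\in\R^\dd}\{\|x-y\| + g(x)\}.
\end{equation*}
The $1$-Lipschitz property of $g$ gives $g(x) \ge g(y) - \|x-y\|$, so every term in the infimum is at least $g(y)$, while equality is attained at $x=y$. Hence $(-g)^c(y) = g(y)$. The same computation, applied to the infimum $g^c(y) = \inf_x\{\|x-y\| - g(x)\}$, shows via $-g(x) \ge -g(y) - \|x-y\|$ that $\|x-y\| - g(x) \ge -g(y)$ with equality at $x=y$, yielding $g^c = -g$.

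There is no real obstacle here; the only subtlety is to make sure the ``only if'' direction is applied to a function $f^c$ that is actually real-valued (otherwise $f^c \equiv -\infty$ is the degenerate case), and to check that taking $f = -g$ indeed produces a legitimate $c$-transform of a real-valued function, which is automatic since $g$ being $1$-Lipschitz is in particular locally bounded.
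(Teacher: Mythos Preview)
Your proof is correct and follows essentially the same approach as the paper: both directions hinge on the triangle inequality, the converse is obtained by showing $(-g)^c = g$, and the final claim $g^c = -g$ is the same computation (the paper phrases it as ``apply the result to $-g$'', while you write it out directly). The only cosmetic difference is that the paper dispatches the ``only if'' direction with the one-line observation that an infimum of $1$-Lipschitz functions is $1$-Lipschitz, whereas you unpack this via an $\varepsilon$-near-minimizer argument.
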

\begin{proof}
    Write 
    $$
g(y)=f^c(y)= \inf_{x \in \R^\dd}\{\| x- y\| -f(x)\}\,.
    $$
For any $x$, the function $y \mapsto \| x- y\| -f(x)$ is clearly 1-Lipschitz by the reverse triangle inequality. Since the set of 1-Lipschitz functions is closed under taking infima, the function $g$ is also 1-Lipschitz. 

    To prove the converse, let $g$ be a 1-Lipschitz function so that for any $x,y\in \R^\dd$, it holds
    $$
g(y)\le \|x-y\|+g(x)\,.
    $$
    Taking the infimum over $x$ yields $g\le (-g)^c$. Moreover, 
    $$
 (-g)^c(y)=\inf_{x \in \R^\dd}\{\|x- y\| +g(x)\}\le g(y)\,,
    $$
    where we took $y=x$ in the last inequality. 
    
    We have shown that $(-g)^c=g$ and in particular that $g$ has to be a $c$-transform (of $-g$). The second statement of the lemma follows from the fact that if $g$ is 1-Lipschitz, then so is $-g$. 
\end{proof}

We are now in a position to prove Theorem~\ref{thm:W1duality}.

\begin{proof}[Proof of Theorem~\ref{thm:W1duality}]
By the argument in Subsection~\ref{subsec:semidual},~\eqref{eq:Wpduality} is equal to the following semidual
$$
\sup_{f \in L^1(\mu)}\left\{\int f\, \ud \mu + \int f^c\, \ud \nu\right\}\,.
$$
We can continue optimizing the potentials further to obtain
$$
\sup_{f \in L^1(\mu)}\left\{\int f^{cc}\, \ud \mu + \int (f^{cc})^c\, \ud \nu\right\}\,.
$$
It follows from Lemma~\ref{lem:1lipctrans} that 
$$
\{f^{cc}\,:\, f \in L^1(\mu)\}
$$
only contains 1-Lipschitz functions. Moreover, since $\mu \in \cP_1(\R^\dd)$, it holds that any 1-Lipschitz function is integrable against $\mu$:
\begin{align*}
    \int |f|\,\ud\mu
    &\le \int |f(x)-f(y)|\,\mu(\ud x) + |f(y)| \le \int \|x-y\|\,\mu (\ud x) + |f(y)| \\
    &<\infty\,,
\end{align*}
so that $f \in L^1(\mu)$. Hence, we have shown that 
\begin{equation}
    \label{eq:weakW1duality}
    W_1(\mu, \nu) =\sup_{f \in \mathrm{Lip}_1}\left\{\int f \,\ud \mu + \int f^c\, \ud \nu\right\}=\sup_{f \in \mathrm{Lip}_1}\biggl\{\int f\, \ud \mu - \int f\, \ud \nu\biggr\}\,,
\end{equation}
which concludes the proof of Theorem~\ref{thm:W1duality}.
\end{proof}

\section{Discussion}

\noindent\textbf{\S\ref{sec:ot_problem}.}
For a historical bibliography, consult~\cite{Vil03}.

\noindent\textbf{\S\ref{sec:wp_distances}.}
Villani~\cite{Vil09} remarks that the common terminology ``Wasserstein distances'' is ``very questionable''---though these distances are attributed to Leonid Vasershtein by Dobrushin~\cite{dobrushin1970prescribing}, Vasershtein was not their progenitor.
Nevertheless, the name has stuck.

Proposition~\ref{prop:wass_controls_moments} is taken from the paper~\cite{RigWee19}, which also provides a converse.
Further comparisons between ``information divergences'' (e.g., total variation, Kullback--Leibler, chi-squared divergence) and optimal transport distances are implied by so-called transport inequalities, which also connects to a larger literature on concentration of measure; see, e.g.,~\cite{BolVil05CKP} and~\cite[Chapter 22]{Vil09}.

\noindent\textbf{\S\ref{sec:ot_1d}.}
As shown in Exercise~\ref{ex:1d_ot}, the monotone coupling in Theorem~\ref{thm:W1_1D} and Proposition~\ref{prop:w21d} is also optimal for any cost function which is a strictly convex function of $x-y$; see~\cite[Section 2.2]{San15}.

\noindent\textbf{\S\ref{sub:brenier}.}
Brenier's theorem was first used to define multivariate quantiles for $\nu \in \cP_2(\R^\dd)$ by~\cite{CheGalHalHen17}, and this definition was extended to $\nu$ which may not have a second moment by~\cite{Hal17}.
In addition to its mathematical elegance, this definition of multivariate quantiles has important implications for nonparametric testing~\cite{GhoSen22,DebSen23Testing}.

Rockafellar's theorem is from~\cite{Roc1966Subdiff}.
The proof of Proposition~\ref{prop:suppCM} is from~\cite{GanMcC96}.

\noindent\textbf{\S\ref{sec:kantorovich_duality}.}
Although we deduced strong duality by explicitly exhibiting dual potentials for which the duality gap is zero (namely, the ones obtained from the characterization of optimal couplings as having cyclically monotone support), it is also possible to prove strong duality directly via appeal to an abstract min-max principle; see~\cite[Section 1.1]{Vil03}.

Many of the arguments in Section~\ref{sec:kantorovich_duality} generalize to general continuous costs $c : \cX\times \cY\to\R$: the $c$-conjugate of a function $f : \cX\to\R$ is given by $f^c(y) \deq \inf_{x\in\cX}\{c(x,y) - f(x)\}$, and likewise the $c$-conjugate of $g : \cY\to\R$ is given by $g^c(x) \deq \inf_{y\in\cY}\{c(x,y) - g(y)\}$.
We say that $f$ is $c$-concave if $f = g^c$ for some function $g$.
The equality~\eqref{eq:general_semidual} still holds and equals the optimal transport value (strong duality). Any optimal transport plan is supported on a $c$-cyclically monotone set, which is defined as in~\eqref{eq:cyclically_monotone} but replacing the quadratic cost with $c$.
Then, $c$-cyclically monotone sets are characterized as $c$-subdifferentials, where the $c$-subdifferential of a $c$-concave function $f = g^c$ is the set of $(x,y)$ pairs such that $f(x) + g(y) = c(x,y)$.
What does \emph{not} generalize as easily, however, is Brenier's theorem, which requires further conditions to ensure the single-valuedness of the $c$-subdifferential. For example if $c(x,y)=h(x-y)$ for some strictly convex function $h$, then a unique optimal transport \emph{map} exists but it need not be the gradient of a convex function; see~\cite[Theorem~1.17]{San15}.

\noindent\textbf{\S\ref{sec:duality_w1}.}
The duality formula for $W_1$ is classical and is closely related to the bounded Lipschitz metric~\cite{Dud02}, which can be extended to define a norm over signed measures.
As discussed in Subsection~\ref{subsec:ipm}, this formula expresses $W_1$ as an integral probability metric.

\section{Exercises}\label{sec:ot_exe}

\begin{enumerate}
    \item Let $A \subseteq \R\times\R$ be monotone:
    \begin{align*}
        \forall\,(x,y), (x',y')\in A\,,\; x < x' \Rightarrow y < y'\,.
    \end{align*}
    For simplicity, assume that $A$ is contained in the graph of a function.
    Show that $A$ is cyclically monotone.

\item Let $X \in \R$ be a random variable that admits a density supported on the whole real line. Let $f, g$ be two monotone increasing functions such that $f(X)$ has the same distribution as $g(X)$. Then, $f=g$ almost everywhere.

    \item\label{exe:ot_gaussians}
    Let $m_1,m_2 \in \R^\dd$ and let $\Sigma_1$, $\Sigma_2$ be positive definite $d\times d$ matrices.
    Let $\mu_1 = \cN(m_1,\Sigma_1)$ and $\mu_2 = \cN(m_2,\Sigma_2)$.
    \begin{enumerate}
        \item Verify the equation~\eqref{eq:W2_Gaussians}.
        \item Using a suboptimal coupling, prove the simple upper bound
            \begin{align*}
                W_2^2(\mu_1,\mu_2)
                &\le \|m_1-m_2\|^2 + \|\Sigma_1^{1/2} - \Sigma_2^{1/2}\|_{\rm HS}^2
            \end{align*}
            where $\|M_1 - M_2\|_{\rm HS}^2 \deq \tr({(M_1 - M_2)}^\T(M_1 - M_2))$.
            Show that this is an equality when $\Sigma_1$ and $\Sigma_2$ commute.
        \item Prove that if $\nu_1$, $\nu_2$ are probability measures with means $m_1$, $m_2$ and covariance matrices $\Sigma_1$, $\Sigma_2$ respectively, then
            \begin{align*}
                W_2(\nu_1,\nu_2)
                &\ge W_2(\mu_1,\mu_2)\,.
            \end{align*}
            \emph{Hint}: What are the optimal dual potentials for the optimal transport problem from $\mu_1$ to $\mu_2$?
    \end{enumerate}

\item 
    \begin{enumerate}
        \item Let $X$ and $Y$ be random vectors in $\R^\dd$.
            Prove that
            \begin{align*}
                &W_2^2\bigl(\law(X),\, \law(Y)\bigr) \\
                &\qquad = \|\E X - \E Y\|^2 + W_2^2\bigl(\law(X-\E X),\, \law(Y-\E Y)\bigr)\,.
            \end{align*}
            Thanks to this equality, often when we work with the $W_2$ distance, it suffices to consider centered random variables.
        \item Let $X, Y_1,\dotsc,Y_k$ be random vectors in $\R^\dd$ and $\lambda_1,\dotsc,\lambda_k\ge 0$.
            Suppose that $X$ and $Y_i$ are optimally coupled for each $i=1,\dotsc,k$.
            Show that $X$ and $\sum_{i=1}^k \lambda_i Y_i$ are optimally coupled.
    \end{enumerate}

\item 
    Let $\nu$ admit a density w.r.t.\ Lebesgue measure.
    Show that $W_2^2(\cdot, \nu)$ is strictly convex; that is, if $\mu_0, \mu_1 \in \cP_2(\R^\dd)$ are distinct and $t \in (0,1)$, then
    \begin{align}\label{eq:strict_cvxty}
        W_2^2\bigl((1-t)\,\mu_0 + t\,\mu_1,\, \nu\bigr)
        &< (1-t)\,W_2^2(\mu_0,\nu) + t\, W_2^2(\mu_1,\nu)\,.
    \end{align}
    \emph{Hint}: Start by proving that~\eqref{eq:strict_cvxty} holds with $\le$ instead of $<$.
    Next, supposing that~\eqref{eq:strict_cvxty} fails, show that there is an optimal transport plan between $(1-t)\,\mu_0 +t\,\mu_1$ and $\nu$ which is not induced by a transport map, contradicting Brenier's theorem.

\item 
    Consider the measures $\mu = \frac{1}{2}\,\cN(-m,1) + \frac{1}{2}\,\cN(+m,1)$ and $\nu = \frac{1}{4}\,\cN(-m,1) + \frac{3}{4} \, \cN(+m, 1)$, where $m > 0$.
    Prove that $W_2(\mu,\nu) \asymp m$.

    {\emph{Hint}: The point of this question is that computing the optimal transport map from $\mu$ to $\nu$ is painful, but it is not hard to obtain good lower and upper bounds.
    For the lower bound, prove and use the fact that for any $1$-Lipschitz function $f : \R\to\R$, it holds that $\E_\mu f - \E_\nu f \le W_1(\mu,\nu) \le W_2(\mu,\nu)$.
    For the upper bound, exhibit a coupling of $\mu$ and $\nu$.
    }

\item Recall that the chi-squared divergence between two probability measures $\mu$ and $\nu$ is defined as 
$$
\chi^2(\mu \mmid \nu)=\int \Bigl(\frac{\ud \mu}{\ud \nu }- 1\Bigr)^2 \,\ud \nu\,.
$$
Show that
$$
 W_1(\mu, \nu) \le \sqrt{\chi^2(\mu \mmid \nu)}\,,
$$
when $\nu$ has unit variance.

    \item\label{ex:1d_ot} Let $c : \R \to \R$ be strictly convex and consider optimal transport over $\R$ with the cost function $(x,y) \mapsto c(x-y)$.
    For example, when $c(z) = |z|^p$ for $p > 1$, we obtain $W_p^p$.
    In this exercise, we show that the coupling $\bar\gamma$ given in Proposition~\ref{prop:w21d} is \emph{universally optimal} for all costs of this form.
    See the discussion section for background.
    \begin{enumerate}
        \item Show that for any $a,b\in\R$, $\bar\gamma((-\infty, a] \times (-\infty, b]) = F_\mu(a) \wedge F_\nu(b)$.
        \item Show that $\bar\gamma$ is the \emph{unique} coupling of $\mu$ and $\nu$ such that
        \begin{align}\label{eq:characterization_1dot}
            \forall \,(x,y),(x',y') \in \supp\bar\gamma\,, \; x < x' \Rightarrow y \le y'\,.
        \end{align}
        \emph{Hint}: Consider the sets $A = (-\infty,a] \times (b,\infty)$ and $B = (a,\infty) \times (-\infty, b]$.
        Show that any coupling satisfying~\eqref{eq:characterization_1dot} must assign one of these two sets measure zero.
        Use this to show that any coupling $\gamma$ which satisfies~\eqref{eq:characterization_1dot} must agree with $\bar\gamma$.
        \item Let $(x,y), (x',y') \in \supp\gamma$, where $\gamma$ is an optimal transport plan between $\mu$ and $\nu$ with cost defined by $c$ as above.
        By $c$-cyclical monotonicity of $\supp\gamma$,
        \begin{align*}
            c(x-y) + c(x'-y') \le c(x-y') + c(x'-y)\,.
        \end{align*}
        Show that if $x < x'$, then $y\le y'$, hence $\gamma = \bar\gamma$.

        \emph{Hint}: Argue by contradiction. If the claim fails, then both $u \deq x-y'$ and $v\deq x'-y$ lie between $w \deq x-y$ and $w' \deq x'-y'$.
        Write $u$ and $v$ as convex combinations of $w$ and $w'$, and apply strict convexity of $c$.
        \item Deduce that the optimal cost equals $\E c(F_\mu^\dagger(U) - F_\nu^\dagger(U))$ where $U\sim\unif([0,1])$.
    \end{enumerate}

    \item\label{qu:tv} Given two probability measures $\mu$, $\nu$ over the same space $\msf S$, define the \emph{total variation distance} between $\mu$ and $\nu$ to be
    \begin{align*}
        d_{\msf{TV}}(\mu,\nu) = \sup_{A\subseteq \msf S}{|\mu(A) - \nu(A)|}\,,
    \end{align*}
    where the supremum ranges over all measurable subsets. Show that the total variation distance is also equal to all of the following.
    (\emph{Hint}: Consider the coupling in Theorem~\ref{thm:wpTV}.)
    \begin{enumerate}
        \item If $f$ and $g$ denote the respective densities of $\mu$ and $\nu$ with respect to some common dominating measure $\lambda$ for $\mu$ and $\nu$ (e.g., $\lambda = \mu+\nu$), then
        \begin{align*}
            d_{\msf{TV}}(\mu,\nu)
            &= \frac{1}{2}\int |f-g|\,\ud \lambda
            = 1 - \int f\wedge g \, \ud \lambda
            = \mu(f \ge g)\,.
        \end{align*}
        \item $d_{\msf{TV}}(\mu,\nu) = \inf \mbb P(X \ne Y)$ where the infimum ranges over all couplings $(X,Y)$ of $\mu$ and $\nu$.
        \item $d_{\msf{TV}}(\mu,\nu) = \sup\{\int h \, \ud (\mu-\nu) \mid h : \msf S \to [0,1]\}$. Compare with $W_1$ duality from Theorem~\ref{thm:W1duality}.
    \end{enumerate}

    \item Let $\mu, \nu \in \cP_2(\R^d)$ admit densities with respect to Lebesgue measure.
    Show that if $\nabla \varphi$ is the optimal transport map from $\mu$ to $\nu$, then $\nabla \varphi^*$ is the optimal transport map from $\nu$ to $\mu$. (See Appendix~\ref{app:convex}.)
    Apply this fact to the optimal transport map between two Gaussians and discover a non-trivial matrix identity.
\end{enumerate}

\chapter{Estimation of Wasserstein distances}
\label{chap:primal-dual}

In applications of optimal transport in statistics, it is paramount to be able to obtain good upper and lower bounds on the Wasserstein distance between probability measures.
This chapter describes tools to bound the Wasserstein distance.
To do so, we heavily employ the primal \emph{and} dual formulations of optimal transport.
As a primary application, we consider a quantitative form of the \emph{Wasserstein law of large numbers}, which is the statement that if $\mu_n$ is an empirical measure consisting of $n$ i.i.d.\ samples from a probability measure $\mu$, then $\E W_p(\mu_n, \mu) \to 0$ as $n \to \infty$.

\section{The Wasserstein law of large numbers}\label{sec:wasserstein_lln}

Suppose that $X_1, \dots, X_n \simiid \mu$, where $\mu$ is a probability measure on a compact subset of $\R^\dd$, which we assume for convenience is equal to the unit cube $[0, 1]^\dd$.
The \emph{empirical measure} is defined to be the (random) measure
\begin{align*}
    \mu_n = \frac 1n \sum_{i=1}^n \delta_{X_i}\,.
\end{align*}
The law of large numbers implies that $\mu_n \hookrightarrow \mu$ and also $\int \|\cdot\|^p \, \ud \mu_n \to \int \|\cdot\|^p \, \ud \mu$ almost surely; therefore, the discussion in Chapter~\ref{chap:OT} implies that $W_p(\mu_n, \mu) \to 0$.
Moreover, since $W_p(\mu_n, \mu)$ is bounded almost surely, we also have convergence in mean:
\begin{equation*}
	\E W_p(\mu_n, \mu) \to 0\,.
\end{equation*}
How fast does this convergence occur?
In the context of the classic law of large numbers for bounded random vectors $X_1, \dots, X_n$ in $\R^\dd$, we of course have
\begin{equation*}
	\E \biggl\|\frac 1n \sum_{i=1}^n X_i - \E X\biggr\|^2 \lesssim \frac{1}{n}\,.
\end{equation*}
Note that the rate of decay $n^{-1}$ holds irrespective of the dimension, and is true even in infinite-dimensional Hilbert spaces.

By contrast, the Wasserstein law of large numbers behaves quite differently.
In this chapter, we prove the following proposition.

\begin{proposition}\label{prop:w1_rate}
	If the support of $\mu$ lies in $[0, 1]^\dd$, then
	\begin{equation}
			\E W_1(\mu_n, \mu) \lesssim \sqrt \dd \cdot \begin{cases}
			n^{-1/2} & \text{if $\dd = 1$,} \\
			(\log n/n)^{1/2} & \text{if $\dd = 2$,} \\
			n^{-1/\dd} & \text{if $\dd \geq 3$,} \\
		\end{cases}
	\end{equation}
	and this rate is unimprovable in general.
\end{proposition}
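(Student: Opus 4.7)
The plan is to treat the upper and lower bounds separately, leveraging the two equivalent descriptions of $W_1$ from Chapter~\ref{chap:OT}: the CDF integral formula in one dimension and Kantorovich duality in higher dimensions. The case $\dd = 1$ is immediate: by Theorem~\ref{thm:W1_1D}, $W_1(\mu_n, \mu) = \int |F_{\mu_n}(t) - F_\mu(t)|\, \ud t$, and since $n F_{\mu_n}(t) \sim \mathsf{Bin}(n, F_\mu(t))$ for each $t$, Jensen gives $\E |F_{\mu_n}(t) - F_\mu(t)| \le \sqrt{F_\mu(t)\,(1-F_\mu(t))/n} \le 1/(2\sqrt n)$, which integrates to $O(n^{-1/2})$ over $[0,1]$.

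For $\dd \ge 2$, I would combine Kantorovich duality (Theorem~\ref{thm:W1duality}) with a dyadic multiresolution of the unit cube. Partition $[0,1]^\dd$ at each scale $k$ into cubes $\mathcal{Q}_k$ of side $2^{-k}$, and for any $1$-Lipschitz $f$ with $f(0)=0$ write the telescoping decomposition $\int f\, \ud(\mu_n - \mu) = \int (f - f^K)\, \ud(\mu_n - \mu) + \sum_{k=0}^{K-1} \int (f^{k+1} - f^k)\, \ud(\mu_n - \mu)$, where $f^k$ denotes the cube-average of $f$ on $\mathcal{Q}_k$. Since $f$ is $1$-Lipschitz and each cube has diameter $\sqrt{\dd}\, 2^{-k}$, one has $\|f - f^k\|_\infty \le \sqrt{\dd}\, 2^{-k}$, so the $k$-th increment is bounded by $2\sqrt{\dd}\, 2^{-k} \sum_{Q \in \mathcal{Q}_{k+1}} |\mu_n(Q) - \mu(Q)|$. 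Cauchy--Schwarz across the $|\mathcal{Q}_k| = 2^{k\dd}$ cubes, together with the binomial variance bound $\E(\mu_n(Q) - \mu(Q))^2 \le \mu(Q)/n$, yields $\E\sum_{Q\in \mathcal{Q}_k} |\mu_n(Q) - \mu(Q)| \le \sqrt{2^{k\dd}/n}$, and the residual contributes $O(\sqrt{\dd}\, 2^{-K})$. Taking the supremum over $f$ and optimizing $K$ produces the three regimes: for $\dd = 1$ the geometric series converges and gives $n^{-1/2}$; for $\dd \ge 3$ the last scale dominates at $2^K \asymp n^{1/\dd}$, giving $n^{-1/\dd}$; and for $\dd = 2$ each scale contributes $\Theta(1/\sqrt n)$, yielding at best a logarithmic factor.

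For the matching lower bounds I would take $\mu$ to be the uniform measure on $[0,1]^\dd$. When $\dd \ge 3$, a standard volume/packing argument shows that for any realization of $X_1, \dots, X_n$ there must be a region of $\mu$-mass bounded below at Euclidean distance $\gtrsim n^{-1/\dd}$ from every sample, forcing the transport cost to be $\Omega(n^{-1/\dd})$. For $\dd = 1$, dually testing against the $1$-Lipschitz function $f(x) = x$ and invoking the CLT for $\frac{1}{n}\sum_i X_i$ gives a matching $\Omega(n^{-1/2})$ bound in expectation. The sharp $\Omega(\sqrt{\log n/n})$ lower bound in $\dd = 2$ is the classical Ajtai--Koml\'os--Tusn\'ady theorem, which I would cite.

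The main obstacle is the borderline dimension $\dd = 2$: the scale-by-scale union bound above only yields $\log n/\sqrt n$ because each of the $\Theta(\log n)$ relevant scales contributes $\Theta(1/\sqrt n)$. Obtaining the sharp $\sqrt{\log n/n}$ rate requires a finer chaining argument that exploits cancellation across scales---for instance, applying Cauchy--Schwarz \emph{in} $k$ after observing that the aggregated variance $\sum_{k \le K}\sum_{Q \in \mathcal{Q}_k} \E(\mu_n(Q) - \mu(Q))^2 \le K/n$---rather than a crude per-scale bound. Matching this on the lower-bound side is the technically hardest step and is where the AKT combinatorial construction enters.
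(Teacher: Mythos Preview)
Your plan for $\dd = 1$ and $\dd \ge 3$ is correct and matches the paper's dyadic-partitioning argument (Theorem~\ref{thm:dyadic} and Proposition~\ref{prop:w1_dyadic_rate}); your direct CDF computation for $\dd = 1$ is in fact cleaner than running the general dyadic bound and appears as Exercise~6 in the chapter. Your lower-bound strategy is also essentially what the paper does in Section~\ref{sec:estimation_lower_bds}.

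The gap is in your $\dd = 2$ fix. Applying Cauchy--Schwarz across scales as you suggest does not gain anything: writing $c_k = 2^{-k}\sum_{Q\in\mathcal Q_{k+1}}|\mu_n(Q)-\mu(Q)|$, Cauchy--Schwarz over cubes gives $c_k \le 2\sqrt{V_{k+1}}$ with $V_j = \sum_{Q\in\mathcal Q_j}(\mu_n(Q)-\mu(Q))^2$, and then Cauchy--Schwarz in $k$ yields
\[
\sum_{k<K} c_k \le 2\sqrt K\,\sqrt{\textstyle\sum_{k<K} V_{k+1}}\,,\qquad \E\sum_{k<K} V_{k+1}\le K/n\,,
\]
so you recover $\E\sum_k c_k \lesssim \sqrt K\cdot\sqrt{K/n} = K/\sqrt n$, exactly the $(\log n)/\sqrt n$ rate you started with. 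The obstruction is structural: the dyadic bound controls $W_1$, not $W_1^2$, so there is no natural place for an $\ell^2$ sum to appear. The paper's route (Section~\ref{sec:akt}, following Bobkov--Ledoux) is genuinely different: after passing to periodic Lipschitz test functions, Parseval yields a bound on $\widetilde{W_1}(\mu,\nu)^2$ by $\sum_{m\ne 0}\|m\|^{-2}|\phi_\mu(m)-\phi_\nu(m)|^2$. Squaring \emph{before} decomposing is what produces the $\ell^2$ structure, and combining with a Gaussian smoothing cutoff and the bound $\E|\phi_{\mu_n}(m)-\phi_\mu(m)|^2\le 1/n$ gives $\sqrt{\log n/n}$.
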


In contrast to the standard law of large numbers, the convergence of $\mu_n$ to $\mu$ in Wasserstein distance degrades exponentially as the dimension grows, a phenomenon often known as the curse of dimensionality.

\section{The dyadic partitioning argument}\label{sec:dyadic}

The fact that the Wasserstein distance is defined by a minimization over couplings suggests a natural strategy for proving bounds: we can show an upper bound on $W_1$ by exhibiting a coupling with a small cost.
In this section, we build such a coupling, which, perhaps surprisingly, gives rise to good bounds in many situations.
The main idea is to attempt to couple $\mu$ and $\nu$ by recursively constructing candidate couplings at multiple scales.

Before stating the bound, let us describe the basic strategy.
For simplicity, let us consider proving an upper bound on $W_1(\mu, \nu)$ for $\mu$ and $\nu$ whose support lies in $[0, 1]^\dd$.
We first make a trivial observation:
\begin{equation}\label{eq-primal-dual:trivial_w1_bound}
	W_1(\mu, \nu) \leq \sqrt \dd\,.
\end{equation}
Indeed, the diameter of $[0, 1]^\dd$ is $\sqrt \dd$, so no coupling between $\mu$ and $\nu$ can move mass a greater distance than this.

Let us now imagine a slight sharpening of this bound. Let $\cQ$ be the collection of cubes of side length $1/2$ whose corners lie at points of the form $2^{-1}\,(k_1, \dots, k_\dd)$ for $k_1, \dotsc, k_\dd \in \{0, 1, 2\}$.
These cubes form a partition of $[0, 1]^\dd$ into $2^\dd$ pieces.\footnote{These cubes overlap at their boundaries, but we can easily modify these sets by removing overlaps to obtain a bona fide partition.}
Suppose for the sake of argument that $\mu(Q) = \nu(Q)$ for all $Q \in \cQ$, $j = 1, \dots, 2^\dd$, so that $\mu$ and $\nu$ assign the same mass to each of the small cubes.
Then, it would be possible to couple $\mu$ and $\nu$ by only moving mass within each small cube.
Since the diameter of each small cube is $\sqrt \dd/2$, any such coupling improves on the bound in~\eqref{eq-primal-dual:trivial_w1_bound} by a factor of $2$.

Even when $\mu$ and $\nu$ do not assign the same mass to each small cube, we can use the above idea to construct a coupling between $\mu$ and $\nu$ in two steps: first, we can match as much mass as possible between $\mu$ and $\nu$ within each cube.
This creates a partial coupling between a portion of $\mu$'s mass and a portion of $\nu$'s.
Since we only move mass within each small cube, the total cost of this partial coupling is at most $\sqrt{\dd}/2$.
We then need to extend this partial coupling to a full coupling, by transporting $\mu$'s extra mass on any cube $Q$ for which $\mu(Q) > \nu(Q)$ to $\nu$'s extra mass on some cube $Q'$ for which $\nu(Q') > \mu(Q')$.
The amount of extra mass matched in this step is $\sum_{Q \in \cQ} (\mu(Q) - \nu(Q))_+ = \frac 12 \sum_{Q \in \cQ} |\mu(Q) - \nu(Q)|$, at a total cost of at most $\frac{\sqrt \dd}2 \sum_{Q \in \cQ} |\mu(Q) - \nu(Q)|$.

Combining these bounds yields the refined estimate
\begin{equation}\label{eq-primal-dual:one_step_w1_bound}
	W_1(\mu, \nu) \leq \frac{\sqrt \dd}{2} \sum_{Q \in \cQ} |\mu(Q) - \nu(Q)| + \frac{\sqrt \dd}{2}\,.
\end{equation}
This bound improves on~\eqref{eq-primal-dual:trivial_w1_bound} when $\mu$ and $\nu$ assign similar mass to each cube.

The proof of the following bound is based on recursing the above argument $J$ times.
At the $j$-th stage, we bound the discrepancy between $\mu$ and $\nu$ on $2^{dj}$ cubes of side length $2^{-j}$.
To state this bound, let us define the set $\cQ_j$, $j \geq 0$, to consist of a set of $2^{dj}$ cubes of side length  $2^{-j}$ which form a partition of $[0,1]^\dd$.\footnote{As above, we assume that the elements of $\cQ_j$ been modified at their boundary so that $\cQ_j$ is a partition and so that $\cQ_{j+1}$ is a refinement of $\cQ_j$ for all $j \geq 0$.}

\begin{theorem}[Dyadic partitioning bound]\label{thm:dyadic}\index{dyadic partitioning}
Let $\mu, \nu \in \cP([0, 1]^\dd)$. For any $J \geq 0$,
	\begin{align*}
		W_1(\mu, \nu) \leq \sqrt \dd \sum_{j=0}^{J-1} \biggl(2^{-j} \sum_{Q \in \cQ_{j+1}} |\mu(Q) - \nu(Q)|\biggr) + \sqrt \dd\, 2^{-J}\,.
	\end{align*}
\end{theorem}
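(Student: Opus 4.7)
My plan is to prove the bound via Kantorovich duality (Theorem~\ref{thm:W1duality}), reducing the claim to a pointwise statement about 1-Lipschitz test functions, and then to telescope over the dyadic hierarchy. By duality, it suffices to show that for every $f \in \mathrm{Lip}_1$,
\[
\int f\,\ud(\mu - \nu) \le \sqrt{\dd}\sum_{j=0}^{J-1} 2^{-j}\sum_{Q \in \cQ_{j+1}}|\mu(Q) - \nu(Q)| + \sqrt{\dd}\,2^{-J},
\]
after which taking the supremum recovers the claim.

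The main step is to approximate $f$ at each dyadic scale. For each $j \in \{0,\dots,J\}$ and each $Q \in \cQ_j$, I will pick a representative point $c_Q$ (say, the center) and define the piecewise-constant approximation $\tilde f_j(x) \deq f(c_Q)$ for $x \in Q$. The identity
\[
\int f\,\ud(\mu-\nu) = \int \tilde f_0\,\ud(\mu-\nu) + \sum_{j=0}^{J-1}\int (\tilde f_{j+1} - \tilde f_j)\,\ud(\mu-\nu) + \int (f - \tilde f_J)\,\ud(\mu-\nu)
\]
splits the problem into three pieces. The first vanishes since $\tilde f_0$ is constant and $\mu - \nu$ has total mass zero. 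For each middle term, $\tilde f_{j+1} - \tilde f_j$ is constant on every $Q \in \cQ_{j+1}$ with value $f(c_Q) - f(c_{p(Q)})$, where $p(Q) \in \cQ_j$ is the parent; since $f$ is 1-Lipschitz and both centers lie in the parent cube, this value is bounded in absolute value by $\mathrm{diam}(p(Q)) = \sqrt{\dd}\,2^{-j}$, yielding the desired bound $\sqrt{\dd}\,2^{-j}\sum_{Q \in \cQ_{j+1}}|\mu(Q) - \nu(Q)|$ by a direct computation. For the residual, I will use the pointwise estimate $|f - \tilde f_J| \le \sqrt{\dd}\,2^{-(J+1)}$ (the half-diameter of a level-$J$ cube, measured from its center) together with $(\mu+\nu)([0,1]^\dd) = 2$ to arrive at exactly $\sqrt{\dd}\,2^{-J}$.

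The only real subtlety is the asymmetric choice of pointwise bounds---the full diameter in the middle terms but the half-diameter in the residual---which together produce exactly the stated constants. A more constructive primal alternative would iterate the one-step construction from~\eqref{eq-primal-dual:one_step_w1_bound} inside each sub-cube, but this requires carefully renormalizing sub-measures to probability measures and rescaling each cube back to $[0,1]^\dd$ at every recursive step; the dual argument sidesteps this bookkeeping entirely, which is why I prefer it here.
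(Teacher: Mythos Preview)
Your proof is correct, and it takes a genuinely different route from the paper's. The paper proves Theorem~\ref{thm:dyadic} in the \emph{primal}: it constructs explicit decompositions $\mu = \sum_{j=0}^J \mu_j$ and $\nu = \sum_{j=0}^J \nu_j$ where $\mu_j$ and $\nu_j$ agree on every cube in $\cQ_j$, then couples each pair within cubes and bounds the mass $\mu_j(\Omega)$ by the discrepancy $\sum_{Q \in \cQ_{j+1}} |\mu(Q) - \nu(Q)|$. This requires two technical lemmas (positivity of the $\mu_j$, and the mass bound), which constitute most of the work. Your dual argument replaces all of this with a one-line telescoping identity for $f$ over the dyadic hierarchy and elementary Lipschitz estimates; it is shorter and avoids the measure-theoretic bookkeeping entirely. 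What the paper's approach buys is constructiveness: it exhibits an explicit near-optimal coupling, which your dual argument does not. The paper later pursues a dual approach in \S\ref{sec:chaining}, but only to re-derive the consequence Proposition~\ref{prop:w1_dyadic_rate} via abstract chaining and covering numbers, not to prove Theorem~\ref{thm:dyadic} itself; your argument can be read as a direct, scale-by-scale version of that chaining, with exact constants.
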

\begin{proof}
    We define a sequence of positive measures $\mu_0, \dotsc, \mu_J$ and $\nu_0, \dotsc, \nu_J$, which satisfy $\sum_{j=0}^J \mu_j = \mu$ and $\sum_{j=0}^J \nu_j = \nu$ and such that
	\begin{align*}
		\mu_j(Q) = \nu_j(Q) \quad \forall Q \in \cQ_j,\, j = 0, \dots, J\,.
	\end{align*}
 We write for simplicity $\Omega \deq [0, 1]^\dd$.
	We first claim that
	\begin{equation}\label{eq-primal-dual:sub_measure_bound}
		W_1(\mu, \nu)  \leq \sqrt \dd \sum_{j=0}^J 2^{-j} \mu_j(\Omega)\,.
	\end{equation}
	This bound is nothing but an instantiation of the strategy described above: since $\mu_j$ and $\nu_j$ assign the same mass to each element of $\cQ_j$, there exists a coupling $\gamma_j$ between $\mu_j$ and $\nu_j$ which only moves mass within each element of $\cQ_j$; for instance, we can take the piecewise independent coupling
	\begin{equation*}
		\gamma_j = \sum_{Q \in \cQ_j: \mu_j(Q) > 0} \frac{(\mu_j)|_Q \otimes (\nu_j)|_Q}{\mu_j(Q)}\,.
	\end{equation*}
	The fact that $\gamma_j \in \Gamma_{\mu_j, \nu_j}$ implies $\gamma = \sum_{j=0}^J \gamma_j \in \Gamma_{\mu, \nu}$, and
	\begin{align*}
		W_1(\mu, \nu)& \leq \int \|x - y\| \, \gamma(\ud x,\ud y) \\
  &= \sum_{j=0}^J \int \|x - y\| \, \gamma_j(\ud x,\ud y) \\
  &\leq \sqrt \dd\sum_{j=0}^J 2^{-j} \mu_j(\Omega)\,,
	\end{align*}
	where the last inequality follows from the fact if $(x, y) \in \supp(\gamma_j)$, then $x$ and $y$ lie in the same element $Q \in \cQ_j$, so that $\|x - y\| \leq \mathrm{diam}(Q) = \sqrt \dd\, 2^{-j}$.
	
	We now exhibit the measures $\mu_j$ and $\nu_j$ which give rise to the final bound.
    Define the restriction of $\mu_J$ on each $Q \in \cQ_J$ by setting
	\begin{align*}
		(\mu_J)|_Q = \frac{\mu(Q)\wedge \nu(Q)}{\mu(Q)}\, \mu|_Q\,,
	\end{align*}
	where by convention we let $\mu_J$ be zero on $Q$ if $\mu(Q) = 0$.
	Similarly, set
	\begin{equation*}
		(\nu_J)|_Q = \frac{\mu(Q)\wedge  \nu(Q)}{\nu(Q)}\, \nu|_Q\,.
	\end{equation*}
	
	For $1 \leq j < J$, let
	\begin{align*}
		\mu'_j & = \mu - \sum_{j < k \leq J} \mu_k\,, \\
		\nu'_j &= \nu - \sum_{j < k \leq J} \nu_k\,,
	\end{align*}
	and then, for each $Q \in \cQ_j$, define
	\begin{align*}
		(\mu_j)|_Q & = \frac{\mu'_j(Q)\wedge  \nu_j'(Q)}{\mu'_j(Q)}\, (\mu_j')|_Q\,, \\
		(\nu_j)|_Q & = \frac{\mu'_j(Q)\wedge  \nu_j'(Q)}{\nu'_j(Q)}\, (\nu_j')|_Q\,.
	\end{align*}
	Finally, we set 
 $$
 \mu_0 = \mu - \sum_{j = 1}^J \mu_j  \qquad \text{and}\qquad  \nu_0 = \nu - \sum_{j = 1}^J \nu_j\,,
 $$
so that
 $$
 \sum_{j=0}^J \mu_j = \mu \qquad \text{and}\qquad  \sum_{j=0}^J \nu_j = \nu\,.
 $$
	It is easy to see that $\mu_j(Q) = \nu_j(Q)$ for all $Q \in \cQ_j$ and all $j \in \{0, \dots, J\}$.
	To apply \eqref{eq-primal-dual:sub_measure_bound}, we also need to check that $\mu_j, \nu_j \geq 0$.
 
	\begin{lemma}\label{lem:prdyadpart}
		The measures $\mu_0, \dots, \mu_J$ and $\nu_0, \dots, \nu_J$ are all positive.
	\end{lemma}
\begin{proof}
		By symmetry, it suffices to verify this fact for the sequence $\mu_0, \dots, \mu_J$.
		
    We first show by backwards induction on $j$ that 
\begin{equation}\label{eq:pr:inductiondyadic1}\tag{${\sf A}_j$}
      \mu_{j+1} \geq 0 \qquad \text{and} \qquad 0 \leq \sum_{j < k \leq J} \mu_k \leq \mu
\end{equation}
  for all $j = 0, \dots, J-1$.
  
  For $j = J-1$, these bounds follow directly from the construction of $\mu_J$.
		Next assume that~\eqref{eq:pr:inductiondyadic1} holds for some $j$, then
		\begin{equation*}
			\mu_j' = \mu -  \sum_{j < k \leq J} \mu_k \geq 0\,,
		\end{equation*}
		and therefore  $\mu_j \ge 0$, since $\mu_j$ is obtained by reweighting $\mu_j'$ on each element of $\cQ_j$ by a non-negative quantity. Note also that this non-negative quantity is also bounded by one so that we also have $\mu_j \le \mu_j'$. Together these two facts yields $0\le \mu_j \le \mu_j'$ so that
		\begin{equation*}
			0 \leq \sum_{j-1 < k \leq J} \mu_k=\sum_{j < k \leq J} \mu_k + \mu_j \leq \sum_{j < k \leq J} \mu_k + \mu'_j = \mu\,.
		\end{equation*}
  We have proved that $({\sf A}_{j-1})$ holds.
		By induction, we obtain that $\mu_1, \dots, \mu_J$ are all positive.
		Finally, since we have also shown that 
  $$
  \sum_{0 < k \leq J} \mu_k \leq \mu,
  $$
  we obtain $\mu_0 \geq 0$ as well.
	\end{proof}
 
In light of~\eqref{eq-primal-dual:sub_measure_bound}, it remains to bound $\mu_j(\Omega)$ for $j = 0, \dots, J$.
	We first claim that 
	\begin{equation}\label{eq-primal-dual:q_gap}
		|\mu'_j(Q) - \nu'_j(Q)| = |\mu(Q) - \nu(Q)| \quad \forall Q \in \cQ_j,\, j = 1, \dots, J\,.
	\end{equation}
	This follows from the fact that $$\mu'_j(Q) - \nu'_j(Q) = \mu(Q) - \nu(Q) - \sum_{j < k \leq J} (\mu_k(Q) - \nu_k(Q))\,,$$
	since $\mu_k$ and $\nu_k$ assign the same mass to each element of $\cQ_k$ and since $Q$ can be written as a disjoint union of elements of $\cQ_k$, so the sum vanishes.
	We now claim that we can bound the mass that $\mu_j$ and $\nu_j$ assign to elements of $\cQ_j$ in terms of the difference between $\mu$ and $\nu$ on cubes in $\cQ_{j+1}$.
 
	\begin{lemma}\label{lem:prdyadic2}
		If $R \in \cQ_j$ for some $0 \leq j < J$, then
			\begin{equation*}
			\mu_j(R) = \nu_j(R) \leq \sum_{Q \subseteq R,\, Q \in \cQ_{j+1}} |\mu(Q) - \nu(Q)|\,.
		\end{equation*}
	\end{lemma}
	\begin{proof}
		We have already shown that $\mu_j(R) = \nu_j(R)$, so it suffices to show that expression holds for $\mu_j(R)$.
		For notational consistency, we set $\mu_0' = \mu_0$.
		Then, for any $0 \leq j < J$ and any $R \in \cQ_j$,
		\begin{align*}
			\mu_j(R) & \leq \mu'_j(R) \\
			& = \sum_{Q \subseteq R,\, Q \in \cQ_{j+1}} \mu'_j(Q) \\
			& = \sum_{Q \subseteq R,\, Q \in \cQ_{j+1}} (\mu'_{j+1}(Q) - \mu_{j+1}(Q)) \\
			& = \sum_{Q \subseteq R,\, Q \in \cQ_{j+1}} (\mu'_{j+1}(Q) - \nu'_{j+1}(Q))_+ \\
			& \le  \sum_{Q \subseteq R,\, Q \in \cQ_{j+1}} |\mu'_{j+1}(Q) - \nu'_{j+1}(Q)| \\
			& = \sum_{Q \subseteq R,\, Q \in \cQ_{j+1}} |\mu(Q) - \nu(Q)|\,,
		\end{align*}
		where the second equality comes from comparing the definitions of $\mu_j'$ and $\mu_{j+1}'$, and the last equality follows from~\eqref{eq-primal-dual:q_gap}.
	\end{proof}
	
	Putting it all together,~\eqref{eq-primal-dual:sub_measure_bound} implies
	\begin{align*}
		W_1(\mu, \nu) & \leq \sqrt{\dd} \sum_{j=0}^J 2^{-j}\mu_j(\Omega) \\
		& = \sqrt{\dd} \sum_{j=0}^{J-1} 2^{-j}\mu_j(\Omega) + \sqrt{\dd}\,2^{-J}\mu_J(\Omega)\\
		& = \sqrt{\dd} \sum_{j=0}^{J-1} \biggl(2^{-j}\sum_{R \in \cQ_j} \mu_j(R) \biggr) + \sqrt{\dd}\,2^{-J}\mu_J(\Omega) \\
		& \leq \sqrt{\dd} \sum_{j=0}^{J-1} \biggl(2^{-j}\sum_{Q \in \cQ_{j+1}} |\mu(Q) - \nu(Q)| \biggr) + \sqrt{\dd}\,2^{-J}\,.
	\end{align*}
 This concludes the proof of Theorem~\ref{thm:dyadic}.
\end{proof}

Applying Theorem~\ref{thm:dyadic} to $\mu$ and $\mu_n$, we obtain the following bound.

\begin{proposition}\label{prop:w1_dyadic_rate}
	If the support of $\mu$ lies in $[0, 1]^\dd$, then 
	\begin{equation*}
		\E W_1(\mu_n, \mu) \lesssim \sqrt \dd \cdot \begin{cases}
			n^{-1/2} & \text{if $\dd = 1$,} \\
			(\log n)\,n^{-1/2} & \text{if $\dd = 2$,} \\
			n^{-1/\dd} & \text{if $\dd \geq 3$.} \\
		\end{cases}
	\end{equation*}
\end{proposition}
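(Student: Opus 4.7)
The plan is to apply the dyadic partitioning bound of Theorem~\ref{thm:dyadic} with $\nu = \mu_n$, take expectations, bound each term $\E|\mu(Q) - \mu_n(Q)|$ via the variance, sum over cubes using Cauchy--Schwarz, and finally optimize the depth $J$ as a function of $n$ and $\dd$.

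The first step is the per-cube bound. For a fixed cube $Q \in \cQ_{j+1}$, the random variable $n\mu_n(Q)$ is $\mathsf{Bin}(n, \mu(Q))$, so by Jensen's inequality
\begin{align*}
\E|\mu_n(Q) - \mu(Q)| \le \sqrt{\Var(\mu_n(Q))} = \sqrt{\frac{\mu(Q)\,(1-\mu(Q))}{n}} \le \sqrt{\frac{\mu(Q)}{n}}\,.
\end{align*}
Summing over $Q \in \cQ_{j+1}$ (of cardinality $2^{\dd(j+1)}$) and applying Cauchy--Schwarz with $\sum_Q \mu(Q) = 1$ gives
\begin{align*}
\E \sum_{Q \in \cQ_{j+1}} |\mu_n(Q) - \mu(Q)| \le \sqrt{\frac{2^{\dd(j+1)}}{n}}\,.
\end{align*}

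Plugging this into Theorem~\ref{thm:dyadic} and factoring out $2^{\dd/2}/\sqrt n$ yields
\begin{align*}
\E W_1(\mu_n,\mu) \le \sqrt{\dd}\,\frac{2^{\dd/2}}{\sqrt n}\sum_{j=0}^{J-1} 2^{j(\dd-2)/2} + \sqrt{\dd}\,2^{-J}\,.
\end{align*}
It remains to choose $J$ according to the dimension. When $\dd = 1$, the geometric sum is bounded by an absolute constant, so letting $J \to \infty$ yields the rate $\sqrt{\dd}/\sqrt n$. When $\dd = 2$, the sum equals $J$, and the choice $J = \lceil \log_2 n \rceil$ balances the two terms and produces the rate $\sqrt{\dd}\,(\log n)/\sqrt n$. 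When $\dd \ge 3$, the sum is a geometric progression dominated by its last term $2^{J(\dd-2)/2}$, and balancing $2^{J(\dd-2)/2}/\sqrt n$ against $2^{-J}$ gives $2^J \asymp n^{1/\dd}$, resulting in the rate $n^{-1/\dd}$ (up to a $\dd$-dependent constant absorbed in $\lesssim$).

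The only step that requires care is the per-cube variance bound and the Cauchy--Schwarz summation: everything else is book-keeping. There is no genuine obstacle, since the dyadic decomposition of Theorem~\ref{thm:dyadic} has already done the heavy lifting by reducing control of $W_1$ to the purely one-dimensional quantities $|\mu(Q) - \mu_n(Q)|$ on a hierarchy of cubes.
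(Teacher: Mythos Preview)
Your proof is correct and follows essentially the same approach as the paper: apply the dyadic partitioning bound, control $\E|\mu_n(Q)-\mu(Q)|$ via the binomial variance, sum over cubes with Cauchy--Schwarz, and balance $J$ against $n$ in each regime. The only cosmetic difference is that the paper applies Cauchy--Schwarz to the sum of second moments directly, whereas you first bound each term by $\sqrt{\mu(Q)/n}$ and then apply Cauchy--Schwarz; both yield the identical estimate $\sum_{Q\in\cQ_{j+1}}\E|\mu_n(Q)-\mu(Q)|\le 2^{\dd(j+1)/2}n^{-1/2}$.
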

\begin{proof}
	Theorem~\ref{thm:dyadic} implies that for any $J \geq 0$,
	\begin{align*}
		\E W_1(\mu_n, \mu) & \leq \sqrt{\dd} \sum_{j=0}^{J-1} 2^{-j} \sum_{Q \in \cQ_{j+1}} \E |\mu_n(Q) - \mu(Q)| +\sqrt \dd\, 2^{-J} \\
		& \leq \sqrt{\dd} \sum_{j=0}^{J-1} 2^{-j}\, 2^{\dd\,(j+1)/2} \, \biggl(\sum_{Q \in \cQ_{j+1}} \E (\mu_n(Q) - \mu(Q))^2\biggr)^{1/2} \\
      &\qquad{} + \sqrt{\dd}\,2^{-J} \\
		& \leq \sqrt{\dd} \sum_{j=0}^{J-1} 2^{-j}\, 2^{\dd\,(j+1)/2} \, n^{-1/2} + \sqrt{\dd}\,2^{-J} \\
		& \lesssim \sqrt{\dd} \cdot \begin{cases}
			2^{(J+1)\,(\dd/2-1)}\,n^{-1/2} +  2^{-J} & \text{ if $\dd \geq 3$,} \\
			J n^{-1/2} + 2^{-J} & \text{ if $\dd = 2$,} \\
			n^{-1/2} + 2^{-J} & \text{ if $\dd = 1$.}
		\end{cases}
	\end{align*}
	To balance these terms, we choose $J$ such that $2^{J} \leq n^{1/2} < 2^{J+1}$ if $\dd \leq 2$, and $J$ such that $2^{J+1} \leq n^{1/\dd} < 2^{J+2}$ if $\dd \geq 3$.
\end{proof}
Note that bound of Proposition~\ref{prop:w1_dyadic_rate} is weaker than that of Proposition~\ref{prop:w1_rate} when $\dd = 2$.
Unfortunately, the dyadic partitioning argument does not yield a sharp bound in two dimensions.
We return to this question in Section~\ref{sec:akt}.

\section{Dual chaining bounds}\label{sec:chaining}

In this section, we present a superficially different proof of Proposition~\ref{prop:w1_dyadic_rate}.
Rather than constructing a coupling in the primal, we use the dual representation of the 1-Wasserstein distance instead.
The benefit of this approach is that we can write
\begin{align}
	W_1(\mu_n, \mu) & = \sup_{f \in \mathrm{Lip}_1} \left\{\int f\, \ud \mu_n - \int f \,\ud \mu\right\} \nonumber\\
	& = \sup_{f \in \mathrm{Lip}_1} \frac 1 n \sum_{i=1}^n \{f(X_i) - \E f(X_i)\}\,. \label{eq-primal-dual:w1_empirical_process}
\end{align}
The random process $f \mapsto  \frac 1 n \sum_{i=1}^n \{f(X_i) - \E f(X_i)\}$ is known as an \emph{empirical process}\index{empirical process}, and bounding the expected suprema of such processes is a very common task in many areas of statistics.

To control this empirical process, we use a standard technique known as \emph{chaining}.\index{chaining}
Given a class $\cF$ of real-valued functions on $\Omega \subseteq \R^\dd$, we call a set $F = \{f_1, \dots, f_N\}$ an $\varepsilon$-cover of $\cF$ if, for any $f \in \cF$, there exists $f_i \in F$ such that $\|f - f_i\|_{L^\infty(\Omega)} \leq \varepsilon$.
The \emph{$\varepsilon$-covering number} of $\cF$ is
\begin{equation*}
	N(\varepsilon, \cF) = \min \{|F|: F \text{ is an $\varepsilon$-cover of $\cF$}\}\,.
\end{equation*}
The chaining argument shows that the covering number of a class $\cF$ controls the supremum of an empirical process indexed by that set.
We use the following version:

\begin{proposition}[{\cite[Theorem 5.31]{Han14}}]\label{prop:chaining}
	If $\cF$ is a set of real-valued functions on $\Omega$ such that $\|f\|_{L^\infty(\Omega)} \leq R$ for all $f \in \cF$, then
	\begin{equation*}
		\E \sup_{f \in \cF} \frac 1n \sum_{i=1}^n \{f(X_i) - \E f(X_i)\} \lesssim \inf_{\tau > 0} \left\{\tau + \frac{1}{\sqrt n} \int_\tau^R \sqrt{\log N(\varepsilon, \cF)} \, \ud \varepsilon\right\}\,.
	\end{equation*}
\end{proposition}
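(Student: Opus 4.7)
The plan is to combine the classical symmetrization trick with Dudley-style chaining. First I would symmetrize: introducing i.i.d.\ Rademacher signs $\sigma_1,\dotsc,\sigma_n$ independent of the sample, standard symmetrization gives
$$
\E \sup_{f\in\cF} \frac{1}{n}\sum_{i=1}^n \{f(X_i) - \E f(X_i)\} \le 2\,\E \sup_{f\in\cF} \frac{1}{n}\sum_{i=1}^n \sigma_i f(X_i)\,.
$$
Conditional on $(X_i)_{i=1}^n$, the centered process $f \mapsto n^{-1}\sum_i \sigma_i f(X_i)$ is sub-Gaussian in $f$: by Hoeffding, for any $f,g$, its increments have sub-Gaussian parameter at most $n^{-1}\,(\sum_i (f(X_i)-g(X_i))^2)^{1/2} \le n^{-1/2}\,\|f-g\|_{L^\infty(\Omega)}$. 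In particular, for any finite set $\{g_1,\dotsc,g_N\}$ of functions with $\|g_j\|_{L^\infty} \le \rho$, the maximal inequality gives $\E \max_j n^{-1}\sum_i \sigma_i g_j(X_i) \lesssim \rho\sqrt{\log N}/\sqrt n$.

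Next I would set up a dyadic chain. Fix $\tau>0$ and let $\eps_k \deq R\cdot 2^{-k}$. For each $k\ge 0$, let $\cF_k$ be a minimal $\eps_k$-cover of $\cF$ in $L^\infty(\Omega)$, taking $\cF_0 = \{0\}$ (which is a valid $R$-cover since $\|f\|_{L^\infty}\le R$). Let $K$ be the smallest integer with $\eps_K \le \tau$, and let $\pi_k(f)\in\cF_k$ be a closest element to $f$. Writing the telescoping decomposition
$$
f = \bigl(f - \pi_K(f)\bigr) + \sum_{k=1}^{K} \bigl(\pi_k(f) - \pi_{k-1}(f)\bigr)\,,
$$
the residual is controlled uniformly by $\|f-\pi_K(f)\|_{L^\infty} \le \eps_K \le \tau$. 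The $k$-th increment $\pi_k(f) - \pi_{k-1}(f)$ ranges over at most $|\cF_k|\cdot|\cF_{k-1}| \le N(\eps_k,\cF)^2$ functions, each with $L^\infty$ norm bounded by $\eps_k+\eps_{k-1} = 3\eps_k$. Applying the sub-Gaussian maximal inequality above at each scale yields
$$
\E \sup_{f\in\cF} \frac{1}{n}\sum_i \sigma_i\,\bigl(\pi_k(f) - \pi_{k-1}(f)\bigr)(X_i) \lesssim \frac{\eps_k}{\sqrt n}\,\sqrt{\log N(\eps_k,\cF)}\,.
$$

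Finally I would assemble the pieces. Summing the increment bounds over $k=1,\dotsc,K$ and adding the residual $\tau$ yields
$$
\E \sup_{f\in\cF} \frac{1}{n}\sum_i \sigma_i f(X_i) \lesssim \tau + \frac{1}{\sqrt n}\sum_{k=1}^K \eps_k\sqrt{\log N(\eps_k,\cF)}\,.
$$
Since $\eps_k - \eps_{k+1} = \eps_k/2$ and $\eps\mapsto \sqrt{\log N(\eps,\cF)}$ is non-increasing, the Riemann sum is dominated by the integral: $\sum_{k=1}^K \eps_k\sqrt{\log N(\eps_k,\cF)} \lesssim \int_\tau^R \sqrt{\log N(\eps,\cF)}\,\ud \eps$. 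Taking the infimum over $\tau>0$ and undoing the symmetrization gives the claim. The main technical nuisance will be aligning the geometric grid with the continuous integral at the endpoints (in particular, ensuring the discretization error absorbs into the $\tau$ term and that the crude bound $3\eps_k$ on each increment does not lose anything essential); everything else is bookkeeping around Hoeffding's inequality.
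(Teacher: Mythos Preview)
The paper does not prove this proposition; it is stated with a citation to \cite[Theorem 5.31]{Han14} and used as a black box. Your argument is the standard symmetrization-plus-Dudley-chaining proof and is correct, so there is nothing to compare against here.
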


Proposition~\ref{prop:chaining} and~\eqref{eq-primal-dual:w1_empirical_process} imply that we can obtain an upper bound on $\E W_1(\mu_n, \mu)$ as long as we can calculate the covering numbers of the set of Lipschitz functions on $[0, 1]^\dd$.
We also notice that we can assume without loss of generality that the functions appearing in~\eqref{eq-primal-dual:w1_empirical_process} take the value $0$ at $(0, \dots, 0)$.
Indeed, a Lipschitz function on $[0, 1]^\dd$ is bounded, and since the value of $\frac 1 n \sum_{i=1}^n \{f(X_i) - \E f(X_i)\}$ is unaffected if we shift $f$ by a constant, we may fix its value at $(0, \dots, 0)$ to be $0$ without loss of generality.

\begin{lemma}\label{lem:cov_bound}
    Denote by $\Lipone([0,1]^\dd)$ the set of 1-Lipschitz functions on $[0, 1]^\dd$ satisfying $f(0) = 0$.
	Then
	\begin{equation*}
		\log N(\varepsilon,\Lipone([0,1]^\dd)) \lesssim (4 \sqrt \dd/\varepsilon)^\dd\,.
	\end{equation*}
\end{lemma}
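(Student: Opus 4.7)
The plan is to build an explicit $\varepsilon$-cover of $\Lipone([0,1]^\dd)$ by discretizing the domain with a grid and the range into multiples of a fixed spacing, then to exploit the Lipschitz constraint to control how many such discretizations can arise. I may assume $\varepsilon \le 2\sqrt{\dd}$; otherwise the identically zero function alone is an $\varepsilon$-cover, since $|f(x)| = |f(x)-f(0)| \le \|x\| \le \sqrt{\dd}$ for all $f \in \Lipone([0,1]^\dd)$ and $x \in [0,1]^\dd$.

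Set $\delta \deq \varepsilon/(2\sqrt{\dd})$ and let $G \deq (\delta\Z)^\dd \cap [0,1]^\dd$, so that $|G| \le (2\sqrt{\dd}/\varepsilon + 1)^\dd \le (4\sqrt{\dd}/\varepsilon)^\dd$. For each $f \in \Lipone([0,1]^\dd)$, define $\tilde f : G \to \delta\Z$ by rounding $f(x)$ to the nearest integer multiple of $\delta$ at every grid point $x$, and extend to $g_f : [0,1]^\dd \to \delta\Z$ by setting $g_f(x) \deq \tilde f(\pi(x))$, where $\pi(x) \in G$ is a nearest grid point (ties broken arbitrarily). Since $\|x - \pi(x)\| \le \sqrt{\dd}\,\delta/2$ and the rounding introduces at most $\delta/2$ of error, the Lipschitz property yields
\begin{equation*}
\|f - g_f\|_{L^\infty([0,1]^\dd)} \le \sqrt{\dd}\,\delta/2 + \delta/2 \le \varepsilon\,,
\end{equation*}
so the collection of all $g_f$ produced this way is an $\varepsilon$-cover.

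The bulk of the argument is then a counting step: how many distinct tuples $(\tilde f(x))_{x \in G} \in (\delta\Z)^{|G|}$ can arise from $f \in \Lipone([0,1]^\dd)$? For any two grid-adjacent points $x, y \in G$ at Euclidean distance exactly $\delta$, the Lipschitz bound together with the rounding errors gives
\begin{equation*}
|\tilde f(x) - \tilde f(y)| \le |\tilde f(x) - f(x)| + |f(x) - f(y)| + |f(y) - \tilde f(y)| \le 2\delta\,,
\end{equation*}
so the integers $\tilde f(x)/\delta$ and $\tilde f(y)/\delta$ differ by at most $2$, leaving at most $5$ choices for $\tilde f(x)$ once $\tilde f(y)$ is fixed. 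Traversing $G$ by breadth-first search from $0 \in G$ (where $\tilde f(0) = 0$, since $f(0) = 0$), each new vertex contributes a factor of at most $5$, so at most $5^{|G|-1}$ discretizations occur. Taking logarithms yields $\log N(\varepsilon, \Lipone([0,1]^\dd)) \le (|G|-1)\log 5 \lesssim (4\sqrt{\dd}/\varepsilon)^\dd$, as required.

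The main obstacle the plan addresses is keeping the cardinality count at most \emph{singly} exponential in $|G|$: without using the Lipschitz constraint to link neighboring values, each of the $|G| \asymp (\sqrt{\dd}/\varepsilon)^\dd$ grid values could independently range over $\Theta(\sqrt{\dd}/\delta)$ multiples of $\delta$, giving a strictly larger bound. The Lipschitz linkage is what collapses the effective degrees of freedom at each vertex to an $O(1)$ choice given its predecessor in the BFS tree.
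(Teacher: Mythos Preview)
Your proposal is correct and follows essentially the same approach as the paper's: discretize the domain into a grid of spacing $\asymp \varepsilon/\sqrt{\dd}$, round function values to multiples of that spacing, and use the Lipschitz constraint to see that each newly visited grid point admits only $5$ rounded values given its predecessor, yielding at most $5^{|G|-1}$ possible discretizations. The paper phrases the domain discretization via dyadic cubes and traverses in lexicographic order rather than by BFS, but the counting is identical; one minor imprecision in your write-up is that $\|x-\pi(x)\|$ can be as large as $\sqrt{\dd}\,\delta$ (not $\sqrt{\dd}\,\delta/2$) near the boundary of $[0,1]^\dd$ when $1/\delta\notin\Z$, though the resulting total error $\sqrt{\dd}\,\delta + \delta/2 = \varepsilon/2 + \varepsilon/(4\sqrt{\dd}) \le \varepsilon$ still suffices.
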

\begin{proof}

We bound the covering number by exhibiting an $\varepsilon$-cover of $\Lipone([0,1]^\dd)$ of the specified size.
To do so, we again use the notion of a dyadic partition of $[0, 1]^\dd$ into a set $\cQ_j$ of cubes of side length $2^{-j}$.
Each element of $\cQ_j$ is of the form $2^{-j}\, ([k_1, k_1+1] \times \dotsc \times [k_\dd, k_\dd+1])$ for some integers $k_1, \dotsc, k_\dd \in [2^j-1] \deq \{0,\dotsc, 2^{j} -1\}$, and we denote such an element by $Q_{\vec k}$ for $\vec k = (k_1, \dots, k_\dd)$.\footnote{This collection of cubes overlaps at the boundaries, but as above we may remove overlaps to obtain a disjoint partition of $[0, 1]^\dd$.}

Fix an integer $j \geq 0$ and positive $\delta > 0$ to be specified.
Consider the set $\cH$ of functions $h$ satisfying the following requirements:
\begin{enumerate}
	\item $h$ is constant on each element of $\cQ_j$, i.e., there exist constants $(h_{\vec k})_{\vec k \in [2^{j}-1]^\dd}$ such that $h(x) = h_{\vec k}$ for all $x \in Q_{\vec k}$.
	\item $h_{\vec k}$ is an integer multiple of $\delta$ for all $\vec k \in [2^{j}-1]^\dd$.
	\item $h_{(0, \dots, 0)} = 0$.
	\item If $\|\vec k - \vec k'\|_{\infty} \leq 1$, then $|h_{\vec k} - h_{\vec k'}| \leq 2^{-j}\sqrt \dd + \delta$.
\end{enumerate}

We first claim that $\cH$ constitutes an $\varepsilon$-cover of $\Lipone([0,1]^\dd)$ if $2^{-j} \sqrt \dd  + \delta \leq \varepsilon$.
Given any $f \in \Lipone([0,1]^\dd)$, denote by $h_f$ the element of $\cH$ given by $(h_f)_{\vec k} = \delta\, \lfloor f(2^{-j}\,(k_1, \dots, k_\dd))/\delta \rfloor$ for all $\vec k \in [2^{j}-1]^\dd$.
To see that $h_f \in \cH$, note that it immediately satisfies the first three requirements by construction, and for the fourth, we have
\begin{align*}
	|(h_f)_{\vec k} - (h_f)_{\vec k'}| & = \delta\, \bigl\lvert\lfloor f(2^{-j}\,(k_1, \dots, k_\dd))/\delta \rfloor - \lfloor f(2^{-j}\,(k'_1, \dots, k'_\dd))/\delta \rfloor\bigr\rvert \\
	& \leq |f(2^{-j}\,(k_1, \dots, k_\dd)) - f(2^{-j}\,(k'_1, \dots, k'_\dd))| + \delta \\
	& \leq 2^{-j}\, \|\vec k - \vec k'\|_2 + \delta\,,
\end{align*}
where the last inequality follows from the fact that $f$ is Lipschitz.
Since $\|\vec k - \vec k'\|_2 \leq \sqrt \dd$ when $\|\vec k - \vec k'\|_\infty = 1$, the claim follows.
Finally, for any $x \in Q_{\vec k}$, the fact that $f$ is Lipschitz again implies
\begin{align*}
	|f(x) - (h_f)_{\vec k}| & = \bigl\lvert f(x) - \delta\, \lfloor f(2^{-j}\,(k_1, \dots, k_\dd))/\delta \rfloor\bigr\rvert \\
	& \leq |f(x) - f(2^{-j}\,(k_1, \dots, k_\dd))| + \delta \\
	& \leq \mathrm{diam}(Q_{\vec k}) + \delta \\
	& = 2^{-j} \sqrt \dd  + \delta\,.
\end{align*}
Therefore $\|f-h_f\|_\infty \leq 2^{-j} \sqrt \dd + \delta$.

We have shown that for every $f \in \Lipone([0, 1]^\dd)$, there exists $h_f \in \cH$ such that $\|f-h_f\|_\infty \leq 2^{-j} \sqrt \dd  + \delta$.
Therefore, if $2^{-j} \sqrt \dd + \delta \leq \varepsilon$, then $\cH$ is an $\varepsilon$-cover of $\Lipone([0, 1]^\dd)$.
We fix $\delta = 2^{-j} \sqrt \dd$, so that this requirement reduces to $2^{-j} \sqrt \dd \leq \varepsilon/2$.

To bound $|\cH|$, note that if we fix the value of $h_{\vec k}$ for some $\vec k$, then for any $\vec k'$ such that $\|\vec k - \vec k'\|_\infty = 1$, there are at most $5$ possible values of $h_{\vec k'}$.
This follows from the fact that $h_{\vec k'}$ must be an integer multiple of $\delta = 2^{-j} \sqrt \dd$, and there are $5$ integer multiples of $\delta$ in the interval $[h_{\vec k} - 2\delta, h_{\vec k} + 2 \delta]$.
Therefore, if we consider specifying an element $\cH$ by specifying the values of $h_{\vec k}$ sequentially by setting $h_{(0, \dots, 0)} = 0$ and proceeding in lexicographic order, then at each stage we have at most $5$ choices for the next value of $h_{\vec k}$.
This implies that $|\cH| \leq 5^{2^{dj} - 1}$.

For any $j$ for which $2^{-j} \sqrt \dd \leq \varepsilon/2$, we have therefore obtained an $\varepsilon$-cover $\cH$ of $\cF$ satisfying $\log |\cH| \lesssim 2^{\dd j}$.
Choosing $2^{j}$ to be the smallest power of two larger than $2\sqrt \dd/\varepsilon$ yields the claim.
\end{proof}

With the bound of Lemma~\ref{lem:cov_bound} in hand, we can give another proof of Proposition~\ref{prop:w1_dyadic_rate}.

\begin{proof}[Proof of Proposition~\ref{prop:w1_dyadic_rate}]
	Since $\|f\|_\infty \leq \sqrt \dd$ for all $f \in \Lipone([0, 1]^\dd)$,
	by Proposition~\ref{prop:chaining} and~\eqref{eq-primal-dual:w1_empirical_process}, for any $\tau > 0$,
	\begin{equation*}
		\E W_1(\mu_n, \mu) \lesssim \tau + \frac{1}{\sqrt n} \int_\tau^{\sqrt \dd} \sqrt{\log N(\varepsilon, \Lipone([0, 1]^\dd))} \, \ud \varepsilon\,.
	\end{equation*}
	Applying Lemma~\ref{lem:cov_bound} yields
	\begin{equation*}
		\E W_1(\mu_n, \mu) \lesssim \tau + \frac{1}{\sqrt n} \int_\tau^{\sqrt \dd} (4 \sqrt{\dd}/\varepsilon)^{\dd/2} \, \ud \varepsilon\,.
	\end{equation*}
	We now consider the bound separately for $\dd = 1$ and $\dd > 1$.
	If $\dd = 1$, then we may take $\tau = 0$ to obtain
	\begin{equation*}
		\E W_1(\mu_n, \mu) \lesssim \frac{1}{\sqrt n} \int_{0}^{1}(4 /\varepsilon)^{1/2} \, \ud \varepsilon \lesssim n^{-1/2}\,.
	\end{equation*}
	If $\dd > 1$, then $\varepsilon^{-\dd/2}$ is no longer integrable at $0$, so we take $\tau = 4\sqrt \dd\, n^{-1/\dd}$ to obtain
	\begin{equation*}
		\E W_1(\mu_n, \mu) \lesssim \sqrt \dd\, n^{-1/\dd} + \frac{1}{\sqrt n} \int_{4\sqrt \dd\, n^{-1/\dd}}^{\sqrt \dd} (4 \sqrt{\dd}/\varepsilon)^{\dd/2} \, \ud \varepsilon\,.
	\end{equation*}
	When $\dd = 2$, the integral is $O(\log n)$, and we obtain $\E W_1(\mu_n, \mu) \lesssim (\log n)/\sqrt n$.
	When $\dd > 2$, the integral is $O(n^{1/2 - 1/\dd})$, and we obtain $\E W_1(\mu_n, \mu) \lesssim \sqrt \dd\, n^{-1/\dd}$.
\end{proof}
Though these two proofs of Proposition~\ref{prop:w1_dyadic_rate} look quite different, they are in fact very similar: in both cases, we employ a multi-scale decomposition of $[0, 1]^\dd$.
The dyadic partitioning argument uses this decomposition to construct a coupling in the primal; the chaining argument uses this decomposition to control the covering numbers of Lipschitz functions in the dual.

\section{A finer analysis for \texorpdfstring{$\dd = 2$}{dimension two}}\label{sec:akt}

Both the dyadic partition argument presented in Section~\ref{sec:dyadic} and the chaining argument presented in Section~\ref{sec:chaining} suffer from the defect that they fail to obtain the correct rate for the Wasserstein law of large numbers in two dimensions.
This fact is related to the fact that $\dd = 2$ is the ``critical'' case for the behavior $\E W_1(\mu_n, \mu)$---it can be shown that in $\dd = 1$, the cost of the optimal transport between $\mu_n$ and $\mu$ is dominated by ``global'' features and that when $\dd \geq 3$, the cost of optimal transport is dominated by ``local'' irregularities. In dimension 2, by contrast, irregularities at all scales contribute simultaneously, and bounding the optimal cost requires more care.

The correct rate for $\dd = 2$ was first discovered by Ajtai, Koml\'{o}s, and Tusn\'{a}dy~\cite{AjtKomTus84} by a somewhat delicate argument.\index{Atjai--Koml\'{o}s--Tusn\'{a}dy theorem}
In this section, we present an ingenious approach due to Bobkov and Ledoux \cite{BobLed21} that obtains the correct rate by simpler means.
This proof is based on Fourier analysis, and as a first step, we show that we can focus our attention on periodic functions, to which the tools of Fourier analysis can naturally be applied.
This gives rise to the following periodic version of the Wasserstein distance: for probability measures $\mu$ and $\nu$ on $\R^\dd$, define
\begin{equation}\label{eq-primal-dual:w1tilde_def}
	\widetilde{W_1}(\mu, \nu) = \sup_{f \in \widetilde{\mathrm{Lip}}} \int f \, (\ud \mu - \ud \nu)\,,
\end{equation}
where $\widetilde{\mathrm{Lip}}$ denotes the set of $1$-Lipschitz, $2 \pi$-periodic $C^\infty$ functions on $\R^\dd$.
For measures on the cube, this definition actually agrees with standard Wasserstein distance.

\begin{lemma}\label{lem:periodic}
	If the supports of $\mu$ and $\nu$ lie in $[0, 1]^\dd$, then $\widetilde{W_1}(\mu, \nu) = W_1(\mu, \nu)$.
\end{lemma}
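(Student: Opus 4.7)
$\widetilde{W_1}(\mu,\nu) \le W_1(\mu,\nu)$ is immediate from the inclusion $\widetilde{\mathrm{Lip}} \subseteq \mathrm{Lip}_1$ and the dual characterization in Theorem~\ref{thm:W1duality}. The substance is the reverse inequality, which I would prove by identifying $\widetilde{W_1}$ with optimal transport on the torus $\torus = \R^\dd/(2\pi\Z)^\dd$ equipped with the quotient metric $d_{\torus}(x,y) \deq \min_{k \in (2\pi\Z)^\dd}\|x - y + k\|$. Applying Kantorovich duality on $(\torus,d_\torus)$, which is a compact Polish space so that the argument behind Theorem~\ref{thm:W1duality} carries over verbatim, shows that $W_1^{\torus}(\mu,\nu)$ equals the supremum of $\int f \,\ud(\mu-\nu)$ over $2\pi$-periodic 1-Lipschitz functions $f : \R^\dd \to \R$ (for periodic $f$, Euclidean 1-Lipschitz and $d_\torus$ 1-Lipschitz coincide: $d_\torus \le \|\cdot\|$ gives one direction, periodicity gives the other). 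Mollifying such $f$ with a compactly supported smooth kernel preserves periodicity and the 1-Lipschitz property while approximating $f$ uniformly, lifting the supremum to $\widetilde{\mathrm{Lip}}$, so $\widetilde{W_1}(\mu,\nu) = W_1^{\torus}(\mu,\nu)$.

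The heart of the argument is therefore the geometric identity
\begin{equation*}
    d_{\torus}(x,y) = \|x - y\| \qquad \text{for all } x, y \in [0,1]^\dd\,,
\end{equation*}
i.e., the cube is too small for ``wrapping around'' the torus to shorten any distance. Granting this, every $\gamma \in \Gamma_{\mu,\nu}$ is supported on $[0,1]^\dd \times [0,1]^\dd$, so the costs $\|x-y\|$ and $d_\torus(x,y)$ agree $\gamma$-almost everywhere; taking the infimum over $\gamma$ yields $W_1(\mu,\nu) = W_1^{\torus}(\mu,\nu) = \widetilde{W_1}(\mu,\nu)$.

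To prove the identity, expand $\|x - y + 2\pi m\|^2 - \|x - y\|^2 = 4\pi\,\langle x - y, m\rangle + 4\pi^2\,\|m\|^2$ for $m \in \Z^\dd$. Since $\|x-y\|_\infty \le 1$, one has $\langle x-y, m\rangle \ge -\|m\|_1$, and for integer $m$ the elementary bound $|m_i| \le m_i^2$ gives $\|m\|_1 \le \|m\|^2$; combining produces a lower bound of $4\pi(\pi-1)\,\|m\|^2$, which is nonnegative and vanishes only at $m=0$. I expect the main obstacle---such as it is---to be finding precisely this lattice estimate: the argument succeeds in every dimension only because $\pi > 1$, so the diameter $\sqrt{\dd}$ of $[0,1]^\dd$ relative to the period $2\pi$ is just offset by $m$ being restricted to integer vectors rather than arbitrary reals.
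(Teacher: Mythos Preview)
Your proof is correct. Both your argument and the paper's rest on the same geometric identity $d_{\torus}(x,y)=\|x-y\|$ for $x,y\in[0,1]^\dd$, but you exploit it differently: the paper stays on the dual side, extending an arbitrary $f\in\mathrm{Lip}_1([0,1]^\dd)$ to a $2\pi$-periodic Lipschitz function via the McShane-type formula $\tilde f(y)=\sup_{x\in[0,1]^\dd}\{f(x)-d_{\torus}(x,y)\}$ and checking $\tilde f=f$ on the cube; you instead identify $\widetilde{W_1}$ with $W_1^{\torus}$ by duality on the torus and then match it to $W_1$ through the \emph{primal}, using that the two costs agree on the support of every coupling. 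Your route is arguably cleaner, and you also supply the lattice estimate $\|x-y+2\pi m\|^2-\|x-y\|^2\ge 4\pi(\pi-1)\|m\|^2$ that the paper simply asserts; the paper's route, on the other hand, avoids invoking duality on a second space and gives an explicit periodic representative for each dual potential, which can be useful if one wants to track the optimal $f$.
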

\begin{proof}
	The point of this lemma is that, under the restriction on the support of $\mu$ and $\nu$, we can assume that the Lipschitz functions appearing in the dual representation of $W_1$ are both periodic and smooth.
	
	We first handle the former restriction.
    Define a metric $\md_{\torus}$ on $\R^\dd$ by
	\begin{equation*}
		\md_{\torus}(x, y) = \min_{z \in \Z^\dd} \|x - y - 2 \pi z\|\,.
	\end{equation*}
	The notation $\md_{\torus}$ is used to emphasize that this is the metric that arises from identifying the opposite faces of $[0, 2 \pi]^\dd$ so that it becomes a flat torus.
	Given any $f \in \mathrm{Lip}_1([0, 1]^\dd)$, define the function $\tilde f: \R^\dd \to \R$ by
	\begin{equation}\label{eq:fbardef}
		\tilde f(y) = \sup_{x \in [0, 1]^\dd}\{f(x) - \md_{\torus}(x, y)\}\,.
	\end{equation}
	For each $x \in [0, 1]^\dd$, the function $y \mapsto f(x) - \md_{\torus}(x, y)$ is $2 \pi$-periodic and Lipschitz with respect to the Euclidean metric on $\R^\dd$ (since both facts are true of $\md_{\torus}(x, y)$).
	Both periodicity and Lipschitzness are preserved by taking pointwise suprema, so these properties are inherited by $\tilde f$ as well.
	We also note the crucial fact that $\tilde f = f$ on $[0, 1]^\dd$: indeed, for $y \in [0, 1]^\dd$, we clearly have $\tilde f(y) \geq f(y)$ by choosing $x = y$ in~\eqref{eq:fbardef}.
	On the other hand, since $d_{\torus}(x, y) = \|x - y\|$ for any $x, y \in [0, 1]^\dd$, we also have
	\begin{equation*}
		f(x) - \md_{\torus}(x, y) = f(x) - \|x - y\| \leq f(y) \quad \forall x \in [0, 1]^\dd\,,
	\end{equation*}
	where the inequality follows from the fact that $f$ is Lipschitz.
	Taking suprema on both sides yields $\tilde f(y) \leq f(y)$.
	
	Since the supports of $\mu$ and $\nu$ lie in $[0 ,1]^\dd$, we therefore have, for any $f \in \mathrm{Lip}_1([0, 1]^\dd)$
	\begin{equation*}
		\int f\, (\ud \mu - \ud \nu) = \int \tilde f\, (\ud \mu - \ud \nu)\,,
	\end{equation*}
	where the function on the right side is Lipschitz and $2 \pi$-periodic.
	This implies that we can always assume that the functions appearing in the dual representation of $W_1$ are periodic.
	
	The restriction to smooth functions is routine: since any Lipschitz function can be uniformly approximated by a smooth function, we can always assume that the functions in question are $C^\infty$.
\end{proof}

Given a probability measure $\mu$, we denote by $\phi_\mu$ its Fourier transform (or characteristic function):
\begin{equation}\label{eq-primal-dual:fourier_def}
	\phi_\mu(m) = \int e^{\ima \langle m, z \rangle} \, \mu(\ud z)\,, \quad m \in \Z^\dd\,.
\end{equation}
The basis of the Bobkov{--}Ledoux argument is the following proposition.

\begin{proposition}\label{fourier_bound}
	\begin{equation}\label{eq-primal-dual:fourier_bound}
		\widetilde{W_1}(\mu, \nu)^2 \leq \sum_{m \neq 0} \|m\|^{-2}\, |\phi_\mu(m) - \phi_\nu(m)|^2\,,
	\end{equation}
	where the sum is over all nonzero $m \in \Z^\dd$ and $\|m\|^2 = m_1^2 + \dots + m_\dd^2$.
\end{proposition}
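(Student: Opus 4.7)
The plan is to use the dual representation~\eqref{eq-primal-dual:w1tilde_def} and expand each test function $f \in \widetilde{\mathrm{Lip}}$ into a Fourier series on the torus. Since $f$ is $C^\infty$ and $2\pi$-periodic, it admits an absolutely convergent expansion $f(x) = \sum_{m \in \Z^\dd} c_m(f)\, e^{\ima \langle m, x \rangle}$ with coefficients $c_m(f) = (2\pi)^{-\dd}\int_{[0,2\pi]^\dd} f(x)\, e^{-\ima \langle m,x\rangle}\,\ud x$. Integrating termwise against $\mu - \nu$ and recognizing the definition~\eqref{eq-primal-dual:fourier_def} gives
\begin{equation*}
\int f\,(\ud \mu - \ud\nu) \;=\; \sum_{m \in \Z^\dd} c_m(f)\,(\phi_\mu(m) - \phi_\nu(m)) \;=\; \sum_{m \ne 0} c_m(f)\,(\phi_\mu(m) - \phi_\nu(m)),
\end{equation*}
where the $m = 0$ term drops out because $\mu$ and $\nu$ are probability measures, so $\phi_\mu(0) = \phi_\nu(0) = 1$.

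Next I would apply the Cauchy--Schwarz inequality after inserting the weight $\|m\| \cdot \|m\|^{-1}$ into each summand, yielding
\begin{equation*}
\biggl|\int f\,(\ud\mu - \ud\nu)\biggr|^2 \;\le\; \biggl(\sum_{m \ne 0} \|m\|^2\,|c_m(f)|^2\biggr)\,\biggl(\sum_{m \ne 0} \|m\|^{-2}\,|\phi_\mu(m) - \phi_\nu(m)|^2\biggr).
\end{equation*}
The second factor is precisely the right-hand side of~\eqref{eq-primal-dual:fourier_bound}, so it remains to bound the first factor by $1$ uniformly over $f \in \widetilde{\mathrm{Lip}}$.

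For that I would invoke Plancherel's theorem on the torus. Since the Fourier coefficients of $\partial_j f$ are $\ima\, m_j\, c_m(f)$, summing the Plancherel identity over $j = 1, \dots, \dd$ gives
\begin{equation*}
\sum_{m \in \Z^\dd} \|m\|^2\,|c_m(f)|^2 \;=\; \int_{[0,2\pi]^\dd} \|\nabla f(x)\|^2\, \frac{\ud x}{(2\pi)^\dd} \;\le\; 1,
\end{equation*}
where the last inequality uses that $f$ is $1$-Lipschitz, hence $\|\nabla f\| \le 1$ almost everywhere (by Rademacher's theorem applied to the periodic extension). Taking the supremum over $f \in \widetilde{\mathrm{Lip}}$ in the Cauchy--Schwarz estimate then yields the proposition. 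There is no genuine obstacle in the argument; the only care needed is bookkeeping with the Fourier conventions so that Plancherel produces exactly $\|\nabla f\|_{L^2}^2$ on the right and checking that the $m = 0$ mode is removed before applying Cauchy--Schwarz (otherwise the factor $\|m\|^{-2}$ would be ill-defined there).
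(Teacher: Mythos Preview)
Your proof is correct and follows essentially the same approach as the paper: expand $f$ in a Fourier series, drop the $m=0$ mode, apply Cauchy--Schwarz with weights $\|m\|^{\pm 1}$, and bound $\sum_{m}\|m\|^2|c_m(f)|^2$ via Parseval and the Lipschitz condition $\|\nabla f\|\le 1$. One tiny remark: since $f\in\widetilde{\mathrm{Lip}}$ is already $C^\infty$, you don't need to invoke Rademacher---the gradient exists classically and the bound $\|\nabla f\|\le 1$ is immediate from $1$-Lipschitzness.
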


Before giving the proof, we pause for a moment to compare Proposition~\ref{fourier_bound} to Theorem~\ref{thm:dyadic}.
Both results give a bound on $W_1(\mu, \nu)$ by comparing them at different scales: in the case of Theorem~\ref{thm:dyadic}, this is done by calculating how much they differ on smaller and smaller cubes, in the case of Proposition~\ref{fourier_bound}, this is done by calculating how much they differ at higher and higher frequencies.
In both cases, each term in the sum is weighted by the scale of the comparison ($2^{-j}$ in the case of Theorem~\ref{thm:dyadic}, $\|m\|^{-2}$ in the case of Proposition~\ref{fourier_bound}).
The key difference between these bounds is that Theorem~\ref{thm:dyadic} has an $\ell^1$ flavor, whereas Proposition~\ref{fourier_bound} has an $\ell^2$ flavor.
This different turns out to be the source of the $\sqrt{\log n}$ savings in the rate for $\dd = 2$.

\begin{proof}[Proof of Proposition~\ref{fourier_bound}]
	Given a $2 \pi$-periodic $C^\infty$ function $f$, we can expand it as a Fourier series:
	\begin{equation*}
		f(x) = \sum_{m \in \Z^\dd} \hat f_m\, e^{\ima \langle m, x \rangle}\,,
	\end{equation*}
	where the coefficients $\hat f_m$ tend to zero faster than any polynomial as $\|m\|\to\infty$.
	We may therefore differentiate term-by-term and apply Parseval's identity to obtain
	\begin{equation*}
		\frac{1}{(2 \pi)^\dd}\int_{[0, 2 \pi]^\dd} (\partial_i f(x))^2 \, \ud x = \sum_{m \in \Z^\dd} m_i^2\, |\hat f(m)|^2\,,
	\end{equation*}
	and summing over the coordinates yields
	\begin{equation*}
		\frac{1}{(2 \pi)^\dd} \int_{[0, 2 \pi]^\dd} \|\nabla f(x)\|^2 \, \ud x = \sum_{m \in \Z^\dd} \|m\|^2\, |\hat f(m)|^2\,.
	\end{equation*}
	If we assume that $f$ is $1$-Lipschitz, then $\|\nabla f(x)\|\leq 1$ for all $x \in [0, 2 \pi]^{\dd}$, so
	\begin{equation}\label{eq-primal-dual:l2-fourier-bound}
		\sum_{m \in \Z^\dd} \|m\|^2\, |\hat f(m)|^2 = \frac{1}{(2 \pi)^\dd} \int_{[0, 2 \pi]^\dd} \|\nabla f(x)\|^2 \, \ud x \leq 1\,.
	\end{equation}

	Fubini's theorem therefore implies that for any $1$-Lipschitz, $2 \pi$-periodic $C^\infty$ function $f$,
	\begin{align*}
		\int f \, (\ud \mu - \ud \nu) & = \int  \sum_{m \in \Z^\dd} \hat f_m\, e^{\ima \langle m, x \rangle}\, (\mu(\ud x) - \nu(\ud x)) \\
		& = \sum_{m \in \Z^\dd} \hat f_m\, (\phi_\mu(m) - \phi_\nu(m)) \\
		& = \sum_{m\in \Z^\dd\setminus\{0\}} \hat f_m\, (\phi_\mu(m) - \phi_\nu(m))\,,
	\end{align*}
	where the last equality follows from the fact that $\phi_\mu(0) = \phi_\nu(0) = 1$.
	The result then follows from the Cauchy--Schwarz inequality and~\eqref{eq-primal-dual:l2-fourier-bound}.
\end{proof}

Unfortunately, Proposition~\ref{fourier_bound} is often vacuous---if $\mu$ is not absolutely continuous, then $\phi_\mu$ is not integrable, and the sum in~\eqref{eq-primal-dual:fourier_bound} can diverge.
This problem is immediately apparent when attempting to apply Proposition~\ref{fourier_bound} to the singular empirical measure $\mu_n$.
The solution to this issue is to inject additional regularity into the problem by convolving with Gaussians.
For any $\varepsilon > 0$, we denote by $\nu \star \gamma_\varepsilon$ the convolution of $\nu$ with a $\cN(0, \varepsilon I)$ distribution; equivalently, $\nu \star \gamma_\varepsilon$ is the law of $X + \sqrt{\varepsilon} Z$ where $X \sim \nu$ and $Z \sim \cN(0, I)$ are independent.
We first recall the effect that this smoothing has on the Fourier transform.

\begin{lemma}
	For all $\varepsilon > 0$,
	\begin{equation*}
		\phi_{\nu \star \gamma_\varepsilon}(m) = \phi_{\nu}(m)\, e^{-\varepsilon\,\|m\|^2/2} \quad  \forall m \in \Z^\dd\,.
	\end{equation*}
\end{lemma}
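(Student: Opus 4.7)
The plan is to use the probabilistic description of $\nu \star \gamma_\varepsilon$ given just before the statement: if $X \sim \nu$ and $Z \sim \cN(0, I)$ are independent, then $\nu \star \gamma_\varepsilon$ is the law of $X + \sqrt{\varepsilon}\, Z$. Plugging this into the definition~\eqref{eq-primal-dual:fourier_def} of the Fourier transform, one has
\begin{align*}
    \phi_{\nu\star\gamma_\varepsilon}(m)
    &= \E\bigl[e^{\ima\,\langle m,\, X + \sqrt{\varepsilon}\, Z\rangle}\bigr]
    = \E\bigl[e^{\ima\,\langle m, X\rangle}\bigr]\cdot \E\bigl[e^{\ima\sqrt{\varepsilon}\,\langle m, Z\rangle}\bigr]\,,
\end{align*}
where the factorization uses independence of $X$ and $Z$. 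The first factor is exactly $\phi_\nu(m)$.

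It remains to identify the second factor as $e^{-\varepsilon\,\|m\|^2/2}$. Since $Z \sim \cN(0, I)$, the random variable $\langle m, Z\rangle$ is a univariate centered Gaussian with variance $\|m\|^2$. Hence the standard computation of the characteristic function of a real Gaussian (completing the square in the exponent, or recognizing it as the moment generating function evaluated at an imaginary argument) gives
\begin{align*}
    \E\bigl[e^{\ima\sqrt{\varepsilon}\,\langle m, Z\rangle}\bigr] = e^{-\varepsilon\,\|m\|^2/2}\,.
\end{align*}
Combining the two factors yields the claim.

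No genuine obstacle is anticipated here: the result is a direct translation of the two textbook facts that (i) the characteristic function of a sum of independent random variables factorizes, and (ii) the characteristic function of a centered Gaussian is itself a Gaussian. The only minor care needed is to note that the identity is stated only for $m \in \Z^\dd$ (which is what the paper will use in conjunction with Fourier series), but the proof goes through unchanged for arbitrary $m \in \R^\dd$.
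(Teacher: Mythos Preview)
Your proof is correct and follows exactly the paper's approach: the paper's proof simply notes that the identity follows from writing $\phi_{\nu \star \gamma_\varepsilon}(m) = \E e^{\ima \langle m, X + \sqrt{\varepsilon}Z \rangle}$ with $X \sim \nu$ and $Z \sim \cN(0, I)$ independent, and you have spelled out the factorization and Gaussian characteristic function computation that this entails.
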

\begin{proof}
	This follows directly from the representation $$\phi_{\nu \star \gamma_\varepsilon}(m) = \int e^{\ima \langle m, y \rangle} \, (\nu \star \gamma_\varepsilon)(\ud y) = \E e^{\ima \langle m, X + \sqrt{\varepsilon}Z \rangle }$$ for $X \sim \nu$ and $Z \sim \cN(0, I)$ independent.
\end{proof}

The smoothing operation is useful because it immediately ensures that the Fourier transform of the resulting measure is well-behaved.
Moreover, smoothing only changes $\widetilde{W_1}$ by a small amount.

\begin{lemma}
	For all $\varepsilon > 0$,
	\begin{equation*}
		\widetilde{W_1}(\mu, \nu) \leq \widetilde{W_1}(\mu, \nu \star \gamma_\varepsilon) + \sqrt{\dd \varepsilon}\,.
	\end{equation*}
\end{lemma}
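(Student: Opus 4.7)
The plan is to show that $\widetilde{W_1}(\nu, \nu \star \gamma_\varepsilon) \le \sqrt{\dd\varepsilon}$ and then invoke the triangle inequality for $\widetilde{W_1}$. Both steps follow quickly from the dual definition~\eqref{eq-primal-dual:w1tilde_def}.

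First I would verify the triangle inequality for $\widetilde{W_1}$. This is immediate from the dual formulation: for any $f \in \widetilde{\mathrm{Lip}}$ and any probability measure $\rho$ on $\R^\dd$,
\begin{equation*}
\int f \, (\ud\mu - \ud\nu) = \int f\,(\ud\mu - \ud\rho) + \int f\,(\ud\rho - \ud\nu) \le \widetilde{W_1}(\mu,\rho) + \widetilde{W_1}(\rho,\nu)\,,
\end{equation*}
and taking the supremum over $f \in \widetilde{\mathrm{Lip}}$ on the left gives $\widetilde{W_1}(\mu,\nu) \le \widetilde{W_1}(\mu,\rho) + \widetilde{W_1}(\rho,\nu)$. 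Applying this with $\rho = \nu \star \gamma_\varepsilon$ reduces the problem to bounding $\widetilde{W_1}(\nu, \nu \star \gamma_\varepsilon)$.

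For the smoothing bound, let $X \sim \nu$ and $Z \sim \cN(0, I)$ be independent, so that $X + \sqrt{\varepsilon}\, Z \sim \nu \star \gamma_\varepsilon$. For any $f \in \widetilde{\mathrm{Lip}}$, Lipschitzness gives the pointwise bound $|f(X) - f(X + \sqrt{\varepsilon}\, Z)| \le \sqrt{\varepsilon}\, \|Z\|$, so by Jensen's inequality,
\begin{equation*}
\biggl| \int f \, (\ud \nu - \ud(\nu \star \gamma_\varepsilon)) \biggr|
= \bigl| \E[f(X) - f(X + \sqrt{\varepsilon}\, Z)] \bigr|
\le \sqrt{\varepsilon}\, \E\|Z\|
\le \sqrt{\varepsilon}\, \sqrt{\E\|Z\|^2}
= \sqrt{\dd \varepsilon}\,.
\end{equation*}
Taking the supremum over $f \in \widetilde{\mathrm{Lip}}$ yields $\widetilde{W_1}(\nu, \nu \star \gamma_\varepsilon) \le \sqrt{\dd\varepsilon}$, and combining with the triangle inequality completes the proof.

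There is no real obstacle here; the only minor point to notice is that $\widetilde{\mathrm{Lip}}$ is defined via Lipschitzness with respect to the Euclidean metric on $\R^\dd$ (not the torus metric), which is exactly what is needed to control the displacement $\sqrt{\varepsilon}\, Z$ that may exceed the period.
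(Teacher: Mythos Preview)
Your proof is correct and follows essentially the same approach as the paper: establish the triangle inequality for $\widetilde{W_1}$ from its dual definition, then bound the smoothing cost via the Lipschitz property and $\E\|Z\| \le \sqrt{\dd}$. The paper inserts one cosmetic extra step, bounding $\widetilde{W_1}(\nu,\nu\star\gamma_\varepsilon) \le W_1(\nu,\nu\star\gamma_\varepsilon)$ and then using the coupling $(X, X+\sqrt{\varepsilon}\,Z)$ in the primal, but this is the same Lipschitz estimate you apply directly in the dual.
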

\begin{proof}
	First, the expression $\widetilde{W_1}$ satisfies the triangle inequality; this follows directly from the definition in~\eqref{eq-primal-dual:w1tilde_def}:
	\begin{align*}
		\widetilde{W_1}(\mu, \nu') + \widetilde{W_1}(\nu', \nu) & = \sup_{f \in \widetilde{\mathrm{Lip}}} \int f\, (\ud \mu - \ud \nu') + \sup_{f \in \widetilde{\mathrm{Lip}}} \int f\, (\ud \nu' - \ud \nu) \\
		& \geq \sup_{f \in \widetilde{\mathrm{Lip}}} {\biggl\{\int f\, (\ud \mu - \ud \nu') + \int f\, (\ud \nu' - \ud \nu)\biggr\}} \\
		& = \widetilde{W_1}(\mu, \nu)\,.
	\end{align*}
	Second, $\widetilde{W_1}$ is dominated by $W_1$, since the supremum in~\eqref{eq-primal-dual:w1tilde_def} is taken over a strict subset of $\mathrm{Lip}$.
	Combining these facts yields
	\begin{equation*}
		\widetilde{W_1}(\mu, \nu) \leq \widetilde{W_1}(\mu, \nu \star \gamma_\varepsilon) + \widetilde{W_1}(\nu, \nu \star \gamma_\varepsilon) \leq \widetilde{W_1}(\mu, \nu \star \gamma_\varepsilon) + {W_1}(\nu, \nu \star \gamma_\varepsilon)\,.
	\end{equation*}
	We now use the fact that $(X, X + \sqrt{\varepsilon} Z)$ with $X \sim \nu, Z \sim \cN(0, I)$ independent is a coupling between $\nu$ and $\nu \star \gamma_\varepsilon$, so that
	\begin{equation*}
		{W_1}(\nu, \nu \star \gamma_\varepsilon) \leq \E \|X - (X + \sqrt{\varepsilon}Z)\| = \sqrt{\varepsilon}\,\E\|Z\| \leq \sqrt{\dd \varepsilon}\,.
	\end{equation*}
    This concludes the proof.
\end{proof}

Combining the preceding two lemmas yields the following corollary to Proposition~\ref{fourier_bound}.

\begin{corollary}\label{cor:fourier_with_smoothing}
	For any $\varepsilon > 0$,
	\begin{equation*}
		\widetilde{W_1}(\mu, \nu) \leq \sqrt{\sum_{m \neq 0} \|m\|^{-2}\, e^{- \varepsilon\, \|m\|^2}\, |\phi_\mu(m) - \phi_\nu(m)|^2} + 2 \sqrt{\dd\varepsilon}\,.
	\end{equation*}
\end{corollary}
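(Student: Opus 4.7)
The plan is to combine the two preparatory lemmas with Proposition~\ref{fourier_bound} in a direct three-step bound. The idea is to smooth \emph{both} measures by $\gamma_\varepsilon$, apply the Fourier bound to the smoothed pair (where everything is integrable and well-behaved), and then use the triangle inequality twice to control the error of smoothing on each side.

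More concretely, I would first write
\begin{equation*}
\widetilde{W_1}(\mu,\nu) \le \widetilde{W_1}(\mu, \mu\star\gamma_\varepsilon) + \widetilde{W_1}(\mu\star\gamma_\varepsilon, \nu\star\gamma_\varepsilon) + \widetilde{W_1}(\nu\star\gamma_\varepsilon, \nu)\,,
\end{equation*}
using the triangle inequality for $\widetilde{W_1}$ that was verified in the proof of the preceding lemma. Each of the outer two terms is bounded by $\sqrt{\dd\varepsilon}$ by the same coupling argument as in that lemma (since $(X, X+\sqrt{\varepsilon}Z)$ couples any measure to its $\gamma_\varepsilon$-convolution at cost $\sqrt{\dd\varepsilon}$, and $\widetilde{W_1} \le W_1$). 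This accounts for the additive $2\sqrt{\dd\varepsilon}$ term.

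For the middle term, I would apply Proposition~\ref{fourier_bound} to the pair $(\mu\star\gamma_\varepsilon,\,\nu\star\gamma_\varepsilon)$ and then substitute the Fourier identity $\phi_{\rho\star\gamma_\varepsilon}(m) = \phi_\rho(m)\,e^{-\varepsilon\|m\|^2/2}$ from the first lemma. This yields
\begin{equation*}
|\phi_{\mu\star\gamma_\varepsilon}(m) - \phi_{\nu\star\gamma_\varepsilon}(m)|^2 = e^{-\varepsilon\|m\|^2}\,|\phi_\mu(m) - \phi_\nu(m)|^2\,,
\end{equation*}
so that
\begin{equation*}
\widetilde{W_1}(\mu\star\gamma_\varepsilon,\nu\star\gamma_\varepsilon)^2 \le \sum_{m\neq 0}\|m\|^{-2}\,e^{-\varepsilon\|m\|^2}\,|\phi_\mu(m) - \phi_\nu(m)|^2\,.
\end{equation*}
Taking square roots and combining with the smoothing error gives exactly the stated bound.

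There is no real obstacle here: both lemmas above do all the work, and the argument is essentially bookkeeping. The only mild subtlety is ensuring that Proposition~\ref{fourier_bound} can be legitimately applied to the smoothed pair, but since $\mu\star\gamma_\varepsilon$ and $\nu\star\gamma_\varepsilon$ are smooth probability densities on $\R^\dd$ and the Gaussian factor $e^{-\varepsilon\|m\|^2/2}$ forces the Fourier series in~\eqref{eq-primal-dual:fourier_bound} to converge absolutely, there is no integrability issue of the kind that made the bare proposition vacuous for singular measures.
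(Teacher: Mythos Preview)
Your proof is correct and takes essentially the same approach as the paper: the paper's second lemma already packages the triangle inequality for $\widetilde{W_1}$ together with the $\sqrt{\dd\varepsilon}$ coupling bound, so applying it twice (once to smooth $\nu$, once to smooth $\mu$) gives exactly your three-term decomposition, after which Proposition~\ref{fourier_bound} and the Fourier identity for convolutions finish the job.
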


We can now prove the desired bound.

\begin{theorem}
	For any probability measure $\mu$ with support in $[0, 1]^2$,
	\begin{equation*}
		\E W_1(\mu_n, \mu) \lesssim \sqrt{\log n/n}\,.
	\end{equation*}
\end{theorem}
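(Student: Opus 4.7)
The plan is to apply Corollary~\ref{cor:fourier_with_smoothing} with $\nu = \mu_n$ and $\dd = 2$, then use Jensen's inequality to bring the expectation inside the square root. Since $\phi_{\mu_n}(m) = n^{-1} \sum_{i=1}^n e^{\ima \langle m, X_i\rangle}$ is an i.i.d.\ average with $\E \phi_{\mu_n}(m) = \phi_\mu(m)$, we get
\begin{equation*}
    \E |\phi_\mu(m) - \phi_{\mu_n}(m)|^2 = \frac{1}{n}\, \Var\bigl(e^{\ima\langle m, X_1\rangle}\bigr) \le \frac{1}{n}\,,
\end{equation*}
using that $|e^{\ima\langle m, X_1\rangle}| = 1$. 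Combined with Lemma~\ref{lem:periodic}, this yields
\begin{equation*}
    \E W_1(\mu_n, \mu) = \E \widetilde{W_1}(\mu_n, \mu) \le \sqrt{\frac{1}{n} \sum_{m \neq 0} \|m\|^{-2}\, e^{-\varepsilon \|m\|^2}} + 2\sqrt{2\varepsilon}\,.
\end{equation*}

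The key remaining step is to estimate the Fourier sum $S(\varepsilon) \deq \sum_{m \in \Z^2 \setminus\{0\}} \|m\|^{-2}\, e^{-\varepsilon \|m\|^2}$ as $\varepsilon \to 0$. Here is where dimension $\dd = 2$ is critical: the number of lattice points $m \in \Z^2$ with $\|m\| \in [r, r+1]$ grows linearly in $r$, so an integral comparison gives
\begin{equation*}
    S(\varepsilon) \lesssim 1 + \int_1^\infty r^{-2}\, e^{-\varepsilon r^2}\, r \, \ud r = 1 + \int_1^\infty r^{-1}\, e^{-\varepsilon r^2}\, \ud r \lesssim 1 + \log(1/\varepsilon)
\end{equation*}
for $\varepsilon \in (0, 1]$. (The logarithmic divergence here corresponds exactly to the borderline behavior at $\dd = 2$; for $\dd \ge 3$ the analogous sum diverges as $\varepsilon^{1 - \dd/2}$, and for $\dd = 1$ the sum converges uniformly in $\varepsilon$.)

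Plugging this in, we obtain
\begin{equation*}
    \E W_1(\mu_n, \mu) \lesssim \sqrt{\frac{1 + \log(1/\varepsilon)}{n}} + \sqrt{\varepsilon}\,.
\end{equation*}
Choosing $\varepsilon = 1/n$ balances the two terms and yields $\E W_1(\mu_n, \mu) \lesssim \sqrt{\log n/n}$, as desired. The only genuinely nontrivial ingredient is the $\log(1/\varepsilon)$ estimate for $S(\varepsilon)$; everything else is Jensen's inequality and a variance computation.
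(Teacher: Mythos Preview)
Your proof is correct and follows essentially the same approach as the paper: apply Corollary~\ref{cor:fourier_with_smoothing}, use Jensen's inequality and the variance bound $\E|\phi_{\mu_n}(m)-\phi_\mu(m)|^2\le 1/n$, estimate the Fourier sum by integral comparison to get $\log(1/\varepsilon)$, and set $\varepsilon=1/n$. Your integral comparison is in fact slightly more explicit than the paper's, which simply asserts the $\log(1/\varepsilon)$ bound by comparison with $\int_{\|x\|\ge 1}\|x\|^{-2}e^{-\varepsilon\|x\|^2}\,\ud x$.
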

\begin{proof}
	Since $\mu$ and $\mu_n$ have support lying in $[0, 1]^2$, we may equivalently prove an upper bound on $\E \widetilde{W_1}(\mu_n, \mu)$.
	Applying Corollary~\ref{cor:fourier_with_smoothing} and Jensen's inequality yields, for any $\varepsilon > 0$,
	\begin{equation*}
		\E \widetilde{W_1}(\mu_n, \mu) \leq \sqrt{\sum_{m \neq 0} \|m\|^{-2}\, e^{- \varepsilon\, \|m\|^2}\, \E |\phi_{\mu_n}(m) - \phi_{\mu}(m)|^2} + 2 \sqrt {2 \varepsilon}\,.
	\end{equation*}
	We can write $\phi_{\mu_n}(m) - \phi_{\mu}(m) = \frac{1}{n} \sum_{i=1}^n \{e^{\ima \langle m, X_i \rangle} - \E e^{\ima \langle m, X_i \rangle}\}$, and since $|e^{\ima \langle m, X_i \rangle}| = 1$ almost surely we conclude that
	\begin{equation}\label{eq:characteristic_1_on_n}
		\E |\phi_{\mu_n}(m) - \phi_{\mu}(m)|^2 \leq n^{-1} \quad \forall m \in \Z^\dd\,.
	\end{equation}
	Continuing, we have for any $\varepsilon > 0$,
	\begin{equation}\label{eq:fourier_trade_off}
		\E \widetilde{W_1}(\mu_n, \mu) \leq n^{-1/2} \sqrt{\sum_{m \neq 0} \|m\|^{-2}\, e^{- \varepsilon\, \|m\|^2}} + 2\sqrt{2 \varepsilon}\,.
	\end{equation}
	By comparing the sum to the integral $\int_{\|x\| \geq 1} \|x\|^{-2}\, e^{-\varepsilon\, \|x\|^2} \, \ud x$, we obtain that the sum is of order $\log(1/\varepsilon)$.
	Therefore, we obtain
	\begin{equation*}
		\E \widetilde{W_1}(\mu_n, \mu) \lesssim \sqrt{\log(1/\varepsilon)/n} + \sqrt{\varepsilon}\,.
	\end{equation*}
	Choosing $\varepsilon = n^{-1}$ gives the claim.
\end{proof}

\section{Applications}\label{sec:applications_w2}

\subsection{Estimation of Wasserstein distances}

So far, we have focused on estimating a measure $\mu$ \emph{in} Wasserstein distance using the empirical measure $\mu_n$. As the title of this chapter indicates, we are often interested in the estimation \emph{of} Wasserstein distances. Indeed, Wasserstein distances are central to many statistical tasks. For example, one of the first applications of the 1-Wasserstein distance (under the name ``earth mover's distance")\index{earth mover's distance} to machine learning was in the context of information retrieval where it was used to measure the distance between images~\cite{RubTomGui00}. Other immediate examples include  nearest neighbors~\cite{BacDonInd20,Pon23} and regression~\cite{GhoPan22,CheLinMul23} for example.

The goal of estimation is to produce an estimator $\widehat{W}$ of $W_1(\mu, \nu)$ using i.i.d.\ data $X_1, \ldots, X_m \sim \mu$ and $Y_1, \ldots, Y_n \sim \nu$. A natural candidate is the \emph{plug-in} estimator\index{plug-in estimator} $\widehat{W} \deq W_1(\mu_m, \nu_n)$ where $\mu_m$ and $\nu_n$ are the empirical measures associated to the samples above. A performance bound for this estimator can be readily obtained using the triangle inequality and Proposition~\ref{prop:w1_rate}: for $d\ge 3$,
$$
\E|W_1(\mu_m, \nu_n) - W_1(\mu, \nu)| \le \E W_1(\mu_m, \mu) + \E W_1(\nu_n, \nu) \lesssim (m\wedge n)^{-1/d}\,.
$$
This coarse bound turns out to be sharp in general. Moreover, using the more general result~\eqref{eq:general_wp_rates} presented at the end of this Chapter, we can get that for $d>2p$,
$$
\E|W_p(\mu_m, \nu_n) - W_p(\mu, \nu)| \le \E W_p(\mu_m, \mu) + \E W_p(\nu_n, \nu) \lesssim (m\wedge n)^{-1/d}\,.
$$
It turns out that when $p>1$, this bound is only sharp when $\mu$ and $\nu$ are sufficiently close. Indeed, better rates can be obtained when $p>1$ and $W_p(\mu, \nu)>c>0$. For example, when $p=2$,~\cite{ManNil24} show that
$$
\E|W_2(\mu_m, \nu_n) - W_2(\mu, \nu)| \lesssim (m\wedge n)^{-2/d}\,.
$$
That paper also shows that this rate is essentially sharp.
While significant, this improvement shows that estimation of Wasserstein distances still suffers from the curse of dimensionality.

\subsection{Hypothesis testing}

These upper bounds can be readily applied to two classical non-parametric hypothesis testing problems: goodness-of-fit and two-sample  (a.k.a.\ homogeneity)  testing. 

Consider first the goodness-of-fit test.\index{goodness-of-fit testing} Given observation $X_1, \ldots, X_n$ i.i.d.\ from some unknown distribution $\mu$, and a fixed distribution $\mu^0$, the goal is to test 
$$
H_0\,:\, \mu=\mu^0 \qtxt{vs.} H_1\,:\, \mu \neq \mu^0\,.
$$
For example, $\mu^0$ can be taken to be a standard Gaussian distribution on $\R^\dd$ or the uniform distribution on $[0,1]^\dd$. There exist many goodness-of-fit tests when $\dd=1$, for example, the Kolmogorov--Smirnov test for continuous distributions. For discrete distributions, the $\chi^2$-test is another popular choice; see, e.g., \cite[Chapter 14]{LehRom05}.

Note that the two hypotheses can be written equivalently as
$$
H^0\,:\, W_1(\mu,\mu^0)=0 \qtxt{vs.} H_1\,:\, W_1(\mu, \mu^0)>0\,.
$$

To study the theoretical limits of a such a setting, we can consider a quantitative version of this testing problem, with hypotheses
$$
H^0\,:\, W_1(\mu,\mu^0) = 0 \qtxt{vs.} H_1\,:\, W_1(\mu, \mu^0)> \eps
$$
for some $\eps > 0$.
We then ask how large the separation $\eps$ must be in order to guarantee that the combined type I and type II errors may be kept small.

Consider a simple test, which consists in rejecting the null hypothesis at level $\alpha \in (0,1)$ as soon as  $W_1(\mu_n,\mu^0)>T_n^\alpha$ for some threshold $T_n^\alpha$ such that
$$
{\mu^0}[W_1(\mu_n,\mu^0)>T_n^\alpha]=\alpha\,.
$$
If we observe $X_i=x_i$, $i=1, \ldots, n$, a $p$-value for such a test may be computed as 
$$
{\mu^0}[W_1(\mu_n,\mu^0)>W_1(\mu_n^\mathrm{obs},\mu^0)]
$$
where 
$$
\mu_n^{\rm obs}=\frac1n\sum_{i=1}^n \delta_{x_i}\,.
$$

Both the computation of $T_n^\alpha$ and of the $p$-value require understanding the actual distribution of $W_1(\mu_n,\mu^0)$ under the null hypothesis. Our results above are actually quite far from achieving this level of precision since we only know an upper bound on $\E[W_1(\mu_n,\mu^0)]$ when $\mu_0$ is supported on the unit cube. Nevertheless, these bounds are sufficient to paint a rather disappointing picture of the potential of the Wasserstein distance in multivariate goodness-of-fit tests.
Indeed, our results suggest that we must take $T_n^\alpha \gtrsim n^{-1/\dd}$ in order to control the type I error of this test, and therefore that a separation of $\eps \gg n^{-1/\dd}$ is necessary to have reasonable power.
Even in moderate dimensions, this level of separation is quite large.

It turns out that the test described above is not optimal for this problem, and that consistent testing is possible with the smaller separation $n^{-2/d}$ via a different approach.
Nevertheless, the simple fact remains that the slow convergence of $W_1$ is an impediment to the use of goodness of fit tests based on the Wasserstein distance.

An explanation for this phenomenon is that a test based on $W_1$ tries to be powerful against too many alternatives. Assume for the sake of discussion that $\mu^0$ is the standard Gaussian distribution over $\R^\dd$. The $1$-Wasserstein distance does not discriminate between distributions that are not $\mu^0$: Gaussian distributions, distributions with smooth densities, those with discontinuous densities, or even discrete distributions. Our test $\{W_1(\mu_n,\mu^0)>T_n^\alpha\}$ tries to detect all of them and spreads thin, resulting in low power against all alternatives. This behavior is to be contrasted with a simple parametric test, for example the Wald test $\{|\bar X_n|>\tau_n\}$ where $\bar X_n = \frac 1n \sum_{i=1}^n X_i$, which simply tries to detect if the mean of $\mu$ differs from that of $\mu^0$. This test is clearly unable to detect even if $\mu$ is a Rademacher distribution, which is quite far from $\mu_0$, but it focuses all of its efforts on shifts in means: when these happen, it can detect them very accurately. 

The manifestation of the curse of dimensionality also extends to two-sample testing where one observes two samples $X_1, \ldots, X_m$ from $\mu$ and $Y_1, \ldots, Y_n$ from $\nu$ and the goal is to test 
$$
H_0\,:\, \mu=\nu \qtxt{vs.} H_1\,:\, W_1(\mu, \nu) > \eps\,.
$$
Denote the corresponding empirical distributions by
$$
\mu_m=\frac1m \sum_{i=1}^m \delta_{X_i}\,, \qquad \nu_n=\frac1n \sum_{j=1}^n \delta_{Y_j}\,.
$$
In this context it is natural to reject the null hypothesis if $W_1(\mu_m, \nu_n)$ is large. Akin to the goodness-of-fit test, such tests require a sample size that is exponential in the dimension to achieve any reasonable power. 

The conclusion of this section is that Wasserstein distances suffer from the curse of dimensionality and are therefore unsuitable for statistical applications of moderate dimension. In Section~\ref{sec:regulOT} we describe various regularizations of Wasserstein distances that escape the curse of dimensionality and have been successfully applied in large-scale statistical applications.

\section{Optimality}\label{sec:estimation_lower_bds}

We have established upper bounds on the Wasserstein distance between the empirical distribution $\mu_n$ and the data generating distribution $\mu$ in two different ways: using the primal and using the dual formulation of the problem. Omitting idiosyncrasies associated to low dimensions, we found that $\mu_n$ estimates $\mu$ in $W_1$ distance at a rate of order $n^{-1/\dd}$. While this result readily yields consistency, the rate is slow even in moderate dimensions and is symptomatic of the curse of dimensionality that plagues most non-parametric methods. One could wonder then whether such rates can be improved.

Note that there are two ways to potentially improve these rates. The most obvious one would be to provide a tighter analysis than the one above and show that in fact, $\E[W_1(\mu_n, \mu)]$ is much smaller than $n^{-1/\dd}$. Another possibility would be that while this rate is tight for the empirical measures $\mu_n$, there could be another estimator $\tilde \mu_n$ of $\mu$ that enjoys much faster rates. In fact, the answer to both questions, while different in nature, is negative, as illustrated by lower bounds. 

While a negative answer to the second question implies a negative answer to the first one---if no estimator can estimate $\mu$ faster than $n^{-1/\dd}$ then certainly the empirical measure $\mu_n$ cannot---we also make the negative answer to the first question explicit since it is, in some sense stronger. Indeed, we show below that even in the case where $\mu$ is the uniform measure on $[0,1]^\dd$ then, $\E[W_1(\mu_n, \mu)]\gtrsim n^{-1/\dd}$. However, in that case, there is clearly a better estimator than $\mu_n$: simply take $\tilde \mu_n=\mu$ itself! The answer to the second question relies on the theory of minimax lower bounds as in \cite[Chapter 2]{Tsy09} and states that for any estimator, i.e., any measurable function $\tilde \mu_n=\tilde \mu_n(X_1, \ldots, X_n)$ of the data $X_1, \ldots, X_n$, there exists $\mu$ supported on $[0,1]^\dd$ such that $\E[ W_1(\tilde\mu_n, \mu)]\gtrsim n^{-1/\dd}$. Unlike the lower bound for the empirical measure $\mu_n$, in the minimax lower bounds, the unfavorable distribution $\mu$ is not explicit.

\subsection{Lower bounds for the empirical measure \texorpdfstring{$\mu_n$}{}}

The goal of this section is to show that any distribution supported on $n$ points has to be far from the uniform measure on $[0,1]^\dd$ in $W_1$ distance.

\begin{theorem}\label{thm:lbW1unif}
Fix $\dd \ge 3$ and let $\mu$ denote the uniform measure on $[0,1]^\dd$. Then for any measure $\tilde \mu_n$ supported on $n$ points $x_1, \ldots, x_n \in \R^\dd$, it holds
$$
W_1(\tilde \mu_n, \mu)\ge\frac1{108\dd}\,n^{-1/\dd}\,.
$$
\end{theorem}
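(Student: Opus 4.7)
The plan is to reduce the lower bound to a geometric estimate about the nearest-neighbor distance to $\supp \tilde\mu_n$. For any coupling $\gamma \in \Gamma_{\tilde\mu_n, \mu}$, disintegration against $\mu$ gives $\gamma(\ud x,\ud y) = K(y,\ud x)\,\mu(\ud y)$ with $K(y,\cdot)$ supported on $\{x_1,\dots,x_n\}$ for $\mu$-a.e.\ $y$ (otherwise the first marginal of $\gamma$ would charge the complement of $\{x_1,\dots,x_n\}$, contradicting $\gamma\in \Gamma_{\tilde\mu_n,\mu}$). Introducing
\[
d_n(y) \deq \min_{1\le i\le n}\|x_i-y\|\,,
\]
one immediately obtains
\[
\int \|x-y\|\,\gamma(\ud x,\ud y) \ge \int_{[0,1]^\dd} d_n(y)\,\ud y\,,
\]
and taking the infimum over $\gamma$ yields $W_1(\tilde\mu_n,\mu)\ge \int_{[0,1]^\dd} d_n(y)\,\ud y$. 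Equivalently, one can plug the $1$-Lipschitz function $d_n$ into the dual formula of Theorem~\ref{thm:W1duality}, noting that $d_n$ vanishes on $\supp \tilde\mu_n$.

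The second step is a union-bound volume estimate. By layer cake,
\[
\int_{[0,1]^\dd} d_n(y)\,\ud y = \int_0^\infty \mu(d_n>t)\,\ud t\,.
\]
Since $\{y\in[0,1]^\dd : d_n(y)\le t\}\subseteq \bigcup_{i=1}^n B(x_i,t)$ has Lebesgue measure at most $n V_\dd\, t^\dd$, where $V_\dd$ denotes the volume of the Euclidean unit ball in $\R^\dd$, we have $\mu(d_n>t)\ge(1-n V_\dd\, t^\dd)_+$. Integrating up to $t^\star \deq (nV_\dd)^{-1/\dd}$ then gives
\[
W_1(\tilde\mu_n,\mu)\ge \int_0^{t^\star}(1-n V_\dd\, t^\dd)\,\ud t = \frac{\dd}{\dd+1}\,(nV_\dd)^{-1/\dd}\,.
\]

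The final step is bookkeeping for the explicit constant. Since the unit ball is contained in $[-1,1]^\dd$, one has $V_\dd \le 2^\dd$, hence $V_\dd^{-1/\dd}\ge 1/2$. Combined with $\dd/(\dd+1)\ge 3/4$ for $\dd\ge 3$, this already yields $W_1(\tilde\mu_n,\mu)\ge \tfrac{3}{8}\, n^{-1/\dd}$, which is much stronger than the claimed $\tfrac{1}{108\dd}\, n^{-1/\dd}$.

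There is no genuine obstacle in this argument. The only subtle point is the disintegration in the first step, which is standard for Borel couplings on $\R^\dd$; alternatively one can argue directly that if $(X,Y)\sim \gamma$ then $\E[\|X-Y\|\mid Y] \ge d_n(Y)$ almost surely since $X\in\{x_1,\dots,x_n\}$ a.s. The estimate $V_\dd\le 2^\dd$ is wasteful for large $\dd$ (one could use Stirling to get $V_\dd^{-1/\dd}\gtrsim \sqrt{\dd}$), but more than sufficient for the stated constant, and it even removes the factor $1/\dd$ entirely.
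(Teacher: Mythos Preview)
Your proof is correct and, after the common first step, takes a different route from the paper. Both arguments begin by using the $1$-Lipschitz function $d_n(y)=\min_i\|x_i-y\|$ in the dual formula to reduce to $W_1(\tilde\mu_n,\mu)\ge\int_{[0,1]^\dd} d_n(y)\,\ud y$. From there the paper partitions $[0,1]^\dd$ into $2n$ cubes of side $(2n)^{-1/\dd}$, uses the pigeonhole principle to find $n$ cubes free of any $x_i$, and on each such cube passes to a concentric subcube on which $d_n$ admits an explicit lower bound; summing gives the constant $\frac{1}{108\dd}$. Your layer-cake plus ball-volume argument is cleaner and yields the sharper dimension-free constant $\tfrac{3}{8}$; moreover, as you observe, replacing the crude bound $V_\dd\le 2^\dd$ by Stirling gives the correct $\sqrt\dd$ prefactor, which is exactly the content of Exercise~\ref{exe:unif_measure_\dd_dep}. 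The paper's cube argument has the minor advantage of avoiding any knowledge of $V_\dd$, but at the cost of the extra $1/\dd$.
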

\begin{proof}
We employ the dual formulation of Theorem~\ref{thm:W1duality} since proving a lower bound on $W_1$ can be done by simply exhibiting a $1$-Lipschitz function, which we define as follows. Given $x \in [0,1]^\dd$, let  $\xi(x) \in \{x_1, \ldots, x_n\}$ denote the closest point to $x$ in $\{x_1, \ldots, x_n\}$ (ties are broken arbitrarily). Next,  consider the function 
$$
f_n(x)=\|x-\xi_n(x)\|\,,
$$
which is $1$-Lipschitz thanks to the reverse triangle inequality. Moreover, for any $i=1, \ldots, n$, we have $f_n(x_i)=0$ so that
$\int f\, \ud \tilde \mu_n=0$. Hence 
$$
W_1(\tilde \mu_n, \mu) \ge \int f_n\,\ud \mu = \int \|x-\xi_n(x)\|\,\mu(\ud x)\,.
$$
To bound this quantity from below, we show that $\mu$ places significant mass on points that are far from \emph{any} $x_i$. To that end, consider a partition $\cQ$ of $[0,1]^d$ into cubes of side length $(2n)^{-1/\dd}$. Since $|\cQ|=2n$, there exist $n$ such cubes $Q_1, \ldots, Q_n$ that do not contain any of the $x_i$'s. Let $Q \in \cQ$ be one such cube with center $q$ and consider its subcube $Q' \subset Q$ also with center $q$ but with a smaller side length than $Q$ by a factor of $1-2/\dd$. Using Minkowski sum notation, we can write this as:
$$
Q'=\big(1-\frac2\dd\big)\,(Q-\{q\})+\{q\}\,.
$$
By construction, any $x \in Q'$ satisfies 
$$
\|x-\xi_n(x)\|\ge \inf_{\substack{x \in Q'\\y \in Q^\comp}} \|x-y\|=\frac1\dd\cdot(2n)^{-1/\dd}\,.
$$
Hence
$$
\int \|x-\xi_n(x)\|\,\mu (\ud x) \ge \sum_{i=1}^n \int_{Q_i'} \|x-\xi_n(x)\|\,\mu (\ud x)\ge \frac{(2n)^{-1/\dd}}{\dd}\sum_{i=1}^n \mu(Q_i')\,.
$$
We conclude by observing that 
$$
\mu(Q_i')=\big(\frac{1-2/\dd}{(2n)^{1/\dd}}\big)^\dd\ge \frac{1}{54n}\,,
$$
where we used the fact that $\dd \mapsto (1-2/\dd)^\dd$ is increasing and that $\dd \ge 3$.
\end{proof}

Theorem~\ref{thm:lbW1unif} shows that $W_1(\mu_n, \mu)$ is indeed of order $n^{-1/\dd}$ at least for $\dd\ge 3$. In fact the lower bound holds almost surely in $X_1, \ldots, X_n$ since it only exploits the fact that $\mu_n$ has a support of size at most $n$.  Note that the $\dd$ dependence in Theorem~\ref{thm:lbW1unif} is off by some polynomial factors in $\dd$. It can be shown that the $\sqrt{\dd}$ factor in Proposition~\ref{prop:w1_dyadic_rate} cannot be improved; see Exercise~\ref{exe:unif_measure_\dd_dep}.

\subsection{Minimax lower bounds}

While it is hard to think of a better estimator for $\mu$ than $\mu_n$ in general (in Section~\ref{sec:faster_rate_smoothness} we show that we can under additional assumptions on $\mu$) it could be the case that there exists another estimator $\tilde \mu_n$ for which $\E[W_1(\tilde \mu_n, \mu)]$ is smaller than $\E[W_1(\mu_n, \mu)]$ uniformly over all measures $\mu$. This possibility is ruled out by the following minimax lower bound.

\begin{theorem}\label{thm:mimiaxlbW1}
Fix $\dd\ge 3, n\ge 8$ and let $X_1, \ldots, X_n$ be $n$ i.i.d.\ observations from a distribution $\mu$ on $\R^\dd$. For any estimator $\tilde \mu_n$, i.e., any measurable function of $X_1, \ldots, X_n$, there exists a measure $\mu$ supported on $[0,1]^\dd$ such that
$$
\E_\mu [W_1(\tilde \mu_n, \mu)] \ge \frac{1}{16}\, (2n)^{-1/\dd}\,.
$$
\end{theorem}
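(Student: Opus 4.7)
The plan is to apply Assouad's lemma to a purely atomic family of candidate measures on $[0,1]^d$, with the combinatorial parameters chosen so that the constant $1/16$ falls out cleanly.

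Write $M = 2n$ and $h = (2n)^{-1/d}$, partition $[0,1]^d$ into $M$ axis-aligned cubes $Q_1,\dotsc,Q_M$ of side length $h$, and in each $Q_i$ pick two points $z_i^0, z_i^1$ on the line through the cube's center parallel to $e_1$, placed symmetrically about the center at distance $h/4$ from the cube boundary and $h/2$ from one another. For each $\tau \in \{0,1\}^M$, define the atomic probability measure
\[
\mu_\tau \deq \frac{1}{M}\sum_{i=1}^M \delta_{z_i^{\tau_i}}.
\]
The key $W_1$-lower bound is geometric: any two atoms in distinct cubes are at distance at least $h - 2\,(h/4) = h/2$, which exactly matches $\|z_i^0 - z_i^1\|$. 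Consequently, no transport plan from $\mu_\tau$ to $\mu_{\tau'}$ can improve on the within-cube matching of each unmatched pair, so
\[
W_1(\mu_\tau, \mu_{\tau'}) = \frac{h}{2M}\,d_H(\tau, \tau').
\]

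For Hamming neighbors $\tau, \tau'$, $d_{\mathsf{TV}}(\mu_\tau, \mu_{\tau'}) = 1/M$, and by the standard coupling-based tensorization $d_{\mathsf{TV}}(P^{\otimes n}, Q^{\otimes n}) \le n\, d_{\mathsf{TV}}(P, Q)$ we get
\[
d_{\mathsf{TV}}(\mu_\tau^{\otimes n}, \mu_{\tau'}^{\otimes n}) \;\le\; \frac{n}{M} \;=\; \frac{1}{2},
\]
thanks to the choice $M = 2n$. Assouad's lemma, with separation parameter $s = h/(4M)$, then yields
\[
\inf_{\tilde\mu_n}\max_\tau \E_{\mu_\tau^{\otimes n}} W_1(\tilde\mu_n, \mu_\tau) \;\ge\; \frac{Ms}{2}\,\bigl(1 - \tfrac{1}{2}\bigr) \;=\; \frac{h}{16} \;=\; \frac{(2n)^{-1/d}}{16},
\]
which is the claim.

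The main obstacle is the sharp identification of the $W_1$-separation: one must verify that reshuffling the unmatched mass \emph{across} cubes cannot beat the within-cube matching of length $h/2$. The $h/4$ buffer from each cube boundary is precisely calibrated so that any cross-cube pair of atoms is also at distance at least $h/2$, ruling out any improvement. A secondary, purely technical point is that when $(2n)^{1/d}$ is not an integer, the partition into exactly $M = 2n$ equal cubes fails; one can then take $M$ to be the largest $d$-th power of an integer that does not exceed $2n$, and the standing hypotheses $d \ge 3$, $n \ge 8$ ensure that the resulting rounding costs at most an absolute constant (which can be absorbed by slightly enlarging the construction's margins). Aside from these points, every step is routine: the tensorization of total variation and the standard form of Assouad's lemma do all the remaining work.
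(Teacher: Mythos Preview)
Your proof is correct and uses the same general tool (Assouad's lemma) as the paper, but with a genuinely different hypothesis family. The paper indexes by $\omega \in \{-1,1\}^n$ and perturbs the \emph{weights} of $2n$ fixed atoms (the cube centers), controlling Hamming neighbors via a KL computation with a tunable amplitude $\alpha$ that is eventually set to $1/4$. You instead index by $\tau \in \{0,1\}^{2n}$ and perturb \emph{positions}, choosing one of two candidate atoms per cube; your divergence control is via total variation and the coupling tensorization $d_{\mathsf{TV}}(P^{\otimes n}, Q^{\otimes n}) \le n\, d_{\mathsf{TV}}(P,Q)$, with no free parameter to tune. Both routes land on the same constant $1/16$.

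Your geometric argument for the $W_1$ separation is clean: the $h/4$ offset from the $e_1$-perpendicular faces makes the nearest cross-cube atom pair (in $e_1$-adjacent cubes) sit at exactly $h/2$, matching the within-cube separation, so the obvious coupling is optimal and $W_1(\mu_\tau,\mu_{\tau'}) = \frac{h}{2M}\,d_H(\tau,\tau')$ holds with equality. The paper's weight-perturbation route only needs an inequality here (any mass moved between distinct cube centers travels at least $(2n)^{-1/d}$), which is arguably more robust but less explicit. One caution on your rounding remark: taking $M$ to be the largest $d$-th power not exceeding $2n$ can give $n/M = 1$ at the boundary case $d=3$, $n=8$ (where $M=8$), which makes the Assouad bound vanish; the paper glosses over the same divisibility issue, so you are at the same level of rigor, but your claimed fix needs a slightly different patch to preserve the stated constant for all $n\ge 8$.
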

\begin{proof}
	Our proof relies on classical techniques for minimax lower bounds. In particular, we use Theorem~2.12 in \cite{Tsy09}. According to this theorem, if we can find $2^m$ probability measures indexed by $\omega \in \{-1, 1\}^m$ each supported on $[0, 1]^\dd$ such that
	\begin{itemize}
		\item[(i)] $W_1(\mu^{(\omega)}, \mu^{(\omega')})\ge \frac{r_n}{2} \sum_{j=1}^m |\omega_j - \omega'_j|$ for any $\omega, \omega' \in  \{-1, 1\}^m$,
		\item[(ii)] for any $\omega, \in \{-1, 1\}^m$ differing in at most one coordinate,
		\begin{equation*}
			{\sf KL}(\mu^{(\omega)}\mmid \mu^{(\omega')}) \le \frac{1}{2n}\,,
		\end{equation*}
	\end{itemize}
	then for any estimator $\tilde \mu_n$ based on $n$ i.i.d.\ observations, there exists $\omega \in \{-1, 1\}^m$ such that 
	$$
	\E_{\mu^{(\omega)}}[W_1(\tilde \mu_n, \mu^{(\omega)})] \ge \frac{m r_n}{4}\,.
	$$
	
	In our construction, we take $m = n$ and define the measures $\mu^{(\omega)}$ to be supported on a discrete set as follows.
	As in the proof of Theorem~\ref{thm:lbW1unif}, let $\cQ$ denote a partition of $[0,1]^\dd$ into $2n$ cubes of side length $(2n)^{-1/\dd}$ and let $q_1, \ldots, q_{2n}$ denote their centers. Let $\mu^{(0)}$ denote the uniform measure on $\{q_1, \ldots,q_{2n}\}$:
	$$
	\mu^{(0)}=\frac1{2n} \sum_{i=1}^{2n} \delta_{q_i}\,.
	$$
	For $\omega \in \{-1, 1\}^{n}$, let $\mu^{(\omega)}$ denote a perturbation of $\mu^{(0)}$ defined as 
	$$
	\mu^{(\omega)}=\mu^{(0)}+ \frac{\alpha}{2n}\sum_{i=1}^n \omega_i\,( \delta_{q_i}- \delta_{q_{n+i}})\,,
	$$
	where $\omega=(\omega_1, \ldots, \omega_{n})$ and $\alpha\in (0,1)$ is to be defined later.
	Note that $\mu^{(\omega)}$ is a probability measure.
	
	Since $\|q_j - q_k\| \ge (2n)^{-1/\dd}$ for $j \neq k$ for we have
	$$
	W_1(\mu^{(\omega)},\mu^{(\omega')}) \ge \frac{\alpha}{2n}\, (2n)^{-1/\dd}\sum_{j=1}^n|\omega_j -\omega'_j| =: \frac{r_n}{2} \sum_{j=1}^n|\omega_j -\omega'_j|
	$$
	for any $\omega, \omega' \in \{0\}^n \cup\{-1, 1\}^n$.
	
	It remains to show that (ii) holds for a suitable choice of $\alpha$. To that end, suppose that $\omega$ and $\omega'$ differ on the $j$th coordinate.
	Observe that
	\begin{align*}
		&{\sf KL}(\mu^{(\omega)}\mmid \mu^{(\omega')})
		=\sum_{i=1}^{2n} \mu^{(\omega)}(q_i)\log\big(\frac{\mu^{(\omega)}(q_i)}{\mu^{(\omega')}(q_i)}\big)\\
		&\qquad =\frac{1}{2n}\left\{(1+\alpha \omega_j)\log\frac{1+\alpha \omega_j}{1-\alpha \omega_j}+(1-\alpha \omega_j)\log\frac{1-\alpha \omega_j}{1+\alpha \omega_j}\right\}\\
		&\qquad = \frac{\alpha}{n} \log\frac{1+\alpha}{1-\alpha}\,,
	\end{align*}
	and this quantity is smaller than $\frac{1}{2n}$ if $\alpha = \frac 14$.
	With this choice of $\alpha$, we obtain
	\begin{equation}
		r_n = \frac{1}{4n}\, (2n)^{-1/d}\,,
	\end{equation}
	which implies the desired bound.
\end{proof}

\section{Faster rates for smooth measures}\label{sec:faster_rate_smoothness}

The preceding section indicates that  no estimator can avoid the slow $n^{-1/\dd}$ rate in general.

There are multiple ways to alleviate this curse of dimensionality and the rest of this chapter illustrates two main approaches. In this section, we impose smoothness assumptions on the measure $\mu$. Such assumptions are classical in non-parametric statistics and known to partially mitigate the curse of dimensionality. In the next section, we describe how modifying/regularizing the Wasserstein distance into other distances that are similar in nature can be used to bypass the curse of dimensionality altogether.\footnote{Since we are interested in improvements to the exponent in the rate of decay, in the remainder of this chapter we ignore dimension-dependent constants in the bounds for clarity.}

The fact that imposing smoothness conditions on $\mu$ can lead to better rates is natural in light of the lower bound presented in Theorem~\ref{thm:mimiaxlbW1}.
The measures used in the proof are mixtures of Dirac masses and are therefore highly ``irregular'' in the sense that they do not even possess densities with respect to the Lebesgue measure.
By assuming that $\mu$ is smooth, we rule out these pathological examples.

For mathematical convenience, we consider smooth densities defined on the torus $\mathbb{T}^\dd \deq \R^\dd/(2 \pi \Z)^\dd$.
Concretely, this can be viewed as isomorphic to the set $[0, 2 \pi)^\dd$, equipped with the metric $\md_{\torus}(x, y)  \deq  \min_{z \in \Z^\dd} \|x - y - 2 \pi z\|$.
On this space, the Wasserstein distance coincides with $\widetilde{W_1}$ defined in Section~\ref{sec:akt}.

We focus on the torus so that we can again use the tools of Fourier analysis.
A similar but slightly more technical argument can extend the results of this section to standard Euclidean space.
Note that the $n^{-1/\dd}$ minimax lower bound proved in the previous section still holds on the torus, so in moving to this setting we have not affected the fundamental statistical difficulty of the problem.

We consider a probability measure $\mu$ on $\mathbb{T}^\dd$ with a density, which we also denote by $\mu$.
We make the assumption that the density of $\mu$ is smooth, in the sense that it lies in a \emph{Sobolev space}.

\begin{definition}
	Given a positive integer $s$, the Sobolev space\index{Sobolev space} $\Sob^s$ consists of all functions $f: \torus \to \R$ such that for every multi-index $\alpha$ with $|\alpha| \leq s$, the derivative $\dd^\alpha f$ lies in $L^2$.
	Given $f \in \Sob^s$, its Sobolev norm is defined to be
	\begin{equation*}
		\|f\|_{\Sob^s}^2 = \max_{|\alpha| \leq s} \int_{\torus} \|\dd^\alpha f\|^2 \, \ud x\,.
	\end{equation*}
\end{definition}

The importance of the Sobolev spaces lies in their close connection with Fourier series.
If $\mu \in \Sob^s$, then its Fourier transform~\eqref{eq-primal-dual:fourier_def} satisfies
\begin{equation*}
	\sum_{m \in \Z^\dd} (1+ \|m\|^{2s})\, |\phi_\mu(m)|^2 < \infty\,.
\end{equation*}
Moreover, this expression in terms of Fourier coefficients actually gives an equivalence of norms.
Indeed, from the Fourier representation
\begin{align*}
    \mu(x)
    &\propto \sum_{m\in\Z^\dd} \phi_\mu(m) \, e^{- \ima \langle m, x\rangle}
\end{align*}
we obtain, for any multi-index $\alpha$,
\begin{align*}
    \dd^\alpha \mu(x)
    &\propto \sum_{m\in\Z^\dd} m^{2\alpha}\, \phi_\mu(m) \,e^{- \ima \langle m, x\rangle}\,,
\end{align*}
where $m^\alpha = m_1^{\alpha_1} \dotsm m_\dd^{\alpha_\dd}$. By Parseval's identity,
\begin{align}
    \int_{\torus} \|\dd^\alpha \mu\|^2 \, \ud x
    &\asymp \sum_{m\in\Z^\dd} m^{2\alpha} \,|\phi_\mu(m)|^2 \label{eq:sob_equiv_1} \\
    &\lesssim \sum_{m\in\Z^\dd} (1+\|m\|^{2s}) \, |\phi_\mu(m)|^2\,. \label{eq:sob_equiv_2}
\end{align}
On the other hand, by the binomial theorem,
\begin{align*}
    \|m\|^{2s}
    = \sum_{|\alpha| = s} m^{2\alpha}\,.
\end{align*}
By~\eqref{eq:sob_equiv_1}, it holds that
\begin{align}
    \sum_{m\in\Z^\dd} \|m\|^{2s}\,|\phi_\mu(m)|^2
    &= \sum_{m\in\Z^\dd} \sum_{|\alpha| = s} m^{2\alpha}\,|\phi_\mu(m)|^2 \nonumber \\
    &\lesssim \sum_{|\alpha| = s} \int_{\torus} \|\dd^\alpha \mu\|^2\,\ud x\,. \label{eq:sob_equiv_3}
\end{align}
By~\eqref{eq:sob_equiv_2} and~\eqref{eq:sob_equiv_3} (applying the latter inequality also for $s=0$), we have shown that
\begin{align*}
    \|\mu\|_{\Sob^s}^2
    &\asymp \sum_{m\in\Z^\dd} \, (1+\|m\|^{2s}) \, |\phi_\mu(m)|^2\,.
\end{align*}

We construct an estimator $\tilde \mu_n$ obtained by estimating the Fourier coefficients of $\mu$ for all $m \in \Z^\dd$ satisfying $\|m\| \leq M$.
Concretely, we define
\begin{equation*}
	\widehat{\phi_\mu}(m) = \phi_{\mu_n}(m) = \frac 1n \sum_{j=1}^n e^{\ima \langle m, X_j \rangle}\,,
\end{equation*}
then for any $M \geq 1$ we set 
\begin{equation*}
	\tilde \mu_n(x) = \frac{1}{{(2\pi)}^\dd} \sum_{\|m\| \leq M} \widehat{\phi_\mu}(m)\, e^{- \ima \langle m, x \rangle}\,.
\end{equation*}
Note that while $\tilde \mu_n$ is always a real-valued function on $\torus$ integrating to $1$, it may not be positive everywhere; however, we ignore this issue for now.
Even when the density $\tilde \mu_n$ takes negative values, the definition of $\widetilde{W_1}$ in terms of its dual representation~\eqref{eq-primal-dual:w1tilde_def} still gives a sensible interpretation of $\widetilde{W_1}(\mu, \tilde \mu_n)$.

We have the following result.

\begin{proposition}\label{prop:smoothed_est}
	Assume $\mu \in \Sob^s(\torus)$ with $\|\mu\|_{\Sob^s} \lesssim 1$.
	For any $M \geq 1$ and $\dd \geq 3$, the estimator $\widetilde \mu_n$ defined above satisfies
	\begin{equation*}
		\E \widetilde{W_1}(\mu, \tilde \mu_n) \lesssim n^{-1/2}\, M^{\dd/2-1} + M^{-(s+1)}\,.
	\end{equation*}
\end{proposition}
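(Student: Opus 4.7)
The plan is to apply the Fourier-analytic bound of Proposition~\ref{fourier_bound} (which extends essentially verbatim to signed measures like $\tilde\mu_n$ through its dual definition~\eqref{eq-primal-dual:w1tilde_def}) and then perform a bias–variance decomposition of the sum over frequencies, truncating at $\|m\| = M$. The key observation is that the Fourier coefficients of $\tilde\mu_n$ are precisely $\widehat{\phi_\mu}(m) = \phi_{\mu_n}(m)$ for $\|m\| \le M$ and zero otherwise, so the frequencies $\|m\| \le M$ contribute a variance term controlled by~\eqref{eq:characteristic_1_on_n}, while the frequencies $\|m\| > M$ contribute a bias term controlled by the Sobolev norm.

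First I would apply Proposition~\ref{fourier_bound} together with Jensen's inequality to obtain
\begin{align*}
    \E \widetilde{W_1}(\mu,\tilde\mu_n)
    &\le \Bigl( \sum_{m\ne 0} \|m\|^{-2}\, \E |\phi_\mu(m) - \phi_{\tilde\mu_n}(m)|^2 \Bigr)^{1/2}\,.
\end{align*}
Then I would split the sum into the two regimes $0 < \|m\| \le M$ and $\|m\| > M$. On the low-frequency part, $\phi_{\tilde\mu_n}(m) = \phi_{\mu_n}(m)$, so the estimate~\eqref{eq:characteristic_1_on_n} yields $\E|\phi_\mu(m) - \phi_{\tilde\mu_n}(m)|^2 \le 1/n$, and hence that portion is bounded by
\begin{align*}
    \frac{1}{n} \sum_{0 < \|m\| \le M} \|m\|^{-2}
    &\lesssim \frac{1}{n} \int_{1 \le \|x\| \le M} \|x\|^{-2}\,\ud x
    \lesssim \frac{M^{\dd-2}}{n}\,,
\end{align*}
where the polynomial bound requires $\dd \ge 3$ (for $\dd = 1,2$ the integral diverges or is logarithmic, which is why we need this restriction).

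On the high-frequency part, $\phi_{\tilde\mu_n}(m) = 0$, so the summand reduces to $\|m\|^{-2}\,|\phi_\mu(m)|^2$. Here I would extract a factor of $\|m\|^{-2s}$ and invoke the equivalence of norms
\begin{align*}
    \sum_{m \in \Z^\dd}(1 + \|m\|^{2s})\,|\phi_\mu(m)|^2 \asymp \|\mu\|_{\Sob^s}^2 \lesssim 1
\end{align*}
established in the text, which gives
\begin{align*}
    \sum_{\|m\| > M} \|m\|^{-2}\, |\phi_\mu(m)|^2
    \le M^{-2(s+1)} \sum_{m \in \Z^\dd} \|m\|^{2s}\,|\phi_\mu(m)|^2
    \lesssim M^{-2(s+1)}\,.
\end{align*}
Combining the two regimes and taking square roots, using $\sqrt{a+b} \le \sqrt a + \sqrt b$, yields the claimed bound $n^{-1/2}\,M^{\dd/2-1} + M^{-(s+1)}$.

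The main subtlety rather than obstacle is that $\tilde\mu_n$ may fail to be a probability measure, so one has to verify that Proposition~\ref{fourier_bound} (or rather its proof via Parseval and Cauchy--Schwarz) applies to the signed measure $\mu - \tilde\mu_n$; this is immediate from inspecting the proof since it only uses Fubini and Parseval, and $\tilde\mu_n$ is integrable. A secondary nuisance is handling $m = 0$: both $\phi_\mu(0)$ and $\phi_{\tilde\mu_n}(0)$ equal $1$ (the latter because $\widehat{\phi_\mu}(0) = 1$), so the $m = 0$ term vanishes and the exclusion in Proposition~\ref{fourier_bound} is harmless.
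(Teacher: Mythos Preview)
Your proposal is correct and follows essentially the same approach as the paper: apply Proposition~\ref{fourier_bound} to the signed measure $\tilde\mu_n$, split the Fourier sum at $\|m\|=M$, bound the low-frequency part via~\eqref{eq:characteristic_1_on_n} and integral comparison, and bound the high-frequency part by extracting $M^{-2(s+1)}$ and invoking the Sobolev norm equivalence. The only cosmetic difference is that the paper splits the sum before applying Jensen (the high-frequency part is deterministic), whereas you apply Jensen first and split afterward; these are equivalent via $\sqrt{a+b}\le\sqrt a+\sqrt b$.
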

\begin{proof}
We first note that
\begin{align*}
    \widetilde{W_1}(\mu, \tilde \mu_n)^2
    &\lesssim
			\sum_{m \neq 0,\, \|m\| \leq M} \|m\|^{-2}\, |\phi_\mu(m) - \widehat{\phi_\mu}(m)|^2 \\
   &\qquad{} + \sum_{\|m\| > M} \|m\|^{-2}\, |\phi_\mu(m)|^2\,.
\end{align*}
	This follows directly from Proposition~\ref{fourier_bound}, using the fact that the (signed) measure $\tilde \mu_n$ has Fourier coefficients $\widehat{\phi_\mu}(m)$ for $\|m\| \leq M$ and zero otherwise.
    As in Section~\ref{sec:akt}, we may use the fact that $|e^{\ima \langle m, X_j\rangle}| \leq 1$ to conclude that $\E |\phi_\mu(m) - \widehat{\phi_\mu}(m)|^2 \leq n^{-1}$.
Therefore,
\begin{equation*}
	\E \widetilde{W_1}(\mu, \tilde \mu_n) \leq n^{-1/2} \sqrt{\sum_{m \neq 0,\, \|m\| \leq M} \|m\|^{-2}} + \sqrt{\sum_{\|m\| > M} \|m\|^{-2}\, |\phi_\mu(m)|^2}\,.
\end{equation*}

Before proceeding, we pause to compare this bound with~\eqref{eq:fourier_trade_off}.
There are two differences: first, the smooth cut-off $e^{-\eps\|m\|^2}$ in~\eqref{eq:fourier_trade_off} has been replaced by the restriction $\|m\| \leq M$.
This change is inessential: since $e^{-\eps\|m\|^2} \ll 1$ when $\|m\|^2 \gg \eps^{-1}$, the smooth cut off term mimics a restriction to $\|m\| \lesssim \eps^{-1/2}$.
The second difference is that the term $2 \sqrt{2 \eps}$ in~\eqref{eq:fourier_trade_off} has been replaced by a term that depends on the higher Fourier coefficients of $\mu$.
This change is crucial, since, as we now show, it implies that the second term automatically becomes smaller when $\mu$ is smooth.

Since $\sum_{m\in\Z^\dd} \|m\|^{2s}\, |\phi_\mu(m)|^2 \lesssim 1$, we may write
\begin{align*}
	\sum_{\|m\| > M} \|m\|^{-2}\, |\phi_\mu(m)|^2
     &\leq M^{-2(s+1)} \sum_{\|m\| > M} \|m\|^{2s}\, |\phi_\mu(m)|^2 \\
     &\lesssim  M^{-2(s+1)}\,. 
\end{align*}

For the first term, we can compare the sum with the integral $(\int_{1 \leq \|x\| \leq M} \|x\|^{-2} \, \ud x)^{1/2}$, which is of order $M^{\dd/2-1}$ when $\dd \geq 3$.
\end{proof}
Tuning $M$ appropriately, we arrive at the following theorem.

\begin{theorem}\label{thm:smoothUB}
	If $\mu \in \Sob^s(\torus)$ with $\|\mu\|_{\Sob^s} \lesssim 1$, then there exists an estimator $\tilde \mu_n$ such that
	\begin{equation*}
		\E \widetilde{W_1}(\mu, \tilde \mu_n) \lesssim n^{-\frac{s+1}{\dd + 2s}}\,.
	\end{equation*}
\end{theorem}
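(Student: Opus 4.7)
The plan is to simply optimize the bandwidth parameter $M$ in Proposition~\ref{prop:smoothed_est}, which already provides the bias-variance decomposition we need. This is essentially a one-line exercise in balancing a variance term against a bias (approximation) term, so there is no real obstacle; the main step is just the calculation.

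First, I would recall that Proposition~\ref{prop:smoothed_est} supplies, for any $M \ge 1$ and $\dd \ge 3$, the bound
\begin{equation*}
    \E \widetilde{W_1}(\mu, \tilde\mu_n) \lesssim \underbrace{n^{-1/2}\, M^{\dd/2 - 1}}_{\text{stochastic term}} + \underbrace{M^{-(s+1)}}_{\text{approximation term}}\,.
\end{equation*}
The stochastic term is increasing in $M$ (since $\dd \ge 3$) and reflects the cost of estimating the empirical Fourier coefficients $\widehat{\phi_\mu}(m)$ for all $\|m\| \le M$, while the approximation term is decreasing in $M$ and reflects the tail of the Fourier series truncated at $\|m\|=M$, controlled via $\mu \in \Sob^s$.

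Second, I would balance the two terms by setting $n^{-1/2}\, M^{\dd/2-1} = M^{-(s+1)}$, which gives $M^{s + \dd/2} = n^{1/2}$, i.e., the optimal choice is $M = M_n \deq \lceil n^{1/(\dd + 2s)} \rceil$. Plugging this back in yields
\begin{equation*}
    \E \widetilde{W_1}(\mu, \tilde\mu_n) \lesssim M_n^{-(s+1)} \lesssim n^{-(s+1)/(\dd + 2s)}\,,
\end{equation*}
which is precisely the stated rate. Note that when $s = 0$ we recover (up to a logarithmic factor that is avoided here because $\dd \ge 3$) the minimax $n^{-1/\dd}$ rate, and as $s \to \infty$ we approach the parametric $n^{-1/2}$ rate, consistent with classical nonparametric estimation.

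The only subtlety worth flagging is that the estimator $\tilde\mu_n$ is a (signed) truncated Fourier series and need not be a probability density. This is harmless for the bound: Proposition~\ref{fourier_bound}, and hence Proposition~\ref{prop:smoothed_est}, is derived purely via the dual formulation~\eqref{eq-primal-dual:w1tilde_def} and Parseval's identity, neither of which requires positivity of $\tilde\mu_n$; only that $\tilde\mu_n$ integrates to $1$ so that the difference $\mu - \tilde\mu_n$ has vanishing zeroth Fourier coefficient. If one insists on a bona fide probability measure, one can project $\tilde\mu_n$ onto the nonnegative cone in $L^2$ and renormalize, which only decreases distances in $\widetilde{W_1}$ up to constants; but this refinement is not needed to obtain the stated rate.
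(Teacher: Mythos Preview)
Your proposal is correct and matches the paper's proof exactly: the paper's argument is the single line ``Apply Proposition~\ref{prop:smoothed_est} with $M \asymp n^{1/(\dd + 2s)}$,'' and you have simply written out the balancing calculation that justifies this choice. Your remarks about the signed-measure issue are also in line with the paper, which addresses this point in the paragraph following the theorem rather than inside the proof.
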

\begin{proof}
	Apply Proposition~\ref{prop:smoothed_est} with $M \asymp n^{1/(\dd + 2s)}$.
\end{proof}

Theorem~\ref{thm:smoothUB} shows that, when $s > 0$, the estimator $\tilde \mu_n$ strictly improves over the empirical measure $\mu_n$.
However, we note that the estimator $\tilde \mu_n$ is a signed measure, which may be viewed as undesirable.
This is a common phenomenon in non-parametric statistics; for instance, in the design of kernel density estimators for very smooth densities, it is necessary to employ higher-order kernels which take negative values.
If a positive estimator is desired, then it is possible to show that the estimator $\bar \mu_n$ defined by
\begin{equation*}
	\bar \mu_n \deq \argmin_{\nu \in \cP(\torus)} \widetilde{W_1}(\nu, \tilde \mu_n)
\end{equation*}
also achieves the bound in Theorem~\ref{thm:smoothUB}: indeed, since $\mu \in \cP(\torus)$,
\begin{equation*}
	\widetilde{W_1}(\mu, \bar \mu_n) \leq \widetilde{W_1}(\mu, \tilde \mu_n) + \widetilde{W_1}(\bar \mu_n, \tilde \mu_n) \leq 2 \widetilde{W_1}(\mu, \tilde \mu_n)\,,
\end{equation*}
so that $\bar \mu_n$ is worse than $\tilde \mu_n$ by a factor of at most $2$.

\section{Regularization of Wasserstein distances}\label{sec:regulOT}

The curse of dimensionality that plagues statistical optimal transport has been recognized since its early days. To overcome this limitation, researchers have proposed multiple solutions which can, in retrospect, be viewed as some kind of regularization of the original optimal transport problem. In the rest of this section, we review three examples and demonstrate how they escape the curse of dimensionality.

\subsection{Integral probability metrics}\label{subsec:ipm}

Recall from the dual chaining argument of Section~\ref{sec:chaining} that the rate $n^{-1/\dd}$ came directly from the entropy number of the class of 1-Lipschitz functions. Lemma~\ref{lem:cov_bound} showed
$$
\log N(\varepsilon,\Lipone([0,1]^\dd)) \lesssim (4 \sqrt \dd/\varepsilon)^\dd\,.
$$
The polynomial scaling in $1/\eps$ is characteristic of non-parametric classes, as opposed to parametric classes where this scaling is logarithmic; see~e.g., \cite{GinNic16}. This raises the question of potentially replacing the class of 1-Lipschitz functions with a smaller, ideally parametric, class of functions. 

Take for example the class of linear functions on $\R^\dd$:
$$
\cF_{\mathrm{lin}} \deq \left\{f(x)=\langle \theta, x\rangle\,: \theta, x \in \R^\dd,\ \|\theta\|=1\right\}\,,
$$
and consider the quantity
\begin{align*}
  \delta(\mu, 
  \nu)&=\sup_{f \in \cF_{\mathrm{lin}}}\left\{ \int f \,\ud \mu - \int f\, \ud \nu \right\}\\
    &=\sup_{\theta \in \R^\dd,\, \|\theta\|=1}\left\{  \int \langle \theta , x\rangle\, \mu(\ud x) - \int \langle \theta , y\rangle\, \nu(\ud y) \right\}\\
    &=\|\E_\mu[X]-\E_\nu[Y]\|\,.
\end{align*}
In particular, $\delta(\mu, \nu)=0$ if and only if $\mu$ and $\nu$ have the same mean. This is of course not sufficient to say that the two measures are the same so the above quantity does not define a distance between probability measures like the Wasserstein distance. To do so, we need to find a class of test functions $\cF$ that is large enough to yield a distance but not as massive as 1-Lipschitz functions so as to escape the curse of dimensionality. 

\begin{definition}\label{def:IPM}
A metric $\md(\cdot, \cdot)$ between two probability measures is called an \emph{integral probability metric} (IPM)\index{integral probability metric (IPM)} if it satisfies the properties of a metric and can be written in the form
\begin{equation}
    \label{eq:IPM}
    \md(\mu, \nu)=\sup_{f \in \cF} \left| \int f\, \ud \mu -\int f\, \ud \nu\right|\,.
\end{equation}
\end{definition}
Note that both the 1-Wasserstein distance $W_1$ and the quantity $\delta$ above are of the form~\eqref{eq:IPM} with $\cF=\mathrm{Lip}_1$ and $\cF=\cF_{\rm lin}$ respectively. Indeed, the absolute value in~\eqref{eq:IPM} is implicit when $\cF$ is symmetric: $\cF=-\cF$. However, while $W_1$ is an IPM, the quantity $\delta$ is not because it fails to satisfy the properties of a metric; here: definiteness. 

Another example of a choice for $\cF$ is the set of bounded Lipschitz functions which indeed yields an IPM, but the size of this class is the same as ${\rm Lip}_1$ for the matter at hand here. To improve the sample complexity, we need much smoother functions.

\subsection{Maximum mean discrepancy}\label{subsec:mmd}

Reproducing Kernel Hilbert Spaces (RKHS) form a flexible and practical class of functions. To define this class of functions very briefly we introduce some basic definitions and key properties. We refer the reader to~\cite{MuaFukSri17} for more details on kernel methods that are particularly relevant to this section.

Consider a reproducing kernel Hilbert space $\cH$ of functions $\R^\dd \to \R$ associated to a bounded positive definite kernel $k$ on $\R^\dd$. Denote by $\langle\cdot, \cdot \rangle_{\cH}$ and $\|\cdot\|_{\cH}$ the inner product and norm on $\cH$ respectively. The reproducing property of the RKHS $\cH$ ensures that for any $f \in \cH$,
$$
\langle k(x,\cdot), f\rangle_{\cH} =f(x)\,.
$$
In particular taking $f=k(y, \cdot)$ yields
$$
\langle k(x,\cdot), k(y,\cdot)\rangle_{\cH}=k(x,y)\,.
$$

We are now in a position to define the Maximum Mean Discrepancy.

\begin{definition}\label{def:MMD}
Let $\cH$ be an RKHS. The  Maximum Mean Discrepancy (MMD) between two probability measures $\mu$ and $\nu$ on $\R^\dd$ is defined to be the quantity
$$
\MMD(\mu, \nu)=\sup_{\substack{f \in \cH\\\|f\|_{\cH}\le 1}} \left|\int f \,\ud \mu - \int f\, \ud \nu\right|\,.
$$
\end{definition}

Without further assumptions on the RHKS, MMD need not define a distance between probability measures. Indeed, observe that the set of linear functions on $\R^\dd$ equipped with the Euclidean inner product is in fact an RKHS associated to the linear kernel $k(x,y)= \langle x , y\rangle$. Moreover, if $f(x)=\langle \theta, x\rangle$, then 
$$
\|f\|_{\cH}^2= \|\langle \theta, \cdot \rangle \|_{\cH}^2=\|k(\theta,\cdot)\|_\cH^2=k(\theta,\theta)=\|\theta\|^2\,.
$$
Hence, the unit ball of $\cH$ is no other than $\cF_{\rm lin}$ and we have shown that this class is not rich enough to define an IPM. 

In fact, we have not addressed whether MMD is finite. To that end, we use the following useful proposition.

\begin{proposition}\label{prop:MMDrewrite}
Let $\cH$ be an RKHS. The  Maximum Mean Discrepancy (MMD)\index{maximum mean discrepancy (MMD)} between two probability measures $\mu$ and $\nu$ on $\R^\dd$ can be equivalently defined as
$$
\MMD(\mu, \nu)=\left\| \int k(x, \cdot)\,\mu(\ud x) - \int k(x, \cdot)\,\nu(\ud x) \right\|_{\cH}\,.
$$
\end{proposition}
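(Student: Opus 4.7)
The plan is to introduce the \emph{mean embedding} $m_\mu \deq \int k(x,\cdot)\,\mu(\ud x) \in \cH$ of a probability measure $\mu$, view the MMD as the dual norm of the functional $f \mapsto \int f\,\ud \mu - \int f\,\ud \nu$ on the Hilbert space $\cH$, and then apply Cauchy--Schwarz.

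First I would check that $m_\mu$ is well defined as a Bochner integral in $\cH$. Boundedness of $k$ gives $\|k(x,\cdot)\|_\cH = \sqrt{k(x,x)} \le \sqrt{\sup_z k(z,z)} < \infty$, uniformly in $x$, so $x \mapsto k(x,\cdot)$ is Bochner $\mu$-integrable and $m_\mu \in \cH$ with $\|m_\mu\|_\cH \le \sqrt{\sup_z k(z,z)}$. The defining property of the Bochner integral combined with the reproducing property then yields, for every $f \in \cH$,
\begin{equation*}
    \langle m_\mu, f\rangle_\cH = \int \langle k(x,\cdot), f\rangle_\cH \, \mu(\ud x) = \int f(x)\,\mu(\ud x)\,.
\end{equation*}
(This interchange of integral and inner product is the one technical step where boundedness of $k$ is essential; if one prefers to avoid Bochner integration, one can instead define $m_\mu$ abstractly via the Riesz representation theorem applied to the bounded linear functional $f \mapsto \int f\,\ud \mu$ on $\cH$, which again requires $\int\sqrt{k(x,x)}\,\mu(\ud x) < \infty$.)

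Subtracting the analogous identity for $\nu$ gives, for every $f \in \cH$,
\begin{equation*}
    \int f\,\ud \mu - \int f\,\ud \nu = \langle m_\mu - m_\nu,\, f\rangle_\cH\,.
\end{equation*}
Finally I would take the supremum over $\{f \in \cH : \|f\|_\cH \le 1\}$. By Cauchy--Schwarz the left-hand side is bounded by $\|m_\mu - m_\nu\|_\cH$, and this bound is attained (when $m_\mu \neq m_\nu$) by the choice $f = (m_\mu - m_\nu)/\|m_\mu - m_\nu\|_\cH$, which lies in the unit ball of $\cH$. The case $m_\mu = m_\nu$ is trivial. This yields
\begin{equation*}
    \MMD(\mu,\nu) = \|m_\mu - m_\nu\|_\cH = \biggl\|\int k(x,\cdot)\,\mu(\ud x) - \int k(x,\cdot)\,\nu(\ud x)\biggr\|_\cH\,,
\end{equation*}
as claimed. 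The only genuinely non-routine point is the justification that $m_\mu \in \cH$ and that the reproducing identity passes under the integral; everything else is a standard dual-norm computation.
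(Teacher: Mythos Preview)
Your proof is correct and follows essentially the same approach as the paper: use the reproducing property to write $\int f\,\ud(\mu-\nu) = \langle m_\mu - m_\nu, f\rangle_\cH$, then conclude by Cauchy--Schwarz. The paper's proof is much terser (it does not explicitly justify the Bochner integrability or the attainment of the supremum), but the underlying argument is identical.
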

\begin{proof}
For any $f\in \cH$  it holds
\begin{align*}
    \int f(x)\, (\mu-\nu)(\ud x) &= \int \langle k(x, \cdot), f\rangle_{\cH}\, (\mu-\nu)(\ud x) \\
    &= \Bigl\langle \int k(x, \cdot)\,(\mu-\nu)(\ud x) , f\Bigr\rangle_{\cH}\,.
\end{align*}
Hence, the claim follows from Cauchy--Schwarz.
\end{proof}

As a corollary of Proposition~\ref{prop:MMDrewrite}, we get that
\begin{align}
\MMD^2(\mu, \nu)
&=\iint k(x, y)\,\mu(\ud x)\,\mu(\ud y)+
\iint k(x, y)\,\nu(\ud x)\,\nu(\ud y)\nonumber\\
&\qquad -2\iint  k(x, y)\,\mu(\ud x)\,\nu(\ud y)\label{eq:MMD2}
\end{align}
and
\begin{align*}
    \MMD(\mu, \nu)
    &\le \int \|k(x,\cdot)\|_\cH\,\mu(\ud x) + \int \|k(x,\cdot)\|_\cH\,\nu(\ud x) \\
    &\le 2\sup_{x\in\R^d} \sqrt{k(x,x)}
    < \infty
\end{align*}
where we used the fact that $k$ is bounded.

The map $\mu \mapsto \int k(x, \cdot)\, \mu (\ud x)$ which embeds $\mu$ onto the RKHS $\cH$ is called \emph{kernel mean embedding}. It follows from Proposition~\ref{prop:MMDrewrite} that MMD is an IPM, meaning that it is indeed a metric, if and only if the kernel mean embedding is \emph{injective}. Kernels that ensure this property are called \emph{characteristic} and one such example is the Gaussian kernel $k(x,y) = e^{-\frac{\|x-y\|^2}{2\sigma^2}}$. To see this, observe that, up to normalizing constants, the kernel mean embedding is a convolution of $\mu$ with a Gaussian measure: for any $y \in \R^\dd$, it holds
   $$
\int k(x, y)\, \mu (\ud x)=\int e^{-\frac{\|x-y\|^2}{2\sigma^2}}\,\mu (\ud x)=(\sigma\sqrt{2\pi})^\dd\, (\mu \star \cN(0, \sigma^2 I))(y)\,.
$$
Injectivity of the convolution with a Gaussian distribution can be seen readily using characteristic functions. Indeed, the characteristic function of $\mu \star \cN(0, \sigma^2 I)$ is given by 
$$
\phi_{\mu \star \cN(0, \sigma^2 I)}(\cdot)=\phi_{\mu}(\cdot)\, \phi_{\cN(0, \sigma^2 I)}(\cdot)=\phi_{\mu}(\cdot)\,e^{-\frac{\sigma^2\,\|\cdot\|^2}{2}}\,.
$$
Hence, since the characteristic function $e^{-\frac{\sigma^2\,\|\cdot\|^2}{2}}$ of the Gaussian is everywhere positive, we get that 
$$
 \mu \star \cN(0, \sigma^2 I)= \nu \star \cN(0, \sigma^2 I)
$$
if and only if $\mu=\nu$. 

Clearly, the above argument generalizes to translation-invariant kernels that are of the form $k(x,y)=K(x-y)$ for some bounded positive definite function $K:\R^\dd \to \R$ and whose Fourier transform is everywhere positive\footnote{Note that Bochner's theorem implies that positive definite kernels have a \emph{non-negative} Fourier transform, so this is a stronger requirement.}. This includes for example the Laplace kernel $k(x,y)=e^{-\|x-y\|}$ as well as other examples; see~\cite[Table~3.1]{MuaFukSri17}.

The representation of MMD given by~\eqref{eq:MMD2} gives an easy way to estimate MMD from data. For example, assume that $X_1, \ldots, X_m$ are i.i.d.\ from $\mu$ and $Y_1, \ldots, Y_n$ are i.i.d.\ from $\mu$. Denote by $\mu_m$ and $\nu_n$ the corresponding empirical distributions.  Then,
\begin{align}
\MMD^2(\mu_m, \nu_n)=\frac1{m^2} \sum_{i,i'=1}^m &k(X_i, X_{i'})+
\frac1{n^2} \sum_{j,j'=1}^n k(Y_{j}, Y_{j'})\nonumber\\
&-\frac2{mn}\sum_{i=1}^m \sum_{j=1}^n    k(X_i, Y_j)\,.\nonumber
\end{align}
A natural question is whether this gives a good estimator of $\MMD^2(\mu, \nu)$. Using the triangle inequality, it is sufficient to control $\MMD(\mu_m,\mu)$. 
While MMD is an IPM, the closed-form representation of Proposition~\ref{prop:MMDrewrite} allows us to bypass the use of empirical process theory to control this quantity.

\begin{theorem}\label{thm:sampleMMD}
Let $k$ be a characteristic kernel such that $k(x,x)\le 1$ for any $x \in \R^\dd$. Let $X_1, \ldots, X_n$ be $n$ i.i.d.\ observations from a distribution $\mu$ on $\R^\dd$ and define the empirical measure
$$
 \mu_n=\frac1n \sum_{i=1}^n \delta_{X_i}\,.
$$
Then
$$
\E_\mu [\MMD(\mu_n, \mu)] \le \frac1{\sqrt{n}}\,.
$$
\end{theorem}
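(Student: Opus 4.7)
The plan is to leverage the Hilbert-space representation of $\MMD$ provided by Proposition~\ref{prop:MMDrewrite} so that the bound reduces to a standard second-moment calculation for a sum of i.i.d.\ mean-zero vectors in $\cH$. Specifically, I would write
\begin{align*}
\MMD(\mu_n, \mu) = \left\| \frac{1}{n}\sum_{i=1}^n k(X_i, \cdot) - \int k(x,\cdot)\, \mu(\ud x)\right\|_{\cH} = \left\| \frac{1}{n}\sum_{i=1}^n Z_i \right\|_{\cH}
\end{align*}
where $Z_i \deq k(X_i,\cdot) - \E k(X,\cdot)$ are i.i.d.\ mean-zero elements of $\cH$.

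Next, I would apply Jensen's inequality to pass from $\E \|\cdot\|_{\cH}$ to $(\E \|\cdot\|_{\cH}^2)^{1/2}$, and then expand the squared norm using the Hilbert-space inner product. By independence and zero mean, all cross terms vanish:
\begin{align*}
\E \left\| \frac{1}{n}\sum_{i=1}^n Z_i \right\|_{\cH}^2 = \frac{1}{n^2} \sum_{i=1}^n \E \|Z_i\|_{\cH}^2 = \frac{1}{n}\,\E \|Z_1\|_{\cH}^2\,.
\end{align*}

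Finally, by the reproducing property, $\|k(X_1,\cdot)\|_\cH^2 = k(X_1, X_1) \le 1$, so
\begin{align*}
\E \|Z_1\|_{\cH}^2 = \E \|k(X_1,\cdot)\|_{\cH}^2 - \|\E k(X_1,\cdot)\|_{\cH}^2 \le \E k(X_1, X_1) \le 1\,.
\end{align*}
Combining these estimates yields $\E \MMD(\mu_n, \mu) \le 1/\sqrt{n}$.

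I do not anticipate any real obstacle: the argument is essentially a one-line application of the fact that $\MMD$ is the $\cH$-norm of a sample mean, together with the standard ``variance of the sample mean equals variance over $n$'' identity in Hilbert space. The only subtle point is ensuring that $k(x, \cdot) \in \cH$ is Bochner integrable with respect to $\mu$, which follows from boundedness of $k$ (so that $\int \|k(x,\cdot)\|_\cH\, \mu(\ud x) \le 1$); this is implicit already in the definition of the kernel mean embedding used above.
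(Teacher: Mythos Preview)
Your proposal is correct and matches the paper's proof essentially line for line: both invoke Proposition~\ref{prop:MMDrewrite} to write $\MMD(\mu_n,\mu)$ as the $\cH$-norm of a centered sample mean, bound $\E\|\cdot\|_\cH$ by $(\E\|\cdot\|_\cH^2)^{1/2}$ via Jensen, use independence to reduce to $\frac{1}{n}\E\|k(X_1,\cdot)-\E k(X_1,\cdot)\|_\cH^2$, and finish with $\E\|k(X_1,\cdot)\|_\cH^2=\E k(X_1,X_1)\le 1$. The only cosmetic difference is the order in which Jensen and the variance expansion are presented.
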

\begin{proof}
It follows from Proposition~\ref{prop:MMDrewrite} that
\begin{align*}
    \E[\MMD^2(\mu_n, \mu)]&=\E\Bigl\| \frac1n \sum_{i=1}^n \{k(X_i, \cdot) - \E k(X_i, \cdot)\} \Bigr\|_{\cH}^2\\
&=\frac1n\, \E\|  k(X_1, \cdot) - \E k(X_1, \cdot) \|_{\cH}^2\\
&=\frac1n\,\bigl( \E\|  k(X_1, \cdot) \|_{\cH}^2- \left\|\E k(X_1, \cdot) \right\|_{\cH}^2\bigr)\\
&\le \frac1n\, \E\|  k(X_1, \cdot) \|_{\cH}^2\,.
\end{align*}
Next, observe that 
$$
\E\|  k(X_1, \cdot)\|_{\cH}^2=\E [k(X_1, X_1)]\le 1\,.
$$
The claim follows from Jensen's inequality.
\end{proof}

We see that unlike Wasserstein distances, MMD does not suffer from the curse of dimensionality. This is certainly a desirable feature, but it may also be interpreted from a more cautious perspective. Indeed, while MMD does define a metric, it is less sensitive to deviations between probability measures and tends to make them small. This is why $\mu_n$, which according to the 1-Wasserstein distance is quite far from $\mu$, appears to be quite close to $\mu$ from the perspective of MMD.

\subsection{Smoothed Wasserstein distances}

We see from Proposition~\ref{prop:MMDrewrite} that when $k$ is the Gaussian kernel, $\MMD(\mu, \nu)$ is a Hilbert space norm involving the densities $\mu \star \cN(0, \sigma^2I_\dd)$ and $\nu \star \cN(0, \sigma^2I_\dd)$. We could very well measure this distance between probability measures using other distances, in particular, using Wasserstein distances.

\begin{definition}\label{def:smoothWp}\index{Wasserstein distance!smoothed}
Fix $p \ge 1$. The smoothed $p$-Wasserstein distance between two probability measures $\mu, \nu \in \cP_p(\R^\dd)$ is defined by
\begin{equation*}
W_p^{(\sigma)}(\mu, \nu) \deq W_p(\mu\star \cN(0, \sigma^2I), \nu \star \cN(0, \sigma^2I))\,.
\end{equation*}
\end{definition}
The idea of computing the Wasserstein distance between smoothed versions of the measures was already used as an analytical tool in the proof of Corollary~\ref{cor:fourier_with_smoothing}; here, we consider it as a notion of distance in its own right.

It follows readily from this definition that the smoothed Wasserstein distance is indeed a distance. Compared to MMD, which embeds distributions in a Hilbert space, the geometry induced on distributions by the smoothed Wasserstein distance is much closer to the original Wasserstein distance. Like MMD, however, smoothed Wasserstein distances enjoy faster statistical rates of convergence. For simplicity, we focus here on the case $p=1$, but parametric rates have been established for $p = 2$ as well.
\begin{theorem}\label{thm:smoothed_rate}
Fix $\sigma>0$. Let $X_1, \ldots, X_n$ be $n$ i.i.d.\ observations from a distribution $\mu$ on $[-1,1]^\dd$ and define the empirical measure
$$
 \mu_n=\frac1n \sum_{i=1}^n \delta_{X_i}\,.
$$
Then
$$
\E_\mu [W_1^{(\sigma)}(\mu_n, \mu)] \lesssim \frac{1}{\sqrt{n}}\,,
$$
where the implicit constant depends on both $\sigma^2$ and $d$.
\end{theorem}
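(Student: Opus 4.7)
My plan is to use Theorem~\ref{thm:wpTV} to express $W_1^{(\sigma)}$ as a weighted $L^1$ discrepancy between the smoothed densities, and then exploit the fact that smoothing reduces pointwise variance to a parametric rate. Set $h_n \deq \mu_n \star \cN(0,\sigma^2 I)$ and $h \deq \mu \star \cN(0,\sigma^2 I)$, which are bona fide densities on $\R^\dd$. Applying Theorem~\ref{thm:wpTV} with $p=1$ and $x_0 = 0$ gives
\begin{equation*}
    W_1^{(\sigma)}(\mu_n,\mu) = W_1(h_n,h) \le \int_{\R^\dd} \|x\| \, |h_n(x) - h(x)| \, \ud x\,.
\end{equation*}
Taking expectations and applying Fubini reduces the problem to bounding $\E |h_n(x) - h(x)|$ for each fixed $x$.

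The point is that $h_n(x) - h(x) = \frac{1}{n}\sum_{i=1}^n \bigl(\gamma_\sigma(x-X_i) - \E \gamma_\sigma(x-X_i)\bigr)$, an empirical average of bounded i.i.d.\ random variables. By Jensen's inequality,
\begin{equation*}
    \E |h_n(x) - h(x)| \le \sqrt{\frac{\Var \gamma_\sigma(x - X_1)}{n}} \le \sqrt{\frac{\E \gamma_\sigma(x-X_1)^2}{n}}\,.
\end{equation*}
Bounding one copy of $\gamma_\sigma$ by $\|\gamma_\sigma\|_\infty = (2\pi\sigma^2)^{-\dd/2}$ yields $\E\gamma_\sigma(x-X_1)^2 \le (2\pi\sigma^2)^{-\dd/2}\,h(x)$, so
\begin{equation*}
    \E W_1^{(\sigma)}(\mu_n,\mu) \le \frac{(2\pi\sigma^2)^{-\dd/4}}{\sqrt n} \int_{\R^\dd} \|x\|\,\sqrt{h(x)}\, \ud x\,.
\end{equation*}

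The remaining task, which is the only real work, is to show that $\int \|x\|\sqrt{h(x)}\, \ud x$ is finite with a bound depending only on $\sigma$ and $\dd$. Because $\mu$ is supported in $[-1,1]^\dd \subset B(0,\sqrt\dd)$, for every $x$ with $\|x\|\ge \sqrt\dd$ we have $h(x) \le (2\pi\sigma^2)^{-\dd/2}\, \exp\bigl(-(\|x\|-\sqrt\dd)^2/(2\sigma^2)\bigr)$, and otherwise $h(x) \le (2\pi\sigma^2)^{-\dd/2}$. Thus $\sqrt{h(x)}$ has Gaussian tail decay, and splitting the integral between $\{\|x\| \le \sqrt\dd\}$ and $\{\|x\| > \sqrt\dd\}$ bounds it by a constant $C(\sigma,\dd)$, completing the proof.

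The main obstacle---modest in this case---is verifying the tail integral in the final step, but since $\mu$ is compactly supported this reduces to a standard Gaussian moment computation. Conceptually, the proof shows that smoothing by a Gaussian trades slow Wasserstein rates for the fast $n^{-1/2}$ rate at the price of a dimension-dependent constant that degrades with small $\sigma$, which is the hallmark behavior of the smoothed Wasserstein distance.
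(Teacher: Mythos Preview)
Your proof is correct and follows essentially the same approach as the paper: both apply Theorem~\ref{thm:wpTV} to bound $W_1^{(\sigma)}$ by a weighted $L^1$ difference of the smoothed densities, then use the i.i.d.\ structure to get a pointwise $n^{-1/2}$ rate, and finally verify that the resulting integral in $x$ is finite using the compact support of $\mu$ and Gaussian tails. The only cosmetic difference is that you bound $\E\gamma_\sigma(x-X_1)^2 \le \|\gamma_\sigma\|_\infty\, h(x)$ and work with $\int \|x\|\sqrt{h(x)}\,\ud x$, whereas the paper bounds $(\E\gamma_\sigma(x-X_1)^2)^{1/2}$ directly by a case split on $\|x\|$; both yield the same Gaussian-tailed integrand and the same conclusion.
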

Before turning to the proof, we note that the constant factor in this bound scales exponentially in the dimension.
This poor scaling in $d$ is, in fact, unavoidable and reflects the fundamental statistical difficulty of estimating the Wasserstein distance.
\begin{proof}
	Denote by $f$ the density of $\mu \star \cN(0, \sigma^2I)$ and by $f_n$ the density of $\mu_n \star \cN(0, \sigma^2I)$.
	Write $\varphi(z)  \deq  (2 \pi \sigma^2)^{-d/2} \exp(-\frac{1}{2 \sigma^2}\|z\|^2)$ for the density of $\cN(0, \sigma^2I)$.
	Theorem~\ref{thm:wpTV} implies
	\begin{align*}
		\E W_1^{(\sigma)}(\mu_n, \mu) & \leq \E \int \|z\|\, |f_n(z) - f(z)| \ud z \\
		& = \int \|z\| \,\E \Bigl\lvert\frac 1n \sum_{i=1}^n \varphi(z - X_i) - \E \varphi(z- X_i)\Bigr\rvert\, \ud z\\
		& \leq \frac{1}{\sqrt n} \int \|z\|\, \bigl(\E (\varphi(z - X_1) - \E \varphi(z- X_1))^2\bigr)^{1/2}\, \ud z \\
		& \leq \frac{1}{\sqrt n} \int \|z\|\, (\E \varphi(z - X_1)^2)^{1/2}\, \ud z\,.
	\end{align*}
	It suffices to show that the integral is bounded.
	If $\|z\| \le 2\sqrt d$, then we can use the crude bound $(\E \varphi(z - X_1)^2)^{1/2} \leq (2 \pi \sigma^2)^{-d/2}$.
	If $\|z\| > 2\sqrt d$, then $\|z - X_1\| \geq \|z\| - \|X_1\| \ge \|z/2\|$ almost surely, which yields $(\E \varphi(z - X_1)^2)^{1/2} \leq \varphi(z/2)$.
	We obtain
	\begin{align*}
		\E W_1^{(\sigma)}(\mu_n, \mu) \leq & \frac{(2 \pi \sigma^2)^{-d/2}}{\sqrt n} \int_{\|z\|\le 2\sqrt d} \|z\|\, \ud z + \frac{1}{\sqrt n} \int \|z\|\, \varphi(z/2)\, \ud z \\
	& \lesssim n^{-1/2}\,,
	\end{align*}
	as claimed.
\end{proof}

\subsection{Sliced Wasserstein distances}\label{subsec:sliced}\index{Wasserstein distance!sliced}

Finally, we close this chapter with yet another method to avoid the curse of dimensionality, this time based on considering the Wasserstein distance between one-dimensional projections.

Formally, let $\mbb S^{\dd-1}$ denote the unit sphere in $\R^\dd$ and for $\theta \in \mbb S^{\dd-1}$ let $\Pi^\theta : \R^\dd\to\R$ be the projection $\Pi^\theta(x) \deq \langle \theta, x \rangle$.
Define the \emph{sliced Wasserstein distance} between $\mu, \nu \in \cP_p(\R^\dd)$ to be the quantity
\begin{align}\label{eq:sliced_wass}
    \SW_p(\mu,\nu)
    &\deq \Bigl(\int W_p^p(\Pi^\theta_\# \mu, \Pi^\theta_\# \nu) \, \sigma(\ud\theta)\Bigr)^{1/p}\,,
\end{align}
where $\sigma$ is the uniform measure on $\mbb S^{\dd-1}$.

The idea of considering one-dimensional projections is rooted in applications to imaging and tomography, for which various integral transforms have been introduced. In particular, the \emph{Radon transform} of a measure $\mu$ is defined to be the collection of one-dimensional projections $(\Pi^\theta_\# \mu)_{\theta\in\mbb S^{\dd-1}}$.
It is a classical fact, known as the \emph{Cram\'er{--}Wold theorem}, that the Radon transform of $\mu$ completely characterizes $\mu$, justifying its use in defining a metric over probability measures. Let us start by checking that the axioms of a metric space are indeed satisfied.

\begin{theorem}
    For every $p\ge 1$, $\SW_p$ defines a metric over $\cP_p(\R^\dd)$.
\end{theorem}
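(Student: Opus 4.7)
The plan is to verify the four metric axioms: non-negativity, symmetry, identity of indiscernibles, and the triangle inequality. Non-negativity is immediate from the definition, since $W_p \ge 0$, and symmetry is inherited directly from the symmetry of $W_p$. I should also first observe that $\SW_p(\mu,\nu)$ is finite whenever $\mu,\nu \in \cP_p(\R^\dd)$: for $\theta \in \mbb S^{\dd-1}$, we have $|\langle \theta, x\rangle| \le \|x\|$, so $\Pi^\theta_\# \mu \in \cP_p(\R)$, and by a suboptimal coupling (namely the one induced from any coupling $\gamma \in \Gamma_{\mu,\nu}$ via pushforward through $(\Pi^\theta, \Pi^\theta)$), $W_p^p(\Pi^\theta_\# \mu, \Pi^\theta_\# \nu) \le \int |\langle \theta, x-y\rangle|^p\,\gamma(\ud x,\ud y) \le \int \|x-y\|^p\,\gamma(\ud x,\ud y)$, which is uniformly bounded in $\theta$ and integrable against $\sigma$.

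For the triangle inequality, fix $\mu,\nu,\rho \in \cP_p(\R^\dd)$. For every $\theta \in \mbb S^{\dd-1}$, Proposition~\ref{prop:wp_is_metric} applied in one dimension gives
\begin{equation*}
    W_p(\Pi^\theta_\# \mu, \Pi^\theta_\# \nu) \le W_p(\Pi^\theta_\# \mu, \Pi^\theta_\# \rho) + W_p(\Pi^\theta_\# \rho, \Pi^\theta_\# \nu)\,.
\end{equation*}
Taking $L^p(\sigma)$ norms on both sides and invoking Minkowski's inequality then yields $\SW_p(\mu,\nu) \le \SW_p(\mu,\rho) + \SW_p(\rho,\nu)$.

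The key step—and the main obstacle—is the identity of indiscernibles. The direction $\mu = \nu \Rightarrow \SW_p(\mu,\nu)=0$ is immediate. For the converse, suppose $\SW_p(\mu,\nu)=0$. Then $W_p(\Pi^\theta_\# \mu, \Pi^\theta_\# \nu) = 0$ for $\sigma$-almost every $\theta$, and since $W_p$ is a metric on $\cP_p(\R)$ this means $\Pi^\theta_\# \mu = \Pi^\theta_\# \nu$ for $\sigma$-a.e.\ $\theta$. I plan to upgrade this ``almost everywhere'' statement to ``everywhere'' by noting that $\theta \mapsto \Pi^\theta_\# \mu$ is weakly continuous (since for any $f \in \Cb(\R)$, the map $\theta \mapsto \int f(\langle \theta, x\rangle)\,\mu(\ud x)$ is continuous by dominated convergence). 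Once the two pushforward families agree for all $\theta \in \mbb S^{\dd-1}$, the characteristic functions of $\mu$ and $\nu$ coincide on every line through the origin, hence everywhere on $\R^\dd$; this is precisely the Cram\'er--Wold theorem, from which we conclude $\mu = \nu$. This last step is the genuinely non-trivial ingredient, relying on an external fact from classical probability rather than on the optimal-transport machinery developed in the chapter.
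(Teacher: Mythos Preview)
Your proof is correct and follows essentially the same route as the paper: non-negativity and symmetry are inherited from $W_p$, the triangle inequality comes from the pointwise $W_p$ triangle inequality followed by Minkowski in $L^p(\sigma)$, and definiteness is handled by upgrading the $\sigma$-a.e.\ equality of one-dimensional marginals to equality for all $\theta$ via a continuity argument, then invoking Cram\'er--Wold. The only minor difference is in the continuity step: the paper establishes that $\theta \mapsto W_p(\Pi^\theta_\#\mu,\Pi^\theta_\#\nu)$ is Lipschitz (by bounding $W_p(\Pi^\theta_\#\mu,\Pi^{\theta'}_\#\mu)$ via the coupling $(\langle\theta,X\rangle,\langle\theta',X\rangle)$), whereas you argue weak continuity of $\theta \mapsto \Pi^\theta_\#\mu$ directly via dominated convergence; both arguments work, and the paper even remarks in a footnote that one can alternatively pass straight to characteristic functions and use their uniform continuity.
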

\begin{proof}
    Symmetry and non-negativity follow from the corresponding facts about the Wasserstein distance.
    For $\Theta \sim \sigma$, the triangle inequality is verified via
    \begin{align*}
        \SW_p(\mu,\nu)
        &= \bigl(\E W_p^p(\Pi^\Theta_\# \mu, \Pi^\Theta_\# \nu)\bigr)^{1/p} \\
        &\le \bigl(\E W_p^p(\Pi^\Theta_\# \mu, \Pi^\Theta_\# \rho)\bigr)^{1/p} + \bigl(\E W_p^p(\Pi^\Theta_\# \rho, \Pi^\Theta_\# \nu)\bigr)^{1/p} \\
        &= \SW_p(\mu,\rho) + \SW_p(\rho,\nu)\,.
    \end{align*}
    Finally, we must check that $\SW_p(\mu,\nu) = 0$ implies $\mu = \nu$.
    Certainly, $\SW_p(\mu,\nu) =0$ implies that $W_p(\Pi^\theta_\#\mu,\Pi^\theta_\# \nu) = 0$ for \emph{almost every} $\theta \in \mbb S^{d-1}$, which implies $\Pi^\theta_\# \mu = \Pi^\theta_\# \nu$. To finish, we would like to upgrade ``almost every'' to ``every'' to apply the Cram\'er{--}Wold device.

    To do so, we prove a Lipschitz continuity property of the mapping $\theta \mapsto \Pi^\theta_\# \mu$.
    For $\theta' \in \mbb S^{\dd-1}$ and $X\sim \mu$,
    \begin{align*}
        W_p(\Pi^\theta_\# \mu, \Pi^{\theta'}_\# \mu)
        &\le \bigl( \E[|\langle \theta-\theta',X\rangle|^p]\bigr)^{1/p}
        \le (\E\|X\|^p)^{1/p}\, \|\theta-\theta'\|\,.
    \end{align*}
    Together with the $W_p$ triangle inequality, it shows that
    \begin{align*}
        &|W_p(\Pi_\#^\theta \mu, \Pi_\#^\theta \nu) - W_p(\Pi_\#^{\theta'} \mu, \Pi_\#^{\theta'} \nu)| \\
        &\qquad \le W_p(\Pi_\#^\theta \mu, \Pi_\#^{\theta'} \mu) + W_p(\Pi_\#^\theta \nu, \Pi_\#^{\theta'}\nu)
        \lesssim \|\theta-\theta'\|\,.
    \end{align*}
    Therefore, $\theta \mapsto W_p(\Pi^\theta_\# \mu, \Pi_\#^\theta \nu)$ is continuous, and $\SW_p(\mu,\nu) = 0$ implies that this quantity vanishes for every $\theta \in \mbb S^{\dd-1}$.\footnote{An alternative argument proceeds as follows: $\Pi_\#^\theta \mu = \Pi_\#^\theta \nu$ for almost every $\theta$ implies that the characteristic functions of $\mu$, $\nu$ are equal almost everywhere. But characteristic functions are uniformly continuous.}
\end{proof}

We can now prove that the sliced Wasserstein distance can be estimated at a parametric rate.

\begin{proposition}\label{prop:sw_rate}
    Suppose that $\mu, \nu \in \cP(B_1)$, where $B_1$ is the unit ball in $\R^\dd$, and let $\mu_n$, $\nu_n$ denote the corresponding empirical measures formed from i.i.d.\ samples $X_1,\dotsc,X_n \sim \mu$ and $Y_1,\dotsc,Y_n \sim \nu$.
    Then,
    \begin{align*}
        \E \SW_1(\mu_n, \mu)
        &\lesssim n^{-1/2}\,.
    \end{align*}
    Also,
    \begin{align*}
        \E|\SW_1(\mu_n, \nu_n) - \SW_1(\mu,\nu)| \lesssim n^{-1/2}\,.
    \end{align*}
\end{proposition}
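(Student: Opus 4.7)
The plan is to reduce both statements to the one-dimensional Wasserstein law of large numbers (the $\dd=1$ case of Proposition~\ref{prop:w1_rate}), exploiting the linear structure of the definition of $\SW_1$.

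For the first inequality, I would start from the definition~\eqref{eq:sliced_wass} with $p=1$ and apply Fubini/Tonelli:
\begin{align*}
    \E \SW_1(\mu_n,\mu)
    &= \int_{\mbb S^{\dd-1}} \E W_1(\Pi^\theta_\# \mu_n,\, \Pi^\theta_\# \mu)\, \sigma(\ud \theta)\,.
\end{align*}
For each fixed $\theta$, the pushforward $\Pi^\theta_\# \mu_n$ is the empirical measure of the real samples $\langle\theta, X_1\rangle,\dotsc,\langle\theta, X_n\rangle$, which are i.i.d.\ from $\Pi^\theta_\# \mu$. Since $\mu$ is supported on $B_1$, Cauchy--Schwarz gives $|\langle\theta,X_i\rangle|\le 1$, so $\Pi^\theta_\# \mu$ and $\Pi^\theta_\# \mu_n$ are supported in the interval $[-1,1]$. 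The $\dd=1$ case of Proposition~\ref{prop:w1_rate} (which is stated for $[0,1]$ but extends trivially by translation/scaling up to a universal constant) then yields
\begin{align*}
    \E W_1(\Pi^\theta_\# \mu_n,\, \Pi^\theta_\# \mu) \lesssim n^{-1/2}\,,
\end{align*}
uniformly in $\theta$. Integrating against $\sigma$, which is a probability measure, gives the first claim.

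For the second inequality, I would invoke the triangle inequality for $\SW_1$ already proved (on the way to showing $\SW_1$ is a metric) to get
\begin{align*}
    |\SW_1(\mu_n,\nu_n) - \SW_1(\mu,\nu)|
    &\le \SW_1(\mu_n,\mu) + \SW_1(\nu_n,\nu)\,,
\end{align*}
then take expectations and apply the first bound to each term.

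There is no real obstacle: both steps are essentially mechanical once one notices that slicing commutes with pushing forward an empirical measure, so the $\dd$-dimensional problem decomposes into a family of one-dimensional problems each enjoying the parametric rate. The only subtle point is checking that the $[0,1]^\dd$ bound of Proposition~\ref{prop:w1_rate} transfers to measures supported on $[-1,1]$ with a dimension-independent constant, which follows from invariance of $W_1$ under translation and its $1$-homogeneity under scaling.
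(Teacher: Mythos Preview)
Your proposal is correct and follows essentially the same approach as the paper: reduce to the one-dimensional rate by noting that $\Pi^\theta_\# \mu_n$ is the empirical measure of i.i.d.\ samples from $\Pi^\theta_\# \mu$, apply the $\dd=1$ bound uniformly in $\theta$, average over $\theta$, and derive the second inequality from the first via the triangle inequality for $\SW_1$. The paper's version is terser (it omits the Fubini step and the scaling remark), but the argument is identical.
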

\begin{proof}
    The second inequality follows from the first by the triangle inequality. To establish the first, we can note that $\langle \theta, X_1\rangle,\dotsc,\langle \theta, X_n\rangle$ is an i.i.d.\ sample from $\Pi^\theta_\# \mu$, and $\Pi^\theta_\# \mu_n$ is the corresponding empirical measure.
    Hence, from the one-dimensional rate in Proposition~\ref{prop:w1_dyadic_rate}, $\E W_1(\Pi^\theta_\# \mu_n, \Pi^\theta_\# \mu) \lesssim n^{-1/2}$. Then, average over $\theta$.
\end{proof}

Although we have motivated the sliced Wasserstein distance for its statistical benefits, fortuitously it also comes with substantial computational ones.
Indeed, computation of $\SW_p$ boils down to one-dimensional optimal transport, which for discrete measures can be solved via sorting; see Exercise~\ref{ex:sliced_computation}.

\section{Discussion}

\noindent\textbf{\S\ref{sec:wasserstein_lln}.}
The Wasserstein law of large numbers is discussed in more detail in~\cite[Chapter 11]{Dud02}.
The slow rate of convergence is a manifestation of the fact that $W_1$ convergence automatically implies convergence of all Lipschitz text functions (and, for $p > 1$, convergence of all higher moments, as Proposition~\ref{prop:wass_controls_moments} shows); it is therefore not surprising that a large number of samples is needed to obtain such strong control.
The implications of the Wasserstein law of large numbers for optimal quantization and $k$-means clustering were first developed in a statistical context in~\cite{pollard1982quantization}.

\noindent\textbf{\S\ref{sec:dyadic}.}
The usefulness of the dyadic partitioning argument for controlling the Wasserstein distances was first highlighted by \cite{BoiLe-14}; see also \cite{dereich2013constructive,FouGui15} extensions to the unbounded setting.
A further discussion of the history of this approach appears in~\cite{WeeBac19}, from which this version of the argument was taken.
This argument can easily be extended to show that the rate of convergence depends on the intrinsic dimension of the measure $\mu$ rather than the ambient dimension.

The dyadic partitioning argument also applies to the $p > 1$ case, and shows that
\begin{equation}\label{eq:general_wp_rates}
    \E W_p(\mu_n, \mu) \lesssim_p \sqrt \dd \cdot \begin{cases}
    n^{-1/2p}\,, & \text{if $\dd < 2p$\,,} \\
    (\log n)^{1/p}/n^{1/2p}\,, & \text{if $\dd = 2p$\,,} \\
    n^{-1/\dd}\,, & \text{if $\dd > 2p$\,.}
\end{cases}
\end{equation}
This rate is essentially sharp, apart from the logarithmic factor in the $\dd = 2p$ case.
On the other hand, the dual bounds we present in this chapter do \emph{not} easily extend to $p > 1$ since the dual formulation of $W_p(\mu_n, \mu)$ does not give rise to an empirical process when $p > 1$.

The triangle inequality implies that rates of convergence of $W_p(\mu_n, \nu)$ to $W_p(\mu, \nu)$ can be derived from the corresponding rates of convergence of $W_p(\mu_n, \mu)$; however, these rates can fail to be sharp.
To give one example, \cite{ChiRouLeg20} showed that
\begin{align}\label{eq:w2_one_sample}
    \E|W_2^2(\mu,\nu_n) - W_2^2(\mu,\nu)|
    &\lesssim \begin{cases}
        n^{-1/2}\,, &\text{if}~\dd < 4\,, \\
        (\log n)/n^{1/2}\,, &\text{if}~\dd = 4\,, \\
        n^{-2/d}\,, &\text{if}~\dd > 4\,.
    \end{cases}
\end{align}
Note that when $\mu \neq \nu$, this bound is stronger than what could be deduced from~\eqref{eq:general_wp_rates}.
A similar phenomenon exists for other $W_p$ distances~\cite{ManNil24}.

More strikingly, the rate at which $W_p(\mu_n, \nu)$ converges to $W_p(\mu, \nu)$ can be shown to depend on the \emph{smaller} of the intrinsic dimensions of $\mu$ and $\nu$; see~\cite{HunStaMun24}.
In particular, if $\nu$ is supported on a finite number of points (sometimes known as the \emph{semi-discrete}\index{semi-discrete optimal transport} optimal transport problem), then $W_p(\mu_n, \nu)$ converges to $W_p(\mu, \nu)$ at a rate that does not suffer from the curse of dimensionality.
This fact cannot be deduced from bounds on $W_p(\mu_n, \mu)$ alone.

\noindent\textbf{\S\ref{sec:chaining}.} Chaining is an idea that goes back implicitly to Kolmogorov.
In the form of Proposition~\ref{prop:chaining}, it is known as Dudley's entropy integral~\cite{Dud1967Chaining}.
For some of the many references on chaining and its applications, see~\cite{Dud99, Han14, Ver18, Wai19, Tal21UppLow, VaaWel23}.

\noindent\textbf{\S\ref{sec:akt}.}
As mentioned, the result of this section is due to~\cite{AjtKomTus84} and the argument here is taken from~\cite{BobLed21}.
The idea of using Fourier transforms to bound matching costs is originally due to~\cite{CoffSho91}, and was developed extensively by Talagrand~\cite{Tal21UppLow}.
More broadly, there is a large literature on so-called matching problems, e.g.,~\cite{Led17MatchingI, LedZhu21MatchingIII, Tal21UppLow}, and recently techniques from partial differential equations have been used to derive very sophisticated results when $\dd = 2$, see, e.g.,~\cite{AmbStrTre19}.

\noindent\textbf{\S\ref{sec:applications_w2}.} Applications of Wasserstein distances to testing can be found in~\cite{delbarriotests99,HalMoSeg21,gonzaleztests23,Neyetal23WassTesting}.
Goodness-of-fit and two-sample testing problems with Wasserstein distances were studied in~\cite{doba11}; for each problem, they showed that the optimal separation is of order $n^{-c/\dd}$ for some constant $c > 1$.

It was shown in~\cite{NilRig22} that, up to logarithmic factors, a separation of $n^{-1/\dd}$ is necessary for a ``robust'' version of goodness-of-fit problem described above, with hypotheses
$$
H^0\,:\, W_1(\mu,\mu^0) < \eps \qtxt{vs.} H_1\,:\, W_1(\mu, \mu^0)> 2 \eps\,.$$
This lower bound can be used to obtain nearly sharp minimax lower bounds for estimating the Wasserstein distance.

\noindent\textbf{\S\ref{sec:estimation_lower_bds}.} The lower bound in Theorem~\ref{thm:lbW1unif} is due to~\cite{Dud69}.
The minimax lower bound in Theorem~\ref{thm:mimiaxlbW1} was first proved by~\cite{SinPoc18}.
For expositions of minimax lower bound techniques, see~\cite{Tsy09, RigHut17, Wai19}.

\noindent\textbf{\S\ref{sec:faster_rate_smoothness}.} Minimax estimation of smooth densities in the Wasserstein distance was studied in~\cite{Singhetal18DensityEst, Lia21} for $W_1$, and in~\cite{NilBer22MinimaxDensity} for $W_p$, $p > 1$.
The case of $p > 1$ evinces different behavior from the $p = 1$ case: 
\cite{NilBer22MinimaxDensity} showed that the rate in Theorem~\ref{thm:smoothUB} is achievable for $p > 1$ only under the additional assumption that the density of $\mu$ is bounded below; without this assumption, rates of estimation are strictly worse.
The results for the $p > 1$ case are confined to densities lying in Besov classes; extending the arguments of \cite{NilBer22MinimaxDensity} to other classes of densities is an open question.
A version of this problem on manifolds has been studied in~\cite{Div22}.

Non-parametric density estimation is itself a classical topic in statistics, albeit usually studied in other distance metrics~\cite{Tsy09}.

\noindent\textbf{\S\ref{sec:regulOT}.}
It is worth mentioning that IPMs have received significant attention in the context of Generative Adversarial Networks (GANs), and in particular, Wasserstein GANs~\cite{ArjChiBot17} where $\cF$ is chosen to be a family of deep neural networks.
For statistical analyses of IPMs and GANs, see, e.g.~\cite{UppSinPoc19,Lia21}.

Maximum mean discrepancy was first developed in~\cite{BorGreRas06}, and is now the subject of a large literature, see,~\cite{MuaFukSri17}. The rate given in Theorem~\ref{thm:sampleMMD} is folklore.
A notable special case of MMD is the class of energy distances, for which we recommend~\cite[Subsection 1.2.4]{Ger24Thesis} for an introduction and references.

The favorable statistical properties of the smoothed Wasserstein distances were first recorded in~\cite{GolGreNil20}.
The simple proof of Theorem~\ref{thm:smoothed_rate} is taken from~\cite{Wee18}.

Sliced Wasserstein distances were introduced in~\cite{Rabin+12Barycenter}. They arose as a device to understand the ``iterative distribution transfer'' algorithm~\cite{PitKokDah07DistTransfer}; see the PhD thesis of Bonnotte~\cite{Bon13Thesis} for history.
For further discussion, consult~\cite[Section 5.5.4]{San15}, and see~\cite{NadDurChi20} for generalizations with similar metric and statistical properties.
Extensions of Proposition~\ref{prop:sw_rate} appear in~\cite{ManBalWas22}.
The sharp condition for the fast rate of estimation of sliced Wasserstein distances to hold was obtained in~\cite{BobLed19OneDim}.
Other results in this vein, including distributional limits, can be found in~\cite{ManBalWas22, NilRig22, OkaIma22ProjWass, XiNil22Sliced, XuHua22CLTSliced, ParSle23Sliced, Gol+24RegOT}.

\section{Exercises}

\begin{enumerate}
	\item\label{exe:unif_measure_\dd_dep} This exercise shows that the $\sqrt \dd$ factor in Proposition~\ref{prop:w1_rate} cannot be improved.
	\begin{enumerate}
		\item Show that there exists a positive universal constant $c$ such that for any $n \geq 1$, the Lebesgue measure of a ball in $\R^\dd$ with radius $c \sqrt{\dd}\, n^{-1/\dd}$ is at most $(2n)^{-1}$. (Hint: recall that the unit ball in $\R^\dd$ has Lebesgue measure $\pi^{\dd/2}/\Gamma(\frac \dd 2 + 1)$.)
		\item Let $\mu$ be the uniform measure on $[0, 1]^\dd$, and let $\tilde \mu_n$ be any measure supported on $n$ points $x_1, \dots, x_n$.
		If we denote by $B(x_i, \epsilon)$ a ball of radius $\epsilon$ around $x_i$, show that $\mu (\bigcup_{i=1}^n B(x_i, \epsilon)) \leq \frac 12$ if $\epsilon = c \sqrt{\dd}\, n^{-1/\dd}$, where $c$ is the constant from part (a).
		\item Conclude that if $\gamma \in \Gamma(\mu, \tilde \mu_n)$, then $\int \|x - y\| \, \ud \gamma(x, y) \geq \frac 12 \cdot c \sqrt{\dd}\, n^{-1/\dd}$.
	\end{enumerate}
 
	\item Adapt the proof of Proposition~\ref{prop:w1_dyadic_rate} to establish~\eqref{eq:general_wp_rates}.
 
    \item Recall the bounded differences inequality (e.g.,~\cite[Theorem 6.2]{BouLugMas13}). Use it to prove a concentration inequality for $W_1(\mu_n,\mu)$ around its expectation, where $X_1,\dotsc,X_n$ are i.i.d.\ from a distribution $\mu$ supported on a ball of radius $R$ and $\mu_n$ is the empirical measure $\mu_n = \frac{1}{n} \sum_{i=1}^n \delta_{X_i}$.

    \item Prove that $\SW_p \le W_p$ for any $p\ge 1$.
    Is this inequality tight?
    Similarly, show that $W_1^{(\sigma)} \leq W_1$.
    Is this inequality tight?

    \item\label{ex:sliced_computation} We consider the computational aspects of the sliced Wasserstein distance, defined in Subsection~\ref{subsec:sliced}.
    \begin{enumerate}
    \item Show that if $\mu, \nu \in \cP_p(\R)$ ($p\ge 1$), and $\mu$ and $\nu$ are each uniformly distributed on $n$ points, then $W_p(\mu,\nu)$ can be computed in $O(n \log n)$ time (where we treat arithmetic and comparison operations as constant time).
        Assume that the measures $\mu$, $\nu$ are given as (unordered) lists of points.
    \item Now suppose that $\mu,\nu \in \cP_p(\R^\dd)$ are each uniformly distributed on $n$ points, presented as lists of points in $\R^\dd$.
        Argue that we can compute $W_p(\Pi^\theta_\# \mu, \Pi^\theta_\# \nu)$ in $O(\dd n + n \log n)$ time.
    \item This is still insufficient for algorithmic purposes, since computing $\SW_1(\mu,\nu)$ exactly requires computing $W_1(\Pi^\theta_\# \mu, \Pi^\theta_\# \nu)$ for uncountably many values of $\theta$ and integrating.
        Argue instead that if $\mu, \nu \in \cP(B_1)$ and we draw $m$ i.i.d.\ points $\theta_1,\dotsc,\theta_m$ from the uniform measure $\sigma$ on $\mbb S^{\dd-1}$, then the Monte Carlo average
        \begin{align*}
            \widehat{\SW_1}(\mu, \nu)
            &\deq \frac{1}{m}\sum_{i=1}^m W_1(\Pi^{\theta_i}_\# \mu, \Pi^{\theta_i}_\# \nu)
        \end{align*}
        approximates $\SW_1(\mu,\nu)$ to an additive error of size $O(m^{-1/2})$.
    \end{enumerate}
    
\item Theorem~\ref{thm:W1_1D} implies that if $\mu \in \cP_1(\R)$, then
\begin{align*}
    W_1(\mu_n, \mu) = \int_{-\infty}^{\infty} |F_{\mu_n}(t) - F_\mu(t)| \,\ud t\,,
\end{align*}
where $F_{\mu_n}$ and $F_\mu$ are the cumulative distribution functions of $\mu_n$ and $\mu$, respectively.
Use this fact to show Proposition~\ref{prop:w1_rate} for $d = 1$ directly.
(Hint: $\E |F_{\mu_n}(t) - F_\mu(t)|^2 = F_\mu(t)\,(1-F_\mu(t))$.)

\item The minimax lower bound proved in Theorem~\ref{thm:mimiaxlbW1} is suboptimal when $d = 1$.
This exercise proves an optimal bound based on testing between two hypotheses
(Theorem 2.2 in \cite{Tsy09}).
To use this approach, it suffices to construct two measures $\mu^{(0)}$ and $\mu^{(1)}$ with support in $[0, 1]$ such that 
\begin{align*}
W_1(\mu^{(0)}, \mu^{(1)}) & \geq 2 r_n\,, \\
{\sf KL}(\mu^{(1)}\mmid \mu^{(0)}) & \leq \tfrac{1}{2n}\,.
\end{align*}
The existence of such measures implies that for any estimator $\tilde \mu_n$ based on $n$ i.i.d.\ observations, there exists $j \in \{0, 1\}$ such that $\E_{\mu^{(j)}}[W_1(\tilde \mu_n, \mu^{(j)})] \geq \frac{r_n}{4}$.
\begin{enumerate}
    \item Fix $\eps \in (0, 1/2)$, and consider $\mu^{(0)} = (\tfrac 12 + \eps) \delta_0 + (\tfrac 12 - \eps) \delta_1$ and $\mu^{(1)} = (\tfrac 12 - \eps) \delta_0 + (\tfrac 12 + \eps) \delta_1$.
    Show that $W_1(\mu^{(0)}, \mu^{(1)}) = 2 \eps$.
    \item Show that this pair of measures satisfies ${\sf KL}(\mu^{(1)}, \mu^{(0)}) \leq  \frac{16  \eps^2}{1 - 4 \eps^2}$.
    \item Conclude that there exists a positive universal constant $c$ such that for any estimator $\tilde \mu_n$ there exists $j \in \{0, 1\}$ such that $\E_{\mu^{(j)}}[W_1(\tilde \mu_n, \mu^{(j)})] \geq c n^{-1/2}$.
\end{enumerate}

    \item The regularization strategies discussed in Section~\ref{sec:regulOT} can be applied more broadly.
    For example, given probability measures $\mu$, $\nu$, define the following quantity and call it ``smoothed $L_2$'':
    \begin{align*}
        d_\sigma^2(\mu,\nu) \deq \int \bigl(\mu\star \cN(0,\sigma^2 I) - \nu \star \cN(0,\sigma^2 I)\bigr)^2\,.
    \end{align*}
    Show that it can be estimated at a parametric rate: if $\mu_n$ denotes the empirical measure formed from $n$ i.i.d.\ samples from $\mu$, then
    \begin{align*}
        \E d_\sigma^2(\mu_n,\mu)
        &\le \frac{1}{{(2\pi\sigma^2)}^{d/2}\,n}\,.
    \end{align*}
    
    \item For $\mu, \nu \in \cP_p(\R^d)$, the ``max-sliced Wasserstein distance''~\cite{DesHuSun19,KolNadSim19}\footnote{Also known as the ``low-dimensional Wasserstein distance''~\cite{NilRig22} or the ``subspace robust Wasserstein distance''~\cite{PatCut19}.} between them is
    \begin{equation*}
    	\mathsf{MSW}_p(\mu, \nu) \deq \max_{\theta \in \mbb S} W_p(\Pi^\theta_\# \mu, \Pi^\theta_\# \nu)\,.
    \end{equation*}
    The goal of this exercise is to show that $\mathsf{MSW}_1$ can be estimated at the parametric rate.
    \begin{enumerate}
    	\item Let $\mu \in \cP(B_1)$, and let $\mu_n$ be the empirical measure corresponding to $X_1, \dots, X_n \simiid \mu$.
    	Show that
    	\begin{equation*}
    		\mathsf{MSW}_1(\mu_n, \mu) = \sup_{f \in \cF} \frac 1n \sum_{i=1}^n \{f(X_i) - \E f(X_i)\}\,,
    	\end{equation*}
    	where $\cF$ is the class of functions of the form $x \mapsto h(\theta^\T x)$ where $\theta \in \mbb S^{d-1}$ and $h$ is a $1$-Lipschitz function on $[-1, 1]$ satisfying $h(0) = 0$.
    	\item Prove that
    	\begin{equation*}
    		\log N(\eps, \cF) \lesssim 1/\eps + d \log(1+1/\eps)\,.
    	\end{equation*}
    	\emph{Hint:} Consult Exercise~\ref{ex:rd_covering_numbers} in Chapter~\ref{chap:maps}.
    	\item Using Proposition~\ref{prop:chaining}, conclude that
    	\begin{equation*}
    		\E \, \mathsf{MSW}_1(\mu_n, \mu) \lesssim \sqrt{d/n}\,.
    	\end{equation*}
    	In fact, a more sophisticated argument shows that the correct rate is dimension free~\cite{boedihardjo2024sharp}.
    \end{enumerate}
\end{enumerate}

\chapter{Estimation of transport maps}
\label{chap:maps}

Thus far, the statistical questions we have investigated center around the estimation of optimal transport distances (and their variants), but the gamut of diverse applications of optimal transport (to name but a few: data fusion~\cite{CouFlaTui16} adaptation/transfer learning~\cite{CouFlaTui17}, and computational biology~\cite{SchShuTab19,BunStaGut23}), it is the optimal transport \emph{map} which is the object of primary interest.
In this chapter, we address the question of estimating this map on the basis of finitely many samples.

\section{Problem formulation}\label{sec:maps_intro}

Recall from Brenier's theorem that if $\mu$ has a density,
\begin{align*}
    W_2^2(\mu,\nu)
    &= \min_{\gamma \in \Gamma_{\mu,\nu}} \int \|x-y\|^2\,\gamma(\ud x,\ud y) \\
    &= \min_{T : T_\# \mu = \nu} \int \|x-T(x)\|^2 \, \mu(\ud x)\,.
\end{align*}
Moreover, the optimal transport map takes the form $T = \nabla \varphi$, where $\varphi$ is convex.
We can also write this as $(X, \nabla \varphi(X)) \sim \gamma$, or $\gamma(\ud x, \ud y) = \mu(\ud x) \, \delta_{T(x)}(\ud y)$.

The statistical question under investigation is formulated as follows.
Given samples $X_1,\dotsc,X_n\simiid \mu$ and $Y_1,\dotsc,Y_n\simiid \nu$, how can we estimate the optimal transport map $T$ from $\mu$ to $\nu$ via an estimator $\widehat T$ constructed on the basis of the samples?

We take as our measure of performance the integrated error
\begin{align*}
    \int \|\widehat T(x) - T(x)\|^2 \, \mu(\ud x)\,.
\end{align*}
The $L^2$ integrated error is a natural measure of distance that is commonly employed in non-parametric statistics.
In this context, however, it takes on an additional interpretation of controlling the Wasserstein distance between the pushforwards of $\mu$ under the two maps.
More precisely, by definition we have $\nu = T_\# \mu$.
If we define the measure $\widehat \nu$ via $\widehat \nu = \widehat T_\# \mu$, then $(\widehat T(X), T(X))$ for $X \sim \mu$ is a (suboptimal) coupling of $\widehat \nu$ and $\nu$, and hence
\begin{align*}
    W_2^2(\widehat \nu,\nu)
    &\le \E\|\widehat T(X) - T(X)\|^2
    = \int \|\widehat T(x) - T(x)\|^2 \, \mu(\ud x)\,.
\end{align*}
See Figure~\ref{fig:ot_est} for an illustration.

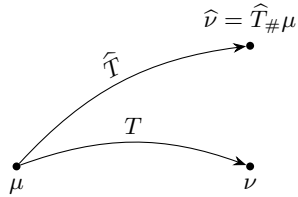
\begin{figure}[ht]
    \centering
    \begin{tikzpicture}[
    dot/.style={circle, fill, minimum size=3pt, inner sep=0pt, outer sep=0pt},
    arrow/.style={-{Stealth[length=5pt]}, bend left=20}
    ]
    \node[dot, label=below:{$\mu$}] (mu) {};
    \node[dot, right=3cm of mu, label=below:{$\nu$}] (nu) {};
    \node[dot, above=1.5cm of nu, label=above:{$\widehat{\nu} = \widehat{T}_\# \mu$}] (nuhat) {};

    \draw[arrow] (mu) to node[midway, sloped, anchor=south] {$T$} (nu);
    \draw[arrow] (mu) to node[midway, sloped, anchor=south] {$\widehat{T}$} (nuhat);
    \end{tikzpicture}
    \caption{The $L^2$ error between the transport maps controls the $W_2^2$ distance between $\widehat \nu$ and $\nu$.}\label{fig:ot_est}
\end{figure}

A first approach to estimation might be to compute the optimal coupling between the empirical measures $\mu_n$ and $\nu_n$, i.e., solve $\min_{\gamma \in \Gamma_{\mu_n,\nu_n}} \int \|x-y\|^2 \, \gamma(\ud x,\ud y)$, but we rapidly recognize an untenable hole in this na\"{\i}ve plan.
Namely, even if the optimal transport plan $\gamma_n$ is induced by a transport map $T_n$, so that $\gamma_n(\ud x, \ud y) = \mu_n(\ud x) \, \delta_{T_n(x)}(\ud y)$, the mapping $T_n$ is only well-defined on the sample $\{X_1,\dotsc,X_n\}$ and it is not clear how to extend it in a principled manner to a mapping over all of $\R^\dd$ (Figure~\ref{fig:out_of_sample}).
To remedy this, several approaches have been proposed in the literature aimed at building an interpolation $\widehat T_n$ of $T_n$ to out-of-sample points.
For example, we can take $\widehat T_n(x)$ to equal $T_n(X_i)$, where $X_i$ is the closest sample point to $x$. This is a 1-nearest neighbor estimator and it can be shown to be minimax optimal without further smoothness assumptions~\cite{ManBalNil21}; see Exercise~\ref{ex:1d_ot_map_est} for the one-dimensional case. Such an approach, however, cannot take advantage of additional regularity of $\mu$ and $\nu$ and we do not pursue it any further here.

\begin{figure}[ht]
    \centering
    \begin{tikzpicture}[
    dot/.style={circle, fill, minimum size=3pt, inner sep=0pt, outer sep=0pt},
    arrow/.style={-{Stealth[length=5pt]}, bend left=20}
    ]
    \node[dot] (left1) {};
    \node[dot, below left=0.65cm and 0.2cm of left1] (left2) {};
    \node[dot, below right=0.75cm and 0.15cm of left2] (left3) {};
    \node[dot, below right=0.6cm and 0.25cm of left3] (left4) {};
    \node[dot, below left=0.2cm and 0.6cm of left4, label=below:{$x$}, label=above:{\textbf{?}}] (left5) {};

    \node[dot, right=3cm of left1] (right1) {};
    \node[dot, below=0.75cm of right1] (right2) {};
    \node[dot, below=0.6cm of right2] (right3) {};
    \node[dot, below=0.7cm of right3] (right4) {};

    \draw[arrow] (left1) to (right1);
    \draw[arrow] (left2) to (right2);
    \draw[arrow] (left3) to (right3);
    \draw[arrow] (left4) to (right4);
    \end{tikzpicture}
    \caption{How do we interpolate the empirical optimal transport map at the out-of-sample point $x$?}\label{fig:out_of_sample}
\end{figure}
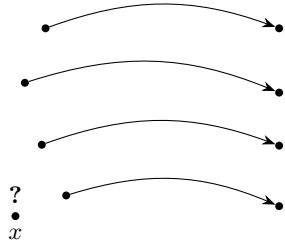

Instead, in the next section, we devise an estimator based on the semidual formulation of optimal transport.
A benefit of this estimation strategy is that it can be used to flexibly incorporate additional assumptions---e.g., smoothness---on the population-level transport map $T$.
Adopting sufficiently strong assumptions gives rise to map estimators that avoid the curse of dimensionality.

\section{The semidual problem and its stability}\label{sec:map_semidual}\index{semidual!stability}

We recall the semidual problem: if $\nabla \varphi$ is the optimal transport map, then $\varphi$ minimizes
\begin{align*}
    \cS(\phi)
    &\deq \int \phi \, \ud \mu + \int \phi^* \, \ud \nu
\end{align*}
where $\phi^*(y) = \sup_{x\in\R^\dd}\{\langle x, y \rangle - \phi(x)\}$ is the convex conjugate of $\phi$.
Crucially, the semidual problem readily lends itself to replacing the population measures $\mu$, $\nu$ with their empirical counterparts $\mu_n$, $\nu_n$, leading to a natural estimator for $\varphi$: namely, we set
\begin{align}\label{eq:empirical_semidual}
    \widehat \varphi
    &= \argmin_{\phi \in \cF} \cS_n(\phi)
    \deq \argmin_{\phi \in \cF}{\Bigl\{\int \phi \, \ud \mu_n + \int \phi^* \, \ud \nu_n\Bigr\}}
\end{align}
where $\cF$ is a suitable class of functions to be chosen later.
We then obtain an estimator for the optimal transport map by setting $\widehat T = \nabla \widehat \varphi$.
Through~\eqref{eq:empirical_semidual}, we have placed the problem of transport map estimation within the well-studied framework of empirical risk minimization (ERM) which is a cornerstone of modern statistical theory---see, e.g.,~\cite{Wai19} for a modern overview of these techniques.
Akin to many other estimators defined via ERM, it is unclear whether the estimator $\widehat T$ can be computed efficiently; however, our focus here is on the statistical, rather than computational, aspects of transport map estimation.

Through the statistician's lens, the uniqueness assertion in Brenier's theorem ensures that the optimal transport map $T$ is identifiable, and hence our statistical question is well-posed. In other words, if $\cS(\phi)=\cS(\varphi)$ then $\nabla \phi =\nabla \varphi$, $\mu$-a.s.
However, in order to obtain rates of estimation, this qualitative assertion needs to be upgraded into a stability statement, which is given as the following theorem.

\begin{theorem}\label{thm:stability_semidual}
    Assume that $\phi$ is strongly convex and smooth,
    \begin{align*}
        \frac{1}{2}\,I \preceq \nabla^2 \phi \preceq 2 \, I\,.
    \end{align*}
    Then,
    \begin{align}\label{eq:stability_semidual}
        \frac{1}{4} \,\|\nabla \phi - \nabla \varphi\|_{L^2(\mu)}^2
        &\le \cS(\phi) - \cS(\varphi)
        \le \|\nabla \phi - \nabla \varphi\|_{L^2(\mu)}^2\,.
    \end{align}
\end{theorem}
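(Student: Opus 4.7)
The plan is to express $\cS(\phi) - \cS(\varphi)$ as an integrated Fenchel--Young gap of $\phi$ (evaluated along the optimal transport map $T = \nabla\varphi$), and then use the strong convexity/smoothness of $\phi$ --- transferred to $\phi^*$ --- to sandwich this gap by $\|\nabla\phi - T\|^2$.

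First, using the constraint $T_\#\mu = \nu$ together with the Fenchel--Young \emph{equality} at optimality (Theorem~\ref{thm:fundOT}), $\varphi(x) + \varphi^*(T(x)) = \langle x, T(x)\rangle$ for $\mu$-a.e.\ $x$, I would rewrite
\begin{align*}
    \cS(\phi) - \cS(\varphi)
    &= \int \bigl(\phi(x) + \phi^*(T(x)) - \langle x, T(x)\rangle\bigr)\,\mu(\ud x)\,.
\end{align*}
Setting $B_\phi(x,y) \deq \phi(x) + \phi^*(y) - \langle x,y\rangle \ge 0$ (the Fenchel--Young gap), this is $\int B_\phi(x, T(x))\,\mu(\ud x)$. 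So everything reduces to pointwise control of $B_\phi(x, T(x))$ in terms of $\|\nabla \phi(x) - T(x)\|^2$.

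Second, I would exploit the dual representation of the Fenchel--Young gap. Using $x = \nabla \phi^*(\nabla \phi(x))$ and $\phi^*(\nabla\phi(x)) = \langle x,\nabla\phi(x)\rangle - \phi(x)$, a direct computation gives the symmetric identity
\begin{align*}
    B_\phi(x,y) = \phi^*(y) - \phi^*(\nabla\phi(x)) - \langle x,\, y - \nabla\phi(x)\rangle = B_{\phi^*}\bigl(y,\, \nabla\phi(x)\bigr)\,.
\end{align*}
The reason this reformulation is useful is that the hypothesis $\tfrac{1}{2} I \preceq \nabla^2 \phi \preceq 2I$ transfers (by the standard duality between strong convexity and smoothness, see Appendix~\ref{app:convex}) to $\tfrac{1}{2} I \preceq \nabla^2 \phi^* \preceq 2I$. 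Then the second-order Taylor bound for $\phi^*$ around $\nabla\phi(x)$ yields
\begin{align*}
    \tfrac{1}{4}\,\|y - \nabla\phi(x)\|^2 \le B_{\phi^*}\bigl(y,\,\nabla\phi(x)\bigr) \le \|y - \nabla\phi(x)\|^2\,.
\end{align*}

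Specializing to $y = T(x)$ and integrating against $\mu$ yields exactly~\eqref{eq:stability_semidual}. The only technical point is the transfer of the Hessian bounds to $\phi^*$, which is classical; apart from that, the argument is just Fenchel--Young manipulation plus Taylor expansion. The conceptual heart is the observation that writing the gap as $B_{\phi^*}(\cdot,\nabla\phi(x))$ (rather than $B_\phi(x,\cdot)$) is what produces bounds in $\|\nabla\phi(x) - T(x)\|^2$ with sharp constants, instead of bounds in $\|x - \nabla\phi^*(T(x))\|^2$ that would lose a factor of $4$ after passing through the Lipschitz constant of $\nabla\phi^*$.
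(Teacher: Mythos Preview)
Your proposal is correct and takes essentially the same approach as the paper: both rewrite $\cS(\phi)-\cS(\varphi)$ via the pushforward $T=\nabla\varphi$, transfer the Hessian bounds on $\phi$ to $\phi^*$ by duality, and then apply the strong convexity/smoothness Taylor bounds for $\phi^*$ around $\nabla\phi(x)$. Your presentation packages the same computation in the language of the Fenchel--Young gap and Bregman divergence, but the pointwise inequalities and constants are identical to the paper's.
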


Before proving Theorem~\ref{thm:stability_semidual}, we first describe how the stability result feeds into the overall statistical analysis.
The proof is prototypical of analysis of ERM estimators.
By definition, $\widehat \varphi$ minimizes $\cS_n$.
Applying the machinery of empirical process theory, we control the fluctuations of the random functional $\cS_n$ from its mean $\cS$, thereby concluding that $\cS(\widehat \varphi) - \cS(\varphi)$ is small.
The first inequality in~\eqref{eq:stability_semidual} then implies that the estimation error $\|\widehat T - T\|_{L^2(\mu)}^2$ is small.

Actually, to obtain faster rates of estimation, we improve upon this argument by incorporating another ingredient: the \emph{fixed-point} or \emph{localization} technique.
Briefly, the estimation rates depend on a uniform bound on the deviations of $\cS_n$ from $\cS$ over a set of functions that contains the estimator $\widehat \varphi$.
Once we know through the stability inequality~\eqref{eq:stability_semidual} that $\widehat \varphi$  lies close to $\varphi$, we can repeat the argument but restricting to a smaller class of functions, thereby improving our estimation rates further.
Seeking the fixed point of this iterative process in which we refine our bounds by localizing the estimator $\widehat \varphi$, we arrive at our final rates of estimation.

We now turn towards the proof of Theorem~\ref{thm:stability_semidual}.
We repeatedly use the Fenchel{--}Young inequality (Theorem~\ref{thm:fenchel_young}), as well as the fact that $\alpha$-convexity of $f$ is equivalent to $\alpha^{-1}$-smoothness of $f^*$ (Lemma~\ref{thm:strcvx_smooth_dual}).

\begin{proof}[Proof of Theorem~\ref{thm:stability_semidual}]
    Since ${(\nabla \varphi)}_\# \mu = \nu$,
    \begin{align*}
        \cS(\phi)
        &= \int \phi(x) \, \mu(\ud x)+ \int \phi^*(y) \, \nu(\ud y)
        = \int \bigl(\phi(x) + \phi^*(\nabla \varphi(x))\bigr) \, \mu(\ud x)\,.
    \end{align*}
    By strong convexity of $\phi^*$,
    \begin{align*}
        \phi^*(\nabla \varphi(x))
        &\ge \phi^*(\nabla \phi(x)) + \langle \underbrace{\nabla \phi^*(\nabla \phi(x))}_{=x}, \nabla \varphi(x) - \nabla \phi(x)\rangle \\
        &\qquad{} + \frac{1}{4} \, \|\nabla \varphi(x) - \nabla \phi(x)\|^2
    \end{align*}
    hence
    \begin{align*}
        \phi(x) + \phi^*(\nabla \varphi(x))
        &\ge \underbrace{\phi(x) + \phi^*(\nabla \phi(x))}_{=\langle x, \nabla \phi(x)\rangle} + \langle x, \nabla \varphi(x) - \nabla \phi(x) \rangle \\
        &\qquad{} + \frac{1}{4} \,\|\nabla \varphi(x) - \nabla \phi(x)\|^2 \\
        &= \langle x, \nabla \varphi(x) \rangle + \frac{1}{4} \,\|\nabla \varphi(x) - \nabla \phi(x)\|^2\,.
    \end{align*}
    However,
    \begin{align*}
        \cS(\varphi)
        &= \int \bigl(\varphi(x) + \varphi^*(\nabla \varphi(x)) \bigr) \, \mu(\ud x)= \int \langle x, \nabla \varphi(x)\rangle \, \mu(\ud x)\,.
    \end{align*}
    Therefore, we obtain
    \begin{align*}
        \cS(\phi)
        &\ge \cS(\varphi) + \frac{1}{4} \,\|\nabla \varphi - \nabla \phi\|_{L^2(\mu)}^2\,.
    \end{align*}

    Similarly, by smoothness,
    \begin{align*}
        \phi^*(\nabla \varphi(x))
        &\le \phi^*(\nabla \phi(x)) + \langle \underbrace{\nabla \phi^*(\nabla \phi(x))}_{=x}, \nabla \varphi(x) - \nabla \phi(x) \rangle \\
        &\qquad{} + \|\nabla \varphi(x) - \nabla \phi(x)\|^2
    \end{align*}
    hence
    \begin{align*}
        \phi(x) & + \phi^*(\nabla \varphi(x))
       \le\phi(x) + \phi^*(\nabla \phi(x)) +\langle x, \nabla \varphi(x) \rangle - \langle x, \nabla \phi(x) \rangle \\
       & \qquad + \|\nabla \varphi(x)- \nabla \phi(x)\|^2\\
        &=\langle x, \nabla \phi(x) \rangle +\varphi(x) + \varphi^*(\nabla \varphi(x))- \langle x, \nabla \phi(x) \rangle \\
       & \qquad + \|\nabla \varphi(x)- \nabla \phi(x)\|^2
    \end{align*}
    and the result follows from integration.
\end{proof}

Note that we have proved something even stronger: for
\begin{align*}
    \cS(\phi)
    &= \int \underbrace{\bigl(\phi(x) + \phi^*(\nabla \varphi(x))\bigr)}_{s_\phi(x)} \, \mu(\ud x)
\end{align*}
we have the pointwise bounds
\begin{align*}
    \frac{1}{4}\,\|\nabla \varphi(x) - \nabla \phi(x)\|^2
    \le s_\phi(x) - s_\varphi(x)
    \le \|\nabla \varphi(x) - \nabla \phi(x)\|^2\,.
\end{align*}

\section{A special case: affine transport maps}\label{sec:affine_transport}

As a sanity check, we first show that the semidual estimation technique is reasonable for a very simple problem.
Consider the one-sample setting, where $\mu = \cN(0, I)$ is known, and we obtain samples from $\nu = (\nabla \varphi)_\sharp \mu$ for some $\varphi \in \cF$, where  $\cF$ consists of all convex quadratic functions $x \mapsto \frac 12\, x^\top A x + b^\top x$ with $A \succeq 0$.
If $\varphi$ is of this form, then the transport map $\nabla \varphi$ is the affine map $Ax + b$, and $\nu = \cN(b, A^2)$.

In this setting, it is natural to estimate the transport map by first computing the empirical mean $\widehat m$ and covariance $\widehat \Sigma$ of $\nu$, and setting
\begin{equation}\label{eq:linmongemap}
    \widehat T(x) = \widehat \Sigma^{1/2} x + \widehat m\,.
\end{equation}
This estimator is studied in more generality in~\cite{FlaLouFer20} by leveraging techniques to derive rates of estimation for covariance matrices.

The next result shows that $\widehat T$ defined in~\eqref{eq:linmongemap} is precisely the estimator computed by minimizing the empirical semidual functional.

\begin{proposition}\label{prop:semidual_quadratic}
	Let $\cF$ be the set of all convex quadratic functions on $\R^d$.
	Let $\mu = \cN(0, I)$, and write $\nu_n$ for an empirical measure consisting of i.i.d.\ samples from a probability measure $\nu$.
	If
	\begin{equation*}
		\widehat \varphi = \argmin_{\phi \in \cF}{\biggl\{\int \phi \, \ud \mu + \int \phi^* \, \ud \nu_n\biggr\}}\,,
	\end{equation*}
	then
	\begin{equation*}
		\nabla \widehat \varphi(x) = \widehat \Sigma^{1/2} x + \widehat m\,,
	\end{equation*}
	where $\widehat m$ and $\widehat \Sigma$ are the mean and covariance of $\nu_n$, respectively.
\end{proposition}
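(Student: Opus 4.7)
The proof plan is essentially a direct computation: parametrize the convex quadratics, reduce the semidual to a finite-dimensional optimization problem, and show its minimizer coincides with the prescribed $\widehat{T}$.

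First, I would parametrize a convex quadratic as $\phi(x) = \tfrac{1}{2}\, x^\T A x + b^\T x + c$ with $A \succeq 0$, $b \in \R^\dd$, $c \in \R$. Focusing on the generic case $A \succ 0$ (I will comment on degeneracies at the end), a direct computation gives the convex conjugate
\begin{align*}
    \phi^*(y) = \tfrac{1}{2}\,(y-b)^\T A^{-1}(y-b) - c\,.
\end{align*}
Since $\mu = \cN(0,I)$, the integral $\int \phi \, \ud\mu = \tfrac{1}{2}\,\tr(A) + c$, and since $\nu_n = n^{-1}\sum_i \delta_{Y_i}$, the integral $\int \phi^* \, \ud\nu_n = \tfrac{1}{2n}\sum_i (Y_i-b)^\T A^{-1}(Y_i-b) - c$. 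The additive constants $c$ cancel, as expected.

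Next I would rewrite the $b$-dependent sum by expanding around $\widehat m$: using $n^{-1}\sum_i (Y_i-b)(Y_i-b)^\T = \widehat\Sigma + (\widehat m - b)(\widehat m - b)^\T$, I obtain
\begin{align*}
    \cS_n(\phi)
    = \tfrac{1}{2}\,\tr(A) + \tfrac{1}{2}\,\tr(A^{-1}\widehat\Sigma) + \tfrac{1}{2}\,(\widehat m - b)^\T A^{-1}(\widehat m - b)\,.
\end{align*}
The optimization in $b$ is immediate: since $A^{-1} \succ 0$, the last term is minimized (and equals $0$) at $b = \widehat m$. Now the objective in $A$ reduces to $\tfrac{1}{2}\,\tr(A) + \tfrac{1}{2}\,\tr(A^{-1}\widehat\Sigma)$. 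Setting the matrix gradient to zero yields $I = A^{-1}\widehat\Sigma A^{-1}$, i.e., $A^2 = \widehat\Sigma$, hence $A = \widehat\Sigma^{1/2}$ (the unique PSD square root). Convexity of $A \mapsto \tfrac{1}{2}\,\tr(A) + \tfrac{1}{2}\,\tr(A^{-1}\widehat\Sigma)$ on $A \succ 0$ (the second summand is convex in $A$ on the PSD cone, e.g.\ because it is the supremum of linear functionals via Fenchel duality) guarantees this is a global minimum. Plugging in, $\nabla \widehat\varphi(x) = A x + b = \widehat\Sigma^{1/2} x + \widehat m$, as claimed.

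The only minor subtlety is that $\widehat\Sigma$ may be singular (in particular when $n \le \dd$); in that case one can either restrict the argument to directions in the range of $\widehat\Sigma$ and note that $\cS_n(\phi) = +\infty$ unless $\widehat m - b$ lies in that range, or allow $A \succeq 0$ from the outset and take $A = \widehat\Sigma^{1/2}$ (with the Moore--Penrose pseudoinverse replacing $A^{-1}$ where needed); the minimizing $\nabla \widehat\varphi$ is still $\widehat\Sigma^{1/2}x + \widehat m$ on the affine hull of $\supp\nu_n$. The main obstacle in the proof is really just bookkeeping for this degenerate case; the generic computation is entirely routine.
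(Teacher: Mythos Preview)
Your proof is correct, but it takes a different route from the paper. You compute the semidual objective explicitly as a function of the parameters $(A,b)$ and optimize directly, arriving at the first-order condition $A^2 = \widehat\Sigma$. The paper instead observes that, since $\phi^*$ is quadratic, the integral $\int \phi^* \, \ud\nu_n$ depends on $\nu_n$ only through its first two moments; hence one may replace $\nu_n$ by the Gaussian $\cN(\widehat m,\widehat\Sigma)$ without changing the objective. The minimizer is then identified by invoking the already-known form of the optimal transport map between $\cN(0,I)$ and $\cN(\widehat m,\widehat\Sigma)$ (Example~\ref{ex:ot_gaussian}). Your argument is more self-contained and transparent about where the square root comes from, while the paper's argument is shorter because it recycles the Gaussian optimal transport computation rather than redoing it; conceptually, the paper's moment-matching observation also explains \emph{why} the answer coincides with the Gaussian transport map, which your calculation establishes but does not really illuminate. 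Both handle the singular $\widehat\Sigma$ case only implicitly.
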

\begin{proof}
	By definition, $\widehat \varphi = \frac 12\, x^\top \widehat A x + \widehat b^\top x$, where $(\widehat A, \widehat b)$ solve
	\begin{align*}
		\min_{A \succeq 0,\, b\in\R^d}\biggl[\int \Bigl(\frac 1 2\, x^\top A x + b^\top x\Bigr) \, \mu(\ud x) + \int \Bigl(\frac 1 2\, y^\top A y + b^\top y\Bigr)^* \, \nu_n(\ud y)\biggr]\,.
	\end{align*}
    By Lemma~\ref{lem:quadratic_conjugate}, the convex conjugate of a quadratic is also a quadratic, so the second integral only depends on moments of $\nu_n$ of order at most $2$. We can therefore replace the integration over $\nu_n$ with any other measure that matches the first two moments, in particular $\cN(\widehat m,\widehat \Sigma)$. Then, since the function class contains all Kantorovich potentials between Gaussians (see Example~\ref{ex:ot_gaussian}), it follows that the minimizer is the one which corresponds to the optimal transport from $\cN(0, I)$ to $\cN(\widehat m,\widehat \Sigma)$.
\end{proof}
    
In the setting of Proposition~\ref{prop:semidual_quadratic}, it is easy to analyze the performance of $\widehat T$ directly, since it is defined explicitly in terms of the sample mean and covariance.
However, for more general families $\cF$, we typically cannot solve the semidual problem explicitly, and we need to use more abstract arguments to analyze $\widehat \varphi$.

\section{Obtaining the slow rate}\label{sec:map_slow}

In this and the following section, we focus on estimating maps arising from potentials that lie in a suitable class $\Phi$ whose covering numbers---in the sense of Section~\ref{sec:chaining}---are suitably bounded.
The size of these covering numbers directly affects the quality of the estimator obtained by minimizing the empirical semidual, as in~\eqref{eq:empirical_semidual}.

To begin our analysis, we make several technical assumptions on the measures $\mu$ and $\nu$ and the family $\Phi$.

\begin{assumption}\label{map_assumptions}
	There exists $\varphi \in \Phi$ such that $\nu = (\nabla \varphi)_{\sharp} \mu$, where $\mu$, $\nu$, and $\Phi$ satisfy:
	\begin{itemize}
		\item The supports of $\mu$ and $\nu$ lie in $\Omega = B_{1}(0)$.
		\item The set $\Phi$ is bounded in $L^\infty$ on $\Omega$, i.e., $\sup_{\phi \in \Phi} \|\phi\|_{L^\infty(\Omega)} \lesssim 1$.
		\item The potentials satisfy $\phi(0) = 0$ and $\phi(x) = + \infty$ if $x \notin \Omega$.
		\item The potentials are lower-semicontinuous everywhere, and smooth and strongly convex on $\Omega$: $\frac 12 I \preceq \nabla^2 \phi(x) \preceq 2 I$ if $\|x\| < 1$.
	\end{itemize}
\end{assumption}

The first and second of these assumptions can be weakened under suitably strong moment assumptions, but we do not pursue this avenue here.
The third is without loss of generality: since subtracting a constant from $\phi$ does not affect the semidual objective or the gradient $\nabla \phi$, we can always assume that $\phi(0) = 0$, and since the supports of $\mu$ and $\nu$ lie in $B_1(0)$, we may define $\phi$ to be infinity outside of this set without affecting the semidual problem.
The fact that $\phi = +\infty$ identically outside of $\Omega$ simplifies several arguments involving the conjugate function, since it implies that
\begin{equation*}
	\phi^*(y) = \sup_{x \in \R^d}{\{\langle x, y \rangle - \phi(x)\}} = \sup_{x \in \Omega}{\{\langle x, y \rangle - \phi(x)\}}
\end{equation*} 
for all $y \in \R^d$ and $\phi \in \Phi$.
The fourth assumption is the most important, because it guarantees the stability of the semidual problem via Theorem~\ref{thm:stability_semidual}.

With these assumptions in hand, we can carry out the first step of the analysis.

\begin{lemma}\label{lem:semidual-erm}
	Adopt Assumption~\ref{map_assumptions}, and assume that $\nu = \nabla \varphi_\sharp \mu$ for some $\varphi \in \Phi$.
	Let $\widehat \varphi$ be given by
	\begin{equation*}
		\widehat \varphi = \argmin_{\phi \in \Phi} \cS_n(\phi)\,.
	\end{equation*}
	Then
	\begin{equation}\label{eq:erm_bound}
		\|\nabla \widehat \varphi - \nabla \varphi\|_{L^2(\mu)}^2 \lesssim \sup_{\phi \in \Phi} {\{|(\mu_n - \mu)(\phi)| + |(\nu_n - \nu)(\phi^*)|\}}\,.
	\end{equation}
\end{lemma}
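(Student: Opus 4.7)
The plan is a textbook ERM/stability decomposition, with Theorem~\ref{thm:stability_semidual} supplying the stability half and the definition of $\widehat\varphi$ as an empirical minimizer supplying the rest. The only subtlety is making sure the hypotheses of Theorem~\ref{thm:stability_semidual} apply to $\widehat\varphi$, which is precisely why Assumption~\ref{map_assumptions} bundles the uniform strong convexity / smoothness condition $\tfrac12 I \preceq \nabla^2\phi \preceq 2I$ into the definition of $\Phi$. Since $\widehat\varphi \in \Phi$, Theorem~\ref{thm:stability_semidual} gives
\begin{equation*}
    \tfrac14\,\|\nabla\widehat\varphi - \nabla\varphi\|_{L^2(\mu)}^2 \;\le\; \cS(\widehat\varphi) - \cS(\varphi)\,.
\end{equation*}

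Next, I would carry out the standard ``add and subtract'' trick, writing
\begin{equation*}
    \cS(\widehat\varphi) - \cS(\varphi) = \underbrace{\bigl(\cS(\widehat\varphi) - \cS_n(\widehat\varphi)\bigr)}_{(\mathrm{I})} + \underbrace{\bigl(\cS_n(\widehat\varphi) - \cS_n(\varphi)\bigr)}_{(\mathrm{II})} + \underbrace{\bigl(\cS_n(\varphi) - \cS(\varphi)\bigr)}_{(\mathrm{III})}\,.
\end{equation*}
Term $(\mathrm{II})$ is $\le 0$ because $\widehat\varphi$ minimizes $\cS_n$ over $\Phi$ and $\varphi \in \Phi$ by hypothesis. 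For terms $(\mathrm{I})$ and $(\mathrm{III})$, I use that $\cS_n(\phi) - \cS(\phi) = (\mu_n - \mu)(\phi) + (\nu_n - \nu)(\phi^*)$ by definition of the semidual, and bound each piece by its absolute value.

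Both $\widehat\varphi$ and $\varphi$ are elements of $\Phi$, so taking the supremum over $\phi \in \Phi$ uniformly controls $(\mathrm{I})$ and $(\mathrm{III})$ by
\begin{equation*}
    2\sup_{\phi \in \Phi}\bigl\{|(\mu_n - \mu)(\phi)| + |(\nu_n - \nu)(\phi^*)|\bigr\}\,,
\end{equation*}
and absorbing constants into the $\lesssim$ yields~\eqref{eq:erm_bound}.

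I do not expect a genuine obstacle here: the only thing to double-check is that the various integrals are well-defined (which is immediate since $\Phi$ is bounded in $L^\infty(\Omega)$ by Assumption~\ref{map_assumptions}, so $\phi^*$ is Lipschitz on $\R^\dd$ with a uniform constant and in particular bounded on the compact support of $\nu$, $\nu_n$), and that the stability bound applies to $\widehat\varphi$, which is guaranteed by the uniform Hessian bounds built into $\Phi$. The remainder of the work in the chapter is presumably to bound the supremum on the right-hand side of~\eqref{eq:erm_bound} via covering/chaining arguments (the ``slow rate'' of the section title) and then to iterate with a localization argument to obtain faster rates, as foreshadowed at the end of Section~\ref{sec:map_semidual}.
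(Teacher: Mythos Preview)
The proposal is correct and follows exactly the same approach as the paper: apply Theorem~\ref{thm:stability_semidual} to get $\|\nabla\widehat\varphi-\nabla\varphi\|_{L^2(\mu)}^2 \lesssim \cS(\widehat\varphi)-\cS(\varphi)$, then use the three-term add-and-subtract decomposition, drop the middle term by optimality of $\widehat\varphi$, and bound the outer terms by $2\sup_{\phi\in\Phi}|\cS_n(\phi)-\cS(\phi)|$. Your additional remarks on integrability and the applicability of the stability bound are correct and simply make explicit what the paper leaves implicit via Assumption~\ref{map_assumptions}.
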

\begin{proof}
	The proof is an application of a standard argument in empirical risk minimization.
	Theorem~\ref{thm:stability_semidual} implies
	\begin{align*}
		\|\nabla \widehat \varphi - \nabla \varphi\|_{L^2(\mu)}^2 & \lesssim \cS(\widehat \varphi) - \cS(\varphi) \\
		& = \cS(\widehat \varphi) - \cS_n(\widehat \varphi) + \cS_n(\widehat \varphi) - \cS_n(\varphi) + \cS_n(\varphi) - \cS(\varphi) \\
		& \leq 2 \sup_{\phi \in \Phi} |\cS_n(\phi) - \cS(\phi)|\,,
	\end{align*}
	where the last inequality uses that $\cS_n(\widehat \varphi) - \cS_n(\varphi) \leq 0$ by definition of $\widehat \varphi$.
	Expanding the definitions of $\cS$ and $\cS_n$ yields the claim.
\end{proof}

To bound the right side of~\eqref{eq:erm_bound}, we employ the chaining technique of Proposition~\ref{prop:chaining}.
For simplicity, we focus on the case where the class of functions is small enough that the covering numbers satisfy
\begin{equation}\label{eq:integrability}
	\log N(\eps, \Phi) \lesssim \eps^{-\gamma} \log(1 + \eps^{-1}) \,, \quad \gamma \in [0, 1)
\end{equation}
for all sufficiently small $\eps$.

A paradigmatic example of such classes are parametric classes, where the set $\Phi$ is finite dimensional, that is, where $\Phi = \{\phi_{\theta}\}_{\theta \in \Theta}$ is indexed by a parameter $\theta \in \Theta \subseteq \R^M$.
Indeed, in this case, we have the following bound.

\begin{lemma}\label{lem:phi-covering}
	Assume that $\Phi = \{\phi_{\theta}\}_{\theta \in \Theta}$,  where $\Theta \subseteq \R^M$ is bounded, and the potentials satisfy $\|\phi_\theta - \phi_{\theta'}\|_{L^\infty(\Omega)} \lesssim \|\theta - \theta'\|$.
	Then there exists a positive constant $C$ such that the covering numbers of $\Phi$ satisfy $\log N(\eps, \Phi) = 0$ if $\eps \geq C$ and 
	\begin{equation*}
		\log N(\eps, \Phi) \lesssim \log(1 + \eps^{-1})
	\end{equation*}
	otherwise.
\end{lemma}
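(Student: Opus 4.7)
The plan is to reduce the covering of $\Phi$ in $L^\infty(\Omega)$ to a covering of the parameter set $\Theta\subseteq\R^M$ in Euclidean distance, then invoke the standard volumetric bound on covering numbers of a bounded subset of $\R^M$.

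First I would handle the trivial regime. Since $\Theta$ is bounded, fix any $\theta_0\in\Theta$; by the Lipschitz hypothesis, for every $\theta\in\Theta$, $\|\phi_\theta - \phi_{\theta_0}\|_{L^\infty(\Omega)} \lesssim \|\theta - \theta_0\| \le \mathrm{diam}(\Theta)$. Thus $\sup_{\phi,\phi'\in\Phi}\|\phi-\phi'\|_{L^\infty(\Omega)} \le C$ for some constant $C$ depending only on the Lipschitz constant $L$ and $\mathrm{diam}(\Theta)$. For $\eps\ge C$, the single function $\{\phi_{\theta_0}\}$ is an $\eps$-cover of $\Phi$, so $\log N(\eps,\Phi)=0$.

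Next, for $\eps<C$, the key observation is that if $F=\{\theta_1,\dots,\theta_N\}$ is an $(\eps/L)$-cover of $\Theta$ in the Euclidean norm, then $\{\phi_{\theta_1},\dots,\phi_{\theta_N}\}$ is an $\eps$-cover of $\Phi$ in $L^\infty(\Omega)$, by the Lipschitz assumption. Hence
\[
N(\eps,\Phi) \le N(\eps/L,\Theta,\|\cdot\|).
\]
Since $\Theta$ is contained in a Euclidean ball of some fixed radius $R$, the standard volumetric bound (compare volumes of inflated balls) gives
\[
N(\eps/L,\Theta,\|\cdot\|) \le (1 + 2LR/\eps)^M,
\]
so $\log N(\eps,\Phi) \le M\log(1+2LR/\eps)$. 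Absorbing constants depending only on $M$, $L$, and $R$ into the $\lesssim$ notation, this is $\lesssim \log(1+\eps^{-1})$ for $\eps$ in the nontrivial regime $\eps<C$.

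There is no serious obstacle; the only point to be careful about is separating the two regimes (large $\eps$ gives a trivial cover by a single point, so the $\log(1+\eps^{-1})$ bound is only needed for $\eps<C$, where it remains $O(1)$ at the boundary). All constants hidden in $\lesssim$ depend on $M$, the Lipschitz constant, and $\mathrm{diam}(\Theta)$, which is consistent with the statement.
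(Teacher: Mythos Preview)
Your proof is correct and follows essentially the same approach as the paper: bound the $L^\infty$-diameter of $\Phi$ via the Lipschitz hypothesis to handle large $\eps$, then for small $\eps$ push the covering problem back to $\Theta\subseteq\R^M$ and apply the standard volumetric bound $(1+2R/\delta)^M$. The paper's version is nearly identical, differing only in cosmetic details of how the Lipschitz constant is absorbed into the $\lesssim$ notation.
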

\begin{proof}
	By assumption, $\Theta \subseteq B_R(0)$ for some $R > 0$, so $\|\phi - \psi\|_{L^\infty(\Omega)} \lesssim R$ for any $\phi, \psi \in \Phi$.
	Therefore, if $\eps$ is larger than a sufficiently large constant, any element of $\Phi$ constitutes a one-element $\eps$-cover of $\Phi$.
	
	For any $\delta > 0$, Exercise~\ref{ex:rd_covering_numbers} shows that there exists $\theta_1, \dots, \theta_N$ with $N \leq (1 + 2R \delta^{-1})^M$ such that $\bigcup_{i=1}^N B_\delta(\theta_i) \supseteq \Theta$.
	Then $\phi_{\theta_1},\dots, \phi_{\theta_N}$ is an $O(\delta)$-cover of $\Phi$.
	Indeed, for any $\theta \in \Theta$, we may choose $i \in [N]$ such that $\|\phi_\theta - \phi_{\theta_i}\|_{L^\infty(\Omega)} \lesssim \|\theta - \theta_i\| \leq \delta$.
	Taking $\delta = c \eps$ for a sufficiently small positive constant $c$ yields the claim.
\end{proof}

To give some examples of parametric classes, $\{\phi_\theta\}_{\theta \in \Theta}$ could be a set of convex quadratic functions, as in Section~\ref{sec:affine_transport}, or it could consist of linear combinations of a fixed dictionary $\{\phi_1, \dots, \phi_M\}$, with
\begin{equation*}
	\phi_\theta = \sum_{i=1}^M \theta_i \phi_i\,.
\end{equation*}
Note that it is common in non-parametric statistics to choose the dictionary carefully to balance approximation and estimation errors, but we do not delve into such questions here in order to focus on the core statistical content.

More generally, condition~\eqref{eq:integrability} allows for infinite-dimensional function classes which are nevertheless not ``too large''.
By contrast, it excludes classes whose complexity grows with the ambient dimension, such as the class of Lipschitz functions studied in Lemma~\ref{lem:cov_bound}.

What is the optimal rate of estimating $T = \nabla \varphi$ under this assumption?
If $\Phi$ is a parametric class, we expect the minimax rate to be $n^{-1}$---in particular, we expect that the map estimation problem avoids the curse of dimensionality.
Indeed, rates avoiding the curse of dimensionality are achievable whenever a bound such as~\eqref{eq:integrability} is satisfied.

Lemma~\ref{lem:semidual-erm} involves both the potential $\phi$ and its conjugate $\phi^*$.
Unfortunately, even if the set $\Phi$ has a simple form, the set $\Phi^* = \{\phi^*: \phi \in \Phi\}$ may defy easy description.
However, we make the following crucial observation: the covering numbers of $\Phi$ control those of $\Phi^*$.

\begin{lemma}\label{lem:phistar-covering}
	For any $\eps > 0$,
	\begin{equation*}
		N(\eps, \Phi^*) \leq N(\eps, \Phi)\,.
	\end{equation*}
\end{lemma}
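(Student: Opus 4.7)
The plan is to show that Legendre--Fenchel conjugation is a non-expansion in $L^\infty$ norm, after which the covering number bound is immediate by mapping a cover of $\Phi$ to its image under the conjugation operation.

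First I would take an arbitrary $\eps$-cover $\phi_1,\ldots,\phi_N$ of $\Phi$ with $N = N(\eps,\Phi)$, and claim that $\phi_1^*,\ldots,\phi_N^*$ is an $\eps$-cover of $\Phi^*$. Fix any $\psi = \phi^* \in \Phi^*$ with $\phi \in \Phi$, and pick $\phi_i$ with $\|\phi-\phi_i\|_{L^\infty(\Omega)} \le \eps$. The key step is the pointwise bound: because Assumption~\ref{map_assumptions} ensures $\phi = +\infty$ outside $\Omega$ (and similarly for $\phi_i$), one has
\begin{align*}
\phi^*(y) = \sup_{x\in\Omega}\{\langle x,y\rangle - \phi(x)\}, \qquad \phi_i^*(y) = \sup_{x\in\Omega}\{\langle x,y\rangle - \phi_i(x)\}
\end{align*}
for every $y \in \R^\dd$. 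Using the elementary fact that $|\sup_x f(x) - \sup_x g(x)| \le \sup_x |f(x)-g(x)|$ whenever both suprema are finite, I obtain
\begin{align*}
|\phi^*(y)-\phi_i^*(y)| \le \sup_{x\in\Omega}|\phi(x)-\phi_i(x)| = \|\phi-\phi_i\|_{L^\infty(\Omega)} \le \eps,
\end{align*}
uniformly in $y$. Hence $\|\phi^*-\phi_i^*\|_{L^\infty(\R^\dd)} \le \eps$, which yields $N(\eps,\Phi^*) \le N$, proving the lemma.

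There is essentially no obstacle here; the argument is one line modulo the technical point of ensuring that the suprema defining $\phi^*$ and $\phi_i^*$ are actually attained over the same set. This is exactly why the boundary condition $\phi \equiv +\infty$ outside $\Omega$ in Assumption~\ref{map_assumptions} was imposed: it reduces the unconstrained sup over $\R^\dd$ to a sup over $\Omega$, on which the $L^\infty$ cover gives uniform control. Without that reduction, the difference $|\phi^*(y)-\phi_i^*(y)|$ would involve suprema over regions where the $L^\infty$ bound on $\phi - \phi_i$ might not hold.
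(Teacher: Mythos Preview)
Your proof is correct and essentially identical to the paper's: both show that Legendre--Fenchel conjugation is a contraction in $L^\infty(\Omega)$ via the bound $|\sup_x f - \sup_x g| \le \sup_x |f-g|$, then push an $\eps$-cover of $\Phi$ through the map $\phi \mapsto \phi^*$. You also correctly identify the role of the $\phi \equiv +\infty$ outside $\Omega$ assumption in reducing the supremum to $\Omega$.
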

\begin{proof}
	The result follows from the fact that the conjugation operation is a contraction in $L^\infty$.
    Indeed, given any pair of functions $\phi, \psi \in \Phi$,
	\begin{align*}
		|\phi^*(y) - \psi^*(y)| & = |\sup_{x \in \Omega}{\{\langle x, y \rangle - \phi(x)\}} - \sup_{x' \in \Omega}{\{\langle x', y \rangle - \psi(x')\}}| \\
		& \leq \sup_{x \in \Omega} |\phi(x) - \psi(x)| = \|\phi - \psi\|_{L^\infty(\Omega)}\,.
	\end{align*}
	In particular, if $\phi_1, \dots, \phi_N$ is an $\eps$-net for $\Phi$, then $\phi_1^*, \dots, \phi_N^*$ is an $\eps$-net for $\Phi^*$. 
\end{proof}

We can now prove our first convergence rate for map estimation.

\begin{theorem}\label{thm:slow_rate}
	Adopt Assumption~\ref{map_assumptions} and assume~\eqref{eq:integrability} holds.
	The semidual estimator $\widehat \varphi$ satisfies the bound
	\begin{equation*}
		\E \|\nabla \widehat \varphi - \nabla \varphi\|_{L^2(\mu)}^2 \lesssim n^{-1/2}\,.
	\end{equation*}
\end{theorem}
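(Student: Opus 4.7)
The plan is to combine the ERM reduction provided by Lemma~\ref{lem:semidual-erm} with a standard chaining bound on the two empirical processes it produces. By Lemma~\ref{lem:semidual-erm},
\begin{align*}
\E\|\nabla\widehat\varphi - \nabla\varphi\|_{L^2(\mu)}^2
&\lesssim \E\sup_{\phi\in\Phi}|(\mu_n-\mu)(\phi)| + \E\sup_{\phi\in\Phi}|(\nu_n-\nu)(\phi^*)|\,,
\end{align*}
where on the second term we used $\sup_{\phi\in\Phi}|(\nu_n-\nu)(\phi^*)|\le \sup_{\psi\in\Phi^*}|(\nu_n-\nu)(\psi)|$. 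The suprema over absolute values are handled by splitting into $\pm$ and using that $-\Phi$ (resp.\ $-\Phi^*$) has the same covering numbers as $\Phi$ (resp.\ $\Phi^*$).

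Next I would verify uniform boundedness of the two classes on $\Omega=B_1(0)$. Assumption~\ref{map_assumptions} gives $\sup_{\phi\in\Phi}\|\phi\|_{L^\infty(\Omega)}\lesssim 1$, and for $y\in\Omega$ the Cauchy--Schwarz inequality yields
\begin{align*}
|\phi^*(y)| = \Bigl|\sup_{x\in\Omega}\{\langle x,y\rangle - \phi(x)\}\Bigr| \le \|y\|+\|\phi\|_{L^\infty(\Omega)}\lesssim 1\,,
\end{align*}
so $\sup_{\psi\in\Phi^*}\|\psi\|_{L^\infty(\Omega)}\lesssim 1$ as well. Combined with Lemma~\ref{lem:phistar-covering}, both $\Phi$ and $\Phi^*$ satisfy the same covering number bound~\eqref{eq:integrability} with the same uniform $L^\infty$ bound~$R\asymp 1$.

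I would then apply Dudley's chaining (Proposition~\ref{prop:chaining}) to each class. Taking $\tau=0$ in that proposition,
\begin{align*}
\E\sup_{\phi\in\Phi}|(\mu_n-\mu)(\phi)|
&\lesssim \frac{1}{\sqrt n}\int_0^{R}\sqrt{\log N(\eps,\Phi)}\,\ud\eps
\lesssim \frac{1}{\sqrt n}\int_0^{R}\eps^{-\gamma/2}\sqrt{\log(1+\eps^{-1})}\,\ud\eps\,.
\end{align*}
Because $\gamma<1$, we have $\gamma/2<1$, so the integrand is integrable near $0$ (the $\sqrt{\log}$ factor does not affect convergence), and the integral is finite. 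The identical bound holds for the $\phi^*$ process via the covering number comparison. Summing yields the desired rate $\E\|\nabla\widehat\varphi-\nabla\varphi\|_{L^2(\mu)}^2 \lesssim n^{-1/2}$.

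The main technical point—and the step where care is needed—is the $L^\infty$ control of $\Phi^*$: because the potentials are taken to be $+\infty$ outside $\Omega$, the supremum defining $\phi^*(y)$ is effectively over the compact set $\Omega$, which is what makes $\Phi^*$ a uniformly bounded class on $\mathrm{supp}(\nu)\subseteq\Omega$. Without this boundary convention, the class $\Phi^*$ could grow linearly in $\|y\|$ and Dudley's bound in its stated form would not apply directly; with it, the argument reduces to a routine chaining calculation.
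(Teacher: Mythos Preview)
Your proof is correct and follows essentially the same approach as the paper: reduce via Lemma~\ref{lem:semidual-erm} to two empirical processes, verify uniform boundedness of $\Phi$ and $\Phi^*$, transfer covering numbers via Lemma~\ref{lem:phistar-covering}, and apply Proposition~\ref{prop:chaining} with $\tau=0$ to obtain a Dudley integral that converges because $\gamma<1$. You add a bit more detail (the explicit $L^\infty$ bound on $\phi^*$ via the $+\infty$-outside-$\Omega$ convention, and the $\pm$ split for absolute values), but the argument is the same.
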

\begin{proof}
	Lemma~\ref{lem:semidual-erm} implies
	\begin{equation*}
		\E \|\nabla \widehat \varphi - \nabla \varphi\|_{L^2(\mu)}^2 \lesssim \E \sup_{\phi \in \Phi} |(\mu_n - \mu)(\phi)| + \E \sup_{\phi \in \Phi} |(\nu_n - \nu)(\phi^*)|\,.
	\end{equation*}
	By Assumption~\ref{map_assumptions}, there exists a positive constant $R$ such that $\|\phi\|_{L^\infty(\Omega)} \leq R$ and $\|\phi^*\| \leq R$ for all $\phi \in \Phi$.
	Applying Proposition~\ref{prop:chaining} with $\tau = 0$ yields
	\begin{equation*}
		\E \|\nabla \widehat \varphi - \nabla \varphi\|_{L^2(\mu)}^2 \lesssim \frac{1}{\sqrt n} \int_0^R \bigl(\sqrt{\log N(\eps, \Phi)} + \sqrt{\log N(\eps, \Phi^*)}\bigr) \, \ud \eps\,.
	\end{equation*}
	Applying~\eqref{eq:integrability} and Lemma~\ref{lem:phistar-covering}, we obtain
	\begin{equation*}
		\E \|\nabla \widehat \varphi - \nabla \varphi\|_{L^2(\mu)}^2 \lesssim \frac{1}{\sqrt n} \int_0^R \eps^{-\gamma/2} \sqrt{ \log(1+\eps^{-1})} \, \ud \eps \lesssim n^{-1/2}\,,
	\end{equation*}
	as desired.
\end{proof}

As anticipated, the parametric assumption on the class $\Phi$ translates to a rate of convergence that avoids the curse of dimensionality.
However, the ``slow rate'' $n^{-1/2}$ is not quite what we hoped to prove.
To obtain the ``fast rate'' $n^{-1}$, we need to \emph{localize} and exploiting this localization step requires imposing additional assumptions on $\mu$.

\section{The fixed point argument}\label{sec:map_fixed_pt}

As mentioned above, the chaining argument in Theorem~\ref{thm:slow_rate} fails to give the correct rate of convergence because it is based on bounding the deviations of $\cS_n$ from $\cS$ uniformly over the set $\Phi$.
However, Theorem~\ref{thm:slow_rate} shows that $\widehat \varphi$ is close to $\varphi$ when $n$ is large, which suggests that it is not necessary to bound the deviations of $\cS_n$ from $\cS$ uniformly over the set $\Phi$, but only over that subset near $\varphi$.

The argument below, due to van de Geer, formalizes this process in the context of map estimation.
The main idea is to apply the reasoning of Lemma~\ref{lem:semidual-erm} not to $\widehat \varphi$ but to a convex combination of $\widehat \varphi$ and $\varphi$ itself.
For simplicity, to apply this argument, we make one more assumption on $\Phi$.
\begin{assumption}\label{assume:phi_convex}
	The set $\Phi$ is convex, that is, if $\phi, \psi \in \Phi$, then $(1-\lambda) \phi + \lambda \psi \in \Phi$ for all $\lambda \in [0, 1]$.
\end{assumption}
Define
\begin{align*}
    \varphi_\varepsilon
    &= (1-\lambda)\,\varphi + \lambda \,\widehat \varphi\,, \qquad \lambda = \frac{\varepsilon}{\varepsilon + \|\nabla \widehat \varphi -\nabla \varphi\|_{L^2(\mu)}}\,.
\end{align*}
Then,
\begin{align*}
    \|\nabla \varphi_\varepsilon - \nabla \varphi\|_{L^2(\mu)}
    &= \lambda \, \|\nabla \widehat \varphi - \nabla \varphi\|_{L^2(\mu)} \\
    &= \varepsilon\, \bigl(\frac{\|\nabla \widehat \varphi - \nabla \varphi\|_{L^2(\mu)}}{\varepsilon + \|\nabla \widehat \varphi - \nabla \varphi\|_{L^2(\mu)}}\bigr)
    \le \varepsilon\,.
\end{align*}
Moreover:
\begin{align*}
    \|\nabla \varphi_\varepsilon - \nabla \varphi\|_{L^2(\mu)} \le \frac{\varepsilon}{2}
    &\iff \frac{\varepsilon\,\|\nabla \widehat \varphi - \nabla \varphi\|_{L^2(\mu)}}{\varepsilon + \|\nabla \widehat \varphi - \nabla \varphi\|_{L^2(\mu)}} \le \frac{\varepsilon}{2} \\
    &\iff \|\nabla \widehat \varphi - \nabla \varphi\|_{L^2(\mu)} \le \varepsilon\,,
\end{align*}
so in considering $\|\nabla \varphi_\varepsilon - \nabla \varphi\|_{L^2(\mu)}$ instead of $\|\nabla \widehat \varphi - \nabla \varphi\|_{L^2(\mu)}$ we only lose a factor of $2$.
Finally, Assumption~\ref{assume:phi_convex} guarantees that $\varphi_\eps \in \Phi$, since it is a convex combination of elements of $\Phi$.

Also, note that for $\lambda \in [0,1]$, pointwise we have $((1-\lambda)\,\phi_0 + \lambda\,\phi_1)^* \le (1-\lambda) \, \phi_0^* + \lambda \, \phi_1^*$, from which it follows that $\cS_n$ is a convex functional.
If we set $\overline{\cS} = \cS - \cS(\varphi)$ and $\overline{\cS_n} = \cS_n - \cS_n(\varphi)$, then by convexity,
\begin{align*}
    \overline{\cS_n}(\varphi_\varepsilon)
    &\le (1-\lambda) \, \underbrace{\overline{\cS_n}(\varphi)}_{=0} + \lambda\,\underbrace{\overline{\cS_n}(\widehat \varphi)}_{\le 0}
    \le 0
\end{align*}
by the definition of $\widehat \varphi$.
Therefore, by Theorem~\ref{thm:stability_semidual}, if $\varphi_\varepsilon$ is $\frac{1}{2}$-strongly convex and $2$-smooth,
\begin{align*}
    \frac{1}{4}\,\|\nabla \varphi_\varepsilon - \nabla \varphi\|_{L^2(\mu)}^2
    &\le \overline{\cS}(\varphi_\varepsilon)
    \le (\overline{\cS} -\overline{\cS_n})(\varphi_\varepsilon) \\
    &= (\cS - \cS_n)(\varphi_\varepsilon) - (\cS - \cS_n)(\varphi) \\
    &\le \sup_{\phi \in \Phi_\varepsilon}{(\cS-\cS_n)(\phi)} - (\cS - \cS_n)(\varphi) \\
    &\le \sup_{\phi \in \Phi_\varepsilon}{\{|(\mu_n - \mu)(\phi - \varphi)| + |(\nu_n - \nu)(\phi^* - \varphi^*)|\}}\,,
\end{align*}
where $\Phi_\varepsilon = \{\phi \in \Phi : \|\nabla \phi - \nabla \varphi\|_{L^2(\mu)} \le \varepsilon\}$.

If the right side of the above inequality is less than $\eps^2/16$, then $\|\nabla \varphi_\eps - \nabla \varphi\|_{L^2(\mu)} \leq \eps/2$, which in turn implies that $\|\nabla \widehat \varphi - \nabla \varphi\|_{L^2(\mu)} \leq \eps.$
We therefore can control the risk of $\widehat \varphi$ if we can find an $\eps$ for which the supremum of the empirical process over $\cF_\eps$ is of order $\eps^2$.
We formalize the above considerations in the following proposition.

\begin{proposition}\label{prop:local_erm}
	Adopt Assumptions~\ref{map_assumptions} and~\ref{assume:phi_convex}.
	For $\eps > 0$, let
	\begin{equation*}
		r(\eps) = \sup_{\phi \in \Phi_\varepsilon}{\{|(\mu_n - \mu)(\phi - \varphi)| + |(\nu_n - \nu)(\phi^* - \varphi^*)|\}}\,.
	\end{equation*}
	Then on the event $\{r(\eps) \leq \eps^2/16\}$, the semidual estimator $\widehat \varphi$ satisfies $\|\nabla \widehat \varphi - \nabla \varphi\|_{L^2(\mu)} \leq \eps$.

	In particular, if $\eps_n$ is a deterministic quantity such that $r(\eps_n) \leq \eps_n^2/16$ with high probability, then the risk of $\widehat \varphi$ is bounded by $\eps_n$ with high probability.
\end{proposition}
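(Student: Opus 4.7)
The plan is essentially to formalize the three-move argument already laid out in the paragraphs immediately preceding the statement. I would introduce the convex combination $\varphi_\eps = (1-\lambda)\,\varphi + \lambda\,\widehat \varphi$ with $\lambda = \eps/(\eps + \|\nabla \widehat \varphi - \nabla \varphi\|_{L^2(\mu)})$, and record three structural facts. First, a direct computation gives $\|\nabla \varphi_\eps - \nabla \varphi\|_{L^2(\mu)} = \lambda\,\|\nabla\widehat\varphi - \nabla\varphi\|_{L^2(\mu)} \le \eps$, so $\varphi_\eps \in \Phi_\eps$ provided $\varphi_\eps \in \Phi$. Second, Assumption~\ref{assume:phi_convex} (convexity of $\Phi$) puts $\varphi_\eps \in \Phi$. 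Third, since the Hessian bounds $\tfrac{1}{2} I \preceq \nabla^2 \cdot \preceq 2I$ on $\Omega$ in Assumption~\ref{map_assumptions} are preserved under convex combinations, $\varphi_\eps$ meets the hypotheses of the stability inequality in Theorem~\ref{thm:stability_semidual}.

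Next, I would establish that $\cS_n$ is convex on $\Phi$. This reduces to the pointwise inequality $((1-\lambda)\,\phi_0 + \lambda\,\phi_1)^* \le (1-\lambda)\,\phi_0^* + \lambda\,\phi_1^*$, which is a one-line consequence of the definition of the convex conjugate (split the supremum). Convexity of $\cS_n$ together with the optimality $\cS_n(\widehat\varphi) \le \cS_n(\varphi)$ then gives $\cS_n(\varphi_\eps) - \cS_n(\varphi) \le \lambda\,(\cS_n(\widehat\varphi) - \cS_n(\varphi)) \le 0$. Combining this with the stability bound of Theorem~\ref{thm:stability_semidual} applied to $\varphi_\eps$ yields
\begin{align*}
    \tfrac{1}{4}\,\|\nabla \varphi_\eps - \nabla \varphi\|_{L^2(\mu)}^2
    &\le \cS(\varphi_\eps) - \cS(\varphi) \\
    &\le (\cS - \cS_n)(\varphi_\eps) - (\cS - \cS_n)(\varphi)
    \le r(\eps),
\end{align*}
where the final step uses $\varphi_\eps \in \Phi_\eps$ together with the definition of $r(\eps)$.

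Finally, I would close the loop. On the event $\{r(\eps) \le \eps^2/16\}$, the previous display gives $\|\nabla \varphi_\eps - \nabla \varphi\|_{L^2(\mu)} \le \eps/2$. Substituting the definition of $\lambda$ gives $\eps\,\|\nabla\widehat\varphi - \nabla\varphi\|_{L^2(\mu)}/(\eps + \|\nabla\widehat\varphi - \nabla\varphi\|_{L^2(\mu)}) \le \eps/2$, which is equivalent to $\|\nabla\widehat\varphi - \nabla\varphi\|_{L^2(\mu)} \le \eps$ after rearrangement; this is the first assertion. The second assertion follows by applying the first at a deterministic $\eps_n$ and intersecting with the high-probability event on which $r(\eps_n) \le \eps_n^2/16$.

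The substantive ingredients (stability, convexity of $\Phi$, and the clever choice of $\lambda$) are all in place, so no genuine obstacle remains: the proof is pure bookkeeping around the definition of $\lambda$, apart from the one-line verification that $\phi \mapsto \cS_n(\phi)$ is convex. Crucially, no covering-number or chaining machinery enters this proposition---its role is precisely to convert any high-probability bound on the \emph{localized} fluctuation $r(\eps)$ into a high-probability bound on the risk, leaving the empirical-process analysis of $r(\eps)$ to a subsequent step.
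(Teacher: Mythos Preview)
Your proposal is correct and matches the paper's approach exactly: the paper presents the entire argument (the convex combination $\varphi_\eps$ with that specific $\lambda$, the convexity of $\cS_n$ via the pointwise conjugate inequality, the stability bound, and the equivalence $\|\nabla\varphi_\eps-\nabla\varphi\|\le\eps/2 \iff \|\nabla\widehat\varphi-\nabla\varphi\|\le\eps$) in the paragraphs immediately preceding the proposition and then states the proposition as a formalization of that discussion, without a separate proof block. Your write-up is a faithful reorganization of those paragraphs.
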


Comparing Lemma~\ref{lem:semidual-erm} with Proposition~\ref{prop:local_erm} shows that we have replaced the task of bounding the deviations of an empirical process uniformly over $\Phi$ by the task of bounding them over the smaller set $\Phi_\eps$.

\section{Obtaining the fast rate}\label{sec:map_fast}

In order to exploit the fact that we now seek to bound the empirical process only over $\Phi_\eps$, we need to formalize the notion that $\Phi_\eps$ is much smaller than $\Phi$.
A complicating factor is that the chaining technique given in Proposition~\ref{prop:chaining} measures the ``size'' of $\Phi$ by its $\eps$-covering numbers, which are defined in terms of $L^\infty$ covers.
By contrast, the restriction of $\Phi$ to $\Phi_\eps$ is based on the additional restriction on the \emph{$L^2$ norm} of the gradients of $\phi$.
We therefore need a version of the chaining bound which is able to exploit the size of a function class with respect to both $L^\infty$ and $L^2$.

The following modified chaining bound addresses this deficit.
\begin{proposition}[{\cite[Theorem 2.14.21]{VaaWel23}}]\label{prop:improved_chaining}
	Let $P$ be a probability measure on a set $\Omega \subseteq \R^d$.
	Let $X_1, \dots, X_n \simiid P$.
	If $\cF$ is a set of real-valued functions such that $\|f\|_{L^2(P)} \leq \sigma$ and $\|f\|_{L^\infty(\Omega)} \leq R$ for all $f \in \cF$,
	then
	\begin{multline}\label{eq:improved_chaining_bound}
	\E \sup_{f \in \cF} \frac 1n \sum_{i=1}^n \{f(X_i) - \E f(X_i)\} \lesssim \frac{1}{\sqrt n} \int_0^{\sigma} \sqrt{\log N(\eps, \cF)}\, \ud \eps \\
	+ \frac 1 n \int_0^{R} \log N(\eps, \cF)\, \ud \eps\,.
	\end{multline}
\end{proposition}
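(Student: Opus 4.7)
The plan is to establish Proposition~\ref{prop:improved_chaining} via a \emph{Bernstein-type chaining} argument, which refines the Hoeffding-based Dudley chaining of Proposition~\ref{prop:chaining} by exploiting the variance bound $\|f\|_{L^2(P)}\le\sigma$ in addition to the uniform bound $\|f\|_{L^\infty(\Omega)}\le R$. The two integrals on the right-hand side of~\eqref{eq:improved_chaining_bound} arise naturally from the two regimes of Bernstein's inequality: a subgaussian regime, in which fluctuations of $\frac{1}{n}\sum_i(g(X_i)-\E g(X_i))$ have scale $\|g\|_{L^2(P)}/\sqrt{n}$ (yielding the Dudley-type integral up to $\sigma$), and a subexponential regime, in which they have scale $\|g\|_\infty/n$ (producing the integral up to $R$).

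For the chaining construction, fix dyadic scales $\eps_k=2^{-k}R$ for $k\ge 0$ and, for each $k$, let $\cF_k\subseteq\cF$ be a minimal \emph{internal} $L^\infty(\Omega)$ cover of $\cF$ at scale $\eps_k$, so $|\cF_k|\le 2\,N(\eps_k,\cF)$ and $\|\phi\|_{L^2(P)}\le\sigma$ for every $\phi\in\cF_k$. For each $f\in\cF$, denote by $\pi_k f\in\cF_k$ a fixed nearest neighbor in $L^\infty$, and set $\Delta_k f\deq\pi_k f-\pi_{k-1}f$ for $k\ge 1$. Taking the top-level cover to be trivial (a single element of $\cF$, valid modulo an absolute constant in $\eps_0$) yields the telescoping decomposition
\begin{align*}
    (P_n-P)(f-\pi_0 f)=\sum_{k\ge 1}(P_n-P)\Delta_k f\,,
\end{align*}
with uniform convergence in $f$ since $\|f-\pi_k f\|_\infty\to 0$. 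The link $\Delta_k f$ obeys the crucial two simultaneous bounds
\begin{align*}
    \|\Delta_k f\|_\infty\le 2\eps_{k-1}\,,\qquad \|\Delta_k f\|_{L^2(P)}\le 2\,(\sigma\wedge\eps_{k-1})\,,
\end{align*}
where the second uses both $\|\Delta_k f\|_{L^2(P)}\le\|\Delta_k f\|_\infty$ and $\|\pi_k f\|_{L^2(P)}\le\sigma$ from the internal-cover property.

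The core of the argument is to apply Bernstein's inequality at each level to the at most $N_k N_{k-1}\le N_k^2$ distinct link functions, combined with a union bound and passage to expectation via the subexponential Orlicz norm $\psi_1$. The resulting two-term maximal inequality asserts that for any finite subclass $\cG$ with $\|g\|_\infty\le b$ and $\|g\|_{L^2(P)}\le v$,
\begin{align*}
    \E\max_{g\in\cG}|(P_n-P)g|\lesssim v\sqrt{\frac{\log(1+|\cG|)}{n}}+\frac{b\,\log(1+|\cG|)}{n}\,.
\end{align*}
Applied at level $k$ to $\{\Delta_k f:f\in\cF\}$, this yields a contribution of order $(\sigma\wedge\eps_{k-1})\sqrt{\log N_k/n}+\eps_{k-1}\log N_k/n$. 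Summing over $k$ and comparing the resulting geometric sums to Riemann approximations, using that $\log N(\cdot,\cF)$ is non-increasing and $\eps_{k-1}-\eps_k=\eps_k$, gives
\begin{align*}
    \sum_{k\ge 1}(\sigma\wedge\eps_{k-1})\sqrt{\log N_k/n}&\lesssim\frac{1}{\sqrt n}\int_0^\sigma\sqrt{\log N(\eps,\cF)}\,\ud\eps\,,\\
    \sum_{k\ge 1}\eps_{k-1}\,\frac{\log N_k}{n}&\lesssim\frac{1}{n}\int_0^R\log N(\eps,\cF)\,\ud\eps\,.
\end{align*}

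The main obstacle is the tight handling of the subgaussian sum: for coarse levels with $\eps_{k-1}>\sigma$, the factor $\sigma\wedge\eps_{k-1}$ saturates at $\sigma$, and one must show that these $O(\log_2(R/\sigma))$ terms collectively contribute at most a constant multiple of $\sigma\sqrt{\log N(\sigma,\cF)/n}$, which is in turn absorbed into $\frac{1}{\sqrt n}\int_0^\sigma\sqrt{\log N(\eps,\cF)}\,\ud\eps$ via monotonicity of $\log N$. A secondary technical point is the derivation of the two-term Bernstein maximal inequality above; this follows from the $\psi_1$-bound $\|(P_n-P)g\|_{\psi_1}\lesssim v/\sqrt n+b/n$ combined with the standard Pisier-type inequality $\|\max_{i\le N}Z_i\|_{\psi_1}\lesssim\log(1+N)\,\max_i\|Z_i\|_{\psi_1}$.
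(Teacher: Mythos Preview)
The paper does not prove this proposition; it is cited from~\cite[Theorem~2.14.21]{VaaWel23}, and the discussion notes only that it follows from Talagrand's generic chaining. Your Bernstein-chaining outline is a reasonable alternative route, but as written it contains two genuine gaps.

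First, your derivation of the Bernstein maximal inequality is incorrect. The $\psi_1$ bound $\|(P_n-P)g\|_{\psi_1}\lesssim v/\sqrt n + b/n$ combined with Pisier's inequality $\|\max_i Z_i\|_{\psi_1}\lesssim \log(1+N)\max_i\|Z_i\|_{\psi_1}$ yields
\[
\E\max_{g\in\cG}|(P_n-P)g|\lesssim \log(1+|\cG|)\,\Bigl(\frac{v}{\sqrt n}+\frac{b}{n}\Bigr)\,,
\]
with $\log(1+|\cG|)$ rather than $\sqrt{\log(1+|\cG|)}$ multiplying the subgaussian term $v/\sqrt n$. The $\psi_1$ norm discards the subgaussian tail information. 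To recover the stated maximal inequality you must argue directly from Bernstein's tail bound via union bound and tail integration, or use a Bernstein--Orlicz norm that tracks both regimes.

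Second, your handling of the coarse levels does not work as claimed. For the $O(\log_2(R/\sigma))$ levels with $\eps_{k-1}>\sigma$, the subgaussian contributions sum to at most $O(\log(R/\sigma))\cdot\sigma\sqrt{\log N(\sigma,\cF)/n}$, not to $\sigma\sqrt{\log N(\sigma,\cF)/n}$; the extra $\log(R/\sigma)$ cannot be removed by monotonicity alone. The standard fix is AM--GM: for each coarse level write
\[
\sigma\sqrt{\frac{\log N_k}{n}} \le \frac{\sigma^2}{2\eps_k}+\frac{\eps_k\log N_k}{2n}\,.
\]
The first terms form a geometric sum over $\eps_k\gtrsim\sigma$ and total $O(\sigma)$, which (after disposing of the degenerate case $N(\sigma,\cF)=1$) is absorbed into $\frac{1}{\sqrt n}\int_0^\sigma\sqrt{\log N(\eps,\cF)}\,\ud\eps$; the second terms are absorbed into $\frac1n\int_0^R\log N(\eps,\cF)\,\ud\eps$.
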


Note that in the first term in~\eqref{eq:improved_chaining_bound}, the upper limit of the integral is $\sigma$ rather than $R$.
The second integral incurs a worse dependence on the covering number, but appears with a prefactor of $\frac 1n$ rather than $\frac{1}{\sqrt n}$.
We may therefore hope that when $n$ is large enough, the first term dominates.
If the $L^2$ size of $\cF$ is small, as captured by $\sigma$, then the first term may be substantially smaller than the bound obtained by applying Proposition~\ref{prop:chaining} directly.

Proposition~\ref{prop:improved_chaining} also comes with a tail bound, showing that the quantity on the right-hand side of~\eqref{eq:improved_chaining_bound} also bounds the empirical process with high probability.

\begin{proposition}\label{prop:chain_tail}
	Let $J_n(\cF)$ denote the right side of~\eqref{eq:improved_chaining_bound}.
	Under the same assumptions as Proposition~\ref{prop:improved_chaining}, there exists a positive universal constant $C$ such that for any $t \geq 0$,
	\begin{equation*}
		\p\biggl(\sup_{f \in \cF} \frac 1n \sum_{i=1}^n \{f(X_i) - \E f(X_i)\}  \geq C\,\bigl(J_n(\cF) + \sigma\sqrt{\frac{t}{n}} +  R\, \frac{t}{n}\bigr)\biggr) \leq \exp(-t)\,.
	\end{equation*}
\end{proposition}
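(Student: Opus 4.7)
The plan is to combine the in-expectation bound of Proposition~\ref{prop:improved_chaining} with a concentration inequality for the supremum of an empirical process around its mean. The natural tool here is Talagrand's inequality (or one of its refinements, such as the Bousquet or Klein--Rio versions), which states that if $\cF$ is a countable class of measurable functions with $\|f\|_{L^\infty(\Omega)} \le R$ and $\|f\|_{L^2(P)} \le \sigma$ for all $f \in \cF$, then
\begin{equation*}
    Z \deq \sup_{f\in\cF} \frac{1}{n}\sum_{i=1}^n \{f(X_i) - \E f(X_i)\}
\end{equation*}
satisfies, for all $t \ge 0$,
\begin{equation*}
    \p\Bigl(Z \ge \E Z + c_1 \sigma\sqrt{\tfrac{t}{n}} + c_2 R\,\tfrac{t}{n}\Bigr) \le e^{-t}\,,
\end{equation*}
for appropriate universal constants $c_1, c_2 > 0$. (Strictly, Talagrand's inequality bounds the variance by $\sup_{f\in \cF}\Var_P(f) + 2R\,\E Z$, but the extra $\E Z$ term can be absorbed into the $\sigma\sqrt{t/n}$ term after a standard $ab \le \tfrac{1}{2}(a^2 + b^2)$ manipulation, or by noting that the contribution $R\,\E Z \cdot t/n$ is at most $\sigma^2 t/n + (R t/n)^2$ up to constants.)

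First I would reduce to the case of a countable $\cF$. The covering number assumption implies that $\cF$ is totally bounded in $L^\infty$, and by the standard argument we may replace the supremum over $\cF$ by the supremum over a countable dense subset (e.g., the union of successive $2^{-k}$-nets), without changing the value of $Z$ a.s.\ or its expectation. This makes the measurability issues in Talagrand's inequality vanish.

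Next I would invoke Talagrand's inequality in the form above. Applying Proposition~\ref{prop:improved_chaining} to upper bound $\E Z$ by (a constant times) $J_n(\cF)$, and absorbing the resulting absolute constants into the $C$ on the right-hand side, yields exactly the stated conclusion. The only subtlety is that Talagrand's inequality is usually written with the \emph{weak variance} $\sup_{f \in \cF} \Var_P(f)$ rather than $\sigma^2 = \sup_{f\in\cF}\|f\|_{L^2(P)}^2$; but since $\Var_P(f) \le \|f\|_{L^2(P)}^2 \le \sigma^2$, this is harmless.

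The main obstacle is really bookkeeping: selecting a version of Talagrand's inequality with the correct shape of the deviation term (a Bernstein-type bound with a $\sigma$ prefactor on the sub-Gaussian term and an $R$ prefactor on the sub-exponential term), and absorbing the $2R\,\E Z$ contribution to the variance proxy into the other two terms so that it does not produce a spurious $\sqrt{R\, J_n(\cF)\,t/n}$ contribution. Once one fixes such a version (e.g., Bousquet's inequality as stated in \cite[Theorem~12.5]{BouLugMas13}), the proof is a one-line combination of that inequality with Proposition~\ref{prop:improved_chaining}.
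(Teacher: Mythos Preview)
Your approach is correct. The paper does not actually prove this proposition in the text; it simply records in the discussion notes that ``the tail bound in Proposition~\ref{prop:chain_tail} follows from a more general result for generic chaining bounds~\cite[Theorem 2.2.19]{VaaWel23}.'' That reference gives a high-probability version of the generic chaining bound directly, so the deviation term $\sigma\sqrt{t/n} + R\,t/n$ comes out of the chaining argument itself rather than from a separate concentration step.

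Your route---bound $\E Z$ by Proposition~\ref{prop:improved_chaining} and then apply Talagrand/Bousquet to concentrate $Z$ around its mean---is a genuinely different and arguably more modular argument. It has the advantage of separating the two ingredients cleanly and of being reusable whenever one already has an expectation bound in hand. The only real work, as you note, is absorbing the $\sqrt{R\,\E Z\cdot t/n}$ cross term from the variance proxy $\sigma^2 + 2R\,\E Z$; the AM--GM step $\sqrt{R\,\E Z\cdot t/n}\le \tfrac12\,\E Z + \tfrac12\,R\,t/n$ handles this and the resulting $\tfrac12\,\E Z$ folds back into $J_n(\cF)$. The paper's cited approach avoids this bookkeeping by building the tail bound into the chaining from the start, at the cost of invoking heavier machinery.
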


Proposition~\ref{prop:improved_chaining} requires us to control the $L^2$ norm of the elements of our function class; however, $\Phi_\eps$ is defined using the $L^2$ norms of the \emph{gradients} of the elements of $\Phi$.
We therefore adopt the final assumption on $\mu$, which allows us to move back and forth between these notions.
\begin{definition}
	A  measure $P$ satisfies a \emph{Poincar\'e inequality}\index{Poincar\'e inequality} (with constant $C$) if for all $f \in L^2(P)$,
	\begin{equation*}
		\int \Bigl(f - \int f \, \ud P\Bigr)^2 \, \ud P \leq C \int \|\nabla f\|^2 \, \ud P\,.
	\end{equation*}
\end{definition}
\begin{assumption}\label{assume:poincare}
	The measure $\mu$ satisfies a Poincar\'e inequality.
\end{assumption}

The Poincar\'e inequality is a quantitative form of the statement that the support of $\mu$ is connected.
Indeed, a Poincar\'e inequality holds for any measure having a density bounded away from zero and infinity on a bounded Lipschitz domain.

Under this new assumption, we obtain $L^2$ bounds on $\Phi_\eps$ and $\Phi^*_\eps$.
\begin{proposition}\label{prop:map_l2_bound}
	If Assumptions~\ref{map_assumptions} and~\ref{assume:poincare} hold, then
	\begin{align*}
		\|\phi - \varphi - \mu(\phi - \varphi)\|_{L^2(\mu)} & \lesssim \eps \\
		\|\phi^* - \varphi^* - \nu(\phi^* - \varphi^*)\|_{L^2(\nu)} & \lesssim \eps
	\end{align*}
	for all $\phi \in \Phi_\eps$.
\end{proposition}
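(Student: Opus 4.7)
The first inequality is essentially immediate: since $\phi, \varphi \in \Phi$, the function $f \deq \phi - \varphi$ lies in $L^2(\mu)$, so applying the Poincar\'e inequality for $\mu$ (Assumption~\ref{assume:poincare}) gives
\begin{equation*}
    \|\phi - \varphi - \mu(\phi - \varphi)\|_{L^2(\mu)}^2 \lesssim \|\nabla \phi - \nabla \varphi\|_{L^2(\mu)}^2 \le \eps^2\,,
\end{equation*}
by the definition of $\Phi_\eps$.

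\textbf{The second inequality takes two steps.} First, I would transfer the Poincar\'e inequality from $\mu$ to $\nu$. Since $\nu = {(\nabla \varphi)}_\# \mu$, for any nice $g$, the change of variables $y = \nabla \varphi(x)$ gives
\begin{equation*}
    \|g - \nu(g)\|_{L^2(\nu)}^2 = \|g \circ \nabla \varphi - \mu(g \circ \nabla \varphi)\|_{L^2(\mu)}^2 \lesssim \int \|\nabla^2 \varphi(x)\,\nabla g(\nabla \varphi(x))\|^2\, \mu(\ud x)\,,
\end{equation*}
where I used Poincar\'e on $\mu$ and the chain rule. Since $\nabla^2 \varphi \preceq 2\,I$ on $\Omega$ by Assumption~\ref{map_assumptions}, the right-hand side is bounded by a constant times $\|\nabla g\|_{L^2(\nu)}^2$, so $\nu$ also satisfies a Poincar\'e inequality.

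Second, I would apply this to $g = \phi^* - \varphi^*$, reducing matters to bounding $\|\nabla \phi^* - \nabla \varphi^*\|_{L^2(\nu)}$. Changing variables $y = \nabla \varphi(x)$ again and using $\nabla \varphi^* \circ \nabla \varphi = \id$ (from strong convexity of $\varphi$) as well as $\nabla \phi^* \circ \nabla \phi = \id$, the integrand becomes
\begin{equation*}
    \|\nabla \phi^*(\nabla \varphi(x)) - x\| = \|\nabla \phi^*(\nabla \varphi(x)) - \nabla \phi^*(\nabla \phi(x))\|\,.
\end{equation*}
Since $\phi$ is $\tfrac12$-strongly convex, $\phi^*$ is $2$-smooth (Lemma~\ref{thm:strcvx_smooth_dual}), so $\nabla \phi^*$ is $2$-Lipschitz. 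Hence
\begin{equation*}
    \|\nabla \phi^* - \nabla \varphi^*\|_{L^2(\nu)}^2 \le 4 \,\|\nabla \phi - \nabla \varphi\|_{L^2(\mu)}^2 \le 4\,\eps^2\,,
\end{equation*}
and combining with the transferred Poincar\'e inequality yields the second bound.

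\textbf{Main obstacles.} The subtle point is justifying the change-of-variables identity $\nabla \phi^* \circ \nabla \phi = \id$ on $\supp \mu$ and ensuring that all gradients are well-defined at the relevant points. This is handled by Assumption~\ref{map_assumptions}: strong convexity plus smoothness of $\phi$ on $\Omega$ guarantees that $\nabla \phi$ is a bi-Lipschitz diffeomorphism onto its image, and the lower semicontinuity of $\phi$ (together with $\phi = +\infty$ off $\Omega$) ensures $\phi^{**} = \phi$ so that the Fenchel duality identities hold. A brief technical check is also needed to move the Poincar\'e inequality through the change of variables when $\nabla \varphi$ need not be surjective onto $\supp \nu$, but this too is taken care of by the fact that ${(\nabla \varphi)}_\# \mu = \nu$ exactly.
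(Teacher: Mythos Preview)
Your proof is correct and the first inequality, as well as the transfer of the Poincar\'e inequality from $\mu$ to $\nu$, match the paper exactly. The difference lies in how you control $\|\nabla \phi^* - \nabla \varphi^*\|_{L^2(\nu)}$.

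The paper takes a more structural route: it introduces the ``swapped'' semidual $\cT(\psi) \deq \int \psi\,\ud\nu + \int \psi^*\,\ud\mu$, notes that $\cT(\phi^*) = \cS(\phi)$ for closed convex $\phi$, and then applies the two-sided stability estimate of Theorem~\ref{thm:stability_semidual} twice---once to $\cT$ (lower bound) and once to $\cS$ (upper bound)---to obtain
\[
\tfrac14\,\|\nabla \phi^* - \nabla \varphi^*\|_{L^2(\nu)}^2 \le \cT(\phi^*)-\cT(\varphi^*) = \cS(\phi)-\cS(\varphi) \le \|\nabla\phi-\nabla\varphi\|_{L^2(\mu)}^2.
\]
You instead compute directly: change variables $y=\nabla\varphi(x)$, use $\nabla\varphi^*\circ\nabla\varphi=\id$ and $\nabla\phi^*\circ\nabla\phi=\id$, and invoke the $2$-Lipschitzness of $\nabla\phi^*$ coming from $\tfrac12$-strong convexity of $\phi$. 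Both arguments land on the identical inequality with the same constant~$4$. Your route is more hands-on and avoids re-invoking the stability machinery; the paper's route highlights the symmetry between $\cS$ and $\cT$ and makes clear that Theorem~\ref{thm:stability_semidual} is doing all the work. One minor caution: Lemma~\ref{thm:strcvx_smooth_dual} as stated in the appendix assumes $f:\R^d\to\R$ is finite-valued, whereas here $\phi=+\infty$ off $\Omega$, so strictly speaking you need the (standard) extension of that duality to proper lsc strongly convex functions---but this is a cosmetic point, not a gap.
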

\begin{proof}
    The first bound follows directly from the Poincar\'e inequality: for $\phi \in \Phi_\eps$,
	\begin{equation*}
		\|\phi - \varphi - \mu(\phi - \varphi)\|_{L^2(\mu)}^2 \leq C\, \|\nabla (\phi - \varphi)\|_{L^2(\mu)}^2 \lesssim \eps^2\,.
	\end{equation*}
	
	To prove the second bound, we first use the strong convexity of $\phi$ and $\varphi$.
	Consider the functional $\cT$ defined by
	\begin{equation*}
		\cT(\psi)  \deq  \int \psi \, \ud \nu + \int \psi^* \, \ud \mu\,.
	\end{equation*}
	That is, $\cT$ is the semidual functional obtained by exchanging the roles of $\mu$ and $\nu$.
	Since $(\phi^*)^* = \phi$ for all convex and lower semicontinuous $\phi$, we have that $\cS(\phi) = \cT(\phi^*)$ for all such $\phi$.
	In particular, the minimizer of $\cT$ is $\varphi^*$, and Theorem~\ref{thm:stability_semidual} implies that
	\begin{equation*}
		\frac 14\, \|\nabla \phi^* - \nabla \varphi^*\|_{L^2(\nu)}^2 \leq \cT(\phi^*) - \cT(\varphi^*) = \cS(\phi) - \cS(\varphi) \leq \|\nabla \phi - \nabla \varphi\|_{L^2(\mu)}^2\,.
	\end{equation*}
	Therefore $\|\nabla \phi^* - \nabla \varphi^*\|_{L^2(\nu)}^2 \lesssim \eps^2$ for all $\phi \in \Phi_\eps$.
	
	To obtain the bound, all that is left is to show that $\nu$ also satisfies a Poincar\'e inequality, since we can then conclude as in the proof of the first inequality.
	To see this, we use the fact that $\nu = (\nabla \varphi)_\sharp \mu$.
	The fact that $\varphi$ is smooth means that for any $f: \R^\dd \to \R^\dd$, $$\|\nabla (f \circ \nabla \varphi)(x)\| = \|\nabla^2 \varphi(x) \, \nabla f(\nabla \varphi(x))\| \leq 2\, \|\nabla f(\nabla \varphi(x))\|\,.$$
	The Poincar\'e inequality for $\mu$ implies that for any $f \in L^2(\nu)$,
	\begin{align*}
		\int \Bigl(f - \int f \, \ud \nu\Bigr)^2 \, \ud \nu
              & = \int \Bigl(f\circ\nabla \varphi - \int f\circ\nabla \varphi \, \ud \mu\Bigr)^2 \, \ud \mu \\
		& \leq C \int \|\nabla (f \circ \nabla \varphi)\|^2 \, \ud \mu \\
		& \leq 4 C \int \|\nabla f(\nabla \varphi(x))\|^2 \, \ud \mu \\
		& = 4 C \int \|\nabla f\|^2 \, \ud \nu\,.
	\end{align*}
	Therefore $\nu$ also satisfies a Poincar\'e inequality, with constant $4C$.
	Hence we may conclude as in the first case.
\end{proof}

We are finally ready to prove the desired rate.
Note that in the finite-dimensional setting, when Lemma~\ref{lem:phi-covering} holds, this theorem shows that the map can be estimated at nearly the parametric rate.
\begin{theorem}
	Adopt Assumptions~\ref{map_assumptions},~\ref{assume:phi_convex}, and~\ref{assume:poincare}.
	If~\eqref{eq:integrability} holds, then the semidual estimator $\widehat \varphi$ satisfies
	\begin{equation}\label{eq:map_hp_bound}
		\| \nabla \widehat \varphi - \nabla \varphi\|_{L^2(\mu)}^2 \lesssim \Bigl(\frac{\log n}{n}\Bigr)^{\frac{2}{2 + \gamma}} + \frac{t+1}n
	\end{equation}
	with probability at least $1-e^{-t}$.
	In particular,
	\begin{equation*}
		\E \| \nabla \widehat \varphi - \nabla \varphi\|_{L^2(\mu)}^2 \lesssim\Bigl(\frac{\log n}{n}\Bigr)^{\frac{2}{2 + \gamma}}\,.
	\end{equation*}
\end{theorem}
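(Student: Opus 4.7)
Proof proposal. The plan is to combine the fixed-point reduction of Proposition~\ref{prop:local_erm} with the high-probability chaining bound of Proposition~\ref{prop:chain_tail}, using Proposition~\ref{prop:map_l2_bound} to exploit the smallness of $\Phi_\varepsilon$ in $L^2$. It suffices to exhibit a deterministic $\varepsilon_n$ such that, with probability at least $1-e^{-t}$, the quantity $r(\varepsilon_n)$ in Proposition~\ref{prop:local_erm} is at most $\varepsilon_n^2/16$; the conclusion then follows from that proposition.

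First I would split $r(\varepsilon) \le r_1(\varepsilon) + r_2(\varepsilon)$, where $r_1(\varepsilon) = \sup_{\phi\in\Phi_\varepsilon}|(\mu_n-\mu)(\phi-\varphi)|$ and $r_2(\varepsilon) = \sup_{\phi\in\Phi_\varepsilon}|(\nu_n-\nu)(\phi^*-\varphi^*)|$, and apply Proposition~\ref{prop:chain_tail} to each. Because empirical process increments are invariant under additive constants, I may center the function classes and replace $\phi-\varphi$ by $\phi-\varphi-\mu(\phi-\varphi)$ (and similarly for $\phi^*-\varphi^*$ under $\nu$); by Proposition~\ref{prop:map_l2_bound}, the resulting classes have $L^2$-radius $\sigma \lesssim \varepsilon$, while Assumption~\ref{map_assumptions} gives the $L^\infty$-radius $R \lesssim 1$. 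For covering numbers I use the bound~\eqref{eq:integrability} on $\Phi$, which transfers to $\Phi-\varphi$ by translation invariance, and to the dual class via Lemma~\ref{lem:phistar-covering}.

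Plugging into Proposition~\ref{prop:improved_chaining}, the first chaining integral evaluates to
\begin{align*}
\frac{1}{\sqrt n}\int_0^{\varepsilon} \delta^{-\gamma/2}\sqrt{\log(1+\delta^{-1})}\,\ud\delta
&\lesssim \frac{\varepsilon^{1-\gamma/2}\sqrt{\log(1+\varepsilon^{-1})}}{\sqrt n}\,,
\end{align*}
while the second integral is $\frac{1}{n}\int_0^R \delta^{-\gamma}\log(1+\delta^{-1})\,\ud\delta \lesssim 1/n$ since $\gamma<1$. Combined with the deviation terms in Proposition~\ref{prop:chain_tail}, this yields
\begin{align*}
r(\varepsilon) \;\lesssim\; \frac{\varepsilon^{1-\gamma/2}\sqrt{\log(1+\varepsilon^{-1})}}{\sqrt n} + \frac{1}{n} + \varepsilon\sqrt{\tfrac{t}{n}} + \frac{t}{n}
\end{align*}
with probability at least $1-e^{-t}$. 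Setting this $\lesssim \varepsilon^2$ and solving, the binding term is the first: it requires $\varepsilon^{2+\gamma}\gtrsim \log n/n$, so taking $\varepsilon_n^2 \asymp (\log n/n)^{2/(2+\gamma)} + t/n$ makes each term smaller than $\varepsilon_n^2/16$ for a suitable absolute constant. Proposition~\ref{prop:local_erm} then yields~\eqref{eq:map_hp_bound}.

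The expectation bound follows by integrating the tail: writing $\xi = \|\nabla\widehat\varphi-\nabla\varphi\|_{L^2(\mu)}^2$, the high-probability bound gives $\p(\xi \ge C(\log n/n)^{2/(2+\gamma)} + Cs/n)\le e^{-s}$, so $\E\xi \le C(\log n/n)^{2/(2+\gamma)} + C/n\int_0^\infty e^{-s}\,\ud s$, which is $O((\log n/n)^{2/(2+\gamma)})$. The main obstacle I anticipate is bookkeeping around the centering step and verifying that the restricted class $\Phi_\varepsilon$ inherits covering number bounds from $\Phi$ (which it does, since it is a subset), together with confirming that $\varphi_\varepsilon$ constructed in Section~\ref{sec:map_fixed_pt} really satisfies the $\frac12$-strong convexity and $2$-smoothness hypothesis of Theorem~\ref{thm:stability_semidual}, which is automatic because these properties are preserved under convex combinations of potentials in $\Phi$ under Assumption~\ref{map_assumptions}.
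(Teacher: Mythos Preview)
Your proposal is correct and follows essentially the same approach as the paper's proof: reduce via Proposition~\ref{prop:local_erm}, center the function classes to exploit Proposition~\ref{prop:map_l2_bound}, apply the localized chaining bound of Propositions~\ref{prop:improved_chaining}--\ref{prop:chain_tail} with covering numbers from~\eqref{eq:integrability} and Lemma~\ref{lem:phistar-covering}, and balance terms to identify $\varepsilon_n$. The only minor omission is an explicit union bound over the $\mu$- and $\nu$-empirical processes (the paper applies the tail bound to each with probability $1-e^{-t}/2$), but this is routine.
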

\begin{proof}
	By Proposition~\ref{prop:local_erm}, it suffices to show that $r(\eps_n) \leq \eps_n^2/16$ with probability at least $1- e^{-t}$ for $\eps_n \asymp \bigl(\frac{\log n}{n}\bigr)^{\frac{1}{2+\gamma}} + \sqrt{\frac {t+1} n}$.
	Let us first bound $\sup_{\phi \in \Phi_\eps} |(\mu_n- \mu) (\phi-\varphi)|$.
	Since $(\mu_n - \mu)h = 0$ if $h$ is a constant function, we have
	\begin{equation*}
		\sup_{\phi \in \Phi_\eps} |(\mu_n- \mu)( \phi-\varphi)| = \sup_{\phi \in \Phi_\eps} |(\mu_n- \mu) (\phi - \varphi - \mu(\phi - \varphi))| \,.
	\end{equation*}
	We can apply Proposition~\ref{prop:improved_chaining}, Proposition~\ref{prop:chain_tail}, and Proposition~\ref{prop:map_l2_bound} along with the fact that $\phi - \varphi - \mu(\phi - \varphi)$ is bounded to obtain that there exists a constant $C_2$ such that
	\begin{align*}
		&\sup_{\phi \in \Phi_\eps}|(\mu_n- \mu) (\phi - \varphi - \mu(\phi - \varphi))| \\
            &\qquad  \lesssim \frac{1}{\sqrt n} \int_0^{C_2 \eps} \delta^{-\gamma/2}\sqrt{\log(1 + \delta^{-1})} \, \ud \delta \\
            &\qquad\qquad{} + \frac 1 n \int_0^{C_2} \delta^{-\gamma} \log(1+\delta^{-1})  \, \ud \delta + \eps\sqrt{\frac tn} + \frac tn\\
		&\qquad \lesssim  \eps^{1-\gamma/2}\sqrt{\frac{\log(1+\eps^{-1})}{n}} + \eps \sqrt{\frac tn}+ \frac{t+1}{n}
	\end{align*}
	with probability at least $1-e^{-t}$.
	Therefore, taking
         \begin{align}\label{eq:choice_of_epsn}
             \varepsilon_n = C\,\Bigl(\Bigl(\frac{\log n}{n}\Bigr)^{\frac{1}{2 + \gamma}} + \sqrt{\frac{t+1}n}\Bigr)
         \end{align}
         for a sufficiently large constant $C$, we can ensure that
	\begin{equation*}
		\sup_{\phi \in \Phi_{\eps_n}}|(\mu_n- \mu) (\phi - \varphi - \mu(\phi - \varphi))| \leq \eps_n^2/32
	\end{equation*}
	with probability at least $1 - e^{-t}/2$.
	An analogous argument yields
	\begin{equation*}
		\sup_{\phi \in \Phi_{\eps_n}}|(\nu_n- \nu) (\phi^* - \varphi^* - \nu(\phi^* - \varphi^*))| \leq \eps_n^2/32
	\end{equation*}
	with the same probability.
	By a union bound, we obtain that $r(\eps_n) \leq \eps_n^2/16$ for $\eps_n$ as in~\eqref{eq:choice_of_epsn} as claimed.
	
	The second bound following from integrating the tail.
\end{proof}

\section{Discussion}

\noindent\textbf{\S\ref{sec:maps_intro}.}
The empirical ``plug-in'' approach based on nearest neighbors was developed as a simple alternative to the semidual approach in~\cite{ManBalNil21, DebGhoSen21, GhoSen22}.
Although the nearest neighbors estimator does not adapt to the smoothness of $\mu$ and $\nu$, one recovers minimax rates via the optimal transport map between density estimators~\cite{ManBalNil21}, and even central limit theorems~\cite{Man+23CLT}.
However, compared to the plug-in approach, the semidual approach developed here is overall more flexible and can be combined with other tools such as kernel SoS~\cite{Vac+24KernelSoS}.

\noindent\textbf{\S\ref{sec:map_semidual}.} The semidual approach to map estimation was introduced in the paper~\cite{HutRig21}, which also proved the semidual stability estimates and minimax lower bounds.
That paper showed that, if the map between $\mu$ and $\nu$ is assumed to be $s$-smooth (i.e., to possess $s$ bounded derivatives), then a suitable semidual estimator achieves the minimax-optimal rate
\begin{equation}\label{eq:map_smooth_minimax}
	\E \|\nabla \widehat \varphi - \nabla \varphi\|_{L^2(\mu)}^2 \lesssim n^{-\frac{2 \alpha}{2 \alpha - 2 +d}}\, (\log n)^2 + \frac 1 n\,.
\end{equation}
This approach was then explored in great generality in~\cite{DivNilPoo22OTGeneral}, and the arguments in that paper are closely related to those in this chapter.
However, the tools we describe here are not strong enough to prove~\eqref{eq:map_smooth_minimax}, since the class of $s$-smooth functions does not satisfy~\eqref{eq:integrability}.
More information about how to obtain~\eqref{eq:map_smooth_minimax} along the lines of the arguments we have presented in this chapter can be found in~\cite[Section 4.4]{DivNilPoo22OTGeneral}.

The alternative semidual stability estimates in Exercises~\ref{exe:alternative_semidual_stab} and~\ref{exe:another_stability} are taken from~\cite{ManBalNil21}.

\noindent\textbf{\S\ref{sec:affine_transport}.} Estimating the transport map between Gaussians was given as an example in~\cite{DivNilPoo22OTGeneral} in which the semidual approach yields parametric rates; see the paper for other function classes of interest.

\noindent\textbf{\S\ref{sec:map_slow}.} Standard references for empirical risk minimization (or M-estimation) include~\cite{Vaart98Asymptotic, Wai19}.
The ``slow rate'' is characteristic of M-estimation problems in the absence of strong convexity; the Poincar\'e inequality assumption adopted in \S\ref{sec:map_fast} can be viewed as the appropriate strong convexity condition for the semi-dual functional $\cS$.

\noindent\textbf{\S\ref{sec:map_fixed_pt}.} The one-shot localization we use is due to van de Geer~\cite{Gee87, Gee02} and provides an alternative to the usual localization arguments (e.g.,~\cite[Chapter 14]{Wai19}).

\noindent\textbf{\S\ref{sec:map_fast}.}
The improved chaining bound of Proposition~\ref{prop:improved_chaining} is obtained by the ``generic chaining'' technique developed by Talagrand~\cite{Tal96a}.
This technique is at the heart of the study of Gaussian processes, see~\cite{Tal21UppLow}.
The tail bound in Proposition~\ref{prop:chain_tail} follows from a more general result for generic chaining bounds~\cite[Theorem 2.2.19]{VaaWel23}.

The Poincar\'e inequality is a standard tool in high-dimensional probability, see~\cite{BouLugMas13, BakGenLed14, Han14}.
It is an open question whether the rates presented in this chapter are achievable without making this assumption.

\section{Exercises}

\begin{enumerate}
    \item \label{ex:1d_ot_map_est}Let $\mu, \nu \in \cP([0,1])$ and let $X_1,\dotsc,X_n \sim \mu$, $Y_1,\dotsc,Y_n\sim \nu$ be i.i.d.\ samples independent from each other.
    Assume that $\mu$, $\nu$ have differentiable CDFs $F_\mu$, $F_\nu$ respectively, such that
    \begin{align*}
        0 < c \le F_\mu', \, F_\nu' \le C < \infty \qquad \text{on}~[0,1]\,.
    \end{align*}
    This is equivalent to $\mu$, $\nu$ having densities on $[0,1]$ which are bounded away from zero and infinity.

    Let us show that the following estimator $\widehat T_n$ obeys a parametric rate of convergence.
    Let $X_{(1)} < \cdots < X_{(n)}$, $Y_{(1)} < \cdots < Y_{(n)}$ denote the samples in sorted order, and given $x \in [0,1]$ let $X_{(i)}$ denote the largest sample from $\mu$ such that $X_{(i)} \le x$.
    We then set $\widehat T_n(x) \deq Y_{(i)}$ (if no such $X_{(i)}$ exists, then output $\widehat T_n(x) \deq 0$).
    This estimator can be viewed as a piecewise constant interpolation of the empirical optimal coupling, or as a 1-nearest neighbor estimator.
    For simplicity, we fix $x \in [0,1]$ and prove
    \begin{align*}
        \E[|\widehat T_n(x) - T(x)|^2] \lesssim 1/n
    \end{align*}
    where $T$ is the true optimal transport map $F_\nu^{-1} \circ F_\mu$ from $\mu$ to $\nu$, although it is a straightforward exercise to extend the results of this problem to the integrated risk $\E \int |\widehat T_n - T|^2 \, \D \mu$.
    \begin{enumerate}
        \item Let $N_x \deq \sum_{i=1}^n \one\{X_i \le x\}$ and $N_y' \deq \sum_{i=1}^n \one\{Y_i \le y\}$.
            Argue that if $N_y' < N_x$, then $\widehat T_n(x) \ge y$; if $N_y' > N_x$, then $\widehat T_n(x) \le y$.
        \item Using the Dvoretzky--Kiefer--Wolfowitz inequality, argue that for any $\delta \in (0,1)$, the following hold simultaneously with probability at least $1-\delta$:
            \begin{align*}
                |N_x - nF_\mu(x)| \lesssim \sqrt{n\log(1/\delta)}
            \end{align*}
            and
            \begin{align*}
                |N_y' - nF_\nu(y)| \lesssim\sqrt{n\log(1/\delta)} \qquad\text{for all}~y\in [0, 1].
            \end{align*}
        \item Use the previous two parts to conclude.
    \end{enumerate}
    
    \item\label{ex:rd_covering_numbers}
	This exercise shows that the ball $B_R(0)$ in $\R^\dd$ can be covered by $(1 + 2 R \delta^{-1})^\dd$ balls of radius $\delta$.
	\begin{enumerate}
		\item Argue by rescaling that it suffices to consider the case $R = 1$.
		\item Let $\cX = \{x_1, \dots, x_N\}$ be any set of elements of $B_1(0)$ such that $\|x_i - x_j\| > \delta$ for all $i \neq j$.
		Such a set is called a $\delta$-packing of $B_1(0)$.
		Show that if $\cX$ is a $\delta$-packing of $B_1(0)$, then the sets $\{B_{\frac \delta 2}(x)\}_{x \in \cX}$ are disjoint subsets of $B_{1+\frac \delta 2}(0)$.
		Conclude that $|\cX| \leq (1+2\delta^{-1})^d$.
		\item Suppose that $\cX$ is a maximal $\delta$-packing of $B_R(0)$, i.e., there does not exist a strict superset of $\cX$ which is also a $\delta$-packing.
		Argue (via the contrapositive) that $B_R(0) \subseteq \bigcup_{x \in \cX} B_\delta(x)$.
		\item Use the previous two parts to conclude.
	\end{enumerate}

    \item\label{exe:alternative_semidual_stab} Let $\mu$, $\nu$ be probability measures over $\R^d$ and let $\nabla \varphi$ denote the optimal transport map from $\mu$ to $\nu$. Assume that $\nabla\varphi$ is $L$-Lipschitz.
    In this exercise, we establish the following estimate: for any other convex function $\phi$, if $\hat\nu \deq {(\nabla\phi)}_\# \mu$, then
    \begin{align*}
        &\|\nabla \phi - \nabla\varphi\|_{L^2(\mu)}^2 \\
        &\qquad \le L\,\Bigl(W_2^2(\mu, \hat\nu) - W_2^2(\mu, \nu) - \int \bigl(\,\|\cdot\|^2 - 2\varphi^*\bigr)\,\ud (\hat\nu-\nu)\Bigr)\,.
    \end{align*}
    \emph{Hint}: First, argue that by strong convexity of $\varphi^*$, it holds that
    \begin{align*}
        &\frac{1}{2L}\,\|\nabla \phi - \nabla \varphi\|_{L^2(\mu)}^2 \\
        &\qquad \le \int \varphi^* \,\ud (\hat\nu-\nu) - \int \langle x, \nabla \phi(x) - \nabla \varphi(x) \rangle \, \mu(\ud x)\,.
    \end{align*}
    Then, expand out the quantity $W_2^2(\mu,\hat\nu) - W_2^2(\mu,\nu)$ and substitute this into the above inequality.

    \item We now use the stability estimate from the previous exercise to deduce rates for map estimation in the one-sample setting.
    Let $\mu$, $\nu$ be probability measures with densities supported on the ball $B_1(0)$ of radius $1$ and assume that the optimal transport map $\nabla\varphi$ from $\mu$ to $\nu$ is Lipschitz.
    Assume that we have access to $\mu$, and to $n$ i.i.d.\ samples from $\nu$. Let $\nabla \widehat \varphi$ denote the optimal transport map from $\mu$ to the empirical measure $\nu_n$. Using~\eqref{eq:w2_one_sample}, prove that
    \begin{align*}
        \E\|\nabla\widehat\varphi-\nabla\varphi\|_{L^2(\mu)}^2
        &\lesssim \begin{cases}
            n^{-1/2}\,, & d < 4\,, \\
            n^{-1/2} \log n\,, & d = 4\,, \\
            n^{-2/d}\,, & d > 4\,.
        \end{cases}
    \end{align*}

    \item\label{exe:another_stability} Starting with Exercise~\ref{exe:alternative_semidual_stab}, assume additionally that $\varphi$ is $\ell$-strongly convex.
    Let $\gamma$ denote the optimal coupling between $\nu$ and $\hat\nu$, and let $(Y, \hat Y) \sim \gamma$.
    Then, $(\nabla \varphi^*(Y), \hat Y)$ is a (suboptimal) coupling between $\mu$ and $\hat\nu$, hence $W_2^2(\mu,\hat \nu) \le \E\|\nabla \varphi^*(Y) - \hat Y\|^2$. Expand this out and use the smoothness of $\varphi^*$ to deduce the stronger stability estimate
    \begin{align*}
        \|\nabla \phi - \nabla \varphi\|_{L^2(\mu)}
        &\le \sqrt{L/\ell}\, W_2(\nu,\hat \nu)\,.
    \end{align*}
\end{enumerate}

\chapter{Entropic optimal transport}
\label{chap:entropic}

Entropic regularization is one of the most active research areas in modern optimal transport.
As a regularization technique, it technically falls under the scope of Section~\ref{sec:regulOT}.
Indeed, we show in this chapter that it yields parametric rates, like many of the other regularization approaches we have discussed.
But entropic optimal transport is, in fact, much more.

Since the groundbreaking work of Cuturi~\cite{Cut13}, it has been primarily used as a computational device that enables fast computation of optimal transport distances using the Sinkhorn algorithm. However, our focus remains statistical and we refer the reader to the excellent manuscript~\cite{PeyCut19} of Gabriel Peyr\'e and Marco Cuturi for more details on the computational benefits of entropic regularization.

The basic principle of entropic optimal transport is to modify the definition of optimal transport to include a penalization term based on the entropy of the coupling, that is, to consider the optimization problem\index{entropic optimal transport}
\begin{equation}\label{eq:eot_cont}
	\inf_{\gamma \in \Gamma_{\mu, \nu}} \biggl\{\int \|x - y\|^2 \, \gamma(\ud x, \ud y) - \eps \operatorname{Ent}(\gamma)\biggr\}\,,
\end{equation}
where $\operatorname{Ent}(\gamma)$ denotes the differential entropy $\int \gamma(x) \log \frac{1}{\gamma(x)} \, \ud x$ for an absolutely continuous probability measure $\gamma$.
In fact, Cuturi originally considered a discrete version of this problem, where $\mu$ and $\nu$ are finitely supported and the coupling $\gamma$ can therefore be identified with a matrix.
He considered the problem
\begin{equation}\label{eq:eot_disc}
	\inf_{\gamma \in \Gamma_{\mu, \nu}} \biggl\{\sum_{i, j} \|x_i - y_j\|^2 \gamma_{i,j} - \eps H(\gamma)\biggr\}\,,
\end{equation}
where $H(\gamma)$ denotes the Shannon entropy $\sum_{i,j} \gamma_{i,j} \log \frac{1}{\gamma_{i,j}}$.

The role of the penalty terms in both~\eqref{eq:eot_cont} and~\eqref{eq:eot_disc} is to encourage the coupling to be \emph{more spread out} than the solution to the unregularized optimal transport problem.
Informally, the entropy of a measure increases when its mass is more evenly spread.
Indeed, Exercise~\ref{exe:max_entropy} shows that uniform distributions (over a subset of $\R^d$ in continuous case, or over a finite set in the discrete case) have the maximum possible entropy.
The entropic penalty biases the solutions to~\eqref{eq:eot_cont} and~\eqref{eq:eot_disc} towards that extreme.

To put~\eqref{eq:eot_cont} and~\eqref{eq:eot_disc} on a common footing, we introduce the KL divergence between probability measures:
\begin{align*}
	\KL(P \mmid Q)
	&= \begin{cases}
		\int \frac{\ud P}{\ud Q}\log \frac{\ud P}{\ud Q} \, \ud Q & \text{if}~P \ll Q, \\
		+\infty & \text{otherwise.}
	\end{cases}
\end{align*}
Exercise~\ref{exe:kl_to_ent} shows that both~\eqref{eq:eot_cont} and~\eqref{eq:eot_disc} are equivalent to
\begin{equation}\label{eq:eot_def}
		\inf_{\gamma \in \Gamma_{\mu, \nu}} \biggl\{\int \|x - y\|^2 \, \gamma(\ud x, \ud y) + \eps \KL(\gamma \mmid \mu \otimes \nu)\biggr\}\,,
\end{equation}
in the sense of yielding the same optimal $\gamma$, and we take~\eqref{eq:eot_def} as the main definition of entropic OT.

In the next section, we give a non-rigorous motivation for this regularization approach from the perspective of convex duality.
We analyze the resulting dual problem in Section~\ref{sec:eot_duality}.

\section{Derivation of entropic optimal transport}\label{sec:eot_deriv}

In this section, we attempt to motivate the definition of entropic OT from basic optimization and duality principles.
For simplicity, we assume for now that we work on a compact set $\Omega \subseteq \R^d$.
Given $\mu, \nu \in \cP(\Omega)$, recall from Theorem~\ref{thm:fundOT} that $W_2^2(\mu, \nu)$ can be written
\begin{equation*}
W_2^2(\mu, \nu) = \sup_{\substack{ f, g \in \Cb(\Omega) \\f(x) +g(y)\le \|x-y\|^2} }\left\{\int f \,\ud \mu + \int g\, \ud \nu\right\}\,.
\end{equation*}
Formally, we can rewrite this as an unconstrained maximization problem by introducing a penalization term that enforces the constraint.
Indeed, if we define
\begin{equation*}
	\iota(f, g) = \begin{cases}
	0 & \text{if $f(x) + g(y) \leq \|x - y\|^2$ $\mu \otimes \nu$-a.e.,} \\
	+ \infty & \text{otherwise,}
	\end{cases}
\end{equation*}
then we obtain
\begin{equation*}
	W_2^2(\mu, \nu) = \sup_{f, g \in \Cb(\Omega) }{\biggl\{\int f \,\ud \mu + \int g\, \ud \nu - \iota(f, g)\biggr\}}\,.
\end{equation*}
This is a concave maximization problem, so it is (in principle) benign; however, from a computational and statistical perspective, the fact that $\iota$ fails to be continuous, much less smooth, is a source of difficulty.
To remedy this, we can consider a relaxed version of this problem obtained by replacing $\iota$ by a smoothed version.\footnote{The smoothing we employ is reminiscent of the ``softmax'' function in machine learning.}
Define
\begin{equation*}
	\iota^\eps(f, g) = \eps \iint \Bigl(e^{(f(x) + g(y) - \|x - y\|^2)/\eps}  - 1\Bigr) \, \mu(\ud x)\, \nu(\ud y)\,.
\end{equation*}
The function $\iota^\eps$ is convex and continuous on the space $\Cb(\Omega)$ of bounded, continuous functions on $\Omega$.
Moreover, it is easy to see that as $\eps \searrow 0$, we recover the original hard constraint.

\begin{lemma}
    For any measurable $f, g$,
	\begin{equation*}
		\lim_{\eps \searrow 0} \iota^\eps (f, g) = \iota(f, g)\,.
	\end{equation*}
\end{lemma}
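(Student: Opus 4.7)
My plan is to split into two cases based on whether the pointwise constraint $f(x)+g(y) \le \|x-y\|^2$ holds $\mu\otimes\nu$-a.e.\ or not. In both cases I will work directly with the integrand
\begin{equation*}
    \psi_\eps(x,y) \deq \eps\,\bigl(e^{h(x,y)/\eps} - 1\bigr)\,, \qquad h(x,y) \deq f(x)+g(y) - \|x-y\|^2\,,
\end{equation*}
and analyze its behavior as $\eps \searrow 0$, integrating against the probability measure $\mu\otimes\nu$.

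For the first case, suppose $h\le 0$ holds $\mu\otimes\nu$-a.e. Then pointwise we have $\psi_\eps(x,y) \to 0$: indeed, if $h(x,y)<0$ then $e^{h/\eps}\to 0$ and so $\psi_\eps \to 0$; if $h(x,y)=0$ then $\psi_\eps=0$ identically. Moreover, for $h\le 0$ we have $0 \le 1-e^{h/\eps}\le 1$, so $|\psi_\eps|\le \eps\le 1$ whenever $\eps\le 1$. The constant dominating function is integrable against the probability measure, so the dominated convergence theorem yields $\iota^\eps(f,g) \to 0 = \iota(f,g)$.

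For the second case, suppose $(\mu\otimes\nu)(\{h>0\})>0$. Since $\{h>0\} = \bigcup_{k\ge 1}\{h\ge 1/k\}$ is an increasing union, there exists $k$ so that the set $A_\delta \deq \{h\ge \delta\}$ with $\delta=1/k$ satisfies $m\deq (\mu\otimes\nu)(A_\delta)>0$. Using $e^{h/\eps}\ge 0$ everywhere, we have $\psi_\eps\ge -\eps$ on $A_\delta^\comp$, while on $A_\delta$ we have $\psi_\eps \ge \eps\,(e^{\delta/\eps}-1)$. Splitting the integral accordingly gives
\begin{equation*}
    \iota^\eps(f,g) \ge m\,\eps\,(e^{\delta/\eps}-1) - (1-m)\,\eps\,,
\end{equation*}
and since $\eps\, e^{\delta/\eps}\to \infty$ as $\eps\searrow 0$, we conclude $\iota^\eps(f,g)\to +\infty = \iota(f,g)$.

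I do not expect any real obstacle here; the only subtlety is that $f$ and $g$ are merely measurable (so $h$ could even take the values $\pm\infty$ on null sets), which is why I invoke a countable superlevel decomposition rather than a continuity argument to extract the positive-measure set $A_\delta$ in Case 2, and use the one-sided bound $e^{h/\eps}\ge 0$ in Case 2 and the clean bound $|\psi_\eps|\le \eps$ for $h\le 0$ in Case 1 to sidestep any integrability issue on the complementary set.
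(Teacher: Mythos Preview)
Your proof is correct and follows essentially the same approach as the paper: split into the two cases, use the bound $|e^{h/\eps}-1|\le 1$ when $h\le 0$ for Case~1, and extract a positive-measure set $\{h\ge\delta\}$ to drive the integral to $+\infty$ in Case~2. The paper's Case~1 is marginally terser (it simply notes the integral is bounded, so multiplying by $\eps$ gives $0$), but your use of dominated convergence is equally valid.
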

\begin{proof}
	Suppose first that $\iota(f, g) = 0$, so that $f(x) + g(y) \leq \|x - y\|^2$ $\mu \otimes \nu$-almost everywhere.
	Then the integral $$\iint \Bigl(e^{(f(x) + g(y) - \|x - y\|^2)/\eps}  - 1\Bigr) \, \mu(\ud x)\, \nu(\ud y)$$ is bounded as $\eps \searrow 0$, and hence $\iota^\eps(f, g) \to 0$.
	
	On the other hand, if $\iota(f, g) = +\infty$, then there exists $\delta > 0$ and a set $U$ of positive $\mu \otimes \nu$ measure such that $e^{(f(x) + g(y) - \|x - y\|^2)/\eps} \geq e^{\delta/\eps}$ for all $(x, y) \in U$.
    We obtain
    \begin{align*}
        \eps \iint \Bigl(e^{(f(x) + g(y) - \|x - y\|^2)/\eps}  - 1\Bigr) \, \mu(\ud x)\, \nu(\ud y)
        &\geq \eps e^{\delta/ \eps}\, (\mu \otimes \nu)(U) - \eps \\
        &\to \infty\,.
    \end{align*}
    This concludes the proof.
\end{proof}

We are therefore led to consider the following ``$\eps$-smoothed'' dual version of the $W_2^2$ distance:
\begin{equation}\label{eta-D}\tag{$\mathsf{\eps}$\textsf{-D-}$\mathsf{W_2^2}$}
    \sup_{f, g \in \Cb(\Omega)}{\biggl\{\int f \,\ud \mu + \int g\, \ud \nu - \iota^\eps(f, g)\biggr\}}
\end{equation}

Now that we have derived a relaxation of the dual problem, we can ask what this corresponds to in the primal problem.
It turns out that the relaxation we have proposed in the dual corresponds to an \emph{entropic penalty} in the primal problem.

To obtain this connection, let us define a version of the Kullback--Leibler divergence over the space $\cM_+(\Omega)$ of all positive (not necessarily probability) Borel measures on $\Omega$:
\begin{align*}
\KL(P \mmid Q)
&= \begin{cases}
\int \Bigl(\frac{\ud P}{\ud Q}\log \frac{\ud P}{\ud Q} - \frac{\ud P}{\ud Q} + 1\Bigr) \, \ud Q & \text{if}~P \ll Q, \\
+\infty & \text{otherwise.}
\end{cases}
\end{align*}
Note that the integrand is non-negative, so that the integral is always well defined, and $\KL$ is always non-negative.
When $P$ and $Q$ are probability measures, the terms $- \frac{\ud P}{\ud Q} + 1$ cancel out and we obtain the usual definition.

The importance of this definition is that the convex conjugate of the functional $\KL(\cdot \mmid Q)$ (see Appendix \ref{sec:cvx_sub_dual}) is precisely the exponential term appearing in $\iota^\eps$.
This fact is a variant of what is commonly known as the \emph{Gibbs variational principle}.\index{Gibbs variational principle}

\begin{proposition}\label{prop:gibbs}
	For any bounded measurable function $h$,
	\begin{equation*}
    \sup_{P\in \cM_+(\Omega)}{\Bigl\{\int h \, \ud P - \KL(P \mmid Q)\Bigr\}}
= \int (\exp h - 1) \, \ud Q\,.
	\end{equation*}
	Moreover the supremum is achieved at $P_h$ satisfying $\frac{\ud P_h}{\ud Q} = \exp h$.
\end{proposition}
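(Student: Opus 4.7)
The plan is to reduce the proposition to a pointwise Fenchel--Young-style inequality for the convex function $\psi(\rho) \deq \rho\log\rho - \rho + 1$ on $[0,\infty)$, which is exactly the integrand defining $\KL(\cdot \mmid Q)$ (with the convention $0\log 0 = 0$, so $\psi(0)=1$). Since $\KL(P \mmid Q) = +\infty$ when $P \not\ll Q$, such $P$ cannot achieve the supremum, so it suffices to restrict attention to $P \ll Q$ with density $\rho = \ud P/\ud Q \ge 0$.

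First I would rewrite the objective pointwise. For $P \ll Q$,
\begin{equation*}
\int h\,\ud P - \KL(P \mmid Q) = \int \Phi(\rho(x), h(x))\,Q(\ud x)\,,
\end{equation*}
where $\Phi(\rho, h) \deq h\rho - \rho\log\rho + \rho - 1$. For fixed $h \in \R$, the map $\rho \mapsto \Phi(\rho, h)$ is concave on $[0,\infty)$ and differentiating gives $\partial_\rho \Phi = h - \log\rho$, which vanishes at $\rho^\star = e^h$ with maximum value $\Phi(e^h, h) = e^h - 1$. This yields the pointwise inequality $\Phi(\rho, h) \le e^h - 1$ for all $\rho \ge 0$, $h\in\R$, which is nothing but the Fenchel--Young inequality for the pair $(\psi, \psi^*)$ with $\psi^*(h) = e^h - 1$.

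Second, I would integrate this pointwise bound against $Q$ to obtain, uniformly over $P \in \cM_+(\Omega)$,
\begin{equation*}
\int h\,\ud P - \KL(P \mmid Q) \le \int (e^h - 1)\,\ud Q\,,
\end{equation*}
which gives the upper bound in the variational formula. The integrability of the right-hand side is guaranteed by the boundedness of $h$.

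Finally, to see the supremum is attained, define $P_h \in \cM_+(\Omega)$ by $\ud P_h/\ud Q \deq e^h$; this is a well-defined positive measure of finite total mass since $h$ is bounded. A direct computation gives
\begin{equation*}
\int h\,\ud P_h - \KL(P_h \mmid Q) = \int \bigl(h e^h - (h e^h - e^h + 1)\bigr)\,\ud Q = \int (e^h - 1)\,\ud Q\,,
\end{equation*}
matching the upper bound. The only mildly delicate point is justifying that the pointwise maximizer $\rho^\star = e^h$ assembles into a legitimate element of $\cM_+(\Omega)$, which is immediate here thanks to the boundedness hypothesis on $h$; there is no real obstacle in the argument.
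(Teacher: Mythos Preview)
Your proof is correct. Both your argument and the paper's reduce to the same pointwise inequality $h\rho - \rho\log\rho + \rho - 1 \le e^h - 1$, but you establish it by direct calculus optimization in $\rho$ (equivalently, Fenchel--Young for $\psi(\rho)=\rho\log\rho-\rho+1$), whereas the paper instead rewrites the integrated gap as
\[
\int(e^h-1)\,\ud Q - \int h\,\ud P + \KL(P\mmid Q) \;=\; \KL(P\mmid P_h)
\]
via the change of measure $\ud P/\ud Q = e^h\,\ud P/\ud P_h$, and then invokes non-negativity of KL. Your route is slightly more elementary; the paper's buys an explicit identification of the suboptimality as $\KL(P\mmid P_h)$, which also yields uniqueness of the maximizer for free.
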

\begin{proof}
    We show that, for any Borel measure $P$, the difference
	\begin{equation*}
		\Delta  \deq  \int (\exp h - 1) \, \ud Q - \int h \, \ud P + \KL(P \mmid Q)
	\end{equation*}
	is non-negative, and equals $0$ when $P = P_h$.
	We may assume without loss of generality that $\KL(P \mmid Q) < +\infty$, since otherwise the claim is vacuous.
Expanding the definition of $\KL(P \mmid Q)$, we obtain
	\begin{multline}\label{rl_diff}
		\Delta = \int \Big(e^h - 1 - h\, \frac{\ud P}{\ud Q} + \frac{\ud P}{\ud Q}\log \frac{\ud P}{\ud Q} - \frac{\ud P}{\ud Q} + 1\Big) \, \ud Q \\
		= \int \Big(\frac{\ud P}{\ud Q}\log \Bigl(e^{-h}\,\frac{\ud P}{\ud Q}\Bigr) - \frac{\ud P}{\ud Q}  + e^h\Big) \, \ud Q\,.
	\end{multline}
	By change of measure, we have
	\begin{equation*}
		\frac{\ud P}{\ud Q} = \frac{\ud P_h}{\ud Q}\, \frac{\ud P}{\ud P_h} = e^h\, \frac{\ud P}{\ud P_h}\,.
	\end{equation*}
	Therefore~\eqref{rl_diff} can be written
	\begin{equation*}
		\Delta = \int e^h\, \Big(\frac{\ud P}{\ud P_h}\log \frac{\ud P}{\ud P_h} - \frac{\ud P}{\ud P_h} + 1 \Big) \, \ud Q = \KL(P \mmid P_h)\,.
	\end{equation*}
	Since $\KL(P \mmid P_h) \geq 0$ and $\KL(P_h \mmid P_h) = 0$, this proves the claim.
\end{proof}

We can therefore rewrite~\eqref{eta-D} as
\begin{multline*}
	\sup_{f, g \in \Cb(\Omega)}{\Bigl\{\int f \, \ud \mu + \int g \, \ud \nu\Bigr\}} \\
	- \eps \sup_{\gamma \in \cM_+(\Omega)}{\Bigl\{ \int \frac{f(x) + g(y) - \|x - y\|^2}{\eps} \, \gamma(\ud x,\ud y) 
	- \KL(\gamma \mmid \mu \otimes \nu)\Bigr\}}\,,
\end{multline*}
and, rearranging,
\begin{align*}
	\sup_{f, g \in \Cb(\Omega)} \inf_{\gamma \in \cM_+(\Omega)}\,\Bigl\{\int \|x - y\|^2\, &\gamma(\ud x,\ud y) + \eps \KL(\gamma \mmid \mu \otimes \nu) \\
 &{}+ \int f \, \ud \mu + \int g \, \ud \nu - \int f \oplus g \, \ud \gamma\Bigr\}\,,
\end{align*}
where we define $(f \oplus g)(x, y) = f(x) + g(y)$.

As in Subsection~\ref{subsec:dualK}, we can swap the inf and sup to get an upper bound on the value of~\eqref{eta-D}:
\begin{align}\label{eta-P-1}
\begin{aligned}
	\inf_{\gamma\in\cM(\Omega)}\,&\Bigl\{\int \|x - y\|^2\, \gamma(\ud x,\ud y) + \eps \KL(\gamma \mmid \mu \otimes \nu) \\
 &\qquad{} + \sup_{f, g\in\Cb(\Omega)}{\Bigl\{\int f \, \ud \mu + \int g \, \ud \nu - \int f \oplus g \, \ud \gamma\Bigr\}}\Bigr\}\,.
 \end{aligned}
\end{align}
We have already seen that
\begin{equation*}
	\sup_{f, g\in\Cb(\Omega)}{\Bigl\{\int f \, \ud \mu + \int g \, \ud \nu - \int f \oplus g \, \ud \gamma\Bigr\}} = \begin{cases}
	0, & \text{if $\gamma \in \Gamma_{\mu, \nu}$}, \\
	\infty, & \text{otherwise}.
	\end{cases}
\end{equation*}
Therefore, \eqref{eta-P-1} is equivalent to
\begin{equation}
\label{eta}\tag{$\mathsf{\eps}$-$\mathsf{W_2^2}$}
	\inf_{\gamma \in \Gamma_{\mu, \nu}}{\Bigl\{\int \|x - y\|^2\, \gamma(\ud x,\ud y) + \eps \KL(\gamma \mmid \mu \otimes \nu)\Bigr\}}\,.
\end{equation}
This is the \emph{primal} version of the entropic OT problem, and it is the version that is usually taken as the definition of entropic regularization.
This choice of regularization is usually justified \emph{a posteriori} by the existence of Sinkhorn's algorithm (see Section~\ref{sec:eot_duality}), but as we have seen it also arises naturally from a simple relaxation of the dual Kantorovich problem.
The argument in this section establishes a form of weak duality, showing that the value of \eqref{eta} is lower bounded by the value of \eqref{eta-D}.
The next section establishes a tight connection between the primal and dual problems, both in terms of their optimal value and their optimal solutions.
This connection has been heavily exploited in the statistical analysis of entropic OT.

\section{Duality}\label{sec:eot_duality}

In this section, we show that the values of the primal problem~\eqref{eta} and dual problem~\eqref{eta-D} actually agree, and that an optimal solution to one problem can be extracted from the optimal solution to the other.

\begin{proposition}\label{prop:eot_strong_duality}
	Let $f^\star$, $g^\star$ solve~\eqref{eta-D}. Then
	\begin{equation}\label{eq:gamma_star_def}
		\gamma^\star( \ud x, \ud y) = \exp\Bigl(\frac{f^\star(x) + g^\star(y) - \|x - y\|^2}{\eps}\Bigr)\, \mu(\ud x)\, \nu(\ud y)
	\end{equation}
	is the unique solution to~\eqref{eta}, and
	\begin{align*}
		&\int \|x - y\|^2 \,\gamma^\star(\ud x,\ud y) + \eps \KL(\gamma^\star \mmid \mu \otimes \nu) \\
            &\qquad  = \int f^\star\, \ud \mu + \int g^\star\, \ud \nu - \iota^\eps(f^\star, g^\star)
		= \int f^\star\, \ud \mu + \int g^\star\, \ud \nu\,.
	\end{align*}
\end{proposition}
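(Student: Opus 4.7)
My plan is to deduce both claims by combining a first-order optimality argument for $(f^\star,g^\star)$ with the identity established inside the proof of Proposition~\ref{prop:gibbs}. I will carry out three steps: verify that $\gamma^\star \in \Gamma_{\mu,\nu}$ and that $\iota^\eps(f^\star,g^\star)=0$, compute the primal objective at $\gamma^\star$, and then compare it with the objective at an arbitrary competitor.

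First I would use optimality of $(f^\star,g^\star)$ for~\eqref{eta-D}. Perturbing $f^\star$ by $t\,\delta f$ with $\delta f \in \Cb(\Omega)$ and differentiating the dual objective at $t=0$ gives
$$
\int \delta f(x)\,\mu(\ud x) = \iint \delta f(x)\, e^{(f^\star(x)+g^\star(y)-\|x-y\|^2)/\eps}\,\mu(\ud x)\,\nu(\ud y)\,.
$$
Since $\delta f$ is arbitrary in $\Cb(\Omega)$, this forces $\int e^{(f^\star(x)+g^\star(y)-\|x-y\|^2)/\eps}\,\nu(\ud y) = 1$ for $\mu$-a.e.\ $x$, i.e.\ the first marginal of $\gamma^\star$ is $\mu$; symmetry handles the second marginal. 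Integrating that identity against $\mu$ also yields $\iint e^{(f^\star+g^\star-\|x-y\|^2)/\eps}\,\ud(\mu\otimes\nu) = 1$, hence $\iota^\eps(f^\star,g^\star)=0$.

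Next, because $\log\frac{\ud\gamma^\star}{\ud(\mu\otimes\nu)} = \eps^{-1}\,(f^\star+g^\star-\|x-y\|^2)$, direct computation gives
$$
\eps\KL(\gamma^\star\mmid\mu\otimes\nu) = \int \bigl(f^\star(x)+g^\star(y)-\|x-y\|^2\bigr)\,\gamma^\star(\ud x,\ud y)\,.
$$
Adding $\int\|x-y\|^2\,\ud\gamma^\star$ cancels the quadratic term, and the marginals of $\gamma^\star$ turn the remaining integral into $\int f^\star\,\ud\mu + \int g^\star\,\ud\nu$, giving the second equality of the statement. For optimality and uniqueness, I would revisit the proof of Proposition~\ref{prop:gibbs}: with $h = \eps^{-1}\,(f^\star+g^\star-\|x-y\|^2)$ and $Q=\mu\otimes\nu$ (so that $P_h = \gamma^\star$), that proof actually establishes the pointwise-in-$P$ identity
$$
\KL(P\mmid \mu\otimes\nu) = \int h\,\ud P - \int(e^h-1)\,\ud(\mu\otimes\nu) + \KL(P\mmid\gamma^\star)\,.
$$
Multiplying by $\eps$, specializing to $P=\gamma\in\Gamma_{\mu,\nu}$, adding $\int\|x-y\|^2\,\ud\gamma$, and using $\iota^\eps(f^\star,g^\star)=0$ yields
$$
\int\|x-y\|^2\,\ud\gamma + \eps\KL(\gamma\mmid\mu\otimes\nu) = \int f^\star\,\ud\mu + \int g^\star\,\ud\nu + \eps\KL(\gamma\mmid\gamma^\star)\,.
$$
Since $\KL(\gamma\mmid\gamma^\star)\ge 0$ with equality iff $\gamma=\gamma^\star$, this identifies $\gamma^\star$ as the unique minimizer of~\eqref{eta} and shows that the optimal primal and dual values coincide.

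The main obstacle is justifying the formal differentiation under the integral sign in the first step: one must dominate the exponential uniformly in $t$ near $0$, which amounts to requiring $f^\star$ and $g^\star$ to be bounded. On compact $\Omega$ it is standard that one may assume the dual optimizers are bounded and continuous (they are obtained as fixed points of the Sinkhorn map), but this existence and regularity of $(f^\star,g^\star)$ is an auxiliary fact which should be invoked before the variational argument can be applied.
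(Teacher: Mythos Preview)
Your proof is correct but differs from the paper's in two places. To show $\gamma^\star\in\Gamma_{\mu,\nu}$, the paper does not differentiate; it defines $\bar g(y) = -\eps\log\int e^{(f^\star(x)-\|x-y\|^2)/\eps}\,\mu(\ud x)$, which satisfies the marginal constraint by construction, and then computes $\eps\KL(\bar\gamma\mmid\gamma^\star)$ to be the difference between the dual objective at $(f^\star,\bar g)$ and at $(f^\star,g^\star)$, hence $\le 0$ by optimality, hence $=0$, forcing $\bar g=g^\star$ $\nu$-a.e. This only uses that the supremum is \emph{attained}, not any smoothness of the objective, so it sidesteps exactly the dominated-convergence obstacle you flag. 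For optimality and uniqueness, the paper simply evaluates the primal at $\gamma^\star$, matches it to the dual value, and invokes weak duality plus strict convexity of $\KL$; your route via the pointwise identity $\Delta=\KL(P\mmid P_h)$ from Proposition~\ref{prop:gibbs} is more informative, since it yields the exact Pythagorean relation
\[
\int\|x-y\|^2\,\ud\gamma + \eps\KL(\gamma\mmid\mu\otimes\nu) = \int f^\star\,\ud\mu + \int g^\star\,\ud\nu + \eps\KL(\gamma\mmid\gamma^\star)
\]
and gets uniqueness for free from the equality case of $\KL$.
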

\begin{proof}
	It suffices to show that $\gamma^\star$ is a solution to~\eqref{eta}, since uniqueness follows immediately from the strict convexity of $\KL(\gamma \mmid \mu \otimes \nu)$.
	
	We need to show that $\gamma^\star \in \Gamma_{\mu, \nu}$.
	Clearly $\gamma^\star$ is positive, so it suffices to show that it has the correct marginals.
	To that end, we need to verify that for any Borel set $A$,
	\begin{equation*}
		\nu(A) = \int_A \int \exp\Bigl(\frac{f^\star(x) + g^\star(y) - \|x - y\|^2}{\eps}\Bigr)\, \mu(\ud x) \,\nu(\ud y)\,.
	\end{equation*}
	Equivalently, we need to show that
	\begin{equation}\label{eq:marg_constraint}
		\int \exp\Bigl(\frac{f^\star(x) + g^\star(y) - \|x - y\|^2}{\eps}\Bigr)\, \mu(\ud x) = 1 \qquad \text{$\nu$-a.e.}
	\end{equation}
	
	Let us define the function
	\begin{equation}\label{eq:bar_g_def}
		\bar g(y) = - \eps \log \int \exp\Bigl(\frac{f^\star(x)  - \|x - y\|^2}{\eps}\Bigr)\, \mu(\ud x)\,.
	\end{equation}
	We show that $\bar g = g^\star$, $\nu$-almost everywhere.
	Since
	\begin{equation}\label{eq:bar_marg_constraint}
		\int \exp\Bigl(\frac{f^\star(x) + \bar g(y) - \|x - y\|^2}{\eps}\Bigr)\, \mu(\ud x) = 1\qquad \forall y\in\R^d
	\end{equation}
	holds by definition, this establishes that~\eqref{eq:marg_constraint} holds as well.
	
	Let $\bar \gamma( \ud x, \ud y) = \exp((f^\star(x) + \bar g(y) - \|x - y\|^2)/\eps) \,\mu(\ud x)\, \nu(\ud y)$.
	Then,
	\begin{align*}
		0 \leq \KL(\bar \gamma \mmid \gamma^\star) & = \int \Bigl(\frac{\ud \bar \gamma}{\ud \gamma^\star} \log \frac{\ud \bar \gamma}{\ud \gamma^\star} - \frac{\ud \bar \gamma}{\ud \gamma^\star} + 1\Bigr) \, \ud \bar \gamma \\
		& = \int \frac{\bar g(y) - g^\star(y)}{\eps}\, \bar \gamma(\ud x,\ud y) - \int \ud \bar \gamma + \int \ud \gamma^\star \\
		& = \frac{1}{\eps} \,\Bigl(\int (\bar g - g^\star) \, \ud \nu - \iota^\eps(f^\star, \bar g) + \iota^\eps(f^\star, g^\star) \Bigr)\,,
		\intertext{where the last step uses that the second marginal of $\bar \gamma$ is $\nu$, by~\eqref{eq:bar_marg_constraint}.
		We obtain that}
	\eps \KL(\bar \gamma \mmid \gamma^\star) & = \int \bar g \, \ud \nu + \int f^\star \, \ud \mu - \iota^\eps(f^\star, \bar g) \\
	& \quad \quad  - \left(\int g^\star \, \ud \nu + \int f^\star \, \ud \mu - \iota^\eps(f^\star, g^\star) \right) \\
	& \leq 0\,,
	\end{align*}
	since $(f^\star, g^\star)$ are optimal for~\eqref{eta-D}.
	
	Therefore $\KL(\bar \gamma \mmid \gamma^\star) = 0$, so $\bar \gamma = \gamma^\star$, and $\bar g = g^\star$, $\nu$-almost everywhere.
	This establishes that the second marginal of $\gamma^\star$ is $\nu$, and an analogous argument shows that the first marginal of $\gamma^\star$ is $\mu$.
	We obtain that $\gamma^\star$ is feasible in~\eqref{eta}.
	
	To conclude, we compute the value that $\gamma^\star$ achieves in the primal problem:
	\begin{align*}
		&\int \|x - y\|^2\, \gamma^\star(\ud x,\ud y) + \eps \KL(\gamma^\star \mmid \mu \otimes \nu) \\
  &\qquad  = 	\int \|x - y\|^2 \,\gamma^\star(\ud x,\ud y) \\
		&\qquad\qquad{} + \int (f^\star(x) + g^\star(y) - \|x - y\|^2)\, \gamma^\star(\ud x,\ud y) \\
		&\qquad  = \int (f^\star(x) + g^\star(y))\, \gamma^\star(\ud x,\ud y) \\
		&\qquad = \int f^\star \,\ud \mu + \int g^\star\, \ud \nu - \iota^\eps(f^\star, g^\star)\,,
	\end{align*}
	where the last step uses that $\gamma^\star \in \Gamma_{\mu, \nu}$ and that $\iota^\eps(f^\star, g^\star) = 0$ since $\gamma^\star$ is a probability measure.
	Therefore, the primal objective evaluated at $\gamma^\star$ and the dual objective evaluated at $(f^\star, g^\star)$ have the same value, and weak duality (see Section~\ref{sec:eot_deriv}) shows that $\gamma^\star$ and $(f^\star, g^\star)$ are an optimal pair of primal/dual solutions.
\end{proof}

Proposition~\ref{prop:eot_strong_duality} deserves several remarks.
First, note that the hypothesis of the proposition is the existence of optimal solutions to \eqref{eta-D}.
We do not justify the existence of such solutions here, but it can be shown that that if $\mu$ and $\nu$ are compactly supported, then there exist $f^\star, g^\star \in \Cb(\Omega)$.
More generally, if $\mu$ and $\nu$ have finite second moments, then optima exist in $L^1(\mu)$ and $L^1(\nu)$, respectively, and Proposition~\ref{prop:eot_strong_duality} continues to hold. 

The proof of Proposition~\ref{prop:eot_strong_duality} actually shows that if $f$, $g$ are such that $\gamma(\ud x, \ud y) = \exp((f(x) + g(y) - \|x-y\|^2)/\eps)\, \mu(\ud x) \,\nu(\ud y)$ is a valid coupling between $\mu$ and $\nu$, then $\gamma$ is optimal for~\eqref{eta} and $f$, $g$ are optimal for~\eqref{eta-D}.
This fact can be viewed as an entropic variant of the complementary slackness condition $\bar f(x) + \bar g(y) = \|x - y\|^2$, $\bar \gamma$-almost everywhere, which holds for the optimal solutions of~\eqref{W2} and~\eqref{DK}.
(See Theorem~\ref{thm:fundOT}.)
We can therefore conclude that $f^\star$ and $g^\star$ are optimal for~\eqref{eta-D} if and only if they satisfy the marginal constraint~\eqref{eq:marg_constraint} and the analogous constraint for the other marginal:
\begin{equation}\label{eq:marg_constraint_2}
			\int \exp\Bigl(\frac{f^\star(x) + g^\star(y) - \|x - y\|^2}{\eps}\Bigr)\, \nu(\ud y) = 1 \quad \text{$\mu$-a.e.}
\end{equation}
The marginal constraints~\eqref{eq:marg_constraint} and~\eqref{eq:marg_constraint_2}, sometimes known as the \emph{Schr\"odinger system}, are at the core of the theory of entropic OT.
Even though these equations \emph{a priori} only hold $\nu$- and $\mu$-almost everywhere, respectively, the construction in~\eqref{eq:bar_g_def} shows that we can construct canonical extensions of $f^\star$ and $g^\star$ so that the marginal constraints hold everywhere in $\R^d$.
Moreover, the dominated convergence theorem shows that if $\mu$ and $\nu$ are compactly supported, then these extensions are continuous (even $C^\infty$) functions on $\R^d$.
In what follows, we may therefore always assume that $f^\star$ and $g^\star$ are defined everywhere on $\R^d$, and that \eqref{eq:marg_constraint} and~\eqref{eq:marg_constraint_2} hold for all $y$ and $x \in \R^d$, respectively.

Finally, we note that Proposition~\ref{prop:eot_strong_duality} is the basis for the celebrated \emph{Sinkhorn algorithm}\index{Sinkhorn's algorithm} for entropic optimal transport.
This algorithm is defined by initializing $f_0 \equiv 0$, and for $t \geq 1$ performing the updates
\begin{align}
    g_t(y) & = - \eps \log \int \exp\Bigl(\frac{f_{t-1}(x)  - \|x - y\|^2}{\eps}\Bigr) \,\mu(\ud x)\,,\label{eq:sinkhorn1} \\
    f_t(y) & = - \eps \log \int \exp\Bigl(\frac{g_{t}(y)  - \|x - y\|^2}{\eps}\Bigr) \,\nu(\ud y)\,.\label{eq:sinkhorn2}
\end{align}
Proposition~\ref{prop:eot_strong_duality} shows that a fixed point of this algorithm yields an optimal solution to~\eqref{eta-D}, and therefore an optimal solution to~\eqref{eta}.

\section{Statistical rates for dual solutions}\label{sec:eot_sample}

In this and the following section, we consider the statistical behavior of empirical versions of the entropic OT problem.
In contrast to the results of Chapter~\ref{chap:primal-dual}, the rates of convergence (as a function of $n$) no longer suffer from the curse of dimensionality.
However, the price to pay for this improvement is a steep dependence on $1/\eps$.

The strategy for proving statistical bounds is to analyze the dual problem~\eqref{eta-D}.
We then transfer these bounds to the primal solution using the connection between primal and dual solutions given by Proposition~\ref{prop:eot_strong_duality}.

Let us denote by $S(\mu, \nu)$ the value of the primal problem~\eqref{eta}.
Given i.i.d.\ samples from $\mu$ and $\nu$, we are chiefly interested in estimating two quantities:
\begin{itemize}
	\item The cost $S(\mu, \nu)$,
	\item The \emph{entropic map} or \emph{entropic regression function}
 \begin{align*}
 b^\star(x) = \E_{(X, Y) \sim \gamma^\star}[Y \mid X = x]\,.
 \end{align*}
\end{itemize}
Estimating the first quantity is the entropic analogue of the question we considered in Chapter~\ref{chap:primal-dual}.
Estimating the second quantity is the entropic analogue of the map estimation task described in Chapter~\ref{chap:maps}. 
Indeed, $b^\star$ is a projection of $\gamma^\star$, in the sense of $L^2$, onto the space of maps; however, we stress that $b^\star$ is not a valid transport map between $\mu$ and $\nu$, since $(b^\star)_\# \mu \neq \nu$.

As in Chapter~\ref{chap:primal-dual}, we analyze \emph{plug-in estimators} for these quantities: $S(\mu_n, \nu_n)$ for the cost, and $\hat b(x) = \E_{(X, Y) \sim \hat \gamma}[Y \mid X = x]$, where $\hat \gamma$ is the optimal solution to the empirical entropic OT problem between $\mu_n$ and $\nu_n$.\footnote{Though the formulas for the entropic maps $b^\star$ and $b_n$ define them as elements of $L^1(\mu)$ and $L^1(\mu_n)$, respectively, the canonical extensions described in Section~\ref{sec:eot_duality} can be used to define continuous versions of $b^\star$ and $b_n$.}

As emphasized above, our main tool to analyze these quantities is the duality relationship established in Proposition~\ref{prop:eot_strong_duality}.
Denote by $\Cb^{\oplus}$ the subspace of $\Cb(\Omega \times \Omega)$ consisting of functions of the form $f \oplus g$ for $f, g \in \Cb(\Omega)$.
The dual problem~\eqref{eta-D} depends on $f$ and $g$ only through their sum $h = f \oplus g \in \Cb^{\oplus}$. In particular, if $(f,g)$ is a dual solution, then so is $(f+\lambda,g-\lambda)$ for any $\lambda \in \R$.
Inspired by this fact, let us define the \emph{dual functional} $\Phi: \Cb^{\oplus}  \to \R$ given by
\begin{equation}\label{eq:Phi_def}
	h \mapsto \int \bigl(h   - \eps\, (e^{(h - c)/\eps} - 1)\bigr) \, \ud(\mu \otimes \nu)\,,
\end{equation}
where $c(x, y) = \|x - y\|^2$ is the squared Euclidean cost.
The dual problem can then be written succinctly as $\sup_{h \in \Cb^{\oplus}} \Phi(h)$.

Suppose we wish to compare $S(\mu, \nu)$ to $S(\mu_n, \nu_n)$, where, as in Chapter~\ref{chap:primal-dual}, $\mu_n$ and $\nu_n$ denote empirical measures corresponding to i.i.d.\ samples from $\mu$ and $\nu$.
We can define an empirical version of the dual functional by
\begin{equation*}
	\widehat \Phi(h) = \int \bigl(h   - \eps\, (e^{(h - c)/\eps} - 1)\bigr) \, \ud(\mu_n \otimes \nu_n)\,.
\end{equation*}
Then
\begin{equation*}
	S(\mu_n, \nu_n) - S(\mu, \nu) = \sup_{h \in \Cb^{\oplus}} \widehat \Phi(h) - \sup_{h \in \Cb^{\oplus}} \Phi(h) = \widehat \Phi(\hat h) - \Phi(h^\star)\,,
\end{equation*}
where $\hat h$ and $h^\star$ are maximizers of $\widehat \Phi$ and $\Phi$, respectively.

Exercise~\ref{exe:eot_empirical_process} sketches a direct approach to obtain an upper bound on $\widehat \Phi(\hat h) - \Phi(h^\star)$ based on empirical process theory, analogous to the one developed in Section~\ref{sec:chaining} for the unregularized optimal transport problem.
However, we pursue a different path, which leverages the strong concavity of the dual functional. 

We begin with a non-rigorous sketch of the argument. Strong concavity of the functional $\widehat \Phi$ should imply there exists $\delta>0$ such that
\begin{equation*}
		\widehat \Phi(\hat h) \leq \widehat \Phi(h^\star) + \langle \nabla \widehat \Phi(h^\star), \hat h - h^\star \rangle_{L^2(\mu_n \otimes \nu_n)} - \frac{\delta}{2}\, \|\hat h -  h^\star\|_{L^2(\mu_n \otimes \nu_n)}^2\,.
	\end{equation*}
While it is possible to define a suitable notion of gradient for $\nabla \widehat \Phi$, it is sufficient for our purposes to interpret the above inner product as a directional (G\^ateaux) derivative. 
In contrast to the empirical process theory approach, this inequality implies that we can obtain a bound by controlling $\widehat \Phi$ and $\nabla \widehat \Phi$ at the \emph{fixed} function $h^\star$ rather than the random function $\hat h$. In particular, there is no need to ``sup-out" $\hat h$  which allows us to circumvent the use of empirical process theory.

We make the following assumption.
\begin{assumption}\label{assume:bounded}
	The supports of $\mu$ and $\nu$ lie in $\Omega \subseteq B_{1/2}(0)$.
\end{assumption}
In particular, under Assumption~\ref{assume:bounded}, $\mathrm{diam}(\Omega) \leq 1$.
This assumption implies simple \emph{a priori} bounds on $\hat h$ and $h^\star$.

\begin{proposition}\label{prop:duals_bounded}
	Under Assumption~\ref{assume:bounded}, it holds that
 $$
 \|\hat h\|_{L^\infty}, \|h^\star\|_{L^\infty} \leq 2\,.
 $$
\end{proposition}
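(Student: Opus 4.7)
The plan is to exploit the Schr\"odinger system satisfied by the optimal dual potentials---recorded in Proposition~\ref{prop:eot_strong_duality} and the discussion after it---together with the observation that under Assumption~\ref{assume:bounded} the squared cost $c(x,y) = \|x-y\|^2$ satisfies $0 \le c \le 1$ on $\Omega \times \Omega$, since $\Omega \subseteq B_{1/2}(0)$ has diameter at most $1$.

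First, I would turn the marginal relation $\int e^{(f^\star(x) + g^\star(y) - c(x,y))/\eps}\,\mu(\ud x) = 1$ into pointwise bounds on $g^\star$. Isolating the $g^\star(y)$ factor gives $e^{-g^\star(y)/\eps} = \int e^{(f^\star(x) - c(x,y))/\eps}\,\mu(\ud x)$, and sandwiching $e^{-c/\eps}$ between $e^{-1/\eps}$ and $1$ yields $g^\star(y) \in [-A,\,1-A]$, where $A \deq \eps \log \int e^{f^\star/\eps}\,\ud \mu$. The symmetric argument applied to the other Schr\"odinger equation gives $f^\star(x) \in [-B,\,1-B]$ with $B \deq \eps \log \int e^{g^\star/\eps}\,\ud \nu$. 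Adding these, $h^\star(x,y) = f^\star(x) + g^\star(y)$ lies in $[-(A+B),\,2-(A+B)]$.

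Second, I would pin down $A+B$ via the \emph{integrated} Schr\"odinger identity. Integrating the marginal relation against $\nu$ and using Fubini gives $\iint e^{(f^\star + g^\star - c)/\eps}\,\ud(\mu \otimes \nu) = 1$. Because $f^\star + g^\star$ factorizes, $\iint e^{(f^\star + g^\star)/\eps}\,\ud(\mu \otimes \nu) = e^{(A+B)/\eps}$, and $0 \le c \le 1$ yields
\begin{equation*}
e^{-1/\eps}\,e^{(A+B)/\eps} \le 1 \le e^{(A+B)/\eps}\,,
\end{equation*}
so $A + B \in [0,1]$. Combined with the previous step, this forces $h^\star \in [-1,2]$ and hence $\|h^\star\|_{L^\infty} \le 2$. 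The bound on $\hat h$ follows by verbatim repetition of the argument with $\mu_n, \nu_n$ (whose supports also lie in $\Omega$) in place of $\mu, \nu$.

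There is no real obstacle; the computation is just a few lines. The only conceptual subtlety is that the individual potentials $f^\star, g^\star$ are determined only up to a shift $(f,g) \mapsto (f+\lambda, g-\lambda)$, so the separate range bounds from Step 1 depend on the chosen normalization. Both $A+B$ and the target $h^\star = f^\star \oplus g^\star$ are however shift-invariant, which is what allows Steps 1 and 2 to be combined into a bona fide $L^\infty$ estimate on $h^\star$.
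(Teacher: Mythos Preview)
Your proof is correct and follows essentially the same route as the paper's: both exploit the Schr\"odinger system together with the pointwise bounds $0 \le c \le 1$ on $\Omega \times \Omega$. The organization differs slightly. The paper multiplies the two marginal inequalities to obtain $e^{(h^\star-2)/\eps}\int e^{h^\star/\eps}\,\ud(\mu\otimes\nu) \le 1$, then invokes Jensen's inequality and $S(\mu,\nu)\ge 0$ to conclude $\int e^{h^\star/\eps}\,\ud(\mu\otimes\nu)\ge 1$ and hence $h^\star \le 2$; the lower bound $h^\star \ge -2$ is derived in a second stage that feeds the upper bound back in. Your version instead controls the shift-invariant quantity $A+B$ directly via the exact identity $\iint e^{(h^\star - c)/\eps}\,\ud(\mu\otimes\nu)=1$ (the total mass of $\gamma^\star$), which sidesteps Jensen and the two-stage structure and yields the marginally sharper range $h^\star \in [-1,2]$. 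Both arguments are short; yours is perhaps a touch more symmetric, while the paper's makes the link to $S(\mu,\nu)\ge 0$ explicit.
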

\begin{proof}
	We first prove the claim for $h^\star$.
	Recall that $h^\star = f^\star \oplus g^\star$ for $f^\star, g^\star \in \Cb(\Omega)$
	and that thanks to canonical extensions, we may assume that the marginal constraints~\eqref{eq:marg_constraint} and \eqref{eq:marg_constraint_2} hold for all $x, y \in \Omega$.
Since $c(x, y) \leq 1$ for all $x, y \in \Omega$, we get
	\begin{align*}
		1 & = \int e^{(f^\star \oplus g^\star - c)/\eps} \, \mu(\ud x) \geq e^{(g^\star(y) - 1)/\eps}\int e^{f^\star(x)/\eps}\, \mu(\ud x)\,, \quad \forall y \in \Omega\,, \\
		1 & = \int e^{(f^\star \oplus g^\star - c)/\eps} \, \nu(\ud y) \geq e^{(f^\star(x) - 1)/\eps}\int e^{g^\star(y)/\eps}\, \nu(\ud y)\,,\quad \forall x \in \Omega\,.
	\end{align*}
 Multiplying these two inequalities yields
 $$
 e^{(h^\star - 2)/\eps}\int e^{h^\star/\eps}\,\ud(\mu \otimes \nu) \le 1 \,.
 $$
 Next note that by Jensen's inequality, we get
 $$
\int e^{h^\star/\eps}\,\ud(\mu \otimes \nu)\ge e^{S(\mu, \nu)/\eps} \ge 1\,,
 $$
 where we used Proposition~\ref{prop:eot_strong_duality}. From the above two displays, we get that $h^\star \le 2$ for all $x,y \in \Omega$.

 Next, since $c\ge 0$ on $\Omega\times \Omega$, we get from the same argument that
 $$
 1\le e^{h^\star/\eps}\int e^{h^\star/\eps}\,\ud(\mu \otimes \nu) \le e^{(h^\star+2)/\eps} \,,
 $$
 where we used the bound $h^\star \le 2$ that we just proved. Hence $h^\star \ge -2$ for all $x,y \in \Omega$.
	
	Since the only fact that was used about $h^\star$ is that it maximizes the dual functional $\Phi$ corresponding to measures whose supports lie in $\Omega$, the claim also holds for $\hat h$ when replacing $(\mu, \nu)$ with $(\mu_n, \nu_n)$ in Proposition~\ref{prop:eot_strong_duality}. Again, canonical extensions play a crucial role here.
\end{proof}

We require some fundamental differentiability and concavity properties of the empirical dual functional.
If we let $\mu_n = \frac 1n \sum_{i=1}^n \delta_{X_i}$ and $\nu_n = \frac 1n \sum_{j=1}^n \delta_{Y_j}$, for $X_1, \dots, X_n \simiid \mu$ and $Y_1, \dots, Y_n \simiid \nu$, then we can write $\widehat \Phi$ explicitly as
\begin{equation}\label{eq:emp_dual_def}
	\widehat \Phi(h) = \frac{1}{n^2} \sum_{i, j = 1}^n\Bigl( h(X_i, Y_j) - \eps \,\bigl(e^{(h(X_i, Y_j) - \|X_i - Y_j\|^2)/\eps}-1\bigr)\Bigr)\,.
\end{equation}
Rather than appealing to functional analysis to study differentiability of the functional $\widehat \Phi$, it is sufficient to study the function $\varphi$ defined on $[0,1]$ by
\begin{equation}\label{eq:def_varphi(t)}
    \varphi(t)=\hat \Phi(h_t)\,, \quad \text{where} \ h_t \deq (1-t)\hat h + th^\star\,.
\end{equation}
In particular, it is twice differentiable with derivatives given by
\begin{equation}
    \label{eq:phiprime}
\varphi'(t)=     \frac{1}{n^2} \sum_{i, j = 1}^n\Bigl( \big(h^\star(X_i, Y_j)-\hat h(X_i, Y_j)\big)\, \big(1- e^{(h_t(X_i, Y_j) - \|X_i - Y_j\|^2)/\eps}\big)\Bigr)\,,
\end{equation}
and
\begin{align}
\varphi''(t)&=    -\frac{1}{\eps n^2} \sum_{i, j = 1}^n\Bigl( \big(h^\star(X_i, Y_j)-\hat h(X_i, Y_j)\big)^2\, e^{(h_t(X_i, Y_j) - \|X_i - Y_j\|^2)/\eps}\Bigr)\nonumber\\
&\le-\frac{e^{-3/\eps}}{\eps} \,\|\hat h - h^\star\|^2_{L^2(\mu_n \otimes \nu_n)} \,,\label{eq:phi2prime}
\end{align}
where we used Proposition~\ref{prop:duals_bounded} and Assumption~\ref{assume:bounded} in the above inequality. 
We readily get that $\varphi$ is strongly concave on $[0,1]$. 

The expression~\eqref{eq:phiprime} reveals that the derivative of $\varphi$ is an $L^2(\mu_n \otimes \nu_n)$ inner product with a function in the space $\Cb^{\oplus}$.
This inner product can be well understood using the Hoeffding (a.k.a.\ Efron--Stein, a.k.a.\ ANOVA) decomposition~\cite{Hoe48}\index{Hoeffding's decomposition}.

\begin{definition}\label{def:efron-stein}
    Let $X,Y$ be two independent random variables with distributions $P$ and $Q$ respectively. Given $k \in L^2(P\otimes Q)$, the Hoeffding decomposition of $k(X,Y)$ in $ L^2(P\otimes Q)$ is given by
    $$
k(X,Y)= \bar k_1(X) + \bar k_2(Y) + \bbar k + \mathfrak{r}(X,Y)
    $$
    where
\begin{align*}
    \bbar k &= \E[k(X,Y)] \in \R\,,\\
    \bar k_1(x) &=\E[k(X,Y)\mid X=x]-\bbar k\,,\\
     \bar k_2(y) &=\E[k(X,Y)\mid Y=y] - \bbar k\,,
\end{align*}
and 
$$
\mathfrak{r}(X,Y) = k(X,Y) - \bar k_1(X) - \bar k_2(Y) - \bbar k\,.
$$
\end{definition}

It is easy to check (exercise!) that the Hoeffding decomposition is orthogonal in $L^2(P\otimes Q)$. In fact, the same calculations reveal the   relevance of this decomposition to our problem: for any $h=f\oplus g \in \Cb^{\oplus}$, it holds
\begin{align*}
\langle h, k\rangle_{L^2(P\otimes Q)}&= \langle f, \bar k_1\rangle_{L^2(P)}+ \langle g, \bar k_2\rangle_{L^2(Q)}+ \langle  h, \bbar k\rangle_{L^2(P\otimes Q)}\\
&=\langle h, \bar k_1 +\bar k_2 + \bbar k\rangle_{L^2(P\otimes Q)}\,.
\end{align*}
Using Cauchy--Schwarz and orthogonality of the Hoeffding decomposition, we get
\begin{align}
\langle h, k\rangle_{L^2(P\otimes Q)}^2&
\le \|h\|^2_{L^2(P\otimes Q)}\|\, \bar k_1 +\bar k_2 + \bbar k\|^2_{L^2(P\otimes Q)}\nonumber\\
&=\|h\|^2_{L^2(P\otimes Q)}\,\big(\| \bar k_1\|^2_{L^2(P\otimes Q)}+\|\bar k_2\|^2_{L^2(P\otimes Q)} + \bbar k^2\big)\,.\label{eq:cs-hoeffding}
\end{align}
Using orthogonality again implies
\begin{align*}
	\E\bigl[\bigl(\E[k(X,Y)\mid X]\bigr)^2\bigr] & = \|\bar k_1 + \bbar k\|_{L^2(P \otimes Q)}^2\\
	& =  \|\bar k_1\|_{L^2(P \otimes Q)}^2 + \bbar k^2
\end{align*}
and similarly
$$
\E\bigl[\bigl(\E[k(X,Y)\mid Y]\bigr)^2\bigr] = \| \bar k_2\|^2_{L^2(P\otimes Q)}+ \bbar k^2\,.
$$
These two identities together with~\eqref{eq:cs-hoeffding} yield
$$
\frac{\langle h, k\rangle_{L^2(P\otimes Q)}^2}{\|h\|_{L^2(P\otimes Q)}^2} \le \E\bigl[\bigl(\E[k(X,Y)\mid X]\bigr)^2\bigr]+ \E\bigl[\bigl(\E[k(X,Y)\mid Y]\bigr)^2\bigr] - \bbar k^2\,.
$$
Applying this result with $P=\mu_n$, $Q=\nu_n$ we get the following lemma.

\begin{lemma}\label{lem:cs-projection}
	For any $k \in L^2(\mu_n \otimes \nu_n)$ and any $h \in \Cb^{\oplus}$, we have
	\begin{equation*}
		\langle h, k \rangle_{L^2(\mu_n \otimes \nu_n)}^2 \leq \|h\|_{L^2(\mu_n \otimes \nu_n)}^2\,(\|\nu_n(k)\|_{L^2(\mu_n)}^2 + \|\mu_n(k)\|_{L^2(\nu_n)}^2)\,,
	\end{equation*}
 where
 $$
\mu_n(k)(y)  = \frac 1n \sum_{i=1}^n k(X_i, y)\,, \,\qquad 
	\nu_n(k)(x) = \frac 1n \sum_{j=1}^n k(x, Y_j)\,.
 $$
\end{lemma}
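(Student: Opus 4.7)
The plan is to directly specialize the Hoeffding-decomposition argument carried out in the paragraphs preceding the lemma to the random variables $X \sim \mu_n$, $Y \sim \nu_n$ drawn independently from the two empirical measures. The inequality
\begin{align*}
\frac{\langle h, k\rangle_{L^2(P\otimes Q)}^2}{\|h\|_{L^2(P\otimes Q)}^2} \le \E\bigl[(\E[k(X,Y)\mid X])^2\bigr]+ \E\bigl[(\E[k(X,Y)\mid Y])^2\bigr] - \bbar k^2
\end{align*}
derived just above is valid for \emph{any} pair of independent random variables, and in particular for the empirical pair, as long as $h \in \Cb^\oplus$ (this is what makes $h$ orthogonal to the residual $\mathfrak r$ of the Hoeffding decomposition). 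So the task reduces to identifying the two conditional-expectation norms with the quantities $\|\nu_n(k)\|_{L^2(\mu_n)}^2$ and $\|\mu_n(k)\|_{L^2(\nu_n)}^2$ that appear in the statement, and then discarding the non-positive term $-\bbar k^2$.

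First I would compute the two conditional expectations under empirical measures. Since $X$ and $Y$ are independent and $Y \sim \nu_n$ puts mass $1/n$ on each $Y_j$, the conditional distribution of $Y$ given $X = x$ is simply $\nu_n$, so
\begin{align*}
    \E[k(X,Y) \mid X = x] = \frac{1}{n} \sum_{j=1}^n k(x, Y_j) = \nu_n(k)(x)\,,
\end{align*}
and symmetrically $\E[k(X,Y) \mid Y = y] = \mu_n(k)(y)$. Taking a further expectation over $X \sim \mu_n$ (resp.\ $Y \sim \nu_n$) then yields
\begin{align*}
    \E\bigl[(\E[k(X,Y)\mid X])^2\bigr] = \|\nu_n(k)\|_{L^2(\mu_n)}^2\,, \qquad \E\bigl[(\E[k(X,Y)\mid Y])^2\bigr] = \|\mu_n(k)\|_{L^2(\nu_n)}^2\,.
\end{align*}

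Substituting these two identifications into the general bound above with $P=\mu_n$, $Q=\nu_n$, and dropping the non-positive term $-\bbar k^2$, gives exactly the inequality claimed. There is no real obstacle here: the heavy lifting (the orthogonality of the Hoeffding decomposition and the Cauchy--Schwarz step that produces the two conditional second moments on the right-hand side) was already carried out in the derivation preceding the lemma, and the specialization to empirical measures is pure bookkeeping, requiring only the observation that conditioning on $X = X_i$ under an empirical product measure averages over the $Y_j$'s.
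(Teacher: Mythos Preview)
Your proposal is correct and follows exactly the paper's approach: the paper itself simply states ``Applying this result with $P=\mu_n$, $Q=\nu_n$ we get the following lemma,'' and you have faithfully spelled out the two bookkeeping steps this specialization entails (identifying the conditional expectations with $\nu_n(k)$ and $\mu_n(k)$, and dropping the $-\bbar k^2$ term).
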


We are now in a position to obtain an important lemma showing that $\hat h$ is a good estimator of $h^\star$. In turn, rates of convergence for the cost and the entropic map follow from this lemma.

\begin{lemma}\label{lem:entropic_potential_estimation}
Let Assumption~\ref{assume:bounded} hold. Then
$$
\E\|\hat h - h^\star\|^2_{L^2(\mu_n \otimes \nu_n)} \le \frac{2\eps^2 e^{10/\eps}}{n}\,.
$$
\end{lemma}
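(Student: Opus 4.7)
The plan is to combine strong concavity of the empirical dual functional $\hat \Phi$ at the \emph{fixed} point $h^\star$ with the Hoeffding-type projection bound of Lemma~\ref{lem:cs-projection}, thereby sidestepping any uniform empirical-process control over the random maximizer $\hat h$. Working with the scalar function $\varphi(t) = \hat\Phi(h_t)$ of~\eqref{eq:def_varphi(t)}, Taylor's formula around $t=1$ together with the uniform bound $\varphi''(s) \le -\frac{e^{-3/\eps}}{\eps}\,\|\hat h - h^\star\|_{L^2(\mu_n\otimes\nu_n)}^2$ derived in~\eqref{eq:phi2prime} yields
\[
\varphi(0) \le \varphi(1) - \varphi'(1) - \frac{e^{-3/\eps}}{2\eps}\,\|\hat h - h^\star\|_{L^2(\mu_n\otimes\nu_n)}^2\,.
\]
Since $\hat h$ maximizes $\hat\Phi$ over $\Cb^{\oplus}$, we have $\varphi(0) = \hat\Phi(\hat h) \ge \hat\Phi(h^\star) = \varphi(1)$, and rearranging produces the cornerstone inequality
\[
\frac{e^{-3/\eps}}{2\eps}\,\|\hat h - h^\star\|_{L^2(\mu_n\otimes\nu_n)}^2 \le -\varphi'(1) = \langle \hat h - h^\star,\, k\rangle_{L^2(\mu_n\otimes\nu_n)}\,,
\]
where I use~\eqref{eq:phiprime} and set $k(x,y) \deq 1 - \exp((h^\star(x,y) - \|x-y\|^2)/\eps)$.

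The crucial observation is that the Schr\"odinger system~\eqref{eq:marg_constraint}--\eqref{eq:marg_constraint_2}, which is simply the pair of marginal conditions characterizing $h^\star$, forces $\int k(x,y)\,\mu(\ud x) = 0$ for $\nu$-a.e.\ $y$ and $\int k(x,y)\,\nu(\ud y) = 0$ for $\mu$-a.e.\ $x$. Since $\hat h - h^\star \in \Cb^{\oplus}$, Lemma~\ref{lem:cs-projection} yields
\[
\langle \hat h - h^\star,\, k\rangle_{L^2(\mu_n\otimes\nu_n)}^2 \le \|\hat h - h^\star\|_{L^2(\mu_n\otimes\nu_n)}^2 \,\bigl(\|\nu_n(k)\|_{L^2(\mu_n)}^2 + \|\mu_n(k)\|_{L^2(\nu_n)}^2\bigr)\,.
\]
Dividing the first display by $\|\hat h - h^\star\|_{L^2(\mu_n\otimes\nu_n)}$ and squaring then gives
\[
\|\hat h - h^\star\|_{L^2(\mu_n\otimes\nu_n)}^2 \le 4\eps^2 e^{6/\eps}\,\bigl(\|\nu_n(k)\|_{L^2(\mu_n)}^2 + \|\mu_n(k)\|_{L^2(\nu_n)}^2\bigr)\,.
\]

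To take expectations, I would observe that, conditionally on $X_i$, the quantity $\nu_n(k)(X_i) = \frac{1}{n}\sum_j k(X_i, Y_j)$ is an average of $n$ conditionally i.i.d.\ random variables whose conditional mean vanishes almost surely (by the marginal constraint just noted). Proposition~\ref{prop:duals_bounded} and Assumption~\ref{assume:bounded} give the sup-norm bound $\|k\|_\infty \le e^{2/\eps}$, so the elementary variance bound for bounded random variables yields $\E[\nu_n(k)(X_i)^2 \mid X_i] \le e^{4/\eps}/(4n)$, and symmetrically for $\mu_n(k)(Y_j)$. Averaging and plugging in produces the stated bound $\E\|\hat h - h^\star\|_{L^2(\mu_n\otimes\nu_n)}^2 \le 2\eps^2 e^{10/\eps}/n$.

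The step I expect to be the most delicate is the opening strong-concavity argument: one must check that the constant $e^{-3/\eps}$ lower-bounding $e^{(h_s - c)/\eps}$ holds \emph{uniformly in} $s \in [0,1]$---which follows by applying Proposition~\ref{prop:duals_bounded} to every convex combination $h_s$---and, more importantly, that the marginal equations for $h^\star$ hold \emph{pointwise} and not merely almost surely, so that they can be evaluated at the random sample points $X_i$ and $Y_j$. This latter point is precisely what the canonical extensions of $f^\star$ and $g^\star$ constructed in Section~\ref{sec:eot_duality} provide. Once these technical points are in place, the remaining computations are essentially routine.
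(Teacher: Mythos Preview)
Your argument is correct and follows the same route as the paper: strong concavity of $\varphi$, the projection bound of Lemma~\ref{lem:cs-projection}, and the marginal constraints~\eqref{eq:marg_constraint}--\eqref{eq:marg_constraint_2} to show that $\nu_n(k)$ and $\mu_n(k)$ are empirical averages of conditionally centered variables. The paper differs only cosmetically: it uses the first-order optimality condition $\varphi'(0)=0$ (rather than your Taylor expansion with $\varphi(0)\ge\varphi(1)$) to get $\frac{e^{-3/\eps}}{\eps}\|\hat h-h^\star\|^2\le -\varphi'(1)$ without your extra factor of $2$, and then bounds the conditional variance crudely by $\E[(p^\star)^2]\le e^{4/\eps}$ rather than by $e^{4/\eps}/4$. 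One small slip in your constant-tracking: the sup-norm bound $\|k\|_\infty\le e^{2/\eps}$ by itself only gives conditional variance $\le e^{4/\eps}$, not $e^{4/\eps}/4$; to recover the factor $1/4$ (which you need for the stated constant, given the factor-of-$2$ loss above) invoke Popoviciu's inequality via the \emph{range} of $p^\star$, which is indeed at most $e^{2/\eps}$ under Proposition~\ref{prop:duals_bounded} and Assumption~\ref{assume:bounded}.
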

\begin{proof}
Since $\varphi$ is strongly concave, using respectively~\eqref{eq:phi2prime}, the optimality condition $\varphi'(0)=0$, and~\eqref{eq:phiprime}, we get
\begin{align}
    &\frac{e^{-3/\eps}}{\eps}\,\|\hat h - h^\star\|^2_{L^2(\mu_n \otimes \nu_n)} \le \varphi'(0) -\varphi'(1)= -\varphi'(1) \nonumber \\
    &\qquad = \frac{1}{n^2} \sum_{i, j = 1}^n\Bigl( \big(h^\star(X_i, Y_j)-\hat h(X_i, Y_j)\big)\, \big(e^{(h^\star(X_i, Y_j) - \|X_i - Y_j\|^2)/\eps}-1\big)\Bigr) \nonumber\\
    & \qquad= \langle h^\star - \hat h,p^\star - 1 \rangle_{L^2(\mu_n \otimes \nu_n)} \label{eq:inter_1}\,,
\end{align}
where 
$$
p^\star(x,y)=e^{(h^\star(x, y) - \|x - y\|^2)/\eps}\,.
$$

Since $h^\star - \hat h \in \Cb^{\oplus}$, Lemma~\ref{lem:cs-projection} implies
\begin{equation*}
	\langle h^\star - \hat h,p^\star - 1 \rangle_{L^2(\mu_n \otimes \nu_n)} \leq \|h^\star - \hat h\|_{L^2(\mu_n \otimes \nu_n)}\,\delta_n\,,
\end{equation*}
where
\begin{equation*}
	\delta_n = \bigl(\|\nu_n(p^\star - 1)\|_{L^2(\mu_n)}^2 + \|\mu_n(p^\star - 1)\|_{L^2(\nu_n)}^2\bigr)^{1/2}\,.
\end{equation*}

Combining this with~\eqref{eq:inter_1} yields
\begin{equation}
	\label{eq:pr_entropic_potential_estimation_1}
	\|\hat h - h^\star\|^2_{L^2(\mu_n \otimes \nu_n)}\le \eps^2 e^{6/\eps}\,\delta_n^2\,.
\end{equation}

Recall from~\eqref{eq:marg_constraint} that
$$
\E[ p^\star(X_i, Y_j) \mid Y_j]=1\,,
$$
so that
\begin{align*}
	\E\|\mu_n(p^\star - 1)\|_{L^2(\nu_n)}^2 & =\frac{1}{n}\, \E \sum_{j=1}^n\Bigl(\frac 1n \sum_{i=1}^n p^\star(X_i, Y_j) - \E[p^\star (X_i, Y_j)\mid Y_j]\Bigr)^2 \\
	& = \frac 1 n\, \E \var\bigl(p^\star(X_1, Y_1) \bigm\vert Y_1\bigr) \\
	& \leq \frac{\E[p^\star(X_1, Y_1)^2]}{n} \leq \frac{e^{4/\eps}}{n}\,.
\end{align*}
An analogous bound holds for $\E\|\nu_n(p^\star - 1)\|_{L^2(\mu_n)}^2$, which implies  that
\begin{equation}\label{eq:delta_n_bound}
	\E \delta_n^2 \leq \frac{2 e^{4/\eps}}{n}\,,
\end{equation}
proving the claim.
\end{proof}

\section{Statistical rates for primal solutions}\label{sec:eot_rates_primal}

Lemma~\ref{lem:entropic_potential_estimation} shows that solutions to the dual problem~\eqref{eta-D} converge at the parametric rate.
In this section, we use this result to give bounds for the primal problem as well.

We now turn to our first quantity of interest, the cost $S(\mu, \nu)$. The following result shows that the mean squared error and bias of the estimator $S(\mu_n, \nu_n)$ are both of order $n^{-1}$, albeit with constants that scale exponentially in $1/\eps$.
Strikingly, the $n^{-1}$ rate we have obtained for the variance and bias is characteristic of \emph{parametric} estimation problems, despite the non-parametric setting. It is of course to be contrasted with the slow, non-parametric rates of Chapter~\ref{chap:primal-dual}.

\begin{theorem}\label{thm:rate_Sn}
	If $\mu$ and $\nu$ satisfy Assumption~\ref{assume:bounded}, then the mean squared error and bias of $S(\mu_n, \nu_n)$ satisfy
	\begin{align}
		\E (S(\mu_n, \nu_n) - S(\mu, \nu))^2 & \lesssim \frac{1}{n}\,, \nonumber\\
				|\E S(\mu_n, \nu_n) - S(\mu, \nu)| & \lesssim \frac{1}{n}\,,\label{eq:ent_bias}
	\end{align}
	where the implicit constants depends exponentially on $1/\eps$.
\end{theorem}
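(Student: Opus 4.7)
\textbf{Proof proposal for Theorem~\ref{thm:rate_Sn}.} The plan is to leverage strong duality together with Lemma~\ref{lem:entropic_potential_estimation}, by \emph{linearizing} the gap $S(\mu_n, \nu_n) - S(\mu, \nu)$ around the deterministic potential $h^\star$. Writing $S(\mu, \nu) = \Phi(h^\star)$ and $S(\mu_n, \nu_n) = \hat\Phi(\hat h)$ via Proposition~\ref{prop:eot_strong_duality}, I decompose
\begin{equation*}
    S(\mu_n, \nu_n) - S(\mu, \nu) = \bigl[\hat\Phi(\hat h) - \hat\Phi(h^\star)\bigr] + \bigl[\hat\Phi(h^\star) - \Phi(h^\star)\bigr]\,.
\end{equation*}
The second bracket is an empirical average of the bounded function $\psi(x,y) = h^\star(x,y) - \eps\,(e^{(h^\star(x,y) - c(x,y))/\eps} - 1)$ over the $n^2$ sample pairs, with mean $\Phi(h^\star)$. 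Since the $X_i$'s and $Y_j$'s are i.i.d.\ and independent, a direct covariance count shows that only $O(n^3)$ of the $n^4$ covariance terms in $\var[\hat\Phi(h^\star)]$ are nonzero, each being $O(1)$ thanks to Proposition~\ref{prop:duals_bounded}. Hence this bracket has mean zero and variance $O(1/n)$.

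For the first bracket, I exploit the strong concavity of $\hat\Phi$ already established in~\eqref{eq:phi2prime}. Expanding $\hat\Phi$ around $h^\star$ along the segment $h_t = (1-t)\hat h + th^\star$ yields
\begin{equation*}
    0 \le \hat\Phi(\hat h) - \hat\Phi(h^\star) \le \langle 1 - p^\star,\, \hat h - h^\star\rangle_{L^2(\mu_n \otimes \nu_n)}\,,
\end{equation*}
where $p^\star(x,y) = e^{(h^\star(x,y) - c(x,y))/\eps}$ and the positivity is by optimality of $\hat h$ for $\hat\Phi$. Crucially, the tangent direction $1 - p^\star$ is \emph{deterministic}, so applying Lemma~\ref{lem:cs-projection} to $k = 1 - p^\star$ and $h = \hat h - h^\star \in \Cb^{\oplus}$ yields the bound $|\langle 1 - p^\star, \hat h - h^\star\rangle| \le \|\hat h - h^\star\|_{L^2(\mu_n \otimes \nu_n)} \cdot \delta_n$ with $\delta_n$ defined as in the proof of Lemma~\ref{lem:entropic_potential_estimation}, where $\E\delta_n^2 \le 2e^{4/\eps}/n$ by~\eqref{eq:delta_n_bound}. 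Using Young's inequality to absorb the $\|\hat h - h^\star\|^2$ term against the strong concavity constant $\delta = e^{-3/\eps}/\eps$ gives the cleaner bound
\begin{equation*}
    0 \le \hat\Phi(\hat h) - \hat\Phi(h^\star) \le \frac{1}{2\delta}\,\delta_n^2\,.
\end{equation*}

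Taking expectations, the bias bound~\eqref{eq:ent_bias} follows immediately: the second bracket contributes zero and the first contributes at most $(2\delta)^{-1}\E\delta_n^2 = O(1/n)$ (positive, recovering the upward bias of plug-in estimators). For the MSE, I use $(a+b)^2 \le 2a^2 + 2b^2$. The second bracket contributes its variance $O(1/n)$. For the first, I use the almost-sure boundedness of $\delta_n$ (since $\|1 - p^\star\|_\infty$ is bounded by Proposition~\ref{prop:duals_bounded} and Assumption~\ref{assume:bounded}, so each of the inner averages $\mu_n(1-p^\star), \nu_n(1-p^\star)$ is uniformly bounded): this gives $\delta_n^4 \lesssim \delta_n^2$ pointwise, hence $\E(\hat\Phi(\hat h) - \hat\Phi(h^\star))^2 \lesssim (2\delta)^{-2}\E\delta_n^4 \lesssim \E\delta_n^2 = O(1/n)$.

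The main obstacle, and the crux of the argument, is the use of Lemma~\ref{lem:cs-projection} to control the cross term $\langle 1 - p^\star, \hat h - h^\star\rangle$. Naively, one would pay a factor $\|\hat h - h^\star\|\,\|1 - p^\star\| = O(1)$ by ordinary Cauchy--Schwarz, which would be disastrous. It is precisely the fact that $\hat h - h^\star \in \Cb^{\oplus}$ is a sum of coordinate-wise functions---so that only the two marginals of $1 - p^\star$ matter---that creates the extra $n^{-1/2}$ gain via the Hoeffding decomposition, and this structural gain (rather than any refined concentration) is what delivers the parametric rate.
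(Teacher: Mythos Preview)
Your argument is correct, and for the bias you proceed essentially as the paper does: both compute $\hat\Phi(\hat h) - \hat\Phi(h^\star)$, invoke concavity at $h^\star$ to linearize, and then use Lemma~\ref{lem:cs-projection} to gain the crucial factor $\delta_n$. Your use of the strong concavity term together with Young's inequality to obtain the pointwise bound $\hat\Phi(\hat h) - \hat\Phi(h^\star) \le \tfrac{1}{2\delta}\delta_n^2$ is a slightly sharper packaging than the paper's route, which instead pairs $\|\hat h - h^\star\|_{L^2(\mu_n\otimes\nu_n)}$ with $\delta_n$ via Cauchy--Schwarz on the expectations and then invokes Lemma~\ref{lem:entropic_potential_estimation}. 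For the mean squared error your approach is genuinely different from the paper's: the paper does not reuse the two-bracket decomposition but instead applies a bias--variance split and controls $\var(S(\mu_n,\nu_n))$ by the Efron--Stein (bounded differences) inequality, using only that swapping one sample point $X_i$ or $Y_j$ perturbs the optimal dual value by $O(1/n)$. Your argument stays closer to the machinery already built: you control the linear term $\hat\Phi(h^\star) - \Phi(h^\star)$ by a direct $O(n^{-1})$ covariance count, and the nonlinear term via the almost-sure bound $\delta_n^4 \lesssim \delta_n^2$. Both are valid; the paper's Efron--Stein step is more black-box and avoids tracking constants in the nonlinear term, whereas your approach is more structural and never leaves the strong-concavity/Hoeffding framework.
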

\begin{proof}
	We begin with establishing~\eqref{eq:ent_bias} by studying the bias. Jensen's inequality implies
	\begin{equation*}
		\E S(\mu_n, \nu_n) = \E \sup_{h \in \Cb^{\oplus}} \widehat \Phi(h) \geq \sup_{h \in \Cb^{\oplus}} \Phi(h) = S(\mu, \nu)\,.
	\end{equation*}
Hence
\begin{align*}
    0\le b_n & \deq \E S(\mu_n, \nu_n) - S(\mu, \nu)\\
    &=\E\big[ \widehat \Phi(\hat h) - \widehat \Phi(h^*)\big]\\
    &=\E\big[\varphi(0)-\varphi(1)\big]\le -\E\varphi'(1)\,,
\end{align*}
where we recall that the concave function $\varphi$ is defined in~\eqref{eq:def_varphi(t)}. It follows from~\eqref{eq:phiprime} that $-\varphi'(1)$ is given by
\begin{align*}
    &\frac{1}{n^2} \sum_{i, j = 1}^n\Bigl( \big(h^\star(X_i, Y_j)-\hat h(X_i, Y_j)\big)\, \big(e^{(h^\star(X_i, Y_j) - \|X_i - Y_j\|^2)/\eps}-1\big)\Bigr) \\
    &\qquad \le \|\hat h -h^\star\|_{L^2(\mu_n \otimes \nu_n)}\, \delta_n\,,
\end{align*}
where $\delta_n$ is defined as in the proof of Lemma~\ref{lem:entropic_potential_estimation} and we have applied Lemma~\ref{lem:cs-projection}.
By the Cauchy--Schwarz inequality,
\begin{align*}
    0 \le b_n
    &\le \sqrt{\E\|\hat h -h^\star\|_{L^2(\mu_n \otimes \nu_n)}^2 \,\E\delta_n^2} \\
    &\le \frac{\sqrt 2 \eps e^{5/\eps}}{\sqrt n}\,\frac{\sqrt 2 e^{2/\eps}}{\sqrt n}
    = \frac{2\eps e^{7/\eps}}{n}\,,
\end{align*}
where we used Lemma~\ref{lem:entropic_potential_estimation} and~\eqref{eq:delta_n_bound}.

	To prove the bound on the mean squared error, we first use a bias--variance decomposition to write
	\begin{align*}
		\E (S(\mu_n, \nu_n) - S(\mu, \nu))^2 & = \var(S(\mu_n, \nu_n)) + |\E S(\mu_n, \nu_n) - S(\mu, \nu)|^2 \\
		& = \var(S(\mu_n, \nu_n))  + O(n^{-2})\,.
	\end{align*}
	It therefore suffices to show that the variance of $S(\mu_n, \nu_n)$ is $O(n^{-1})$.
	For this purpose, we employ the \emph{Efron--Stein inequality}. More precisely,~\cite[Corollary~3.2]{BouLugMas13} is sufficient for our purposes. It says that if $f = f(Z_1, \dots, Z_m)$ is a function of independent random variables that satisfies the bounded differences inequality:
 \begin{equation}\label{eq:bounded_differences}
		|f(z_1, \dots, z_m)-f(z_1, \dots, z_{i-1}, z_i', z_{i+1}, \dots, z_m)| \leq 2c
	\end{equation}
        for all $z_1, \dots, z_m, z_i'$ and all $i \in [m]$,
	then $\var(f) \leq c^2 m$.
	
	Let us view $S(\mu_n, \nu_n)$ as a function of the $m=2n$ independent random variables $X_1, \dots, X_n, Y_1, \dots, Y_n$.
Fix $(X_2, \dots, X_{n}) = (x_2, \dots, x_{n})$ and $(Y_1, \dots, Y_n) = (y_1, \dots, y_n)$, and view the dual functional $\widehat \Phi = \widehat \Phi_{x_1}$ as a function of the value of $X_1 = x_1$ alone.
	Then for any $x_1 \in \Omega$, under Assumption~\ref{assume:bounded}, Proposition~\ref{prop:duals_bounded} implies that the maximizer of the dual functional $\widehat \Phi_{x_1}$ over $\Cb^{\oplus}$ is achieved at an $h$ satisfying $\|h\|_\infty \leq 2$.
	Therefore
	\begin{align*}
		|\sup_{h \in \Cb^{\oplus}} \widehat \Phi_{x_1}(h) - \sup_{h' \in \Cb^{\oplus}} \widehat \Phi_{x'_1}(h')| &\leq \sup_{h \in \Cb^{\oplus},\, \|h\|_\infty \leq 2} |\widehat \Phi_{x_1}(h) - \widehat \Phi_{x_1'}(h)| \\
        & \le  \frac{2\eps e^{2/\eps}+4}{n} \eqqcolon 2c\,,
	\end{align*}
	where we have used the fact that each term in~\eqref{eq:emp_dual_def} is bounded.
	Repeating this argument for $X_2, \dots, X_n, Y_1, \dots, Y_n$, we obtain that $S(\mu_n, \nu_n)$ satisfies~\eqref{eq:bounded_differences} with 
 $$
 c =  \frac{\eps e^{2/\eps}+2}{n} \le \frac{2 \eps e^{2/\eps}}{n}\,,
 $$
 since $\eps e^{2/\eps} > 2$ for all $\eps > 0$.
	
Applying the Efron--Stein inequality, we obtain 
 $$
 \var(S(\mu_n, \nu_n)) \le 2c^2 n\le \frac{8\eps^2 e^{4/\eps}}{n}\,,
 $$
 as claimed.
\end{proof}

\medskip

We now conclude with an analogous sample complexity result for the entropic map $b^\star$.

Recall from~\eqref{eq:gamma_star_def} that the density of $\gamma^\star$ with respect to $\mu \otimes \nu$ is
$$
p^\star = e^{(h^\star - c)/\eps}\,.
$$
Similarly, let $\hat p = e^{(\hat h - c)/\eps}$ denote the density of $\hat \gamma$ with respect to $\mu_n \otimes \nu_n$. Note that thanks to canonical extensions, these two functions may be defined on the whole space.

In the sequel, we use the fact that these densities are uniformly bounded. Indeed, from Proposition~\ref{prop:duals_bounded} and Assumption~\ref{assume:bounded},
\begin{equation}\label{eq:bdd_densities}
    \|\hat p\|_{L^\infty}, \|p^\star\|_{L^\infty} \le e^{2/\eps}\,.
\end{equation}
With these definitions, we have the identities
\begin{align*}
	b^\star(x) & = \int y\, p^\star(x, y) \, \nu(\ud y)\,,\\
	\hat b(x) & = \int y\, \hat p(x, y) \, \nu_n(\ud y)\,.
\end{align*}

The following bound holds.
\begin{theorem}\label{map_rate}
	Adopt Assumption~\ref{assume:bounded}.
	The empirical entropic map satisfies
	\begin{equation*}
		\E \|b^\star - \hat b\|^2_{L^2(\mu_n)} \lesssim \frac{1}{n}\,,
	\end{equation*}
	where the implicit constant depends exponentially on $1/\eps$.
\end{theorem}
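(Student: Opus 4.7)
The proof proposal is to reduce the map estimation bound to the potential estimation bound already established in Lemma~\ref{lem:entropic_potential_estimation}, via a standard add-and-subtract decomposition.

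\textbf{Step 1: Decomposition.} I would write
\begin{align*}
    b^\star(x) - \hat b(x)
    &= \int y\,p^\star(x,y)\,\nu(\ud y) - \int y\,\hat p(x,y)\,\nu_n(\ud y) \\
    &= \underbrace{\int y\,p^\star(x,y)\,(\nu - \nu_n)(\ud y)}_{A(x)}
    + \underbrace{\int y\,\bigl(p^\star(x,y) - \hat p(x,y)\bigr)\,\nu_n(\ud y)}_{B(x)}
\end{align*}
and apply $(a+b)^2 \le 2a^2 + 2b^2$ in $L^2(\mu_n)$ to reduce to bounding each term separately.

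\textbf{Step 2: Bound $A$ by conditional variance.} For each fixed $X_i$, since $Y_1,\dotsc,Y_n\simiid \nu$ are independent of $X_i$ and $\E[Y_j\,p^\star(X_i,Y_j)\mid X_i] = b^\star(X_i)$, we have
\begin{align*}
    \E\bigl[\|A(X_i)\|^2 \bigm\vert X_i\bigr]
    &= \frac{1}{n}\,\var\bigl(Y_1\,p^\star(X_i,Y_1) \bigm\vert X_i\bigr)
    \le \frac{\|Y_1\|^2\,\|p^\star\|_\infty^2}{n}
    \le \frac{e^{4/\eps}}{4n}\,,
\end{align*}
using Assumption~\ref{assume:bounded} and~\eqref{eq:bdd_densities}. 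Averaging over $i$ yields $\E \|A\|_{L^2(\mu_n)}^2 \lesssim 1/n$.

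\textbf{Step 3: Bound $B$ by potential differences.} By Jensen's inequality applied to $B(X_i) = \frac{1}{n}\sum_j Y_j\,(p^\star - \hat p)(X_i,Y_j)$, and using $\|Y_j\|\le 1/2$,
\begin{align*}
    \|B\|_{L^2(\mu_n)}^2
    &\le \frac{1}{4}\, \|p^\star - \hat p\|_{L^2(\mu_n\otimes\nu_n)}^2\,.
\end{align*}
Then, because $p = e^{(h-c)/\eps}$, $c\ge 0$, and $\|h^\star\|_\infty, \|\hat h\|_\infty \le 2$ by Proposition~\ref{prop:duals_bounded}, the mean value theorem gives the pointwise Lipschitz estimate
\begin{align*}
    |p^\star(x,y) - \hat p(x,y)|
    &\le \frac{e^{2/\eps}}{\eps}\, |h^\star(x,y) - \hat h(x,y)|\,.
\end{align*}
Taking expectations and invoking Lemma~\ref{lem:entropic_potential_estimation} produces
\begin{align*}
    \E\|p^\star - \hat p\|_{L^2(\mu_n\otimes\nu_n)}^2
    &\le \frac{e^{4/\eps}}{\eps^2}\,\E\|h^\star - \hat h\|_{L^2(\mu_n\otimes\nu_n)}^2
    \le \frac{2\,e^{14/\eps}}{n}\,,
\end{align*}
so $\E\|B\|_{L^2(\mu_n)}^2 \lesssim 1/n$. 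Combining with Step~2 gives the result, with the implicit constant depending exponentially on $1/\eps$.

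\textbf{Main obstacle.} The plan is essentially routine once we identify the right decomposition; the only mildly delicate point is Step~3, which must pass from estimation of potentials $h$ to estimation of densities $p = e^{(h-c)/\eps}$. This is where both the uniform boundedness of the dual potentials (Proposition~\ref{prop:duals_bounded}) and the bounded support assumption (Assumption~\ref{assume:bounded}) enter critically: without them, the exponential map from $h$ to $p$ fails to be Lipschitz with a controlled constant, and the $\eps$-dependence in the final bound would blow up further.
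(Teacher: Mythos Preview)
Your proposal is correct and follows essentially the same approach as the paper: the same add-and-subtract decomposition into a variance term $A$ and a density-difference term $B$, the same conditional-variance argument for $A$, and the same Lipschitz bound $|p^\star - \hat p| \lesssim |h^\star - \hat h|$ combined with Lemma~\ref{lem:entropic_potential_estimation} for $B$. Your Lipschitz constant $e^{2/\eps}/\eps$ is in fact sharper than the paper's stated $e^{2/\eps}$ (the $1/\eps$ arises from differentiating $e^{(h-c)/\eps}$ in $h$), but this only affects the implicit constant and not the $1/n$ rate.
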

\begin{proof}
	Fix $x \in \Omega$.
	Young's inequality and Jensen's inequality imply
	\begin{align*}
		&\|b^\star(x) - \hat b(x)\|^2 \\
            &\quad  \le 2\,\big\|\int y\, p^\star(x, y) \, (\nu-\nu_n)(\ud y)\big\|^2 + 2\,\big\|\int y\, (p^\star - \hat p)(x, y) \, \nu_n(\ud y)\big\|^2  \\
		&\quad \leq 2\,\big\|\int y\, p^\star(x, y) \, (\nu-\nu_n)(\ud y)\big\|^2 + 2\,\int \|y\|^2\, |(p^\star - \hat p)(x, y)|^2 \, \nu_n(\ud y) \\
		&\quad \leq 2\,\big\|\int y \, p^\star(x, y) \, (\nu-\nu_n)(\ud y)\big\|^2 + 2\,\|p^\star(x, \cdot) - \hat p(x, \cdot)\|_{L^2(\nu_n)}^2\,,
	\end{align*}
	where in the last inequality we use the fact that $\|y\| \leq 1$ on the support of $\nu_n$, by Assumption~\ref{assume:bounded}.
	We therefore obtain
	\begin{align*}
		&\|b^\star - \hat b\|^2_{L^2(\mu_n)} \\
  &\qquad \le \frac 2n \sum_{i=1}^n \big\|\int y\,  p^\star(X_i, y) \, (\nu-\nu_n)(\ud y)\big\|^2 + 2\,\|p^\star - \hat p\|_{L^2(\mu_n \otimes \nu_n)}^2\,.
	\end{align*}
 To control the first term, observe that
 \begin{align*}
   \E\big[&\big\|\int y\,  p^\star(X_i, y) \, (\nu-\nu_n)(\ud y)\big\|^2\bigm\vert X_i\big]\\
   &=\frac1n\, \E\big[\bigl\|Y_1  p^\star(X_i, Y_1)- \E[Y_1  p^\star(X_i, Y_1)] \bigr\|^2\bigm\vert X_i\big] \le \frac{e^{4/\eps}}{n}\,,
 \end{align*}
 where we used Assumption~\ref{assume:bounded} and~\eqref{eq:bdd_densities}.
 
To control the second term, we use the fact that the exponential function $e^{x}$ is $e^M$-Lipschitz on $(-\infty, M]$ for any~$M$. Hence, using Assumption~\ref{assume:bounded} and Proposition~\ref{prop:duals_bounded}, we get also that
	\begin{equation*}
		|p^\star(x, y) - \hat p(x, y)| \leq e^{2/\eps}\, |h^\star(x, y) - \hat h(x, y)| \quad \quad \forall x, y \in \Omega\,.
	\end{equation*} 
	Therefore 
 $$
 \E  \|p^\star - \hat p\|_{L^2(\mu_n \otimes \nu_n)}^2 \le e^{4/\eps}\, \E  \|h^\star - \hat h\|_{L^2(\mu_n \otimes \nu_n)}^2 \le \frac{\eps^2 e^{14/ \eps}}{n}\,,
 $$
 by Lemma~\ref{lem:entropic_potential_estimation}. We have proved that
 $$
\E\|b^\star - \hat b\|^2_{L^2(\mu_n)}  \le \frac{2e^{4/\eps}}{n} + \frac{2\eps^2 e^{14/\eps}}{n} \lesssim \frac 1n\,.
 $$
\end{proof}

The preceding theorem gives a bound on the empirical entropic map in expected $L^2(\mu_n)$ norm.
At the price of a larger constant factor, it is also possible to obtain a bound in $L^2(\mu)$.

\begin{theorem}\label{thm:ent_map_estimation_pop}
	Adopt Assumption~\ref{assume:bounded}.
	The empirical entropic map satisfies
	\begin{equation*}
		\E \|b^\star - \hat b\|^2_{L^2(\mu)} \lesssim \frac{1}{n}\,,
	\end{equation*}
	where the implicit constant depends exponentially on $1/\eps$.
\end{theorem}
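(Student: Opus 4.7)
The strategy is to repeat the proof of Theorem~\ref{map_rate} with $\mu$ replacing $\mu_n$ at the outermost integration. Thanks to the canonical extensions of the dual potentials, the pointwise estimate
$$\|b^\star(x) - \hat b(x)\|^2 \le 2\,\Bigl\|\int y\,p^\star(x,y)\,(\nu-\nu_n)(\ud y)\Bigr\|^2 + 2\,\|p^\star(x,\cdot) - \hat p(x,\cdot)\|^2_{L^2(\nu_n)}$$
established in the proof of Theorem~\ref{map_rate} holds for \emph{every} $x\in\Omega$, so it can be integrated against $\mu$ instead of $\mu_n$. This splits $\E\|b^\star-\hat b\|^2_{L^2(\mu)}$ into two contributions.

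The first contribution, $\int \E\,\|\int y\,p^\star(x,y)\,(\nu-\nu_n)(\ud y)\|^2\, \mu(\ud x)$, is handled exactly as in the proof of Theorem~\ref{map_rate}: $p^\star$ is deterministic and bounded by $e^{2/\eps}$, so for each fixed $x$ the inner quantity is a centered sample mean whose second moment is at most $e^{4/\eps}/n$; integrating against $\mu$ yields $e^{4/\eps}/n$. For the second, the Lipschitz estimate $|\hat p - p^\star|\le e^{2/\eps}|\hat h - h^\star|$ on $\Omega\times\Omega$ (Proposition~\ref{prop:duals_bounded}) reduces the problem to proving
$$\E\,\|h^\star - \hat h\|^2_{L^2(\mu\otimes\nu_n)} \lesssim \frac{1}{n},$$
which is the analogue of Lemma~\ref{lem:entropic_potential_estimation} in the ``half-population'' norm $L^2(\mu\otimes\nu_n)$.

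To obtain this bound I would decompose
$$\|h^\star-\hat h\|^2_{L^2(\mu\otimes\nu_n)} = \|h^\star-\hat h\|^2_{L^2(\mu_n\otimes\nu_n)} + \int\psi(x)\,(\mu-\mu_n)(\ud x),$$
where $\psi(x)\deq\|h^\star(x,\cdot)-\hat h(x,\cdot)\|^2_{L^2(\nu_n)}$. The first summand is $O(1/n)$ in expectation by Lemma~\ref{lem:entropic_potential_estimation}, so the task is reduced to bounding $\E|\int\psi\,(\mu-\mu_n)(\ud x)|$. The function $\psi$ is uniformly bounded by $16$ (Proposition~\ref{prop:duals_bounded}), but is random since it depends on the entire sample through $\hat h$ and $\nu_n$.

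The main obstacle is handling this last deviation. I would control it by an Efron--Stein / bounded-differences argument in the spirit of the variance calculation in the proof of Theorem~\ref{thm:rate_Sn}: replacing a single $X_i$ by an independent copy perturbs the Sinkhorn fixed point, and hence $\hat h$, by $O(1/n)$ uniformly on $\Omega\times\Omega$, so $\psi$ has bounded differences of order $1/n$, which gives $\E|\int\psi\,\ud(\mu-\mu_n)|\lesssim 1/n$. Making this ``$O(1/n)$ uniform perturbation of $\hat h$'' quantitative---via strong concavity of $\hat\Phi$ in $L^2(\mu_n\otimes\nu_n)$ together with a first-order analysis of the Schr\"odinger system described in Proposition~\ref{prop:eot_strong_duality}---is the most delicate step of the argument, and is where the exponential-in-$1/\eps$ constants accumulate.
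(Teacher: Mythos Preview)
Your reduction to bounding $\E\|h^\star-\hat h\|^2_{L^2(\mu\otimes\nu_n)}$ is exactly right, and the paper proceeds the same way up to that point. The divergence is in how this half-population norm is controlled.

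Your plan is to write $\|h^\star-\hat h\|^2_{L^2(\mu\otimes\nu_n)} = \|h^\star-\hat h\|^2_{L^2(\mu_n\otimes\nu_n)} + \int\psi\,d(\mu-\mu_n)$ and bound the fluctuation via a stability-of-$\hat h$ argument. The gap is that the stability you invoke is too strong. Strong concavity of $\hat\Phi$ only gives $\|\hat h - \hat h^{(i)}\|_{L^2(\mu_n\otimes\nu_n)} = O(n^{-1/2})$ when one $X_i$ is swapped (and even this is delicate, since $\hat h^{(i)}$ optimizes a functional living in a \emph{different} $L^2$ space); it does not yield the uniform $O(1/n)$ perturbation you need. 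To upgrade $L^2$ control to the pointwise control your bounded-differences step requires, you would in any case have to use that $\hat f$ is a log-sum-exp of $\hat g$, so that $|\hat f(x)-\hat f^{(i)}(x)|\lesssim\|\hat g-\hat g^{(i)}\|_{L^1(\nu_n)}$ uniformly in $x$. But once you are willing to use that structural fact, the detour through stability is unnecessary.

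The paper bypasses the stability analysis entirely by exploiting the separable form $h = f\oplus g$ directly. Since the $x$-dependence of $\hat h - h^\star$ sits entirely in $\hat f - f^\star$, and both $\hat f$ and $f^\star$ are given by explicit log-sum-exp formulas (over $\nu_n$ and $\nu$ respectively) in terms of $\hat g$ and $g^\star$, one gets the pointwise bound
\[
|\hat h(x,y) - h^\star(x,y)| \;\lesssim\; \int|\hat g - g^\star|\,d\nu_n + |\hat g(y) - g^\star(y)| + \Bigl|\int e^{(g^\star(y)-\|x-y\|^2)/\eps}\,(\nu_n-\nu)(\ud y)\Bigr|
\]
uniformly in $x$. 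Integrating against $\mu\otimes\nu_n$ and taking expectations, the last term contributes $O(1/n)$ by a variance bound, and the first two give $\E\|\hat g - g^\star\|^2_{L^2(\nu_n)}$, which is dominated by $\E\|\hat h - h^\star\|^2_{L^2(\mu_n\otimes\nu_n)}$ under the centering $\int\hat g\,d\nu_n = \int g^\star\,d\nu_n$. Lemma~\ref{lem:entropic_potential_estimation} then closes the argument. This is both shorter and avoids the unproven stability claim.
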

\begin{proof}
	As in the proof of Theorem~\ref{map_rate}, we have the pointwise bound
	\begin{align*}
		&\|b^\star(x) - \hat b (x)\|^2 \\
            &\qquad \lesssim \Bigl\|\int y\,  p^\star(x, y) \, (\nu-\nu_n)(\ud y)\Bigr\|^2 + \|h^\star(x, \cdot) - \hat h(x, \cdot)\|_{L^2(\nu_n)}^2\,.
	\end{align*}
	Integrating with respect to $\mu$ and taking expectation, we obtain
	\begin{equation*}
		\E \|b^\star - \hat b\|^2_{L^2(\mu)} \lesssim \frac 1n + \E \|h^\star - \hat h\|_{L^2(\mu \otimes \nu_n)}^2\,.
	\end{equation*}
It is thus sufficient to establish that
\begin{equation}\label{eq:pointwise_to_l2}
     \E\|h^\star - \hat h\|_{L^2(\mu \otimes \nu_n)}^2 \lesssim \frac1n\,.
\end{equation}
To that end, we use the fact that the logarithm and exponential functions are locally Lipschitz, with Lipschitz constant depending on the magnitude of the arguments.
	In particular, we recall the elementary inequalities
	\begin{align*}
	e^{\min\{a, b\}}\,|a-b| \leq	|e^{a} - e^{b}| \leq e^{\max\{a, b\}}\,|a - b|\,,
	\end{align*}
	which also imply the bound $|\log a - \log b| \leq \frac{|a - b|}{\min\{a, b\}}$ for $a,b > 0$.
 
Recall that 
$$
\hat h(x,y) =\hat f(x)  + \hat g(y)= - \eps \log \int e^{(\hat g(y) - \|x - y\|^2)/\eps} \, \nu_n(\ud y)+ \hat g(y)\,,
$$
and 
\begin{align*}
    h^\star(x,y) &=f^\star(x)  + g^\star(y)= - \eps \log \int e^{(g^\star(y) - \|x - y\|^2)/\eps} \, \nu(\ud y)+ g^\star(y)\\
    &=- \eps \log \int e^{(g^\star(y) - \|x - y\|^2)/\eps} \, \nu_n(\ud y)+g^\star(y) + \Delta(x)\,,
\end{align*}
where
\begin{equation*}
    \Delta(x) = \eps \log \int e^{(g^\star(y) - \|x - y\|^2)/\eps} \, \nu_n(\ud y) - \eps \log \int e^{(g^\star(y) - \|x - y\|^2)/\eps} \, \nu(\ud y)\,.
\end{equation*}
Moreover, we may assume that $\int \hat g\,\ud \nu_n=\int g^\star\, \ud \nu_n$ without loss of generality since dual solutions are defined up to an additive constant. Using the Lipschitz properties listed above together with Assumption~\ref{assume:bounded} and Proposition~\ref{prop:duals_bounded}, we get
\begin{align*}
  |\hat h(x,y)-h^\star(x,y)| \lesssim \int &|\hat g -g^\star|\, \ud \nu_n  + |\hat g(y) - g^\star(y)|\\
  &+  \Bigl|\int e^{(g^\star(y) - \|x - y\|^2)/\eps} \, (\nu_n - \nu)(\ud y)\Bigr|\,.  
\end{align*}
Using Jensen's inequality and a trivial variance bound for the average of independent and bounded random variables, we finally obtain
$$
\E\|\hat h-h^\star\|_{L^2(\mu \otimes \nu_n)}^2  \lesssim \E \|\hat g - g^\star\|_{L^2(\nu_n)}^2 +\frac1n\,.
$$
Finally, note that
\begin{align*}
  \|\hat h-h^\star\|_{L^2(\mu_n \otimes \nu_n)}^2 &= \int \bigl[(\hat f - f^\star) \oplus (\hat g - g^\star)\bigr]^2 \, \ud (\mu_n\otimes \nu_n) \\
  &=\|\hat f - f^\star\|_{L^2(\mu_n)}^2 +  \|\hat g - g^\star\|_{L^2(\nu_n)}^2 \\
  &\qquad{} +2\int (\hat f -f^\star)\, \ud \mu_n \int (\hat g -g^\star)\, \ud \nu_n\\
  &\ge  \|\hat g - g^\star\|_{L^2(\nu_n)}^2 \,,
\end{align*}
where in the last inequality we used the fact that $\int \hat g\,\ud \nu_n=\int g^\star\, \ud \nu_n$.

Hence we have proved that 
$$
\E\|\hat h-h^\star\|_{L^2(\mu \otimes \nu_n)}^2 \lesssim \E\|\hat h-h^\star\|_{L^2(\mu_n \otimes \nu_n)}^2 + \frac1n\,.
$$
Together with Lemma~\ref{lem:entropic_potential_estimation}, it completes the proof of~\eqref{eq:pointwise_to_l2}, and hence of the theorem.
\end{proof}

The conclusion of this section is quite striking: non-parameteric quantities can be estimated at a parametric rate. An inspection of the proofs of these results indicates that strong convexity is key to achieve such a result. In retrospect it is not surprising that the empirical risk minimizer of a strongly convex functional should enjoy such dimension-free rates. Indeed, stochastic gradient descent on such an objective does (see, e.g.,~\cite[Theorem~4]{KarNutSch16}). This phenomenon is not new: it is known for specific losses such as the ones employed in Chapter~\ref{chap:barycenters} and was observed in~\cite{EHanLi18} for example.

\section{Discussion}

\noindent\textbf{\S\ref{sec:eot_deriv}.}
Entropic optimal transport was first popularized for computational purposes in~\cite{Cut13}. See~\cite{PeyCut19} for an introduction to computational optimal transport which nicely complements our treatment of statistical optimal transport.
Entropic optimal transport between Gaussians (Exercise~\ref{exe:eot_gaussians}) was computed in~\cite{Janetal20EOTGaussians, MalGerMin22EOTGaussians}.

Besides statistical applications, entropic optimal transport has also been used to establish mathematical results for unregularized optimal transport~\cite{FatGozPro20, Gen+20EntropicHWI, ChePoo23Caffarelli}.

\noindent\textbf{\S\ref{sec:eot_duality}.}
The convergence of Sinkhorn's algorithm is discussed in many places, see~\cite{Kal+08RAS, AltWeeRig17, DvuGasKro18, DeB+21DiffSB, Leg21Sinkhorn, AubKorLeg22MD, GhoNut22Sinkhorn, BalBer23Sinkhorn, Gre+23Sinkhorn, ChiDelVas24Sinkhorn}.

\noindent\textbf{\S\ref{sec:eot_sample}.}
The proofs in this section are based on~\cite{RigStr22EOT}.
Note that the bounds obtained here are \emph{dimension-free}, but scale exponentially w.r.t.\ $1/\eps$.
The sample complexity of entropic optimal transport was first established by~\cite{genevay2019sample}, who proved bounds that scaled as $e^{O(1/\eps)} \eps^{-O(d)}$.
Later, \cite{MenNil19EOT} observed that it was possible to slightly modify their argument to remove the exponential factor.
These bounds can be better in low dimension, but provide poor control when the dimension is large.

Recent works have also focused on obtaining bounds which instead scale as $\eps^{-O(d^\star)}$, where the parameter $d^\star$ denotes the intrinsic dimensionality of the measures (in fact, the minimum intrinsic dimension among $\mu$ and $\nu$). See Exercise~\ref{qu:eot_intrinsic} and~\cite{Str23MID} for an approach close to the one taken here, and~\cite{GroHun23AdaptEOT} for an empirical process argument.

The techniques used in this section can be used not only to prove sample complexity bounds, but also to obtain distributional limits~\cite{MenNil19EOT, delSanLou+23CLTEntropic, GonLouNil24EOT}.
Such bounds were originally obtained in the discrete case by~\cite{BigCazPap19,KlaTamMun20}.

One notable quirk about entropic optimal transport is that in general, $S(\mu,\mu) > 0$ due to the presence of the entropic term in the objective.
In light of this, Genevay et al.~\cite{GenPeyCut18} proposed the ``debiased'' quantity $D(\mu, \nu)  \deq  S(\mu,\nu) - \frac{1}{2}\,(S(\mu,\mu) + S(\nu,\nu))$, called the \emph{Sinkhorn divergence}\index{Sinkhorn divergence} between $\mu$ and $\nu$.
It can be shown that the Sinkhorn divergence is convex in each of its variables, non-negative, and vanishes if and only if $\mu = \nu$~\cite{feydy2019interpolating}.
Like entropic optimal transport, the Sinkhorn divergence can be estimated at a parametric rate~\cite{genevay2019sample,delSanLou+23CLTEntropic}.
The Sinkhorn divergence has been advocated as tool for estimating Wasserstein distances~\cite{ChiRouLeg20}, although there are caveats when using it for map estimation~\cite{PooCutNil22Debiaser}.

\noindent\textbf{\S\ref{sec:eot_rates_primal}.}
Theorem~\ref{thm:ent_map_estimation_pop} provides a rate for estimating the entropic map $b^\star$, but combined with an approximation result quantifying the distance between $b^\star$ and the true optimal transport map $T$, one can use $\hat b$ as a computationally efficient estimator for $T$ (c.f.~\cite{PooNil22EntropicMaps}).
Although it is not minimax in general, it is in the semi-discrete case~\cite{PooDivNil23SemiDiscrete}\index{semi-discrete optimal transport}.

As the alternative nomenclature ``entropic regression function'' indicates, the entropic map also solves a regression problem with respect to the entropic coupling $\gamma^\star$; indeed,  $b^\star = \argmin_{f: \R^d \to \R^d} \E_{\gamma^\star} \|Y - f(X)\|^2$.
Analyzing this regression problem when the minimization is taken over a smaller class of candidate regression functions rather than all maps from $\R^d \to \R^d$ is an open problem.

\section{Exercises}

\begin{enumerate}
	\item\label{exe:max_entropy}
	\begin{enumerate}
		\item Let $\Omega$ be a compact subset of $\R^d$ with positive Lebesgue measure.
		Show that the uniform measure on $\Omega$ has the largest differential entropy of any probability measure on $\Omega$.
		\item Let $m$ be a positive integer. Show that the uniform measure on $[m]$ has the largest Shannon entropy of any probability measure on $[m]$.
	\end{enumerate}
	
	\item \label{exe:kl_to_ent} 
 \begin{enumerate}
     \item Show that if $\mu$, $\nu$, and $\gamma$ are absolutely continuous, and $\gamma \in \Gamma_{\mu, \nu}$, then $\KL(\gamma \mmid \mu \otimes \nu) = \operatorname{Ent}(\mu) + \operatorname{Ent}(\nu) - \operatorname{Ent}(\gamma)$.
     Conclude that if $\mu$ and $\nu$ are absolutely continuous, then the optimization problem~\eqref{eq:eot_cont} is equivalent to~\eqref{eq:eot_def}.
     \item Show by an analogous calculation that if $\mu$ and $\nu$ are discrete, then~\eqref{eq:eot_disc} is equivalent to~\eqref{eq:eot_def}.
 \end{enumerate}
	
    \item \label{exe:eot_smoothness} Let $\gamma_\eps$ denote the entropic optimal transport plan between $\mu$ and $\nu$, with corresponding potentials $f_\eps$, $g_\eps$.
    Define $\varphi_\eps \deq \frac{1}{2}\,\|\cdot\|^2 - f_\eps$.
    Compute the derivatives of $\varphi_\eps$ and conclude that
    \begin{align}
        \nabla \varphi_\eps(x)
        &= \E_{\gamma_\eps}[Y\mid X=x]\,, \\
        \nabla^2 \varphi_\eps(x)
        &= \eps^{-1} \cov_{\gamma_\eps}(Y \mid X=x)\,.\label{eq:entropic_hessian}
    \end{align}
    In particular, since we expect that $\varphi_\eps$ converges to the unregularized Brenier potential $\varphi$ as $\eps\to0$ (proven rigorously in~\cite{NutWie22EOTPot}), and $\varphi_\eps$ is convex by~\eqref{eq:entropic_hessian}, this gives another explanation for Brenier's Theorem~\ref{thm:improvedBrenier}.
    
    \item\label{exe:eot_gaussians} Compute the entropic optimal transport solution (i.e., the potentials, the plan, the cost) between two Gaussians. \emph{Hint}: as you might expect, the entropic potentials are quadratic functions.

    \item In this exercise, we present another view on the Sinkhorn iterations~\eqref{eq:sinkhorn1},~\eqref{eq:sinkhorn2}. Consider the joint distributions
    \begin{align*}
        \gamma_{t-\frac{1}{2}}(\ud x,\ud y)
        &\propto \exp\bigl( (f_{t-1}(x)+ g_t(y) - \|x-y\|^2)/\eps\bigr)\,\mu(\ud x)\,\nu(\ud y)\,, \\
        \gamma_t(\ud x, \ud y)
        &\propto \exp\bigl( (f_t(x)+ g_t(y) - \|x-y\|^2)/\eps\bigr)\,\mu(\ud x)\,\nu(\ud y)\,.
    \end{align*}
    Here, we take $\gamma_0(\ud x, \ud y) \propto \exp(-\|x-y\|^2/\eps)\,\mu(\ud x)\,\nu(\ud y)$.
    Show that the Sinkhorn updates correspond to iteratively ``fixing the marginals''; i.e., $\gamma_{t-\frac{1}{2}}$ is obtained from $\gamma_{t-1}$ by keeping the conditional distribution of $X\mid Y$ fixed but setting the $Y$-marginal to $\nu$, and $\gamma_t$ is obtained from $\gamma_{t-\frac{1}{2}}$ by keeping the conditional distribution of $Y\mid X$ fixed but setting the $X$-marginal to $\mu$.

    \item In the discrete setting where $\mu$, $\nu$ are finitely on $\{x_1,\dotsc,x_m\}$ and $\{y_1,\dotsc,y_n\}$ respectively, Sinkhorn's algorithm shows that there are positive \emph{scalings} $\kappa \in \R_+^m$, $\lambda \in \R_+^n$ of the rows and columns of the matrix $M$ with entries $M_{i,j} = \exp(-\|x_i-y_j\|^2/\eps)$, such that the scaled matrix $\diag(\kappa)\,M\diag(\lambda)$ has marginals $\mu$ and $\nu$ respectively; see~\cite{PeyCut19} for details.

    As a special case, suppose that $\mu$, $\nu$ are \emph{uniformly} distributed and $m=n$.
    Then, the scaled matrix $\tilde M \deq \diag(\kappa)\,M\diag(\lambda)$ has marginals $\mu$ and $\nu$ if and only if $n\tilde M$ is doubly stochastic, i.e., it belongs to the Birkhoff polytope~\eqref{eq:birkhoff}.
    In this case, prove the existence of these scalings for any matrix $M$ with positive entries by considering the KL minimization problem
    \begin{align*}
        \operatorname*{minimize}_{\gamma \in \msf{Birk}}\quad \sum_{i,j=1}^n \bigl( \gamma_{i,j} \log \frac{\gamma_{i,j}}{M_{i,j}} - \gamma_{i,j} + M_{i,j}\bigr)\,.
    \end{align*}
    Namely, show that a solution to this problem exists, and using Lagrange multipliers, show that $\gamma$ is of the form $\diag(\kappa)\,M\diag(\lambda)$ for positive scalings $\kappa$, $\lambda$.
    
	\item\label{exe:eot_empirical_process} The strong convexity arguments in Section~\ref{sec:eot_sample} are designed to avoid the use of empirical process theory.
	However, this exercise shows how to use empirical process theory to prove sample complexity bounds using techniques analogous to those in Section~\ref{sec:chaining}.
	Unlike the approach in Section~\ref{sec:eot_sample}, these bounds depend polynomially on $1/\eps$, but with exponent scaling with $d$. 
	For simplicity, we focus on the \emph{one-sample problem}, and prove bounds on the quantity $S(\mu_n, \nu) - S(\mu, \nu)$.
	\begin{enumerate}
		\item Let $f$ and $g$ be solutions to~\eqref{eta-D} for any pair of measures supported on $\Omega = B_{1/2}(0)$.
		Let $s$ be a positive integer.
		Arguing as in Exercise~\ref{exe:eot_smoothness}, show that there exists a positive constant $C_s$ such that for all multi-indices $\alpha = (\alpha_1, \dots, \alpha_d)$ with $|\alpha| = s$, we have the bound
		\begin{equation*}
			\sup_{x \in \Omega} |\partial_\alpha f(x)| \leq C_s \eps^{1-s}\,.
		\end{equation*}
		Argue that we can assume that $f(0) = 0$ without loss of generality, and thereby obtain the bound $\sup_{x \in \Omega} |f(x)| \leq C_0$ for some positive constant $C_0$. 
		\item For $L > 0$, define
		\begin{equation*}
			\cF_s(L)  \deq  \Big\{f: \Omega \to \R^d: \sum_{k=0}^s \sum_{\alpha: |\alpha| = k} \|\partial_\alpha f\|_{L^\infty(\Omega)} \leq L\Big\}\,.
		\end{equation*}
		Fix a positive integer $s$. Argue that there exists a constant $C = C(s)$ such that for $L = C\,(1+\eps^{1-s})$, we have
		\begin{equation*}
			|S(\mu_n, \nu) - S(\mu, \nu)| \leq \sup_{f \in \cF_s(L)} \left|\int f \, \ud \mu_n - \int f \, \ud \mu\right|\,.
		\end{equation*}
		\emph{Hint:} let $f^\star$ and $g^\star$ be solutions to~\eqref{eta-D} for $\mu$ and $\nu$, and let $\hat f$ and $\hat g$ be the solutions for $\mu_n$ and $\nu$.
		Argue that $h^\star = f^\star \oplus g^\star$ and $\hat h = \hat f \oplus \hat g$ satisfy
		\begin{equation*}
			S(\mu_n, \nu) - S(\mu, \nu) = \widehat \Phi(\hat h) - \Phi(h^\star) \leq \int \hat f \, \ud \mu_n - \int \hat f \, \ud \mu\,,
		\end{equation*}
		and analogously
		\begin{equation*}
			S(\mu, \nu) - S(\mu_n, \nu) \leq \int f^\star \, \ud \mu - \int f^\star \, \ud \mu_n\,.
		\end{equation*}
		Then apply part~(a).
		\item It can be shown that $\log N(\delta, \cF_s(L)) \lesssim (L/\delta)^{d/s}$, and moreover that this bound holds also for fractional $s$, where $\cF_s$ is interpreted as a suitable H\"older space.
		Taking $s = d/2 + 1$, use Proposition~\ref{prop:chaining} to conclude
		\begin{equation*}
			\E |S(\mu_n, \nu) - S(\mu, \nu)| \lesssim (1+\eps^{-d/2})\, n^{-1/2}\,.
		\end{equation*}
	\end{enumerate}
	
    \item\label{qu:eot_intrinsic} The statistical results in Section~\ref{sec:eot_sample} rely on pointwise bounds on the density $p^\star$.
    Here, we show a different way to control $\|p^\star\|_{L^2(\mu\otimes \nu)}$, which provides an entry point into~\cite{Str23MID}.
    \begin{enumerate}
        \item Argue that the dual potentials $f^\star$, $g^\star$ are $O(1)$-Lipschitz, and that $\log p^\star$ is $O(\eps^{-1})$-Lipschitz. (Here, we are still working over a bounded domain.)
        \item Prove that for all $\delta > 0$, $\int {\nu(B(z, \delta))}^{-1}\,\nu(\ud z) \le N(\delta/4, \supp \nu)$, where $N(\delta/4, \supp\nu)$ is the covering number of $\supp\nu$ at scale $\delta/4$. (Let $z_1,\dotsc,z_K \in \supp\nu$ be a $\delta/2$-covering of $\supp\nu$ with $K \le N(\delta/4, \supp\nu)$. Bound the integral by summing over the integals over $B(z_k, \delta/2)$ for $k=1,\dotsc,K$.)
        \item Using the fact that
        \begin{align*}
            1 = p^\star(x,y) \int \frac{p^\star(x,y')}{p^\star(x,y)}\,\nu(\ud y')
            \ge p^\star(x,y) \int_{B(y, r)} \frac{p^\star(x,y')}{p^\star(x,y)}\,\nu(\ud y')
        \end{align*}
        and the log-Lipschitz property from (a), show that $p^\star(x,y) \lesssim {\nu(B(y, r))}^{-1}$, where $r \asymp \eps$.
        \item Combining this with the estimate in (b), prove that $\|p^\star\|_{L^2(\mu\otimes \nu)} \lesssim \sqrt{N(r', \supp \nu)}$ where $r' \asymp \eps$.
        Explain why this implies that if $\supp\mu$ is $d_\mu$-dimensional and $\supp \nu$ is $d_\nu$-dimensional, then $\|p^\star\|_{L^2(\mu\otimes \nu)} \lesssim \eps^{-(d_\mu \wedge d_\nu)/2}$.
    \end{enumerate}
\end{enumerate}

\chapter{Wasserstein gradient flows: theory}
\label{chap:WGF}

We have seen in Proposition~\ref{prop:wp_is_metric} that $\cP_2(\R^d)$, once endowed with the $W_2$ distance, has the structure of a metric space. It has in fact a much richer geometric structure, as it resembles a Riemannian manifold.
Consequently, we can bring to bear the calculation rules of Riemannian geometry, known in this context as \emph{Otto calculus}, on the design and interpretation of algorithms over the space of probability measures.

The identification of $\cP_2(\R^d)$ with a Riemannian manifold is purely ``formal'' (that is, heuristic).
For instance, $\cP_2(\R^d)$ is not locally homeomorphic to a Hilbert space.
However, the Riemannian view of $\cP_2(\R^d)$ is nevertheless a powerful tool for understanding the geometric properties of the Wasserstein space.

To elucidate this Riemannian viewpoint, we work with absolutely continuous measures in this chapter, for which the Riemannian formalism can be put on a more rigorous footing.
Our main goal in constructing this formalism is to define interesting dynamics on the Wasserstein space, given by \emph{gradient flows}.
Having defined these dynamics, we shall see that they often make sense even for non-absolutely continuous measures---in particular, they give rise to well-defined dynamics for discrete measures, viewed as particle systems.
Once derived, these non-trivial dynamics can be studied directly for discrete measures without the need for making rigorous sense of the Riemannian calculations in the discrete case.
The reader interested in seeing a fully rigorous derivation of gradient flows for general measures should consult the seminal monograph of Ambrosio, Gigli, and Savar\'e~\cite{AmbGigSav08}, or~\cite{San17GradFlows} for a quick overview.

\section{Metric derivative and the continuity equation}\label{sec:continuity_eq}

The utility of optimal transport lies in its endowment of the space of probability measures with a geometric structure which respects that of the underlying space. For example, we saw that the mapping $x\mapsto \delta_x$ is an isometric embedding of $(\R^d,\|\cdot\|)$ into $(\cP_2(\R^d), W_2)$.
Accordingly, as we now seek to understand dynamics on $\cP_2(\R^d)$, our approach is to ``lift'' the corresponding dynamics of particles on $\R^d$.

The general way to prescribe dynamics on $\R^d$ using differential calculus is via ordinary differential equations (ODEs). Namely, given a time-dependent family of vector fields ${(v_t)}_{t\ge 0}$, consider the ODE
\begin{align}\label{eq:particle_ode}
    \dot X_t
    &= v_t(X_t)\,.
\end{align}
Under standard assumptions on ${(v_t)}_{t\ge 0}$,\footnote{For example, if the vector fields are Lipschitz uniformly in time, then well-posedness follows from the Cauchy--Lipschitz theorem.} there is a unique solution to the ODE for any given initial condition $X_0$.
Suppose now that $X_0$ is drawn \emph{randomly} from a measure $\mu_0 \in \cP_2(\R^d)$, and similarly let $\mu_t$ denote the law of $X_t$ for all $t\ge 0$.
We think of the curve of measures ${(\mu_t)}_{t\ge 0}$ as describing the evolution of a \emph{collection} of particles.
Then, the dynamics of ${(\mu_t)}_{t\ge 0}$ is described by a partial differential equation (PDE), known as the \emph{continuity equation}.

\begin{proposition}[Continuity equation]\index{continuity equation}
    Suppose that $X_0 \sim \mu_0$, and that ${(X_t)}_{t\ge 0}$ evolves according to the dynamics~\eqref{eq:particle_ode}, which we assume is well-posed.
    Let $\mu_t$ denote the law of $X_t$ for all $t\ge 0$.
    Then, ${(\mu_t)}_{t\ge 0}$ satisfies the following equation in the weak sense,
    \begin{align}\label{eq:continuity}
        \partial_t \mu_t + \divergence(\mu_t v_t) = 0\,,
    \end{align}
    i.e., for all compactly supported and smooth test functions $\varphi : \R^d\to\R$, it holds that
    \begin{align}\label{eq:continuity_weak}
        \partial_t \int \varphi \, \ud \mu_t = \int \langle \nabla \varphi, v_t\rangle \, \ud \mu_t\,.
    \end{align}
\end{proposition}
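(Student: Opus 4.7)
The plan is to lift the pointwise ODE dynamics to the level of expectations against test functions, and then recognize the resulting identity as the weak form of the continuity equation. Concretely, writing $\int \varphi \, \ud \mu_t = \E[\varphi(X_t)]$, my goal is to differentiate under the expectation and apply the chain rule along trajectories of~\eqref{eq:particle_ode}.

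First I would fix a compactly supported smooth $\varphi : \R^\dd \to \R$ and compute, for each fixed $\omega$ in the underlying probability space, the derivative
\begin{align*}
    \frac{\ud}{\ud t}\, \varphi(X_t) = \langle \nabla \varphi(X_t),\, \dot X_t\rangle = \langle \nabla \varphi(X_t),\, v_t(X_t)\rangle\,,
\end{align*}
using the chain rule together with the ODE~\eqref{eq:particle_ode}. Then, writing
\begin{align*}
    \varphi(X_{t+h}) - \varphi(X_t) = \int_t^{t+h} \langle \nabla \varphi(X_s),\, v_s(X_s)\rangle\,\ud s\,,
\end{align*}
I would take expectations on both sides and exchange expectation and time integration via Fubini, yielding
\begin{align*}
    \int \varphi \, \ud \mu_{t+h} - \int \varphi\, \ud \mu_t = \int_t^{t+h} \int \langle \nabla \varphi,\, v_s\rangle\,\ud \mu_s\,\ud s\,.
\end{align*}
Dividing by $h$ and letting $h \to 0$ produces exactly~\eqref{eq:continuity_weak}.

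The main technical step, and the place to be careful, is justifying the Fubini exchange and the passage to the limit $h\to 0$. This requires a dominated convergence argument: since $\varphi$ is compactly supported, $\nabla\varphi$ is bounded, and one can bound $|\langle \nabla\varphi(X_s), v_s(X_s)\rangle|$ uniformly on bounded time intervals under the standing assumption of well-posedness of~\eqref{eq:particle_ode} (e.g., local Lipschitzness of $v_t$ in $x$ uniformly in $t$, which forces $X_s$ to remain in a bounded set on bounded intervals, so that $v_s(X_s)$ is controlled on $\supp \nabla\varphi$). Once this is in place, the limit is immediate.

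Finally, to connect~\eqref{eq:continuity_weak} with~\eqref{eq:continuity}, I would note that~\eqref{eq:continuity_weak} is exactly the distributional statement $\partial_t \mu_t + \divergence(\mu_t v_t) = 0$: formally integrating by parts,
\begin{align*}
    \int \varphi\,\partial_t \mu_t = -\int \varphi\,\divergence(\mu_t v_t) = \int \langle \nabla \varphi, v_t\rangle\,\ud \mu_t\,,
\end{align*}
where the boundary terms vanish since $\varphi$ has compact support. This is precisely the definition of the continuity equation being satisfied in the weak (distributional) sense.
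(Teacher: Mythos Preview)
Your proposal is correct and follows essentially the same approach as the paper: the paper simply observes that~\eqref{eq:continuity_weak} in probabilistic language reads $\partial_t \E \varphi(X_t) = \E\langle \nabla \varphi(X_t), v_t(X_t)\rangle$, which ``simply follows from~\eqref{eq:particle_ode} and the chain rule,'' and then recovers~\eqref{eq:continuity} via integration by parts. You are actually more careful than the paper in justifying the interchange of expectation and time derivative via Fubini and dominated convergence, but the underlying argument is identical.
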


The equation~\eqref{eq:continuity_weak}, when written in probabilistic language, reads $\partial_t \E \varphi(X_t) = \E\langle \nabla \varphi(X_t), v_t(X_t)\rangle$, and it simply follows from~\eqref{eq:particle_ode} and the chain rule.
The real content of the proposition actually lies in~\eqref{eq:continuity}: when $\mu_t$ admits a smooth density w.r.t.\ Lebesgue measure, which by an abuse of notation we denote also by $\mu_t$, then integration by parts yields the equation
\begin{align*}
    \int \varphi \, \partial_t \mu_t
    &= \partial_t \int \varphi \, \ud \mu_t
    = \int \langle \nabla \varphi, v_t\rangle \, \mu_t
    = -\int\varphi \divergence(\mu_t v_t)\,.
\end{align*}
Since this equality is supposed to hold for all suitable test functions $\varphi$, it follows that~\eqref{eq:continuity} holds pointwise.
To summarize, we see that ${(\mu_t)}_{t\ge 0}$ solves the PDE~\eqref{eq:continuity}, at least when $\mu_t$ admits a smooth density for all $t\ge 0$.
In general, it is more convenient to make statements that hold for curves ${(\mu_t)}_{t\ge 0}$ without knowing in advance the regularity of $\mu_t$, in which case~\eqref{eq:continuity} should be interpreted to hold in the weak sense~\eqref{eq:continuity_weak}.
However, for the sake of developing the framework of Otto calculus unencumbered by technical distractions, from now on we ignore such issues of regularity and pretend that we are working with  curves of smooth densities. See~\cite{AmbGigSav08} for a more rigorous treatment.

The equations~\eqref{eq:particle_ode} and~\eqref{eq:continuity} provide us with dual perspectives on the same dynamics; in the field of fluid dynamics, these perspectives are known as \emph{Lagrangian} and \emph{Eulerian} respectively.
The Lagrangian perspective describes the evolution of individual particle trajectories, whereas the Eulerian perspective tracks the evolution of aggregate quantities through the notions of mass density $\mu_t$ and velocity field $v_t$.

To foreshadow the development of geometry over $\cP_2(\R^d)$, let us first examine how to develop geometry over $\R^d$; for now, we refer to concepts from Riemannian geometry loosely, but we return to the subject in Section~\ref{sec:riem}.
For a single particle trajectory $t\mapsto X_t$ evolving according to~\eqref{eq:particle_ode}, the kinetic energy at time $t$ (assuming the particle has unit mass) is $\|\dot X_t\|^2 = \|v_t(X_t)\|^2$.
The total energy of the curve over the time interval $[0,1]$ is $\int_0^1 \|v_t(X_t)\|^2\,\ud t$, and if we minimize this energy over all curves ${(X_t)}_{t\in [0,1]}$ evolving according to~\eqref{eq:particle_ode} with endpoints fixed at $X_0$ and $X_1$, we obtain the constant-speed geodesic $t\mapsto X_t \deq (1-t)\,X_0 + t\,X_1$.
Geometrically, we think of $v_t(X_t)$ as the \emph{tangent vector} to the curve ${(X_t)}_{t\in [0,1]}$ at time $t$, and we measure its length using the Euclidean norm $\|\cdot\|$.

We now try to lift this picture to $\cP_2(\R^d)$.
For a curve ${(\mu_t)}_{t\ge 0}$ evolving according to~\eqref{eq:continuity}, it is natural to think of the velocity vector field $v_t : \R^d\to\R^d$ as an abstract ``tangent vector'' to ${(\mu_t)}_{t\ge 0}$ at time $t$, and to measure its squared ``length'' via the kinetic energy\footnote{Since $\mu_t$ plays the role of a mass density, then $\|v_t\|^2 \,\mu_t$ is the kinetic energy density, and integrating this over all of space yields the kinetic energy.}
\begin{align}\label{eq:kinetic_energy}
    \|v_t\|_{\mu_t}^2 \deq \int \|v_t\|^2 \, \ud \mu_t\,.
\end{align}
However, we immediately arrive at an obstacle in doing so: given a curve ${(\mu_t)}_{t\ge 0}$, there is not a \emph{unique} choice of vector fields ${(v_t)}_{t\ge 0}$ for which~\eqref{eq:continuity} holds, and hence it is unclear what vector field $v_t$ to choose as our tangent vector.
Indeed, if we start with any family of vector fields ${(v_t)}_{t\ge 0}$ for which~\eqref{eq:continuity} holds, and if $w_t$ satisfies $\divergence(\mu_t w_t) = 0$ for all $t\ge 0$, then ${(v_t + w_t)}_{t\ge 0}$ is another family of vector fields for which~\eqref{eq:continuity} holds by linearity of the divergence operator.
To see a concrete example of non-uniqueness, suppose that $\mu_t$ is the standard Gaussian distribution on $\R^2$ for all $t\ge 0$.
Then, one natural choice of vector fields is to take $v_t = 0$ for all $t\ge 0$; however, due to the rotational invariance of the standard Gaussian, another choice is to choose vector fields inducing a rotation (see Figure~\ref{fig:gaussian_vector_fields}).

\begin{figure}[ht]
    \centering
\begin{tikzpicture}[scale=0.6]

\begin{scope}
    \foreach \x in {-3,...,3} {
        \foreach \y in {-3,...,3} {
            \fill (\x,\y) circle (1pt);
        }
    }
    \draw (0,-4.5) node {Zero vector field};
\end{scope}

\begin{scope}[xshift=10cm]
    \foreach \x in {-3,...,3} {
        \foreach \y in {-3,...,3} {
            \ifnum\y=0
                \ifnum\x=0 
                    \fill (\x,\y) circle (1pt);
                \else
                    \pgfmathsetmacro{\r}{sqrt(\x*\x + \y*\y)/5.0}
                    \draw[-{Stealth[length=3pt]}] (\x,\y) -- ++({atan2(\y,\x)+90}:\r);
                \fi
            \else
                \pgfmathsetmacro{\r}{sqrt(\x*\x + \y*\y)/5.0}
                \draw[-{Stealth[length=3pt]}] (\x,\y) -- ++({atan2(\y,\x)+90}:\r);
            \fi
        }
    }
    \draw (0,-4.5) node {Rotation vector field};
\end{scope}

\end{tikzpicture}
    \caption{Two vector fields which preserve the standard Gaussian on $\R^2$.}\label{fig:gaussian_vector_fields}
\end{figure}
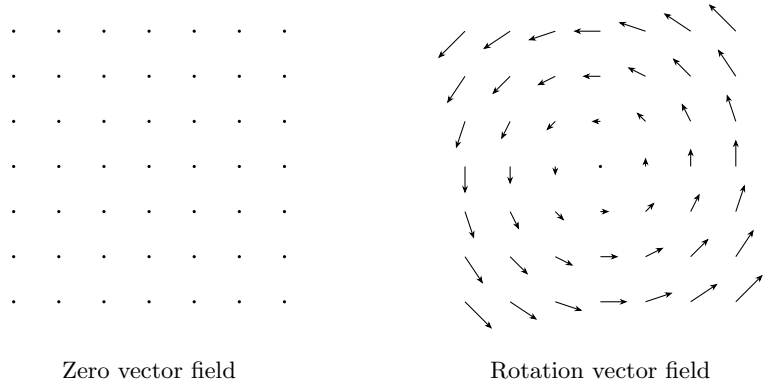

We see that the vector field on the right of Figure~\ref{fig:gaussian_vector_fields} induces extraneous motion for the particles and is therefore not the most parsimonious explanation for the dynamics of ${(\mu_t)}_{t\ge 0}$.
To resolve the ambiguity in the choice of vector fields, we can elect to declare as our tangent vector the vector field which minimizes the kinetic energy~\eqref{eq:kinetic_energy} while still explaining the dynamics of ${(\mu_t)}_{t\ge 0}$.
As discussed below, the minimization of kinetic energy falls naturally in line with the philosophy of ``optimal'' transport of mass.

To further motivate this choice, we introduce the notion of the metric derivative of a curve ${(x_t)}_{t\ge 0}$ in a metric space $(S, \msf d)$.
Although in general we cannot make sense of the notion of a tangent vector (or the ``derivative'') of a curve in a general metric space, it turns out that we can make sense of its \emph{speed}.

\begin{definition}[Metric derivative]\index{metric derivative}
    Let $(S,\msf d)$ be a metric space and let ${(x_t)}_{t\ge 0}$ be a curve in $S$.
    Then, the \emph{metric derivative} of the curve at time $t$ is given by
    \begin{align*}
        |\dot x|(t)
        &\deq \lim_{s\to t, \; s\ne t} \frac{\msf d(x_s, x_t)}{|s-t|}\,,
    \end{align*}
    provided that the limit exists.
\end{definition}

The next theorem shows that our selection principle produces a well-defined choice of tangent vector $v_t$ which can be characterized in any one of three ways: (1) as the vector field $v_t$ with minimal kinetic energy subject to the constraint~\eqref{eq:continuity}; (2) as the unique choice of vector field solving~\eqref{eq:continuity} with length $\|v_t\|_{\mu_t}$ equal to the metric derivative $|\dot \mu|(t)$; (3) as a limit of Brenier maps.

However, let us first introduce the concept of the \emph{flow map} associated with the ODE~\eqref{eq:particle_ode} (or equivalently, with the family of vector fields ${(v_t)}_{t\ge 0}$).
The map $F_{0,t} : \R^d\to\R^d$ is defined as the map which, given $X_0\in\R^d$, outputs the solution $X_t$ to the ODE~\eqref{eq:particle_ode} at time $t$ when started at $X_0$.
In an analogous manner, we can define the flow map $F_{s,t} :\R^d\to\R^d$ for any pair of times $0 \le s \le t$.
The significance of this definition is that it shifts our attention away from thinking about the ODE as describing a single trajectory, and instead views the effect of the ODE as a deformation of the entire space $\R^d$.

Given a pair of probability measures $\mu, \nu \in \cP_2(\R^d)$ such that $\mu$ admits a density, we write $T_{\mu \to \nu}$ for the Brenier map from $\mu$ to $\nu$, and denote by $|\dot \mu|$ the metric derivative of a curve in $\cP_2(\R^d)$ with respect to the metric $\msf d = W_2$.

\begin{theorem}\label{thm:fundamental_otto}
    Let ${(\mu_t)}_{t\ge 0}$ be a regular\footnote{Here, ``regular'' can be taken to mean that $\mu_t \in \cP_2(\R^d)$ and admits a density, and that the metric derivative $|\dot\mu|(t)$ exists for all $t\ge 0$. The qualifier ``for all $t\ge 0$'' in the assumptions and conclusions can be replaced by ``for almost every $t\ge 0$'', and the assumed existence of a density can also be relaxed.} curve of probability measures.
    Then, for every family of vector fields ${(v_t)}_{t\ge 0}$ for which~\eqref{eq:continuity} holds, we have $|\dot \mu|(t) \le \|v_t\|_{\mu_t}$ for all $t\ge 0$.

    Conversely, there exists a unique family ${(v_t)}_{t\ge 0}$ 
 such that~\eqref{eq:continuity} holds and for which $|\dot\mu|(t) = \|v_t\|_{\mu_t}$ for every $t\ge 0$. This family is such that
    \begin{align}\label{eq:vector_field_as_limit}
        v_t = \lim_{h\searrow 0} \frac{T_{\mu_t\to\mu_{t+h}} - {\id}}{h} \qquad \text{in}~L^2(\mu_t)\,,
    \end{align}
    where $\id$ is the identity function.
\end{theorem}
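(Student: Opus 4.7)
The plan has three pieces: the inequality $|\dot\mu|(t) \le \|v_t\|_{\mu_t}$ for any solution, the construction of a minimizer via Brenier maps, and uniqueness via strict convexity.

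For the inequality, I would exploit the flow map $F_{s,t} : \R^d\to\R^d$ associated with the ODE $\dot X_r = v_r(X_r)$. Under the tacit well-posedness assumption on $(v_r)_r$, one has $(F_{s,t})_\# \mu_s = \mu_t$, so $(\id, F_{s,t})_\# \mu_s$ is a coupling between $\mu_s$ and $\mu_t$. This suboptimal coupling gives
\begin{align*}
    W_2^2(\mu_s,\mu_t)
    &\le \int \|F_{s,t}(x) - x\|^2 \, \mu_s(\ud x)
    = \int \Bigl\|\int_s^t v_r(F_{s,r}(x)) \, \ud r\Bigr\|^2 \, \mu_s(\ud x) \\
    &\le |t-s|\int_s^t \int \|v_r(F_{s,r}(x))\|^2\, \mu_s(\ud x) \, \ud r
    = |t-s|\int_s^t \|v_r\|_{\mu_r}^2 \, \ud r,
\end{align*}
using Jensen's inequality and $(F_{s,r})_\# \mu_s = \mu_r$. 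Dividing by $(t-s)^2$ and letting $s\to t$ yields $|\dot\mu|(t) \le \|v_t\|_{\mu_t}$, provided $r \mapsto \|v_r\|_{\mu_r}$ is continuous enough at $r=t$.

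For the converse direction, the plan is to construct $v_t$ as an $L^2(\mu_t)$-limit of Brenier difference quotients. Since $\mu_t$ admits a density, Brenier's Theorem~\ref{thm:improvedBrenier} supplies a transport map $T_h \deq T_{\mu_t \to \mu_{t+h}}$, for which
\begin{align*}
    \Bigl\|\frac{T_h - \id}{h}\Bigr\|_{\mu_t}
    = \frac{W_2(\mu_t, \mu_{t+h})}{|h|}
    \longrightarrow |\dot\mu|(t) \qquad\text{as } h \to 0.
\end{align*}
The family $\{(T_h-\id)/h\}_h$ is thus bounded in $L^2(\mu_t;\R^d)$, so admits a weak limit $v_t$ along a subsequence, satisfying $\|v_t\|_{\mu_t} \le |\dot\mu|(t)$ by lower semicontinuity of the norm. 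To check that $v_t$ solves the continuity equation, I would test against $\varphi \in C_c^\infty(\R^d)$:
\begin{align*}
    \frac{1}{h}\int \varphi \, (\mu_{t+h} - \mu_t)
    = \frac{1}{h}\int (\varphi\circ T_h - \varphi) \, \ud \mu_t
    = \int \Bigl\langle \nabla\varphi,\, \frac{T_h - \id}{h}\Bigr\rangle\,\ud\mu_t + o(1),
\end{align*}
where the error term uses the boundedness of $\nabla^2\varphi$ together with $\|T_h - \id\|_{L^2(\mu_t)} = W_2(\mu_t,\mu_{t+h}) \to 0$. Passing $h\to 0$ produces the weak form~\eqref{eq:continuity_weak} of the continuity equation for $v_t$. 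Combining with Paragraph~1 gives $\|v_t\|_{\mu_t} = |\dot\mu|(t)$, and since weak convergence together with convergence of norms implies strong convergence in a Hilbert space, the full limit in~\eqref{eq:vector_field_as_limit} exists in $L^2(\mu_t)$.

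Finally, for uniqueness, suppose $v_t$ and $\tilde v_t$ both satisfy~\eqref{eq:continuity} with norm equal to $|\dot\mu|(t)$. By linearity, $\bar v_t \deq (v_t + \tilde v_t)/2$ also solves the continuity equation, hence by Paragraph~1 satisfies $\|\bar v_t\|_{\mu_t} \ge |\dot\mu|(t)$. The parallelogram identity then gives
\begin{align*}
    \|v_t - \tilde v_t\|_{\mu_t}^2
    = 2\,\|v_t\|_{\mu_t}^2 + 2\,\|\tilde v_t\|_{\mu_t}^2 - 4\,\|\bar v_t\|_{\mu_t}^2
    \le 0,
\end{align*}
forcing $v_t = \tilde v_t$ in $L^2(\mu_t)$. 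The main technical obstacle I anticipate is making the Taylor-expansion step in Paragraph~2 fully rigorous, since passing the continuity equation through the weak limit requires quantitative control on $T_h - \id$. A more robust alternative is to characterize the minimum-norm solution abstractly as the orthogonal projection of any given solution onto the closed ``Otto tangent space'' $\overline{\{\nabla\phi : \phi \in C_c^\infty(\R^d)\}}^{L^2(\mu_t)}$ (whose $L^2(\mu_t)$-orthogonal complement is exactly the $\mu_t$-divergence-free fields), and then verify that the Brenier difference quotients lie in this subspace and converge to this unique element.
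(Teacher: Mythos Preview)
Your proposal is correct and follows essentially the same strategy as the paper's proof sketch: a suboptimal coupling via the flow map for the inequality, Brenier difference quotients for the optimal family, and strict convexity of the norm for uniqueness. Your Part~1 uses Jensen over $[s,t]$ where the paper uses a first-order Taylor expansion $F_{t,t+h}(X_t) = X_t + h v_t(X_t) + o(h)$, but these are interchangeable; your Part~2 actually gives more detail than the paper, which simply asserts the converse and writes ``we omit the proof''; and your parallelogram-identity argument in Part~3 is exactly the strict-convexity argument the paper invokes, written out explicitly. The technical caveat you flag at the end (making the Taylor step rigorous and, more substantially, ensuring the pointwise-in-$t$ weak limits assemble into a family to which Part~1 applies) is real, and the paper does not address it either---both treatments are sketches, and your proposed fallback via orthogonal projection onto the Otto tangent space is the standard way to close this gap.
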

\begin{proof}[Proof sketch]
    We start with the first statement.
    Let ${(X_t)}_{t\ge 0}$ be the curve of random variables with $X_0 \sim \mu_0$ solving $\dot X_t = v_t(X_t)$. Since the continuity equation~\eqref{eq:continuity} holds by assumption, then $X_t \sim \mu_t$ for all $t\ge 0$, and in particular, $\mu_{t+h} = {(F_{t,t+h})}_\# \mu_t$.
    We can upper bound $W_2(\mu_t, \mu_{t+h})$ using this suboptimal coupling:
    \begin{align*}
        \frac{W_2^2(\mu_t, \mu_{t+h})}{h^2}
        &\le \E\Bigl[\frac{\|F_{t,t+h}(X_t) - X_t\|^2}{h^2} \Bigr]\,.
    \end{align*}
Observe that $F_{t,t+h}(X_t) = X_t + hv_t(X_t) + o(h)$ so that
letting $h\to 0$, we obtain $|\dot\mu|(t) \le \sqrt{\E[\|v_t(X_t)\|^2]} = \|v_t\|_{\mu_t}$.

    Note that the inequality above arises from the use of a suboptimal coupling. Intuitively, if we take $X_t$ and $X_{t+h}$ to be optimally coupled and thereby define $v_t$ according to~\eqref{eq:vector_field_as_limit}, then we ought to obtain an equality. This is in fact the case but we omit the proof.

	Finally, by combining the two statements, we obtain that if $(v_t)_{t \geq 0}$ is such that~\eqref{eq:continuity} holds and $|\dot\mu|(t) = \|v_t\|_{\mu_t}$, then
    \begin{align*}
        0  \in \argmin_{w_t : \R^d\to\R^d}{\|v_t + w_t\|_{\mu_t}}\quad\text{s.t.}\quad \divergence(\mu_t w_t) = 0\,.
    \end{align*}
    Moreover, this is a strictly convex problem, so the minimizer is unique.
    We obtain that if $(v_t)_{t \geq 0}$ and $(u_t)_{t \geq 0}$ are two vector fields satisfying the continuity equation and $\|v_t\|_{\mu_t} = \|u_t\|_{\mu_t} = |\dot\mu|(t)$, then $w_t = u_t - v_t = 0$, as claimed.
\end{proof}

In the above theorem, we used the fact that $\mu_t$ admits a density in order to write~\eqref{eq:vector_field_as_limit}, i.e., to assert that the optimal transport map exists.
In order to facilitate the discussion, we restrict to this class of measures from now on, although we return to the subject of particle methods at the end of the chapter.

\begin{definition}
    $\cP_{2,\rm ac}(\R^d)$ is the class of probability measures over $\R^d$ with finite second moment and which are absolutely continuous (i.e., admit a density w.r.t.\ Lebesgue measure).
\end{definition}

\section{Elements of Riemannian geometry}\label{sec:riem}

Before proceeding further, we provide a brief and informal exposition to the concepts from Riemannian geometry that we need.

A \emph{manifold} $\cM$ is a set which is locally homeomorphic to a Euclidean space $\R^d$. At each point $p\in \cM$, we can associate a \emph{tangent space} $T_p\cM$, which is a $d$-dimensional vector space and represents all possible velocity vectors of curves passing through $p$.
A \emph{Riemannian metric} is a choice of an inner product $\mg_p$ on each tangent space $T_p\cM$, and once endowed with a Riemannian metric, the manifold is then called a \emph{Riemannian manifold}.
To emphasize the Hilbertian structure, we usually simply write $\langle \cdot,\cdot\rangle_p$ for the metric $\mg_p$.
Usually, one imposes additional smoothness assumptions for these objects in order to properly build up a theory of differential calculus, but here we focus on introducing the basic language without delving into details.
In the case of the Wasserstein space, note that we have already identified a natural norm for a ``tangent vector'' (velocity vector field) $v_t$ at $\mu_t$---the $L^2$ norm, $\sqrt{\mg_{\mu_t}(v_t,v_t)} = \|v_t\|_{\mu_t}$---indicating the possibility of identifying further Wasserstein analogues of Riemannian theory.

The next important concept is that of a geodesic.
For a curve ${(p_t)}_{t\in [0,1]}$ in $\cM$, let $\dot p_t \in T_{p_t} \cM$ denote the tangent vector at time $t$.
Given $p_0,p_1\in \cM$, \emph{geodesics} or shortest paths\footnote{In Riemannian geometry, it is more customary to define geodesics to only be \emph{locally} length-minimizing, but here we always use the word ``geodesic'' to refer to shortest paths.} between $p_0$ and $p_1$ are obtained by solving either of the following variational problems,
\begin{align*}
    \min_{{(p_t)}_{t\in [0,1]}}\int \|\dot p_t\|_{p_t} \, \ud t \qquad\text{or}\qquad
    \min_{{(p_t)}_{t\in [0,1]}}\int \|\dot p_t\|_{p_t}^2 \, \ud t 
\end{align*}
over curves ${(p_t)}_{t\in [0,1]}$ joining $p_0$ to $p_1$.
In the first problem, the objective functional is the \emph{arc length} of the curve; in the second problem, the objective functional is called the \emph{energy}.
The second variational problem is technically more convenient.
Indeed, the arc length is invariant under reparametrization (i.e., replacing ${(p_t)}_{t\in [0,1]}$ by ${(p_{f(t)})}_{t\in [0,1]}$ for any continuous and strictly increasing function $f : [0,1]\to [0,1]$), so solutions to the first variational problem can only be unique up to reparametrization.
In contrast, the second variational problem singles out a specific parametrization of the optimal curves, namely, curves with \emph{constant speed} (i.e., $t\mapsto \|\dot p_t\|_{p_t}$ is constant).
Henceforth, we only consider constant-speed geodesics and therefore omit the adjective ``constant-speed''.

The values of the variational problems are $\msf d(p_0,p_1)$ and $\msf d^2(p_0,p_1)$ respectively, where $\msf d$ defines a metric (in the sense of metric spaces) on $\cM$ induced by the Riemannian metric $\langle \cdot,\cdot\rangle$.

The \emph{exponential map}\footnote{The name is motivated by a classical example of a manifold, the set of orthogonal matrices, in which the tangent space at the identity matrix is the set of anti-symmetric matrices and the exponential map $\exp_I(A) = \exp(A)$ coincides with the matrix exponential.} is a mapping $\exp_p : T_p\cM \to \cM$ which maps a tangent vector $v$ to $p_1$, where ${(p_t)}_{t\in [0,1]}$ is the constant-speed geodesic such that $p_0 = p$ and $\dot p_0 = v$.
The inverse map is called the \emph{logarithmic map} $\log_p : \cM \to T_p\cM$, which maps $q\mapsto \exp_p^{-1}(q)$.
Actually, in general, the exponential map may not be defined over all of $T_p\cM$ because a geodesic, once extended too far, may no longer remain a shortest path between its endpoints; think, for instance, of extending the geodesic from the north pole to the south pole of the sphere.

With the idea of a geodesic in hand, we can then define the concepts of convexity, gradients, and gradient flows, which form the building blocks of optimization over curved spaces.
We say that a set $C \subseteq \R^d$ is \emph{convex} if for all $p_0,p_1 \in C$ and all $t\in [0,1]$, it holds that $(1-t)\,p_0 + t\,p_1 \in C$.
In this definition, $t\mapsto (1-t)\,p_0 + t\,p_1$ is the Euclidean geodesic joining $p_0$ to $p_1$.
We can generalize this definition to Riemannian manifolds: we say that $C \subseteq \cM$ is \emph{geodesically convex}\index{Geodesic convexity} if for all $p_0,p_1 \in C$, all geodesics joining $p_0$ to $p_1$ also lie in $C$.

We can also define convexity for functions: given $\alpha \in\R$, a function $f : \cM \to\R$ is called \emph{$\alpha$-geodesically convex} if
\begin{align}\label{eq:geod_cvx_def}
    f(p_t) \le (1-t)\,f(p_0)+t\,f(p_1) - \frac{\alpha\,t\,(1-t)}{2}\,\msf d^2(p_0,p_1)
\end{align}
for all $t \in [0,1]$ and all geodesics ${(p_t)}_{t\in [0,1]}$.
Equivalently, we have the first-order condition
\begin{align*}
    f(q)
    &\ge f(p) + \langle \nabla f(p), \log_p(q)\rangle_p + \frac{\alpha}{2}\, \msf d^2(p,q)\qquad\forall p,q\in \cM
\end{align*}
where $\nabla f$, the Riemannian gradient, is defined so that for all curves ${(p_t)}_{t\ge 0}$, $\nabla f(p_t) \in T_{p_t}\cM$ satisfies $\partial_t f(p_t) = \langle \nabla f(p_t), \dot p_t\rangle_{p_t}$.
Equivalently, we also have the second-order condition
\begin{align*}
    \nabla^2 f(p)[v,v] \ge \alpha\,\|v\|_p^2\qquad\forall p\in \cM, \; \forall v \in T_p\cM\,,
\end{align*}
where $\nabla^2 f$, the Riemannian Hessian, can be defined via $\nabla^2 f(p)[v,v] \deq \partial_t^2 f(p_t)|_{t=0}$, where ${(p_t)}_{t\in [0,1]}$ is the geodesic with $p_0 = p$ and $\dot p_0 = v$.

In the next section, we return to $\cP_{2,\rm ac}(\R^d)$ which, despite not being a \emph{bona fide} Riemannian manifold, carries enough structure to apply calculation rules from Riemannian geometry (and indeed, the formidable book~\cite{AmbGigSav08} is devoted to the task of placing this endeavor on rigorous footing).
It leads to a toolbox, known as \emph{Otto calculus} after Felix Otto, for the study of gradient flows over the space of probability measures.

\section{The Riemannian structure of Wasserstein space}\label{sec:riem_wass}

We are now in a position to define a formal Riemannian structure over $\cP_2(\R^d)$.
Recall from Brenier's theorem (Theorem~\ref{thm:improvedBrenier}) that optimal transport maps for the quadratic cost are gradients of convex functions.
From~\eqref{eq:vector_field_as_limit}, it follows that optimal velocity vector fields lie in the $L^2$ closure of the space of gradients of functions (which are not necessarily convex, since we have subtracted the identity map).

\begin{definition}\label{def:w2_tangent}
    Let $\mu \in \cP_{2,\rm ac}(\R^d)$.
    We define the \emph{tangent space} to $\cP_{2,\rm ac}(\R^d)$ at $\mu$ to be
    \begin{align*}
        T_\mu \cP_{2,\rm ac}(\R^d)
        &\deq \overline{\{\nabla \psi \mid \psi : \R^d\to\R~\text{compactly supported, smooth}\}}^{L^2(\mu)}
    \end{align*}
    where $\overline{\{\cdot\}}^{L^2(\mu)}$ denotes the $L^2(\mu)$ closure.
    We endow $T_\mu \cP_{2,\rm ac}(\R^d)$ with the $L^2(\mu)$ inner product,
    \begin{align*}
        \langle \nabla \psi_1,\nabla \psi_2\rangle_\mu
        &\deq \int \langle \nabla \psi_1,\nabla \psi_2\rangle \, \ud \mu\,.
    \end{align*}
\end{definition}

One can show that requiring $v_t$ to be the gradient of a function, $v_t = \nabla \psi_t$, in fact furnishes a fourth characterization of the optimal vector field $v_t$ in Theorem~\ref{thm:fundamental_otto}, thus justifying Definition~\ref{def:w2_tangent}, but we do not prove this here.

\begin{remark}
    We pause to describe a common alternative convention: instead of defining the tangent vector at $\mu_t$ to be the driving velocity vector field $\nabla \psi$, we could take it to be the ordinary time derivative $\partial_t\mu_t$ which is given by the continuity equation:
    $\partial_t \mu_t = -\divergence(\mu_t \nabla \psi) \eqqcolon \chi$. In this case, the tangent space becomes the space of signed measures with zero total mass, and the metric becomes $\langle \chi, \chi'\rangle_\mu = \int\langle \nabla \psi, \nabla \psi' \rangle\,\ud\mu$, where $\psi$, $\psi'$ solve the equations $\chi=-\divergence(\mu \nabla \psi)$, $\chi'=-\divergence(\mu\nabla \psi')$.
    This just amounts to a change of notation: $\nabla \psi \mapsto \chi$ is an isometry between our convention and the alternative convention.
\end{remark}

Although our logical development thus far has strongly hinted at a connection between this Riemannian structure and the theory of optimal transport, we have not yet stated any result to this effect. The following theorem computes the constant-speed geodesics in the metric defined above.
In the language of Section~\ref{sec:riem}, it asserts that the metric induced by the Riemannian structure we defined over $\cP_{2,\rm ac}(\R^d)$ is indeed the Wasserstein distance.

\begin{theorem}[Benamou--Brenier]\label{thm:benamou_brenier}\index{Benamou--Brenier formula}
    Let $\mu_0,\mu_1 \in \cP_{2,\rm ac}(\R^d)$.
    Then,
    \begin{align}
        W_2^2(\mu_0,\mu_1)
        &= \inf\Bigl\{\int_0^1 \|v_t\|_{\mu_t}^2 \, \ud t \Bigm\vert {(\mu_t, v_t)}_{t\in [0,1]}~\text{solves}~\eqref{eq:continuity}\Bigr\}\,.\label{eq:benamou_brenier}
    \end{align}
    The optimal curve ${(\mu_t)}_{t\in [0,1]}$ is unique and is described by $X_t\sim\mu_t$, where $X_t = (1-t)\,X_0 +t\,X_1$ and $(X_0,X_1)\sim\bar\gamma \in \Gamma_{\mu_0,\mu_1}$ with $\bar\gamma$ being an optimal coupling.
\end{theorem}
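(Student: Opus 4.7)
The plan is to prove the two inequalities separately, then address uniqueness.

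For the upper bound, I would exhibit a curve that attains the right-hand side. Let $\bar\gamma \in \Gamma_{\mu_0,\mu_1}$ be an optimal coupling and, writing $T = \nabla\varphi$ for the Brenier map from $\mu_0$ to $\mu_1$ (which exists since $\mu_0 \in \cP_{2,\rm ac}(\R^d)$), set $S_t \deq (1-t)\,\id + tT = \nabla\bigl((1-t)\,\tfrac{1}{2}\|\cdot\|^2 + t\varphi\bigr)$ and $\mu_t \deq (S_t)_\# \mu_0$. Since the potential is strictly convex for $t\in [0,1)$, $S_t$ is injective, so the velocity field $v_t \deq (T-\id) \circ S_t^{-1}$ is well defined on the support of $\mu_t$. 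A direct verification (differentiate $\int \phi\,\ud\mu_t = \int \phi \circ S_t \, \ud\mu_0$ in $t$ and use the chain rule) confirms that $(\mu_t,v_t)$ solves~\eqref{eq:continuity}. Finally, since $v_t \circ S_t = T-\id$,
\begin{align*}
    \int_0^1 \|v_t\|_{\mu_t}^2 \, \ud t
    = \int_0^1 \int \|T(x) - x\|^2 \, \mu_0(\ud x) \, \ud t = W_2^2(\mu_0,\mu_1)\,,
\end{align*}
and noting that $X_t = S_t(X_0) = (1-t)\,X_0 + t\,X_1$ for $(X_0,X_1)\sim\bar\gamma$ gives the displacement-interpolation description.

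For the lower bound, I would combine Theorem~\ref{thm:fundamental_otto} with the general metric-space inequality. Fix any ${(\mu_t,v_t)}_{t\in[0,1]}$ satisfying~\eqref{eq:continuity}. Theorem~\ref{thm:fundamental_otto} yields $|\dot\mu|(t) \le \|v_t\|_{\mu_t}$ for a.e.\ $t$. By the triangle inequality applied along a partition of $[0,1]$ and a standard limiting argument,
\begin{align*}
    W_2(\mu_0,\mu_1) \le \int_0^1 |\dot\mu|(t) \, \ud t\,,
\end{align*}
and then the Cauchy--Schwarz inequality gives
\begin{align*}
    W_2^2(\mu_0,\mu_1)
    \le \Bigl(\int_0^1 |\dot\mu|(t)\,\ud t\Bigr)^2
    \le \int_0^1 |\dot\mu|(t)^2\,\ud t
    \le \int_0^1 \|v_t\|_{\mu_t}^2\,\ud t\,,
\end{align*}
establishing the reverse inequality.

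For uniqueness, the equality case in the Cauchy--Schwarz step forces $t \mapsto |\dot\mu|(t)$ to be constant, and the equality case in Theorem~\ref{thm:fundamental_otto} forces $v_t$ to coincide with the canonical tangent vector of Theorem~\ref{thm:fundamental_otto}. Combined with the uniqueness of the Brenier coupling (Theorem~\ref{thm:improvedBrenier}, since $\mu_0$ admits a density), this pins down ${(\mu_t)}_{t\in[0,1]}$ as the displacement interpolation.

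The main obstacle is the upper bound: checking carefully that the candidate $(\mu_t, v_t)$ is well defined and solves the continuity equation. The key point, using the convexity of $\varphi$, is that $S_t$ is a (one-sided) diffeomorphism onto its image for $t \in [0,1)$, which both makes $v_t$ well defined $\mu_t$-a.e.\ and allows the change-of-variables computation. Once that is in place, everything else is either a direct computation or an application of already-proved results.
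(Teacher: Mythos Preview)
Your proposal is correct. The upper-bound construction is identical to the paper's: both define $v_t = (T-\id)\circ S_t^{-1}$ with $S_t = \nabla\bigl((1-t)\tfrac12\|\cdot\|^2 + t\varphi\bigr)$, use the strict convexity of the potential for $t\in[0,1)$ to justify invertibility, and compute the energy by changing variables back to $\mu_0$.

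Where you differ is the lower bound. The paper argues directly at the particle level: given $(\mu_t,v_t)$ solving~\eqref{eq:continuity}, it runs the flow $\dot X_t = v_t(X_t)$ from $X_0\sim\mu_0$, observes that $(X_0,X_1)$ is a (suboptimal) coupling, and applies Jensen's inequality to $\bigl\|\int_0^1 \dot X_t\,\ud t\bigr\|^2 \le \int_0^1 \|\dot X_t\|^2\,\ud t$. This is a one-liner that simultaneously sets up the uniqueness analysis: equality in the coupling bound forces $(X_0,X_1)$ optimal, and equality in Jensen forces $\dot X_t$ constant, which immediately gives the displacement interpolation. Your route instead passes through the metric derivative via Theorem~\ref{thm:fundamental_otto}, then length-vs-distance, then Cauchy--Schwarz in $t$. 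This is perfectly valid and makes the metric-geometry content explicit, but it is more circuitous---especially since the proof sketch of Theorem~\ref{thm:fundamental_otto} is itself the same flow-map argument the paper uses directly. Your uniqueness step also requires a small extra observation you leave implicit: constant metric speed plus length equal to distance makes $(\mu_t)$ a constant-speed geodesic, and then one invokes uniqueness of Wasserstein geodesics from an absolutely continuous endpoint. The paper's equality-case analysis delivers the explicit form of the optimal curve without that detour.
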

\begin{proof}
    Let ${(\mu_t,v_t)}_{t\in [0,1]}$ solve~\eqref{eq:continuity}, and let $\dot X_t = v_t(X_t)$ with $X_0 \sim \mu_0$.
    Then, it holds that
    \begin{align*}
        W_2^2(\mu_0,\mu_1)
        &\le \E[\|X_0 - X_1\|^2]
        = \E\Bigl[\Bigl\lVert \int_0^1 \dot X_t \, \ud t\Bigr\rVert^2\Bigr]
        \le \int_0^1 \E[\|\dot X_t\|^2] \, \ud t \\
        &= \int_0^1 \|v_t\|_{\mu_t}^2 \, \ud t\,.
    \end{align*}

    To study the equality case, note that in the above calculations we employed two inequalities.
    The first inequality is an equality if and only if $(X_0, X_1)$ are optimally coupled.
    The second inequality is an equality if and only if $t\mapsto \dot X_t$ is constant, which forces $\dot X_t = X_1 - X_0$ for all $t\in [0,1]$.
    
    It therefore suffices to show that there exists a family of vector fields $(v_t)_{t \geq 0}$ such that $X_1 - X_0 = v_t((1-t)X_0 + tX_1)$ for all $t \in [0, 1)$.
    Since $(X_0, X_1)$ are optimally coupled, there exists a convex function $\varphi$ such that $\nabla \varphi(X_0) = X_1$.
    This implies that $(1-t)X_0 + tX_1 = \nabla \varphi_t(X_0)$, where $\varphi_t(x) \deq (1-t) \tfrac{\|x\|^2}{2} +  t \varphi(x)$.
    In particular, $\varphi_t$ is strongly convex for all $t \in [0, 1)$, so $\nabla \varphi_t$ is invertible.
    We may therefore define $v_t \deq (\nabla \varphi - \id) \circ \nabla \varphi_t^{-1}$, which satisfies $v_t((1-t)X_0 + tX_1) = v_t(\nabla \varphi_t(X_0)) = \nabla \varphi(X_0) - X_0 = X_1 - X_0$, as claimed.
\end{proof}

Note that we can also write $\mu_t = [(1-t)\,{\id} + t\,T]_\#\mu_0$, where $T$ is the optimal transport map from $\mu_0$ to $\mu_1$.
Hence, we formulate the following definition.

\begin{definition}\label{def:w2_geod}
    Let $\mu_0,\mu_1 \in \cP_{2,\rm ac}(\R^d)$ and let $T$ denote the optimal transport map from $\mu_0$ to $\mu_1$.
    The \emph{constant-speed geodesic} joining $\mu_0$ to $\mu_1$ is the curve ${(\mu_t)}_{t\in [0,1]}$, where
    \begin{align}\label{eq:displ_interp}
        \mu_t = [(1-t)\,{\id} + t\,T]_\# \mu_0\,.
    \end{align}
    This curve is known as the \emph{displacement interpolation}, \emph{McCann's interpolation}, or simply the \emph{Wasserstein geodesic} joining $\mu_0$ to $\mu_1$.
\end{definition}

From~\eqref{eq:displ_interp}, we can identify $\log_{\mu}(\nu) = T_{\mu\to\nu} - {\id}$, and hence $\exp_\mu(\nabla \psi) = ({\id} + \nabla \psi)_\# \mu$.
Note that the exponential map is not well-defined if $\nabla^2\psi$ has an eigenvalue smaller than $-1$, since then ${\id} + \nabla \psi$ is not the gradient of a convex function and thus not an optimal transport map.
This reflects our earlier discussion that the exponential map is not necessarily defined on the full tangent space of a Riemannian manifold.\footnote{However, the domain of the exponential map for a Riemannian manifold always contains a neighborhood of the origin, whereas this is not true for the Wasserstein space. This is one of the reasons why the Wasserstein space is not truly a Riemannian manifold, even an infinite-dimensional one.}

\section{Otto calculus}\label{sec:otto_calc}

The next step is to identify the Wasserstein gradient, which, in turn, allow us to define Wasserstein gradient flows.
After obtaining criteria for functionals to be geodesically convex, we can then obtain rates of convergence thereof.

It turns out that the Wasserstein gradient can be expressed in terms of the first variation.

\begin{definition}[First variation]\label{defn:first_variation}\index{first variation}
    Let $\cF : \cP_{2,\rm ac}(\R^d)\to\R$ be a functional.
    The \emph{first variation} of $\cF$ at $\mu$, denoted $\delta \cF(\mu) : \R^d\to\R$, is the function defined by
    \begin{align}\label{eq:first_var_def}
        \lim_{\varepsilon\searrow 0}\frac{\cF(\mu+\varepsilon\chi) - \cF(\mu)}{\varepsilon} = \int \delta\cF(\mu)\,\ud\chi\,,
    \end{align}
    for all signed measures $\chi$ such that $\mu+\varepsilon\chi \in \cP_{2,\rm ac}(\R^d)$ for all sufficiently small $\varepsilon$.
\end{definition}

If ${(\mu_t)}_{t\ge 0}$ is a curve of densities, then we can write the linear approximation $\mu_{t+\varepsilon} \approx \mu_t + \varepsilon\,\partial_t\mu_t$ for $\varepsilon$ small, where $\partial_t\mu_t$ denotes the usual time derivative.
We can take $\chi = \partial_t \mu_t$, in which case~\eqref{eq:first_var_def} reads $\partial_t \cF(\mu_t) = \int \delta \cF(\mu_t)\,\partial_t\mu_t$.
Note also that the first variation is only defined up to an additive constant, since the perturbations $\chi$ always satisfy $\int \ud \chi = 0$.

We can now define Wasserstein gradients.
By definition, given a curve of measures ${(\mu_t)}_{t\ge 0}$ with tangent vectors ${(v_t)}_{t\ge 0}$, the gradient of a functional $\cF : \cP_{2,\rm ac}(\R^d) \to\R$ is the element of  $T_{\mu_t}\cP_{2,\rm ac}(\R^d)$ such that $\partial_t \cF(\mu_t) = \langle \gradW \cF(\mu_t),v_t\rangle_{\mu_t}$.
The following proposition shows that this definition can be written directly in terms of the first variation.

\begin{proposition}\label{prop:w2_grad}\index{Wasserstein gradient}
    Let $\cF : \cP_{2,\rm ac}(\R^d) \to\R$ be a functional with first variation $\delta \cF$.
    Then, the Wasserstein gradient of $\cF$ is the vector field $ \gradW \cF(\mu): \R^d \to \R^d$ defined by
    \begin{align*}
        \gradW \cF(\mu) = \nabla \delta \cF(\mu)\,,
    \end{align*}
    where $\nabla$ on the right-hand side denotes the usual Euclidean gradient.
\end{proposition}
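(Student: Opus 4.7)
The plan is to invoke the defining property of the Riemannian gradient: for any regular curve $(\mu_t)_{t \ge 0}$ with associated velocity field $(v_t)_{t \ge 0}$ satisfying the continuity equation~\eqref{eq:continuity}, the Wasserstein gradient $\gradW \cF(\mu_t) \in T_{\mu_t}\cP_{2,\rm ac}(\R^d)$ is characterized by
\begin{align*}
    \partial_t \cF(\mu_t) = \langle \gradW \cF(\mu_t), v_t \rangle_{\mu_t} = \int \langle \gradW \cF(\mu_t), v_t \rangle \, \ud \mu_t\,.
\end{align*}
The strategy is to compute $\partial_t \cF(\mu_t)$ in two different ways, first via the first variation and then via the continuity equation together with integration by parts, and then to match them.

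Concretely, the first step is to apply Definition~\ref{defn:first_variation} with $\chi = \partial_t \mu_t$, which yields
\begin{align*}
    \partial_t \cF(\mu_t) = \int \delta\cF(\mu_t) \, \partial_t \mu_t\,.
\end{align*}
Next, I would substitute the continuity equation in the form $\partial_t \mu_t = -\divergence(\mu_t v_t)$ and then integrate by parts, which (under the usual decay assumptions at infinity, valid for smooth compactly supported $v_t$) gives
\begin{align*}
    \partial_t \cF(\mu_t)
    &= -\int \delta\cF(\mu_t)\, \divergence(\mu_t v_t)
    = \int \langle \nabla \delta\cF(\mu_t), v_t\rangle \, \ud \mu_t \\
    &= \langle \nabla \delta\cF(\mu_t), v_t\rangle_{\mu_t}\,.
\end{align*}
Comparing the two expressions for $\partial_t \cF(\mu_t)$ and invoking the arbitrariness of the velocity field $v_t$ (one may run this argument along the one-parameter family of curves generated by any gradient $\nabla \psi$ of a smooth compactly supported function $\psi$, by Theorem~\ref{thm:fundamental_otto}) yields $\gradW \cF(\mu) = \nabla \delta\cF(\mu)$.

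The main technical point, which I would treat informally in line with the spirit of this chapter, is to verify that $\nabla \delta\cF(\mu)$ in fact lies in the tangent space $T_\mu \cP_{2,\rm ac}(\R^d)$ of Definition~\ref{def:w2_tangent} (i.e., in the $L^2(\mu)$-closure of gradients of smooth compactly supported functions), and that the inner-product identity uniquely determines an element of this space; both follow from the density of smooth compactly supported gradients and the fact that the defining identity holds against every such test gradient. Everything else reduces to the chain rule for the first variation and to integration by parts, so there is no serious obstacle beyond this point.
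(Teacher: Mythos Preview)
Your proposal is correct and follows essentially the same approach as the paper: compute $\partial_t \cF(\mu_t)$ via the first variation, substitute the continuity equation, integrate by parts to obtain $\langle \nabla \delta\cF(\mu_t), v_t\rangle_{\mu_t}$, and then observe that $\nabla \delta\cF(\mu_t)$ lies in the tangent space since it is the gradient of a function. The paper's treatment of the tangent-space membership is slightly more direct (it simply invokes Definition~\ref{def:w2_tangent}), but your density remark amounts to the same thing.
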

\begin{proof}
    Let ${(\mu_t)}_{t\ge 0}$ be a curve of measures with tangent vectors ${(v_t)}_{t\ge 0}$.
    The fact that $v_t$ is the tangent vector at time $t$ means that it solves the continuity equation~\eqref{eq:continuity}.
    From the above discussion of the first variation,
    \begin{align*}
        \partial_t \cF(\mu_t)
        &= \int \delta \cF(\mu_t)\,\partial_t \mu_t
        = -\int \delta \cF(\mu_t) \divergence(\mu_t v_t)\\
        &= \int \langle \nabla \delta \cF(\mu_t), v_t\rangle\,\ud \mu_t
        = \langle \nabla \delta \cF(\mu_t), v_t\rangle_{\mu_t}\,.
    \end{align*}
    Moreover, since $\nabla \delta \cF(\mu_t)$ is the gradient of a function, from Definition~\ref{def:w2_tangent} we have $\nabla \delta \cF(\mu_t) \in T_{\mu_t} \cP_{2,\rm ac}(\R^d)$.
    From this, we conclude that $\nabla \delta \cF(\mu_t)$ is indeed the Wasserstein gradient of $\cF$ at $\mu_t$.
\end{proof}

To compute the Wasserstein gradient, we therefore have to compute the first variation and then take its gradient.
We illustrate this on three canonical examples of functionals over $\cP_{2,\rm ac}(\R^d)$.

\begin{example}[Potential energy]\label{ex:potential}\index{potential energy}
    Let $\cF(\mu) \deq \int V \, \ud \mu$ for some (potential) function $V : \R^d\to\R$.
    Then, $\partial_t \cF(\mu_t) = \int V \,\partial_t \mu_t$ and we can identify $\delta \cF(\mu) = V$.
    Thus, $\gradW \cF(\mu) = \nabla V$.
\end{example}

\begin{example}[Internal energy]\label{ex:internal}\index{internal energy}
    Let $\cF(\mu) \deq \int U(\mu(x)) \, \ud x$ for some function $U : \R_+\to\R$.
    For example, $U(x) = x\log x$ gives rise to the entropy\footnote{This is the negative of the thermodynamic entropy.} functional.
    Then, $\partial_t \cF(\mu_t) = \int U'(\mu_t) \,\partial_t \mu_t$, so we can identify $\delta \cF(\mu) = U'\circ \mu$ and therefore $\gradW\cF(\mu) = \nabla(U'\circ \mu)$.
    In the case of entropy, $\delta \cF(\mu) = \log \mu + 1$, and $\gradW\cF(\mu) = \nabla \log \mu$.
\end{example}

\begin{example}[Interaction energy]\label{ex:interaction}\index{interaction energy}
    Take a symmetric kernel $K : \R^d\to\R$, i.e., $K(-z) = K(z)$. Set $\cF(\mu) \deq \frac{1}{2}\iint K(x-y)\,\mu(\ud x)\,\mu(\ud y)$.
    For example, we could consider a Gaussian kernel $K(x) = \exp(-\frac{\|x\|^2}{2\sigma^2})$.
    Then, $\partial_t \cF(\mu_t) = \iint K(x - y)\,\mu_t(\ud y)\,\partial_t \mu_t(\ud x)$, so we can identify $\delta \cF(\mu) = \int K(\cdot - y)\,\mu(\ud y)$, and $\gradW\cF(\mu) = \int \nabla K(\cdot-y)\,\mu(\ud y)$.
\end{example}

We can now define the Wasserstein gradient flow of a functional $\cF$ over $\cP_{2,\rm ac}(\R^d)$. The gradient flow is a curve of measures ${(\mu_t)}_{t\ge 0}$ such that the tangent vector to the curve at time $t$ equals $-\gradW\cF(\mu_t)$.
Recalling that the tangent vectors governs the evolution of ${(\mu_t)}_{t\ge 0}$ through the continuity equation~\eqref{eq:continuity}, we arrive at the following definition.

\begin{definition}[Wasserstein gradient flow]\label{def:w2_grad_flow}\index{Wasserstein gradient flow}
    Let $\cF : \cP_{2,\rm ac}(\R^d)\to\R$ be a functional.
    Then, ${(\mu_t)}_{t\ge 0}$ is called the \emph{Wasserstein gradient flow} of $\cF$ if it solves the PDE
    \begin{align*}
        \partial_t \mu_t
        &= \divergence\bigl(\mu_t \gradW \cF(\mu_t)\bigr)\,.
    \end{align*}
\end{definition}

As is well-understood in optimization, gradient flows are natural dynamics for minimizing the objective functional $\cF$ because, as discussed shortly, they always reduce the value of the objective.
Wasserstein gradient flows therefore constitute a principled approach for designing dynamics over the space of probability measures aimed at minimizing some criterion, a task which is ubiquitous in applied mathematics, statistics, and beyond; see Chapter~\ref{chap:appWGF}.

A quick calculation using the definition of the Wasserstein gradient flow ${(\mu_t)}_{t\ge 0}$ of $\cF$ yields
\begin{align}
\label{eq:grad_flow_dissipate}
    \partial_t \cF(\mu_t)
    &= \langle \gradW \cF(\mu_t), v_t\rangle_{\mu_t}
    = -\|\gradW\cF(\mu_t)\|_{\mu_t}^2
\end{align}
where $v_t = -\gradW\cF(\mu_t)$ is the tangent vector at time $t$.
This equality, which states that the objective functional is dissipated at a rate equal to the squared norm of the gradient, is a generic fact about gradient flows.
In particular, if $\cF$ is bounded below, it implies that any limit point of the gradient flow must be a stationary point of $\cF$.

However, we can say more once we have a quantitative lower bound on the rate of dissipation.
The simplest such condition is the \emph{Polyak--\L{}ojasiewicz (P\L{}) inequality}.

\begin{definition}[Polyak--\L{}ojasiewicz (P\L{}) 
inequality]\index{Polyak--\L{}ojasiewicz (P\L{}) inequality}
    We say that $\cF : \cP_{2,\rm ac}(\R^d)\to\R$ satisfies a \emph{P\L{} inequality} with constant $\alpha > 0$ if for all $\mu \in \cP_{2,\rm ac}(\R^d)$,
    \begin{align*}
        \|\gradW \cF(\mu)\|_\mu^2
        &\ge 2\alpha\,(\cF(\mu) - \inf\cF)\,.
    \end{align*}
\end{definition}

The P\L{} inequality over $\R^d$ is discussed in Appendix~\ref{sec:strcvx_smooth}; the above definition adapts this concept to the present setting.
From~\eqref{eq:grad_flow_dissipate}, the P\L{} inequality yields
\begin{align*}
    \partial_t (\cF(\mu_t)-\inf \cF)
    &\le -2\alpha\,(\cF(\mu_t) - \inf\cF)\,.
\end{align*}
Let $\phi(t) \deq \cF(\mu_t) - \inf \cF$, so that $\dot \phi(t) \le -2\alpha \phi(t)$.
If this inequality were an equality, then we could solve the differential equation to obtain $\phi(t) = \phi(0) \exp(-2\alpha t)$.
In general, when we have a differential \emph{inequality}, we can bound $\phi$ by the solution to the differential equation; this is formalized as Gr\"onwall's inequality.

\begin{lemma}[Gr\"onwall's inequality]\index{Gr\"onwall's inequality}
    Let $c\in\R$.
    Let $\phi : [0,T] \to \R$ be differentiable, satisfying $\dot\phi(t) \le c\phi(t)$ for all $t\in [0,T]$.
    Then,
    \begin{align*}
        \phi(t) \le \phi(0)\exp(ct) \qquad\forall\,t\in [0,T]\,.
    \end{align*}
\end{lemma}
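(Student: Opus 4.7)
The plan is to reduce the differential inequality to the monotonicity of an auxiliary function by means of an integrating factor, which is the standard trick for linear first-order differential (in)equalities. Specifically, I would introduce
\begin{align*}
    \psi(t) \deq \phi(t)\exp(-ct)\,,
\end{align*}
and aim to show that $\psi$ is non-increasing on $[0,T]$, from which the claim follows immediately: for any $t\in [0,T]$, one has $\psi(t) \le \psi(0) = \phi(0)$, i.e., $\phi(t)\exp(-ct) \le \phi(0)$, and multiplying through by $\exp(ct) > 0$ yields the desired bound $\phi(t) \le \phi(0)\exp(ct)$.

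The first step is to verify that $\psi$ is differentiable, which follows from the differentiability of $\phi$ and of $t\mapsto \exp(-ct)$, together with the product rule. The second step is to compute
\begin{align*}
    \dot\psi(t)
    &= \dot\phi(t)\exp(-ct) - c\phi(t)\exp(-ct)
    = \bigl(\dot\phi(t) - c\phi(t)\bigr)\exp(-ct)\,.
\end{align*}
Since $\exp(-ct) > 0$ for all $t$ and the hypothesis guarantees $\dot\phi(t) - c\phi(t) \le 0$ for every $t\in [0,T]$, we conclude that $\dot\psi(t) \le 0$ throughout $[0,T]$.

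The third and final step is to invoke the elementary fact that a differentiable function on an interval with non-positive derivative is non-increasing (a direct consequence of the mean value theorem), which gives $\psi(t)\le \psi(0)$ and completes the proof. There is no real obstacle here; the only subtlety is purely notational, namely that $c$ is allowed to be any real number (positive, negative, or zero), so one must take care that the argument does not implicitly assume a sign of $c$. The integrating factor $\exp(-ct)$ handles this uniformly, since it is positive regardless of the sign of $c$.
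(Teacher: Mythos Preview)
Your proof is correct and follows exactly the same integrating-factor approach as the paper: both introduce $\psi(t) = \exp(-ct)\,\phi(t)$, compute $\dot\psi(t) = \exp(-ct)\,(\dot\phi(t) - c\phi(t)) \le 0$, and conclude $\psi(t) \le \psi(0)$. Your write-up is slightly more verbose in justifying the routine steps, but the argument is identical.
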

\begin{proof}
    It holds that
    \begin{align*}
        \partial_t [\exp(-ct)\, \phi(t)]
        &= \exp(-ct)\,[-c\phi(t) + \dot \phi(t)]
        \le 0\,.
    \end{align*}
    This implies $\exp(-ct)\,\phi(t) \le \exp(-c\cdot 0)\,\phi(0)=\phi(0)$.
\end{proof}

On the other hand, applying the same argument as in Lemma~\ref{lem:strcvx_implies_pl}, one can show that $\cF$ satisfies the P\L{} inequality with constant $\alpha$ as soon as $\cF$ is $\alpha$-geodesically convex.
We deduce the following useful corollary.

\begin{corollary}\label{cor:conv_grad_flow}
    Let $\cF : \cP_{2,\rm ac}(\R^d)\to\R$ be $\alpha$-geodesically convex.
    Then, along the Wasserstein gradient flow ${(\mu_t)}_{t\ge 0}$ for $\cF$, it holds
    \begin{align*}
        \cF(\mu_t) - \inf \cF
        &\le e^{-2\alpha t}\,(\cF(\mu_0) - \inf \cF)\,.
    \end{align*}
\end{corollary}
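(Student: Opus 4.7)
The plan is to chain together three ingredients already assembled in the chapter: the energy dissipation identity~\eqref{eq:grad_flow_dissipate}, a P\L{} inequality derived from $\alpha$-geodesic convexity, and Gr\"onwall's inequality.

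First I would establish that $\alpha$-geodesic convexity implies the P\L{} inequality with the same constant $\alpha$. The argument mirrors the Euclidean one from Lemma~\ref{lem:strcvx_implies_pl} in the appendix: fix $\mu \in \cP_{2,\rm ac}(\R^d)$ and let $\mu^\star$ be any minimizer (or a near-minimizer, approaching the infimum along a sequence if no minimizer exists). Take the Wasserstein geodesic ${(\mu_t)}_{t\in[0,1]}$ from $\mu$ to $\mu^\star$. The first-order characterization of $\alpha$-geodesic convexity gives
\begin{equation*}
\cF(\mu^\star) \ge \cF(\mu) + \langle \gradW \cF(\mu),\, \log_\mu(\mu^\star)\rangle_\mu + \tfrac{\alpha}{2}\,W_2^2(\mu,\mu^\star)\,,
\end{equation*}
so Cauchy--Schwarz in $T_\mu \cP_{2,\rm ac}(\R^d)$ combined with $\|\log_\mu(\mu^\star)\|_\mu = W_2(\mu,\mu^\star)$ yields
\begin{equation*}
\cF(\mu) - \cF(\mu^\star) \le \|\gradW \cF(\mu)\|_\mu\, W_2(\mu,\mu^\star) - \tfrac{\alpha}{2}\,W_2^2(\mu,\mu^\star)\,.
\end{equation*}
Maximizing the right-hand side over $W_2(\mu,\mu^\star) \ge 0$ (it is a concave quadratic in that variable) gives the bound $\cF(\mu) - \cF(\mu^\star) \le \frac{1}{2\alpha}\,\|\gradW \cF(\mu)\|_\mu^2$, which is exactly the P\L{} inequality with constant $\alpha$.

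Second, I would combine this with the dissipation identity~\eqref{eq:grad_flow_dissipate}: along the gradient flow,
\begin{equation*}
\partial_t\bigl(\cF(\mu_t) - \inf\cF\bigr) = -\|\gradW \cF(\mu_t)\|_{\mu_t}^2 \le -2\alpha\,\bigl(\cF(\mu_t) - \inf\cF\bigr)\,.
\end{equation*}
Applying Gr\"onwall's inequality to $\phi(t) \deq \cF(\mu_t) - \inf\cF$ with $c = -2\alpha$ gives the claimed exponential decay.

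The main subtlety, rather than obstacle, is handling the case where $\inf\cF$ is not attained in $\cP_{2,\rm ac}(\R^d)$: the first-order condition for geodesic convexity requires the existence of a geodesic to a minimizer. This is easily sidestepped by running the argument with a minimizing sequence ${(\mu^{(n)})}_{n\ge 1}$ with $\cF(\mu^{(n)}) \searrow \inf\cF$, yielding $\cF(\mu) - \cF(\mu^{(n)}) \le \frac{1}{2\alpha}\,\|\gradW \cF(\mu)\|_\mu^2$ for each $n$, and passing to the limit. All other steps are routine given the Riemannian formalism laid out in Sections~\ref{sec:riem_wass} and~\ref{sec:otto_calc}.
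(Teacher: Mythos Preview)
Your proposal is correct and follows essentially the same route as the paper: the corollary is stated immediately after the paper assembles exactly the three ingredients you use---the dissipation identity~\eqref{eq:grad_flow_dissipate}, the observation that $\alpha$-geodesic convexity implies the P\L{} inequality ``by the same argument as in Lemma~\ref{lem:strcvx_implies_pl}'', and Gr\"onwall's inequality. Your treatment of the case where the infimum is not attained is a nice addition that the paper leaves implicit.
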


\section{Bures--Wasserstein}\label{sec:bw}

It is illuminating to specialize the concepts in the previous section to the submanifold of Wasserstein space consisting of Gaussian measures.

\begin{definition}\index{Bures--Wasserstein}
    The \emph{Bures--Wasserstein space} $\BW(\R^d)$ is the space of non-degenerate Gaussians on $\R^d$, equipped with the Wasserstein metric.
\end{definition}

Concretely, since Gaussians are parameterized by the mean and covariance matrix, we can think of $\BW(\R^d) \cong \R^d\times \mb S_{++}^d$, where $\mb S_{++}^d$ is cone of symmetric positive definite $d \times d$ matrices.
Recall from Example~\ref{ex:ot_gaussian} that for any $\mu_0,\mu_1 \in \BW(\R^d)$, the optimal transport map $T$ from $\mu_0$ to $\mu_1$ is an affine map, and the Wasserstein geodesic joining $\mu_0$ to $\mu_1$ is
\begin{align*}
    \mu_t = \underbrace{[(1-t)\,{\id} + t\,T]}_{\text{affine}}{}_\# \mu_0\,, \qquad t\in [0,1]\,.
\end{align*}
Since the pushforward of a non-degenerate Gaussian by a non-singular affine map is also a non-degenerate Gaussian, the Wasserstein geodesic from $\mu_0$ to $\mu_1$ lies entirely in $\BW(\R^d)$, or in other words:

\begin{proposition}\label{prop:bw_geod_cvx}
    $\BW(\R^d)\subseteq \cP_{2,\rm ac}(\R^d)$ is geodesically convex.
\end{proposition}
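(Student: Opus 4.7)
The plan is to invoke Definition \ref{def:w2_geod} of the Wasserstein geodesic together with the explicit form of the optimal transport map between Gaussians derived in Example \ref{ex:ot_gaussian}. The goal is to show that every point on the Wasserstein geodesic joining two non-degenerate Gaussians is again a non-degenerate Gaussian.

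First I would fix $\mu_0 = \cN(m_0, \Sigma_0)$ and $\mu_1 = \cN(m_1, \Sigma_1)$ in $\BW(\R^d)$ with $\Sigma_0, \Sigma_1 \succ 0$. By Example \ref{ex:ot_gaussian}, the Brenier map from $\mu_0$ to $\mu_1$ is the affine map $T(x) = A(x - m_0) + m_1$, where
\[
A \deq \Sigma_0^{-1/2}\,\bigl(\Sigma_0^{1/2} \Sigma_1 \Sigma_0^{1/2}\bigr)^{1/2}\,\Sigma_0^{-1/2}
\]
is symmetric positive definite.

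Next I would substitute $T$ into the displacement interpolation formula $\mu_t = [(1-t)\,\id + t\,T]_\# \mu_0$. The map $(1-t)\,\id + t\,T$ is affine with linear part $M_t \deq (1-t)\,I + t\,A$. Since both $I$ and $A$ lie in $\mb S_{++}^d$ and $\mb S_{++}^d$ is a convex cone, $M_t \in \mb S_{++}^d$ for every $t \in [0,1]$; in particular $M_t$ is invertible. Now if $X_0 \sim \mu_0$, a direct computation yields
\[
(1-t)\,X_0 + t\,T(X_0) = M_t\, X_0 + t\,(m_1 - A m_0)\,,
\]
and pushing forward the Gaussian $\mu_0$ through this invertible affine map gives
\[
\mu_t = \cN\bigl((1-t)\,m_0 + t\,m_1,\; M_t \Sigma_0 M_t^\T\bigr)\,.
\]

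Finally, since $M_t \succ 0$ and $\Sigma_0 \succ 0$, the covariance $M_t \Sigma_0 M_t^\T$ is positive definite, so $\mu_t \in \BW(\R^d)$ for every $t \in [0,1]$. This establishes geodesic convexity. There is no real obstacle here: the only substantive input is the explicit affine form of the Brenier map between Gaussians recalled in the previous section, combined with the elementary fact that the set of symmetric positive definite matrices is convex.
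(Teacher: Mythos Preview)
Your proof is correct and follows essentially the same approach as the paper: the paper's argument (given in the paragraph immediately preceding the proposition) observes that the optimal transport map $T$ between Gaussians is affine, so $(1-t)\,\id + t\,T$ is a non-singular affine map and hence pushes a non-degenerate Gaussian to a non-degenerate Gaussian. You supply more explicit detail---in particular the reason the linear part $M_t = (1-t)\,I + t\,A$ is invertible via convexity of $\mb S_{++}^d$---but the underlying idea is identical.
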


Recall that a functional $\cF$ on a Riemannian manifold $\cM$ is $\alpha$-geodesically convex if the mapping $[0,1] \to \cM$, $t\mapsto \cF(p_t)$ is $\alpha$-convex for all geodesics ${(p_t)}_{t\in [0,1]}$ on $\cM$.
The geodesic convexity of $\BW(\R^d)$ means that the intrinsic geodesics of $\BW(\R^d)$ coincide with the Wasserstein geodesics, which immediately furnishes the following corollary.

\begin{corollary}\label{cor:bw_cvxty}
    Let $\cF : \cP_{2,\rm ac}(\R^d)\to\R$ be an $\alpha$-geodesically convex functional. Then, $\cF$ is also $\alpha$-geodesically convex when viewed as a functional over $\BW(\R^d)$.
\end{corollary}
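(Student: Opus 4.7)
The plan is very short because most of the work has already been done in Proposition~\ref{prop:bw_geod_cvx}. The idea is that $\alpha$-geodesic convexity is a property that refers only to values of $\cF$ along geodesics, so the whole argument reduces to identifying the geodesics of $\BW(\R^d)$ (as an ambient submanifold with the induced metric) with the restriction of Wasserstein geodesics to pairs of Gaussian endpoints.

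First I would unpack the definition: for $\cF$ to be $\alpha$-geodesically convex on $\BW(\R^d)$ means that for every pair $\mu_0,\mu_1\in\BW(\R^d)$ and every constant-speed geodesic ${(\mu_t)}_{t\in [0,1]}$ joining them in $\BW(\R^d)$, the inequality~\eqref{eq:geod_cvx_def} holds with $\msf d = W_2$ (since the metric on $\BW(\R^d)$ is the restriction of $W_2$). Next I would invoke Proposition~\ref{prop:bw_geod_cvx}: the unique Wasserstein geodesic ${(\mu_t)}_{t\in [0,1]}$ from $\mu_0$ to $\mu_1$, given by Definition~\ref{def:w2_geod} as the displacement interpolation $\mu_t = [(1-t)\,\id + t\,T]_\#\mu_0$, in fact lies entirely inside $\BW(\R^d)$ because $T$ is affine and pushforwards of non-degenerate Gaussians under non-singular affine maps are non-degenerate Gaussians. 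Thus the ambient Wasserstein geodesic coincides with the intrinsic geodesic of $\BW(\R^d)$.

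Finally, since $\cF$ is $\alpha$-geodesically convex on $\cP_{2,\rm ac}(\R^d)$ by hypothesis, inequality~\eqref{eq:geod_cvx_def} holds along this Wasserstein geodesic; but this is exactly the inequality defining $\alpha$-geodesic convexity on $\BW(\R^d)$, completing the proof. The only step that requires mild justification is the coincidence of geodesics, which is immediate from Proposition~\ref{prop:bw_geod_cvx} together with the uniqueness of the Wasserstein geodesic asserted in Theorem~\ref{thm:benamou_brenier}; this is really the only potential obstacle, and it has already been handled.
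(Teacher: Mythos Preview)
Your proposal is correct and follows exactly the approach the paper takes: the paper simply observes (in the sentence preceding the corollary) that geodesic convexity of $\BW(\R^d)$ means the intrinsic geodesics coincide with the Wasserstein geodesics, which immediately yields the result. Your write-up fleshes this out with the same ingredients (Proposition~\ref{prop:bw_geod_cvx} plus uniqueness of the Wasserstein geodesic), so there is nothing to add.
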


We make use of this fact in Subsection~\ref{sec:gvi}.

The Riemannian structure of $\cP_{2,\rm ac}(\R^d)$ descends to $\BW(\R^d)$ and endows the Bures{--}Wasserstein space with the structure of a bona fide finite-dimensional Riemannian manifold.
The tangent space at $\mu\in \BW(\R^d)$ is
\begin{align*}
    T_\mu \BW(\R^d)
    &= \{\lambda \,(T_{\mu\to \nu}-{\id}) \mid \lambda > 0,\, \nu \in \BW(\R^d)\} \\
    &= \{x\mapsto Sx+a \mid a\in\R^d,\,S \in \mb S^d\}\,,
\end{align*}
where $\mb S^d$ is the space of symmetric $d\times d$ matrices.
Actually, it is convenient to reparametrize the tangent space as
\begin{align*}
    T_\mu \BW(\R^d)
    &= \{x\mapsto S\,(x-m)+a \mid a\in\R^d,\,S \in \mb S^d\}\,,
\end{align*}
where $m = \E_{X\sim \mu}[X]$.

By definition, the Riemannian structure induced on $\BW(\R^d)$ is the restriction of the inner product on $T_\mu \cP_{2,\rm ac}(\R^d)$ to the subspace $T_\mu \BW(\R^d) \subseteq T_\mu \cP_{2,\rm ac}(\R^d)$, i.e., the $L^2(\mu)$ inner product.
Using this, we could compute the BW gradient of a functional $\cF$ from scratch.
However, since we have already computed the Wasserstein gradient of $\cF$ at $\mu$ to be the vector field $\gradW \cF(\mu) = \nabla \delta \cF(\mu)$ (Proposition~\ref{prop:w2_grad}), a more expedient approach is to now compute the orthogonal projection of $\gradW \cF(\mu)$ onto $T_\mu\BW(\R^d)$.

\begin{theorem}\label{thm:bw_grad}
    Let $\cF : \cP_{2,\rm ac}(\R^d)\to\R$ be a functional with first variation $\delta \cF(\mu)$ at $\mu$. Then, the Bures{--}Wasserstein gradient\index{Bures--Wasserstein!gradient} of $\cF$ at $\mu \in \BW(\R^d)$ is the affine mapping
    \begin{align*}
        x\mapsto  \bigl(\int \nabla^2 \delta \cF(\mu)\, \ud \mu\bigr) (x- m) + \int \nabla \delta \cF(\mu)\,\ud \mu\,,
    \end{align*}
    where $m=\int x \,\mu(\ud x)$ is the mean of $\mu$.
\end{theorem}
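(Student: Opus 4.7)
The plan is to exploit the fact that $\BW(\R^d)$ is a totally geodesic submanifold of $\cP_{2,\rm ac}(\R^d)$ (Proposition~\ref{prop:bw_geod_cvx}) equipped with the restricted $L^2(\mu)$ inner product. Consequently, the Bures--Wasserstein gradient at $\mu$ is simply the $L^2(\mu)$-orthogonal projection of the Wasserstein gradient $\gradW\cF(\mu)=\nabla\delta\cF(\mu)$ (Proposition~\ref{prop:w2_grad}) onto the linear subspace
\[
T_\mu\BW(\R^d)=\{x\mapsto S(x-m)+a \mid a\in\R^d,\,S\in\mb S^d\}\,.
\]
So the plan reduces to checking that the affine map $\bar v(x)\deq A(x-m)+a$ with $A\deq\int\nabla^2\delta\cF(\mu)\,\ud\mu$ and $a\deq\int\nabla\delta\cF(\mu)\,\ud\mu$ satisfies the orthogonality condition: for every $S\in\mb S^d$ and $b\in\R^d$,
\[
\int \bigl\langle \nabla\delta\cF(\mu)(x)-\bar v(x),\,S(x-m)+b\bigr\rangle\,\mu(\ud x)=0\,.
\]

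First I will split this inner product into a ``constant piece'' and a ``linear piece''. For the constant piece, since $\int (x-m)\,\mu(\ud x)=0$, we have $\int \bar v\,\ud\mu=a=\int\nabla\delta\cF(\mu)\,\ud\mu$, so the $b$-term vanishes automatically. The crux is therefore the linear piece, and here is where Gaussianity enters. I will set $\phi\deq\delta\cF(\mu)$ and $\tilde X\deq X-m$ for $X\sim\mu$. Since $\mu$ is Gaussian with covariance $\Sigma$, Stein's lemma (Gaussian integration by parts) gives
\[
\E[\tilde X\,\nabla\phi(X)^\top]=\Sigma\,\E[\nabla^2\phi(X)]=\Sigma A\,.
\]
Using this and the cyclic/transpose invariance of the trace (together with the symmetry of $S$, $A$, $\Sigma$), I compute
\[
\int\langle\nabla\phi(x),\,S\tilde x\rangle\,\mu(\ud x)=\tr(S\Sigma A)=\tr(SA\Sigma)=\int\langle A\tilde x,\,S\tilde x\rangle\,\mu(\ud x)\,,
\]
where the last equality is just $\int \tilde x^\top S A\tilde x\,\mu(\ud x)=\tr(SA\Sigma)$. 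Combined with $\int \langle a,S\tilde x\rangle\,\mu(\ud x)=0$, this shows that the linear piece also vanishes, completing the orthogonality check.

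The main obstacle I anticipate is handling the symmetry constraint on $S$ cleanly; a naive minimization over all $d\times d$ matrices would yield a Lyapunov equation $\Sigma S+S\Sigma=\Sigma A+A\Sigma$ whose unique solution is $S=A$ only because $A$ and $\Sigma$ are both symmetric, so some care is needed to ensure that $A$ genuinely realizes the orthogonal projection within $\mb S^d$. The orthogonality approach above sidesteps this issue by verifying the projection property directly, with the trace identity $\tr(S\Sigma A)=\tr(SA\Sigma)$ (valid for symmetric $S,A,\Sigma$) playing the role of the symmetry compatibility. Once this is established, identifying $\bar v$ as the Bures--Wasserstein gradient is immediate.
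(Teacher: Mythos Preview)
Your proposal is correct and follows essentially the same route as the paper: both identify $\nabla_{\BW}\cF(\mu)$ as the $L^2(\mu)$-orthogonal projection of $\nabla\delta\cF(\mu)$ onto the affine tangent space, and both hinge on the Gaussian integration-by-parts identity $\E[(X-m)\,\nabla\phi(X)^\top]=\Sigma\,\E[\nabla^2\phi(X)]$ (which you call Stein's lemma; the paper derives it via $\nabla\log\mu=-\Sigma^{-1}(\cdot-m)$). The only cosmetic difference is that the paper solves for the unknown coefficients $S,a$ by matching the two inner products, whereas you verify the orthogonality of the claimed $\bar v$ directly; the computations are otherwise identical.
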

\begin{proof}
    Recall that $\gradW\cF(\mu) = \nabla \delta \cF(\mu)$ (Proposition~\ref{prop:w2_grad}).
    The BW gradient at $\mu$ is the orthogonal projection of $\nabla \delta \cF(\mu)$ onto $T_\mu\BW(\R^d)$; by definition, this is the element $\nabla_{\BW}\cF(\mu) \in T_\mu \BW(\R^d)$ which satisfies
    \begin{align}\label{eq:ortho_proj}
        \langle \nabla_{\BW} \cF(\mu), v\rangle_\mu = \langle \nabla \delta \cF(\mu), v\rangle_\mu \qquad\forall v \in T_\mu \BW(\R^d)\,.
    \end{align}
    We can write out this condition more explicitly.
    Since $\nabla_{\BW}\cF(\mu), v \in T_\mu \BW(\R^d)$, they are of the form
    \begin{align*}
        \nabla_{\BW} \cF(\mu)
        &= S\,(\cdot - m) + a\,, \\
        v
        &= \tilde S\,(\cdot - m) + \tilde a\,.
    \end{align*}
    On one hand,
    \begin{align}
        \langle \nabla_{\BW} \cF(\mu), v\rangle_\mu
        &= \E_{X\sim \mu}\langle S\,(X-m) + a, \tilde S \,(X-m) + \tilde a \rangle \nonumber\\
        &= \langle S,\Sigma\,\tilde S\rangle + \langle a, \tilde a\rangle\,,\label{eq:bw_grad_1}
    \end{align}
    where $\Sigma$ is the covariance matrix of $\mu$.
    On the other hand,
    \begin{align}
        \langle \nabla \delta \cF(\mu), v\rangle_\mu
        &= \E_{X\sim \mu}\langle [\nabla \delta \cF(\mu)](X), \tilde S\,(X-m) + \tilde a\rangle \nonumber \\
        &= \langle \E_{X\sim \mu}[\nabla \delta \cF(\mu)(X)\, {(X-m)}^\T], \tilde S\rangle \label{eq:bw_grad_2}\\
        &\qquad{} + \langle \E_{X\sim \mu} \nabla \delta \cF(\mu)(X), \tilde a\rangle\,.\label{eq:bw_grad_3}
    \end{align}
    Also, integration by parts yields
    \begin{align*}
        \E_{X\sim \mu}[\nabla \delta \cF(\mu)(X)\, {(X-m)}^\T]
        &= \int \nabla \delta \cF(\mu)\,{(\Sigma \Sigma^{-1}\,(\cdot-m))}^\T\, \ud \mu \\
        &= -\int \nabla \delta \cF(\mu)\,{(\nabla \log \mu)}^\T \, \ud \mu\,\Sigma \\
        &= -\int \nabla \delta \cF(\mu)\, {(\nabla \mu)}^\T \, \Sigma \\
        &= \int \nabla^2 \delta \cF(\mu)\, \ud \mu\,\Sigma\,.
    \end{align*}
    Hence,
    \begin{align}
        \langle \E_{X\sim \mu}[\nabla \delta \cF(\mu)(X)\, {(X-m)}^\T], \tilde S\rangle
        &= \Bigl\langle \int \nabla^2 \delta \cF(\mu)\, \ud \mu\,\Sigma, \tilde S\Bigr\rangle \nonumber\\
        &= \Bigl\langle \Sigma\int \nabla^2 \delta\cF(\mu)\, \ud \mu, \tilde S\Bigr\rangle \nonumber\\
        &= \Bigl\langle \int \nabla^2 \delta\cF(\mu)\, \ud \mu,\Sigma\, \tilde S\Bigr\rangle\,.\label{eq:bw_grad_4}
    \end{align}
    Since~\eqref{eq:ortho_proj} is supposed to hold for all $\tilde a \in\R^d$ and all $\tilde S\in \mb S^d$, by comparing~\eqref{eq:bw_grad_1},~\eqref{eq:bw_grad_2},~\eqref{eq:bw_grad_3}, and~\eqref{eq:bw_grad_4}, we can identify 
    $$
    S = \int \nabla^2 \delta\cF(\mu)\, \ud \mu\quad \text{ and} \quad  a = \int \nabla \delta \cF(\mu)\, \ud \mu\,.
    $$
    This completes the derivation.
\end{proof}

Once we have identified the BW gradient, we can use the Lagrangian interpretation of the continuity equation to implement the gradient flow via the dynamics
\begin{align*}
    \dot X_t
    &= -\nabla_{\BW}\cF(\mu_t)(X_t) \\
    &= -\bigl(\int\nabla^2 \delta \cF(\mu_t)\,\ud \mu_t\bigr) \,(X_t - m_t) - \int\nabla \delta \cF(\mu_t)\,\ud \mu_t\,,
\end{align*}
where $X_t \sim \mu_t$ and we denote the mean and covariance of $\mu_t$ by $m_t$ and $\Sigma_t$ respectively.
However, since $\mu_t$ is a Gaussian for each $t\ge 0$, it is expedient to instead track $\mu_t$ \emph{exactly} through the mean $m_t$ and covariance $\Sigma_t$. They follow the dynamics:
\begin{align*}
    \dot m_t
    &= \E \dot X_t
    = -\int \nabla \delta \cF(\mu_t)\,\ud \mu_t\,,
\end{align*}
and
\begin{align*}
    \dot \Sigma_t
    &= \partial_t \E[(X_t-m_t)\,{(X_t-m_t)}^\T] \\
    &= \E[\dot X_t\,{(X_t - m_t)}^\T] + \E[(X_t - m_t)\,\dot X_t^\T] \\
    &= -\E\Bigl[\Bigl(\bigl(\int \nabla^2 \delta \cF(\mu_t)\,\ud \mu_t\bigr) \,(X_t - m_t) \\
    &\qquad\qquad\qquad\qquad\qquad{} + \int\nabla \delta \cF(\mu_t) \, \ud \mu_t\Bigr)\,{(X_t - m_t)}^\T\Bigr] + \cdots \\
    &= -\bigl(\int\nabla^2 \delta\cF(\mu_t)\,\ud \mu_t\bigr) \,\E[(X_t - m_t)\,{(X_t - m_t)}^\T] + \cdots \\
    &= -\bigl(\int \nabla^2\delta \cF(\mu_t)\,\ud \mu_t\bigr)\,\Sigma_t -\Sigma_t\,\bigl(\int \nabla^2\delta \cF(\mu_t)\,\ud \mu_t\bigr)\,,
\end{align*}
where above, $A + \cdots$ is shorthand for the expression $A + A^\T$.
Finally, we have arrived at an explicit system of equations.

\begin{theorem}\label{thm:bw_gf}
    The BW gradient flow of the functional $\cF$ is the curve ${(\mu_t = \cN(m_t, \Sigma_t))}_{t\ge 0}$, where
    \begin{align}\label{eq:bw_gf}
        \boxed{
        \begin{aligned}
            \dot m_t &= -\E\nabla \delta \cF(\mu_t)(X_t)\,, \\
            \dot \Sigma_t &= - \E\nabla^2\delta \cF(\mu_t)(X_t)\,\Sigma_t - \Sigma_t\,\E\nabla^2\delta \cF(\mu_t)(X_t)\,,
        \end{aligned}
        }
    \end{align}
    and $X_t \sim \mu_t$.
\end{theorem}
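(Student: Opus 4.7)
The plan is to recognize that the theorem is essentially a packaging of the calculation sketched informally just before the statement, and to organize that calculation into a clean proof using the Lagrangian picture together with Theorem~\ref{thm:bw_grad}. First I would invoke Proposition~\ref{prop:bw_geod_cvx} to argue that the BW gradient flow, started at a Gaussian, stays in $\BW(\R^d)$; equivalently, one can observe that $\nabla_{\BW}\cF(\mu)$ is by construction affine in $x$ (Theorem~\ref{thm:bw_grad}), so if $\mu_t \in \BW(\R^d)$ then the tangent vector at $\mu_t$ pushes $\mu_t$ infinitesimally into another Gaussian, and Gaussianity is preserved along the flow. This reduces the task to identifying the evolution of the parameters $(m_t,\Sigma_t)$.

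Next I would pass to the Lagrangian description. Let ${(X_t)}_{t\ge 0}$ with $X_0\sim\mu_0$ solve the ODE $\dot X_t = -\nabla_{\BW}\cF(\mu_t)(X_t)$, so by the continuity equation $X_t\sim\mu_t$. By Theorem~\ref{thm:bw_grad}, this ODE reads
\begin{align*}
    \dot X_t = -H_t\,(X_t - m_t) - b_t\,, \qquad H_t \deq \int \nabla^2 \delta\cF(\mu_t)\,\ud\mu_t\,,\; b_t \deq \int \nabla \delta\cF(\mu_t)\,\ud\mu_t\,.
\end{align*}
Taking expectations, the $(X_t-m_t)$ term vanishes and I get $\dot m_t = -b_t = -\E\nabla\delta\cF(\mu_t)(X_t)$, which is the first equation in~\eqref{eq:bw_gf}.

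For the covariance I would differentiate $\Sigma_t = \E[(X_t-m_t)\,{(X_t-m_t)}^\T]$ using the product rule, noting that the $\dot m_t$ contributions cancel against $\E(X_t-m_t) = 0$. What remains is
\begin{align*}
    \dot \Sigma_t = \E\bigl[\dot X_t\,{(X_t-m_t)}^\T\bigr] + \E\bigl[(X_t-m_t)\,\dot X_t^\T\bigr]\,.
\end{align*}
Substituting the ODE, the constant $b_t$ contribution again vanishes by $\E(X_t-m_t)=0$, and the linear term yields $-H_t\,\E[(X_t-m_t)\,{(X_t-m_t)}^\T] = -H_t \Sigma_t$, with the symmetric contribution from the second expectation. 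This gives the second equation in~\eqref{eq:bw_gf}.

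There is no genuine hard step here; the main point requiring care is the claim that the BW gradient flow indeed coincides with the curve obtained by integrating the Lagrangian ODE driven by the affine field $-\nabla_{\BW}\cF(\mu_t)$. One way to certify this is to verify a posteriori that the Gaussian curve $(m_t,\Sigma_t)$ produced by~\eqref{eq:bw_gf} satisfies the continuity equation with velocity field equal to $-\nabla_{\BW}\cF(\mu_t)$ at each time, which is immediate since the affine field and the moment ODEs are designed to be consistent. A second, more conceptual route is to appeal to the fact, noted in Corollary~\ref{cor:bw_cvxty} and the surrounding discussion, that $\BW(\R^d)$ is a totally geodesic finite-dimensional Riemannian submanifold of $\cP_{2,\rm ac}(\R^d)$, so the restriction of the Wasserstein gradient of $\cF$ to $\BW(\R^d)$ coincides with its orthogonal projection onto $T_\mu\BW(\R^d)$; this is precisely the computation in the proof of Theorem~\ref{thm:bw_grad}, and it guarantees that intrinsic BW-gradient descent integrates to~\eqref{eq:bw_gf}.
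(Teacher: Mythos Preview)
Your proposal is correct and follows essentially the same approach as the paper: the paper carries out exactly this Lagrangian computation (using Theorem~\ref{thm:bw_grad} for the affine form of the BW gradient, taking expectations for $\dot m_t$, and differentiating the covariance for $\dot\Sigma_t$) in the passage immediately preceding the theorem statement, and then states Theorem~\ref{thm:bw_gf} as a summary without a separate proof environment. Your additional remarks about why the flow stays Gaussian and why the Lagrangian ODE agrees with the intrinsic BW gradient flow are more careful than what the paper spells out, but do not differ in substance.
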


\section{Gaussian mixtures}\label{sec:gaussian_mixtures}

Building on top of the ideas introduced in Section~\ref{sec:bw}, we now consider gradient flows over the space of Gaussian mixtures, which is a far richer space. In fact, as explained below, \emph{any} measure over $\R^d$ can be viewed as a Gaussian mixture when viewed through the right lens.

Before doing so, we first note that simply constraining the Wasserstein gradient flow to lie on the space of Gaussian mixtures, similarly to how we proceeded in Section~\ref{sec:bw}, does not work.
The problem is that we cannot explicitly compute the optimal transport map between two Gaussian mixtures, even infinitesimally, and so we cannot identify the tangent space---unless each Gaussian mixture has one component, or one dimension. 
Nevertheless, following~\cite{CheGeoTan19, DelDes20GMM}, there is a natural geometric structure we can consider: Wasserstein over Bures--Wasserstein.

Gaussian mixtures are typically introduced as distributions of the form $\sum_{k=1}^K w_k \cN(m_k,\Sigma_k)$ for \emph{mixing weights} $w_k \ge 0$, $\sum_{k=1}^K w_k = 1$, but we can define a Gaussian mixture more broadly as a measure of the form $\int \cN(m, \Sigma)\,\nu(\ud m,\ud \Sigma)$.
The finite Gaussian mixture above corresponds to a discrete \emph{mixing measure} $\nu$: $\nu = \sum_{k=1}^K w_k \delta_{(m_k,\Sigma_k)}$.
This new, broader definition of a Gaussian mixture, is nearly useless, since any measure $\mu$ can be represented thus: $\mu = \int \delta_x \, \mu(\D x)$, where $\delta_x = \cN(x, 0)$ is a degenerate Gaussian. Also, the representation of a Gaussian mixture by a mixing measure $\nu$ is ``overparametrized'', i.e., $\nu$ is highly non-unique: consider the equality $\cN(0, I) = \int \cN(x,\tau I)\, \nu(\ud x)$ where $\nu = \cN(0, (1-\tau)I)$, valid for any $\tau \in [0,1]$.
Nevertheless, the utility of this perspective is that it leads to a natural interpretation: \emph{a mixing measure for a Gaussian mixture is simply a probability measure over the Bures--Wasserstein space}.
Let us see how this leads to the definition of a geometric structure.

\emph{The Bures--Wasserstein space is a Riemannian manifold.}
As noted earlier, the space $\BW(\R^d)$ is isometric to the manifold $\R^d\times \mb S_{++}^d$ equipped with a certain Riemannian metric. Hereafter we consider the metric arising from Otto calculus but any metric, including the Euclidean one could be used here.

\emph{We can consider the space of probability measures (with finite second moment) over any metric space, and endow it with the $2$-Wasserstein distance.}
Indeed, recall from Section~\ref{sec:ot_problem} that the optimal transport problem can be defined with more general costs, so we can take the squared distance function over the metric space as our cost.

\emph{When the metric space in question is a Riemannian manifold, the results from Sections~\ref{sec:continuity_eq}--\ref{sec:otto_calc} continue to hold with appropriate modifications.}
We do not justify this in detail here, but we invite the reader to revisit these sections with a fresh perspective. For example, the ODE~\eqref{eq:particle_ode} still makes sense, keeping in mind that a vector field $v$ on a manifold $\cM$ is an assignment $x\mapsto v(x)$ of a tangent vector $v(x) \in T_x \cM$ at each point $x\in \cM$.
The continuity equation still makes sense in its weak form~\eqref{eq:continuity_weak}, where $\nabla$ now refers to the Riemannian gradient, and even~\eqref{eq:continuity} makes sense if we interpret $\mu_t$ as a density with respect to the volume measure, etc.
In fact, this geometric setting is the source of some of the deepest developments in optimal transport theory; see~\cite{Vil09}.

Crucially, for our purposes, the formula for the Wasserstein gradient given in Proposition~\ref{prop:w2_grad} still holds, where we again interpret $\nabla$ as the Riemannian gradient.

Putting this discussion together, we can derive the Wasserstein gradient flow which lives in the space $(\cP_2(\BW(\R^d)), W_2)$.

\begin{theorem}\label{thm:gm_flow}
    Given $\nu \in \cP_2(\BW(\R^d))$, let $G_\nu$ denote the corresponding Gaussian mixture $G_\nu = \int \cN(m,\Sigma)\,\nu(\ud m, \ud \Sigma)$.
    Let $\cF$ be a functional over $\cP_2(\R^d)$, and let $\cG$ be the corresponding functional over $\cP_2(\BW(\R^d))$ given by $\nu \mapsto \cF(G_\nu)$.
    Then, the Wasserstein gradient flow of $\cG$ is the curve ${(\nu_t)}_{t\ge 0}$ described as follows: $\nu_t = \law(m_t,\Sigma_t)$, where
    \begin{align*}
        \boxed{\begin{aligned}
        \dot m_t
        &= -\E\nabla \delta \cF(G_{\nu_t})(X_t)\,, \\
        \dot \Sigma_t
        &= -\E\nabla^2 \delta \cF(G_{\nu_t})(X_t) \,\Sigma_t - \Sigma_t\,\E\nabla^2\delta \cF(G_{\nu_t})(X_t)\,,
        \end{aligned}}
    \end{align*}
    and $X_t \sim \cN(m_t,\Sigma_t)$.
\end{theorem}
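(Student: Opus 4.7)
The plan is to carry the developments of Sections~\ref{sec:otto_calc} and~\ref{sec:bw} one level up, treating $\cP_2(\BW(\R^d))$ as a Wasserstein space built over the Riemannian manifold $\BW(\R^d)$. I will first identify the Wasserstein gradient $\gradW \cG(\nu)$ as a tangent vector field on $\BW(\R^d)$, and then read off the Lagrangian ODE for the random point $(m_t,\Sigma_t)\sim\nu_t$, exactly as in Theorem~\ref{thm:bw_gf}. The key simplification is that the mapping $\nu \mapsto G_\nu$ is \emph{linear}, which reduces the chain rule for $\cG = \cF\circ G$ at the level of first variations to a one-line computation.

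Concretely, for any admissible signed measure $\chi$ on $\BW(\R^d)$, linearity gives $G_{\nu+\varepsilon\chi} = G_\nu + \varepsilon \int \cN(m,\Sigma)\,\chi(\ud m,\ud\Sigma)$, so Definition~\ref{defn:first_variation} together with Fubini yields
\begin{align*}
    \delta \cG(\nu)(m,\Sigma)
    &= \int \delta \cF(G_\nu)(x)\,\cN(m,\Sigma)(\ud x)
    = \E_{X\sim\cN(m,\Sigma)}[h(X)]\,,
\end{align*}
where $h \deq \delta \cF(G_\nu)$. By the Riemannian-manifold analog of Proposition~\ref{prop:w2_grad}, $\gradW \cG(\nu)$ at a point $(m,\Sigma)$ is the Bures--Wasserstein gradient of the function $(m',\Sigma')\mapsto \E_{X\sim\cN(m',\Sigma')}[h(X)]$ evaluated at $(m,\Sigma)$. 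This latter function is nothing but the potential energy $\mu \mapsto \int h\,\ud \mu$ restricted to $\BW(\R^d)$, whose first variation is $h$ itself by Example~\ref{ex:potential}. Hence Theorem~\ref{thm:bw_grad} applies and gives the affine tangent vector
\begin{align*}
    x \mapsto \bigl(\E_{X\sim\cN(m,\Sigma)}\nabla^2 h(X)\bigr)\,(x-m) + \E_{X\sim\cN(m,\Sigma)}\nabla h(X)\,.
\end{align*}

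From here the announced ODE system follows immediately: it is exactly the Lagrangian description of the curve ${(\nu_t)}_{t\ge 0}$ whose tangent vector field at time $t$ is the negative of the above BW-affine map, with $h$ replaced by the time-dependent first variation $\delta\cF(G_{\nu_t})$. Thus the very same computation that produced Theorem~\ref{thm:bw_gf} applies verbatim, now with $h = \delta\cF(G_{\nu_t})$, and yields the stated evolution equations for $(\dot m_t, \dot \Sigma_t)$.

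The only real obstacle is foundational rather than computational: one must set up $\cP_2(\BW(\R^d))$ as a (formal) Wasserstein space over the Riemannian base $\BW(\R^d)$ and justify the analogs, in that setting, of Proposition~\ref{prop:w2_grad} and of the Lagrangian--Eulerian correspondence used to pass from the continuity equation on $\cP_2(\BW(\R^d))$ to an ODE on particles. These are sketched informally in the discussion preceding the theorem; a rigorous treatment would proceed along the lines of~\cite{AmbGigSav08}, leveraging the fact that $\BW(\R^d)$ is a finite-dimensional smooth Riemannian manifold so that no new ideas beyond the Euclidean case are needed.
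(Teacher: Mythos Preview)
Your proposal is correct and follows essentially the same route as the paper: compute $\delta\cG(\nu)(m,\Sigma)=\int \delta\cF(G_\nu)\,\ud\cN(m,\Sigma)$ via the linearity of $\nu\mapsto G_\nu$, invoke the Riemannian version of Proposition~\ref{prop:w2_grad} so that the Wasserstein gradient at $(m,\Sigma)$ is the BW gradient of the potential-energy functional with potential $\delta\cF(G_\nu)$, and then appeal to Theorems~\ref{thm:bw_grad} and~\ref{thm:bw_gf}. Your remark about the foundational step (treating $\cP_2(\BW(\R^d))$ as a Wasserstein space over a Riemannian base) matches the paper's own informal discussion preceding the theorem.
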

\begin{proof}
    The first variation of $\cG$ is computed as follows.
    If ${(\nu_t)}_{t\in\R}$ is a curve in $\cP_2(\BW(\R^d))$, then $\partial_t G_{\nu_t} = \int \cN(m,\Sigma)\,\partial_t \nu_t(\ud m, \ud \Sigma)$.
    Hence,
    \begin{align*}
        \partial_t \cG(\nu_t)
        &= \partial_t \cF(G_{\nu_t})
        = \int \delta \cF(G_{\nu_t})\,\partial_t G_{\nu_t}\\
        &= \iint \delta \cF(G_{\nu_t})\, \ud \cN(m,\Sigma)\, \partial_t \nu_t(\ud m, \ud \Sigma)\,.
    \end{align*}
    This implies that the first variation is
    \begin{align*}
        \delta \cG(\nu) : (m,\Sigma) \mapsto \int \delta \cF(G_\nu)\, \D \cN(m,\Sigma)\,.
    \end{align*}
    Based on our identification of $\BW(\R^d)$ with $\R^d\times \mb S_{++}^d$, the Riemannian gradient of $\delta \cG(\nu)$, evaluated at $(m,\Sigma)$, is the same as the Bures--Wasserstein gradient of $\mu \mapsto \int \delta \cF(G_\nu)\,\ud \mu$, evaluated at $\mu = \cN(m,\Sigma)$, and we computed the latter in Theorem~\ref{thm:bw_grad}.
    Therefore, the result follows from Theorem~\ref{thm:bw_gf}.
\end{proof}

\section{Wasserstein--Fisher--Rao}\label{sec:wfr}

We now describe a variation of the Wasserstein geometry that is often useful in applications as illustrated in Chapter~\ref{chap:appWGF}.
This variation, known as the \emph{Wasserstein--Fisher--Rao} (WFR) or \emph{Hellinger--Kantorovich}\index{Hellinger--Kantorovich|see{Wasserstein--Fisher--Rao}} distance, was originally proposed and studied as a model of \emph{unbalanced optimal transport}\index{unbalanced optimal transport}, that is, optimal transport between positive measures not necessarily containing the same total mass.
A notable example is the cellular trajectory reconstruction application mentioned in the Preface, in which measures are used to represent snapshots of the cell population at different times, and for which the total mass indeed changes due to the birth and death of individual cells.

From the static perspective, WFR defines a variant of the optimal transport problem, and its various properties such as the cost, metric properties, duality, etc.\ can all be investigated in a similar vein as we did in Chapter~\ref{chap:OT}.
We refer to the references~\cite{LieMieSav16HK, KonMonVor16,
 Chi+18FR, LieMieSav18HK} for detailed investigations in this direction.
In this section, we follow~\cite[Appendix H]{Lametal22GVI} and focus on the Riemannian structure of the resulting metric space for the purpose of deriving gradient flows.

\paragraph{Fisher--Rao}\index{Fisher--Rao}
Before turning toward Wasserstein--Fisher--Rao, we first describe one of its key components: the \emph{Fisher--Rao} metric.
This is a metric over the space $\cM_+(\R^d)$ of positive measures over $\R^d$, defined via
\begin{align*}
    \dFR^2(\mu_0, \mu_1)
    \deq \int {(\sqrt{\mu_0} - \sqrt{\mu_1})}^2\,,
\end{align*}
where as usual we identify measures with their Lebesgue densities, assuming that they exist.\footnote{When the densities do not exist, we can define the distance via $\dFR^2(\mu_0,\mu_1) \deq \int (\sqrt{\frac{\D\mu_0}{\D\lambda}} - \sqrt{\frac{\D\mu_1}{\D\lambda}})^2 \, \D \lambda$ with respect to any common dominating measure $\lambda$.}
When $\mu_0$, $\mu_1$ are probability measures, then $\dFR$ coincides with the statistician's \emph{Hellinger distance}.\footnote{This explains the competing naming conventions for the WFR metric; note that $\{\text{Wasserstein}, \text{Fisher--Rao}\} \cong \{\text{Hellinger}, \text{Kantorovich}\}$.}

Geometrically, $\dFR$ is the metric over (say) non-negative densities obtained by demanding that $\mu \mapsto \sqrt\mu$ be an isometry into the Hilbert space $L^2(\R^d)$.
Therefore, the geometry of $(\cM_+(\R^d), \dFR)$ is flat, and we can obtain a Riemannian structure through the isometry.
Namely, if $\dot\mu$ denotes the derivative in time of a curve of densities, then the derivative of the square root is $\dot{\sqrt\mu} = \dot \mu/(2\sqrt\mu)$.
If we measure the ``length'' of the latter in the $L^2(\R^d)$ norm, we arrive at the induced Riemannian metric
\begin{align*}
    \mg_\mu(\dot \mu,\dot\mu)
    &\deq  \|\dot{\sqrt{\mu}}\|_{L^2(\R^d)}^2=\int \frac{\dot\mu^2}{4\mu}
\end{align*}
over the tangent space $T_\mu \cM_+(\R^\dd)$ of functions $\dot\mu : \R^\dd\to\R$.

This geometry is set over the space of all positive measures, including measures with differing amounts of mass, and indeed this geometry proves useful for modeling physical situations in which change of mass naturally occurs.
A canonical example is when $\mu$ represents the concentration of a chemical substance, and the concentration changes over time due to chemical reactions.
This is modelled by the \emph{reaction equation} $\partial_t \mu_t = \alpha_t \mu_t$, where $\alpha_t : \R^d\to\R$ dictates the rate of reaction at each point in space.
Note that in this notation, $\alpha = \dot \mu/\mu$, which amounts to a reparametrization of the tangent space.
In other words, we can equivalently think of the tangent space as consisting of functions $\alpha : \R^\dd \to\R$ equipped with the metric
\begin{align}\label{eq:fr_metric}
    \widetilde{\mg}_\mu(\alpha,\alpha)
    \deq \mg_\mu(\alpha\mu,\alpha\mu)
    = \frac{1}{4} \int \alpha^2\,\D \mu\,.
\end{align}
Going forward, we adopt this as our definition of the metric, and hence we write $\|\alpha\|_\mu^2 \deq \widetilde{\mg}_\mu(\alpha,\alpha)$.

We can draw comparisons with the definition of the Wasserstein geometry: at each measure $\mu$, the ``tangent space'' $T_\mu \cM_+(\R^d)$ at $\mu$ is now defined to be the space of all functions $\alpha : \R^d\to\R$, equipped with the metric~\eqref{eq:fr_metric}, and the continuity equation is replaced by the reaction equation $\partial_t \mu_t = \alpha_t \mu_t$.
Compared to the Wasserstein metric, the Fisher{--}Rao metric is based on an entirely different intuition: rather than \emph{transportation} of mass, the reaction equation now describes spontaneous \emph{creation and destruction} of mass.

Despite motivating the Fisher{--}Rao geometry for problems involving change of mass, we may also wish to apply it to problems in which we want to maintain a flow on the space of probability measures.
To do so, we consider the induced geometry over $\cP(\R^d)$.
The equation $\partial_t \mu_t = \alpha_t \mu_t$ preserves the total mass if and only if $\int \alpha_t \,\D \mu_t = 0$ for all $t\ge 0$, so we restrict the tangent space to $T_\mu \cP(\R^d) = \{\alpha : \R^d \to \R \mid \int \alpha \,\D \mu = 0\}$, equipped with the metric~\eqref{eq:fr_metric}.\footnote{A discrete analogy: endow the space of probability measures on $\{1,\dotsc,d\}$ (i.e., the simplex in $\R^d$) with a geometry via an isometry $p\mapsto \sqrt p$, where $\sqrt p$ is an element of the unit sphere $\cS^{d-1}$. See Exercise~\ref{ex:fr_isometry}.}
The preservation of mass ensures that any mass that is destroyed is also instantly created elsewhere. To adhere to the lexicon of transport, this phenomenon is sometimes referred to as \emph{teleportation}; however, it should be noted that it merely corresponds to a reweighting.

What about gradient flows? Given a functional $\cF : \cM_+(\R^d)\to\R$ or $\cF : \cP(\R^d)\to\R$, the gradient by definition satisfies $\partial_t \cF(\mu_t) = \langle \gradFR \cF(\mu_t), \alpha_t\rangle_{\mu_t}$ along every curve $\partial_t \mu_t = \alpha_t \mu_t$.
By unpacking the definitions, one checks (see Exercise~\ref{ex:fr_gradient}) that
\begin{align}\label{eq:fr_gradient}
    \gradFR \cF(\mu) = \delta \cF(\mu) \quad \text{or}\quad \gradFR \cF(\mu) =\delta \cF(\mu) - \int \delta \cF(\mu)\,\ud \mu
\end{align}
depending on whether we are working over $\cM_+(\R^d)$ or $\cP(\R^d)$ respectively; here, $\delta \cF$ denotes the first variation of $\cF$ (Definition~\ref{defn:first_variation}).
To disambiguate the two cases and to emphasize the original motivation of the WFR metric from unbalanced optimal transport, we refer to the former case as \emph{unbalanced Fisher--Rao} and the latter as simply \emph{Fisher--Rao}.
The Fisher-Rao gradient flow of $\cF$ follows the tangent vector $ -\gradFR \cF(\mu_t)$ at time $t$ and is  given by
\begin{align}\label{eq:fr_gf}
    \partial_t \mu_t = -\gradFR \cF(\mu_t)\,\mu_t\,.
\end{align}

\paragraph{Wasserstein--Fisher--Rao}\index{Wasserstein--Fisher--Rao (WFR)}
We now combine both the Wasserstein and Fisher--Rao geometries into a hybrid geometry that incorporates both mass transport and creation/destruction (a.k.a.\ reweighting, a.k.a.\ teleportation).
The idea is to simply consider the continuity equation with reaction, $\partial_t \mu_t + \divergence(\mu_t v_t) = \alpha_t \mu_t$. The governing equation is parameterized by a function $\alpha$ and a vector field $v$, which together form a tangent vector. It is then natural to consider the metric\footnote{Strictly speaking, to add the Wasserstein and Fisher--Rao geometries, we should add a factor of $\frac{1}{4}$ in front of the $\alpha^2$ term, and this is indeed the convention adopted in some works. We omit this factor for parsimony.}
\begin{align}\label{eq:wfr_pre_metric}
    \|(\alpha,v)\|_\mu^2
    &= \int (\alpha^2 + \|v\|^2)\,\ud \mu\,.
\end{align}
However, similarly to our discussion in Section~\ref{sec:continuity_eq}, this does not uniquely define a tangent vector because there is too much freedom to choose the pair $(\alpha,v)$ while maintaining the same evolution of measures ${(\mu_t)}_{t\ge 0}$.
It can be shown that the \emph{optimal} pair $(\alpha, v)$, in the sense of minimizing the norm~\eqref{eq:wfr_pre_metric} (c.f.\ the discussion in Section~\ref{sec:continuity_eq}) can be characterized as follows: $\alpha = \psi$ and $v = \nabla \psi$ for some function $\psi : \R^d\to\R$.
Hence, we can define the WFR tangent space at $\mu$ to be
\begin{align*}
    T_\mu \cM_+(\R^d) = \overline{\{(\psi,\nabla\psi) \mid \psi : \R^d\to\R~\text{compact supp., smooth}\}}^{L^2(\mu)}
\end{align*}
equipped with the norm
\begin{align}\label{eq:wfr_metric}
    \|(\psi,\nabla\psi)\|_\mu^2
    &\deq \int (\psi^2 + \|\nabla \psi\|^2)\,\ud \mu\,.
\end{align}
This has the pleasing interpretation of ``completing'' the Wasserstein metric $\|\nabla\psi\|_{L^2(\mu)}^2$ to the full Sobolev norm of $\psi$.
Note also that the governing equation becomes
\begin{align}\label{eq:reaction_transport}
    \partial_t \mu_t +\divergence(\mu_t\nabla \psi_t) = \psi_t \mu_t\,.
\end{align}

As before, we can also restrict to the space of probability measures $\cP(\R^d)$, in which case we restrict to pairs $(\psi,\nabla \psi)$ such that $\int \psi\,\ud \mu = 0$, endowed with the same metric~\eqref{eq:wfr_metric}.
We refer to WFR over the full space $\cM_+(\R^\dd)$ as \emph{unbalanced WFR}, henceforth reserving the use of WFR for the restriction to $\cP(\R^\dd)$.

The following theorem computes the  WFR gradient.

\begin{theorem}
    Let $\cF$ be a functional over $\cM_+(\R^d)$ or $\cP(\R^d)$.
    Then, unbalanced WFR gradient of $\cF$, denoted $\gradWFR \cF$, is given by
    \begin{align*}
        \gradWFR\cF(\mu) = \bigl(\delta \cF(\mu), \nabla \delta \cF(\mu)\bigr)
    \end{align*}
    and the WFR gradient by
    \begin{align*}
        \gradWFR\cF(\mu) = \Bigl(\delta \cF(\mu) - \int \delta \cF(\mu)\,\ud \mu, \nabla \delta \cF(\mu)\Bigr)\,.
    \end{align*}
\end{theorem}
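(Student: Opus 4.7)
The plan is to apply the defining property of the Riemannian gradient, namely $\partial_t \cF(\mu_t) = \langle \gradWFR\cF(\mu_t), (\psi_t,\nabla\psi_t)\rangle_{\mu_t}$ along every curve ${(\mu_t)}_{t\ge 0}$ governed by the reaction--transport equation~\eqref{eq:reaction_transport}, and then read off the gradient by expanding the left-hand side via the first variation.

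First I would compute, starting from Definition~\ref{defn:first_variation}, that $\partial_t\cF(\mu_t) = \int \delta\cF(\mu_t)\,\partial_t\mu_t$. Substituting the governing equation $\partial_t \mu_t = -\divergence(\mu_t \nabla\psi_t) + \psi_t\,\mu_t$ and integrating by parts in the divergence term (using that test tangents are generated by compactly supported smooth $\psi_t$ so that boundary terms vanish), I obtain
\begin{align*}
    \partial_t \cF(\mu_t)
    &= \int \langle \nabla \delta \cF(\mu_t), \nabla \psi_t\rangle \, \ud \mu_t + \int \delta \cF(\mu_t)\, \psi_t \, \ud \mu_t \\
    &= \bigl\langle \bigl(\delta \cF(\mu_t), \nabla \delta \cF(\mu_t)\bigr),\, (\psi_t, \nabla \psi_t)\bigr\rangle_{\mu_t}\,,
\end{align*}
where the last equality is just the definition of the WFR inner product~\eqref{eq:wfr_metric}. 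In the unbalanced case, the pair $(\delta \cF(\mu), \nabla \delta\cF(\mu))$ is manifestly of the form $(\phi, \nabla \phi)$ and hence lies in $T_\mu \cM_+(\R^d)$, so we may identify it as the gradient.

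For the balanced case, the extra requirement is that the gradient lie in the constrained tangent space $\{(\phi,\nabla\phi) : \int \phi \, \ud\mu = 0\}$, which the candidate $(\delta\cF(\mu), \nabla \delta \cF(\mu))$ typically fails. I would therefore subtract its mean, setting $\phi \deq \delta\cF(\mu) - \int \delta \cF(\mu)\, \ud \mu$ so that $\int \phi\,\ud\mu = 0$ and $\nabla \phi = \nabla \delta \cF(\mu)$. For any admissible tangent vector $(\psi,\nabla\psi)$ with $\int \psi\,\ud\mu = 0$, the mean-zero correction term contributes $-(\int \delta\cF(\mu)\,\ud\mu)\int \psi\,\ud\mu = 0$, so the inner product is unchanged and still equals $\partial_t\cF(\mu_t)$. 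This is precisely the orthogonal projection onto the constrained tangent space, analogous to the Bures--Wasserstein computation in Theorem~\ref{thm:bw_grad}.

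The computation itself is short; the only subtle point to address is the well-posedness of the tangent-space identification, i.e., verifying that restricting to curves generated by smooth compactly supported $\psi$ is enough to uniquely pin down the gradient in the $L^2(\mu)$ closure defining $T_\mu$. This is handled by the density of such test functions in the tangent space together with the continuity of the pairing in the $L^2(\mu)$ norm, exactly as with the analogous step for Wasserstein in Proposition~\ref{prop:w2_grad}; I do not anticipate further obstacles.
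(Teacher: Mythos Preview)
Your proposal is correct and follows essentially the same approach as the paper: compute $\partial_t\cF(\mu_t)$ via the first variation and the reaction--transport equation, integrate by parts, and match against the WFR inner product; for the balanced case, both you and the paper subtract the mean so the gradient lies in the constrained tangent space, using $\int\psi_t\,\ud\mu_t=0$ to verify the inner product is unchanged. Your write-up is slightly more explicit about the projection interpretation and the density-of-test-functions point, but the argument is the same.
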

\begin{proof}
    Let ${(\mu_t)}_{t\ge 0}$ satisfy~\eqref{eq:reaction_transport}.
    Then, by integration by parts,
    \begin{align*}
        \partial_t \cF(\mu_t)
        &= \int \langle\nabla \delta \cF(\mu_t), \nabla \psi_t \rangle \,\ud \mu_t + \int \delta \cF(\mu_t)\,\psi_t\,\ud \mu_t\,.
    \end{align*}
    In the unbalanced case, we can identify this as
    \begin{align*}
        \langle (\delta \cF(\mu), \nabla \delta \cF(\mu)), (\psi_t, \nabla \psi_t)\rangle_{\mu_t}
    \end{align*}
    according to the definition of the metric~\eqref{eq:wfr_metric}.
    In the balanced case, we have $\int \psi_t \,\ud \mu_t = 0$, and since the WFR gradient is by definition an element of the tangent space its first component must also have mean zero, so the claim follows.
\end{proof}

The WFR gradient flow is therefore given by
\begin{align}\label{eq:wfr_gf}
    \partial_t \mu_t
    &= \divergence\bigl(\mu_t\,\nabla \delta\cF(\mu_t)\bigr) - \Bigl(\delta \cF(\mu_t) - \int \delta \cF(\mu_t)\,\D \mu_t\Bigr)\,\mu_t\,.
\end{align}

\section{Mean-field particle systems}\label{sec:mean_field_particle_systems}

We conclude this chapter by describing Wasserstein and WFR gradient flows from a particle systems perspective. These arise naturally when these gradient flows are initialized at finite measures. Indeed, a key observation is that since both gradient flows can be implemented using ordinary differential equations, if $\mu_0$ is a finite measure, $\mu_t$ remains a finite measure at all times $t$ along these gradient flows.

\subsection{Particle Wasserstein gradient flow}\label{sec:mf_WGF}

To illustrate this point,  let $\cF$ be a function over $\cP(\R^d)$ and recall from Definition~\ref{def:w2_grad_flow} that the Wasserstein gradient flow of $\cF$ is the continuity equation associated with the ODE
\begin{equation}\label{eq:mean_field_wgf}
            \dot X_t
    = -\gradW \cF(\mu_t)(X_t)\,,
\end{equation}
where $\mu_t$ denotes the law of $X_t$. In particular, we only need to describe these dynamics on the support of $\mu_t$.

Assume now that the Wasserstein gradient flow is initialized at 
$$
\mu_0 \deq  \frac1N \sum_{j=1}^N \delta_{X_0^{j}}\,,
$$
for a given collection of points $X_0^{1}, \ldots, X_0^N \in \R^d$. We get that
$$
\mu_t \deq  \frac1N \sum_{j=1}^N \delta_{X_t^{j}}\,,
$$
where for $i \in [N]$,
\begin{equation}\label{eq:part_wgf}
    \boxed{\dot X_t^i = -\gradW \cF(\mu_t)(X_t^i)\,.}
\end{equation}
These dynamics describe an interacting particle system where particles $(X_t^1, \ldots, X_t^N)$ are subject to dynamics of the form
\begin{equation}\label{eq:gen_int_part}
    \dot X_t^i = V_t^i(X_t^1, \ldots, X_t^N)\,, \quad i \in [N]\,. 
\end{equation}
Note that in the case of Wasserstein gradient flows, we further have that:
\begin{itemize}
    \item[(a)] each particle $X_t^i$ interacts with the others only through the effect of their distribution $\mu_t$, and
    \item[(b)] these interactions have the same form for all the particles. 
\end{itemize} 
Slightly overloading notation, this means that the general dynamics in~\eqref{eq:gen_int_part} simplify to
$$
  \dot X_t^i=V_t^i(X_t^1, \ldots, X_t^N)\stackrel{\text{(a)}}{=}  V_t^i(X_t^i, \mu_t) \stackrel{\text{(b)}}{=} V_t(X_t^i, \mu_t)\,, \qquad i \in [N]\,.
$$
These two properties are precisely captured by~\eqref{eq:mean_field_wgf}: the first one is obvious and the second one is manifest due to the absence of a superscript $i$, which indicates that each particle is subject to the same vector field. Such a system is said to exhibit \emph{mean-field interactions}. Both the Wasserstein and WFR gradient flows are of this form.

Mean-field interaction systems are convenient because it is strictly equivalent to describe the dynamics of each particle and that of their distribution. The latter takes the form of a PDE given by the continuity equation~\eqref{eq:continuity}.

Since the Wasserstein gradient flow only moves particles, the weights in $\mu_0$ do not change over time: if the Wasserstein gradient flow is initialized at 
\begin{equation}\label{eq:finite_init}
    \mu_0 \deq  \sum_{j=1}^N w_0^{j}\delta_{X_0^{j}}\,,
\end{equation}
where $w_0^{j}\ge 0,\, j \in [N]$ and $\sum_{j=1}^N w_0^{j}=1$,
then
$$
\mu_t \deq  \sum_{j=1}^N w_0^{j}\delta_{X_t^{j}}\,,
$$
where $X_t^1, \ldots, X_t^N$ evolve according to~\eqref{eq:part_wgf}. 
To also impose dynamics on the weights, we employ instead a WFR gradient flow.

\subsection{Particle WFR gradient flow}\label{subsec:particle_wfr}

Recall that tangent vector fields for the Wasserstein space are displacement maps of the form $\nabla \psi$. The Wasserstein--Fisher--Rao (WFR) geometry
reinterprets the tangent space by replacing the governing continuity equation~\eqref{eq:continuity} with the reaction-transport equation~\eqref{eq:reaction_transport}.
In particular, it offer the possibility of traversing the space of probability measures, say from initial distribution to target distribution, more efficiently by reweighting particles rather than having to move them across the space in a continuous fashion. When initialized at a finite measure of the form~\eqref{eq:finite_init}, this effect manifests itself in the form of time-varying weights:
$$
\mu_t \deq  \sum_{j=1}^N w_t^{j}\delta_{X_t^{j}}\,,
$$
where $w_t^{j}\ge 0,\, j \in [N]$ and $\sum_{j=1}^N w_t^{j}=1$. 

The particle updates follow the Wasserstein geometry, and the weight updates follow the Fisher{--}Rao geometry: for $i\in [N]$,
\begin{align}\label{eq:wfr_particle}
    \boxed{\begin{aligned}
        \dot X^i_t
        &= -\nabla\delta\cF(\mu_t)(X_t^i)\,, \\
        \dot w^i_t
        &= -\Bigl(\delta \cF(\mu_t)(X_t^i) - \int \delta \cF(\mu_t)\,\ud \mu_t\Bigr)\,w^i_t\,.
    \end{aligned}}
\end{align}

\subsection{Gaussian particles}\label{ssec:gaussian_particles}

Following Section~\ref{sec:gaussian_mixtures}, we can also take a finite Gaussian mixture with mixing measure
\begin{align*}
    \nu_t
    &= \frac{1}{K} \sum_{k=1}^K \delta_{(m^k,\Sigma^k)}
\end{align*}
that evolves according to the Wasserstein gradient flow for the functional $\nu \mapsto \cG(\nu) = \cF(G_\nu)$.
By Theorem~\ref{thm:gm_flow}, this flow takes the following form: for each $k\in [K]$,
\begin{align}\label{eq:gm_particle_flow}
    \boxed{\begin{aligned}
        \dot m_t^k
        &= -\E\nabla \delta \cF(G_{\nu_t})(X_t^k)\,, \\
        \dot \Sigma_t^k
        &= -\E\nabla^2 \delta \cF(G_{\nu_t})(X_t^k) \,\Sigma_t^k - \Sigma_t^k\,\E\nabla^2 \delta \cF(G_{\nu_t})(X_t^k)\,,
    \end{aligned}}
\end{align}
where $X_t^k \sim \cN(m_t^k, \Sigma_t^k)$.
Note that this is an interacting system of ``particles'' $(m_t^k, \Sigma_t^k)$, but each particle corresponds to a Gaussian component $\cN(m_t^k, \Sigma_t^k)$, and the collection thereof to the Gaussian mixture $\nu_t$.
We therefore refer to $\cN(m_t^k,\Sigma_t^k)$ as a \emph{Gaussian particle}.

We emphasize that these dynamics do \emph{not} implement the Wasserstein gradient flow for $\cF$. Nevertheless, these dynamics are perfectly valid for minimizing $\cF$ over the space of $K$-component Gaussian mixtures.

Recall that in Section~\ref{sec:gaussian_mixtures}, we equipped the space of probability measures over $\BW(\R^d)$---i.e., the space of mixing measures---with the Wasserstein geometry.
But we could have equally well considered equipping this space with the WFR geometry.
The corresponding particle dynamics evolves the finite Gaussian mixture
\begin{align*}
    \nu_t = \sum_{k=1}^K w_t^k \delta_{(m_t^k, \Sigma_t^k)}
\end{align*}
with changing weights, governed by
\begin{align*}
    \boxed{\begin{aligned}
        \dot m_t^k
        &= -\E\nabla \delta \cF(G_{\nu_t})(X_t^k)\,, \\
        \dot \Sigma_t^k
        &= -\E\nabla^2\delta \cF(G_{\nu_t})(X_t^k) \,\Sigma_t^k - \Sigma_t^k\,\E\nabla^2\delta \cF(G_{\nu_t})(X_t^k)\,, \\
        \dot w_t^k
        &= -\Bigl(\E\delta \cF(G_{\nu_t})(X_t^k) - \frac{1}{K} \sum_{k'=1}^K \E\delta \cF(G_{\nu_t})(X_t^{k'})\Bigr)\, w_t^k\,,
    \end{aligned}}
\end{align*}
where $X_t^k \sim \cN(m_t^k, \Sigma_t^k)$.

\subsection{Implementation strategies for gradient flows}

For both the Wasserstein gradient flow and the WFR gradient flow, one needs to compute the Wasserstein gradient $\gradW \cF\bigl(\mu_t\bigr)=\nabla\delta\cF(\mu_t)$ on the support of $\mu_t$. When $\mu_t$ is a discrete measure, this quantity may not be well-defined. This is the case for example when $\cF$ is the entropy functional which is itself not defined on discrete measures, let alone its Wasserstein gradient.
(Note, however, that it may be well-defined when we use Gaussian particles.)

In practice, the particle implementations discussed here typically needs to be combined with other tricks (e.g., ``kernelization'' as in Subsection~\ref{sec:svgd}). These implementation strategies are described in the next chapter.

\section{Discussion}

\noindent\textbf{\S\ref{sec:continuity_eq}.} Detailed treatments of the metric derivative and the continuity equation can be found in~\cite{AmbGigSav08, Vil09, San15}.
In particular, a rigorous version of Theorem~\ref{thm:fundamental_otto} can be found in~\cite[Chapter 8]{AmbGigSav08} or~\cite[Chapter 5]{San15}.

\noindent\textbf{\S\ref{sec:riem}.} There are many excellent textbooks covering Riemannian geometry, e.g.,~\cite{DoC1992Riem}.

\noindent\textbf{\S\ref{sec:riem_wass}.} The Benamou--Brenier formula is often called the ``dynamical'' formulation of optimal transport (as opposed to Chapter~\ref{chap:OT}, which describes the ``static'' picture).
There is also a dynamical version of the dual problem, in which the dual potentials evolve according to the \emph{Hamilton--Jacobi equation}\@; see~\cite[Section 8.1]{Vil03}. The dynamical version of entropic optimal transport, introduced in Chapter~\ref{chap:entropic}, is closely tied to the well-known \emph{Schr\"odinger bridge} problem~\cite{Leo14Schrodinger, CheGeoPav21}.

\noindent\textbf{\S\ref{sec:otto_calc}.} The formal calculation rules described in this section were first laid out by Otto~\cite{Ott01}, although some of the ideas were already anticipated in the earlier work of~\cite{Laf1988DensityManifold}.

The tangent space at a measure $\mu$, together with its metric, can be viewed as a linearization of the geometric structure at $\mu$. This gives rise to ``linearized optimal transport'' which has formed the basis for numerous applications~\cite{Wan+13LinearOT, BasKolRoh14Microscopy, KolRoh15SuperRes, SegCut15, Kol+16Pattern, Big+17GeoPCA, ParTho18OT, Cai+20LinOT}.

Besides gradient flows, there have also been proposals for adaptations of other optimization algorithms to the Wasserstein space, e.g.,~\cite{ChoLiZho20Ham, WanLi20Newton, WanLi22Accel, Tan23Accel, Che+24Accel}.

\noindent\textbf{\S\ref{sec:bw}.} The Bures{--}Wasserstein geometry is named after Donald Bures~\cite{Bures1969Kakutani}, who introduced this metric over the PSD cone in his work on quantum information theory.
BW geometry is further explored in~\cite{Modin17OT, BhaJaiLim19BW, Han+21BW, Van22BW}.
The connection with the Burer--Monteiro factorization, as described in Exercise~\ref{ex:burer_monteiro}, has been explored in the context of low-rank matrix recovery~\cite{LuoGar22Matrix, MauLeGRig23LowRank}.

\noindent\textbf{\S\ref{sec:gaussian_mixtures}.}
As mentioned in the main text, the geometry described in this section was first considered in~\cite{CheGeoTan19, DelDes20GMM}, although the gradient flow equations were obtained in~\cite{Lametal22GVI}.

\noindent\textbf{\S\ref{sec:wfr}.} The Fisher{--}Rao geometry is well-studied in information geometry~\cite{AmaNag00, Ay+17InfoGeo}.

\noindent\textbf{\S\ref{sec:mean_field_particle_systems}.}
In some sources, such as~\cite{LieMieSav16HK}, WFR gradient flows are written in terms of the square root of the weights, i.e., in terms of $r \deq \sqrt{w}$.
This convention is motivated by the fact that the FR distance corresponds to the Euclidean distance between the square roots of the weights, and the WFR distance can therefore be interpreted as a coupling cost on the space of $(r, x)$ pairs equipped with a ``cone'' metric (c.f.\ Subsection~\ref{subsec:cone}). Since we do not cover this perspective here, we adopt the more straightforward parametrization in terms of $w$.

The use of Gaussian particles was first advocated in~\cite{Lametal22GVI}.

\section{Exercises}

In the following exercises, you may use the following formula for the Wasserstein gradient of the squared Wasserstein distance:
\begin{align}\label{eq:grad_of_w2}
    [\gradW W_2^2(\cdot, \nu)](\mu)
    &= 2\,({\id} - T_{\mu\to\nu})\,.
\end{align}
We do not give the full proof of~\eqref{eq:grad_of_w2} here, but it is straightforward to establish the upper bound (Exercise~\ref{ex:w2_grad_upper}).

\begin{enumerate}
    \item Let $X_0\sim \mu_0$, where $\mu_0$ is the standard Gaussian over $\R^2$. For $t\ge 0$, let $X_t=R_tX_0$ where $R_t$ is a rotation by $\theta(t)$ radians. Compute the vector field $v_t$ such that $\dot X_t = v_t(X_t)$ and show that $\divergence(\mu_0 v_t)=0$ for all $t\ge 0$ (and hence that $X_t \sim \mu_0$ for all $t\ge 0$).
    
    \item\label{ex:prod_measures_cvx} Show that the set of product measures over $\R^d$ is a geodesically convex subset of $\cP_2(\R^d)$.

    \item\label{ex:w2_grad_upper} Suppose that ${(X_t)}_{t\in\R}$ follows the ODE $\dot X_t = v_t(X_t)$, so that $\mu_t = \law(X_t)$ evolves according to the continuity equation $\partial_t \mu_t + \divergence(\mu_t v_t) = 0$, and suppose that $\mu_0 = \mu$.
    Prove that
    \begin{align*}
        \limsup_{h\searrow 0}\frac{W_2^2(\mu_h, \nu) - W_2^2(\mu_0,\nu)}{h}
        \le 2\,\langle {\id} - T_{\mu\to\nu}, v_0 \rangle_\mu\,.
    \end{align*}
    \emph{Hint:} Let $X_0\sim \mu$ and $Y\sim \nu$ be optimally coupled.
    
    \item\label{ex:weakly_cvx_rate} Let $\cF : \cP_{2,\rm ac}(\R^d)\to \R\cup \{\infty\}$ be a geodesically convex functional which is minimized at $\pi$.
    Let ${(\mu_t)}_{t\ge 0}$ denote the Wasserstein gradient flow for $\cF$.
    By differentiating $t\mapsto 2t\,\cF(\mu_t) + W_2^2(\mu_t, \pi)$, prove
    \begin{align*}
        \cF(\mu_t) - \inf \cF
        &\le \frac{W_2^2(\mu_0,\pi)}{2t}\,.
    \end{align*}
    
    \item \label{ex:otto_villani} Let $\cF : \cP_{2,\rm ac}\to\R\cup\{\infty\}$ be a functional.
    We saw that $\alpha$-strong convexity of $\cF$ implies the Polyak{--}\L{}ojasiewicz (P\L{}) inequality
    \begin{align}\label{eq:pl_ineq}
        \|\gradW \cF(\mu)\|_\mu^2
        &\ge 2\alpha\,(\cF(\mu) - \inf \cF)\,, \qquad \forall \mu \in \cP_{2,\rm ac}(\R^d)\,.
    \end{align}
    \begin{enumerate}
        \item Show that~\eqref{eq:pl_ineq} implies the quadratic growth inequality
            \begin{align}\label{eq:quadratic_growth}
                \cF(\mu) - \inf \cF
                &\ge \frac{\alpha}{2}\,W_2^2(\mu,\pi)\,, \qquad \forall \mu \in \cP_{2,\rm ac}(\R^d)\,,
            \end{align}
            where $\pi$ is the minimizer of $\cF$. This is known as the \emph{Otto{--}Villani theorem} after~\cite{OttVil00LSI}.

            \emph{Hint}: Differentiate $t\mapsto \sqrt{\frac{\alpha}{2}}\, W_2(\mu_t,\mu_0) + \sqrt{\cF(\mu_t) - \inf \cF}$ along the Wasserstein gradient flow of $\cF$.
            You may assume that the gradient flow converges to $\pi$, which is a consequence of~\eqref{eq:quadratic_growth} if $\cF$ is uniquely minimized.
        \item In general,~\eqref{eq:quadratic_growth} does not imply~\eqref{eq:pl_ineq}.
            However, prove that when $\cF$ is geodesically convex, then~\eqref{eq:quadratic_growth} implies~\eqref{eq:pl_ineq} but with $\alpha$ replaced by $\alpha/4$.
    \end{enumerate}

    \item Suppose that instead of the PL inequality~\eqref{eq:pl_ineq}, we instead have the inequality
    \begin{align*}
        \|\gradW \cF(\mu)\|_\mu^p \ge c\,(\cF(\mu) - \inf \cF)\,, \qquad\forall \mu \in \cP_{2,\rm ac}(\R^\dd)\,,
    \end{align*}
    for some power $0 < p < 2$.
    Show that the Wasserstein gradient flow dissipates $\cF$ at a polynomial rate: $\cF(\mu_t) - \inf \cF = O(1/t^{\frac{2}{p}-1})$.
    What happens in the case $p > 2$?
    
    \item Consider $\cF : \cP_{2,\rm ac}(\R^d)\to\R$ which is the operator norm of the second moment matrix:
            \begin{align*}
                \cF(\mu) \deq \Bigl\lVert \int xx^\T \, \mu(\ud x) \Bigr\rVert_{\rm op}\,.
            \end{align*}
            Prove that $\cF$ is geodesically convex.
            
    \item \begin{enumerate}
        \item Compute the Wasserstein gradient of the chi-squared divergence $\chi^2(\cdot \mmid \pi)$ at $\mu$.
            Recall that $\chi^2(\mu \mmid \pi) \deq \int \frac{\D\mu}{\D\pi} \, \D \mu - 1$.
            Also, write down the equation for the Wasserstein gradient flow of the chi-squared divergence.
        \item Prove that when $\pi$ is log-concave, then $\chi^2(\cdot \mmid \pi)$ is geodesically convex.
    \end{enumerate}
    
    \item\label{ex:gen_geod_cvx} We say that a functional $\cF : \cP_{2,\rm ac}(\R^d)\to\R\cup\{\infty\}$ is $\alpha$-convex along \emph{generalized geodesics} if for all triples $\mu_0,\mu_1,\nu \in \cP_{2,\rm ac}(\R^d)$, if we define the generalized geodesic joining $\mu_0$ to $\mu_1$ with base $\nu$ via
    \begin{align*}
        \mu_t^\nu
        &\deq {[(1-t)\,T_{\mu_0\to\nu} + t\,T_{\mu_1\to\nu}]}_\# \nu\,,
    \end{align*}
    then it holds:
    \begin{align*}
        \cF(\mu_t^\nu)
        &\le (1-t)\,\cF(\mu_0) + t\,\cF(\mu_1) - \frac{\alpha\,t\,(1-t)}{2}\,W_2^2(\mu_0,\mu_1)\,.
    \end{align*}
    \begin{enumerate}
        \item Explain why, if $\cF$ is $\alpha$-convex along generalized geodesics, then it is $\alpha$-strongly convex.
            Also, explain why being $\alpha$-convex along generalized geodesics is equivalent to the mapping $\cF \circ \exp_\nu$ being $\alpha$-strongly convex on the tangent space $T_\nu \cP_{2,\rm ac}(\R^d)$.
        \item Show that for $\pi \propto \exp(-V)$ where $V$ is $\alpha$-strongly convex, then $\KL(\cdot \mmid \pi)$ is $\alpha$-convex along generalized geodesics (this strengthens the convexity result of Corollary~\ref{cor:kl_cvx}).
        \item Show that for any $\mu_0,\mu_1,\nu \in \cP_{2,\rm ac}(\R^d)$, there \emph{exists} at least one generalized geodesic joining $\mu_0$ to $\mu_1$, along which $\frac{1}{2}\,W_2^2(\cdot, \nu)$ is $1$-strongly convex.
    \end{enumerate}
    \emph{Remark}: Generalized geodesics play an important role in studying the geometry of the Wasserstein space. The result of the third part of this question was used to show existence of the minimizing movements scheme, which in turn is used to rigorously construct Wasserstein gradient flows; see~\cite{AmbGigSav08} for further reading.

    \item Compute the Wasserstein geodesic joining two Gaussians.

    \item\label{ex:wgf_of_w2} Use~\eqref{eq:grad_of_w2} to show that if ${(\mu_t)}_{t\ge 0}$ is the Wasserstein gradient flow of $\frac{1}{2}\,W_2^2(\cdot,\nu)$, then $t\mapsto \mu_{1-\exp(-t)}$ is the constant-speed Wasserstein geodesic joining $\mu_0$ to $\nu$.

    \item\label{ex:burer_monteiro} Let $\cF$ be a functional over $\cP_2(\R^d)$, and consider the functional $(m, U) \mapsto F(m,U) \deq \cF(\cN(m,UU^\T))$.
    Show that the \emph{Euclidean gradient flow} for $F$ over $\R^d \times \R^{d\times d}$ yields the same dynamics (up to rescaling time) as the Bures--Wasserstein gradient flow~\eqref{eq:bw_gf} where $\Sigma = UU^\T$.
    Similarly, show that the Euclidean gradient flow of $(m^1,\dotsc,m^K, U^1,\dotsc,U^K) \mapsto \cF(\frac{1}{K}\sum_{k=1}^K \cN(m^k, U^k (U^k)^\T))$ recovers the Gaussian mixture flow~\eqref{eq:gm_particle_flow} (up to rescaling time).

    \emph{Remark}: The parametrization $\Sigma = UU^\T$ is often referred to as the \emph{Burer--Monteiro parametrization}\index{Burer--Monteiro}, especially when it is used to constrain $\Sigma$ to have low rank~\cite{BurMon03LowRank, BurMon05LocalMin}.

    \item\label{ex:fr_isometry} Let $p$ be an element in the interior of the simplex, i.e., a strictly positive probability distribution over the finite alphabet $\{1,\dotsc,d\}$.
    Consider the isometry $f : p\mapsto \sqrt p$ that maps $p$ to an element of the sphere: $\sqrt p \in \cS^{d-1}$.
    Show that under this isometry, a tangent vector $\dot p$ on the simplex is mapped to $v = \dot p/(2\sqrt p)$ and conclude that $\dot p$ is tangent to the simplex (i.e., $\sum_{i\in [d]} \dot p_i = 0$) if and only if $v$ is tangent to the sphere (i.e., $\sqrt p \perp v$).

    \item\label{ex:fr_gradient} Verify the expressions~\eqref{eq:fr_gradient} for the (unbalanced) Fisher--Rao gradient of a functional.
    
    \item Compute the FR and WFR gradients of the functionals listed in Examples~\ref{ex:potential},~\ref{ex:internal}, and~\ref{ex:interaction}.
    
    \item Show that the FR gradient flow~\eqref{eq:fr_gf} and the WFR gradient flow~\eqref{eq:wfr_gf} maintain the property that $\mu_t$ is a probability measure for all $t\ge 0$.
    
    \item Show that~\eqref{eq:wfr_particle} indeed follows the WFR gradient flow~\eqref{eq:wfr_gf}.
\end{enumerate}

\chapter{Wasserstein gradient flows: applications}
\label{chap:appWGF}

In the previous chapter, we developed a Riemannian structure on the space $(\cP_2(\R^d), W_2)$ in order to define Wasserstein and WFR gradient flows. In this chapter, we use these gradient flows as optimization algorithms over the space of probability measures for various tasks arising in statistics and machine learning. Each task corresponds to choosing a specific functional $\cF$ over this space. In particular, akin to the notion of convexity in classical optimization (see, e.g.,~\cite{Bub15}), the notion of geodesic convexity is instrumental in deriving rates of convergence.

\section{Variational inference}\label{sec:vi}\index{variational inference (VI)}

As our first application of gradient flow theory, we consider a rich source of optimization problems over the space of measures arising from the burgeoning field of \emph{variational inference} (VI)~\cite{JorGhaJaa99, WaiJor08, BleKucMcA17}.
In VI\@, we posit access to a probability measure $\pi$ over $\R^d$ via an expression for its density, and our goal is to perform inference.
A typical example arises when $\pi$ is the posterior distribution from a Bayesian inference problem, in which case VI is also known as \emph{variational Bayes}, and it has gradually emerged as an appealing computational counterpoint to traditional Markov chain Monte Carlo (MCMC) methods, which we study in Section~\ref{sec:sampling}.

The idea of VI is to approximate $\pi$ with an element of a simpler class $\cQ$ of probability measures by solving the optimization problem
\begin{align}\label{eq:VI}\tag{$\msf{VI}$}
    q_\star = \argmin_{q\in \cQ} \KL(q \mmid \pi)\,.
\end{align}
Although a plethora of variants have been proposed which replace the KL divergence with other objectives\footnote{Including the KL divergence with the order of arguments swapped, which is closer to the statistician's concept of maximum likelihood.}, the one we present here is particularly popular in practice.
This is because typically we do not have access directly to $\pi$ but rather to an unnormalized density $\tilde \pi$, and the unknown normalization constant $Z = \int \tilde \pi$ does not affect the optimization objective in~\eqref{eq:VI}.

This modelling choice also has fortuitous consequences for the convexity of the VI problem over the Wasserstein space, leading to the development of flow-based algorithms.

\subsection{Convexity of the VI problem}

In order to apply the gradient flow machinery to~\eqref{eq:VI}, we are inexorably led to our next undertaking: studying the geodesic convexity of the KL divergence over the Wasserstein space.

Henceforth, we always assume that $\pi$ admits a density of the form $\pi \propto \exp(-V)$, where $V : \R^d\to\R$ is called the potential function.
The first observation is that the KL divergence decomposes into a sum of two functionals:
\begin{align}\label{eq:kl_decomp}
    \cF(\mu) \deq \KL(\mu\mmid \pi)
    &= \int \mu \log \frac{\mu}{\pi}
    = {\underbrace{\int V\,\ud \mu}_{\eqqcolon \cV(\mu)}} + {\underbrace{\int \mu \log \mu}_{\eqqcolon \cH(\mu)}} + {\text{const.}}
\end{align}
where $\cV$ is the \emph{potential energy}, $\cH$ is the \emph{entropy}, and ``const.'' denotes an additive constant that does not depend on $\mu$ (and hence is irrelevant for studying properties of the gradient flow).

In Examples~\ref{ex:potential} and~\ref{ex:internal}, we have already computed the Wasserstein gradients $\gradW\cV(\mu) = \nabla V$ and $\gradW \cH(\mu) = \nabla \log \mu$. Adding these together, we obtain $\gradW\cF(\mu) = \nabla \log \mu + \nabla V = \nabla \log(\mu/\pi)$.
From Definition~\ref{def:w2_grad_flow}, the Wasserstein gradient flow of $\cF$ solves
\begin{align}\label{eq:wgf_kl}
    \partial_t \mu_t
    &= \divergence\bigl(\mu_t\,\nabla \log \frac{\mu_t}{\pi}\bigr)\,.
\end{align}

Leveraging~\eqref{eq:kl_decomp}, we can study the convexity of the two functionals $\cV$ and $\cH$ separately.
The potential energy is straightforward.

\begin{theorem}\label{thm:potential_cvx}
    Suppose that $V : \R^d\to\R$ is $\alpha$-convex on $\R^d$.
    Then, the corresponding potential energy functional $\cV$ defined by $\cV(\mu) \deq \int V\,\ud \mu$ is $\alpha$-geodesically convex on $\cP_{2,\rm ac}(\R^d)$.
\end{theorem}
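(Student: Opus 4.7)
The plan is to prove geodesic convexity by direct computation along the Wasserstein geodesic, using Brenier's theorem to parametrize it and exploiting $\alpha$-convexity of $V$ pointwise.

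First, I would fix $\mu_0, \mu_1 \in \cP_{2,\rm ac}(\R^d)$ and let $T$ be the Brenier map from $\mu_0$ to $\mu_1$ (which exists by Brenier's Theorem~\ref{thm:improvedBrenier} since $\mu_0$ is absolutely continuous). Invoking Definition~\ref{def:w2_geod}, the constant-speed Wasserstein geodesic joining $\mu_0$ to $\mu_1$ is $\mu_t = [(1-t)\,{\id} + t\,T]_\# \mu_0$, so by the change-of-variables/pushforward formula,
\begin{align*}
    \cV(\mu_t) = \int V\,\ud \mu_t = \int V\bigl((1-t)\,x + t\,T(x)\bigr)\,\mu_0(\ud x)\,.
\end{align*}

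The next step is to invoke $\alpha$-convexity of $V$ pointwise inside the integrand: for every $x \in \R^d$,
\begin{align*}
    V\bigl((1-t)\,x + t\,T(x)\bigr)
    \le (1-t)\,V(x) + t\,V(T(x)) - \frac{\alpha\,t\,(1-t)}{2}\,\|x - T(x)\|^2\,.
\end{align*}
Integrating this inequality against $\mu_0$ and using linearity of the integral yields
\begin{align*}
    \cV(\mu_t)
    &\le (1-t)\int V\,\ud\mu_0 + t\int V(T(x))\,\mu_0(\ud x) \\
    &\qquad{} - \frac{\alpha\,t\,(1-t)}{2}\int \|x - T(x)\|^2\,\mu_0(\ud x)\,.
\end{align*}

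To conclude, I would identify each term on the right-hand side. Since $T_\#\mu_0 = \mu_1$, the second term equals $t\int V\,\ud\mu_1 = t\,\cV(\mu_1)$. Since $T$ is the optimal transport map from $\mu_0$ to $\mu_1$, the Brenier theorem gives $\int \|x - T(x)\|^2\,\mu_0(\ud x) = W_2^2(\mu_0, \mu_1)$. Substituting back yields exactly
\begin{align*}
    \cV(\mu_t) \le (1-t)\,\cV(\mu_0) + t\,\cV(\mu_1) - \frac{\alpha\,t\,(1-t)}{2}\,W_2^2(\mu_0, \mu_1)\,,
\end{align*}
which is the definition of $\alpha$-geodesic convexity in~\eqref{eq:geod_cvx_def}. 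There is no real obstacle here; the argument is entirely mechanical once one uses the displacement interpolation to rewrite $\cV(\mu_t)$ as an integral against $\mu_0$. The only subtlety worth flagging is that we need the Brenier map to exist, which is why the statement is restricted to $\cP_{2,\rm ac}(\R^d)$; the extension to non-absolutely-continuous endpoints can be handled by an approximation argument or by replacing the Brenier map with an optimal coupling and applying the same pointwise convexity estimate on the support of that coupling.
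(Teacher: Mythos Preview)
Your proof is correct and takes a genuinely different route from the paper. The paper verifies the second-order condition from Section~\ref{sec:riem}: it writes $X_t = (1-t)\,X_0 + t\,T(X_0)$, computes
\[
\gradW^2\cV(\mu_0)[T-{\id},\,T-{\id}]
= \partial_t^2\,\E V(X_t)\big|_{t=0}
= \E\langle T(X_0)-X_0,\,\nabla^2 V(X_0)\,(T(X_0)-X_0)\rangle\,,
\]
and then invokes $\nabla^2 V \succeq \alpha I$ to bound this below by $\alpha\,\|T-{\id}\|_{\mu_0}^2$. By contrast, you verify the defining inequality~\eqref{eq:geod_cvx_def} directly, applying the zeroth-order $\alpha$-convexity of $V$ pointwise under the integral. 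Your argument is more elementary and, notably, does not require $V$ to be twice differentiable, so it applies verbatim to non-smooth $\alpha$-convex potentials; the paper's computation, on the other hand, serves the expository purpose of illustrating the Wasserstein Hessian in action within the Otto-calculus framework being developed.
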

\begin{proof}
    We use the second-order condition from Section~\ref{sec:riem}.
    Namely, let $X_t\sim \mu_t$ for $t\in [0,1]$, where ${(\mu_t)}_{t\in [0,1]}$ is a Wasserstein geodesic; thus, $X_t = (1-t)\,X_0 + t\,T(X_0)$ where $T$ is the optimal transport map from $\mu_0$ to $\mu_1$.
    We compute
    \begin{align*}
        \gradW^2\cV(\mu_0)[T-{\id}, T-{\id}]
        &= \partial_t^2 \cV(\mu_t)\big|_{t=0}
        = \partial_t^2 \E V(X_t)\big|_{t=0} \\
        &= \E\langle T(X_0) - X_0, \nabla^2 V(X_0)\,(T(X_0) - X_0)\rangle \\
        &\ge \alpha\, \E[\|T(X_0) - X_0\|^2] \\
        &= \alpha\,\|T-{\id}\|_{\mu_0}^2\,,
    \end{align*}
    where we used the assumption $\nabla^2 V \succeq \alpha I$.
\end{proof}

Next, we show that the entropy $\cH$ is geodesically convex.
For this, we invoke the change of variables formula.

\begin{lemma}[Change of variables]\label{lem:change_of_var}\index{change of variables}
    Let $\mu$ be a density on $\R^d$, let $T : \R^d\to\R^d$ be a diffeomorphism, and let $\nu \deq T_\# \mu$.
    Then, $\nu$ has density given by
    \begin{align*}
        \nu(T(x)) = \frac{\mu(x)}{|\det \nabla T(x)|}\,.
    \end{align*}
\end{lemma}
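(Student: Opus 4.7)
The plan is to verify the formula by testing $\nu$ against an arbitrary bounded continuous function $\varphi$ and using the standard Jacobian change of variables for Lebesgue integrals on $\R^d$.

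First, I would invoke the defining property of the pushforward: for $\nu = T_\# \mu$ and any bounded continuous $\varphi : \R^d \to \R$,
\[
\int \varphi(y)\, \nu(\ud y) = \int \varphi(T(x))\, \mu(x)\, \ud x\,.
\]
Since $T$ is a diffeomorphism, I can perform the substitution $y = T(x)$ on the left-hand side. Because $T$ is $C^1$ with $C^1$ inverse, the Jacobian determinant $\det \nabla T(x)$ is nonvanishing on $\R^d$, and the standard change-of-variables theorem for Lebesgue integrals gives $\ud y = |\det \nabla T(x)|\, \ud x$. Denoting (by slight abuse of notation) the density of $\nu$ by $\nu$, the left-hand side becomes
\[
\int \varphi(T(x))\, \nu(T(x))\, |\det \nabla T(x)|\, \ud x\,.
\]

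Equating the two expressions yields
\[
\int \varphi(T(x))\, \bigl(\nu(T(x))\, |\det \nabla T(x)| - \mu(x)\bigr)\, \ud x = 0
\]
for every bounded continuous $\varphi$. Since $T$ is a diffeomorphism, $\varphi \circ T$ ranges over all bounded continuous functions as $\varphi$ does, so by a standard density argument (bounded continuous functions are separating for $L^1_{\mathrm{loc}}$) the integrand in parentheses must vanish almost everywhere, giving the claimed identity $\nu(T(x)) = \mu(x)/|\det \nabla T(x)|$.

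The argument is entirely routine — no serious obstacle. The only mild subtleties are (i) justifying that $\nu$ actually admits a density (which follows because the pushforward of an absolutely continuous measure by a diffeomorphism is absolutely continuous, since $T$ maps Lebesgue-null sets to Lebesgue-null sets) and (ii) ensuring the change-of-variables theorem applies, which it does because $T$ is a $C^1$ diffeomorphism with a nonvanishing Jacobian. One could equivalently argue by testing on indicator functions of Borel sets $A$: writing $\mu(T^{-1}(A)) = \nu(A)$ and applying the Jacobian formula on $T^{-1}(A)$ produces the same conclusion.
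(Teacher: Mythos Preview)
Your proof is correct and follows essentially the same approach as the paper: test $\nu$ against an arbitrary function $\varphi$, use the defining property of the pushforward, and apply the Jacobian change of variables $\ud y = |\det\nabla T(x)|\,\ud x$ to identify the density. The paper's argument is slightly terser (it simply writes out the equality chain and concludes), while you add the justification that the resulting integrand must vanish a.e.\ and note the subtlety about $\nu$ being absolutely continuous; these are welcome details but do not change the method.
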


Recall the mnemonic for memorizing this rule: under the change of variables $y = T(x)$, one has $\ud y = |\det \nabla T(x)|\,\ud x$, since the Jacobian determinant $|\det \nabla T(x)|$ measures the volume distortion of the map $T$.
The pushforward satisfies, by definition, $\int \varphi\circ T \, \ud \mu = \int \varphi\,\ud \nu$ for all test functions $\varphi$.
We can write this as
\begin{align*}
    \int \varphi(T(x)) \,\mu(x)\,\ud x
    &= \int \varphi(y) \, \nu (y) \, \ud y \\
    &= \int \varphi(T(x))\,\nu(T(x))\,|\det \nabla T(x)|\,\ud x
\end{align*}
and Lemma~\ref{lem:change_of_var} follows.
In applications, we do not always know that optimal transport maps are diffeomorphisms, but nevertheless a variant of Lemma~\ref{lem:change_of_var} still holds, and we refer to~\cite[Theorem 4.8]{Vil03} for the technical details.

The change of variables formula furnishes the quickest proof of geodesic convexity of $\cH$.

\begin{theorem}\label{thm:entropy_cvx}
    The entropy functional $\cH$, given by $\cH(\mu) \deq \int \mu \log \mu$, is geodesically convex on $\cP_{2,\rm ac}(\R^d)$.
\end{theorem}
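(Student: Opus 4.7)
My plan is to compute $\cH(\mu_t)$ along the Wasserstein geodesic explicitly using the change of variables formula, and then reduce geodesic convexity of $\cH$ to the classical concavity of $\log \det$ on the positive semidefinite cone.

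First I would set up notation. Given $\mu_0, \mu_1 \in \cP_{2,\rm ac}(\R^\dd)$, Brenier's theorem (Theorem~\ref{thm:improvedBrenier}) produces a convex function $\varphi$ such that $T \deq \nabla \varphi$ pushes $\mu_0$ onto $\mu_1$. The geodesic is $\mu_t = {(T_t)}_\# \mu_0$ with $T_t \deq (1-t)\,{\id} + t\,T = \nabla\varphi_t$, where $\varphi_t(x) = \tfrac{1-t}{2}\,\|x\|^2 + t\,\varphi(x)$ is convex; for $t \in [0,1)$ it is strictly convex, so $T_t$ is injective and $\nabla T_t(x) = (1-t)\,I + t\,\nabla T(x) \succ 0$ (where $\nabla T(x) = \nabla^2\varphi(x)$ exists Lebesgue-almost everywhere by Alexandrov's theorem and is symmetric positive semidefinite).

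Next, I would apply Lemma~\ref{lem:change_of_var} to write $\mu_t(T_t(x)) = \mu_0(x)/\det \nabla T_t(x)$, and compute
\begin{align*}
    \cH(\mu_t)
    &= \int \mu_t(T_t(x))\,\log\mu_t(T_t(x))\,\det\nabla T_t(x)\,\ud x \\
    &= \int \mu_0(x)\,\log\frac{\mu_0(x)}{\det\nabla T_t(x)}\,\ud x \\
    &= \cH(\mu_0) - \int \log\det\nabla T_t(x)\,\mu_0(\ud x)\,.
\end{align*}
Since $\cH(\mu_0)$ does not depend on $t$, it suffices to show that $t \mapsto -\int \log\det\nabla T_t(x)\,\mu_0(\ud x)$ is convex on $[0,1]$.

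The key analytical input is that the map $A \mapsto \log \det A$ is concave on the cone of symmetric positive definite matrices. Consequently, for $\mu_0$-a.e.\ $x$ the function $t \mapsto \log\det\bigl((1-t)\,I + t\,\nabla T(x)\bigr) = \log\det\nabla T_t(x)$ is concave in $t \in [0,1)$, so $t \mapsto -\log\det\nabla T_t(x)$ is convex. Convexity is preserved by integration against the positive measure $\mu_0$, giving convexity of $t\mapsto \cH(\mu_t)$ on $[0,1)$, and extending to $t = 1$ by continuity (or a standard approximation argument using lower semicontinuity of $\cH$).

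The main obstacle is really only technical: I would need to justify the use of the change of variables formula when $T$ is merely a Brenier map rather than a diffeomorphism. This is handled by McCann's extension of the change of variables formula for gradients of convex functions (cited after Lemma~\ref{lem:change_of_var}), together with Alexandrov's a.e.\ second differentiability of $\varphi$; these let the computation above go through in a rigorous sense.
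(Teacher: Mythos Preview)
Your proposal is correct and follows essentially the same approach as the paper: compute $\cH(\mu_t) = \cH(\mu_0) - \int \log\det\nabla T_t\,\ud\mu_0$ via the change of variables formula, then invoke concavity of $\log\det$ along the affine path $t\mapsto \nabla T_t(x)$. The paper is slightly more cavalier about the regularity issues (it applies the change of variables ``blithely'' and refers to \cite[Theorem 4.8]{Vil03}), whereas you spell out the Alexandrov/McCann justification, but the argument is the same.
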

\begin{proof}
    Again let ${(\mu_t)}_{t\in [0,1]}$ be a Wasserstein geodesic and let $T_t \deq (1-t)\,{\id} + t\,T$, so that $\mu_t = (T_t)_\# \mu_0$.
    Then, by Lemma~\ref{lem:change_of_var} (which we apply blithely despite not knowing that $T_t$ is a diffeomorphism),
    \begin{align*}
        \cH(\mu_t)
        &= \int (\log \mu_t) \, \ud \mu_t
        = \int \log(\mu_t \circ T_t)\,\ud \mu_0
        = \int \log \frac{\mu_0}{\det \nabla T_t} \, \ud \mu_0 \\
        &= \cH(\mu_0) - \int \log \det \nabla T_t \, \ud \mu_0\,.
    \end{align*}
    It is a standard exercise to show that $-\log \det$ is convex over the positive definite cone, and $t\mapsto \nabla T_t$ is affine; therefore, the composition $t\mapsto - \log \det \nabla T_t$ is convex.
    In turn, it shows that $t\mapsto \cH(\mu_t)$ is convex, which is what we wanted to show.
\end{proof}

\begin{corollary}\label{cor:kl_cvx}
    Let $\pi \propto\exp(-V)$ be a density, where $V : \R^d\to\R$ is $\alpha$-convex.
    Then, the functional $\cF\deq \KL(\cdot \mmid \pi)$ is $\alpha$-geodesically convex on $\cP_{2,\rm ac}(\R^d)$.
\end{corollary}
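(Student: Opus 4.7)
The proof will be a short three-line exercise combining the decomposition~\eqref{eq:kl_decomp} with Theorems~\ref{thm:potential_cvx} and~\ref{thm:entropy_cvx}. My plan is as follows.

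First, I invoke the decomposition~\eqref{eq:kl_decomp}, which writes $\cF(\mu) = \cV(\mu) + \cH(\mu) + \text{const}$, where $\cV(\mu) = \int V\,\ud\mu$ is the potential energy associated with $V$ and $\cH(\mu) = \int \mu \log \mu$ is the entropy. Since adding a constant does not affect geodesic convexity, it suffices to show that $\cV+\cH$ is $\alpha$-geodesically convex.

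Second, I observe that by Theorem~\ref{thm:potential_cvx}, the $\alpha$-convexity of $V$ on $\R^d$ implies that $\cV$ is $\alpha$-geodesically convex on $\cP_{2,\rm ac}(\R^d)$, while Theorem~\ref{thm:entropy_cvx} ensures $\cH$ is $0$-geodesically convex (i.e., geodesically convex). The last step is simply to note that the sum of an $\alpha$-geodesically convex functional and a geodesically convex functional is again $\alpha$-geodesically convex: indeed, adding the defining inequality~\eqref{eq:geod_cvx_def} for $\cV$ (with constant $\alpha$) to the corresponding inequality for $\cH$ (with constant $0$) gives the desired inequality for $\cV+\cH$ with constant $\alpha$, along every Wasserstein geodesic ${(\mu_t)}_{t\in[0,1]}$.

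There is no real obstacle here since all the heavy lifting has been done in the preceding two theorems. The only thing worth mentioning is that the argument is purely formal: once one has the decomposition into potential and entropy parts, the conclusion follows mechanically from the linearity of the Wasserstein geodesic convexity inequality in the functional.
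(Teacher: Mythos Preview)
Your proposal is correct and is exactly the intended argument: the paper states this as an immediate corollary of the decomposition~\eqref{eq:kl_decomp} together with Theorems~\ref{thm:potential_cvx} and~\ref{thm:entropy_cvx}, and your write-up simply makes explicit the trivial observation that summing an $\alpha$-geodesically convex functional with a $0$-geodesically convex one yields an $\alpha$-geodesically convex functional.
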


Distributions $\pi \propto \exp(-V)$ for which $V$ is strongly convex are known as \emph{strongly log-concave distributions}. Therefore, we have shown that the KL divergence w.r.t.\ a (strongly) log-concave measure is (strongly) geodesically convex.

For~\eqref{eq:VI}, our goal is to minimize the KL divergence over a subset $\cQ \subseteq \cP_{2,\rm ac}(\R^d)$.
If $\cQ$ is geodesically convex (see Section~\ref{sec:riem}), then we immediately obtain the following corollary.

\begin{corollary}\label{cor:gconvexQ}
    Let $\pi \propto \exp(-V)$ be a density on $\R^d$, where $V$ is $\alpha$-convex.
    Let $\cQ \subseteq \cP_{2,\rm ac}(\R^d)$ be geodesically convex.
    Then, $\KL(\cdot \mmid \pi)$ is $\alpha$-geodesically convex over $\cQ$.

    In particular, the solution $q_\star$ to~\eqref{eq:VI} is unique.
\end{corollary}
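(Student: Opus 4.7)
The plan is straightforward: this corollary is essentially a matter of restriction. Corollary~\ref{cor:kl_cvx} already establishes that $\cF \deq \KL(\cdot \mmid \pi)$ is $\alpha$-geodesically convex on the ambient space $\cP_{2,\rm ac}(\R^d)$. The assumption that $\cQ \subseteq \cP_{2,\rm ac}(\R^d)$ is geodesically convex means that for any $\mu_0,\mu_1 \in \cQ$ there exists a Wasserstein geodesic ${(\mu_t)}_{t\in[0,1]}$ joining them which lies entirely in $\cQ$. Thus the geodesics of $\cQ$ are precisely (a subcollection of) the Wasserstein geodesics of $\cP_{2,\rm ac}(\R^d)$, and the $\alpha$-convexity inequality~\eqref{eq:geod_cvx_def} applied to such a geodesic, using Corollary~\ref{cor:kl_cvx}, immediately yields $\alpha$-geodesic convexity of the restriction $\cF|_{\cQ}$.

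For the uniqueness claim, I will use the standard strong convexity argument (implicitly assuming $\alpha > 0$, which is the regime of interest; otherwise the statement should be read as uniqueness up to the ambiguity allowed by geodesic convexity). Suppose $q_\star, q_\star' \in \cQ$ both achieve the infimum $\inf_{q \in \cQ} \cF(q)$. Let ${(\mu_t)}_{t\in [0,1]}$ be a Wasserstein geodesic joining them, which lies in $\cQ$ by geodesic convexity. Applying $\alpha$-geodesic convexity at $t = 1/2$ gives
\begin{align*}
    \cF(\mu_{1/2})
    &\le \tfrac{1}{2}\,\cF(q_\star) + \tfrac{1}{2}\,\cF(q_\star') - \tfrac{\alpha}{8}\,W_2^2(q_\star, q_\star') \\
    &= \inf_{q\in \cQ} \cF(q) - \tfrac{\alpha}{8}\,W_2^2(q_\star, q_\star')\,.
\end{align*}
Since $\mu_{1/2} \in \cQ$, we also have $\cF(\mu_{1/2}) \ge \inf_{q \in \cQ} \cF(q)$. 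Combining these inequalities forces $W_2(q_\star, q_\star') = 0$, i.e., $q_\star = q_\star'$.

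There is no real obstacle here, as both conclusions reduce mechanically to the two ingredients already in hand (Corollary~\ref{cor:kl_cvx} and the definition of geodesic convexity of $\cQ$). The only subtle point worth flagging is that existence of a minimizer is not asserted by the corollary, only its uniqueness conditional on existence; existence would require an additional lower semicontinuity and compactness argument on $\cQ$, which is not part of this statement.
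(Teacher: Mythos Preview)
Your proof is correct and matches the paper's implicit reasoning: the paper states this as an immediate consequence (``we immediately obtain the following corollary'') without giving a proof, and your argument---restricting the ambient $\alpha$-convexity from Corollary~\ref{cor:kl_cvx} via the geodesic convexity of $\cQ$, then deducing uniqueness from strong convexity at the midpoint---is exactly the intended justification. Your observation that uniqueness requires $\alpha > 0$ and that existence is not being claimed is also on point.
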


Recall from Lemma~\ref{lem:strcvx_implies_pl} and Section~\ref{sec:otto_calc} that $\alpha$-convexity implies a PL inequality, which in turn implies rapid convergence for the gradient flow:

\begin{corollary}\label{cor:vi_convergence}
    Let $\pi \propto \exp(-V)$ be a density on $\R^d$, where $V$ is $\alpha$-convex.
    Let $\cQ \subseteq \cP_{2,\rm ac}(\R^d)$ be geodesically convex.
    Then, the Wasserstein gradient flow ${(q_t)}_{t\ge 0}$ of $\KL(\cdot\mmid \pi)$ constrained to lie in $\cQ$ satisfies
    \begin{align*}
        \KL(q_t \mmid \pi) - \KL(q_\star \mmid \pi)
        &\le e^{-2\alpha t}\,\{\KL(q_0 \mmid \pi) - \KL(q_\star \mmid \pi)\}\,.
    \end{align*}
\end{corollary}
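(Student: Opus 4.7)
The plan is to reduce the claim to Corollary~\ref{cor:conv_grad_flow} by viewing $\cQ$ as a (formal) Riemannian submanifold of $\cP_{2,\rm ac}(\R^d)$. By Corollary~\ref{cor:gconvexQ}, the restriction $\cF|_\cQ$ of $\cF \deq \KL(\cdot \mmid \pi)$ to the geodesically convex set $\cQ$ is $\alpha$-geodesically convex with respect to the $W_2$ metric; since the intrinsic geodesics of $\cQ$ coincide with the Wasserstein geodesics restricted to $\cQ$, this is exactly $\alpha$-geodesic convexity in the intrinsic sense (compare with Corollary~\ref{cor:bw_cvxty}, where the same argument was used for $\BW(\R^d)$).

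For each $q\in\cQ$, let $T_q\cQ \subseteq T_q\cP_{2,\rm ac}(\R^d)$ denote the intrinsic tangent space, equipped with the restriction of $\langle\cdot,\cdot\rangle_q$. The constrained Wasserstein gradient $\gradW^\cQ \cF(q)$ is by definition the orthogonal projection of $\gradW \cF(q) = \nabla\log(q/\pi)$ onto $T_q\cQ$, and the constrained gradient flow ${(q_t)}_{t\ge 0}$ is the curve in $\cQ$ whose tangent vector at $q_t$ is $-\gradW^\cQ \cF(q_t)$. Exactly as in~\eqref{eq:grad_flow_dissipate}, one then has the dissipation identity
\begin{align*}
    \partial_t \cF(q_t) = -\|\gradW^\cQ\cF(q_t)\|_{q_t}^2 \,.
\end{align*}

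Next, I would establish a PL inequality on $\cQ$: for every $q\in\cQ$,
\begin{align*}
    \|\gradW^\cQ\cF(q)\|_q^2 \ge 2\alpha\,\bigl(\cF(q) - \cF(q_\star)\bigr)\,.
\end{align*}
This follows the standard strong-convexity-implies-PL argument (c.f.\ Lemma~\ref{lem:strcvx_implies_pl}): letting ${(q_t)}_{t\in[0,1]}$ be the intrinsic geodesic from $q_0 = q$ to $q_1 = q_\star$, apply the defining inequality~\eqref{eq:geod_cvx_def} with $t\in(0,1]$, rearrange to
\begin{align*}
    \frac{\cF(q_t) - \cF(q)}{t} \le \cF(q_\star) - \cF(q) - \frac{\alpha\,(1-t)}{2}\,W_2^2(q,q_\star)\,,
\end{align*}
and send $t\searrow 0$ to obtain $\langle \gradW^\cQ\cF(q),\log_q^\cQ(q_\star)\rangle_q \le \cF(q_\star) - \cF(q) - \frac{\alpha}{2}W_2^2(q,q_\star)$ (here $\log_q^\cQ$ is the intrinsic logarithm on $\cQ$, whose length equals $W_2(q,q_\star)$). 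Cauchy--Schwarz followed by minimization over $W_2(q,q_\star) \ge 0$ yields the PL inequality.

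Combining the PL inequality with the dissipation identity gives
\begin{align*}
    \partial_t\bigl(\cF(q_t) - \cF(q_\star)\bigr) \le -2\alpha\,\bigl(\cF(q_t) - \cF(q_\star)\bigr)\,,
\end{align*}
and Gr\"onwall's inequality completes the proof. The main technical obstacle, which I would sweep under the same formalism Otto calculus has used throughout the chapter, is justifying the existence and well-posedness of the constrained gradient flow together with the identification of intrinsic geodesics, tangent spaces, and logarithmic maps on $\cQ$; once the geometric picture is granted, the convergence rate is a direct corollary of $\alpha$-geodesic convexity, exactly as in Corollary~\ref{cor:conv_grad_flow}.
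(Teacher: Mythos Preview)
Your proposal is correct and follows essentially the same approach as the paper. The paper does not give an explicit proof but simply remarks, immediately before the corollary, that ``$\alpha$-convexity implies a PL inequality, which in turn implies rapid convergence for the gradient flow,'' i.e., exactly your chain: Corollary~\ref{cor:gconvexQ} $\Rightarrow$ $\alpha$-geodesic convexity on $\cQ$ $\Rightarrow$ PL on $\cQ$ (via the argument of Lemma~\ref{lem:strcvx_implies_pl}) $\Rightarrow$ exponential decay via Gr\"onwall, in direct analogy with Corollary~\ref{cor:conv_grad_flow}.
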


In the sequel, our aim is show how the constrained Wasserstein gradient flow can be implemented in several important cases.

\subsection{Gaussian VI}\label{sec:gvi}\index{variational inference (VI)!Gaussian}

In this section, we study the problem of \emph{Gaussian VI}\@, in which the variational family $\cQ$ consists of all non-degenerate Gaussian measures over $\R^d$.
This family is simple yet abundantly motivated: if we can approximate $\pi \propto\exp(-V)$ by a Gaussian $\cN(m, \Sigma)$, then the parameters $(m,\Sigma)$ of the Gaussian are a reasonable guess for the mean and covariance matrix of $\pi$, which already suffice to construct credible regions.
The \emph{Laplace approximation} takes $m = \theta^\star$ and $\Sigma = {[\nabla^2 V(\theta^\star)]}^{-1}$ where $\theta^\star = \argmin V$ is the mode of $\pi$.
When $\pi$ is a Bayesian posterior, the validity of this approximation can be justified in the large-sample limit by the Bernstein--von Mises theorem.

To go beyond the Laplace approximation, we can ask for the \emph{optimal} Gaussian approximation, which is formulated as the VI problem
\begin{align}\label{eq:gvi}\tag{$\msf{GVI}$}
    q_\star = \argmin_{q\in \BW(\R^d)}\KL(q\mmid \pi)
\end{align}
where $\BW(\R^d)$ is the Bures--Wasserstein space introduced in Section~\ref{sec:bw}.
Note that if we had considered the KL divergence with the arguments swapped, $q \mapsto \KL(\pi \mmid q)$, then the optimal solution is the one that matches the mean and covariance of $\pi$, which defeats the purpose of VI since they are precisely the parameters we are trying to compute.

Following~\cite{Lametal22GVI}, our approach to solve~\eqref{eq:gvi} is to follow the Wasserstein gradient flow constrained to lie in the Bures--Wasserstein space, see Section~\ref{sec:bw}.
By Corollary~\ref{cor:bw_cvxty}, $\BW(\R^d)$ is geodesically convex, and hence the guarantee of Corollary~\ref{cor:vi_convergence} applies.
It remains to derive the form of the BW gradient flow using Theorem~\ref{thm:bw_grad} in order to arrive at an implementable algorithm.

For the KL divergence $\cF = \KL(\cdot\mmid \pi)$, $\nabla \delta \cF(q) = \nabla V + \nabla \log q$ and $\nabla^2\delta \cF(q) = \nabla^2 V + \nabla^2 \log q$.
Hence,
\begin{align*}
    \nabla_{\BW}\cF(q)(x)
    &= \Bigl(\int (\nabla^2 V + \nabla^2 \log q)\,\ud q\Bigr)\,(x-m_q) \\
    &\qquad{} + \int\nabla V\,\ud q + \underbrace{\int\nabla \log q\,\ud q}_{=0} \\
    &= \Bigl(\int\nabla^2 V\,\ud q - \Sigma_q^{-1}\Bigr)\,(x-m_q) + \int\nabla V\,\ud q\,.
\end{align*}

By setting the BW gradient equal to zero, we also deduce the first-order stationarity conditions, which are both necessary and sufficient by convexity.

\begin{proposition}
    Suppose that $\pi\propto\exp(-V)$, where $V$ is $\alpha$-convex for some $\alpha > 0$.
    Then, the unique minimizer $q_\star$ in~\eqref{eq:gvi} is characterized by the conditions
    \begin{align*}
        \int\nabla V\,\ud q_\star = 0 \qquad\text{and}\qquad \int\nabla^2 V\,\ud q_\star = \Sigma_{q_\star}^{-1}\,.
    \end{align*}
\end{proposition}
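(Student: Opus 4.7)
The plan is to invoke the strong convexity established in the preceding corollary together with the explicit expression for the Bures--Wasserstein gradient derived immediately above the proposition statement, and then read off the stationarity conditions.

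First, I would note that by Proposition~\ref{prop:bw_geod_cvx}, $\BW(\R^d)$ is a geodesically convex subset of $\cP_{2,\rm ac}(\R^d)$, so its intrinsic geodesics coincide with Wasserstein geodesics. Since $V$ is $\alpha$-convex with $\alpha > 0$, Corollary~\ref{cor:gconvexQ} (applied with $\cQ = \BW(\R^d)$) shows that $\cF \deq \KL(\cdot \mmid \pi)$ is $\alpha$-geodesically convex over $\BW(\R^d)$. Strong convexity guarantees that a (unique) minimizer $q_\star$ exists, and that $q_\star$ is the minimizer if and only if the Bures--Wasserstein gradient vanishes: $\nabla_{\BW}\cF(q_\star) = 0$ in $T_{q_\star}\BW(\R^d)$.

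Next I would use the formula just derived above the proposition,
\begin{align*}
    \nabla_{\BW}\cF(q)(x)
    &= \Bigl(\int\nabla^2 V\,\ud q - \Sigma_q^{-1}\Bigr)\,(x-m_q) + \int\nabla V\,\ud q\,,
\end{align*}
which exhibits $\nabla_{\BW}\cF(q)$ as an affine mapping $x \mapsto S_q(x - m_q) + a_q$, where $S_q \deq \int \nabla^2 V\,\ud q - \Sigma_q^{-1} \in \mb S^d$ and $a_q \deq \int \nabla V\,\ud q \in \R^d$. From the computation of the Bures--Wasserstein inner product in the proof of Theorem~\ref{thm:bw_grad}, the squared norm of this tangent vector is $\|\nabla_{\BW}\cF(q)\|_q^2 = \tr(S_q \Sigma_q S_q) + \|a_q\|^2$. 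Since $\Sigma_{q_\star}$ is positive definite, this quantity vanishes if and only if $S_{q_\star} = 0$ and $a_{q_\star} = 0$, which is exactly the pair of stated conditions.

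I do not foresee a genuine obstacle: the proof is essentially a reading-off of first-order optimality from the gradient formula, combined with the convexity result already in hand. The only mild subtlety is to argue carefully that the vanishing of the whole affine tangent vector decouples into independent constraints on its linear and constant parts, which is handled by the explicit form of the Bures--Wasserstein metric on $T_{q_\star}\BW(\R^d)$ as a direct sum contribution from $S$ and $a$.
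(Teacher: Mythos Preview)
Your proposal is correct and follows essentially the same approach as the paper, which simply states that the conditions arise ``by setting the BW gradient equal to zero'' and are ``necessary and sufficient by convexity.'' You add a bit more care in justifying why the vanishing of the affine tangent vector decouples into the separate conditions $S_{q_\star}=0$ and $a_{q_\star}=0$ via the explicit norm formula $\|\nabla_{\BW}\cF(q)\|_q^2 = \tr(S_q\Sigma_q S_q) + \|a_q\|^2$, which is a reasonable elaboration.
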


We can now write down the BW gradient flow using Theorem~\ref{thm:bw_gf}.
Note that there is a slight simplification since the covariance matrix $\Sigma_t$ cancels with the Hessian of the first variation of the entropy.

\begin{theorem}
    The BW gradient flow of the functional $\KL(\cdot\mmid \pi)$, where $\pi\propto\exp(-V)$, is the curve ${(q_t = \cN(m_t, \Sigma_t))}_{t\ge 0}$, where
    \begin{align}\label{eq:bw_gvi}
        \boxed{
        \begin{aligned}
            \dot m_t &= -\E\nabla V(X_t)\,, \\
            \dot \Sigma_t &= - \E\nabla^2 V(X_t)\,\Sigma_t - \Sigma_t\,\E\nabla^2 V(X_t) + 2I\,,
        \end{aligned}
        }
    \end{align}
    and $X_t \sim q_t$.
\end{theorem}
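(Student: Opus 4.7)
The plan is to invoke Theorem~\ref{thm:bw_gf}, which provides the general form of the Bures--Wasserstein gradient flow of a functional $\cF$ in terms of its first variation $\delta \cF$, and then specialize to $\cF(\mu) = \KL(\mu \mmid \pi)$. All that remains is to compute $\nabla \delta \cF(q_t)$ and $\nabla^2\delta \cF(q_t)$ against $q_t \in \BW(\R^d)$, then observe a fortuitous cancellation that eliminates the dependence on $\log q_t$ in the covariance equation.

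First, I would use the decomposition~\eqref{eq:kl_decomp}, namely $\cF = \cV + \cH + \text{const}$, together with the first variations computed in Examples~\ref{ex:potential} and~\ref{ex:internal}, to obtain $\delta \cF(q) = V + \log q$ (up to an irrelevant additive constant), and hence
\begin{align*}
    \nabla \delta \cF(q) = \nabla V + \nabla \log q\,, \qquad \nabla^2 \delta \cF(q) = \nabla^2 V + \nabla^2 \log q\,.
\end{align*}

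Next I would substitute these expressions into the general BW flow equations~\eqref{eq:bw_gf} from Theorem~\ref{thm:bw_gf} and use two standard identities for a Gaussian $q = \cN(m,\Sigma)$: since $\nabla \log q(x) = -\Sigma^{-1}(x-m)$, one has $\E_{X\sim q}[\nabla \log q(X)] = 0$, which makes the mean equation collapse to $\dot m_t = -\E\nabla V(X_t)$; similarly, since $\nabla^2 \log q(x) = -\Sigma^{-1}$ is constant, one has $\E_{X\sim q}[\nabla^2 \log q(X)] = -\Sigma^{-1}$. Substituting this into the covariance equation,
\begin{align*}
    \dot \Sigma_t &= -\bigl(\E\nabla^2 V(X_t) - \Sigma_t^{-1}\bigr)\,\Sigma_t - \Sigma_t\,\bigl(\E\nabla^2 V(X_t) - \Sigma_t^{-1}\bigr) \\
    &= -\E\nabla^2 V(X_t)\,\Sigma_t - \Sigma_t\,\E\nabla^2 V(X_t) + 2I\,,
\end{align*}
where the two copies of $\Sigma_t^{-1}\Sigma_t = I$ and $\Sigma_t\Sigma_t^{-1} = I$ combine to produce the $2I$ term. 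This is precisely~\eqref{eq:bw_gvi}.

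There is no serious obstacle here; the proof is essentially bookkeeping given the machinery already developed. The only point requiring a moment of care is ensuring that the Hessian of the entropy's first variation---a term that might at first appear problematic because $\nabla^2 \log q$ blows up as $\Sigma$ becomes degenerate---gets absorbed cleanly into the constant $2I$ contribution, which is exactly what keeps the covariance dynamics well-posed on the interior of the BW manifold. This cancellation also sheds light on why the BW gradient flow of the KL divergence enjoys the same rate of convergence as its unconstrained Wasserstein counterpart (via Corollaries~\ref{cor:bw_cvxty} and~\ref{cor:vi_convergence}), since the entropy contribution to strong geodesic convexity is preserved under the restriction to Gaussians.
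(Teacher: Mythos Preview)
Your proposal is correct and mirrors the paper's approach exactly: compute $\delta\cF(q)=V+\log q$, plug its gradient and Hessian into the general BW flow equations of Theorem~\ref{thm:bw_gf}, and use $\E\nabla\log q=0$ together with $\nabla^2\log q=-\Sigma^{-1}$ so that the entropy contribution in the covariance equation collapses to $+2I$. The paper presents this slightly more tersely (computing $\nabla_{\BW}\cF(q)$ first and then noting the cancellation), but the substance is identical.
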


To implement the gradient flow, the system of ODEs~\eqref{eq:bw_gvi} can be discretized in time. At each iteration $t$, since we keep track of the mean $m_t$ and covariance $\Sigma_t$, the expectations $\E\nabla V(X_t)$ and $\E\nabla^2 V(X_t)$ can be approximated via Monte Carlo averages by drawing samples from $q_t = \cN(m_t,\Sigma_t)$, or via quadrature rules.
Furthermore,~\cite{Diaoetal23FBGVI} observed that the splitting~\eqref{eq:kl_decomp} of the KL divergence naturally suggests a proximal gradient method for~\eqref{eq:gvi}\@.

We mention two appealing features of the gradient flow perspective.
First, it comes with principled guarantees: by mimicking optimization proofs over the Bures--Wasserstein space, the papers~\cite{Lametal22GVI, Diaoetal23FBGVI} translate Corollary~\ref{cor:vi_convergence} into non-asymptotic convergence rates for the stochastic gradient-based implementations of~\eqref{eq:bw_gvi}.
Second, it readily leads to an extension to variational inference over the richer class of mixtures of Gaussians by applying either of the Gaussian particle methods from Subsection~\ref{ssec:gaussian_particles}; see~\cite{Lametal22GVI} for details.

\subsection{Mean-field VI}\index{variational inference (VI)!mean-field}\label{ssec:mfvi}

Another important variational family is the class $\cQ = {\cP_{2,\rm ac}(\R)}^{\otimes d}$ of product measures over $\R^d$, in which case the problem~\eqref{eq:VI} is known as \emph{mean-field VI}:
\begin{align}\label{eq:mfvi}\tag{$\msf{MFVI}$}
    q_\star = \argmin_{q\in {\cP_{2,\rm ac}(\R)}^{\otimes d}} \KL(q \mmid \pi)\,.
\end{align}
This form of VI has its roots in product measure approximations of spin systems from statistical physics and can be motivated statistically by the desire to compute integrals of separable test functions $\phi(x) = \sum_{i=1}^d \phi_i(x_i)$ against the posterior.

Since the family of product measures is geodesically convex (Exercise~\ref{ex:prod_measures_cvx} in Chapter~\ref{chap:WGF}), Corollary~\ref{cor:vi_convergence} once again shows that the constrained Wasserstein gradient flow converges rapidly.
One can also show that for a functional $\cF$, the component of the Wasserstein gradient $\gradW\cF(\mu)$ which is tangential to the space of product measures takes the form
\begin{align}\label{eq:mf_gradient}
    x\mapsto \sum_{i=1}^d \Bigl(\int\gradW\cF(\mu)(x)\,\mu(\ud x_1,\dotsc,\ud x_{i-1}, \ud x_{i+1},\dotsc,\ud x_d)\Bigr)\,e_i\,,
\end{align}
where $e_i$ is the $i$-th standard basis vector; see Exercise~\ref{ex:mf_gradient}.

Note that the particle approach of Subsection~\ref{sec:mf_WGF} does not apply because the Wasserstein gradient of the KL divergence is not defined for discrete measures.
One way to circumvent this issue is via stochastic dynamics, see Subsection~\ref{sec:interacting}.
In this section, we instead describe the approach of~\cite{JiaChePoo24MFVI} which, similarly to the previous subsection, is based on finite-dimensional parameterization.
Key to this approach is that for mean-field VI, the measure $q_\star$ is not intrinsically high-dimensional due to the product structure, which allows for efficient parameterization.

The idea is to parameterize an element $q \in {\cP_{2,\rm ac}(\R)}^{\otimes d}$ via the Brenier map $T_{\rho\to q}$ from the standard Gaussian measure $\rho$.
Since both $\rho$ and $q$ are product measures, one sees that the transport map is separable, i.e., it is of the form $T_{\rho\to q}(x) = (T_1(x_1),\dotsc,T_d(x_d))$ for some \emph{univariate} (and increasing, by Theorem~\ref{thm:fundOT}) maps $T_i : \R\to\R$.
However, the class of such maps is still infinite-dimensional and needs to be further restricted for implementation purposes.

To do so, we take a finite family $\cM$ of optimal transport maps---called the \emph{dictionary}---and take as our eventual family of maps the set $\cone(\cM)$ of all conic combinations of elements of $\cM$:
\begin{align*}
    \cone(\cM)
    &= \Bigl\{\sum_{T\in\cM}\lambda_T T \Bigm\vert \lambda \in \R_+^{\cM}\Bigr\}\,.
\end{align*}
We denote $T^\lambda \deq \sum_{T\in \cM} \lambda_T T$ and $\rho^\lambda \deq (T^\lambda)_\# \rho$.
This set is now finite-dimensional, and in fact is parameterized by the positive orthant $\R_+^{\cM}$.
Therefore, we can now optimize the functional $\lambda \mapsto \KL(\rho^\lambda \mmid \pi)$ over $\R_+^{\cM}$.
Note that we have replaced our original variational family ${\cP_{2,\rm ac}(\R)}^{\otimes d}$ with the smaller set $\cone(\cM)_\# \rho$, but the hope is that for an appropriate choice of $\cM$, the family $\cone(\cM)_\# \rho$ is expressive enough to approximately capture all of ${\cP_{2,\rm ac}(\R)}^{\otimes d}$.
In~\cite{JiaChePoo24MFVI}, this is indeed shown to be the case, e.g., when $\cM$ consists of increasing and piecewise linear functions which act on a single coordinate, and that the total size of $\cM$ is polynomially bounded in the problem parameters.

We have the following lemma, whose proof we leave as Exercise~\ref{ex:mf_isometry}.

\begin{lemma}\label{lem:mf_isometry}
    Assume that $\cM$ consists of maps $T$ which are separable, in the sense that $T(x) = (T_1(x_1),\dotsc,T_d(x_d))$ for increasing univariate maps $T_1,\dotsc,T_d : \R\to\R$.
    Then, $\cone(\cM)_\#\rho$ is geodesically convex.
    
    Moreover, the map $(\R_+^{\cM}, \|\cdot\|_Q) \to (\cone(\cM)_\#\rho, W_2)$, $\lambda \mapsto \rho^\lambda$ is an isometry, where $\|x\|_Q^2 \deq \langle x, Q\,x\rangle$ and $Q$ is the $|\cM|\times |\cM|$ matrix with entries
    \begin{align}\label{eq:mfvi_Q}
        Q_{T,T'}
        &\deq \langle T, T'\rangle_\rho\,, \qquad T,T'\in \cM\,.
    \end{align}
\end{lemma}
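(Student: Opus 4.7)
The plan is to identify $T^\lambda$ as the Brenier map from $\rho$ to $\rho^\lambda$ for every $\lambda \in \R_+^{\cM}$; once this is in hand, both claims follow from examining the explicit displacement interpolation and the induced transport cost. To carry this out, observe first that separability of the dictionary elements is inherited by the conic combination $T^\lambda = \sum_{T\in\cM}\lambda_T\,T$, whose $i$-th coordinate $T^\lambda_i = \sum_{T\in\cM}\lambda_T\,T_i$ is itself increasing on $\R$ as a conic combination of increasing univariate maps. Consequently $T^\lambda_i = (\Phi^\lambda_i)'$ for some convex $\Phi^\lambda_i : \R \to \R$, so that $T^\lambda = \nabla\varphi^\lambda$ for $\varphi^\lambda(x) \deq \sum_{i=1}^d \Phi^\lambda_i(x_i)$. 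Since $\rho$ admits a density, Theorem~\ref{thm:improvedBrenier} identifies $T^\lambda$ as the unique Brenier map from $\rho$ to $\rho^\lambda$.

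For the geodesic convexity, I would then show that for any $\lambda, \lambda' \in \R_+^{\cM}$ the coupling $(T^\lambda(X), T^{\lambda'}(X))$ with $X\sim\rho$ is optimal between $\rho^\lambda$ and $\rho^{\lambda'}$. Its $i$-th coordinate is $(T^\lambda_i(X_i), T^{\lambda'}_i(X_i))$ with $X_i$ standard Gaussian on $\R$, and since both $T^\lambda_i$ and $T^{\lambda'}_i$ are increasing this is the monotone (hence, by Exercise~\ref{ex:1d_ot}, optimal) coupling of its marginals. Independence of the coordinates of $\rho$ makes the joint coupling a product of these 1D optimal couplings; separability of $\|x-y\|^2$ together with the inequality $W_2^2(\rho^\lambda, \rho^{\lambda'}) \ge \sum_i W_2^2(\rho^\lambda_i, \rho^{\lambda'}_i)$ (which the product coupling attains) then upgrades this to optimality between the product measures $\rho^\lambda$ and $\rho^{\lambda'}$. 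Invoking Theorem~\ref{thm:benamou_brenier} with this optimal coupling yields the geodesic $\mu_t = [(1-t)\,T^\lambda + t\,T^{\lambda'}]_\#\rho = [T^{(1-t)\lambda + t\lambda'}]_\#\rho = \rho^{(1-t)\lambda+t\lambda'}$, which lies in $\cone(\cM)_\#\rho$ for every $t\in[0,1]$ by convexity of $\R_+^{\cM}$.

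The isometry claim then reduces to a bilinear expansion using the same optimal coupling:
\[
W_2^2(\rho^\lambda, \rho^{\lambda'}) = \int \|T^\lambda - T^{\lambda'}\|^2\,\ud\rho = \Bigl\|\sum_{T\in\cM}(\lambda_T - \lambda'_T)\,T\Bigr\|_\rho^2 = \sum_{T,T'\in\cM}(\lambda_T - \lambda'_T)(\lambda_{T'} - \lambda'_{T'})\,Q_{T,T'} = \|\lambda-\lambda'\|_Q^2.
\]
The principal obstacle is the product-coupling optimality step: it requires the brief separate argument above that the product of coordinatewise optimal 1D couplings attains $W_2^2$ between products of 1D measures, a fact that is standard but easy to overlook. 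Every other step reduces to straightforward bookkeeping with the separable structure of $\cM$ and the one-dimensional theory of Section~\ref{sec:ot_1d}.
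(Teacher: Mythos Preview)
Your proof is correct and follows exactly the route the paper intends: the lemma is left as Exercise~\ref{ex:mf_isometry} with the hint to reduce to one-dimensional optimal transport via separability, and that is precisely what you do---showing $T^\lambda$ is a Brenier map, establishing optimality of the coupling $(T^\lambda(X),T^{\lambda'}(X))$ coordinatewise, and reading off both the geodesic and the isometry. One small nit: Theorem~\ref{thm:benamou_brenier} is stated only for absolutely continuous endpoints, so Theorem~\ref{thm:W2geo} (or Definition~\ref{def:w2_geod}) is the cleaner reference for the displacement interpolation.
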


The first statement of Lemma~\ref{lem:mf_isometry} shows that under strong log-concavity for $\pi$, the problem of minimizing $\KL(\cdot \mmid \pi)$ over $\cone(\cM)_\# \rho$ is a strongly convex problem in the Wasserstein geometry, and in particular, that Corollary~\ref{cor:vi_convergence} applies.
The second statement shows that implementing the Wasserstein gradient flow in this case amounts to implementing a \emph{Euclidean} gradient flow up to preconditioning by $Q^{-1}$.
Indeed, the isometry implies that the Wasserstein gradient flow of the KL divergence over $\cone(\cM)_\# \rho$ is equivalent to the gradient flow of $\lambda \mapsto \KL(\rho^\lambda \mmid \pi)$ over $(\R_+^\cM, \|\cdot\|_Q)$, and the gradient operator in the $\|\cdot\|_Q$ norm is simply the Euclidean gradient operator premultiplied by the matrix $Q^{-1}$.
In particular, we can write down the resulting algorithm explicitly.

\begin{theorem}
    The Wasserstein gradient flow of $\KL(\cdot \mmid \pi)$ restricted to ${\cone(\cM)}_\# \rho$ is given by ${(\rho^{\lambda(t)})}_{t\ge 0}$, where
    \begin{align*}
        \boxed{\dot \lambda_T = -\sum_{T'\in \cM} {(Q^{-1})}_{T,T'}\, \int\bigl[\langle \nabla V\circ T^{\lambda}, T'\rangle - \langle (\nabla T^{\lambda})^{-1}, \nabla T'\rangle\bigr]\,\ud \rho}
    \end{align*}
    and the matrix $Q$ is given in~\eqref{eq:mfvi_Q}.
\end{theorem}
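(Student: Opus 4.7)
The plan is to transfer the Wasserstein gradient flow on $\cone(\cM)_\#\rho$ to a Riemannian gradient flow on the finite-dimensional parameter space $\R_+^\cM$ via Lemma~\ref{lem:mf_isometry}, and then to compute the Euclidean gradient of the reduced objective $\Psi(\lambda) \deq \KL(\rho^\lambda \mmid \pi)$ explicitly. Since the Lemma provides a Riemannian isometry $\lambda \mapsto \rho^\lambda$ between $(\R_+^\cM, \|\cdot\|_Q)$ and $(\cone(\cM)_\#\rho, W_2)$, the Wasserstein gradient flow of $\KL(\cdot\mmid \pi)$ restricted to $\cone(\cM)_\#\rho$ is exactly the image of the $Q$-gradient flow of $\Psi$. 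A standard check (the defining identity $\langle \nabla_Q \Psi, v\rangle_Q = \langle \nabla_E \Psi, v\rangle$ applied to all $v\in\R^\cM$) yields $\nabla_Q \Psi(\lambda) = Q^{-1}\nabla_E \Psi(\lambda)$, so it suffices to compute the Euclidean gradient coordinate-wise.

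Next, I would split $\Psi$ as in~\eqref{eq:kl_decomp}: $\Psi(\lambda) = \int V\,\ud\rho^\lambda + \cH(\rho^\lambda) + \text{const}$. For the potential part, change of variables gives $\int V\,\ud\rho^\lambda = \int V\circ T^\lambda\,\ud\rho$, so since $\partial_{\lambda_T} T^\lambda = T$, the chain rule yields
\begin{equation*}
    \partial_{\lambda_T}\int V\,\ud\rho^\lambda
    = \int\langle \nabla V\circ T^\lambda, T\rangle\,\ud\rho\,.
\end{equation*}
For the entropy part, Lemma~\ref{lem:change_of_var} applied to the map $T^\lambda$ (which is a diffeomorphism because it is separable with strictly increasing univariate components, and thus $\nabla T^\lambda$ is diagonal with positive entries) gives $\cH(\rho^\lambda) = \cH(\rho) - \int\log\det\nabla T^\lambda\,\ud\rho$. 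The Jacobi formula $\partial \log\det M = \tr(M^{-1}\,\partial M)$, together with $\partial_{\lambda_T}\nabla T^\lambda = \nabla T$, then yields
\begin{equation*}
    \partial_{\lambda_T}\cH(\rho^\lambda)
    = -\int\tr\bigl((\nabla T^\lambda)^{-1}\,\nabla T\bigr)\,\ud\rho
    = -\int\langle(\nabla T^\lambda)^{-1}, \nabla T\rangle\,\ud\rho\,,
\end{equation*}
where the last inner product is the Frobenius inner product of matrices.

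Combining these two contributions gives
\begin{equation*}
    [\nabla_E\Psi(\lambda)]_T
    = \int\bigl[\langle\nabla V\circ T^\lambda, T\rangle - \langle(\nabla T^\lambda)^{-1}, \nabla T\rangle\bigr]\,\ud\rho\,,
\end{equation*}
and the $Q$-gradient flow $\dot\lambda = -Q^{-1}\nabla_E\Psi(\lambda)$ is then precisely the ODE in the theorem statement. The main technical nuance will be justifying the differentiability-under-the-integral-sign calculations and checking that the flow remains inside the positive orthant $\R_+^\cM$ (or, if not, that the relevant statement should be interpreted as holding up to the first boundary exit time); I would also want to briefly note that the diagonal structure of $\nabla T^\lambda$ under separability guarantees invertibility throughout the flow, so that the entropy contribution is well-defined.
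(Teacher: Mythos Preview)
Your proposal is correct and follows essentially the same approach as the paper: transfer via the isometry of Lemma~\ref{lem:mf_isometry} to the $Q$-gradient flow $\dot\lambda = -Q^{-1}\nabla_\lambda \KL(\rho^\lambda\mmid\pi)$, then compute the Euclidean gradient by splitting into potential and entropy and applying change of variables plus the $\log\det$ derivative formula. Your write-up is in fact slightly more detailed than the paper's (explicit Jacobi formula, Frobenius interpretation, remarks on invertibility and the orthant constraint), but the argument is the same.
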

\begin{proof}
    Lemma~\ref{lem:mf_isometry} implies that the Wasserstein gradient flow is given by $\dot\lambda_t = -Q^{-1}\,\nabla_\lambda \KL(\rho^{\lambda_t} \mmid \pi)$.
    We compute
    \begin{align*}
        \partial_{\lambda_T} \cV(\rho^\lambda)
        &= \partial_{\lambda_T} \int V\circ T^\lambda \, \D \rho
        = \int \langle \nabla V\circ T^\lambda, T\rangle\,\D \rho
    \end{align*}
    and
    \begin{align*}
        \partial_{\lambda_T} \cH(\rho^\lambda)
        &= \partial_{\lambda_T} \int \rho^\lambda \log \rho^\lambda\\
        &= -\partial_{\lambda_T} \int \log \det \nabla T^\lambda \, \D \rho \\
        &= - \int \langle (\nabla T^\lambda)^{-1}, \nabla T\rangle \,\D \rho\,,
    \end{align*}
    where we used respectively Lemma~\ref{lem:change_of_var} and a classical result of matrix calculus to compute the gradient of the $\log \det$ function.
\end{proof}

As in the previous subsection, the expectations can be estimated using Monte Carlo averages.

\subsection{Stein variational gradient descent}\label{sec:svgd}\index{Stein variational gradient descent (SVGD)}

We now ask whether we can minimize the KL divergence over the family $\cQ$ of empirical measures---measures of the form $\frac{1}{N} \sum_{i=1}^N \delta_{x^i}$.
Unlike the preceding two subsections, this class is arbitrarily expressive: as $N\to\infty$, we can always find a sequence of empirical measures that converges weakly to $\pi$, as a consequence of the law of large numbers; see Chapter~\ref{chap:primal-dual}.

Recall from Section~\ref{sec:mean_field_particle_systems} that the Wasserstein gradient flow of a functional $\cF$ over the full Wasserstein space can be represented via
\begin{align}\label{eq:wgf_general_func}
    \dot X_t = -\gradW \cF(\mu_t)(X_t)\,, \qquad X_t\sim \mu_t\,,
\end{align}
provided that $\gradW \cF(\mu_t)$ makes sense.
Note also that unlike the previous two subsections, we do not have to do anything special to ensure that $\mu_t$ remains an empirical measure for all $t\ge 0$: the gradient flow~\eqref{eq:wgf_general_func} \emph{automatically} preserves the space of empirical measures.

If we specialize this to $\cF = \KL(\cdot \mmid \pi)$, then~\eqref{eq:wgf_kl} leads to
\begin{align}\tag{$\msf{WGF}$}\label{eq:wgf_kl_particle}
    \dot X_t = -\nabla \log \frac{\mu_t}{\pi}(X_t)\,, \qquad X_t\sim \mu_t\,.
\end{align}
Unfortunately, the expression $\nabla \log(\mu_t/\pi)$ does \emph{not} make sense when $\mu_t$ is an empirical measure.

The next idea that springs to mind is to replace $\mu_t$ in~\eqref{eq:wgf_kl_particle} with a smoothed version via a kernel density estimator (KDE).
More precisely, let us initialize $N$ particles $X_0^1,\dotsc,X_0^N$, and let $k : \R^d\to\R_+$ be a symmetric kernel with $\int k = 1$.
At time $t$, we replace $\mu_t$ by the KDE $\widehat \mu_t = \frac{1}{N}\sum_{i=1}^N k(\cdot - X_t^i)$, which leads to an interacting system of particles:
\begin{align*}
    \dot X_t^i
    &= -\nabla V(X_t^i) - \Bigl[\nabla \log \frac{1}{N} \sum_{j=1}^N k(\cdot - X_t^j)\Bigr](X_t^i)\,, \qquad i\in [N]\,.
\end{align*}
In the mean-field limit $N\to\infty$, we expect that $\frac{1}{N} \sum_{j=1}^N k(\cdot - X_t^j) \to \int k(\cdot - y) \, \mu_t(\ud y) = k \star \mu_t$, where $\mu_t = \law(X_t)$, leading to the dynamics
\begin{align*}
    \dot X_t
    &= -\nabla V(X_t) - \nabla \log(k\star \mu_t)(X_t)
\end{align*}
or equivalently
\begin{align*}
    \partial_t \mu_t
    &= \divergence\bigl(\mu_t\,(\nabla V + \nabla \log(k \star \mu_t))\bigr)\,.
\end{align*}
However, these dynamics do not necessarily converge to the target $\pi\propto \exp(-V)$.
This issue can be fixed by introducing a \emph{bandwidth} parameter to the kernel $k$ which tends to zero as $N\to\infty$, but there is an alternative which stems from the RKHS literature (recall the discussion in Subsection~\ref{subsec:mmd}) which we describe next.
The \emph{Stein variational gradient descent} (SVGD) algorithm, due to~\cite{LiuWan16SVGD}, manages to use a \emph{fixed} kernel $k$ but still admits $\pi$ as a stationary solution.

We define the integral operator
\begin{align*}
    \cK_\mu : f \mapsto \int f(y)\,k(\cdot- y) \, \mu(\ud y)\,,
\end{align*}
and we follow the dynamics
\begin{align}\label{eq:svgd}\tag{$\msf{SVGD}$}
    \partial_t \mu_t
    &= \divergence\Bigl(\mu_t\, \cK_{\mu_t} \nabla \log \frac{\mu_t}{\pi}\Bigr)\,,
\end{align}
where the integral operator acts on vector fields coordinate-wise.
Clearly these dynamics leave $\pi$ stationary.
Let us calculate the effect of the integral operator above. First,
\begin{align}\label{eq:svgd1}
    \cK_\mu \nabla \log \frac{1}{\pi}
    &= \cK_\mu \nabla V
    = \int \nabla V(y) \, k(\cdot - y)\,\mu(\ud y)\,.
\end{align}
For the other term, we use integration by parts:
\begin{align}
    \cK_\mu \nabla \log \mu
    &= \int \frac{\nabla \mu(y)}{\mu(y)}\,k(\cdot- y)\,\mu(\ud y)
    = \int \nabla \mu(y)\,k(\cdot- y)\,\ud y \nonumber\\
    &= \int \nabla k(\cdot - y)\,\mu(\ud y)\,.\label{eq:svgd2}
\end{align}
Both~\eqref{eq:svgd1} and~\eqref{eq:svgd2} are expectations w.r.t.\ $\mu$, so they can be replaced by empirical averages over $N$ particles.
This leads to the algorithm
\begin{align*}
    \boxed{\dot X_t^i = -\frac{1}{N} \sum_{j=1}^N [k(X_t^i - X_t^j)\,\nabla V(X_t^j) + \nabla k(X_t^i - X_t^j)]\,.}
\end{align*}
Although SVGD has been an active subject of research, many theoretical questions regarding its convergence remain open.

Recall that we motivated SVGD with the idea of approximating $\pi$ by an empirical measure, noting that the class of empirical measures over $N$ atoms is arbitrarily expressive as $N\to\infty$.
But if our ultimate goal is fidelity with respect to $\pi$, we may as well ask if we can directly output samples from $\pi$ itself.
In the next section, we discuss the problem of sampling via MCMC methods, which can be viewed as \emph{stochastic} implementations of the Wasserstein gradient flow.

\section{Sampling}\label{sec:sampling}

One of the most compelling applications of the theory of Wasserstein gradient flows is to provide a geometric interpretation of the Langevin diffusion, as put forth in the seminal work of Jordan, Kinderlehrer, and Otto~\cite{JorKinOtt98}.
As before, let $V : \R^d\to\R$ be a smooth potential with $\int \exp(-V) < \infty$ and let $\pi$ denote the probability measure over $\R^d$ with density $\pi\propto\exp(-V)$.

Suppose that we wish to \emph{sample} from the distribution $\pi$. In other words, we want to design an algorithm for producing a random variable whose law is close to $\pi$.
For example, $\pi$ could be the posterior distribution in a Bayesian inference problem, in which case basic downstream tasks such as constructing credible regions or point estimates are often intractable in non-conjugate models.
Nevertheless, we can usually solve these tasks approximately and efficiently, given a subroutine for drawing approximate samples from the posterior.
Beyond the application to computational Bayesian statistics, sampling also plays an important role in scientific computing through Monte Carlo integration and for the design of randomized algorithms.

The predominant approach to this problem, dubbed Markov chain Monte Carlo (MCMC), is to design a Markov chain whose unique stationary distribution is, or at least is close to, the target $\pi$.
When $\pi$ admits a positive and smooth density, as we assume in this section, then we can write $\pi \propto\exp(-V)$ without loss of generality (with $V = \log(1/\pi) + \text{const.}$).
In this case, a canonical MCMC algorithm is obtained by discretizing the \emph{Langevin diffusion}\index{Langevin diffusion}, which is the solution to the stochastic differential equation (SDE)
\begin{align*}
    \ud X_t
    = -\nabla V(X_t) \, \ud t + \sqrt 2\, \ud B_t\,,
\end{align*}
where ${(B_t)}_{t\ge 0}$ is a standard Brownian motion.
As soon as $\nabla V$ is, e.g., Lipschitz continuous, there is a unique strong solution to this SDE for any prescribed initial condition, and its stationary distribution is $\pi$.

In the next section, we show that when we track the evolution of the marginal law $\mu_t \deq \law(X_t)$ of the Langevin diffusion, then ${(\mu_t)}_{t\ge 0}$ follows the Wasserstein gradient flow of the KL divergence $\KL(\cdot \mmid \pi)$.
More broadly, this story is the starting point of a fruitful literature which has blossomed in recent years on an optimization perspective (i.e., the application of optimization algorithms such as gradient flows) on the problem of sampling.

\subsection{The Langevin diffusion as a Wasserstein gradient flow}\label{subsec:langevin}

To spoil the surprise, the fundamental reason why~\eqref{eq:wgf_kl} admits a stochastic implementation is because we can rewrite
\begin{align*}
    \divergence(\mu_t\, \nabla \log \mu_t)
    = \divergence\bigl(\mu_t\,\frac{\nabla \mu_t}{\mu_t}\bigr)
    = \divergence(\nabla \mu_t)
    = \Delta \mu_t
\end{align*}
where $\Delta f = \sum_{i=1}^d \partial_i^2 f$ is the \emph{Laplacian} of $f$.
On the other hand, second-order parabolic PDEs---the heat equation $\partial_t \mu_t = \Delta \mu_t$ being the most fundamental example---classically describe the evolution in law of stochastic differential equations driven by Brownian motion.
The rest of this subsection aims to make this connection precise.

Using the computation above, we rewrite the Wasserstein gradient flow of the KL divergence, given in~\eqref{eq:wgf_kl}, as
\begin{align}\label{eq:wgf_kl_form2}
    \partial_t \mu_t
    &= \Delta\mu_t + \divergence(\mu_t\,\nabla V)\,.
\end{align}

We next compute the marginal evolution of the Langevin diffusion in order to compare with~\eqref{eq:wgf_kl_form2}.
The usual method for doing so is to use It\^o's formula from stochastic calculus, but we instead proceed more informally.
First, let us condition on $X_0 = x_0$, and write the Langevin diffusion in integral form.
\begin{align*}
    X_t = x_0 - \int_0^t \nabla V(X_s) \, \ud s + \sqrt 2 \, B_t\,.
\end{align*}
Recall also that $B_t \sim \cN(0, tI)$.
In particular, $\|\int_0^t \nabla V(X_s)\,\ud s\| = O_\p(t)$ and $\|B_t\| = O_\p(t^{1/2})$ for small $t$, so the Brownian motion term dominates and $\|X_t - x_0\| = O_\p(t^{1/2})$.
By duality, to calculate the evolution of $\mu_t$, it suffices to compute the evolution of the expectation of any test function $\varphi : \R^d\to\R$.
A Taylor expansion yields
\begin{align*}
    \varphi(X_t)
    &= \varphi\Bigl(x_0 - \int_0^t \nabla V(X_s) \, \ud s + \sqrt 2 \, B_t\Bigr) \\
    &= \varphi(x_0) + \Bigl\langle \nabla \varphi(x_0), -\int_0^t\nabla V(X_s) \, \ud s + \sqrt 2 \, B_t\Bigr\rangle \\ 
    &\qquad{} + \frac{1}{2} \,\Bigl\langle \nabla^2 \varphi(x_0), \Bigl( -\int_0^t \nabla V(X_s) \, \ud s + \sqrt 2 \,B_t\Bigr)^{\otimes 2}\Bigr\rangle + O_\p(t^{3/2})\,.
\end{align*}
The first-order term equals
\begin{align*}
    -t\,\langle \nabla \varphi(x_0), \nabla V(x_0)\rangle + \sqrt 2 \, \langle \nabla \varphi(x_0), B_t\rangle + O_\p(t^{3/2})\,.
\end{align*}
The second-order term equals
\begin{align*}
    \langle \nabla^2 \varphi(x_0) \, B_t, B_t\rangle + O_\p(t^{3/2})\,.
\end{align*}
Therefore,
\begin{align*}
    \varphi(X_t)
    &= \varphi(x_0) - t\,\langle \nabla \varphi(x_0), \nabla V(x_0)\rangle + + \sqrt 2 \, \langle \nabla \varphi(x_0), B_t\rangle  \\
    & \qquad  +  \langle \nabla^2 \varphi(x_0)\,B_t, B_t\rangle+ O_\p(t^{3/2})\,.
\end{align*}
Taking expectations and using $\E[B_t] = 0$, $\E[B_t B_t^\T] = tI$,
\begin{align*}
    \E \varphi(X_t)
    &= \varphi(x_0) + t\,\bigl(\tr \nabla^2 \varphi(x_0) - \langle \nabla \varphi(x_0),\nabla V(x_0)\rangle\bigr) + O_\p(t^{3/2}) \\
    &= \varphi(x_0) + t\,\bigl(\Delta \varphi(x_0) - \langle \nabla \varphi(x_0),\nabla V(x_0)\rangle\bigr) + O_\p(t^{3/2})\,.
\end{align*}
Subtracting $\varphi(x_0)$, dividing by $t$, and letting $t\searrow 0$,
\begin{align}\label{eq:langevin_gen_1}
    \partial_t \E\varphi(X_t) \big|_{t=0}
     = \Delta \varphi(x_0) - \langle \nabla \varphi(x_0), \nabla V(x_0)\rangle\,.
\end{align}
In the language of Markov semigroup theory, we have computed the \emph{generator} of the Langevin diffusion to be the second-order differential operator $\cL$, defined by $\cL \varphi \deq \Delta \varphi - \langle \nabla \varphi, \nabla V\rangle$.

More generally, by first conditioning on the value of $X_t$ and using the Markov property and~\eqref{eq:langevin_gen_1}, it holds that
\begin{align*}
    \partial_t \E \varphi(X_t) = \E \cL\varphi(X_t)\,.
\end{align*}
Expressed in terms of the marginal law $\mu_t$, it reads
\begin{align*}
    \int \varphi\,\partial_t \mu_t
    &= \int (\Delta \varphi - \langle \nabla \varphi, \nabla V\rangle) \, \ud\mu_t\,.
\end{align*}
In order to identify an equation for $\partial_t \mu_t$, we must compute the adjoint (w.r.t.\ Lebesgue measure) of $\cL$.
This is accomplished through integration by parts, which shows that the right-hand side equals
\begin{align*}
    \int \varphi\,\bigl(\Delta \mu_t + \divergence(\mu_t\, \nabla V)\bigr)\,.
\end{align*}
We have established the following theorem.

\begin{theorem}[Fokker--Planck equation]\label{thm:fokker_planck}\index{Fokker--Planck equation}
    The marginal law $\mu_t \deq \law(X_t)$ of the Langevin diffusion with potential $V$ is given by the solution to the \emph{Fokker--Planck equation}
    \begin{align*}
        \partial_t \mu_t = \Delta \mu_t + \divergence(\mu_t\,\nabla V)\,.
    \end{align*}
\end{theorem}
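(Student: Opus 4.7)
The plan is to proceed in exactly the duality-based manner sketched in the paragraphs preceding the theorem, but organized as a self-contained argument. The core idea is to first determine how expectations of smooth test functions evolve under the Langevin dynamics (this gives the \emph{generator} of the diffusion), and then transfer this information to $\mu_t$ by integration by parts (this gives the \emph{adjoint}, which is the Fokker--Planck operator).

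First I would fix a compactly supported smooth test function $\varphi:\R^\dd\to\R$ and compute $\partial_t \E\varphi(X_t)\big|_{t=0}$ conditioned on $X_0=x_0$. The cleanest route is It\^o's formula applied to $\varphi(X_t)$, which immediately yields
\[
\partial_t \E[\varphi(X_t) \mid X_0 = x_0]\big|_{t=0} = \Delta \varphi(x_0) - \langle \nabla\varphi(x_0), \nabla V(x_0)\rangle \eqqcolon (\cL\varphi)(x_0)\,.
\]
If one wishes to avoid invoking It\^o directly, the same identity can be obtained by a second-order Taylor expansion of $\varphi$ around $x_0$ using the representation $X_t = x_0 - \int_0^t \nabla V(X_s)\,\ud s + \sqrt 2\, B_t$ together with the moment bounds $\E\|B_t\| = O(t^{1/2})$, $\E[B_tB_t^\T] = tI$, and Lipschitz continuity of $\nabla V$ to control the error terms. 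Either way, I get the generator $\cL\varphi = \Delta\varphi - \langle \nabla\varphi,\nabla V\rangle$.

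Next, by the Markov property of the Langevin diffusion, conditioning on $X_t$ and applying the generator computation at time $s=t$ gives the global identity $\partial_t \E\varphi(X_t) = \E(\cL\varphi)(X_t)$, i.e.,
\[
\int \varphi\,\partial_t \mu_t = \int (\Delta\varphi - \langle \nabla\varphi, \nabla V\rangle)\,\ud\mu_t\,.
\]
Integrating by parts (legitimately, since $\varphi$ is compactly supported and smooth) moves the derivatives onto $\mu_t$: $\int \Delta\varphi\,\ud\mu_t = \int \varphi\,\Delta\mu_t$ and $-\int \langle \nabla\varphi,\nabla V\rangle\,\ud\mu_t = \int \varphi\,\divergence(\mu_t\nabla V)$. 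Since $\varphi$ was arbitrary, this identifies $\partial_t\mu_t = \Delta\mu_t + \divergence(\mu_t\nabla V)$ in the distributional sense, which is the desired Fokker--Planck equation.

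The only genuine obstacle is rigor: at this point in the text we are working informally with densities, but making the Taylor-expansion step rigorous requires either appealing to It\^o's formula (which is standard but technically outside the chapter's toolbox) or carefully bounding the remainder terms uniformly in $t$, which in turn needs a mild regularity assumption on $V$ (e.g., $\nabla V$ Lipschitz, so that the SDE has a unique strong solution with finite moments). The integration-by-parts step likewise implicitly assumes enough regularity and decay of $\mu_t$ to discard boundary terms at infinity; this is the place I would flag, and in a fully rigorous treatment one would state the equation weakly against $\varphi\in C_c^\infty(\R^\dd)$, exactly as the derivation above naturally produces.
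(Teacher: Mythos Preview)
Your proposal is correct and follows essentially the same approach as the paper: compute the generator $\cL\varphi = \Delta\varphi - \langle\nabla\varphi,\nabla V\rangle$ via a Taylor expansion (or It\^o), invoke the Markov property to get $\partial_t\int\varphi\,\ud\mu_t = \int\cL\varphi\,\ud\mu_t$, and then integrate by parts to identify the adjoint as the Fokker--Planck operator. The paper proceeds informally in exactly this way in the paragraphs leading up to the theorem, and your remarks on the rigor caveats (It\^o vs.\ Taylor remainder, weak formulation against $C_c^\infty$) accurately flag where the argument would need tightening.
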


Comparing with~\eqref{eq:wgf_kl_form2}, it yields:

\begin{corollary}\label{cor:langevin_is_wgf}
    The marginal law of the Langevin diffusion with potential $V$ is the Wasserstein gradient flow of $\KL(\cdot\mmid \pi)$, where $\pi$ has density proportional to $\exp(-V)$.
\end{corollary}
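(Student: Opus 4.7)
The plan is very short because essentially all of the work has already been done in preparing the statement. The core observation is that Theorem~\ref{thm:fokker_planck} tells us that the marginal law ${(\mu_t)}_{t\ge 0}$ of the Langevin diffusion solves
\begin{align*}
    \partial_t \mu_t = \Delta \mu_t + \divergence(\mu_t\,\nabla V)\,,
\end{align*}
while equation~\eqref{eq:wgf_kl_form2}, obtained by rewriting $\divergence(\mu_t \nabla \log \mu_t) = \Delta \mu_t$ in~\eqref{eq:wgf_kl}, tells us that the Wasserstein gradient flow of $\KL(\cdot \mmid \pi)$ with $\pi \propto \exp(-V)$ satisfies exactly the same PDE. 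So the proof is just a line: the two evolution equations coincide, hence (under any reasonable condition guaranteeing uniqueness of solutions with a fixed initial datum) the two curves of measures are equal.

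To present this cleanly, I would first recall Definition~\ref{def:w2_grad_flow}, which says that the Wasserstein gradient flow of a functional $\cF$ is the curve satisfying $\partial_t \mu_t = \divergence(\mu_t \,\gradW \cF(\mu_t))$. Then I would plug in the computation $\gradW \KL(\cdot\mmid \pi)(\mu) = \nabla \log(\mu/\pi) = \nabla \log \mu + \nabla V$ from Examples~\ref{ex:potential},~\ref{ex:internal} (or directly from Proposition~\ref{prop:w2_grad} applied to the decomposition~\eqref{eq:kl_decomp}). Expanding the divergence and using $\divergence(\mu \nabla \log \mu) = \divergence(\nabla \mu) = \Delta \mu$ produces~\eqref{eq:wgf_kl_form2}. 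Comparing with the Fokker--Planck equation of Theorem~\ref{thm:fokker_planck} then identifies the two evolutions.

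The only point that deserves a word of caution, rather than a genuine obstacle, is well-posedness: we should note that given a fixed initial distribution $\mu_0$, both the Fokker--Planck equation and the Wasserstein gradient flow of $\KL(\cdot\mmid \pi)$ admit a unique solution (in the appropriate weak sense), so that the coincidence of the PDEs implies the coincidence of the curves. At the informal level adopted in this chapter, we can simply state that both curves satisfy the same evolution equation with the same initial condition and leave the technical uniqueness result to references such as~\cite{AmbGigSav08}. No computation beyond what was already carried out in Subsection~\ref{subsec:langevin} and equation~\eqref{eq:wgf_kl_form2} is needed.
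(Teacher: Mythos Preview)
Your proposal is correct and matches the paper's approach exactly: the paper simply writes ``Comparing with~\eqref{eq:wgf_kl_form2}, it yields'' before stating the corollary, which is precisely the comparison of the Fokker--Planck equation from Theorem~\ref{thm:fokker_planck} with the rewritten Wasserstein gradient flow equation~\eqref{eq:wgf_kl_form2} that you describe. Your additional remark on well-posedness is a reasonable caveat but goes slightly beyond what the paper states, consistent with the informal level adopted throughout the chapter.
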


As a special case when $V = 0$, we also obtain the following corollary.

\begin{corollary}\label{cor:heat_flow_wgf}
    If ${(\mu_t)}_{t\ge 0}$ is the marginal law of a (rescaled) Brownian motion ${(\sqrt 2\,B_t)}_{t\ge 0}$, then ${(\mu_t)}_{t\ge 0}$ solves the heat equation $\partial_t \mu_t = \Delta \mu_t$, and it is the Wasserstein gradient flow of the entropy functional $\cH$.
\end{corollary}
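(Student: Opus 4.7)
The plan is to deduce this corollary as a direct specialization of the preceding results in the section. With $V=0$, the Langevin SDE $\ud X_t = -\nabla V(X_t)\,\ud t + \sqrt 2\, \ud B_t$ collapses to $\ud X_t = \sqrt 2\,\ud B_t$, whose solution is precisely the rescaled Brownian motion. So both assertions -- the heat equation and the Wasserstein gradient flow structure -- should follow from the $V=0$ case of Theorem~\ref{thm:fokker_planck} and Corollary~\ref{cor:langevin_is_wgf}, respectively. The only reason this needs a separate statement at all is a mild subtlety: with $V=0$, the formal ``target'' $\pi\propto e^{-V}$ is Lebesgue measure, which is not a probability measure, so $\KL(\cdot\mmid \pi)$ does not literally make sense and one has to argue that the entropy $\cH$ plays the role of the KL here.

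First I would handle the PDE assertion. Specialize Theorem~\ref{thm:fokker_planck} with $V\equiv 0$: the Fokker--Planck equation becomes $\partial_t\mu_t = \Delta\mu_t + \divergence(\mu_t \cdot 0) = \Delta\mu_t$, which is the heat equation. The hypotheses of Theorem~\ref{thm:fokker_planck} are satisfied trivially since $\nabla V = 0$ is smooth.

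Next I would verify the Wasserstein gradient flow assertion by computing $\gradW\cH$ directly, rather than invoking Corollary~\ref{cor:langevin_is_wgf} (which is stated for probability measures $\pi$). From Example~\ref{ex:internal} with $U(x) = x\log x$, the first variation is $\delta\cH(\mu) = \log\mu + 1$, whence by Proposition~\ref{prop:w2_grad},
\begin{align*}
    \gradW\cH(\mu) = \nabla\delta\cH(\mu) = \nabla\log\mu = \frac{\nabla\mu}{\mu}\,.
\end{align*}
Substituting into Definition~\ref{def:w2_grad_flow}, the Wasserstein gradient flow of $\cH$ satisfies
\begin{align*}
    \partial_t\mu_t = \divergence\bigl(\mu_t\gradW\cH(\mu_t)\bigr) = \divergence\Bigl(\mu_t\,\frac{\nabla\mu_t}{\mu_t}\Bigr) = \divergence(\nabla\mu_t) = \Delta\mu_t\,,
\end{align*}
which matches the heat equation identified in the previous step. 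Combining the two steps, the marginal law of the rescaled Brownian motion satisfies the same PDE as the Wasserstein gradient flow of $\cH$, so (by well-posedness of the PDE, given initial condition $\mu_0$) the two coincide.

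There is no real obstacle here; the only minor point to be careful about is that the derivation of Corollary~\ref{cor:langevin_is_wgf} implicitly used the factorization $\KL(\mu\mmid\pi) = \cV(\mu) + \cH(\mu) + \text{const}$ from~\eqref{eq:kl_decomp}, where the constant is $\log\int e^{-V}$. With $V=0$ this constant is infinite, so one cannot literally invoke Corollary~\ref{cor:langevin_is_wgf}. Bypassing this by computing $\gradW\cH$ from scratch, as above, gives a self-contained proof and makes clear that it is precisely the entropy term -- not KL relative to some probability measure -- that drives the heat flow.
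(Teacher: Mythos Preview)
Your proposal is correct and takes essentially the same approach as the paper, which simply introduces the corollary with ``As a special case when $V = 0$, we also obtain the following corollary'' and gives no separate proof. Your direct computation of $\gradW\cH$ via Example~\ref{ex:internal} is a nice touch that sidesteps the normalization issue you identify (that $\pi\propto e^{-V}$ with $V=0$ is not a probability measure), a subtlety the paper glosses over.
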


We showed in Corollary~\ref{cor:vi_convergence} that the strong log-concavity of $\pi$ implies rapid convergence of the Wasserstein gradient flow of the KL divergence.
Therefore, we immediately obtain the following elegant convergence result for the Langevin diffusion.

\begin{corollary}\label{cor:langevin_conv}
    Let $\pi$ be an $\alpha$-strongly log-concave measure, and let ${(\mu_t)}_{t\ge 0}$ denote the marginal law of the Langevin diffusion with stationary distribution $\pi$. Then,
    \begin{align*}
        \KL(\mu_t\mmid \pi)
        &\le e^{-2\alpha t}\KL(\mu_0\mmid \pi)\,.
    \end{align*}
\end{corollary}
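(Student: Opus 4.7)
The plan is to chain together three results already established in the excerpt. First, I would invoke Corollary~\ref{cor:langevin_is_wgf}, which identifies the curve ${(\mu_t)}_{t\ge 0}$ of marginal laws of the Langevin diffusion with the Wasserstein gradient flow of the functional $\cF \deq \KL(\cdot \mmid \pi)$. This transfers the question from a probabilistic statement about an SDE into a deterministic statement about a gradient flow on $\cP_{2,\rm ac}(\R^d)$, where the Otto calculus machinery applies.

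Next, I would verify that $\cF$ is $\alpha$-geodesically convex. Since $\pi \propto \exp(-V)$ with $V$ being $\alpha$-convex (this is the definition of $\alpha$-strong log-concavity), Corollary~\ref{cor:kl_cvx} gives exactly this. Finally, I would apply Corollary~\ref{cor:conv_grad_flow}, which says that $\alpha$-geodesic convexity of a functional implies that its Wasserstein gradient flow satisfies
\begin{align*}
    \cF(\mu_t) - \inf \cF \le e^{-2\alpha t}\,(\cF(\mu_0) - \inf \cF)\,.
\end{align*}
To conclude, I would observe that $\inf \cF = 0$: the KL divergence is non-negative and vanishes at $\mu = \pi$, which lies in $\cP_{2,\rm ac}(\R^d)$ under the standing smoothness assumptions on $V$. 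Substituting $\inf \cF = 0$ into the previous display yields the claimed bound $\KL(\mu_t \mmid \pi) \le e^{-2\alpha t} \KL(\mu_0 \mmid \pi)$.

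There is essentially no obstacle here beyond assembling the pieces; the real mathematical content was carried out in earlier sections (the JKO identification of the Langevin semigroup as a gradient flow, the geodesic convexity of the entropy and potential energies, and the Gr\"onwall-based dissipation argument). The only point requiring a word of care is that the guarantees in Corollary~\ref{cor:conv_grad_flow} implicitly assume sufficient regularity of $\mu_0$ so that the gradient flow is well-defined with finite initial value $\cF(\mu_0)$; when $\KL(\mu_0 \mmid \pi) = \infty$ the bound is vacuous, so no loss of generality occurs.
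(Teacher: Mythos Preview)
Your proposal is correct and follows essentially the same route as the paper: the paper states that the result follows immediately from Corollary~\ref{cor:vi_convergence} (with $\cQ$ the full space, so $q_\star = \pi$ and $\KL(q_\star\mmid\pi)=0$), which is itself the combination of Corollaries~\ref{cor:kl_cvx} and~\ref{cor:conv_grad_flow} that you invoke directly. The only difference is that you bypass the intermediate VI corollary and cite its ingredients, which is equivalent.
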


To conclude this section, we take stock of the situation at hand.
Recall that the Wasserstein gradient flow of the KL divergence can be implemented via the deterministic evolution~\eqref{eq:wgf_kl_particle}.
On the other hand, in this subsection, we started with the Langevin diffusion, which is a stochastic evolution:
\begin{align}\tag{$\msf{LD}$}\label{eq:langevin}
    \ud X_t = -\nabla V(X_t)\,\ud t + \sqrt 2 \, \ud B_t\,.
\end{align}
We showed in Theorem~\ref{thm:fokker_planck} that the marginal law $\mu_t \deq \law(X_t)$ evolves according to the Fokker{--}Planck equation
\begin{align}\tag{$\msf{FP}$}\label{eq:fokker_planck}
    \partial_t \mu_t
    &= \Delta \mu_t + \divergence(\mu_t\,\nabla V)\,,
\end{align}
and moreover that this evolution coincides with the Wasserstein gradient flow of $\KL(\cdot \mmid \pi)$.
Note that~\eqref{eq:fokker_planck} is strictly coarser than~\eqref{eq:langevin} because~\eqref{eq:langevin} also includes information about correlations between different time points of the stochastic process.

Ultimately, we have the equivalence $\eqref{eq:fokker_planck} \Leftrightarrow \eqref{eq:langevin} \Leftrightarrow \eqref{eq:wgf_kl_particle}$ in the sense that they correspond to the same curve on the space of probability measures---clearly at the particle level, they are different---but these three differing perspectives provide new avenues for algorithm design and theoretical study.

\subsection{Sampling as optimization}\label{sec:sampling_as_opt}

The gradient flow perspective on the Langevin diffusion is the starting point of a flourishing literature on an optimization perspective on sampling.
In this subsection, we study the basic properties of KL divergence minimization as an optimization problem, inspired by the treatment in~\cite{Wib18}.
Then, in the next subsection, we provide a glimpse of the recent impact of this perspective on the theory of log-concave sampling.
We refer to the monograph~\cite{Chewi24Book} for a detailed exposition.

Let $V : \R^d\to\R$ be a potential which is $\alpha$-convex, $\alpha > 0$, and recall that our goal is to output a sample from the target $\pi\propto \exp(-V)$.
Corollary~\ref{cor:langevin_conv} then ensures that the continuous-time Langevin diffusion converges rapidly to $\pi$, but in order to obtain algorithmic guarantees we must \emph{discretize} the process, and for this we also impose the dual assumption of smoothness, $\nabla^2 V \preceq \beta I$, to ensure stability.

Error estimates for discretizations of the Langevin diffusion are by now well-established.
Under our assumptions of strong convexity and smoothness of $V$, non-asymptotic convergence guarantees can be established for the Euler--Maruyama scheme
\begin{align}\label{eq:lmc}
    X_{k+1}
    &= X_k - h\,\nabla V(X_k) + \cN(0,2hI)\,,\qquad k=0,1,2,\ldots\,,
\end{align}
which parallels the complexity theory for optimization~\cite{Nes18}.
Unlike the situation in optimization, however, the discretization~\eqref{eq:lmc} is asymptotically biased: the stationary distribution of the Markov chain~\eqref{eq:lmc} does not equal the target $\pi$. This leads to slower rates of convergence, as the step size $h$ must be chosen small to mitigate the bias.
We now explain how an optimization perspective sheds light on the source of asymptotic bias and suggests a proximal scheme for removing it.

As in~\eqref{eq:kl_decomp}, we write the KL divergence as the sum of the potential energy and the entropy,
\begin{align*}
    \KL(\mu \mmid \pi)
    &= \int V \, \ud \mu + \int \mu \log \mu + \text{const.}
    = \cV(\mu) + \cH(\mu) + \text{const.}
\end{align*}
The problem of sampling from $\pi$ is cast as the minimization of this objective functional over $\cP_{2,\rm ac}(\R^d)$, and we have already begun studying its properties.
Namely:
\begin{itemize}
    \item The potential energy is strongly convex and smooth, $\alpha \le \gradW^2 \cV \le \beta$.
    We proved the lower bound in Theorem~\ref{thm:potential_cvx}, and the upper bound follows by a similar computation.
    \item The entropy is convex, $0 \le \gradW^2 \cH$. We proved this as Theorem~\ref{thm:entropy_cvx}. However, the entropy is \emph{non-smooth}.\footnote{One can in fact show that $\gradW^2 \cH(\mu)[v, v] = \int \|\nabla v - I\|_{\rm HS}^2\,\ud \mu$ and there is no constant $C > 0$ such that $\gradW^2 \cH(\mu)[v,v] \le C\,\|v\|_\mu^2$ for all $v \in T_\mu \cP_{2, \rm ac}(\R^d)$.}
\end{itemize}
The situation at hand is one that is commonly encountered in optimization, known as \emph{composite optimization}: minimize the sum $f+g$, where $f$ is (strongly) convex and smooth, and $g$ is convex but non-smooth.
The prototypical example is the $\ell_1$-penalized least squares objective (or LASSO), $\theta \mapsto \|y-X\theta\|^2 + \lambda \,\|\theta\|_1$.
In optimization theory, the canonical algorithm designed for such problems is the \emph{proximal gradient} method
\begin{align}\label{eq:prox_grad}
    x_{k+1}
    &= \prox_{hg}(x_k - h\,\nabla f(x_k))\,, 
\end{align}
where
\begin{align}\label{eq:prox}
    \prox_{hg}(y) \deq \argmin_{x\in\R^d}{\Bigl\{hg(x) + \frac{1}{2}\,\|y-x\|^2\Bigr\}}\,.
\end{align}
When $g$ has a simple structure, such as the $\ell_1$ norm, then the proximal mapping sometimes admits a closed-form solution.

The proximal gradient algorithm is unbiased, meaning that its only fixed points are minimizers of $f+g$.
Moreover, one can show that the iteration~\eqref{eq:prox_grad} converges at the same rate that gradient descent would for a convex and smooth objective, despite the non-smoothness of $g$.

More broadly, we have introduced two discretization schemes: gradient descent, and the proximal step~\eqref{eq:prox}. Each has its relative merits. Whereas gradient descent is cheaper to implement (especially for functions which do not have a simple structure like $\|\cdot\|_1$), the proximal scheme converges even without smoothness.
Therefore, we are motivated to apply one discretization method to $f$, and the other to $g$; this is known as a \emph{splitting scheme}.
Not all splitting schemes are unbiased, however, and the combination of gradient descent and the proximal map is an especially auspicious match.\footnote{In numerical analysis, these two discretizations are so-called ``adjoints'' to each other, see~\cite[Appendix B]{Wib18}.}

With these principles from optimization in mind, let us now consider the situation for sampling.
The discretization~\eqref{eq:lmc} can be viewed as the splitting scheme
\begin{align*}
    X_{k+1/2}
    &= X_k - h\,\nabla V(X_k)\,, \\
    X_{k+1}
    &= X_{k+1/2} + \cN(0, 2hI)\,.
\end{align*}
The two steps correspond, respectively, to \emph{gradient descent}\footnote{Check that if $\mu_k = \law(X_k)$, $\mu_{k+1/2} = \law(X_{k+1/2})$, and $h\le 1/\beta$, then $\mu_{k+1/2} = \exp_{\mu_k}(-h\,\gradW \cV(\mu_k))$, which is the Riemannian analogue of gradient descent.} for $\cV$ and the \emph{gradient flow} of $\cH$; the latter statement is Corollary~\ref{cor:heat_flow_wgf}. The combination of gradient descent and gradient flow does not produce an unbiased splitting scheme.

The intuition from optimization suggests to replace the gradient flow for $\cH$ with the proximal map for $\cH$.
Generalizing the definition~\eqref{eq:prox} to the Wasserstein space, we arrive at
\begin{align}\label{eq:prox_ent}
    \prox_{h\cH}(\mu)
    &\deq \argmin_{\nu \in \cP_{2,\rm ac}(\R^d)}{\Bigl\{h\cH(\nu) + \frac{1}{2}\,W_2^2(\mu,\nu)\Bigr\}}\,.
\end{align}
(In fact, the proximal operator on the Wasserstein space was the device through which~\cite{JorKinOtt98} first made precise the Wasserstein gradient flow interpretation of the Langevin diffusion in Subsection~\ref{subsec:langevin}.)
The resulting proximal gradient algorithm on the Wasserstein space can indeed be shown to converge rapidly to $\pi$~\cite{SalKorLui20WPG}.
However, the proximal map~\eqref{eq:prox_ent} is in general intractable, so the proximal gradient algorithm is merely wishful thinking.

Actually, it is not just $\prox_{h\cH}$ that is intractable; gradient \emph{descent} on $\cH$ is also impractical because it requires knowing the entire probability density (this is the motivation for our discussion of SVGD in Section~\ref{sec:svgd}).
It seems that the only operation we can reasonably implement for minimizing $\cH$ is the gradient \emph{flow}, due to the fortuitous link with Brownian motion.
This can be considered both as a blessing and curse.
The blessing is that the routine we can implement---gradient flow---succeeds despite the non-smoothness of $\cH$, which explains why sampling is possible at all.
The curse, however, is that the gradient flow is ``mismatched'' with our discretization for $\cV$, and hence the discretization~\eqref{eq:lmc} incurs asymptotic bias.

This is perhaps representative of the subject as a whole: although optimization theory suggests a huge number of algorithmic paradigms which we hope to port over to the world of sampling, the execution of these ideas requires care.
In the next subsection, we survey a number of examples in which this philosophy has been successfully carried out, including a surprising ``proximal'' algorithm for sampling.

\subsection{Some recent developments}

\paragraph{Algorithms}
Open any modern book on convex optimization to find a formidable arsenal of methods: coordinate descent, gradient descent, interior point, mirror descent, Newton's method, Nesterov's fast gradient method, proximal gradient, stochastic gradient descent, etc.
In recent years, a substantial research effort has been devoted to developing sampling analogues of all of these methods and more, of which we describe only a select few.

Our first example is the aforementioned proximal algorithm for sampling.
Since it is easier to motivate \emph{a posteriori}, we begin by defining the algorithm. Augment the target distribution $\pi$ to form a distribution $\bpi$ over $\R^d\times \R^d$ with density given by
\begin{align*}
    \bpi(x,y) \propto \exp\Bigl(-V(x) - \frac{1}{2h}\,\|y-x\|^2\Bigr)\,.
\end{align*}
The \emph{proximal sampler}\index{proximal sampler}, introduced in~\cite{LeeSheTia21RGO}, applies Gibbs sampling to the new target $\bpi$. Explicitly, repeat the following steps:
\begin{enumerate}
    \item Given $X = x$, resample $Y\sim \bpi^{Y \mid X=x} = \cN(x, hI)$.
    \item Given $Y = y$, resample $X \sim \bpi^{X\mid Y=y}$, where the conditional distribution $\bpi^{X\mid Y=y}$, called the \emph{restricted Gaussian oracle} (RGO), has density $\bpi^{X\mid Y=y}(x) \propto \exp(-V(x) - \frac{1}{2h}\,\|y-x\|^2)$.
\end{enumerate}

A few simple properties can be verified immediately.
First, the $X$-marginal of $\bpi$ is the original target $\pi\propto\exp(-V)$, so it suffices to sample from the augmented target.
Second, the proximal sampler is unbiased---its stationary distribution is $\bpi$---because Gibbs sampling is so.
As stated, however, it is still an idealized algorithm, since it is unclear how to implement step two.
But notice the following analogy: if minimizing $V$ (optimization) corresponds to sampling from $\pi\propto\exp(-V)$ (sampling), then computing the proximal map~\eqref{eq:prox} for $V$ corresponds to sampling from $\bpi^{X\mid Y=y}$; it is in this sense that the proximal sampler resembles a ``proximal'' algorithm for sampling.

But perhaps the most convincing justification is based on the adage ``if it looks like a duck\ldots'': the convergence analyses in~\cite{LeeSheTia21RGO, Chenetal22ProxSampler} show that the convergence rates for the proximal sampler \emph{exactly} replicate the rates for the proximal point method from convex optimization.
Through careful implementations of the RGO\@, the proximal sampler has played an essential role in extending sampling guarantees to both non-log-concave and non-log-smooth settings~\cite{FanYuaChe23ImprovedProx, AltChe24HighAcc}.

Our next example is the adaptation of mirror descent. Recall that the \emph{mirror descent} algorithm for minimizing $V$, originally introduced in~\cite{NemYud83} for optimization w.r.t.\ non-Euclidean norms, starts by choosing a strictly convex function (the ``mirror map'') $\phi : \R^d\to\R\cup\{\infty\}$ and iterating
\begin{align*}
    \nabla \phi(x_{k+1}) = \nabla \phi(x_k) - h\,\nabla V(x_k)\,, \qquad k=0,1,2,\dotsc\,.
\end{align*}
It turns out that the ``mirror'' analogue of~\eqref{eq:langevin} is the so-called \emph{mirror Langevin diffusion}\index{Langevin diffusion!mirror Langevin}~\cite{Zhang+20MLMC, CheLe-Lu20}:
\begin{align*}
    Y_t \deq \nabla \phi(X_t)\,, \qquad \ud Y_t = -\nabla V(X_t)\,\ud t + \sqrt{2\,\nabla^2 \phi(X_t)}\,\ud B_t\,.
\end{align*}
The mirror Langevin diffusion can also be suitably interpreted as a Wasserstein ``mirror'' flow of the KL divergence.\footnote{More specifically, it is the gradient flow of $\KL(\cdot\mmid \pi)$ with respect to the Wasserstein geometry induced by the Hessian metric induced by $\phi$.}
An important special case is obtained when $V$ is strictly convex and we take the mirror map $\phi = V$, leading to the \emph{Newton--Langevin diffusion}\index{Langevin diffusion!Newton--Langevin}---the sampling analogue of Newton's method:
\begin{align*}
    Y_t \deq \nabla V(X_t)\,, \qquad \ud Y_t = -Y_t\,\ud t + \sqrt{2\,\nabla^2 V(X_t)}\,\ud B_t\,.
\end{align*}
The Newton--Langevin diffusion inherits some appealing properties of Newton's method, such as its affine invariance. However, discretization of the Newton--Langevin diffusion (or of the more general mirror Langevin diffusion) is currently less well-understood than for~\eqref{eq:langevin}.

Finally, our last example is the adaptation of Nesterov's \emph{accelerated gradient descent}~\cite{Nes1983Accel}, which is an optimal first-order method for convex smooth minimization.
The corresponding SDE system, called the \emph{underdamped} (or \emph{kinetic}) \emph{Langevin diffusion}\index{Langevin diffusion!underdamped Langevin}, dates back at least to~\cite{Kol1934Underdamped} and is given by
\begin{align*}
    \ud X_t = P_t \,\ud t\,, \qquad \ud P_t = -\nabla V(X_t)\,\ud t - \gamma P_t\,\ud t + \sqrt{2\gamma}\,\ud B_t\,.
\end{align*}
Here, $P$ represents a momentum variable, and $\gamma > 0$ is the ``friction'' parameter.
This diffusion has already formed the basis for numerous state-of-the-art guarantees for log-concave sampling. But in the world of optimization, Nesterov's method is best known for improving the complexity of strongly convex and smooth minimization to $\widetilde O(\sqrt\kappa)$, where $\kappa$ is the ``condition number''. This remarkable result, which saves a factor of $\sqrt\kappa$ over the basic rate for gradient descent, has been dubbed the \emph{acceleration} phenomenon, and it remains an intriguing open question to establish such a phenomenon for sampling.

\paragraph{Complexity}

Since the work of~\cite{NemYud83}, a major goal of optimization research has been to precisely characterize the minimax complexity of optimization over various function classes and oracle models. With the advent of this mindset to MCMC\@, it became natural to do the same for sampling, starting with the non-asymptotic upper bounds in works such as~\cite{Dal17a, DurMou17NonAsymptotic, CheBar18Langevin, DurMajMia19LMCCvxOpt}.
The similarities are striking: both fields consider similar choices for the function classes (e.g., strongly convex and smooth functions) and for the oracle models.
This connection also motivated the search for oracle \emph{lower bounds} which could certify the optimality of our existing algorithms.
This has proven to be a challenging problem, with some modest progress made recently.

Another example of the transfer of ideas is the development of a sampling analogue of ``approximate first-order stationarity'' which provides an alternative approach to the quantitative study of sampling in general non-log-concave settings~\cite{Bal+22NonLogConcave}.

Despite rapid progress in this direction, there are still many fundamental unresolved questions regarding the complexity of log-concave sampling.
We refer to the monograph~\cite{Chewi24Book} for an introduction to this active field and for further references.

\section{Interacting particle systems}\label{sec:interacting}

The SDE systems we encountered in Section~\ref{sec:sampling} all involve evolving a single particle (possibly over an expanded state space) at a time.
More generally, we can consider an \emph{interacting system} of particles, either deterministic or stochastic.  This is highly relevant because as we discussed in Subsection~\ref{sec:mf_WGF}, Wasserstein gradient flows initialized at discrete measures can be implemented using mean-field interacting particle systems. Indeed, recall that 
if we initialize the Wasserstein gradient flow~\eqref{eq:wgf_general_func} at
$$
\mu_0^N = \frac1N\sum_{j=1}^N \delta_{X_0^j}\,,
$$
then $\mu_t$ is given by the empirical measure 
$$
\mu_t^N = \frac1N\sum_{j=1}^N \delta_{X_t^j}\,,
$$
where
\begin{align}\label{eq:interacting_wass_gf}
    \dot X_t^i
    &= -\gradW \cF\bigl(\frac{1}{N}\sum_{j=1}^N \delta_{X_t^j}\bigr)\,, \qquad i \in [N]\,.
\end{align}
This is true whenever the Wasserstein gradient is defined at the discrete measure $\mu_t^N$, and in this case, it is generally expected (and can be rigorously established under assumptions on $\cF$) that as $N\to\infty$ and $\mu_0^N \to \mu_0 \in  \cP_{2,\rm ac}(\R^d)$, the dynamics~\eqref{eq:interacting_wass_gf} converges to~\eqref{eq:wgf_general_func} initialized at $\mu_0$. Using additional tools, one may also show that if $\mu_0 \in  \cP_{2,\rm ac}(\R^d)$ then $\mu_t \in  \cP_{2,\rm ac}(\R^d)$ for all $t \ge 0$.

\subsection{McKean--Vlasov equations}\label{subsec:mkv}

In this subsection, we consider other examples of interacting systems arising from Wasserstein gradient flows.
Recall that in Section~\ref{sec:otto_calc}, we gave three fundamental examples of functionals over the Wasserstein space: potential energy, internal energy, and interaction energy.
What if we consider the sum of the three?
\begin{align}\label{eq:sum_of_three}
    \cF(\mu)
    &\deq \int V\,\ud \mu + \iint W(x-y)\,\mu(\ud x)\,\mu(\ud y) + \frac{\sigma^2}{2} \int \mu \log \mu\,,
\end{align}
where $W$ is even.
By using the trick at the beginning of Subsection~\ref{subsec:langevin} and by computing Wasserstein gradients, convince yourself of the following theorem.

\begin{theorem}
    The Wasserstein gradient flow ${(\mu_t)}_{t\ge 0}$ of~\eqref{eq:sum_of_three} can be described as follows: $\mu_t = \law(X_t)$, where ${(X_t)}_{t\ge 0}$ solves the SDE
    \begin{align}\label{eq:mkv}
        \D X_t
        &= -\nabla V(X_t)\,\ud t - \int \nabla W(X_t - y)\,\mu_t(\ud y)\,\ud t + \sigma\,\ud B_t\,.
    \end{align}
\end{theorem}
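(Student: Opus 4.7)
The plan is to identify the Wasserstein gradient of $\cF$ by computing the first variation of each of the three summands, then translate the resulting Wasserstein gradient flow into a stochastic differential equation via the argument used for the Langevin diffusion in Subsection~\ref{subsec:langevin}.

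First I would use linearity of the first variation and invoke Examples~\ref{ex:potential}, \ref{ex:internal}, and~\ref{ex:interaction} to identify the Wasserstein gradients:
\begin{align*}
    \gradW\Bigl(\int V\,\ud \mu\Bigr) &= \nabla V\,, \\
    \gradW\Bigl(\iint W(x-y)\,\mu(\ud x)\,\mu(\ud y)\Bigr) &= 2\int \nabla W(\cdot - y)\,\mu(\ud y)\,,\\
    \gradW\Bigl(\frac{\sigma^2}{2}\int \mu\log\mu\Bigr) &= \frac{\sigma^2}{2}\,\nabla\log\mu\,.
\end{align*}
The factor $2$ in the interaction term comes from the symmetry of $W$, which allows one to differentiate with respect to both marginals (see Example~\ref{ex:interaction} where the functional is already normalized by $1/2$). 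By Proposition~\ref{prop:w2_grad} and Definition~\ref{def:w2_grad_flow}, the Wasserstein gradient flow of $\cF$ solves
\begin{equation*}
    \partial_t \mu_t = \divergence\Bigl(\mu_t\,\Bigl[\nabla V + 2\int\nabla W(\cdot-y)\,\mu_t(\ud y) + \frac{\sigma^2}{2}\,\nabla\log\mu_t\Bigr]\Bigr)\,.
\end{equation*}

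The key step is now to rewrite the entropy term. Just as in Subsection~\ref{subsec:langevin}, we use the identity
$$\divergence(\mu_t \nabla \log \mu_t) = \Delta \mu_t\,,$$
so that
\begin{equation*}
    \partial_t \mu_t = \frac{\sigma^2}{2}\,\Delta\mu_t + \divergence\Bigl(\mu_t\,\Bigl[\nabla V + 2\int\nabla W(\cdot-y)\,\mu_t(\ud y)\Bigr]\Bigr)\,.
\end{equation*}
This is a nonlinear Fokker--Planck equation, nonlinear because the drift depends on $\mu_t$ itself through the convolution $\nabla W \star \mu_t$.

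Finally, I would identify this PDE as the marginal equation of the McKean--Vlasov SDE~\eqref{eq:mkv} by repeating the generator computation of Subsection~\ref{subsec:langevin}. If ${(X_t)}_{t\ge 0}$ solves~\eqref{eq:mkv} with $\mu_t = \law(X_t)$, then conditioning on $\mu_t$ and applying It\^o's formula (informally, Taylor expanding to second order and using $\E[B_tB_t^\top]=tI$) yields
\begin{equation*}
    \partial_t \E\varphi(X_t) = \E\Bigl[\frac{\sigma^2}{2}\,\Delta\varphi(X_t) - \Bigl\langle\nabla\varphi(X_t),\nabla V(X_t)+2\int\nabla W(X_t-y)\,\mu_t(\ud y)\Bigr\rangle\Bigr]
\end{equation*}
for smooth test $\varphi$; integration by parts against $\mu_t$ produces exactly the Fokker--Planck equation above. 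The main obstacle is conceptual rather than technical: unlike the classical Langevin case, the drift in~\eqref{eq:mkv} depends on the law of $X_t$ itself, so the SDE is not in the standard Lipschitz framework and well-posedness requires a separate argument (typically Lipschitz continuity of $\nabla V$ and $\nabla W$, via a fixed-point argument on the map $\mu \mapsto \law(X^\mu)$); I would assume the required regularity on $V$ and $W$ so that both~\eqref{eq:mkv} and the associated PDE are well-posed and the identification of the marginal law is justified.
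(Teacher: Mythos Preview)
Your approach is exactly what the paper intends: it does not give a proof but simply says ``by using the trick at the beginning of Subsection~\ref{subsec:langevin} and by computing Wasserstein gradients, convince yourself of the following theorem,'' and that is precisely what you do.

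There is, however, a factor-of-two slip in your final identification. You correctly compute that the Wasserstein gradient of the interaction term in~\eqref{eq:sum_of_three}, which carries no $\tfrac12$, is $2\int\nabla W(\cdot-y)\,\mu(\ud y)$. But when you then apply It\^o's formula to the SDE~\eqref{eq:mkv}, the drift there is $-\nabla V(X_t)-\int\nabla W(X_t-y)\,\mu_t(\ud y)$ with coefficient~$1$, so the generator computation actually yields
\[
\partial_t \E\varphi(X_t) = \E\Bigl[\frac{\sigma^2}{2}\,\Delta\varphi(X_t) - \Bigl\langle\nabla\varphi(X_t),\,\nabla V(X_t)+\int\nabla W(X_t-y)\,\mu_t(\ud y)\Bigr\rangle\Bigr],
\]
not the expression with a $2$ that you wrote. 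You have silently inserted the $2$ to force agreement with your Fokker--Planck equation, and as written the two do not match. This discrepancy is in fact a normalization inconsistency in the paper's statement of~\eqref{eq:sum_of_three}: either the interaction term there should carry a $\tfrac12$ (as in Example~\ref{ex:interaction}), or~\eqref{eq:mkv} should have $2\int\nabla W$. With either fix your argument goes through cleanly; just be explicit about which convention you are using rather than letting the factor appear unannounced.
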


Note that the coefficients of the SDE system~\eqref{eq:mkv} depend on the law of the process.
Such systems are called \emph{McKean--Vlasov processes}~\cite{McK1966}\index{McKean--Vlasov}.
To approximate~\eqref{eq:mkv} by an interacting particle system, we replace the integral over $\mu_t$ with an average over particles:
\begin{align}\label{eq:mkv_fp}
    \D X_t^i
    &= -\nabla V(X_t^i)\,\ud t - \frac{1}{N-1} \sum_{j\in [N] \setminus i} \nabla W(X_t^i - X_t^j) \, \ud t + \sigma\,\ud B_t^i\,,
\end{align}
where ${\{B^i\}}_{i\in [N]}$ is a collection of independent Brownian motions.
A natural question that arises is to quantify how close the finite-particle 
system~\eqref{eq:mkv_fp} is to its mean-field limit~\eqref{eq:mkv}.
This problem is addressed by the mathematical theory of \emph{propagation of chaos}~\cite{Szn1991PoC}.

More generally, suppose that we have the general entropically-regularized functional
\begin{align}\label{eq:ent_reg_functional}
    \cF(\mu)
    \deq \cF_0(\mu) + \frac{\sigma^2}{2}\int \mu \log \mu\,.
\end{align}
One can show that the Wasserstein gradient flow ${(\mu_t)}_{t\ge 0}$ of~\eqref{eq:ent_reg_functional} can be described as the marginal law $\mu_t = \law(X_t)$ of the SDE system
\begin{align*}
    \ud X_t
    &= -\gradW \cF_0(\mu_t)(X_t)\,\ud t + \sigma\,\ud B_t\,.
\end{align*}
This system is known as the \emph{mean-field} (or \emph{interacting}) \emph{Langevin dynamics}\index{Langevin diffusion!mean-field}~\cite{CheRenWan23PoC, SuzNitWu23MFLangevin}.
The corresponding finite-particle system,
\begin{align*}
    \ud X_t^i
    &= -\gradW\cF_0\bigl(\frac{1}{N} \sum_{j=1}^N \delta_{X_t^j}\bigr)(X_t^i) \,\ud t + \sigma \,\ud B_t^i\,, \qquad i\in [N]\,,
\end{align*}
is actually the Langevin diffusion corresponding to the target
\begin{align*}
    \hat\pi_N(x^1,\dotsc,x^N)
    &\propto \exp\Bigl(-\frac{2N}{\sigma^2}\,\cF_0\bigl(\frac{1}{N}\sum_{j=1}^N \delta_{x_t^j}\bigr)\Bigr)\,.
\end{align*}
The complexity of sampling from the minimizers of the functionals~\eqref{eq:sum_of_three} and~\eqref{eq:ent_reg_functional} was studied in~\cite{Kook+24SamplingMeanField}.

We conclude this section with an application to the mean-field VI problem introduced in Subsection~\ref{ssec:mfvi}.
By writing down the stochastic implementation of the Wasserstein gradient flow therein,~\cite{Lac23IndepProj} arrived at the McKean--Vlasov SDE
\begin{align*}
    \D X_t
    &= -\int \nabla_1 W(X_t, y)\,\mu_t(\D y)\,\ud t + \sqrt 2\,\D B_t\,,
\end{align*}
where $\mu_t = \law(X_t)$, $\nabla_1$ denotes the gradient taken with respect to the first argument, and
\begin{align*}
    W(x,y)
    &\deq \sum_{i=1}^d V(y_1,\dotsc,y_{i-1}, x_i, y_{i+1},\dotsc,y_d)\,.
\end{align*}
When this SDE is initialized at $X_0 \sim \mu_0$ which is a product measure, $\mu_t$ remains a product measure for all $t\ge 0$ so that ${(\mu_t)}_{t\ge 0}$ is indeed the constrained Wasserstein gradient flow.

The corresponding interacting particle system is given by
\begin{align*}
    \D X_t^j
    &= -\frac{1}{N-1} \sum_{j'\in [N]\setminus i} \nabla_1 W(X_t^j, X_t^{j'}) \, \D t + \sqrt 2 \,\D B_t^j\,, \qquad j\in [N]\,.
\end{align*}
One then expects that if we take $N$ sufficiently large and run the particle system, then the law of (say) the first particle $X_t^1$ will be close to the mean-field minimizer $q_\star$.

\subsection{Birth-death sampling}\index{birth-death sampling}\index{Wasserstein--Fisher--Rao (WFR)}

We return to the sampling problem from Section~\ref{sec:sampling}.
Instead of following the Wasserstein gradient flow of the KL divergence, what if we follow the WFR gradient flow introduced in Section~\ref{sec:wfr}?
The fundamental difference between these approaches is that the Langevin diffusion is a \emph{local} algorithm and hence struggles to jump between well-separated modes. This manifests itself in the convergence rate in Corollary~\ref{cor:langevin_conv}, which depends on the log-Sobolev constant of the target $\pi$. On the other hand, the ``teleportation'' effect of the Fisher--Rao component gives rise to a universal exponential convergence rate~\cite{DomPoo23FR}.

The main challenge is to implement the flow.
Recall that a particle implementation for the WFR gradient flow was presented in Subsection~\ref{subsec:particle_wfr}, but it does not apply to the choice of functional $\cF = \KL(\cdot \mmid \pi)$ since it assumed there that the first variation $\delta \cF$ can be evaluated at an empirical measure.
An implementation was provided in~\cite{LuLuNol19} under the name of ``birth-death'' sampling, which we now describe.

The implementation is based on an interacting system of $N$ particles, which at time $t$ are denoted $X_t^1,\dotsc,X_t^N$.
The approach of Subsection~\ref{subsec:particle_wfr} would associate with each particle $X_t^i$ a weight $w_t^i$ which evolves via $\dot w_t^i = -\alpha_t(X_t^i) \,w_t^i$, where $\alpha_t(x) \deq \delta \cF(\mu_t)(x) - \int \delta \cF(\mu_t) \, \ud \mu_t$.
Here, $\alpha_t(x)$ represents an exponential rate of decay/growth (according to $\alpha_t(x) > 0$ or $\alpha_t(x) < 0$) of the density at $x$.
We instead replace the use of weights with a procedure that ``kills'' or ``duplicates'' the particle after a random wait time.
More precisely, associate with each particle $X_t^i$ an independent clock which rings in the next instantaneous time interval $[t, t+\ud t]$ with probability $\alpha_t(X_t^i)\,\ud t$.
Whenever one of these clock rings---corresponding to, say, $X_t^i$---we either remove $X_t^i$ from the system (if $\alpha_t(X_t^i) > 0$) or we duplicate $X_t^i$ (if $\alpha_t(X_t^i) < 0$).
To keep the total number of particles constant, in the former (resp.\ latter) case we randomly duplicate (resp.\ kill) one of the other particles.

The birth-death process implements the Fisher--Rao component of the WFR gradient flow.
To implement the Wasserstein component, we stipulate that each particle evolves independently according to a Langevin diffusion between birth-death events.

The preceding discussion is ambiguous: what is the measure $\mu_t$? Ideally it should be the marginal law of $X_t^i$ (which is independent of $i$ due to exchangeability), but we do not have access to this marginal law for implementation purposes.
The methodology from the previous subsection suggests to replace $\mu_t$ by the empirical measure $\mu_t^N \deq \frac{1}{N}\sum_{i=1}^N \delta_{X_t^i}$ over the particles, but for the KL divergence the first variation is not well-defined at such a measure.
Therefore, as suggested in~\cite{LuLuNol19}, we resort to a kernel density estimator, replacing $\mu_t$ with $k \star \mu_t^N$ for an appropriate kernel function $k : \R^d\to\R$.
This leads to the following algorithm.
\begin{enumerate}
    \item Associate with each particle $X_t^i$ an independent clock that rings with instantaneous rate $\widehat\alpha_t(X_t^i)$, where
    \begin{align*}
        \widehat\alpha_t(x) \deq \log \frac{k\star \mu_t^N}{\pi}(x) - \frac{1}{N} \sum_{j=1}^N \log \frac{k\star \mu_t^N}{\pi}(X_t^j)\,.
    \end{align*}
    Note that when $\pi$ is given as an unnormalized density $\pi\propto\exp(-V)$, the computation of $\widehat\alpha_t$ does not require knowledge of the normalization constant for $\pi$.
    \item When one of the clock rings, kill or duplicate the corresponding particle $X_t^i$, and randomly duplicate or kill another particle, according to $\widehat\alpha_t(X_t^i) > 0$ or $\widehat\alpha_t(X_t^i) < 0$.
    \item Between the rings of the clocks, each particle evolves according to a Langevin diffusion:
    \begin{align*}
        \ud X_t^i = -\nabla V(X_t^i)\,\ud t + \sqrt 2 \,\ud B_t^i\,, \qquad i \in [N]\,,
    \end{align*}
    where ${\{B^i\}}_{i\in [N]}$ are i.i.d.\ Brownian motions.
\end{enumerate}
As shown in~\cite{LuLuNol19}, the marginal laws of this process converge to the WFR gradient flow as $N\to\infty$ and the bandwidth of the kernel tends to zero appropriately.

\section{Non-parametric maximum likelihood}\index{non-parametric maximum likelihood (NPMLE)}\label{sec:NPMLE}

We have just seen that sampling can be viewed as optimization over the space of probability measures using the perspective originally put forward in~\cite{JorKinOtt98} and \cite{Wib18}. In this section we study a classical statistical problem that is readily of this nature.

Consider a Gaussian mixture on $\R^d$ with density given by:
$$
G_\rho = \int_{\R^d} \phi(\cdot-y)\, \rho(\ud y)=\phi \star \rho \,,
$$
where $\phi(z)=(2\pi)^{-d/2} \exp(-\|z\|^2/2)$ denotes the density of the standard isotropic Gaussian distribution $\cN(0, I)$ and $\rho$ is the mixing distribution of interest. Note that in comparison with the general Gaussian mixtures introduced in Section~\ref{sec:gaussian_mixtures}, we constrain all the Gaussian components to have identity covariance matrix for simplicity.

Let $\rho^\star$ be an unknown mixing distribution on $\R^d$. Given $n$ independent observations $X_1, \ldots, X_n$ drawn from $G_{\rho^\star}$, our goal is to estimate $\rho^\star$. This is a Gaussian deconvolution problem, for which the rates of convergence are known to be very slow~\cite{RigWee19}. When $\rho^\star$ is assumed to be smooth, a classical approach that leads to optimal rates of convergence uses kernel smoothing~\cite{Fan91a}. In this section, we explore a different approach called non-parametric maximum likelihood estimation following the paper~\cite{YanWanRig23}.

The negative log-likelihood for this problem is defined as:
$$
\ell_n(\rho) \deq  -\frac{1}{n} \sum_{i=1}^n \log G_\rho(X_i) =  -\frac{1}{n} \sum_{i=1}^n \log \phi \star \rho (X_i)\,.
$$
The non-parametric maximum likelihood estimator, or NPMLE, is defined as any minimizer of $\ell_n$:
\begin{equation}\label{eq:NPMLE}
    \hat \rho = \argmin_{\rho \in \cP(\R^d)} \ell_n (\rho)\,.
\end{equation}

Before turning to the computational aspects of this problem, we first note a surprising connection with entropic optimal transport, highlighted in~\cite{RigWee18}.
Writing $\mu_n$ for the empirical measure $\frac 1n \sum_{i=1}^n \delta_{X_i}$, it turns out that the NPMLE satisfies
\begin{equation}\label{eq:eot_NPMLE}
	\hat \rho = \argmin_{\rho \in \cP(\R^d)} S_{2 \sigma^2}(\mu_n, \rho)\,,
\end{equation}
where $S_{2 \sigma^2}(\cdot, \cdot)$ is the entropic optimal transport cost with regularization parameter $\eps = 2 \sigma^2$.
In other words, the NPMLE precisely minimizes the entropic OT cost to the data $\mu_n$.

This connection arises from duality.
A version of the Gibbs variational principle (see Proposition~\ref{prop:gibbs}) tailored to probability measures rather than general positive measures implies that for suitable $h: \R^d \to \R$, it holds
\begin{equation*}
	\log \int \exp(h) \, \ud Q = \sup_{P\in \cP(\R^d)}{\Bigl\{\int h \, \ud P - \KL(P \mmid Q)\Bigr\}}\,.
\end{equation*}
This result can be found for example as Proposition~1.4.2 in~\cite{DupEll97}.
We can use this expression to obtain a variational formulation of $\log G_\rho(X_i)$:
\begin{align*}
    &- \log G_\rho(X_i)
    = (2 \pi)^{d/2} - \log \int \exp\Bigl(-\frac{\|X_i - y\|^2}{2\sigma^2}\Bigr)\, \rho(\ud y) \\
    &\qquad = (2 \pi)^{d/2} + \inf_{P_i \in \cP(\R^d)} \int \frac{1}{2 \sigma^2}\,\|X_i - y\|^2\, P_i(\ud y) + \KL(P_i \mmid \rho)\,.
\end{align*}
This shows that the optimization problem in~\eqref{eq:NPMLE} is equivalent to
\begin{align*}
	\hat \rho = \argmin_{\rho \in \cP(\R^d)} \inf_{P_1, \dots, P_n \in \cP(\R^d)} \frac 1n \sum_{i=1}^n \Bigl[\int \|X_i - y\|^2\, P_i(\ud y) + 2 \sigma^2 \KL(P_i \mmid \rho)\Bigr]\,.
\end{align*}
The minimization problem over $P_1, \dots, P_n$ can be equivalently rewritten as a minimization over measures of the form $\gamma = \frac 1n \sum_{i=1}^n \delta_{X_i} \otimes P_i$, which are precisely joint measures whose first marginal is $\mu_n$.
It can be shown that the second marginal can be taken to be $\rho$, which leads to the representation in~\eqref{eq:eot_NPMLE}.

The convex infinite-dimensional optimization problem~\eqref{eq:NPMLE} has primarily been studied in the case where $d=1$ where it can be shown that the solution is unique and supported on a small number of atoms~\cite{Lin83, PolWu24}. Using these properties, various computational schemes have been proposed by restricting the set of measures to ones with a small support. In higher dimensions, much less is known.

The definition~\eqref{eq:NPMLE} of the NPMLE is precisely an optimization over probability measures and in the rest of this section we describe algorithms based on Wasserstein gradient flows to solve it.

We first compute the Wasserstein gradient. To that end, we need the first variation of $\ell_n$. Fix $\eps>0$ and $\xi$ be a perturbation such that $\rho + \eps \xi$ is a probability measure and observe that
\begin{align*}
   \ell_n (\rho+ \eps \xi) &=  -\frac{1}{n} \sum_{i=1}^n \log (\phi \star \rho +  \eps \phi \star \xi) (X_i) \\
   &= -\frac{1}{n} \sum_{i=1}^n \log (\phi \star \rho) (X_i)   -\frac{\eps}{n} \sum_{i=1}^n \frac{\phi \star \xi(X_i) }{\phi \star \rho(X_i)} + O(\eps^2)\,.  
\end{align*}
Hence
$$
\lim_{\eps \searrow 0} \frac{ \ell_n (\rho+ \eps \xi) -  \ell_n (\rho)}{\eps} =  -\frac{1}{n} \sum_{i=1}^n \frac{\phi \star \xi(X_i) }{\phi \star \rho(X_i)} = -\frac{1}{n} \sum_{i=1}^n \frac{\int \phi(\cdot - X_i)\, \ud \xi }{\phi \star \rho(X_i)} \,,
$$
and we readily identify that the first variation is given by
$$
\delta \ell_n(\rho) =-\frac1n \sum_{i=1}^n\frac{ \phi(\cdot - X_i)  }{\phi \star \rho(X_i)}\,.
$$
The Wasserstein gradient flow of $\ell_n$ correspond to the following ODE:
\begin{align}
    \dot \theta_t &= - \gradW \ell_n(\rho_t) (\theta_t) \nonumber \\
    &=  \frac1n \sum_{i=1}^n\frac{ \nabla \phi(\theta_t  - X_i)  }{\phi \star \rho_t(X_i)} \nonumber\\
    &= -\frac1n \sum_{i=1}^n\frac{ (\theta_t -X_i)\,\phi(\theta_t  - X_i)  }{\phi \star \rho_t(X_i)} \label{eq:WGF_NPMLE} \,,
\end{align}
where $\rho_t = \law(\theta_t)$. In light of the continuity equation~\eqref{eq:continuity}, we see that the Wasserstein gradient flow of $\ell_n$ is the curve described by the PDE:
\begin{align*}
\partial_t \rho_t = \frac1n \sum_{i=1}^n \divergence\Bigl( \rho_t\,\frac{ (\cdot -X_i)\,\phi(\cdot  - X_i)  }{\phi \star \rho_t(X_i)} \Bigr)\,.
\end{align*}

Since the velocity field in~\eqref{eq:WGF_NPMLE} depends on $\rho_t$, we use a particle implementation of this gradient flow: given $N$ particles $\theta_t^1, \ldots, \theta_t^N$ with marginal distribution $\rho_t$, we replace $\rho_t$ with the empirical distribution $N^{-1} \sum_{j=1}^N \delta_{\theta_t^j}$. It results in the system of coupled ODEs: for $j\in [N]$,
$$
\boxed{\dot \theta_t^j = -\frac1n \sum_{i=1}^n\frac{ (\theta_t^j -X_i)\,\phi(\theta_t^j  - X_i)  }{\frac1N  \sum_{k=1}^N \phi(\theta_t^k- X_i)}\,.}
$$

Unfortunately, this Wasserstein gradient flow is difficult to analyze. Moreover, time-discretizations of this Wasserstein gradient flow do not perform well in practice. Instead,~\cite{YanWanRig23} propose to study the Wasserstein--Fisher--Rao gradient flow of $\ell_n$. In this context, the measure $\rho_t$ is approximated by
$$
\sum_{j=1}^N w_t^j \delta_{\theta_t^j}
$$
where for any $j \in [N]$, we use the following dynamics:
\begin{align*}
    \boxed{\begin{aligned}
        \dot \theta_t^j &= - \frac1n \sum_{i=1}^n\frac{ (\theta_t^j -X_i)\,\phi(\theta_t^j  - X_i)  }{\sum_{k=1}^N w_t^k \phi(\theta_t^k - X_i)}\,,\\
        \dot w_t^j&= \Bigl[  \frac1n \sum_{i=1}^n\frac{\phi(\theta_t^j  - X_i)  }{\sum_{k=1}^N w_t^k \phi(\theta_t^k - X_i)} - 1 \Bigr]\, w_t^j\,.
    \end{aligned}}
\end{align*}
Under some conditions, the convergence guarantees of this system can be established but they are entirely driven by the Fisher--Rao part and the proof largely consists in finding conditions under which the Wasserstein part does not get in the way of convergence.

\section{Mean-field neural networks}\index{mean-field neural networks}\label{sec:mean_field_NN}

A \emph{two-layer\footnote{In other words, the network has one hidden layer.} neural network}\index{neural network} is a parameterized function
\begin{align}\label{eq:mean_field_NN}
    f(x; \theta)
    &= \frac{1}{m} \sum_{j=1}^m a_j \sigma(\langle w_j,x\rangle + b_j)\,,
\end{align}
where $\theta \deq \{(a_j,w_j,b_j),\,j\in [m]\}$ represents the \emph{parameters} (or \emph{weights}) of the network, and $\sigma(\cdot)$ is a non-linearity, e.g., the common ReLU activation $\sigma(\cdot) = {(\cdot)}_+ = \max(0, \cdot)$. 

The first layer is the map $\ell_1: \R^d \to \R^m$ defined by
\begin{equation}\label{eq:first_layer}
    \ell_1(x) = \bigl(\sigma(\langle w_1,x\rangle + b_1), \ldots, \sigma(\langle w_m,x\rangle + b_m)\bigr)^\T \eqqcolon \sigma( Wx + b)\,.
\end{equation}
It produces an internal \emph{representation} of the vector $x$ that is more suitable for the subsequent task; e.g. classification or regression. This representation is then passed on to the second and terminal layer $\ell_2: \R^m \to \R$ which collapses the representation $z \deq \ell_1(x)$ into a scalar prediction using a linear projection:
$$
\ell_2(z) = \frac{1}{m} \sum_{j=1}^m a_j z_j = \frac1m \langle a, z\rangle\,.
$$
This terminal layer is tailored to a regression task but it is also common to  employ terminal layers that are tailored to classification. For binary classification for example, it is desirable to have the output of the neural network lie in the interval $[0,1]$ so further process the output $y = \ell_2\circ \ell_1(x)$ using 
$$
\msf{logistic}(y) =  \frac{e^{y}}{1+e^y}\,.
$$
The reader will recognize here the logistic function\index{logistic function} employed in generalized linear models. In the rest of this section we focus on two-layer neural networks of the form $\ell_2\circ \ell_1$ and leave the study of $\msf{logistic}\circ \ell_2 \circ \ell_1$ as an exercise for the reader.

The parametrization~\eqref{eq:mean_field_NN} is called the \emph{mean-field} parametrization, and it lends itself to passing to the limit $m\to\infty$.

Consider a simple regression task in which we have $n$ data points $\{(X_i, Y_i),\; i\in [n]\}$ with $X_i\in\R^d$ and $Y_i \in\R$.
To train the neural network, we can minimize the squared error
\begin{align}\label{eq:nn_loss}
    L(\theta)
    &\deq \sum_{i=1}^n \bigl(Y_i - f(X_i; \theta)\bigr)^2\,.
\end{align}

The training dynamics for minimizing the objective~\eqref{eq:nn_loss} are complex because the parametrization $\theta \mapsto f(\cdot;\theta)$ is non-linear and consequently the loss $L$ is non-convex.
One approach to study these dynamics is to lift the optimization problem to one set over the space of probability measures, with the hope that the lifted problem affords simplifications.
In doing so, we must preserve the connection with the original dynamics, and this leads naturally to Wasserstein gradient flows.

The lifting is carried out as follows.
Let $\Omega = \R\times \R^d\times \R$ denote the space of $(a,w,b)$ triples, and let $\mu$ be a measure over $\Omega$.
Set
\begin{align*}
    f(x; \mu)
    &= \int \rho(x; \omega)\,\mu(\ud \omega)\,, \qquad \rho(x;\omega) \deq a\sigma(\langle w, x\rangle + b)\,,
\end{align*}
where $\omega = (a,w,b)$.
One can check that if we encode parameters $\theta = \{(a_i,w_i,b_i)\}_{i=1}^m$ via the empirical measure $\mu_\theta \deq \frac{1}{m} \sum_{i=1}^m \delta_{(a_i,w_i,b_i)}$, then $f(\cdot; \mu_\theta) = f(\cdot;\theta)$, so this definition indeed generalizes~\eqref{eq:mean_field_NN}.
However, we can now formulate the problem of optimizing, over the space $\cP_2(\Omega)$ of probability measures over $\Omega$, the objective
\begin{align}\label{eq:nn_loss_wass}
    L(\mu) \deq \sum_{i=1}^n \bigl(Y_i - f(X_i; \mu)\bigr)^2\,.
\end{align}

As discussed above, we require that the dynamics of minimizing $\mu\mapsto L(\mu)$ over $\cP_2(\Omega)$ be compatible with the original dynamics of minimizing $\theta\mapsto L(\theta)$ over $\Omega^m$.
Herein lies the utility of the Wasserstein geometry, as it was set up precisely to ensure that dynamics over the base space $\Omega$ lift gracefully to dynamics over $\cP_2(\Omega)$.
More precisely:

\begin{proposition}\label{prop:mfnn}
    The Wasserstein gradient flow ${(\mu_t)}_{t\ge 0}$ of~\eqref{eq:nn_loss_wass}, when initialized at a measure of the form $\mu_0 = \mu_\theta$, is such that $\mu_t = \mu_{\theta_t}$ for all $t\ge 0$, where ${(\theta_t)}_{t\ge 0}$ is the (time-rescaled) Euclidean gradient flow of~\eqref{eq:nn_loss} initialized at $\theta$.
\end{proposition}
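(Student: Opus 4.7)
The plan is to verify the proposition by explicitly computing both sides of the claimed identity and matching them up, leveraging the particle implementation of Wasserstein gradient flows developed in Subsection~\ref{sec:mf_WGF}. The proposition amounts to showing that, at the level of particles, the Wasserstein gradient flow on $\cP_2(\Omega)$ reduces to the Euclidean gradient flow on $\Omega^m$, modulo a uniform time rescaling by the factor $m$.

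First, I would compute the first variation of the lifted loss. Writing $L(\mu) = \sum_{i=1}^n (Y_i - \int \rho(X_i;\omega)\,\mu(\ud\omega))^2$ and perturbing $\mu \leadsto \mu + \eps\chi$ with $\int\ud\chi = 0$, a straightforward expansion gives
\[
\delta L(\mu)(\omega) = -2\sum_{i=1}^n \bigl(Y_i - f(X_i;\mu)\bigr)\,\rho(X_i;\omega)\,.
\]
By Proposition~\ref{prop:w2_grad}, the Wasserstein gradient is then
\[
\gradW L(\mu)(\omega) = -2\sum_{i=1}^n \bigl(Y_i - f(X_i;\mu)\bigr)\,\nabla_\omega \rho(X_i;\omega)\,,
\]
where $\nabla_\omega$ denotes the Euclidean gradient in the parameter variable $\omega = (a,w,b) \in \Omega$.

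Second, I would invoke the particle formulation from Subsection~\ref{sec:mf_WGF}: if $\mu_0 = \frac{1}{m}\sum_{j=1}^m \delta_{\omega_0^j}$, then since the Wasserstein gradient flow moves mass along the ODE $\dot\omega_t^j = -\gradW L(\mu_t)(\omega_t^j)$ without altering weights, $\mu_t = \frac{1}{m}\sum_{j=1}^m \delta_{\omega_t^j}$ for all $t\ge 0$. Moreover, because the integrand $\rho(X_i;\omega)$ is linear in $\mu$, one has $f(X_i;\mu_t) = \frac{1}{m}\sum_{k=1}^m \rho(X_i;\omega_t^k) = f(X_i;\theta_t)$ where $\theta_t \deq \{\omega_t^j\}_{j=1}^m$. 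Substituting yields
\[
\dot\omega_t^j = 2\sum_{i=1}^n \bigl(Y_i - f(X_i;\theta_t)\bigr)\,\nabla_\omega \rho(X_i;\omega_t^j)\,.
\]

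Third, I would compute the Euclidean gradient of the original objective directly. From $L(\theta) = \sum_{i=1}^n (Y_i - \frac{1}{m}\sum_{k=1}^m \rho(X_i;\omega_k))^2$, one finds
\[
\nabla_{\omega_j} L(\theta) = -\frac{2}{m}\sum_{i=1}^n \bigl(Y_i - f(X_i;\theta)\bigr)\,\nabla_\omega \rho(X_i;\omega_j)\,,
\]
so the Euclidean gradient flow $\dot{\tilde\omega}_t^j = -\nabla_{\omega_j} L(\tilde\theta_t)$ agrees with the particle ODE above after rescaling time by a factor of $m$. Uniqueness of solutions to the ODE system (which may be assumed under mild regularity on $\sigma$, e.g., $\rho \in C^{1,1}$ locally) then identifies $\omega_t^j = \tilde\omega_{mt}^j$, finishing the proof.

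The only subtle step is the passage from the weak PDE formulation of the Wasserstein gradient flow to the particle ODE when the initial datum is purely atomic: in general, $\gradW \cF(\mu)$ need not be defined at discrete measures. Here, however, the map $\mu \mapsto \gradW L(\mu)(\omega)$ depends on $\mu$ only through the scalars $f(X_i;\mu) = \int \rho(X_i;\cdot)\,\ud\mu$, which are perfectly well-defined for discrete $\mu$ and in fact smooth functions of the particle locations. This is what makes the reduction to a closed ODE system on $\Omega^m$ legitimate, and it is the one place where one should verify that the continuity equation $\partial_t \mu_t + \divergence(\mu_t v_t) = 0$ is indeed solved in the distributional sense by the atomic curve $t\mapsto \frac{1}{m}\sum_j \delta_{\omega_t^j}$ with $v_t = -\gradW L(\mu_t)$ evaluated on particles—a direct test-function computation.
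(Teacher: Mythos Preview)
Your proposal is correct and follows exactly the approach the paper intends: the paper does not give a proof of this proposition in the main text but leaves it as an exercise, and your argument---computing $\delta L(\mu)$, taking its Euclidean gradient to get $\gradW L(\mu)$, invoking the particle implementation of Subsection~\ref{sec:mf_WGF}, and matching against $\nabla_{\omega_j} L(\theta)$ up to the factor $m$---is the expected solution. Your observation that the reduction to a closed particle ODE is legitimate precisely because $\gradW L(\mu)$ depends on $\mu$ only through the finite collection of scalars $f(X_i;\mu)$ is the key point, and is consistent with the paper's remark in Subsection~\ref{sec:mf_WGF} that the particle implementation works whenever the Wasserstein gradient is well-defined at discrete measures.
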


We can also rewrite the objective~\eqref{eq:nn_loss_wass} as follows:
\begin{align*}
    L(\mu)
    &= \sum_{i=1}^n \Bigl(Y_i^2 - 2Y_i \int \rho(X_i; \omega)\, \mu(\ud \omega) \\
    &\qquad\qquad{} + \iint \rho(X_i; \omega) \,\rho(X_i;\omega')\,\mu(\ud \omega)\,\mu(\ud \omega')\Bigr) \\
    &= \text{const.} -2\int \sum_{i=1}^n Y_i \,\rho(X_i;\omega)\,\mu(\ud \omega) \\
    &\qquad\qquad{} + \iint \sum_{i=1}^n \rho(X_i; \omega) \,\rho(X_i;\omega')\,\mu(\ud \omega)\,\mu(\ud\omega')\,.
\end{align*}
We can recognize the second term as a potential energy (in the sense of Example~\ref{ex:potential}) and the third term as an interaction energy (in the sense of Example~\ref{ex:interaction}), albeit a generalized version in which the interaction is not of the form $(x,y)\mapsto K(x-y)$.
As expected, the loss $L$ is not in general geodesically convex.

Since the Wasserstein perspective is essentially a reformulation of the original neural network problem, a skeptic may ask what advantages it brings.
The answer is that we can now consider more general initializations than empirical measures (measures of the form $\mu_\theta$), and in particular, a well-known result of Chizat and Bach~\cite{ChiBac18GlobalConv} uses this approach to establish global convergence in the mean-field regime, under certain assumptions.
Their result requires the initialization to be absolutely continuous, and since such a measure can only be approximated by empirical measures in the limit $m\to\infty$, this corresponds in some sense to ``infinitely wide'' neural networks.
The error incurred for finite $m$ can be controlled and leads to insights for finite-width networks in various settings~\cite{MeiMonNgu18MeanField, AbbBoiMis22Staircase, AbbBoiMis23Leap}.

It has also been proposed to add an entropic regularization term to the loss~\eqref{eq:nn_loss_wass} and to train the network via the mean-field Langevin dynamics from Subsection~\ref{subsec:mkv}~\cite{CheRenWan23PoC, SuzNitWu23MFLangevin, TzeRag24MKV}.

\section{Transformers}\index{transformers}\label{sec:transformers}

Since their introduction in 2017 in the paper ``Attention is all you need"~\cite{VasShaPar17}, transformers have profoundly transformed practical deep neural networks, most notably in natural language processing (NLP), but also in computer vision and robotics. Central to this new architecture is the so-called \emph{attention}\index{attention} mechanism, a layer that is markedly different from a perceptron\index{perceptron} (a.k.a.\ feed-forward) layer such as the one in~\eqref{eq:mean_field_NN}. 

Unlike the neural networks that we have seen in the previous sections that are functions $f: \R^d \to \R$, an attention layer is a sequence-to-sequence map 
\begin{align*}
g: (\R^d)^N &\to  (\R^d)^N\,,\\
(x^1, \ldots, x^N) & \mapsto \big(g^1(x^1, \ldots, x^N), \ldots, g^N(x^1, \ldots, x^N)\big)\,.
\end{align*}
More specifically, a input (a sentence in NLP or an image in computer vision) is broken into \emph{tokens} $x^1, \ldots, x^N \in \R^d$ and processed through an attention layer $g=(g^1, \dotsc, g^N)$ where for each $i \in [N]$, 
$$
g^i(x^1, \ldots, x^N) = x^i + V\, \frac{\sum_{j=1}^N x^j e^{\langle Qx^i, K x^j\rangle}}{\sum_{j=1}^N e^{\langle Qx^i, K x^j\rangle}}\,,
$$
where $K$, $Q$, and $V$ are three $d \times d$ matrices called \emph{key}, \emph{query}, and \emph{value} respectively. 

While practical transformers combine perceptron layers with attention layers---and also normalization layers that are briefly discussed below---we focus here on composing multiple attention layers with the same matrices $(K,Q,V)$. This composition results in a iterative scheme where tokens are updated as:
$$
x^i_{t+1} =  x^i_t + V\, \frac{\sum_{j=1}^N\,x^j_t e^{\langle Qx^i_t, K x^j_t\rangle}}{\sum_{j=1}^N\,e^{\langle Qx^i_t, K x^j_t\rangle}}\,, \qquad i \in [N]\,.
$$
In turn, taking the same perspective as in neural ODEs~\cite{CheRubBet18}, we can view the above iterations as a time discretization of the following dynamical system of interacting particles:
\begin{equation}\label{eq:ode_trans}
    \dot{x}^i_t = V\, \frac{\sum_{j=1}^N\,x^j_t e^{\langle Qx^i_t, K x^j_t\rangle}}{\sum_{j=1}^N\,e^{\langle Qx^i_t, K x^j_t\rangle}}\,, \qquad i\in [N]\,.
\end{equation}
The above equation describes a system of $N$ ordinary differential equations (ODEs), one for each token/particle, that are called \emph{self-attention dynamics} by~\cite{GesLetPol23,GesLetPol24}. The way these tokens interact is not completely wild: each token  evolves according to its own position and the \emph{empirical distribution} of all the tokens. Indeed, let $\mu_t$ denote this empirical distribution at time $t$:
$$
\mu_t = \frac1N \sum_{i=1}^N \delta_{x_t^i}\,.
$$
We can rewrite~\eqref{eq:ode_trans} as
$$
\dot{x}^i_t = V\, \frac{\int y e^{\langle Qx^i_t, K y\rangle} \, \mu_t(\ud y)}{\int e^{\langle Qx^i_t, K y\rangle} \, \mu_t(\ud y)}\,, \qquad i\in [N]\,.
$$
It becomes now clear that the tokens all have the same mean-field dynamics so we can drop the index $i$:
\begin{equation}\label{eq:mean_field_trans}
    \dot{x}_t = V\, \frac{\int y e^{\langle Qx_t, K y\rangle} \, \mu_t(\ud y)}{\int e^{\langle Qx_t, K y\rangle} \, \mu_t(\ud y)}\,,
\end{equation}
where $\mu_t = \law(x_t)$. Using the continuity equation, we get that $\mu_t$ evolves according to the following PDE:
$$
\partial_t \mu_t + \divergence\Bigl( \mu_t V\, \frac{\int y e^{\langle Q\cdot, K y\rangle} \, \mu_t(\ud y)}{\int e^{\langle Q\cdot, K y\rangle} \, \mu_t(\ud y)}\Bigr)=0\,.
$$
This perspective on transformers was first put forward in~\cite{SanAblBlo22} which raised the question of whether this curve could be viewed as a Wasserstein gradient flow. To investigate this question, assume that $K=Q=V=I$ so that~\eqref{eq:mean_field_trans} becomes:
$$
\dot{x}_t = \frac{\int y e^{\langle x_t,  y\rangle} \, \mu_t(\ud y)}{\int e^{\langle x_t,  y\rangle} \, \mu_t(\ud y)} = \nabla\Bigl[\log \int  e^{\langle \cdot,  y\rangle} \, \mu_t(\ud y)\Bigr](x_t)\,.
$$
The form of the velocity field is suggestive and readily begs the question of whether there exists a functional $\cF$ such that its first variation is given by
$$
\delta \cF(\mu) = \log \int  e^{\langle \cdot ,  y\rangle} \, \mu_t(\ud y)\,.
$$
Unfortunately,~\cite{SanAblBlo22} also show that this is not the case due to a lack of symmetry. To overcome this limitation, one may consider instead the 
\emph{unnormalized self-attention} dynamics introduced in~\cite{GesLetPol24}. These dynamics are of the form
$$
\dot{x}_t = \int y e^{\langle x_t,  y\rangle} \, \mu_t(\ud y) = \nabla\Bigl[ \int  e^{\langle \cdot,  y\rangle} \, \mu_t(\ud y)\Bigr](x_t)\,.
$$
We readily get that these dynamics describe the Wasserstein gradient flow of the interaction energy
$$
\cF(\mu) \deq  - \iint e^{\langle x, y \rangle} \, \mu(\ud x) \, \mu (\ud y)\,.
$$
Unfortunately, it is easy to see that this functional does not admit a global minimum over the space of probability measure, indeed, for any Dirac delta $\mu=\delta_x$, we have $\cF(\delta_x) = -e^{\|x\|^2} \to -\infty$ as $x \to \infty$. In particular this suggests that tokens undergoing these dynamics will simply diverge to infinity.

In practice however, tokens are restricted to live on the unit sphere $\cS^{d-1}$ of $\R^d$ using a procedure know as \emph{layer normalization} (or simply ``layernorm"). With layernorm, the unnormalized self-attention dynamics then become
$$
\dot{x}_t = {\sf P}_{x_t} \int y e^{\langle x_t,  y\rangle} \, \mu_t(\ud y) = \nabla_{x_t} \int  e^{\langle x_t, y\rangle}\, \mu_t(\ud y)\,,
$$
where for any $x \in \cS^{d-1}$, $y \in  \R^d$, we write  ${\sf P}_{x} y \deq y - \langle x,y\rangle\, x$ for the projection of $y$ onto the tangent space of the sphere $\cS^{d-1}$ at $x$ and $\nabla_{x} \deq  {\sf P}_{x} \nabla$ denotes the spherical (Riemannian) gradient at $x \in \cS^{d-1}$. Using the version Otto calculus on Riemannian manifolds alluded to in Section~\ref{sec:gaussian_mixtures}, we get that these dynamics correspond to a Wasserstein gradient flow of the interaction energy $\cF$ now defined on the sphere. In fact, since for $x,y \in \cS^{d-1}$, it holds that $\|x-y\|^2 = 2\,(1-\langle x, y \rangle)$, we can write $\cF$ as
\begin{equation}\label{eq:functional_sphere}
    \cF(\mu) \deq  - e \iint_{\cS^{d-1} \times \cS^{d-1}}  e^{-\frac{\|x-y\|^2}{2}} \, \mu(\ud x) \, \mu (\ud y)\,.
\end{equation}

It can be readily seen that the maximizers of this functional are precisely Dirac deltas $\delta_x$ for any $x \in \cS^{d-1}$. Hence unnormalized self-attention is a Wasserstein gradient flow of a functional minimized at Dirac deltas, which correspond to states where all the tokens are clustered. Unfortunately, this functional is not geodesically convex and admits many stationary points where the Wasserstein gradient vanishes. Using this framework~\cite{CriRebMcR24, GesLetPol24} show that these points are in fact saddle points, guaranteeing asymptotic convergence to a single cluster when dynamics are initialized in a generic position.

\section{Discussion}

\noindent\textbf{\S\ref{sec:vi}.}
Another natural geometric approach to VI is \emph{natural gradient descent}, which is motivated by the parametrization invariance of the Fisher--Rao geometry~\cite{Ama98NatGrad, AmaNag00}. However, it has been challenging to analyze this approach since the VI problem is often non-convex.
See~\cite{AwaRis15VI} for an early work on algorithmic guarantees for VI\@.

Our discussion of Gaussian VI follows~\cite{Lametal22GVI}.
Algorithms for Gaussian VI which are closely related to~\eqref{eq:bw_gvi} have been proposed and studied in works such as~\cite{AlqRid20VI, Dom20VI, GalPerOpp21VarGauss}; here, our emphasis is on the derivation via Otto calculus.
There have been many subsequent works on Gaussian VI, both on the computational (e.g.,~\cite{Diaoetal23FBGVI, DomGowGar23BBVI, Kim+23BBVI, BonLamBac24LowRank}) and statistical (\cite{KatRig23GVI}) aspects, as well as applications to bandits~\cite{ClaHuiDur24VITS} and control~\cite{LamBonBac23Kushner, LamBacBon24VarControl}.

The potential application of Wasserstein geometry to mean-field VI was noticed by several authors~\cite{Gho+22MFVI, Lac23IndepProj, YaoYan23MFVI}.
The works~\cite{Gho+22MFVI, Lac23IndepProj} also wrote down interacting SDE implementations of the gradient flow described in Subsection~\ref{subsec:mkv}.
The wider literature on mean-field VI, which usually focuses on coordinate ascent variational inference (CAVI), is vast and we do not survey it here, but see~\cite{ArnLac24CAVI, LavZan24CAVI} for analyses leveraging Otto calculus.

SVGD was introduced in~\cite{LiuWan16SVGD}, and geometric interpretations of SVGD are given in~\cite{Liu17SVGD, Cheetal20SVGD, DunNueSzp23SVGD}.
Convergence theory remains underdeveloped, see, e.g.,~\cite{LuLuNol19SVGD, Kor+20SVGD, SalSunRic22SVGD, DasNag23SVGD, ShiMac23SVGD, PriBiaSal24SVGD}.

Corollary~\ref{cor:kl_cvx} can be generalized to measures over Riemannian manifolds, in which case the strong convexity parameter of the KL divergence captures information about the Ricci curvature.
The seminal work of Lott and Villani~\cite{LotVil09} and Sturm~\cite{Stu06,Stu06a} leverages this to define a synthetic notion of Ricci curvature lower bounds for measured geodesic spaces (see Chapter~\ref{chap:geometry}) that recover classical Ricci curvature lower bounds when specialized to the Riemannian setting.
These ideas were later extended to discrete settings, e.g.,~\cite{Oll10Survey, OllVil12RicciHypercube, Oll13Visual}.

Finally note that while the KL divergence plays a preponderant role in variation inference, other distances between measures can be considered for this task; see, e.g.,~\cite{ArbKorSal19} who use Maximum Mean Discrepancy.

\noindent\textbf{\S\ref{sec:sampling}.} As mentioned in the main text, the interpretation of the Langevin diffusion as a Wasserstein gradient flow goes back to the seminal work of~\cite{JorKinOtt98}.
Otto calculus was first applied to obtain quantitative results for the Langevin diffusion, as in Exercise~\ref{ex:apply_otto_to_langevin} below, in~\cite{OttVil00LSI}; in this context, Exercise~\ref{ex:weakly_cvx_rate} from Chapter~\ref{chap:WGF} can be sharpened by a factor of $2$~\cite{BobGenLed01Hyper, OttVil01Comment}.
See also~\cite{Cor02MassTransport} for proofs in this spirit which do not require as much differential structure.
For textbook treatments on stochastic calculus, see~\cite{Ste01StocCalc} or~\cite{LeG16StocCalc}. The Wasserstein PL inequality for the KL divergence functional is known as the \emph{log-Sobolev inequality} and it plays a key role in the study of high-dimensional probability and Markov processes.
Crucially, although we have presented strong log-concavity as a sufficient condition for the validity of the log-Sobolev inequality, it is not necessary.
See~\cite{BakGenLed14} for further detail and~\cite{OhtTak11DisplI, OhtTak13DisplII, BlaBol18Loj} for generalizations.

The optimization perspective on sampling dates back to early works such as~\cite{DalTsy12LMC}; our discussion largely follows~\cite{Wib18}.
For an exposition to the modern complexity theory of log-concave sampling and further references, see~\cite{ChewiThesis, Chewi24Book}.
Recent works also apply this perspective for parameter estimation~\cite{Aky+23MMLE, Cap+24ParticleGD}.

The proximal sampler has been applied to structured log-concave sampling~\cite{LeeSheTia21RGO}, to non-Euclidean~\cite{Gopi+23LogLaplace} and heavy-tailed~\cite{He+24StableOracle} sampling, and to sampling from convex bodies~\cite{KooVemZha24InOut}.

As noted in~\cite{CheLe-Lu20}, the Newton--Langevin diffusion converges to any strictly log-concave target with a universal exponential rate as a consequence of the Brascamp--Lieb inequality~\cite{BraLie76}.
There is a sense in which it is an optimal preconditioning of Langevin~\cite{CuiTonZah24OptPoincare}.

Convergence of the underdamped Langevin diffusion requires heavier machinery than Wasserstein gradient flows and is based on the theory of hypocoercivity, for which the standard reference is~\cite{Vil09Hypo}.

\noindent\textbf{\S\ref{sec:interacting}.}
Analysis of birth-death sampling was first carried out in~\cite{LuLuNol19} and improved in~\cite{LuSleWan23BirthDeath}.

\noindent\textbf{\S\ref{sec:NPMLE}.} The WFR gradient flow for NPMLE can be adapted to more general mixtures. Usually, asymptotic convergence of the gradient flow to the NPMLE is only established conditionally on convergence to a limit point. In fact, it is shown in~\cite{YanWanRig23} that the NPMLE is the only stationary point of the gradient flow initialized at a measure that is absolutely continuous. 

\noindent\textbf{\S\ref{sec:mean_field_NN}.} The study of training dynamics of two-layer neural networks from the mean-field perspective was proposed in four independent papers that were released within about a month period in 2018:~\cite{MeiMonNgu18MeanField} on April 18,~\cite{SirSpi20} and~\cite{RotVan22} both on May 2, and~\cite{ChiBac18GlobalConv} on May 24. It is worth noting that the normalization $1/m$ in~\eqref{eq:mean_field_NN} is critical for the mean-field interpretation of the problem. Other works have proposed to use the normalization $1/\sqrt{m}$ which results in the so called \emph{neural tangent kernel (NTK)}\index{neural tangent kernel (NTK)} (a.k.a.\  \emph{lazy training}) regime. In this regime, which will remind the reader of the normalization employed in the central limit theorem, it can be shown that the parameters do not move far away from a random initialization and the neural network can be studied using linear approximation around initialization~\cite{NTK}. For more details on NTK and its relationship with the mean-field regime see~\cite{MisMon23}.

\noindent\textbf{\S\ref{sec:transformers}.} The functional $\cF$ defined in~\eqref{eq:functional_sphere} has appeared in the literature on optimal configuration. In this line of work, the \emph{maximizers} of this functional are of interest. It is know that $\cF$ is maximized by the uniform distribution on the sphere~\cite{shuotan2017}. Finding maximizers subject to a cardinality constraint on the support of $\mu$ is directly connected to questions arising in sphere packing; see~\cite{cohn2007universally}.

\section{Exercises}

\begin{enumerate}
\item Let $K: \R^d \to \R$ be a symmetric function on $\R^d$, and consider the corresponding interaction energy as defined in Example~\ref{ex:interaction}: $\cF(\mu)  \deq  \frac 12 \iint K(x - y)\, \mu(\ud x) \, \mu(\ud y)$.
\begin{itemize}
    \item Show that if $K$ is convex, then $\cF$ is geodesically convex on $\cP_{2, \mathrm{ac}}(\R^d)$.
\emph{Hint}: let $X_t = (1-t) X_0 + t T(X_0)$, so that $(\mu_t)_{t \in [0, 1]}  \deq  (\mathrm{law}(X_t))_{t \in [0, 1]} $ is a Wasserstein geodesic, then apply~\eqref{eq:geod_cvx_def}.
\item Show that $\cF$ is never $\alpha$-geodesically convex for any $\alpha > 0$. \emph{Hint}: Consider the geodesic $(\cN(t v, I))_{t \in [0, 1]} $ for a nonzero vector $v \in \R^d.$
\end{itemize}

    \item\label{ex:mf_gradient} Show that the tangent space to the space of product measures at $\mu$ is given by the space of \emph{separable} vector fields:
    \begin{align*}
        &T_\mu {\cP_{2,\rm ac}(\R)}^{\otimes d} \\
        &\qquad = \overline{\{x \mapsto (\psi_1'(x_1),\dotsc,\psi_d'(x_d)) \mid \psi_1,\dotsc,\psi_d : \R\to\R\}}^{L^2(\mu)}\,,
    \end{align*}
    where $\psi_1,\dotsc,\psi_d$ are smooth and compactly supported.
    Then, show that the Wasserstein gradient projected to this subspace takes the form~\eqref{eq:mf_gradient}.

    \item Compute the gradient of the KL divergence restricted to the space of product measures.
    From the first-order optimality condition, write down a fixed-point equation for the density of the solution $q_\star$ to~\eqref{eq:mfvi}.

    \item\label{ex:mf_isometry} Prove Lemma~\ref{lem:mf_isometry}. \emph{Hint}: Use the separability of the transport maps to reduce to one-dimensional optimal transport, for which we can apply the results of Section~\ref{sec:ot_1d} and Proposition~\ref{prop:w21d}.

    \item Compute the derivative of $t\mapsto \KL(\mu_t \mmid \pi)$ when ${(\mu_t)}_{t\ge 0}$ evolves according to~\eqref{eq:svgd}.

    \item\label{ex:apply_otto_to_langevin} Interpret Exercises~\ref{ex:weakly_cvx_rate} and~\ref{ex:otto_villani} from Chapter~\ref{chap:WGF} for the Langevin diffusion.

    \item Consider the Euler{--}Maruyama scheme~\eqref{eq:lmc} where the initial distribution is $\cN(m, \Sigma)$ and the target distribution is $\pi = \cN(0, I)$.
    Compute the law of $X_k$ for each $k\ge 0$.
    Use this to compute the stationary distribution $\hat\pi$ of~\eqref{eq:lmc}, and compute the KL divergence $\KL(\hat\pi \mmid \pi)$.
    How small should we choose the step size $h$ if we want to ensure $\KL(\hat\pi \mmid \pi) \le \varepsilon^2$?
    
    \item Let $\mu = \cN(m, \Sigma)$ be a Gaussian measure with $\Sigma\succ 0$. Evaluate $\prox_{h\cH}(\mu)$, where $\prox_{h\cH}$ is the proximal map for the entropy defined in~\eqref{eq:prox_ent}.
    This computation is used as the basis for the Gaussian VI algorithm of~\cite{Diaoetal23FBGVI}.

    \item For $\cF \deq \frac{1}{2}\,W_2^2(\cdot,\nu)$, compute the iterations $\mu_{n+1} = \prox_{h\cF}(\mu_n)$ of the JKO scheme, where
    \begin{align*}
        \prox_{h\cF}(\mu) \deq \argmin_{\mu' \in \cP_2(\R^d)}\Bigl\{h\cF(\mu') + \frac{1}{2}\,W_2^2(\mu,\mu')\Bigr\}\,.
    \end{align*}
    Letting $h\searrow 0$ while $nh\to t$, show that one recovers the gradient flow from Exercise~\ref{ex:wgf_of_w2} from Chapter~\ref{chap:WGF}.

    \item Consider the proximal sampler with initial distribution $\cN(m,\Sigma)$ and target distribution $\pi = \cN(0, I)$. Compute the law of the $k$-th iterate $X_k$ for each $k\ge 0$ and estimate the rate of convergence to $\pi$.
    
    \item Let $V(x) = \frac{1}{2}\,\langle x,A\,x\rangle$ and $W(x) = \frac{\lambda}{2}\,\|x\|^2$, where $A \succ 0$ and $\lambda \ge 0$. Compute the stationary distributions $\pi$ and $\hat\pi_N$ of the McKean{--}Vlasov SDE~\eqref{eq:mkv} and the finite-particle system~\eqref{eq:mkv_fp} respectively.

    \item Prove Proposition~\ref{prop:mfnn}.
    Also, generalize to the case of a two-layer neural network composed with a logistic function when the training data satisfies $Y_i \in \{0,1\}$ for each $i\in [n]$ and we use the cross-entropy loss: $L(\mu) = -\sum_{i=1}^n \{(1-Y_i)\log(1-f(X_i;\mu)) + Y_i\log f(X_i;\mu)\}$.
\end{enumerate}

\chapter{Metric geometry of the Wasserstein space}
\label{chap:geometry}

In the previous two chapters, we studied the space $\cW_2 = (\cP_2(\R^\dd), W_2)$ through the lens of Riemannian geometry.
Although such an approach yields considerable geometric insight and can even be treated rigorously (see~\cite{AmbGigSav08}), it is important to keep in mind that $\cW_2$ is not a \emph{bona fide} Riemannian manifold, and consequently technical issues abound. 

Despite what its name suggests, \emph{metric geometry} requires a bit more structure than simply a metric space. Indeed, in the rest of this chapter, we will talk about length/geodesic spaces which have a continuous flavor. In particular it is possible to take derivatives of functions along smooth curves. This primitive differential structure is often sufficient to understand questions about curvature which we will employ to establish rates of convergence for Wasserstein barycenters in the next chapter.

The goal of this chapter is to gather basic material from metric geometry for a general metric space $(S,\md)$ following the classical book \cite{BurBurBur01}; see also \cite{AleKapPet22} for a more advanced coverage. Main concepts (curvature, tangent cone, logarithmic map, etc.)  are instantiated to the (2-)Wasserstein space.

\section{Geodesics}\label{subsec:geodesic}

We already appealed to an intuitive notion of geodesics in Chapter~\ref{chap:WGF}. In this chapter we properly define these objects as length minimizing.

\subsection{Length and geodesic spaces}\index{geodesic space}

Let $(S, \md)$ be a metric space.
A \emph{path}\index{path} in $S$ is a continuous map $\omega:I\to S$ where $I\subset \R$ is an interval.
The \emph{length}\index{length} $L(\omega)\in \R \cup\{\infty\}$ of a path $\omega:I\to S$ is defined by 
\begin{equation}
\label{EQ:deflength}
L(\omega) \deq \sup\sum_{i=1}^{n-1} \md(\omega(t_i),\omega(t_{i+1}))\,,
\end{equation}
where the supremum is taken over all $n\ge 1$ and all $n$-tuples $t_1<\dots< t_{n}$ in $I$.

A path is called \emph{rectifiable}\index{rectifiable path} if it has finite length. 

For any path $\omega$ and any interval $J \subset \R$, we write $\omega_{J}$ to denote the restriction of $\omega$ to $I \cap J$. The following lemma holds.

\begin{lemma}\label{lem:contpath}
For any rectifiable path $\omega:I \to S$, the function $t \mapsto \ell(t)=L(\omega_{(-\infty, t]})$ is continuous on $I$.
\end{lemma}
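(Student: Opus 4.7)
The plan is to prove continuity of $\ell$ by combining three ingredients: monotonicity of $\ell$, the additivity of length over subintervals, and a partition-refinement argument that exploits continuity of $\omega$.

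First I would observe that $\ell$ is non-decreasing: any partition of $(-\infty,s]\cap I$ is a partition of $(-\infty,t]\cap I$ for $t\ge s$, so $\ell(s)\le\ell(t)$. This means that at every interior point $s\in I$ both one-sided limits $\ell(s^{-})\le \ell(s)\le\ell(s^{+})$ exist; continuity will follow once I show $L(\omega_{[s,t]})\to 0$ as $t\to s^{\pm}$. (Endpoints of $I$ are handled by one-sided versions.)

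Next I would establish the additivity identity: for $s<t$ in $I$,
\begin{equation*}
    \ell(t)-\ell(s) \;=\; L(\omega_{[s,t]})\,.
\end{equation*}
The inequality $\ell(t)\ge \ell(s)+L(\omega_{[s,t]})$ is obtained by concatenating approximating partitions of $(-\infty,s]\cap I$ and $[s,t]$. The reverse inequality uses the fact that any partition of $(-\infty,t]\cap I$ can be refined by inserting $s$, which by the triangle inequality does not decrease the Riemann-type sum. This reduces continuity of $\ell$ to the claim that $L(\omega_{[s,t]})\to 0$ when $t\to s$.

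For the quantitative estimate, fix $s\in I$ (say interior) and $t_0>s$ with $t_0\in I$. Given $\varepsilon>0$, choose a partition $s=u_0<u_1<\cdots<u_n=t_0$ with
\begin{equation*}
    \sum_{i=0}^{n-1}\md\bigl(\omega(u_i),\omega(u_{i+1})\bigr) \;>\; L(\omega_{[s,t_0]})-\tfrac{\varepsilon}{2}\,.
\end{equation*}
For $t\in(s,u_1)$, inserting $t$ produces a partition of $[t,t_0]$ whose sum lower bounds $L(\omega_{[t,t_0]})$. Combined with additivity $L(\omega_{[s,t_0]})=L(\omega_{[s,t]})+L(\omega_{[t,t_0]})$ and the triangle inequality $\md(\omega(s),\omega(u_1))-\md(\omega(t),\omega(u_1))\le \md(\omega(s),\omega(t))$, this gives
\begin{equation*}
    L(\omega_{[s,t]})\;\le\;\tfrac{\varepsilon}{2}+\md(\omega(s),\omega(t))\,.
\end{equation*}
Continuity of $\omega$ at $s$ then makes the right-hand side smaller than $\varepsilon$ for $t$ close enough to $s$. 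The symmetric argument on the left gives the other one-sided limit.

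The main technical obstacle is the additivity step: the definition of length as a supremum over partitions does not immediately distribute over concatenation, and one must justify both the concatenation of partitions and the refinement by an inserted point. Once additivity is in hand, the partition-refinement argument above is essentially a $3\varepsilon$-style continuity argument and presents no difficulty.
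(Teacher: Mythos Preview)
Your proposal is correct and uses essentially the same core argument as the paper: choose a near-optimal partition, insert an intermediate point, and use the triangle inequality together with continuity of $\omega$ to bound the discrepancy by $\tfrac{\varepsilon}{2}+\md(\omega(s),\omega(t))$. The only structural difference is that you factor through the additivity identity $\ell(t)-\ell(s)=L(\omega_{[s,t]})$ as a separate preliminary step, whereas the paper works directly with $\ell$ and avoids stating additivity explicitly; this makes your argument slightly longer but more modular.
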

\begin{proof}
We prove left continuity, i.e., that for any $\eps>0$, there exists $\delta>0$ such if $t-\delta<t'\le t$, we have
$$
\ell(t)-\eps\le \ell(t')\le \ell(t)\,.
$$ 

By continuity of $\omega$, there exists $\delta_1>0$ such that  $t' \in (t-\delta_1, t]$ implies
\begin{equation}\label{EQ:contgamma}
\md(\omega(t'),\omega(t)) \le \frac{\eps}{2}\,.
\end{equation}
Next, let $n$ and $t_1 < \dots < t_{n}=t$ be such that 
\begin{equation}\label{EQ:deflengthpr}
\ell(t)-\frac{\eps}{2}\le \sum_{i=1}^{n-1} \md(\omega(t_i),\omega(t_{i+1}))\le \ell(t)\,,
\end{equation}
define $\delta_2=\min_{i=1,\dotsc,n-1}|t_{i+1}-t_i|>0$, and let $\delta=\min(\delta_1, \delta_2)>0$. Observe that for any $t'$ such that $t\ge t'>t-\delta$ it holds $t_{n-1} < t' \le t$ so that
\begin{align*}
\ell(t')&\ge  \sum_{i=1}^{n-2} \md(\omega(t_i),\omega(t_{i+1}))+ \md(\omega(t_{n-1}), \omega(t'))\\
&\ge \sum_{i=1}^{n-1} \md(\omega(t_i),\omega(t_{i+1})) - \md(\omega(t'),\omega(t))\\
&\ge \ell(t) -\eps
\end{align*}
where we used the triangle inequality in the second line and~\eqref{EQ:contgamma}--\eqref{EQ:deflengthpr} in the third. This completes the proof of left continuity. Right continuity follows using the same argument.
\end{proof}

Two paths $\omega_1: I_1 \to S$ and $\omega_2:I_2 \to S$ are  \emph{equivalent} if there exists a continuous, non-decreasing, and surjective function $\varphi:I_1 \to I_2$ such that $\omega_1=\omega_2\circ\varphi$. In this case, $\omega_2$ is a \emph{reparametrization} of $\omega_1$ (and vice-versa) and it is easy to check that $L(\omega_1)=L(\omega_2)$. 

Finally a path $\omega:[a,b]\to S$ is said to have \emph{constant speed} if for all $a\le s\le t\le b$,
\begin{equation}\label{cspeed}
    L(\omega_{[s,t]})=\frac{t-s}{b-a}\, L(\omega)\,.
\end{equation}

\begin{proposition}
Any rectifiable path $\omega:[a,b] \to S$  has a constant-speed reparametrization $\bar \omega:[0,1]\to S$.
\end{proposition}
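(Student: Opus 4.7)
My plan is to construct the reparametrization via the arc-length function. If $L \deq L(\omega) = 0$, then by the definition~\eqref{EQ:deflength}, $\omega$ is constant, so $\bar\omega$ can be taken constant on $[0,1]$. So I will assume $L > 0$. Define $\ell:[a,b] \to [0,L]$ by $\ell(t) \deq L(\omega_{[a,t]})$ and set $\varphi(t) \deq \ell(t)/L$. By Lemma~\ref{lem:contpath}, $\varphi$ is continuous. It is plainly non-decreasing (by a standard additivity argument: refining any partition of $[a,v]$ to include $u$ shows $L(\omega_{[a,v]}) = L(\omega_{[a,u]}) + L(\omega_{[u,v]})$), and $\varphi(a) = 0$, $\varphi(b) = 1$, so $\varphi$ is surjective onto $[0,1]$.

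Next, I want to define $\bar\omega(s) \deq \omega(t)$ for any $t \in \varphi^{-1}(s)$. This is well-posed because if $\varphi(t_1) = \varphi(t_2)$ with $t_1 < t_2$, then $L(\omega_{[t_1,t_2]}) = \ell(t_2) - \ell(t_1) = 0$, which by~\eqref{EQ:deflength} applied to the two-point partition $\{t_1,t_2\}$ forces $\md(\omega(t_1),\omega(t_2)) = 0$, i.e., $\omega(t_1) = \omega(t_2)$. By construction, $\omega = \bar\omega \circ \varphi$, so $\bar\omega$ is a reparametrization of $\omega$ in the sense defined before the proposition.

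The step I expect to require the most care is continuity of $\bar\omega$, since $\varphi$ may be constant on subintervals and hence is not a homeomorphism. I will argue by contradiction: if $s_n \to s$ in $[0,1]$ but $\bar\omega(s_n) \not\to \bar\omega(s)$, extract a subsequence with $\md(\bar\omega(s_{n_k}),\bar\omega(s)) \ge \varepsilon$, pick $t_{n_k} \in \varphi^{-1}(s_{n_k})$, and pass to a further subsequence with $t_{n_k} \to t^\ast \in [a,b]$. Continuity of $\varphi$ gives $\varphi(t^\ast) = s$, hence $\bar\omega(s) = \omega(t^\ast)$, while continuity of $\omega$ gives $\omega(t_{n_k}) \to \omega(t^\ast)$, a contradiction.

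Finally I will verify the constant-speed condition~\eqref{cspeed}. Given $0 \le s \le t \le 1$, pick $u \le v$ in $[a,b]$ with $\varphi(u) = s$, $\varphi(v) = t$. Since $\omega_{[u,v]} = \bar\omega \circ \varphi_{[u,v]}$ is an equivalent reparametrization of $\bar\omega_{[s,t]}$, they have equal length. Together with the additivity of length noted above, this gives
\begin{align*}
L(\bar\omega_{[s,t]}) = L(\omega_{[u,v]}) = \ell(v) - \ell(u) = L\,(t-s)\,,
\end{align*}
which is~\eqref{cspeed} (with $a=0$, $b=1$) and completes the proof.
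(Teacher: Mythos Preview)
Your proof is correct and uses the same core idea as the paper: reparametrize by arc length via $\varphi(t) = L(\omega_{[a,t]})/L(\omega)$. The execution differs only in how the possible non-injectivity of $\varphi$ is handled. The paper first preprocesses $\omega$ by collapsing each interval on which it is constant, so that $\varphi$ becomes strictly increasing and has an honest inverse, and then sets $\bar\omega = \omega\circ\varphi^{-1}$. You instead leave $\varphi$ as is, show that $\bar\omega$ is well-defined because $\omega$ must be constant on each fiber $\varphi^{-1}(s)$, and establish continuity by a compactness argument. Your route is arguably cleaner (the paper's ``by repeating this operation'' is informal when there are infinitely many intervals of constancy), and you also verify the constant-speed identity~\eqref{cspeed} explicitly, which the paper leaves implicit.
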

\begin{proof}
Let us first reparametrize $\omega$ so that it is \emph{never locally constant} meaning that there exists no interval $[c,d] \subset [a,b]$ such that $\omega_{[c,d]}$ is constant. If such an interval exists, define 
$\pi:\R\to \R$ to be such that
$$
\pi(t)=\begin{cases}
t & \text{if } t\le c\,,\\
c& \text{if }  c<t \le d\,,\\
t-(d-c)&\text{if }  t> d\,.
\end{cases}
$$
Observe that $\pi([a,b])=[a,b-(d-c)]$ is an interval and that $\pi$ is continuous and non-decreasing on this interval. Then reparametrize $\omega$ into $\omega':[a,b-(d-c)]\to S$ such that $\omega=\omega'\circ \pi$ holds, which is possible since $\omega$ is constant on $[c,d]$.

By repeating this operation, we may assume that $\omega$ is never locally constant and, in particular, that the map $t \mapsto \varphi(t) \deq L(\omega_{[a,t]})/L(\omega)$ is strictly increasing on $[a,b]$ and continuous by Lemma~\ref{lem:contpath} and therefore invertible.  In particular, $\varphi^{-1}$ is also continuous, strictly increasing, and defined over $[0,1]$. We define $\bar \omega:[0,1]\to S$ by $\bar \omega=\omega\circ \varphi^{-1}$ which is a constant-speed reparametrization of $\omega$.
\end{proof}

Given $x,y \in S$, a path $\omega:[a,b]\to S$ is said to \emph{connect} (or \emph{join}) $x$ to $y$ if $\omega(a)=x$ and $\omega(b)=y$.
By construction of the length function $L$, $\md(x,y)\le L(\omega)$ for any path $\omega$ connecting $x$ to $y$.
The space $S$ is called a \emph{length space} if for all $x,y\in S$,
\begin{equation}
\label{dintrinsic}
\md(x,y)=\inf_{\omega} L(\omega),
\end{equation}
where the infimum is taken over all paths $\omega$ connecting $x$ to $y$. A length space is said to be a \emph{geodesic space} if for all $x,y\in S$, the infimum on the right hand side of \eqref{dintrinsic} is attained.

\begin{definition}
Let $(S,d)$ be a length space. A \emph{geodesic}\index{geodesic} between $x$ and $y$ is any path $\omega:[0,1]\to S$  attaining the infimum in \eqref{dintrinsic}.
\end{definition}
In other words, a geodesic is a shortest path between two points.
It follows from the minimizing property of a geodesic $\omega$ that 
\[ \md(\omega(s),\omega(t))=L(\omega_{[s,t]})\,,\]
for all $0\le s\le t\le 1$. Together with~\eqref{cspeed} it yields the following useful characterization of \emph{constant-speed geodesics}.

\begin{proposition}\label{prop:cspeedg}
A path $\omega:[0,1]\to S$ is a \emph{constant-speed geodesic} if and only if 
$$
\md(\omega(s),\omega(t))=(t-s)\,\md(\omega(0),\omega(1))\,,
$$
for all $0\le s\le t\le 1$.
\end{proposition}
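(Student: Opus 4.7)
The plan is to prove both directions by unpacking the definitions and then using two elementary facts: the triangle inequality gives $L(\omega) \ge \md(\omega(0),\omega(1))$ for any path from $\omega(0)$ to $\omega(1)$, and for any partition $0 = t_0 < t_1 < \cdots < t_n = 1$ the telescoping sum $\sum_i (t_{i+1}-t_i) = 1$ will allow us to translate linear growth of distances into length computations.

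For the forward direction, assume $\omega$ is a constant-speed geodesic. Since $\omega$ is a geodesic, we have $L(\omega) = \md(\omega(0),\omega(1))$, and moreover the restriction identity $\md(\omega(s),\omega(t)) = L(\omega_{[s,t]})$ already noted in the text applies. Combining this with the constant-speed definition~\eqref{cspeed} instantiated with $[a,b]=[0,1]$, I get $\md(\omega(s),\omega(t)) = L(\omega_{[s,t]}) = (t-s)\,L(\omega) = (t-s)\,\md(\omega(0),\omega(1))$, which is the desired identity. This direction is essentially bookkeeping.

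For the backward direction, assume $\md(\omega(s),\omega(t)) = (t-s)\,\md(\omega(0),\omega(1))$ for all $0 \le s \le t \le 1$. I first show $\omega$ is a geodesic by computing its length: for any partition $0 = t_0 < \cdots < t_n = 1$,
\begin{equation*}
\sum_{i=0}^{n-1} \md(\omega(t_i),\omega(t_{i+1})) = \sum_{i=0}^{n-1} (t_{i+1}-t_i)\,\md(\omega(0),\omega(1)) = \md(\omega(0),\omega(1))\,.
\end{equation*}
Taking the supremum over partitions yields $L(\omega) \le \md(\omega(0),\omega(1))$, and the reverse inequality follows from the triangle inequality (or from the general bound $\md(x,y) \le L(\omega)$ for any connecting path). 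Thus $L(\omega) = \md(\omega(0),\omega(1))$, so $\omega$ attains the infimum in~\eqref{dintrinsic} and is a geodesic. The same computation applied to the restricted partition $s = t_0 < \cdots < t_n = t$ gives $L(\omega_{[s,t]}) = (t-s)\,\md(\omega(0),\omega(1)) = (t-s)\,L(\omega)$, verifying the constant-speed condition~\eqref{cspeed}.

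I do not anticipate any real obstacle here: the proposition is really a restatement of the definitions, and the only mildly non-tautological step is recognizing that the hypothesis forces every refinement of a partition to contribute the same sum, so the supremum in the definition of $L(\omega)$ collapses to $\md(\omega(0),\omega(1))$.
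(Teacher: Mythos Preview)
Your proof is correct and follows the same reasoning the paper indicates. The paper does not give a formal proof of this proposition; it merely remarks before the statement that the minimizing property of a geodesic gives $\md(\omega(s),\omega(t))=L(\omega_{[s,t]})$, and that combining this with~\eqref{cspeed} yields the characterization. Your forward direction is exactly this observation, and your backward direction (computing $L(\omega)$ via telescoping partitions) supplies the detail the paper leaves implicit.
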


\subsection{Midpoints}\label{subsec:midpoints}

We now obtain a characterization of geodesic spaces in terms of midpoints. For any two points $x, y$ in a metric space, a \emph{midpoint}\index{midpoint} of $(x,y)$ is any $z \in S$ such that
\[ \md(x,z)= \md(y,z)=\frac{1}{2}\,\md(x,y)\,.\]

\begin{proposition}\label{def:midpoint}
Let $(S,\md)$ be a complete metric space. Then the following are equivalent:
\begin{enumerate}[label=(\roman*)]
\item $(S, \md)$ is a geodesic space.
\item Any two points $x,y\in M$ admit a midpoint.
\end{enumerate}
\end{proposition}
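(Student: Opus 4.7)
The plan is to prove the two implications separately, with (i)$\Rightarrow$(ii) being essentially immediate and (ii)$\Rightarrow$(i) requiring a dyadic midpoint construction together with completeness.

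For (i)$\Rightarrow$(ii), given $x,y\in S$, I would take a geodesic $\omega:[0,1]\to S$ joining $x$ to $y$, reparametrize it to constant speed, and then appeal to Proposition~\ref{prop:cspeedg} to conclude that $\omega(1/2)$ satisfies $\md(x,\omega(1/2))=\md(y,\omega(1/2))=\frac{1}{2}\md(x,y)$, hence is a midpoint.

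For (ii)$\Rightarrow$(i), the plan is to build a constant-speed geodesic by iterated bisection on the dyadic rationals of $[0,1]$ and then extend by continuity. Concretely, fix $x,y\in S$ and set $\omega(0)=x$, $\omega(1)=y$. Let $\omega(1/2)$ be a midpoint of $(x,y)$; then let $\omega(1/4)$ be a midpoint of $(\omega(0),\omega(1/2))$ and $\omega(3/4)$ a midpoint of $(\omega(1/2),\omega(1))$; iterate to define $\omega$ on $D \deq \bigcup_{n\ge 0} D_n$, where $D_n=\{k/2^n:0\le k\le 2^n\}$. The key invariant, proved by induction on $n$, is that for all dyadic rationals $s<t$ in $D_n$,
\[
\md(\omega(s),\omega(t))\le (t-s)\,\md(x,y)\,.
\]
The inductive step uses that $\omega((k+1)/2^{n+1})$ is the midpoint of $\omega(k/2^{n+1})$ and $\omega((k+2)/2^{n+1})$, which gives equality at consecutive dyadic points of level $n+1$; the triangle inequality then propagates the bound to arbitrary pairs in $D_{n+1}$. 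Summing these inequalities around any chain from $x$ to $y$ forces the inequalities to be equalities, i.e., $\md(\omega(s),\omega(t))=(t-s)\,\md(x,y)$ for all $s<t$ in $D$.

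Having established the constant-speed identity on the dense set $D$, I would extend $\omega$ to all of $[0,1]$: for $t\in[0,1]$ pick dyadic rationals $t_n\to t$; the identity on $D$ shows $(\omega(t_n))_n$ is Cauchy with modulus $|t_n-t_m|\,\md(x,y)$, and completeness of $(S,\md)$ yields a limit, which one checks is independent of the approximating sequence. The extended map $\omega:[0,1]\to S$ then satisfies $\md(\omega(s),\omega(t))=(t-s)\,\md(x,y)$ for all $0\le s\le t\le 1$, so Proposition~\ref{prop:cspeedg} identifies $\omega$ as a constant-speed geodesic joining $x$ to $y$, proving that $(S,\md)$ is a geodesic space.

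The main obstacle is the bookkeeping in the inductive step on $D_n$: one must be careful that the bound $\md(\omega(s),\omega(t))\le (t-s)\,\md(x,y)$ on level $n$ really upgrades, via the midpoint property at level $n+1$ combined with the triangle inequality, to the analogous bound at level $n+1$, and that summing these bounds along a full subdivision of $[0,1]$ forces equality everywhere. Everything else (completion of the dyadics, continuity of the extension) is then routine.
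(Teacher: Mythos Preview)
Your proposal is correct and follows essentially the same approach as the paper: the easy direction takes the midpoint of a geodesic, and the harder direction builds the geodesic on dyadic rationals via iterated bisection, extends by completeness, and verifies the constant-speed identity. The paper is terser---it simply asserts $\md(\omega(s),\omega(t))=|s-t|\,\md(x,y)$ on dyadics without spelling out the induction and concludes via the length definition rather than Proposition~\ref{prop:cspeedg}---so your more careful treatment of that step is a fine elaboration.
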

\begin{proof}
We begin with the easy direction: $(i) \Rightarrow (ii)$. Let $\omega$ be a geodesic that connects $x$ to $y$, then clearly $\omega(1/2)$ is a midpoint.

To prove $(ii) \Rightarrow (i)$, we construct a path $\omega:[0,1]\to S$ such that $\omega(0)=x$, $\omega(1)=y$ and $L(\omega)=\md(x,y)$. To that end, we first define $\omega$ on the set $\cD$ of dyadic rationals of $[0,1]$ defined by
$$
\cD=\{k/2^m:m\ge 1,\, k\ge0\}\cap [0,1]\,.
$$
We proceed in a recursive fashion. Let $z$ be a midpoint of $(x,y)=(\omega(0),\omega(1))$ and define $\omega(1/2)=z$. Given $H_m \deq \{\omega(\frac{k}{2^m}),\; k \in [2^m]\}$, define $H_{m+1}=\{\omega(\frac{k}{2^{m+1}}),\; k \in [2^{m+1}]\}$ by setting $\omega(\frac{k}{2^{m+1}})=\omega(\frac{k/2}{2^{m}}) \in H_m$ if $k$ is even and letting $\omega(\frac{k}{2^{m+1}})$ be the midpoint of $(\omega_{\frac{(k-1)/2}{2^m}},\omega_{\frac{(k+1)/2}{2^m}})$ when $k$ is odd.
The union of $H_m$, $m \ge 0$ defines $\omega$ on $\cD$.

From our construction, for   $t,t' \in \cD$, it holds
\begin{equation}\label{EQ:lipext}
\md(\omega(t), \omega(t'))=|t-t'|\,\md(x,y)
\end{equation}
so that $\omega$ is $\md(x,y)$-Lipschitz on $\cD$. We now show that $\omega$ can be extended to a continuous function on $[0,1]$ that connects $x$ to $y$. To that end, fix $t \in [0,1]$ and let $(t_n)_{n\ge 0} \subseteq \cD$ be a sequence of dyadic integers that converges to $t$. Observe that $(\omega(t_n))_{n\ge 0}$ forms a Cauchy sequence in $(S,\md)$ since by~\eqref{EQ:lipext} it holds
$$
\md(\omega(t_n), \omega(t_m))\le |t_n-t_m|\,\md(x,y) \to 0\,, \qquad n,m \to \infty\,.
$$
Therefore since $S$ is complete, $(\omega(t_n))_{n\ge 0}$ converges and we set $\omega(t)$ to be its limit. To see that such an $\omega$ is continuous, note that for any $t,u \in [0,1]$, there exists sequences $(t_n)_{n\ge 0}, (u_n)_{n\ge 0} \subseteq \cD$ such that $t_n \to t$, $u_n \to u$ and
\begin{align*}
    \md(\omega(t), \omega(u))
    =\lim_{n \to \infty} \md(\omega(t_n), \omega(u_n))
    &\le \lim_{n \to \infty}|t_n-u_n|\,\md(x,y) \\
    &=|t-u|\,\md(x,y)
\end{align*}
where we used~\eqref{EQ:lipext} in the equality. Therefore, we have constructed a path that connects $x$ to $y$.

To conclude the proof, it suffices to observe that~\eqref{EQ:deflength} and~\eqref{EQ:lipext} imply that $L(\omega)=\md(x,y)$ as desired.
\end{proof}

\subsection{Geodesics in Wasserstein space}\label{subsec:geodesic-wass}

We are now in a position to place the Wasserstein space $\cW_2$ within the framework of metric geometry.
Compare the following theorem with Theorem~\ref{thm:benamou_brenier}.

\begin{theorem}\label{thm:W2geo}
The Wasserstein space $\cW_2$ is a geodesic space. Moreover, let $\pi_t(x,y) \deq (1-t)\,x+t\,y$, $t\in [0,1]$, and for any $\mu,\nu \in \cW_2$ let $\gamma \in \Gamma_{\mu,\nu}$ be an optimal transport plan in the sense that
$$
\int \|x-y\|^2\, \gamma(\ud x,\ud y)=W_2^2(\mu, \nu)\,.
$$
Then the path $\omega$ given by $\omega(t)=(\pi_t)_{\#}\gamma$ is a constant-speed geodesic in $\cW_2$ connecting $\omega(0)=\mu$ to $\omega(1)=\nu$.
\end{theorem}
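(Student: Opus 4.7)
The plan is to verify directly that the candidate curve $\omega(t) = (\pi_t)_{\#}\gamma$ meets the characterization of constant-speed geodesics from Proposition~\ref{prop:cspeedg}; the fact that $\cW_2$ is a geodesic space then follows since this curve joins $\mu$ to $\nu$ with length $W_2(\mu,\nu)$. There are really four small steps, and no serious obstacle. First, I would check the endpoints: since $\pi_0(x,y) = x$ and $\pi_1(x,y) = y$, pushing $\gamma$ forward by $\pi_0$ and $\pi_1$ returns respectively the first and second marginals of $\gamma$, which by hypothesis are $\mu$ and $\nu$; hence $\omega(0) = \mu$ and $\omega(1) = \nu$. Continuity of $\omega$ on $[0,1]$ will fall out of the Lipschitz estimate proved below.

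The core of the argument is to show
\begin{equation}\label{eq:proposal-target}
W_2(\omega(s),\omega(t)) = (t-s)\,W_2(\mu,\nu) \qquad \forall\, 0 \le s \le t \le 1\,.
\end{equation}
For the upper bound, observe that $(\pi_s,\pi_t)_{\#}\gamma$ is a coupling of $\omega(s)$ and $\omega(t)$, and a direct computation gives $\pi_t(x,y) - \pi_s(x,y) = (t-s)(y-x)$. Using this coupling as a suboptimal competitor in the definition of $W_2^2$,
\begin{align*}
W_2^2(\omega(s),\omega(t))
&\le \int \|\pi_t(x,y) - \pi_s(x,y)\|^2\, \gamma(\ud x,\ud y) \\
&= (t-s)^2 \int \|x-y\|^2\, \gamma(\ud x,\ud y)
= (t-s)^2\, W_2^2(\mu,\nu)\,,
\end{align*}
where the last equality uses optimality of $\gamma$. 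Taking square roots gives $W_2(\omega(s),\omega(t)) \le (t-s)\,W_2(\mu,\nu)$.

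For the matching lower bound, I would apply the triangle inequality for $W_2$ (Proposition~\ref{prop:wp_is_metric}) together with the already-established upper bound on the two outer segments:
\begin{align*}
W_2(\mu,\nu)
&\le W_2(\omega(0),\omega(s)) + W_2(\omega(s),\omega(t)) + W_2(\omega(t),\omega(1)) \\
&\le s\, W_2(\mu,\nu) + W_2(\omega(s),\omega(t)) + (1-t)\, W_2(\mu,\nu)\,,
\end{align*}
which rearranges to $W_2(\omega(s),\omega(t)) \ge (t-s)\,W_2(\mu,\nu)$. Combining the two bounds yields~\eqref{eq:proposal-target}, so by Proposition~\ref{prop:cspeedg} the curve $\omega$ is a constant-speed geodesic from $\mu$ to $\nu$. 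Since any two elements of $\cW_2$ are joined by such a curve (existence of an optimal $\gamma$ is guaranteed by Definition~\ref{def:wass}), $\cW_2$ is a geodesic space, completing the proof. The only mildly subtle point is that both bounds on $W_2(\omega(s),\omega(t))$ are needed: the coupling construction alone gives only $\le$, while the triangle inequality alone gives only $\ge$; the equality case is forced by the fact that the lengths of the three subsegments must add up exactly to $W_2(\mu,\nu)$.
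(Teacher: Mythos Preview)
Your proof is correct and follows essentially the same approach as the paper: both use the coupling $(\pi_s,\pi_t)_{\#}\gamma$ to get the upper bound $W_2(\omega(s),\omega(t)) \le (t-s)\,W_2(\mu,\nu)$, then invoke the triangle inequality on the three-segment decomposition to force equality, and conclude via Proposition~\ref{prop:cspeedg}. The only cosmetic difference is that the paper writes the triangle-inequality step as a chain of inequalities that collapses to an equality (so each summand must match its upper bound), whereas you rearrange directly to isolate the lower bound on the middle segment; the content is the same.
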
 
\begin{proof}
For any $0 \le s \le t \le 1$, define the coupling $\gamma_{s,t} \deq (\pi_s, \pi_t)_{\#}\gamma \in \Gamma_{\omega(s),\omega(t)}$. Then
\begin{align*}
W_2^2(\omega(s), \omega(t))&\le \int \|x-y\|^2\, \gamma_{s,t}(\ud x,\ud y)\\
&=\int \|\pi_s(x,y)-\pi_t(x,y)\|^2 \,\gamma(\ud x,\ud y)\\
&=\int \|(1-s)\,x+s\,y-((1-t)\,x+t\,y)\|^2 \,\gamma(\ud x,\ud y)\\
&=(t-s)^2\int \|x-y\|^2 \,\gamma(\ud x,\ud y)\\
&=(t-s)^2\,W_2^2(\omega(0), \omega(1))\,.
\end{align*}
We have proved that
$$
W_2(\omega(s), \omega(t))\le |t-s|\,W_2(\omega(0), \omega(1))\,.
$$
To show that this inequality is in fact an equality, note that together with the triangle inequality, it yields
\begin{align*}
W_2(\omega(0), \omega(1))&\le W_2(\omega(0), \omega(s))+W_2(\omega(s), \omega(t))+W_2(\omega(t), \omega(1))\\
&\le (s+ |t-s|+|1-t|)\,W_2(\omega(0), \omega(1))\\
&=W_2(\omega(0), \omega(1))\,.
\end{align*}
Therefore, the above inequalities are equalities and in particular,
\begin{align*}
    &W_2(\omega(0), \omega(s))+W_2(\omega(s), \omega(t))+W_2(\omega(t), \omega(1))\\
    &\quad = s\,W_2(\omega(0), \omega(1))+ |t-s|\,W_2(\omega(0), \omega(1))+|1-t|\,W_2(\omega(0), \omega(1))\,.
\end{align*}
Since each term on the left-hand side is smaller than its corresponding part in the right-hand side, we have that
$$
W_2(\omega(s), \omega(t))=|t-s|\,W_2(\omega(0), \omega(1))\,,
$$
and the conclusion follows from Proposition~\ref{prop:cspeedg}. This explicit construction of geodesics joining any pair $\mu,\nu \in \cW_2$ readily implies that $\cW_2$ is indeed a geodesic space.
\end{proof}

For any constant-speed geodesic $\omega$ connecting two measures $\mu, \nu \in \cW_2$ and any $t \in [0,1]$, the measure $\omega(t)$ is often called \emph{displacement interpolation} after~\cite{McC97}. Crucially, if $\mu$ and $\nu$ have densities $f_\mu$ and $f_\nu$, this interpolation differs from the usual interpolation given by the mixture with density $(1-t)\,f_\mu+t\, f_\nu$. This is a manifestation of the geometry of $\cW_2$.

Note that the proof of Theorem~\ref{thm:W2geo} above implies the following interesting corollary. 
\begin{corollary}\label{cor:geowass}
Let $\omega$ be any constant-speed geodesic in $\cW_2$ and let  $\gamma$ be an optimal coupling between $\omega(0)$ and $\omega(1)$. Then for any $0\le s\le t \le 1$, the coupling $\gamma_{s,t} \deq (\pi_s, \pi_t)_{\#}\gamma \in \Gamma_{\omega(s),\omega(t)}$, where $\pi_t(x,y) \deq (1-t)\,x+t\,y$, $t\in [0,1]$, is optimal in the sense that
$$
\int \|x-y\|^2\, \gamma_{s,t}(\ud x,\ud y)=W_2^2(\omega(s), \omega(t))\,.
$$
\end{corollary}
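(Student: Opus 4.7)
My plan is to exhibit $\gamma_{s,t}$ as an admissible coupling between $\omega(s)$ and $\omega(t)$ whose transport cost exactly matches $W_2^2(\omega(s),\omega(t))$. In the proof of Theorem~\ref{thm:W2geo}, the displacement interpolation $t\mapsto(\pi_t)_{\#}\gamma$ was shown to be a constant-speed geodesic from $\omega(0)$ to $\omega(1)$; the content of the corollary is that the obvious candidate coupling $\gamma_{s,t}$ inherited from $\gamma$ is in fact optimal, rather than merely admissible.

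The computation is essentially lifted from the first display of the proof of Theorem~\ref{thm:W2geo}. Since $\pi_t(x,y)-\pi_s(x,y)=(t-s)\,(y-x)$ pointwise, we have
\begin{align*}
\int \|x-y\|^2\, \gamma_{s,t}(\ud x,\ud y)
&=\int \|\pi_s(x,y)-\pi_t(x,y)\|^2\,\gamma(\ud x,\ud y)\\
&=(t-s)^2\int\|x-y\|^2\,\gamma(\ud x,\ud y)
=(t-s)^2\,W_2^2(\omega(0),\omega(1))\,,
\end{align*}
where the last equality uses optimality of $\gamma$. On the other hand, because $\omega$ is a constant-speed geodesic, Proposition~\ref{prop:cspeedg} gives $W_2^2(\omega(s),\omega(t))=(t-s)^2\,W_2^2(\omega(0),\omega(1))$. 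The two right-hand sides agree, so $\gamma_{s,t}$ achieves the Wasserstein cost and is therefore optimal.

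The one point that requires care, and which I would treat as the main obstacle in a fully rigorous write-up, is the assertion $\gamma_{s,t}\in\Gamma_{\omega(s),\omega(t)}$. This is immediate when $\omega$ is obtained from $\gamma$ via the displacement interpolation construction of Theorem~\ref{thm:W2geo}, since then $(\pi_s)_{\#}\gamma=\omega(s)$ and $(\pi_t)_{\#}\gamma=\omega(t)$ by definition, and the result follows verbatim from the computation above. For a geodesic $\omega$ presented abstractly, one has to justify that its optimal endpoint couplings $\gamma$ do parametrize $\omega$ as $\omega(r)=(\pi_r)_{\#}\gamma$ for all $r\in[0,1]$; I would handle this by comparing $\omega$ to the displacement interpolation $\widetilde\omega(r) \deq (\pi_r)_{\#}\gamma$ (which is a constant-speed geodesic with the same endpoints) and invoking a glueing/concatenation argument: the concatenation of $\omega|_{[0,s]}$, a geodesic between $\omega(s)$ and $\widetilde\omega(s)$, and $\widetilde\omega|_{[s,1]}$ must have length equal to $W_2(\omega(0),\omega(1))$, which forces $\omega(s)=\widetilde\omega(s)$. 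Once this identification is in place, the proof reduces to the one-line cost computation above.
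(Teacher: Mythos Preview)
Your cost computation is correct and is exactly what the paper extracts from the proof of Theorem~\ref{thm:W2geo}: the inequality $W_2^2(\omega(s),\omega(t))\le\int\|x-y\|^2\,\gamma_{s,t}(\ud x,\ud y)=(t-s)^2\,W_2^2(\omega(0),\omega(1))$ established there is in fact an equality, so $\gamma_{s,t}$ is optimal. You also correctly isolate the only non-trivial point, namely whether the marginals of $\gamma_{s,t}$ really are $\omega(s)$ and $\omega(t)$.

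However, your concatenation argument for this point does not work. The concatenated path $\omega|_{[0,s]}\,\cup\,[\omega(s),\widetilde\omega(s)]\,\cup\,\widetilde\omega|_{[s,1]}$ has length $W_2(\omega(0),\omega(1))+W_2(\omega(s),\widetilde\omega(s))$, and the length-minimizing property only gives you that this is \emph{at least} $W_2(\omega(0),\omega(1))$, yielding the vacuous $W_2(\omega(s),\widetilde\omega(s))\ge 0$. There is no reason for equality. Indeed, the identification $\omega=\widetilde\omega$ you are after is false when geodesics are not unique: take $\mu=\tfrac12(\delta_{(0,0)}+\delta_{(1,1)})$ and $\nu=\tfrac12(\delta_{(0,1)}+\delta_{(1,0)})$ in $\R^2$; the two optimal couplings give two distinct displacement interpolations passing through different midpoints. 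If $\omega$ comes from one coupling and $\gamma$ is the other, then $(\pi_{1/2})_\#\gamma\ne\omega(1/2)$ and $\gamma_{s,t}\notin\Gamma_{\omega(s),\omega(t)}$.

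The corollary should therefore be read with $\gamma$ being an optimal coupling \emph{that generates} $\omega$ via $\omega(r)=(\pi_r)_\#\gamma$ (such a $\gamma$ always exists---this is the standard characterization of $\cW_2$ geodesics), not an arbitrary one. With that reading the marginal condition is automatic and your one-line proof is complete; the paper's phrasing is slightly imprecise on this point.
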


Finally, in the case where the geodesic emanates from a distribution that admits a density, we get from Brenier's Theorem~\ref{thm:improvedBrenier} the following useful corollary, which justifies Definition~\ref{def:w2_geod}.
\begin{corollary}\label{cor:geobrenier}
Let $\mu, \nu \in \cW_2$ be two probability measures such that $\mu$ has a density and let $T: \R^\dd \to \R^\dd$ be the (unique) Brenier map such that $T_{\#}\mu=\nu$. Then, the constant-speed geodesic $\omega:[0,1]\to \cW_2$ such that $\omega(0)=\mu$ and $\omega(1)=\nu$ is unique and given by
$$
\omega(t)=\bigl((1-t)\,{\id} + t\,T\bigr)_{\#}\mu\,, \qquad \forall\, t \in [0,1]\,.
$$
where ${\id} :\R^\dd\to \R^\dd$ denotes the identity map.

In other words, if $X \sim \mu$, then $(1-t)\,X +t\,T(X) \sim \omega(t)$.
\end{corollary}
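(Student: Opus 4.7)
The plan is to establish existence and uniqueness separately. Existence is essentially a free byproduct of the machinery already built up; uniqueness requires a bit of work to extract the displacement structure from the abstract constant-speed geodesic condition.

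For \emph{existence}, I would invoke the improved Brenier theorem (Theorem~\ref{thm:improvedBrenier}): since $\mu$ has a density, the unique optimal coupling between $\mu$ and $\nu$ is the deterministic coupling $\bar\gamma = ({\id}, T)_\#\mu$. Plugging this into Theorem~\ref{thm:W2geo} with $\pi_t(x,y) = (1-t)x + ty$ gives that the path $\omega(t) = (\pi_t)_\#\bar\gamma = ((1-t)\,{\id} + t\,T)_\#\mu$ is a constant-speed geodesic connecting $\mu$ to $\nu$.

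For \emph{uniqueness}, let $\tilde\omega:[0,1]\to\cW_2$ be an arbitrary constant-speed geodesic with $\tilde\omega(0) = \mu$ and $\tilde\omega(1) = \nu$, and fix $t\in(0,1)$. I would apply the gluing lemma to optimal couplings of $(\mu,\tilde\omega(t))$ and $(\tilde\omega(t),\nu)$, producing a random triple $(X_0,X_t,X_1)$ with $X_0\sim\mu$, $X_t\sim\tilde\omega(t)$, $X_1\sim\nu$, such that $(\E\|X_0-X_t\|^2)^{1/2} = t\,W_2(\mu,\nu)$ and $(\E\|X_t-X_1\|^2)^{1/2} = (1-t)\,W_2(\mu,\nu)$. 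The key step is then the chain
\[
W_2(\mu,\nu) \le (\E\|X_0 - X_1\|^2)^{1/2} \le (\E\|X_0-X_t\|^2)^{1/2} + (\E\|X_t-X_1\|^2)^{1/2} = W_2(\mu,\nu)\,,
\]
where the first inequality uses that $(X_0,X_1)$ is a valid coupling and the second is the $L^2$ triangle inequality. Hence both are equalities.

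From here, two conclusions combine. First, equality in Minkowski's inequality in $L^2$ applied to the $\R^\dd$-valued random vectors $X_t - X_0$ and $X_1 - X_t$ forces $X_t - X_0 = c\,(X_1 - X_t)$ almost surely for some constant $c\ge 0$; taking $L^2$-norms and using the speed identities yields $c = t/(1-t)$, so that $X_t = (1-t)\,X_0 + t\,X_1$ almost surely. Second, the equality $(\E\|X_0 - X_1\|^2)^{1/2} = W_2(\mu,\nu)$ means that $(X_0,X_1)$ is an optimal coupling; by the uniqueness clause of Theorem~\ref{thm:improvedBrenier} (using that $\mu$ has a density), this coupling must be $({\id},T)_\#\mu$, i.e., $X_1 = T(X_0)$ almost surely. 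Combining, $X_t = (1-t)\,X_0 + t\,T(X_0)$ a.s., so $\tilde\omega(t) = ((1-t)\,{\id} + t\,T)_\#\mu = \omega(t)$.

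The main obstacle I anticipate is the Minkowski-equality step: one must be careful that the equality case of the $L^2$ triangle inequality for vector-valued random variables indeed yields \emph{pointwise} proportionality with a \emph{deterministic} constant (not merely proportionality as $L^2$ elements). This is standard but worth spelling out. Everything else reduces to assembling results already in the excerpt (gluing lemma, improved Brenier uniqueness, and Theorem~\ref{thm:W2geo}).
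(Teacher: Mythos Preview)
Your proof is correct and self-contained. The paper does not actually give a proof of this corollary: it presents it as an immediate consequence of the improved Brenier theorem (Theorem~\ref{thm:improvedBrenier}) combined with Theorem~\ref{thm:W2geo}. The intended reasoning is that since the optimal coupling is unique (equal to $({\id},T)_\#\mu$), the geodesic constructed in Theorem~\ref{thm:W2geo} is uniquely determined. Strictly speaking, however, this only pins down geodesics of the displacement form $(\pi_t)_\#\gamma$ for optimal $\gamma$; it does not by itself exclude the existence of some other constant-speed geodesic not arising that way. Your gluing-plus-Minkowski argument closes precisely this gap by showing that \emph{any} constant-speed geodesic must be a displacement interpolation along an optimal coupling, after which improved Brenier finishes the job. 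So your route is genuinely more complete than what the paper writes down, at the cost of the Minkowski equality-case analysis you correctly flag as the one delicate point. (Your argument is essentially the metric-space analogue of the equality-case reasoning in the proof of Theorem~\ref{thm:benamou_brenier}, which the paper carries out only in the dynamical formulation.)
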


\section{Curvature}\label{sec:curvature}

\subsection{Alexandrov curvature}\label{subsec:curvature}\index{Alexandrov curvature}

Given a real number $\kappa\in\R$, a geodesic space of special interest is the (complete and simply connected) $2$-dimensional Riemannian manifold with constant sectional curvature $\kappa$.
For given $\kappa\in\R$, this metric space $(M_{\kappa}, \md_{\kappa})$ is unique up to an isometry, and called a 
\emph{model space}. For each $\kappa \in \R$, we use the following representative of the equivalence class generated by the group of isometries.
\begin{itemize}
\item If $\kappa<0$, $(M_{\kappa}, \md_{\kappa})$ is the  hyperbolic plane of constant curvature $\kappa<0$.
\item If $\kappa=0$, $(M_0,\md_{0})$ is the Euclidean plane $\R^2$ equipped with its Euclidean metric.
\item If $\kappa>0$, $(M_{\kappa},\md_{\kappa})$ is the $2$-dimensional Euclidean sphere of radius $1/\sqrt{\kappa}$ equipped with the angular metric.
\end{itemize}
These model spaces play a central role in metric geometry. As described below, curvature bounds in general metric spaces  are formulated by comparison arguments involving these model spaces as benchmarks. 

The fundamental device allowing for this comparison is that of comparison triangles. Given a metric space $(S,\md)$, we define a triangle as any set of three distinct points $\{p,x,y\}\subset S$. For $\kappa\in\R$, a \emph{comparison triangle} for $\{p,x,y\}$ in $M_{\kappa}$ is an isometric embedding of $\{p,x,y\}$ in $M_{\kappa}$, i.e., a set $\{\bar p, \bar x, \bar y\}\subset M_{\kappa}$ such that  
\[ \md_{\kappa}(\bar p, \bar x)=\md(p,x)\,,\quad \md_{\kappa}(\bar p, \bar y)=\md(p,y)\,,\quad\text{and}\quad \md_{\kappa}(\bar x, \bar y)=\md(x,y)\,.\] 
When $\kappa\le 0$, such a comparison triangle always exists (and is unique up to an isometry). When $\kappa>0$, such a triangle exists (and is unique up to an isometry) provided it fits on the sphere of radius $\kappa^{-1/2}$. This condition may be specified in terms of its perimeter:
\begin{equation}\label{EQ:admissible_triangle}
\peri\{p,x,y\} \deq \md(p,x)+\md(p,y)+\md(x,y)<\frac{ 2\pi}{\sqrt{\kappa}}\,.
\end{equation}
For $\kappa>0$ say that a triangle $\{p,x,y\}$ that satisfies~\eqref{EQ:admissible_triangle} is \emph{admissible}. When $\kappa \le 0$, all triangles are admissible.

We are now in a position to define curvature bounds for general geodesic spaces.

\begin{definition}\label{def:curvk}
Let $\kappa\in \R$ and $(S,\md)$ be a geodesic space.
\begin{itemize}
\item We say that $\curv(S)\ge\kappa$ if for any admissible triangle $\{p,x,y\}\subset S$ and any comparison triangle $\{\bar p,\bar x,\bar y\}\subset M_{\kappa}$, the following holds. For any constant-speed geodesics  $\omega: [0,1]\to S$ and $\bar \omega: [0,1]\to M_\kappa$ joining   $x$ to $y$ and $\bar x$ to $\bar y$ respectively, it holds
\begin{equation}\label{boundkappabelow}
\md\big(p, \omega(t)\big) \ge \md_{\kappa}\big(\bar p, \bar \omega(t)\big)\,, \qquad \forall \, t \in [0,1]\,.
\end{equation}
\item We say that $\curv(S)\le\kappa$ if for any admissible triangle $\{p,x,y\}\subset S$ and any comparison triangle $\{\bar p,\bar x,\bar y\}\subset M_{\kappa}$, the following holds. For any constant-speed geodesics $\omega: [0,1]\to S$ and $\bar \omega: [0,1]\to M_\kappa$ joining   $x$ to $y$ and $\bar x$ to $\bar y$ respectively, it holds
\begin{equation}\label{boundkappaabove}
\md\big(p, \omega(t)\big) \le \md_{\kappa}\big(\bar p, \bar \omega(t)\big)\,, \qquad \forall \, t \in [0,1]\,.
\end{equation}

\end{itemize}
\end{definition} 

\noindent The previous definition admits a natural geometric interpretation: if $\curv(S)\ge\kappa$ (resp.\ $\curv(S)\le\kappa$), a triangle $\{p,x,y\}$ looks thicker (resp.\ thinner) than a corresponding comparison triangle $\{\bar p, \bar x, \bar y\}$ in the model space $M_{\kappa}$.

The case $\kappa=0$ is of special interest since the model space of reference is flat. In that case, one compares our geometry to a familiar Euclidean one. We say that $S$ is a space of non-positive curvature (NPC) when $\curv(S)\le0$, and a space of non-negative curvature (NNC) when $\curv(S)\ge  0$. 

In the flat case the following lemma holds.

\begin{lemma}\label{lem:hilbertflat}
Let $\bH$ be a Hilbert space equipped with inner product $\langle\cdot, \cdot\rangle$ and norm $\|\cdot\|$. Then, for any $p, x, y \in \bH$, the constant-speed geodesic joining $x$ to $y$ is unique and given by $\omega(t)=(1-t)\,x+ t\,y$ and for any $p \in \bH$,
$$
\|p-\omega(t)\|^2= (1-t)\,\| p-x\|^2+t\,\| p-y\|^2-t\,(1-t)\,\| x- y\|^2\,, \quad \forall \, t \in [0,1]\,.
$$
In particular, this holds for the model space $M_0=\R^2$.
\end{lemma}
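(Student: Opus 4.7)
The plan is to prove the three claims (existence of the stated geodesic, uniqueness, and the displayed identity) in that order, with the identity reduced to a direct algebraic expansion and uniqueness handled via the parallelogram law.

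First I would verify directly that $\omega(t)\deq(1-t)\,x+t\,y$ is a constant-speed geodesic. Clearly $\omega(0)=x$ and $\omega(1)=y$, and for any $0\le s\le t\le 1$,
\[
\|\omega(t)-\omega(s)\|=\|(t-s)\,(y-x)\|=(t-s)\,\|x-y\|\,,
\]
so by Proposition~\ref{prop:cspeedg} (applied with the Hilbert norm as the distance) $\omega$ is a constant-speed geodesic joining $x$ to $y$.

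For uniqueness, I would first show that $(x+y)/2$ is the \emph{unique} midpoint of $x$ and $y$ in $\bH$. Indeed, if $z$ is a midpoint then $\|z-x\|=\|z-y\|=\tfrac12\|x-y\|$, and the parallelogram identity applied to the vectors $z-x$ and $z-y$ gives
\[
\|z-x\|^2+\|z-y\|^2=2\,\|z-\tfrac{x+y}{2}\|^2+\tfrac12\,\|x-y\|^2\,,
\]
which forces $z=(x+y)/2$. Now given any constant-speed geodesic $\tilde\omega$ from $x$ to $y$, Proposition~\ref{prop:cspeedg} implies that $\tilde\omega(1/2)$ is a midpoint of $x$ and $y$, hence $\tilde\omega(1/2)=\omega(1/2)$. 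Recursing on the two subgeodesics $\tilde\omega_{[0,1/2]}$ and $\tilde\omega_{[1/2,1]}$ (which are themselves constant-speed geodesics between their endpoints, again by Proposition~\ref{prop:cspeedg}), I obtain $\tilde\omega(t)=\omega(t)$ for every dyadic rational $t\in[0,1]$, and then continuity of both paths yields equality on $[0,1]$.

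Finally, for the displayed identity I would just expand. Writing $p-\omega(t)=(1-t)\,(p-x)+t\,(p-y)$,
\[
\|p-\omega(t)\|^2=(1-t)^2\,\|p-x\|^2+2t\,(1-t)\,\langle p-x,p-y\rangle+t^2\,\|p-y\|^2\,,
\]
and substituting the polarization identity $2\,\langle p-x,p-y\rangle=\|p-x\|^2+\|p-y\|^2-\|x-y\|^2$ gives
\[
\|p-\omega(t)\|^2=(1-t)\,\|p-x\|^2+t\,\|p-y\|^2-t\,(1-t)\,\|x-y\|^2\,,
\]
after collecting the coefficients $(1-t)^2+t\,(1-t)=1-t$ and $t^2+t\,(1-t)=t$. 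No step is really an obstacle here; the only mildly nontrivial piece is the uniqueness, which rests entirely on the Hilbert-space parallelogram identity, and this is precisely the feature that distinguishes Hilbert spaces (flat NPC/NNC geometry) from more general Banach spaces where midpoints can fail to be unique.
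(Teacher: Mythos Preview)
Your proof is correct. The expansion establishing the displayed identity is essentially identical to the paper's (both write $p-\omega(t)=(1-t)(p-x)+t(p-y)$, expand the square, and eliminate the cross term via $2\langle p-x,p-y\rangle=\|p-x\|^2+\|p-y\|^2-\|x-y\|^2$).

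The genuine difference is in the uniqueness argument. You prove uniqueness \emph{before} the identity, via the parallelogram law (unique midpoints) followed by a dyadic recursion and continuity. The paper instead proves the identity \emph{first} and then derives uniqueness from it in one stroke: given any constant-speed geodesic $\omega'$ from $x$ to $y$, apply the identity with $p=\omega'(t)$ to get
\[
\|\omega'(t)-\omega(t)\|^2=(1-t)\,\|\omega'(t)-x\|^2+t\,\|\omega'(t)-y\|^2-t(1-t)\,\|x-y\|^2\,,
\]
and then substitute $\|\omega'(t)-x\|=t\,\|x-y\|$ and $\|\omega'(t)-y\|=(1-t)\,\|x-y\|$ from Proposition~\ref{prop:cspeedg}, which makes the right-hand side vanish identically. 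This is shorter and avoids the recursion/continuity step; your approach, on the other hand, isolates the role of the parallelogram law more explicitly and does not require having the full interpolation identity in hand first.
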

\begin{proof}
It can be easily checked that $\omega$ is indeed a constant-speed geodesic joining $x$ to $y$. Fix $t \in [0,1]$. To check the equality, observe that on the one hand
\begin{align*}
\|p-\omega(t)\|^2&=\|p-(1-t)\,x- t\,y\|^2\\
&=\|(1-t)\,(p-x)+ t\,(p-y)\|^2\\
&=(1-t)^2\,\|p-x\|^2 +t^2\,\|p-y\|^2+2t\,(1-t)\,\langle p-x, p-y \rangle\,.
\end{align*}
On the other hand,
\begin{align*}
\| x- y\|^2=\| x-p+p- y\|^2=\|p-x\|^2+\|p-y\|^2-2\,\langle  p-x, p-y \rangle\,.
\end{align*}
Putting the above two displays together yields
\begin{align*}
\|p-\omega(t)\|^2
&=(1-t)^2\,\|p-x\|^2 +t^2\,\|p-y\|^2\\
&\qquad{} +t\,(1-t)\,\big[\| p-x\|^2+\|p-y\|^2-\| x- y\|^2\big]\\
&=(1-t)\,\| p-x\|^2+t\,\| p-y\|^2-t\,(1-t)\,\| x- y\|^2\,.
\end{align*}
It remains to show that $\omega$ is unique. To that end, let $\omega'$ by any constant-speed geodesic joining $x$ to $y$ and fix $t \in [0,1]$. Apply the above identity to $p=\omega'(t)$ to get
$$
\|\omega'(t)-\omega(t)\|^2= (1-t)\,\|\omega'(t)-x\|^2+t\,\| \omega'(t)-y\|^2-t\,(1-t)\,\| x- y\|^2\,.
$$
Since $\omega'$ is a constant-speed geodesic joining $x$ to $y$, we have by Proposition~\ref{prop:cspeedg} that $\|\omega'(t)-x\|=t\,\| x- y\|$ and $\|\omega'(t)-y\|=(1-t)\,\| x- y\|$. Therefore
$$
\|\omega'(t)-\omega(t)\|^2= \big((1-t)\,t^2+t\,(1-t)^2-t\,(1-t)\big)\,\| x- y\|^2=0\,,
$$
so that $\omega'=\omega$.
\end{proof}

Lemma~\ref{lem:hilbertflat} involves only squared distances and can be directly stated in geodesic spaces. It turns out that this generalization gives a useful characterization of NNC or NPC spaces. Note that this characterization does not extend to the cases where the reference space is not flat (i.e., curvature bounded by a non-zero quantity).

\begin{proposition}\label{pro:NNC}
Let $(S,\md)$ be a geodesic space. Then $\curv(S)\ge0$ if and only if for triangle $\{p,x,y\}\in S$ and any constant-speed geodesic $\omega$ joining $x$ to $y$, we have
\begin{equation}\label{EQ:curvge0}
\md^2(p,\omega(t))\ge (1-t)\,\md^2(p,x)+t\,\md^2(p,y)-t\,(1-t)\,\md^2(x,y)\quad \forall\,  t\in[0,1]\,.
\end{equation}
We have $\curv(S)\le0$ if and only if the same statement holds with the opposite inequality.
\end{proposition}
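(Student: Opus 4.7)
The plan is to leverage Lemma~\ref{lem:hilbertflat}, which guarantees that in any Hilbert space (in particular in the model space $M_0 = \R^2$), the squared-distance identity
\begin{equation*}
    \md_0^2(\bar p, \bar\omega(t)) = (1-t)\,\md_0^2(\bar p, \bar x) + t\,\md_0^2(\bar p, \bar y) - t\,(1-t)\,\md_0^2(\bar x, \bar y)
\end{equation*}
holds with \emph{equality} for the (unique) constant-speed geodesic $\bar\omega$ joining $\bar x$ to $\bar y$. Once this exact identity is in hand in the model space, Definition~\ref{def:curvk} with $\kappa=0$ essentially becomes~\eqref{EQ:curvge0} by direct translation.

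For the forward direction, I would fix an arbitrary triangle $\{p,x,y\} \subset S$ (which is automatically admissible since $\kappa = 0 \le 0$) and a constant-speed geodesic $\omega:[0,1]\to S$ joining $x$ to $y$. I would then take any comparison triangle $\{\bar p,\bar x,\bar y\} \subset \R^2$, whose existence and isometric embedding property give $\md_0(\bar p,\bar x) = \md(p,x)$, $\md_0(\bar p,\bar y) = \md(p,y)$, $\md_0(\bar x,\bar y) = \md(x,y)$. Applying Lemma~\ref{lem:hilbertflat} to $\{\bar p,\bar x,\bar y\}$ in $\R^2$ yields the explicit formula for $\md_0^2(\bar p,\bar\omega(t))$ in terms of the original side lengths. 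Then the curvature hypothesis $\curv(S) \ge 0$ via~\eqref{boundkappabelow} gives $\md(p,\omega(t)) \ge \md_0(\bar p,\bar\omega(t))$, and squaring (both sides are non-negative) produces exactly~\eqref{EQ:curvge0}.

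For the reverse direction, I would run the same construction backwards: given~\eqref{EQ:curvge0} for arbitrary $\{p,x,y\}$ and arbitrary constant-speed geodesic $\omega$, pick any comparison triangle in $\R^2$ and use Lemma~\ref{lem:hilbertflat} to rewrite the right-hand side of~\eqref{EQ:curvge0} as $\md_0^2(\bar p,\bar\omega(t))$. Taking square roots then recovers $\md(p,\omega(t)) \ge \md_0(\bar p,\bar\omega(t))$, which is precisely the $\kappa=0$ case of~\eqref{boundkappabelow}; the uniqueness of $\bar\omega$ in $\R^2$ (also from Lemma~\ref{lem:hilbertflat}) ensures this is unambiguous. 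The NPC case is handled by the identical argument with every inequality reversed.

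There is no real obstacle here: the whole proposition is essentially a restatement that ``$\curv \ge 0$'' (resp.\ ``$\curv \le 0$'') lifts the equality satisfied by the flat model $\R^2$ to an inequality in $S$, and Lemma~\ref{lem:hilbertflat} provides the only nontrivial ingredient. The one point requiring minor care is the reminder that for $\kappa=0$ every triangle is admissible, so no perimeter restriction intervenes, and that passing between $\md$ and $\md^2$ preserves the direction of the inequality because distances are non-negative.
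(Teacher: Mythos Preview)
Your proposal is correct and follows essentially the same approach as the paper: both directions invoke Lemma~\ref{lem:hilbertflat} to convert the comparison-triangle inequality~\eqref{boundkappabelow} into the explicit squared-distance inequality~\eqref{EQ:curvge0} (and back), using the isometric side-length correspondence. The paper's proof is slightly more terse but the logical content is identical.
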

\begin{proof}
Consider a triangle $\{p, x, y\}$ together with a comparison triangle $\{\bar p, \bar x, \bar y\} \in \R^2$. 

Assume that $\curv(S)\ge 0$ in the sense of Definition~\ref{def:curvk}. Then for any constant-speed geodesic $\omega$ that connects $x$ to $y$ and $\bar \omega$ the unique constant-speed geodesic that connects $\bar x$ to $\bar y$, we have by Definition~\ref{def:curvk} and Lemma~\ref{lem:hilbertflat} respectively that
\begin{align*}
    \md^2(p,\omega(t))
    &\ge \|\bar p -\bar \omega(t)\|^2\\
    &=(1-t)\,\| \bar p-\bar x\|^2+t\,\| \bar p-\bar y\|^2-t\,(1-t)\,\| \bar x- \bar y\|^2\\
    &=(1-t)\,\md^2(p,x)+t\,\md^2(p,y)-t\,(1-t)\,\md^2(x,y)\,.
\end{align*}

To prove the converse, note that~\eqref{EQ:curvge0} yields
\begin{align*}
    \md^2(p,\omega(t))&\ge (1-t)\,\md^2(p,x)+t\,\md^2(p,y)-t\,(1-t)\,\md^2(x,y)\\
    &=(1-t)\,\| \bar p-\bar x\|^2+t\,\| \bar p-\bar y\|^2-t\,(1-t)\,\| \bar x- \bar y\|^2\\
    &= \|\bar p -\bar \omega(t)\|^2\,,
\end{align*}
by Lemma~\ref{lem:hilbertflat} so that Definition~\ref{def:curvk} holds.
\end{proof}

\begin{remark}
    Comparing with the definition of $\alpha$-convexity in Appendix~\ref{app:convex}, we see that $\frac{1}{2}\,\|p-\cdot\|^2$ is $1$-strongly convex in any Hilbert space.
    Similarly, $(S, \md)$ is an NPC space if and only if $\frac{1}{2}\,\md^2(p,\cdot)$ is $1$-strongly convex \emph{along the geodesics} of $(S,\md)$. The notion of geodesic convexity was also used in Section~\ref{sec:riem}, but here we work in the more general setting of geodesic spaces.
\end{remark}

A geodesic space $(S,\md)$ with any curvature bound is called an \emph{Alexandrov space}\index{Alexandrov space}. If $\curv(S)\le\kappa$ for some $\kappa\in \R$, then $(S,\md)$ is sometimes called a CAT($\kappa$) space in reference to E.\ Cartan, A.\ D.\ Alexandrov, and V.\ A.\ Toponogov. As noted before, a CAT($0$)\index{CAT($0$)|see {NPC}} space is also referred to as an NPC\index{NPC} (non-positively curved) or sometimes \emph{Hadamard} space\index{Hadamard|see {NPC}}. If $\curv(S) \ge 0$ we call the space \emph{non-negatively curved} or NNC\index{NNC}. It is worth noting that the previous definitions are of global nature as they require comparison inequalities to be valid for all triangles (that admit a comparison triangle in the relevant model space). Some definitions of curvature require the previous comparison inequalities to hold only locally. The local validity of these comparison inequalities is known, under suitable conditions depending on the value of $\kappa$, to imply their global validity (globalization theorems).

We conclude this subsection by giving a third equivalent definition of non-negative curvature.

\begin{proposition}\label{pro:NNC2}
Let $(S,\md)$ be a geodesic space. Then $\curv(S)\ge0$ if and only if for any triangle $\{p,x,y\}\subset S$, comparison triangle $\{\bar p,\bar x,\bar y\}\subset M_{0}$, and any constant-speed geodesics $\omega, \omega', \bar \omega, \bar \omega'$ joining $p$ to $x$, $p$ to $y$, $\bar p$ to $\bar x$, and $\bar p$ to $\bar y$ respectively, we have
\begin{equation}\label{EQ:curvge0-2}
\md^2(\omega(s), \omega'(t))\ge \|\bar \omega(s)-\bar \omega'(t)\|^2\,,\qquad \forall\,  s,t\in[0,1]\,.
\end{equation}
We have $\curv(S)\le0$ if and only if the same statement holds with the opposite inequality.
\end{proposition}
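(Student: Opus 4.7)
My plan is to establish this as a two-way implication, handling each direction separately and reducing as much as possible to the already-proven Proposition~\ref{pro:NNC}. The easier direction, $(\ref{EQ:curvge0-2}) \Rightarrow \curv(S) \ge 0$, follows by specialization: taking $s = 1$ so that $\omega(1) = x$ and $\bar\omega(1) = \bar x$, inequality~\eqref{EQ:curvge0-2} becomes $\md^2(x, \omega'(t)) \ge \|\bar x - \bar\omega'(t)\|^2$. Expanding the right-hand side via Lemma~\ref{lem:hilbertflat} and using that $\{\bar p, \bar x, \bar y\}$ is a comparison triangle yields exactly the NNC characterization of Proposition~\ref{pro:NNC} applied to the triangle $\{x, p, y\}$ with the geodesic $\omega'$ joining $p$ to $y$. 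Since this holds for all such triangles and geodesics, we obtain $\curv(S) \ge 0$.

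For the harder direction, the idea is to apply the NNC inequality from Proposition~\ref{pro:NNC} \emph{twice}. First, I apply it to the triangle $\{\omega(s), p, y\}$ along the geodesic $\omega'$ from $p$ to $y$, which gives
\begin{align*}
\md^2(\omega(s), \omega'(t)) &\ge (1-t)\,\md^2(\omega(s), p) + t\,\md^2(\omega(s), y) - t(1-t)\,\md^2(p,y)\,.
\end{align*}
Since $\omega$ is a constant-speed geodesic joining $p$ to $x$, Proposition~\ref{prop:cspeedg} yields $\md(\omega(s), p) = s\,\md(p,x)$. To bound $\md^2(\omega(s), y)$ from below, I apply Proposition~\ref{pro:NNC} a second time, now to the triangle $\{y, p, x\}$ along the geodesic $\omega$ from $p$ to $x$:
\begin{align*}
\md^2(\omega(s), y) &\ge (1-s)\,\md^2(p,y) + s\,\md^2(x,y) - s(1-s)\,\md^2(p,x)\,.
\end{align*}

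Substituting this back and writing $a \deq \md(p,x)$, $b \deq \md(p,y)$, $c \deq \md(x,y)$, a direct expansion collapses the cross terms to give the lower bound $s^2 a^2 + t^2 b^2 - st\,(a^2 + b^2 - c^2)$. On the Euclidean side, writing $\bar\omega(s) - \bar\omega'(t) = s\,(\bar x - \bar p) - t\,(\bar y - \bar p)$ and using $2\langle \bar x - \bar p, \bar y - \bar p\rangle = a^2 + b^2 - c^2$ (from expanding $\|\bar x - \bar y\|^2 = c^2$), one computes $\|\bar\omega(s) - \bar\omega'(t)\|^2$ to be exactly the same expression, which concludes the argument. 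The NPC case is handled identically with all inequalities reversed, since Proposition~\ref{pro:NNC} provides both directions.

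The main obstacle is really just bookkeeping: verifying that the two applications of the NNC triangle inequality, once combined, produce a quadratic in $s, t$ that coincides with the Euclidean squared distance. The strategy itself -- ``move $p$ along one side, then move $y$ along the other'' -- is the standard technique for upgrading a one-parameter comparison to a two-parameter one, and the fact that the flat model space is \emph{bi}linear in the two interpolation parameters is what makes the two-step reduction sharp.
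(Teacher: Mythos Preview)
Your proposal is correct and follows essentially the same approach as the paper: both directions are handled identically (specialize to $s=1$ for the converse; apply the one-parameter NNC inequality of Proposition~\ref{pro:NNC} twice for the forward direction). The only cosmetic difference is that the paper freezes $\omega'(t)$ first rather than $\omega(s)$, and finishes by invoking Lemma~\ref{lem:hilbertflat} directly on the Euclidean side instead of expanding everything in terms of $a,b,c$ and matching---both routes yield the same quadratic $s^2 a^2 + t^2 b^2 - st\,(a^2+b^2-c^2)$.
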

\begin{proof}
Assume first that $\curv(S)\ge 0$ and observe that by Proposition~\ref{pro:NNC} and Definition~\ref{def:curvk} respectively, it holds
\begin{align*}
    &\md^2(\omega(s), \omega'(t))
    \ge (1-s)\,\md^2(p, \omega'(t))+s\,\md^2(x, \omega'(t))-s\,(1-s)\,\md^2(p,x)\\
    &\qquad \ge (1-s)\,\|\bar p - \bar \omega'(t)\|^2+s\,\|\bar x- \bar \omega'(t)\|^2-s\,(1-s)\,\|\bar p- \bar x\|^2\,.
\end{align*}
The right-hand side of the above inequality is precisely $\|\bar \omega(s)-\bar \omega'(t)\|^2$ by Lemma~\ref{lem:hilbertflat}. We have proved~\eqref{EQ:curvge0-2}.

Conversely,  let $\omega_x, \omega_x'$ be constant-speed geodesics joining $x$ to $y$ and $x$ to $p$, respectively, and let $\bar \omega_{\bar x}, \bar \omega_{\bar x}'$ be constant-speed geodesics joining $\bar x$ to $\bar y$ and $\bar x$ to $\bar p$ respectively. Then, taking $s=1$ in~\eqref{EQ:curvge0-2}, we get for any $t \in [0,1]$,
\begin{align*}
    \md^2(p, \omega_x(t))
    &= \md^2(\omega_x'(1), \omega_x(t)) \\
    &\ge \|\bar p -\bar \omega_{\bar x}(t)\|^2\\
    &=(1-t)\,\|\bar p -\bar x\|^2+ t\,\|\bar p -\bar y\|^2-t\,(1-t)\,\|\bar x -\bar y\|^2\\
    &=(1-t)\,\md^2(p,x)+t\,\md^2(p, y) -t\,(1-t)\,\md^2(x,y)\,,
\end{align*}
which is the characterization of $\curv(S)\ge 0$ from Proposition~\ref{pro:NNC}.

The proof for $\curv(S) \le 0$ follows using the same argument.
\end{proof}

\subsection{Curvature of the Wasserstein space}\label{subsec:curv-wass}

Note that if $d=1$, the space $\cP_2(\R)$ equipped with the Wasserstein distance is actually \emph{flat}.

\begin{proposition}
The space $\cW_{2}(\R)$ is flat in the sense that
\begin{align*}
    \curv(\cW_{2}(\R)) \le 0 \qquad\text{and}\qquad \curv(\cW_2(\R)) \ge 0
\end{align*}
and it can be isometrically embedded into a Hilbert space.
\end{proposition}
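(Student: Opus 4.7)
The plan is to exploit Proposition~\ref{prop:w21d}, which already suggests the isometric embedding via the quantile map, and then transport the Hilbert identity of Lemma~\ref{lem:hilbertflat} through this embedding to obtain flatness in both senses via Proposition~\ref{pro:NNC}.

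First I would note that Proposition~\ref{prop:w21d} gives $W_2^2(\mu,\nu) = \int_0^1 |F_\mu^\dagger(u) - F_\nu^\dagger(u)|^2\,\ud u = \|F_\mu^\dagger - F_\nu^\dagger\|_{L^2([0,1])}^2$. Hence the map $\Phi : \mu \mapsto F_\mu^\dagger$ is an isometric embedding of $(\cW_2(\R), W_2)$ into the Hilbert space $L^2([0,1])$, which establishes the second claim.

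Next I would identify the constant-speed geodesics. For $\mu, \nu \in \cW_2(\R)$ and $t\in[0,1]$, the function $(1-t)F_\mu^\dagger + tF_\nu^\dagger$ is non-decreasing and left-continuous (as a convex combination of such), so it is the quantile function $F_{\omega(t)}^\dagger$ of some measure $\omega(t)$, which lies in $\cW_2(\R)$ since its $L^2([0,1])$ norm is finite. Applying $\Phi$ again gives $W_2(\omega(s),\omega(t)) = |s-t|\,\|F_\mu^\dagger - F_\nu^\dagger\|_{L^2} = |s-t|\,W_2(\mu,\nu)$, so by Proposition~\ref{prop:cspeedg}, $\omega$ is a constant-speed geodesic from $\mu$ to $\nu$. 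Moreover, because $\Phi$ is an isometric embedding, any constant-speed geodesic in $\cW_2(\R)$ joining $\mu$ to $\nu$ is mapped to a curve in $L^2([0,1])$ of length $\|F_\mu^\dagger - F_\nu^\dagger\|_{L^2}$ joining $F_\mu^\dagger$ to $F_\nu^\dagger$. In a strictly convex normed space such a minimizing curve is forced (by equality in the triangle inequality) to be the line segment up to reparametrization, so $\omega$ is the unique constant-speed geodesic between $\mu$ and $\nu$.

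Finally I would apply Lemma~\ref{lem:hilbertflat} in $L^2([0,1])$ with $p = F_\rho^\dagger$, $x = F_\mu^\dagger$, $y = F_\nu^\dagger$ for arbitrary $\rho, \mu, \nu \in \cW_2(\R)$: for all $t \in [0,1]$,
\begin{align*}
W_2^2(\rho, \omega(t))
&= \|F_\rho^\dagger - \bigl((1-t)F_\mu^\dagger + tF_\nu^\dagger\bigr)\|_{L^2}^2 \\
&= (1-t)\,\|F_\rho^\dagger - F_\mu^\dagger\|_{L^2}^2 + t\,\|F_\rho^\dagger - F_\nu^\dagger\|_{L^2}^2 \\
&\qquad{} - t\,(1-t)\,\|F_\mu^\dagger - F_\nu^\dagger\|_{L^2}^2 \\
&= (1-t)\,W_2^2(\rho,\mu) + t\,W_2^2(\rho,\nu) - t\,(1-t)\,W_2^2(\mu,\nu)\,.
\end{align*}
Since this identity is an \emph{equality} and $\omega$ is the unique constant-speed geodesic joining $\mu$ to $\nu$, the characterization in Proposition~\ref{pro:NNC} yields simultaneously $\curv(\cW_2(\R)) \ge 0$ and $\curv(\cW_2(\R)) \le 0$.

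The argument is essentially a bookkeeping exercise once Proposition~\ref{prop:w21d} is in hand; the only mild subtlety, which I would treat carefully but do not anticipate as a real obstacle, is verifying that convex combinations of quantile functions lift back to elements of $\cW_2(\R)$ (to define $\omega(t)$) and that uniqueness of the constant-speed geodesic transfers through the isometric embedding from strict convexity of the $L^2$ norm.
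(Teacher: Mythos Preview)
Your proposal is correct and follows essentially the same approach as the paper: both use the quantile map $\mu \mapsto F_\mu^\dagger$ as the isometric embedding into $L^2([0,1])$, identify the geodesic as $F_{\omega(t)}^\dagger = (1-t)F_\mu^\dagger + tF_\nu^\dagger$, and then apply Lemma~\ref{lem:hilbertflat} to obtain the exact parallelogram identity, concluding via Proposition~\ref{pro:NNC}. The only minor differences are that the paper constructs $\omega(t)$ via the probabilistic representation from Theorem~\ref{thm:W2geo} rather than directly as a convex combination of quantile functions, and that the paper first works on $\cW_{2,\text{ac}}(\R)$ (since Proposition~\ref{prop:w21d} as stated assumes $\mu$ has a density) and then asserts the extension to $\cW_2(\R)$, whereas you implicitly use the quantile formula for all of $\cW_2(\R)$.
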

\begin{proof}
Recall from Proposition~\ref{prop:w21d} that for any $\mu,\nu \in \cW_{2,\text{ac}}$, it holds
$$
W_2^2(\mu, \nu)=\int_0^1|F^{\dagger}_\mu(u)-F^{\dagger}_\nu(u)|^2\,\ud u=\|F^{\dagger}_\mu-F^{\dagger}_\nu\|^2\,,
$$
where $\|\cdot\| \deq \|\cdot\|_{L^2([0,1])}$. In particular, the map $\mu \mapsto F^{\dagger}_\mu$ is an isometry from $\cW_{2,\text{ac}}$ to $L^2([0,1])$.

Let now $\omega$ be a constant-speed geodesic that connects $\mu$ to $\nu$ and recall from  Proposition~\ref{prop:w21d} and Theorem~\ref{thm:W2geo} that $\omega$ is uniquely characterized by the fact that if $V=(1-t)\,F_\mu^{\dagger}(U)+ t\,F_\nu^{\dagger}(U)$, where $U \sim \unif([0,1])$, then $W \sim \omega(t)$. It yields that for any $v \in \R$, 
$$
\p(V \le v)=\p\big((1-t)\,F_\mu^{\dagger}(U)+ t\,F_\nu^{\dagger}(U)\le v\big)=\big((1-t)\,F_\mu^{\dagger}+t\,F_\nu^{\dagger}\big)^\dagger(v)\,.
$$
Hence,
$$
F_{\omega(t)}^{\dagger}=(1-t)\,F_\mu^{\dagger} + t\,F_{\nu}^{\dagger}\,.
$$

Next, let $\rho \in \cW_{2}$ and $t \in [0,1]$. Since $L^2(\R)$ is a Hilbert space, we get from Lemma~\ref{lem:hilbertflat} that
\begin{align*}
    &W_2^2(\rho, \omega(t))
    =\|F^{\dagger}_\rho-F^{\dagger}_{\omega(t)}\|^2
    =\|F^{\dagger}_\rho-(1-t)\,F_\mu^{\dagger} - t\,F_{\nu}^{\dagger}\|^2\\
    &\qquad =(1-t)\,\|F^{\dagger}_\rho-F_\mu^{\dagger}\|^2+t\,\|F^{\dagger}_\rho-F_\nu^{\dagger}\|^2-t\,(1-t)\,\|F_\mu^\dagger-F_\nu^{\dagger}\|^2\\
    &\qquad =(1-t)\,W_2^2(\rho,\mu)+t\,W_2^2(\rho,\nu)-t\,(1-t)\,W_2^2(\mu,\nu)\,.
\end{align*}
This completes the proof that $\curv(\cW_{2,\text{ac}}(\R))=0$. 
In turn, one can show that this implies $\curv(\cW_2(\R)) = 0$ as well.
\end{proof}

More generally, for any $d\ge 1$, $\cW_2(\R^d)$ is positively curved as indicated by the theorem below.

\begin{theorem}\label{thm:curvw2ge0}
The 2-Wasserstein space $\cW_2$ is non-negatively curved,
\begin{align*}
    \curv(\cW_2) \ge 0\,,
\end{align*}
i.e., for any $\mu, \nu, \rho \in \cW_2$ and any constant-speed geodesic $\omega$ that connects $\mu$ to $\nu$,  it holds
$$
W_2^2(\rho, \omega(t))\ge (1-t)\,W_2^2(\rho, \mu)+t\,W_2^2(\rho, \nu)-t\,(1-t)\,W_2^2(\mu, \nu)\,.
$$
\end{theorem}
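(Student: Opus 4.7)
The plan is to lift a coupling between $\rho$ and $\omega(t)$ to a three-marginal coupling of $\rho, \mu, \nu$, and then apply the Hilbertian identity of Lemma~\ref{lem:hilbertflat} pointwise before integrating. In light of Proposition~\ref{pro:NNC}, this is exactly what needs to be shown.

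First, I would invoke Theorem~\ref{thm:W2geo} to write $\omega(t) = (\pi_t)_\# \gamma$, where $\gamma \in \Gamma_{\mu,\nu}$ is an optimal coupling and $\pi_t(y_1,y_2) = (1-t)\,y_1 + t\,y_2$. Let $\sigma \in \Gamma_{\rho,\omega(t)}$ be an optimal coupling, so that $\int \|x - z\|^2 \, \sigma(\ud x,\ud z) = W_2^2(\rho, \omega(t))$.

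Second, the key step: I would construct a probability measure $\pi$ on $\R^\dd \times \R^\dd \times \R^\dd$, with coordinates $(X, Y_1, Y_2)$, such that $(Y_1, Y_2) \sim \gamma$ and $(X, (1-t)\,Y_1 + t\,Y_2) \sim \sigma$. This is a gluing argument: disintegrate $\gamma$ along $\pi_t$ to obtain conditional kernels $\gamma_z(\ud y_1, \ud y_2) \deq \gamma\bigl(\,\cdot \,\mid\, (1-t)\,Y_1 + t\,Y_2 = z\bigr)$ on $\R^\dd \times \R^\dd$, and disintegrate $\sigma$ along its second marginal to obtain $\sigma_z(\ud x) \deq \sigma(\,\cdot\, \mid Z = z)$. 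Since $\R^\dd$ is Polish, these regular conditional probabilities exist. Now define $\pi$ so that, given $Z \sim \omega(t)$, the variables $X$ and $(Y_1, Y_2)$ are conditionally independent with laws $\sigma_Z$ and $\gamma_Z$ respectively, and then marginalize out $Z$. By construction the three one-dimensional marginals of $\pi$ are $\rho, \mu, \nu$, the $(Y_1, Y_2)$-marginal is $\gamma$, and the pair $(X, (1-t)\,Y_1 + t\,Y_2)$ has law $\sigma$.

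Third, applying Lemma~\ref{lem:hilbertflat} in $\R^\dd$ with $p = X$, $x = Y_1$, $y = Y_2$ gives the pointwise identity
\begin{align*}
\|X - ((1-t)\,Y_1 + t\,Y_2)\|^2
&= (1-t)\,\|X - Y_1\|^2 + t\,\|X - Y_2\|^2 \\
&\qquad{} - t\,(1-t)\,\|Y_1 - Y_2\|^2\,.
\end{align*}
Taking expectation under $\pi$, the left-hand side equals $W_2^2(\rho, \omega(t))$ by optimality of $\sigma$, while on the right the third term equals $t\,(1-t)\,W_2^2(\mu,\nu)$ by optimality of $\gamma$. The first two terms are bounded below using that $(X, Y_1)$ and $(X, Y_2)$ are (possibly suboptimal) couplings of $(\rho,\mu)$ and $(\rho,\nu)$: $\E\|X - Y_1\|^2 \ge W_2^2(\rho, \mu)$ and $\E\|X - Y_2\|^2 \ge W_2^2(\rho, \nu)$. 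Rearranging yields the desired inequality.

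The main obstacle is the disintegration/gluing construction of $\pi$ in the second step. Conceptually it is standard, but care is needed to ensure that the conditional kernels $\gamma_z$ and $\sigma_z$ are jointly measurable in $z$ so that the formula $\pi(\ud x, \ud y_1, \ud y_2) = \sigma_{(1-t)y_1 + ty_2}(\ud x)\,\gamma(\ud y_1, \ud y_2)$ genuinely defines a Borel probability measure. Once this technicality is handled (it is the same device as in the proof of the gluing lemma used in Proposition~\ref{prop:wp_is_metric}), the remaining calculation is immediate.
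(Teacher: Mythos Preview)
Your proof is correct and is essentially identical to the paper's argument: the paper also glues an optimal coupling $\gamma \in \Gamma_{\mu,\nu}$ with an optimal coupling in $\Gamma_{\omega(t),\rho}$ to obtain a three-variable coupling, applies Lemma~\ref{lem:hilbertflat} pointwise in $\R^\dd$, and then uses optimality of $\gamma$ together with suboptimality of the induced $(\rho,\mu)$ and $(\rho,\nu)$ couplings. The only difference is notational (the paper writes $(X,Y,Z)$ for your $(Y_1,Y_2,X)$) and that the paper invokes the gluing lemma directly rather than spelling out the disintegration.
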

\begin{proof}
Let $\gamma \in \Gamma_{\mu, \nu}$ be an optimal coupling and recall from Theorem~\ref{thm:W2geo} that for any $t\in [0,1]$, $\omega(t)=(\pi_t)_{\#}\gamma$ where $\pi_t(x,y)=(1-t)\,x+t\,y$. In particular, if $(X,Y)\sim \gamma$, then $V_t \deq (1-t)\,X+t\,Y\sim \omega(t)$. Next, let $\gamma_t \in \Gamma_{\omega(t), \rho}$ be an optimal coupling between $\omega(t)$ and $\rho$. In particular, it induces a conditional distribution on $Z\sim\rho$ given $V_t$. We have described a joint distribution $\Upsilon$ for $(X,Y,Z)$ that has marginals $\mu$, $\nu$, and $\rho$ respectively (the reader will have recognized a variant of the gluing lemma, Lemma~\ref{lem:gluing}).

With this notation, we have
\begin{align*}
    W_2^2(\rho, \omega(t))
    &= \int\|z-v\|^2\, \gamma_t(\ud x,\ud v) \\
    &=\int\|z-(1-t)\,x-t\,y\|^2\, \Upsilon(\ud x,\ud y,\ud z)\,.
\end{align*}
Next, observe that by Lemma~\ref{lem:hilbertflat} applied to $\bH=\R^\dd$, we have 
$$
\|z-(1-t)\,x-t\,y\|^2=(1-t)\,\|z-x\|^2+t\,\|z-y\|^2-t\,(1-t)\,\|x-y\|^2\,,
$$
so that
\begin{align*}
    W_2^2(\rho, \omega(t))&=\int \bigl[(1-t)\,\|z-x\|^2 + t\,\|z-y\|^2 \\
    &\qquad\qquad\qquad\qquad\qquad{} - t\,(1-t)\,\|x-y\|^2\bigr]\, \Upsilon(\ud x,\ud y,\ud z)\\
    &\ge (1-t)\,W_2^2(\rho, \mu)+t\,W_2^2(\rho, \nu)\\
    &\qquad{} -t\,(1-t)\,\int\|x-y\|^2\, \Upsilon(\ud x,\ud y,\ud z)\\
    &=(1-t)\,W_2^2(\rho, \mu)+t\,W_2^2(\rho, \nu)-t\,(1-t)\,W_2^2(\mu, \nu)\,,
\end{align*}
where in the inequality, we used the suboptimality of the first two couplings and in the last equality, we used the optimality of the coupling between $\mu$ and $\nu$ induced by $\Upsilon$.
\end{proof}

\section{Tangent cones}\label{subsec:angles}\index{tangent cone}

A geodesic space has a priori no differentiable structure but a surrogate for it may be built. It starts from the notion of \emph{angle} which can be defined on any metric space by analogy to the Hilbert case, akin to our definition of curvature bounds. When applied to a geodesic space, angles allow us to define the notion of \emph{direction}, which can be thought of as the initial velocity of a constant-speed geodesic. The collection of such directions forms the tangent cone.

\subsection{Angles}

We first define angles on a metric space and show that they provide alternative characterizations of 
curvature bounds for geodesic spaces.

Recall that for any three points $p,x,y \in \R^2$, the cosine of the angle $\measuredangle_p(x,y)$, formed by vectors $\overrightarrow{p\,x}$ and $\overrightarrow {p\,y}$ is given by
$$
\cos\measuredangle_p(x,y)=\frac{\langle x-p, y-p\rangle}{\|x-p\|\,\|y-p\|}\,.
$$
Note that
\begin{align*}
-2\,\langle x-p, y-p\rangle&=\|(x-p)-(y-p)\|^2-\|x-p\|^2-\|y-p\|^2\\
&=\|x-y\|^2-\|x-p\|^2-\|y-p\|^2\,.
\end{align*}
Therefore, we can rewrite this definition only in terms of squared distances to obtain
$$
\cos\measuredangle_p(x,y)=\frac{\|x-p\|^2+\|y-p\|^2-\|x-y\|^2}{2\,\|x-p\|\,\|y-p\|}\,.
$$
This definition generalizes to any metric space.

\begin{definition}\label{def:angle}
Let $(S,\md)$ be a metric space and  for any triangle $\{p,x,y\}$ in $S$, define the \emph{angle} $\measuredangle_p(x,y)\in[0,\pi]$ at $p$ by 
 \[\cos\measuredangle_p(x,y) \deq \dfrac{\md^2(p,x)+\md^2(p,y)-\md^2(x,y)}{2\,\md(p,x)\,\md(p,y)}\,.\]
\end{definition}

Similar comparisons may be made with model spaces $M_{\kappa}$ for $\kappa\neq 0$ but are beyond the scope of these lectures.

The next result presents a characterization of positively curved spaces in terms of the \emph{angle monotonicity.}

\begin{proposition}[Angle monotonicity]\label{pro:monotone}
 Let $(S,\md)$ be a geodesic space. Then, $\curv(S)\ge 0$ in the sense of Definition \ref{def:curvk}, if and only if for any triangle $\{p,x,y\}$ in $S$ and any geodesics $\omega$ and $\omega'$ joining $p$ to $x$ and  $p$ to $y$ respectively, the function
 \[(s,t)\in[0,1]^2\mapsto\measuredangle_p(\omega(s),\omega'(t))\]
 is non-increasing in each variable when the other is fixed.
\end{proposition}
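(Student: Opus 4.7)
The plan is to reduce both directions to the characterization of $\curv(S) \geq 0$ given in Proposition~\ref{pro:NNC2}, combined with planar law-of-cosines computations that translate distance comparisons into angle comparisons via the very definition of $\measuredangle_p$ in Definition~\ref{def:angle}. The key observation is that in the Euclidean comparison triangle $\{\bar p, \bar x, \bar y\} \subset M_0$, the angle at $\bar p$ between subpoints $\bar c \in [\bar p, \bar x]$ and $\bar d \in [\bar p, \bar y]$ is independent of the location of $\bar c$ and $\bar d$ along those segments and equals the angle of the full triangle at $\bar p$; in a general geodesic space the same angle does depend on the chosen subpoints, and this dependence is exactly what encodes curvature.

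For the forward direction, suppose $\curv(S) \geq 0$ and fix $0 < s_1 \leq s_2 \leq 1$, $t \in [0,1]$. I would build the comparison triangle $\{\bar p, \bar a, \bar b\}$ for $\{p, \omega(s_2), \omega'(t)\}$ in $\R^2$, place $\bar c$ on $[\bar p, \bar a]$ at proportion $s_1/s_2$, and observe that $\|\bar p - \bar c\| = \md(p, \omega(s_1))$ and $\measuredangle_{\bar p}(\bar c, \bar b) = \measuredangle_{\bar p}(\bar a, \bar b) = \measuredangle_p(\omega(s_2), \omega'(t))$. Applying Proposition~\ref{pro:NNC2} to the reparametrized geodesics $r \mapsto \omega(rs_2)$ and $r \mapsto \omega'(rt)$ yields $\md(\omega(s_1), \omega'(t)) \geq \|\bar c - \bar b\|$. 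Plugging this inequality into the definition of $\cos\measuredangle_p(\omega(s_1), \omega'(t))$ and comparing to the Euclidean formula gives $\cos\measuredangle_p(\omega(s_1), \omega'(t)) \leq \cos\measuredangle_{\bar p}(\bar c, \bar b) = \cos\measuredangle_p(\omega(s_2), \omega'(t))$, which is monotonicity in $s$; monotonicity in $t$ follows by symmetry.

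For the reverse direction, I would use Proposition~\ref{pro:NNC} and aim to prove the inequality $\md^2(p, \omega(t)) \geq (1-t)\,\md^2(p,x) + t\,\md^2(p,y) - t\,(1-t)\,\md^2(x,y)$ for a constant-speed geodesic $\omega$ from $x$ to $y$. The trick is to look at angles at the vertex $x$ rather than at $p$: let $\omega_1 = \omega$ and let $\omega_2$ be any geodesic from $x$ to $p$. Monotonicity applied to the triangle $\{x, y, p\}$ gives $\measuredangle_x(\omega(t), p) \geq \measuredangle_x(y, p)$, hence $\cos\measuredangle_x(\omega(t), p) \leq \cos\measuredangle_x(y, p)$. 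Expanding both cosines via Definition~\ref{def:angle}, using $\md(x, \omega(t)) = t\,\md(x, y)$, and clearing the positive denominator yields precisely the inequality from Proposition~\ref{pro:NNC} after a short algebraic rearrangement; degenerate cases (where $\md(x, p) = 0$ or $t \in \{0, 1\}$) are trivial.

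The main subtlety, rather than a genuine obstacle, is keeping the reparametrizations straight: in the forward direction one must invoke Proposition~\ref{pro:NNC2} for the \emph{sub}-geodesics $r \mapsto \omega(rs_2)$ and $r \mapsto \omega'(rt)$ (which are constant-speed geodesics on $[0,1]$ between the relevant endpoints), not for the original $\omega$, $\omega'$. Once this is handled, both directions reduce to the law of cosines in $\R^2$ and the definition of $\measuredangle$.
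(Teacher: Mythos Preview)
Your proposal is correct. The forward direction is essentially identical to the paper's argument: both apply Proposition~\ref{pro:NNC2} to a comparison triangle and translate the distance inequality into a cosine inequality via Definition~\ref{def:angle}. You are slightly more explicit about the reparametrization to sub-geodesics, whereas the paper collapses this into the single claim ``it is enough to prove $\cos\measuredangle_p(\omega(s),\omega'(t))\le\cos\measuredangle_p(x,y)$'' and leaves the reader to apply the resulting inequality to sub-triangles to deduce full monotonicity.

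For the converse, you take a genuinely different (and arguably cleaner) route. The paper simply reverses the chain of equalities from the forward direction: the inequality $\cos\measuredangle_p(\omega(s),\omega'(t)) \le \cos\measuredangle_p(x,y)$ is algebraically equivalent to $\md^2(\omega(s),\omega'(t)) \ge \|\bar\omega(s)-\bar\omega'(t)\|^2$, which is exactly the criterion in Proposition~\ref{pro:NNC2}. You instead switch the base vertex from $p$ to $x$, apply monotonicity to the pair of geodesics $x\to y$ and $x\to p$, and expand $\cos\measuredangle_x(\omega(t),p) \le \cos\measuredangle_x(y,p)$ to land directly on the inequality of Proposition~\ref{pro:NNC}. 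The paper's approach has the advantage of making the equivalence transparent (the same computation runs both ways); yours has the advantage of showing that only the one-variable monotonicity $t\mapsto \measuredangle_x(\omega(t),p)$ is actually needed for the converse, and it avoids invoking Proposition~\ref{pro:NNC2} entirely.
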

\begin{proof}
Assume first that  $\curv(S)\ge 0$ and consider a triangle $\{p,x,y\}$ in $S$ and constant-speed geodesics $\omega$ and $\omega'$ joining $p$ to $x$ and $p$ to $y$ respectively. It is enough to prove that, for all $(s,t)\in[0,1]^2$, 
 \begin{equation}
 \nonumber  
\cos\measuredangle_p(\omega(s),\omega'(t))\le \cos\measuredangle_p(x,y)\,.
 \end{equation}
Let $\{\bar p,\bar x,\bar y\}$ be a comparison triangle for $\{p,x,y\}$ in $M_0=\R^2$ and let $\bar \omega$ and $\bar \omega'$ be constant-speed geodesics in $M_0$ connecting $\bar p$ to $\bar x$ and $\bar y$ respectively. 
It holds
 \begin{align*}
\cos\measuredangle_p(\omega(s),\omega'(t))&=\frac{\md^2(p,\omega(s))+\md^2(p,\omega'(t))-\md^2(\omega(s),\omega'(t))}{2\,\md(p,\omega(s))\,\md(p,\omega'(t))}\\
&=\frac{s^2\,\md^2(p,x)+t^2\,\md^2(p,y)-\md^2(\omega(s),\omega'(t))}{2st\,\md(p,x)\,\md(p,y)}\\
&=\frac{s^2\,\|\bar p- \bar x\|^2+t^2\,\|\bar p-\bar y\|^2-\md^2(\omega(s),\omega'(t))}{2st\,\|\bar p-\bar x\|\, \|\bar p-\bar y\|}\\
&\le \frac{s^2\,\|\bar p- \bar x\|^2+t^2\,\|\bar p-\bar y\|^2-\|\bar \omega(s)-\bar \omega'(t)\|^2}{2st\,\|\bar p-\bar x\|\, \|\bar p-\bar y\|}\\
&=\frac{\|\bar p- \bar \omega(s)\|^2+\|\bar p-\bar \omega'(t)\|^2-\|\bar \omega(s)-\bar \omega'(t)\|
^2}{2\,\|\bar p-\bar \omega(s)\|\, \|\bar p-\bar \omega'(t)\|}\\
&=\cos\measuredangle_{\bar p}(\bar \omega(s),\bar \omega'(t))\\
&=\cos\measuredangle_{\bar p}(\bar x,\bar y)\\
&=\cos\measuredangle_{p}(x,y)\,,
\end{align*}
where in the inequality we used the fact that $\curv(S)\ge 0$ and Proposition~\ref{pro:NNC2}. This completes the proof that the curvature lower bound implies angle monotonicity.

Conversely, assume that for any triangle $\{p,x,y\}$ in $S$, any constant-speed geodesics $\omega$ and $\omega'$ connecting $p$ to $x$ and $p$ to $y$ respectively, and all $(s,t)\in[0,1]^2$, we have 
$$
\cos\measuredangle_p(\omega(s),\omega'(t))\le \cos\measuredangle_p(x,y)\,.
$$
Then, the first part of the proof implies that
\[ \md^2(\omega(s),\omega'(t))\ge \|\bar \omega(s)-\bar \omega'(t)\|^2\,, \qquad \forall\, (s,t)\in[0,1]^2\]
with the same notation as above, which is the characterization of $\curv(S)\ge 0$ from Proposition~\ref{pro:NNC2}.
 \end{proof}

\subsection{Directions}

From the notion of angles between points, we can readily define an angle between constant-speed geodesics. 

Let $(S,\md)$ be a geodesic space such that $\curv(S)\ge 0$, $p\in S$, and $\omega$, $\omega'$ two constant-speed geodesics connecting $p$ to $x$ and $y$ respectively. We define the angle between $\omega$ and $\omega'$ as 
\[
 \measuredangle(\omega,\omega') \deq \lim_{s,t\searrow 0}\measuredangle_p(\omega(s),\omega'(t))\,.
 \]
It follows from Proposition \ref{pro:monotone} that this limit exists under the assumption $\curv(S)\ge 0$. In fact, under the same assumption,
\[
 \measuredangle(\omega,\omega') = \lim_{t\searrow 0}\measuredangle_p(\omega(t),\omega'(t))\,.
 \]

Given a third constant-speed geodesic $\omega'':[0,1]\to S$ such that $\omega''(0)=p$ and $\omega''(1)=z$, it can be shown (see Exercise~\ref{ex:triangle_cone}) that we have the triangular inequality \begin{equation}
 \label{angleti}
\measuredangle(\omega,\omega')\le \measuredangle(\omega,\omega'')+\measuredangle(\omega'',\omega')\,,
 \end{equation}
so that $\measuredangle$ is a pseudo-metric on the set $\cG(p)$ of all constant-speed  geodesics emanating from~$p$. Next, we define the equivalence relation $\sim$ on $\cG(p)$ by 
$$
\omega\sim\omega'\Leftrightarrow \measuredangle(\omega,\omega')=0\,.
$$
We can turn $\measuredangle$ into a proper metric (still denoted $\measuredangle$) on the quotient $\cG(p)/{\sim}$.

\begin{definition}\label{def:tangentcone}
The \emph{space of directions} emanating from $p$ is the completion $(\Sigma_p,\measuredangle)$ of $(\cG(p)/{\sim},\measuredangle)$. An element of $\Sigma_p$ is called a \emph{direction}.
\end{definition}

\subsection{Tangent cone}\label{subsec:cone}

An analog of a tangent space for geodesic spaces is provided by the notion of a tangent cone. 
  
\begin{definition}[Tangent cone]\index{tangent cone}\label{def:tan_cone}
Let $(S,\md)$ be a geodesic space with positive curvature and fix $p\in S$. The tangent cone $T_pS$ at $p$ is the Euclidean cone over the space of directions $(\Sigma_p,\measuredangle)$. In other words, $T_pS$ is the metric space:
\begin{itemize} 
\item whose underlying set consists in equivalence classes in $\Sigma_p\times[0,+\infty)$ for the equivalence relation $\sim$ defined by 
\[
(\omega,s)\sim (\omega',t) \Leftrightarrow \left\{\begin{array}{ll}
\phantom{\text{or }}s=t=0\\
\text{or } \omega=\omega' \ \text{and} \ s=t
\end{array}\right.
\]
\item and whose metric $\md_p$ is defined 
\[
\md_p((\omega,s),(\omega',t)) \deq \sqrt{s^2+t^2-2st\cos\measuredangle(\omega,\omega')}\,.
\]
\end{itemize}

For $u=(\omega,s)$ and $v=(\omega',t)\in T_pS$, we write $\|u-v\|_p \deq \md_p(u,v)$, $\|u\|_p \deq \md_p(o_p,u)$, where $o_p=(\omega,0) \in T_p S$ is the tip of the cone and 
$$ 
\langle u,v\rangle_p \deq \|u\|_p\,\|v\|_p\cos\measuredangle(\omega,\omega')=\frac12\,\bigl(\|u\|^2_p+\|v\|^2_p-\|u-v\|^2_p\bigr)\,.
$$
\end{definition}

The terminology \emph{cone} and the  notation $\|\cdot\|_p$ and $\langle\cdot,\cdot\rangle_p$ introduced above is justified by the fact that the cone $T_pS$ possesses a Hilbert-like structure described below. As often the case in metric geometry, the definition of $\md_p$ comes from rewriting a Euclidean notion using only notions that exist on a geodesic space, namely distances and angles in this case. Indeed, if $\omega, \omega'$ are points on the sphere and $(\omega,s) \deq s\cdot \omega$, $(\omega', t) \deq  t\cdot \omega'$ then it follows from the law of cosines that their squared distance is given by
\begin{equation}\label{eq:law_cosines}
    \|s\cdot \omega- t\cdot \omega'\|^2= s^2 + t^2 -2st \cos\measuredangle(\omega, \omega')\,.
\end{equation}

For a point $u=(\omega,t)$ and $\lambda\ge 0$, we define $\lambda\cdot u \deq (\omega, \lambda t)$. 
Moreover, it may be checked using the previous definitions that, for any $u,v\in T_pS$ and any $\lambda\ge 0$, we get 
\[\|\lambda\cdot u\|_p=\lambda\,\|u\|_p\quad\mbox{and}\quad \langle \lambda\cdot u,v\rangle_p=\langle u,\lambda\cdot v\rangle_p=\lambda\,\langle u,v\rangle_p\,.\]

Note that the tangent cone may not be geodesic but in cases when it is, the sum of points $u,v\in T_pS$ is defined as the midpoint of $2\cdot u$ and $2\cdot v$ as defined in Definition~\ref{def:midpoint}. In this case, $T_pS$ is indeed a cone.

An example to keep in mind is when $S$ is a filled-in square in the plane $\R^2$. Then, the tangent cone at one of the corners of $S$ is ``missing'' some directions (the ones that would lead out of $S$) and is therefore not a vector space, hence why we do not call it the tangent ``space''.

The logarithmic map plays an important role in the sequel. For all $x\in S$, we denote ${\Uparrow_p^{x}}\subset \Sigma_p$ the set of all equivalence classes of constant-speed geodesics connecting $p$ to $x$ in $S$. Then for every $x\in S$, we arbitrarily choose one direction ${\uparrow_p^x}\in{\Uparrow_p^{x}}$.  

\begin{definition}[Logarithmic map]\index{logarithmic map}
Let $(S,\md)$ be a positively curved geodesic space. Then, having chosen ${\uparrow_p^x}\in{\Uparrow_p^{x}}$ for every $x\in S$, the associated logarithmic map is defined by
 \[
 \log_p: x\in S\mapsto (\uparrow_{p}^{x},\md(p,x))\in T_pS\,.
 \]
  \end{definition}
  
At this level of generality, the definition of $\log_p(x)$ depends on the choice of directions $\{{\uparrow_p^x},\, x\in S\}$. This ambiguity may be removed by restricting the $\log_p$ map to an appropriate subset, namely the set of points $x\in S$ for which there is only one equivalence class of directions of constant-speed geodesics connecting $p$ to $x$. This set might be specified even more accurately by observing that, in an NNC space $S$,  if constant-speed geodesics $\omega$ and $\omega'$ from $p$ to $x$ satisfy $\measuredangle(\omega,\omega')=0$, then $\omega=\omega'$. In other words, the set of points $x\in S$ for which there is more than one equivalence class of constant-speed geodesics connecting $p$ to $x$ is exactly the set of points $x$ connected to $p$ by at least two distinct constant-speed geodesics. This set of points is denoted $C(p)$ and called the \emph{cut-locus} of $p$. Then, for any $x\in S\setminus C(p)$, $\log_p(x)$ is defined without ambiguity as 
 \[
 \log_p(x)=(\omega_{x},\md(p,x))\,,
 \]
 where $\omega_{x}$ denotes the unique constant-speed geodesic from $p$ to $x$ therefore identified to its direction. With this notation, we get in particular, for all $t\in[0,1]$ and all $x\in S\setminus C(p)$, that
 \[\log_p \omega_{x}(t) =t \log_p x \,.\]
More generally, if $\omega:[0,1] \to S$ is a constant-speed geodesic in $S$ and $p$ is such that $p=\omega(t)$ for some $t \in [0,1]$, i.e., $p$ is on the geodesic, then
\begin{equation}
\label{EQ:logstraight}
\log_p \omega(s)=(1-s)\log_p \omega(0) + s\log_p \omega(1)\,.
\end{equation}
In other words, the logarithmic maps turns geodesics into straight lines.

The following result shows that the $\log_p$ map is expanding in a space of positive curvature.

\begin{proposition}\label{lem:tangentcone2}
    Let $(S,\md)$ be a geodesic space and $p\in S$ be fixed. If $\curv(S)\ge0,$ then the logarithmic map is expansive in the sense that
    then for all $x,y\in S$,
     \[ \md(x,y)\le\|{\log_p(x)}-\log_p(y)\|_p\,,\]
     with equality if $x=p$ or $y=p$.
\end{proposition}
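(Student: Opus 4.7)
The plan is to unpack the definition of $\|\log_p(x)-\log_p(y)\|_p$ in the tangent cone, recognize it as a law of cosines expression, and then compare the angle appearing in it to the comparison angle $\measuredangle_p(x,y)$ using the angle monotonicity result (Proposition~\ref{pro:monotone}).

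Concretely: let $\omega_x$ and $\omega_y$ be constant-speed geodesics from $p$ to $x$ and from $p$ to $y$ representing the chosen directions ${\uparrow_p^x}$ and ${\uparrow_p^y}$. From Definition~\ref{def:tan_cone},
\begin{align*}
    \|\log_p(x)-\log_p(y)\|_p^2
    = \md^2(p,x) + \md^2(p,y) - 2\,\md(p,x)\,\md(p,y)\cos\measuredangle(\omega_x,\omega_y)\,.
\end{align*}
The first step is to observe that since $\curv(S)\ge 0$, Proposition~\ref{pro:monotone} guarantees that the function $(s,t)\mapsto \measuredangle_p(\omega_x(s),\omega_y(t))$ is non-increasing in each variable, so taking $s,t\searrow 0$ we obtain
\begin{align*}
    \measuredangle(\omega_x,\omega_y) \ge \measuredangle_p(\omega_x(1),\omega_y(1)) = \measuredangle_p(x,y)\,,
\end{align*}
hence $\cos\measuredangle(\omega_x,\omega_y)\le \cos\measuredangle_p(x,y)$.

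Substituting the defining expression for $\cos\measuredangle_p(x,y)$ from Definition~\ref{def:angle} into the displayed identity yields
\begin{align*}
    \|\log_p(x)-\log_p(y)\|_p^2
    &\ge \md^2(p,x)+\md^2(p,y) \\
    &\qquad{} - 2\,\md(p,x)\,\md(p,y)\cdot\frac{\md^2(p,x)+\md^2(p,y)-\md^2(x,y)}{2\,\md(p,x)\,\md(p,y)} \\
    &= \md^2(x,y)\,,
\end{align*}
which is the desired inequality. Finally, the equality cases are immediate: if $x=p$, then $\log_p(x)$ is the tip $o_p$ of the cone, so $\|\log_p(x)-\log_p(y)\|_p = \|\log_p(y)\|_p = \md(p,y) = \md(x,y)$, and symmetrically for $y=p$.

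There is essentially no obstacle here once one has Proposition~\ref{pro:monotone}: the whole argument is a substitution. The only subtlety is a bookkeeping one---making sure that the cancellation in the arithmetic of $\md(p,x)\,\md(p,y)$ does not require $x\ne p$ and $y\ne p$, which is why we handle those degenerate cases separately.
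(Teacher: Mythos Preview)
Your proof is correct and follows essentially the same approach as the paper: both unpack the cone metric via the law of cosines, invoke angle monotonicity (Proposition~\ref{pro:monotone}) to replace $\cos\measuredangle(\omega_x,\omega_y)$ by $\cos\measuredangle_p(x,y)$, and then substitute Definition~\ref{def:angle} to obtain the cancellation. The paper writes the intermediate inequality for general $s,t$ before specializing to $s=t=1$, whereas you go directly to the endpoint comparison, but this is cosmetic.
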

\begin{proof}
 Let $\omega, \omega'$ be two constant-speed geodesics connecting $p$ to $x$ and $y$ respectively. Then if $p\neq x$ and $p \neq y$, we have by definition of $\|\cdot\|_p$ and angle monotonicity that for all $s,t \in [0, 1]$, it holds
\begin{align*}
     &\|{\log_p(x)}-\log_p(y)\|_p^2
     =\md_p^2(\log_p(x), \log_p(y))\\
     &\qquad = \md^2(p, x) + \md^2(p,y)-2\,\md(p,x)\,\md(p,y)\cos \measuredangle(\omega, \omega') \\
     &\qquad \ge \md^2(p, x) + \md^2(p,y)-2\,\md(p,x)\,\md(p,y)\cos \measuredangle_p(\omega(s), \omega'(t))\,.
\end{align*}
Applying Definition~\ref{def:angle}, we get
\begin{align*}
     &\|{\log_p(x)}-\log_p(y)\|_p^2\\
     &\qquad \ge \md^2(p, x) + \md^2(p,y)-\frac{s^2 \md^2(p,x)+t^2 \md^2(p,y)-\md^2(\omega(s),\omega'(t))}{st}\,.
 \end{align*}
 Letting $s=t=1$ yields
\begin{align*}
 \|{\log_p(x)}-\log_p(y)\|_p^2\ge \md^2(x,y)\,.
 \end{align*}
It is easy to check the equality cases from the definition of $\md_p$.
 \end{proof}

\subsection{Tangent cone of the Wasserstein space}

Going to the very definition of the tangent cone, we can show that it takes a very simple form in the Wasserstein case. To that end, let $\mu,\nu,\rho \in \cW_2$ be three probability distributions. Moreover, let $\omega_\nu$ and $\omega_\rho$ be two geodesics in the 2-Wasserstein space $\cW_2$ joining $\mu$ to $\nu$ and $\mu$ to $\rho$ respectively and recall that the tangent cone at $\mu$ is the metric space of directions at $\mu$ equipped with distance $\md_\mu$ such that
\begin{align*}
    &\md^2_\mu\bigl((\omega_\nu,\md(\mu,\nu)),(\omega_\rho,\md(\mu,\rho))\bigr) \\
    &\qquad = W_2^2(\mu,\nu)+W_2^2(\mu,\rho)-2\,W_2(\mu,\nu) \,W_2(\mu,\rho)\cos\measuredangle(\omega_\nu,\omega_\rho)\,.
\end{align*}
What is the angle $\measuredangle(\omega_\nu,\omega_\rho)$ between these two Wasserstein geodesics? We can carry out a calculation assuming that $\mu$ has a density so that Brenier's theorem ensures the existence of two optimal transport maps $T_{\mu \to \nu}$ and $T_{\mu \to \rho}$ so that
\begin{align*}
    \cos\measuredangle(\omega_\nu,\omega_\rho) &= \lim_{t\searrow 0} \frac{W_2^2(\mu,\omega_\nu(t))+W_2^2(\mu,\omega_\rho(t))-W_2^2(\omega_\nu(t),\omega_\rho(t))}{2\,W_2(\mu,\omega_\nu(t))\,W_2(\mu,\omega_\rho(t))}\\
    &= \lim_{t\searrow 0} \frac{t^2\,W_2^2(\mu,\nu)+t^2\,W_2^2(\mu,\rho)-W_2^2(\omega_\nu(t),\omega_\rho(t))}{2t^2\,W_2(\mu,\nu)\,W_2(\mu,\rho)}\\
    &=\frac{W_2^2(\mu,\nu)+W_2^2(\mu,\rho)-\lim_{t\searrow 0} \frac{W_2^2(\omega_\nu(t),\omega_\rho(t))}{t^2}}{2\,W_2(\mu,\rho)\,W_2(\mu,\nu)}\,.
\end{align*}

\begin{lemma}\label{lem:Wasserstein_derivative}
Let $\mu \in \cP_{2,\rm ac}(\R^d)$ and denote by $T_{\mu \to \nu}$ and $T_{\mu \to \rho}$ the Brenier maps from $\mu$ to $\nu$ and $\mu$ to $\rho$ respectively. Then
$$
\lim_{t\searrow 0} \frac{W_2^2(\omega_\nu(t),\omega_\rho(t))}{t^2}=\|T_{\mu \to \nu} - T_{\mu \to \rho}\|^2_{L^2(\mu)}\,.
$$
\end{lemma}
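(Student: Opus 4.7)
The proof will proceed in two stages: an easy upper bound via a suboptimal coupling, and a matching lower bound via the tangent cone identity.

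First, set $T^\nu_t \deq (1-t)\,\id + t\,T_{\mu\to\nu}$ and analogously define $T^\rho_t$, so that by Corollary~\ref{cor:geobrenier}, $(T^\nu_t)_\#\mu=\omega_\nu(t)$ and $(T^\rho_t)_\#\mu=\omega_\rho(t)$. Then $(T^\nu_t(X), T^\rho_t(X))$ with $X\sim\mu$ is a valid coupling between $\omega_\nu(t)$ and $\omega_\rho(t)$, and it follows that
\[
W_2^2(\omega_\nu(t),\omega_\rho(t)) \le \int \|T^\nu_t(x)-T^\rho_t(x)\|^2 \, \mu(\ud x) = t^2\,\|T_{\mu\to\nu}-T_{\mu\to\rho}\|_{L^2(\mu)}^2\,.
\]
Dividing by $t^2$ and letting $t\searrow 0$ gives $\limsup_{t\searrow 0} W_2^2(\omega_\nu(t),\omega_\rho(t))/t^2 \le \|T_{\mu\to\nu}-T_{\mu\to\rho}\|_{L^2(\mu)}^2$.

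For the matching lower bound, the plan is to leverage the non-negative curvature of $\cW_2$ (Theorem~\ref{thm:curvw2ge0}) together with the tangent cone formalism. Since $\omega_\nu$ and $\omega_\rho$ are constant-speed geodesics emanating from $\mu$, we have $W_2(\mu,\omega_\nu(t)) = t\,W_2(\mu,\nu)$ and $W_2(\mu,\omega_\rho(t)) = t\,W_2(\mu,\rho)$, so the definition of the angle at $\mu$ (Definition~\ref{def:angle}) yields the exact identity
\[
\frac{W_2^2(\omega_\nu(t),\omega_\rho(t))}{t^2} = W_2^2(\mu,\nu)+W_2^2(\mu,\rho)-2\,W_2(\mu,\nu)\,W_2(\mu,\rho)\cos\measuredangle_\mu(\omega_\nu(t),\omega_\rho(t))\,.
\]
By angle monotonicity (Proposition~\ref{pro:monotone}) applied in the NNC space $\cW_2$, the right-hand side converges as $t\searrow 0$ to $W_2^2(\mu,\nu)+W_2^2(\mu,\rho)-2\,W_2(\mu,\nu)\,W_2(\mu,\rho)\cos\measuredangle(\omega_\nu,\omega_\rho)$, so the limit $L \deq \lim_{t\searrow 0}W_2^2(\omega_\nu(t),\omega_\rho(t))/t^2$ exists.

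It then remains to identify $L$ with $\|T_{\mu\to\nu}-T_{\mu\to\rho}\|_{L^2(\mu)}^2$. Using Brenier's theorem to write $W_2(\mu,\nu)=\|T_{\mu\to\nu}-\id\|_{L^2(\mu)}$ and $W_2(\mu,\rho)=\|T_{\mu\to\rho}-\id\|_{L^2(\mu)}$, and expanding $\|T_{\mu\to\nu}-T_{\mu\to\rho}\|_{L^2(\mu)}^2$, this reduces to showing the identity
\[
W_2(\mu,\nu)\,W_2(\mu,\rho)\cos\measuredangle(\omega_\nu,\omega_\rho) \;=\; \langle T_{\mu\to\nu}-\id,\,T_{\mu\to\rho}-\id\rangle_{L^2(\mu)}\,.
\]
One direction ($\ge$) follows immediately by inserting the upper bound into the cosine identity and taking $t\searrow 0$. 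The matching direction ($\le$) is the crux and amounts to the statement that the tangent cone $T_\mu\cW_2$ is \emph{isometric} (not merely log-expansive as in Proposition~\ref{lem:tangentcone2}) to the Hilbert space of gradient fields in $L^2(\mu)$; this is the decisive consequence of $\mu$ being absolutely continuous.

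The main obstacle is precisely this last step. A clean self-contained route is to establish that the Brenier map $S_t$ from $\omega_\nu(t)$ to $\omega_\rho(t)$ agrees, in $L^2(\omega_\nu(t))$ and to first order in $t$, with the suboptimal map $T^\rho_t\circ (T^\nu_t)^{-1}$, by using that both $T^\nu_t$ and $T^\rho_t$ are gradients of strongly convex functions for small $t$, that $(T^\nu_t)^{-1}(y) = y - t\,(T_{\mu\to\nu}(y)-y)+o(t)$ pointwise, and invoking uniqueness of the Brenier map; alternatively, one may appeal to the full identification of $T_\mu\cW_2$ with the $L^2(\mu)$-closure of gradient fields worked out in detail in~\cite{AmbGigSav08}.
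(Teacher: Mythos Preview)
Your upper bound is exactly the paper's. The problem is the lower bound: you correctly isolate the crux as the inequality $W_2(\mu,\nu)\,W_2(\mu,\rho)\cos\measuredangle(\omega_\nu,\omega_\rho)\le \langle T_{\mu\to\nu}-\id,\,T_{\mu\to\rho}-\id\rangle_{L^2(\mu)}$, but you never prove it. In the paper's logical order, the identification of $T_\mu\cW_2$ with a Hilbert subspace of $L^2(\mu)$ (equation~\eqref{eq:cos_wasserstein} and Theorem~\ref{thm:tangentWass}) is \emph{derived from} this lemma, so invoking it---or deferring wholesale to~\cite{AmbGigSav08}---is circular here. Your alternative sketch (first-order agreement of the Brenier map $S_t$ with $T^\rho_t\circ(T^\nu_t)^{-1}$) is not an argument either: ``invoking uniqueness of the Brenier map'' does not produce the $o(t)$ agreement you claim, and the formal expansion of $(T^\nu_t)^{-1}$ already presupposes regularity of $T_{\mu\to\nu}$ you have not justified.

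The paper's route is a direct coupling construction for the lower bound. One glues five random variables $(X,Y,Z,Y_t,Z_t)$ along a tree: $(X,Y)$ optimally coupled via $T_{\mu\to\nu}$, $Y_t=(1-t)X+tY$, then $(Y_t,Z_t)$ optimally coupled between $\omega_\nu(t)$ and $\omega_\rho(t)$, and $(Z_t,Z)$, $(Z_t,X')$ optimally coupled along the $\omega_\rho$ geodesic with $X'\sim\mu$ a second copy. Applying the Euclidean identity of Lemma~\ref{lem:hilbertflat} twice to $\E\|Y_t-Z_t\|^2$ and using optimality/suboptimality of the various sub-couplings yields $W_2^2(\omega_\nu(t),\omega_\rho(t))\ge t^2\,\E\|Y-Z\|^2$. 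Since $\E\|X-X'\|^2=O(t^2)$, one then gets $\E\|Y-Z\|^2\to\|T_{\mu\to\nu}-T_{\mu\to\rho}\|_{L^2(\mu)}^2$ (under a Lipschitz assumption on $T_{\mu\to\rho}$, noted to be removable by approximation). This concrete lower bound on the optimal cost is the missing idea in your proposal.
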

\begin{proof}
It is easy to show one of the required inequalities. Indeed, let $X \sim \mu$ and observe that
\begin{align*}
    X^\nu_t
    &\deq (1-t)\,X + t\,T_{\mu \to \nu}(X) \sim \omega_{\nu}(t)\,, \\
    X^\rho_t
    &\deq (1-t)\,X + t\, T_{\mu \to \rho}(X) \sim \omega_{\rho}(t)\,.
\end{align*}
Therefore,
\begin{align}
    W_2^2(\omega_\nu(t),\omega_\rho(t))& \le \E \|X^\nu_t-X^\rho_t\|^2 \nonumber\\
    &=t^2\, \E \|T_{\mu \to \nu}(X)  - T_{\mu \to \rho}(X) \|^2 \nonumber\\
    &=t^2\, \|T_{\mu \to \nu} - T_{\mu \to \rho}\|^2_{L^2(\mu)}\,.\label{eq:ub_tan_cone_dist}
\end{align}

To prove the converse, for any $t \in [0,1]$, let $\Upsilon_t$ be the following coupling between five random variables: $(X,Y,Z,Y_t, Z_t) \sim \Upsilon_t$ if
\begin{enumerate}
    \item $X \sim \mu$ and $Y=T_{\mu \to \nu}(X) \sim \nu$ are optimally coupled, 
    \item $Y_t = (1-t)\,X + t\,Y \sim \omega_{\nu} (t)$,
    \item $Z_t\sim \omega_\rho(t)$ and $Y_t \sim \omega_{\nu} (t)$ are optimally coupled,
    \item $Z_t \sim \omega_\rho(t)$ and $Z \sim \rho$ are  are optimally coupled, 
    \item $Z_t \sim \omega_\rho(t)$ and $X' \sim \mu$ are  are optimally coupled. 
\end{enumerate}
Figure~\ref{fig:upsilon} indicates that this joint coupling can be realized using the gluing lemma since the resulting graph is acyclic. Note in particular that $Z_t= (1-t)\,X' + t\,Z$.
\begin{figure}[h]
\centering
\begin{tikzpicture}
    \coordinate (x) at (0, 0);
    \coordinate (y) at (-1, 2);
    \coordinate (z) at (1, 2);
    \coordinate (yt) at (-.75, 1);
    \coordinate (zt) at (.75, 1);
    \coordinate (z0) at (1, 0);

    \draw[thick]  (x)--(yt);
    \draw[thick]  (y)--(yt);
    \draw[thick]  (z0)--(zt);
    \draw[thick]  (zt)--(z);
    \draw[thick, dashed] (zt)--(x);
    \draw[thick]  (yt)--(zt);

    \fill (x) circle (2pt) node[below] {$X \sim \mu$};
    \fill (y) circle (2pt) node[above left] {$Y \sim \nu$};
    \fill (z) circle (2pt) node[above right] {$Z \sim \rho$};
    \fill (yt) circle (2pt) node[left] {$Y_t \sim \omega_{\nu}(t)$};
    \fill (zt) circle (2pt) node[right] {$Z_t \sim \omega_{\rho}(t)$};
    \fill (z0) circle (2pt) node[right] {$X' \sim \mu$};
\end{tikzpicture}
    \caption{The coupling $\Upsilon_t$ between $(\mu, \nu, \rho, \omega_{\nu}(t),  \omega_{\rho}(t))$. Solid lines indicate optimal couplings. Dashed lines and missing lines indicate potentially suboptimal ones.}\label{fig:upsilon}
\end{figure}
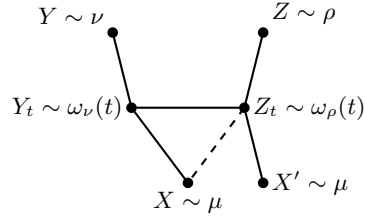

We have
\begin{align}
     W_2^2&(\omega_\nu(t),\omega_\rho(t))= \E\|Y_t -Z_t\|^2=\E\|(1-t)\,X + t\,Y -Z_t\|^2\nonumber\\
     &=(1-t)\,\E\|X-Z_t\|^2 + t\, \E\|Y - Z_t\|^2 -t\,(1-t)\, \E\|X-Y\|^2 \label{eq:pr:upsilon:1}\,,
\end{align}
where we used Lemma~\ref{lem:hilbertflat} for $\bH=\R^d$. Now note that $X$ and $Z_t$ are potentially coupled in a suboptimal way so that 
$$
(1-t)\,\E\|X-Z_t\|^2 \ge (1-t)\,W_2^2(\mu, \omega_\rho(t)) = t^2\,(1-t)\,  W_2^2(\mu, \rho)\,.
$$
Moreover, using Lemma~\ref{lem:hilbertflat} again, we get
\begin{align*}
    t\, \E\|&Y - Z_t\|^2 = t\, \E\|Y - (1-t)\,X' + t\,Z\|^2 \\
    &=t\, (1-t)\, \E\|Y-X'\|^2 + t^2\,\E\|Y - Z\|^2 - t^2\,(1-t)\, \E\|X'-Z\|^2\\
    &\ge t\,(1-t)\, \E\|Y-X\|^2 + t^2\, \E\|Y - Z\|^2 - t^2\,(1-t)\, \E\|X'-Z\|^2\,,
\end{align*}
where in the above inequality, we used the fact that $X$ and $Y$ are optimally coupled. 

Plugging the above two displays in~\eqref{eq:pr:upsilon:1}, we see that the terms in $\E\|Y-X\|^2 $ can cancel out. We get
\begin{align*}
       W_2^2&(\omega_\nu(t),\omega_\rho(t)) \\
       &\ge t^2\,(1-t)\, W_2^2(\mu, \rho) +  t^2\, \E\|Y - Z\|^2 -  t^2\,(1-t)\, \E\|X'-Z\|^2\\
       &= t^2 \,\E\|Y-Z\|^2\,.
\end{align*}
Recall from~\eqref{eq:ub_tan_cone_dist} that $W_2(\omega_\nu(t), \omega_\rho(t)) = O(t)$, so that $\E\|Y_t - Z_t\|^2 = O(t^2)$.
Since
\begin{align*}
    X
    &= (Y_t - tY)/(1-t) \qquad\text{and}\qquad X' = (Z_t - tZ)/(1-t)\,,
\end{align*}
it implies $\E\|X-X'\|^2 = O(t^2)$ as well.
Assuming (without justification) that $T_{\mu\to\rho}$ is Lipschitz\footnote{This assumption be lifted via approximation arguments, at the cost of additional technicalities.}
\begin{align*}
    \E\|Y-Z\|^2
    &= \E\|T_{\mu\to\nu}(X) - T_{\mu\to\rho}(X')\|^2 \\
    &= \E\|T_{\mu\to\nu}(X) - T_{\mu\to\rho}(X)\|^2 - O(t)\,.
\end{align*}
This readily yields
$$
\lim_{t\searrow 0} \frac{W_2^2(\omega_\nu(t),\omega_\rho(t))}{t^2}\ge \|T_{\mu \to \nu} - T_{\mu \to \rho}\|^2_{L^2(\mu)}\,,
$$
which concludes the proof of our Lemma.
\end{proof}

It follows from Lemma~\ref{lem:Wasserstein_derivative} that
\begin{align}
     \cos\measuredangle(&\omega_\nu,\omega_\rho) = \frac{W_2^2(\mu,\nu)+W_2^2(\mu,\rho)-\|T_{\mu \to \nu} - T_{\mu \to \rho}\|^2_{L^2(\mu)}}{2\,W_2(\mu,\nu)\,W_2(\mu,\rho)} \nonumber\\
     &=  \frac{\|T_{\mu \to \nu}-{\id}\|^2_{L^2(\mu)}+\|T_{\mu \to \rho}-{\id}\|^2_{L^2(\mu)}-\|T_{\mu \to \nu} - T_{\mu \to \rho}\|^2_{L^2(\mu)}}{2\,\|T_{\mu \to \nu}-{\id}\|_{L^2(\mu)}\,\|T_{\mu \to \rho}-{\id}\|_{L^2(\mu)}}\nonumber \\
     &=\cos \measuredangle(T_{\mu \to \nu}-{\id},T_{\mu \to \rho}-{\id}) \label{eq:cos_wasserstein}\,,
\end{align}
where the last $\cos$ is understood in the Hilbert space $L^2(\mu)$.

In turn, the law of cosines~\eqref{eq:law_cosines} implies that the metric on the tangent cone at $\mu$ is given by
\begin{align*}
    \md_\mu^2\bigl((\omega_\nu, s), (\omega_\rho,t)\bigr)
    &= s^2 +t^2 -2st\cos \measuredangle(\omega_\nu,\omega_\rho)\\
    &= s^2 +t^2 -2st\cos \measuredangle(T_{\mu \to \nu}-{\id},T_{\mu \to \rho}-{\id})\\
    &=\|s\,(T_{\mu \to \nu}-{\id})-t\,(T_{\mu \to \rho}-{\id})\|^2_{L^2(\mu)}\,.
\end{align*}
We have shown that the tangent cone equipped with the metric $\md_\mu$ is isometric to a Hilbert space. We have proved the following theorem which we had identified using the formalism of Otto calculus in Section~\ref{sec:otto_calc}.

\begin{theorem}\label{thm:tangentWass}
Let $\mu \in \cP_{2,\rm ac}(\R^d)$. Then the tangent cone $T_\mu\cW_2(\R^d)$ at $\mu$ is a convex subset of $L^2(\mu)$. Moreover, for any $\nu \in \cP_2(\R^\dd)$, we have
$$
\log_\mu(\nu)=T_{\mu \to \nu}-{\id} \in L^2(\mu)\,,
$$
where $T_{\mu \to \nu}$ is the Brenier map from $\mu$ to $\nu$.
\end{theorem}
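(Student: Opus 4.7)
The plan is to exploit all the machinery already assembled just before the theorem statement, namely Corollary~\ref{cor:geobrenier}, the angle identity~\eqref{eq:cos_wasserstein} derived from Lemma~\ref{lem:Wasserstein_derivative}, and the definition of the cone metric $\md_\mu$ from Definition~\ref{def:tan_cone}. Essentially the bulk of the work is already done; what remains is to package the identification cleanly.

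First I would fix $\mu \in \cP_{2,\mathrm{ac}}(\R^d)$ and observe that Brenier's theorem, together with Corollary~\ref{cor:geobrenier}, yields that for every $\nu \in \cP_2(\R^d)$ there is a \emph{unique} constant-speed geodesic $\omega_\nu$ from $\mu$ to $\nu$, given by $\omega_\nu(t) = [(1-t)\,{\id}+t\,T_{\mu\to\nu}]_\#\mu$, and that $\|T_{\mu\to\nu}-{\id}\|_{L^2(\mu)}=W_2(\mu,\nu)$. Consequently the set $\cG(\mu)$ of constant-speed geodesics emanating from $\mu$ is in bijection with $\cP_2(\R^d)$, and I can define the candidate map
\begin{align*}
    \Psi : (\omega_\nu, s) \in T_\mu \cW_2(\R^d) \longmapsto s \cdot \frac{T_{\mu\to\nu}-{\id}}{W_2(\mu,\nu)} \in L^2(\mu)\,,
\end{align*}
with $\Psi$ sending the tip of the cone to $0$.

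Next I would verify that $\Psi$ is an isometric embedding, by directly applying the formula from Definition~\ref{def:tan_cone} together with~\eqref{eq:cos_wasserstein}: for $u=(\omega_\nu,s)$ and $v=(\omega_\rho,t)$,
\begin{align*}
    \md_\mu^2(u,v) &= s^2 + t^2 - 2st\cos\measuredangle(\omega_\nu,\omega_\rho) \\
    &= s^2 + t^2 - 2st\cos\measuredangle\bigl(T_{\mu\to\nu}-{\id},\,T_{\mu\to\rho}-{\id}\bigr) \\
    &= \|\Psi(u)-\Psi(v)\|_{L^2(\mu)}^2\,.
\end{align*}
In particular this shows $\Psi$ is well-defined on equivalence classes (two geodesics producing the same point of $L^2(\mu)$ after normalization have zero angle, hence are identified in $\Sigma_\mu$), and it immediately gives $\log_\mu \nu = \Psi(\omega_\nu, W_2(\mu,\nu)) = T_{\mu\to\nu}-{\id}$, which is the main identity of the theorem.

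Finally, for the convexity of the image $\Psi(T_\mu\cW_2)\subset L^2(\mu)$, I would use that by Brenier's theorem the set $\cT \deq \{T_{\mu\to\nu}\,:\,\nu\in\cP_2(\R^d)\}$ coincides (up to negligible modifications) with the set of gradients of convex functions in $L^2(\mu)$, which is a convex cone. The image of $\Psi$ is $\{\lambda\,(T-{\id})\,:\,\lambda\ge 0,\,T\in\cT\}$, and for $a,b\ge 0$ and $T_1=\nabla\varphi_1$, $T_2=\nabla\varphi_2$ in $\cT$, the combination $a(T_1-{\id})+b(T_2-{\id}) = (a+b)\,(T-{\id})$ where $T = \frac{a}{a+b}\nabla\varphi_1 + \frac{b}{a+b}\nabla\varphi_2 = \nabla(\frac{a}{a+b}\varphi_1+\frac{b}{a+b}\varphi_2)$ is again the gradient of a convex function, hence again of the form $T_{\mu\to\nu'}$ for some $\nu'\in\cP_2(\R^d)$. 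Passing to the $L^2(\mu)$ closure (built into Definition~\ref{def:tan_cone} via completion of the space of directions) preserves convexity.

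The one delicate step---and the main obstacle---is the tangential but crucial verification that the angle computation in~\eqref{eq:cos_wasserstein} actually extends to the limit defining the angle between directions, i.e., that the $\lim_{s,t\searrow 0}\measuredangle_\mu(\omega_\nu(s),\omega_\rho(t))$ coincides with $\measuredangle(T_{\mu\to\nu}-{\id},\,T_{\mu\to\rho}-{\id})$ rather than merely the pointwise angle at $t=1$; but this is already implicit in the preceding derivation since the optimal coupling structure scales linearly along Wasserstein geodesics (Corollary~\ref{cor:geowass}), making the squared-distance ratio stable under taking $t\searrow 0$.
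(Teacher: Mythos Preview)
Your proposal is correct and follows essentially the same approach as the paper: the paper's ``proof'' is really the discussion immediately preceding the theorem statement, which derives the isometry identity $\md_\mu^2((\omega_\nu,s),(\omega_\rho,t))=\|s\,(T_{\mu\to\nu}-{\id})-t\,(T_{\mu\to\rho}-{\id})\|_{L^2(\mu)}^2$ from~\eqref{eq:cos_wasserstein} and the law of cosines, and you are simply packaging this as the explicit map $\Psi$. Your convexity argument is actually more complete than the paper's (which merely asserts convexity), and your final paragraph's worry is unfounded---equation~\eqref{eq:cos_wasserstein} is \emph{already} the computation of the limiting angle $\lim_{t\searrow 0}\measuredangle_\mu(\omega_\nu(t),\omega_\rho(t))$, not the pointwise angle at $t=1$, so there is nothing to extend.
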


In cases such as the one above, where the tangent cone $T_\mu\cW_2$ equipped with $\langle \cdot, \cdot\rangle$ from Definition~\ref{def:tangentcone} is, in fact, a convex subset of a Hilbert space, we call it, by abuse of notation, the \emph{tangent space} at $\mu$. It follows readily from the definition of the logarithmic map that the inner product $\langle\cdot, \cdot\rangle_\mu$  is given for any $\nu, \rho \in \cP_2
(\R^\dd)$ by
\begin{align*}
\langle \log_\mu(\nu), \log_\mu(\rho) \rangle_\mu
&=\langle T_{\mu \to \nu}-{\id}, T_{\mu \to \rho}-{\id}\rangle_{L^2(\mu)}\\
&=\int \bigl\langle T_{\mu \to \nu}(x)-x, T_{\mu \to \rho}(x)-x\bigr\rangle\,\mu(\ud x)\,.
\end{align*}

\section{Discussion}

\noindent\textbf{\S\ref{subsec:geodesic}.} Wasserstein geodesics are discussed in detail in~\cite[Chapter 5]{Vil03} and~\cite[Chapter 7]{AmbGigSav08}.
More generally, the Wasserstein space over any length space is also a length space.

\noindent\textbf{\S\ref{sec:curvature}.}
The non-negative curvature of the Wasserstein space is proven in~\cite[Section 7.3]{AmbGigSav08}.
More generally, the Wasserstein space over a non-negatively curved Alexandrov space is also non-negatively curved.

The curve in Exercise~\ref{ex:gen_geod} is called a \emph{generalized geodesic}\index{geodesic!generalized} and it plays an important role in the theory of Wasserstein gradient flows, as well as occasionally in other applications of optimal transport.

As mentioned in the discussion notes for Section~\ref{sec:vi}, there is a notion of synthetic Ricci curvature lower bounds which makes sense on geodesic spaces.
It is a natural to ask whether this notion recovers the non-negative Alexandrov curvature of the Wasserstein space when equipped with an appropriate measure. Unfortunately, \cite{Cho12} shows that does not yield any finite lower bound even for even for the (flat) Wasserstein space on the real line.

\noindent\textbf{\S\ref{subsec:angles}.}
The tangent cone of the Wasserstein space and its relationship to the tangent space is discussed in~\cite[Section 12.4]{AmbGigSav08}.

\section{Exercises}

\begin{enumerate}
    \item Show that the space of probability measures endowed with MMD (Definition~\ref{def:MMD}) defines a flat geometry.
    
    \item\label{ex:gen_geod} Let $\mu,\nu,\rho \in \cP_2(\R^d)$. Prove that there \emph{exists} a curve $\omega : [0,1]\to \cP_2(\R^d)$ with $\omega(0) = \mu$, $\omega(1) = \nu$ such that the opposite inequality to Theorem~\ref{thm:curvw2ge0} holds, i.e.,
    \begin{align*}
        W_2^2(\rho,\omega(t)) \le (1-t)\,W_2^2(\rho,\mu) + t\,W_2^2(\rho,\nu) - t\,(1-t)\,W_2^2(\mu,\nu)\,.
    \end{align*}
    \emph{Hint}: for $X_\mu \sim \mu$, $X_\nu \sim \nu$, $X_\rho\sim\rho$, optimally couple $(X_\mu, X_\rho)$ and $(X_\nu,X_\rho)$.
    Define $\omega(t)$ to be the law of a suitable interpolation of $X_\mu$ and $X_\nu$.
    Compare with Exercise~\ref{ex:gen_geod_cvx} from Chapter~\ref{chap:WGF}.

    \item Generalize the proof of Theorem~\ref{thm:W2geo} to show that for any $p\ge 1$, the space $\cP_p(\R^d)$ of probability measures with finite $p$-th moment, equipped with the $p$-Wasserstein metric $W_p$, is a geodesic space.
    What are the geodesics?

    \item\label{ex:triangle_cone} Generalizing Definition~\ref{def:tan_cone}, use the following outline to show that if $(S, \md)$ is a metric space with diameter at most $\pi$, and $\cone(S)$ is the set $X \times [0,\infty)$ with all points of the form $(x, 0)$ identified, then
    \begin{align*}
        \md((x,s), (y,t))
        &\deq \sqrt{s^2 + t^2 - 2st \cos \md(x,y)}
    \end{align*}
    defines a metric on $\cone(S)$.
    To do so, let $(x_1,r_1)$, $(x_2, r_2)$, $(x_3, r_3)$ be three points in the cone and construct three points $y_1,y_2,y_3 \in \R^2$ so that their distances from the origin equal $r_1$, $r_2$, $r_3$ respectively, and so that the angles between $y_1$ and $y_2$, and between $y_2$ and $y_3$, equal $\md(x_1, x_2)$ and $\md(x_2, x_3)$ respectively. Show that $\|y_1 - y_2\| = \md((x_1,r_1), (x_2,r_2))$ and $\|y_2-y_3\| = \md((x_2,r_2), (x_3, r_3))$. (Caution: $\|y_1-y_3\|$ does not necessarily equal $\md((x_1,r_1), (x_3,r_3))$.)
    Now establish the triangle inequality for the cone metric, splitting into two cases according to whether or not $\md(x_1,x_2) + \md(x_2,x_3) \le \pi$.
\end{enumerate}

\chapter{Wasserstein barycenters}
\label{chap:barycenters}

\emph{Averaging} data is among the most fundamental of the statistician's tools, but its implementation on non-Euclidean spaces, capturing data modalities that differ from the typical vector-valued covariates common in traditional statistical literature, often requires care.
Suppose, for instance, that our dataset consists of \emph{images} and we wish to define a suitable notion of an average image which captures representative aspects of the whole.
This model problem arises in situations such as the aggregation of information from repeated MRI scans.

We can represent a $p$-pixel image via its values in the R, G, B channels for each pixel, thereby considering it as a vector taking values in $\{0,1,\dotsc,255\}^{3p}$.
A na\"{\i}ve approach to the averaging problem would be to simply compute the usual average of these vector representations of the image.
Attempting this method on a few images, however, should readily convince the reader that this notion of average is unsatisfactory.

A closer inspection of the na\"{\i}ve approach reveals the nature of the problem: when we average the vector representations of the image, we tacitly endow the space $\R^{3p}$ with the Euclidean geometry, and there is no reason to expect that this geometry should be compatible with our embedding of images into $\R^{3p}$. Indeed, the representation of an image as a vector in $\R^{3p}$ is an engineering choice, not an intrinsic quality of the image. A perhaps more principled approach would be to regard the images as living in an abstract space $S$ endowed with a metric $\md$ which captures closeness with respect to the attributes we regard as important for the data under consideration. Our task can then be formulated as follows: given points $x_1,\dotsc,x_n$ inside a metric space $(S,\md)$, what is a suitable notion for the \emph{average} of $x_1,\dotsc,x_n$?

Fortunately there is a general and useful answer to this question, which is motivated as follows. It is not hard to see that for $x_1,\dotsc,x_n$ belonging to a Hilbert space $\bH$, the average $\frac{1}{n}\sum_{i=1}^n x_i$ is characterized as the unique minimizer of the functional
\begin{align*}
    x\mapsto \frac{1}{n}\sum_{i=1}^n \|x_i-x\|^2\,.
\end{align*}
This formulation only involves squared distances and is amenable to generalization to metric spaces.

\begin{definition}[Barycenter]\index{barycenter}
    Given any probability measure $P$ over a metric space $(S, \md)$, we say that $b$ is a \emph{barycenter} of $P$ if it is a minimizer of the functional
    \begin{align*}
        b\mapsto \int \md^2(b, x) \, P(\ud x)\,.
    \end{align*}
\end{definition}

In particular, if we take $P$ to be an empirical measure $\frac{1}{n}\sum_{i=1}^n \delta_{x_i}$, then a barycenter of $P$ is an average of the points $x_1,\dotsc,x_n$.
Note that at this level of generality, a barycenter may not exist, and even if one exists, it may not be unique.

The case when $(S,\md)$ is the Wasserstein space is already of interest and provides motivation for the theory we develop in this chapter.
For example, Wasserstein barycenters provide a geometrically meaningful solution to the image averaging problem with which we opened, as well as to many other problems such as curve registration; see~\cite{Rabin+12Barycenter, CutDou14, GraPeyCut15Neuro, SolGoePey15, BonPeyCut16BaryCoord, PanZem16Point, SriLiDun18Bary, PeyCut19, LeGLouRig20Fairness} and the references therein.
However, since the framework we develop fits naturally within metric geometry, as developed in Chapter~\ref{chap:geometry}, we work in this setting and specialize later.

Here, we develop statistical theory to justify the use of geometric averaging methods in practice.
Namely, assume that we have i.i.d.\ data $X_1,\dotsc,X_n$ drawn from a distribution $P$ over $(S,\md)$, and let $b^\star$ denote the barycenter of $P$. This \emph{population barycenter} is our quantity of interest and is unknown. In order for the statistical problem to be well-posed, we always work under assumptions which guarantee that $b^\star$ exists and is unique.

There is a natural plug-in estimator for this problem: the barycenter $b_n$ of the empirical measure or the \emph{empirical barycenter}, defined as the minimizer of the functional
\begin{align*}
    b\mapsto \frac{1}{n} \sum_{i=1}^n \md^2(b,X_i)\,.
\end{align*}
Our goal is to quantify the error $\md(b_n, b^\star)$ based on natural geometric features of the space $(S,\md)$.

\section{The Hilbert case}\label{sec:hilbertbarycenter}

In the case where $(S,\md)$ is a Hilbert space, the empirical barycenter converges at the so-called parametric rate. This is easy to see. To that end, let $\bH$ be a Hilbert space and recall that in this case 
$$
 b_n=\frac{1}{n}\sum_{i=1}^n X_i\,, \qquad b^\star=\E X=\int x\, P(\ud x)\,,
$$
where we used a Pettis integral to define $\bs$. We have
\begin{align*}
\E\| b_n -b^\star\|^2&=\frac{1}{n^2}\sum_{i,j=1}^n \E\langle X_i-\E X, X_j-\E X\rangle\\
&=\frac{1}{n}\var(X)\,, \qquad \text{where} \qquad \var(X)=\E\| X-\E X\|^2\,,
\end{align*}
where we used the independence of the $X_i$'s and bilinearity of the inner product. Unfortunately, this proof, while concise and leading to an equality (!) is not very instructive since it makes crucial use of the closed form for the barycenter, as well as the inner product structure, which do not extend beyond Hilbert spaces.

Remarkably, the same parametric rate of estimation for the barycenter continue to hold in NPC spaces, see Exercise~\ref{ex:npc_bary}.
Unfortunately, as we saw in Theorem~\ref{thm:curvw2ge0}, our main space of interest, namely the Wasserstein space, is an \emph{NNC space}.
Therefore, our goal is to develop statistical theory for the more difficult setting of $\curv \ge 0$.

Returning to the Hilbert case for inspiration, we propose a second proof which still leads to qualitatively the same result but is off by a factor 4. By definition of $ b_n$, we have
$$
P_n\| b_n -\bcdot\|^2 \le P_n\|b^\star -\bcdot\|^2\,.
$$
Here and in the sequel, we use the shorthand operator notation: for any integrable function $f$, $Pf(\bcdot)= \int f(x)\, P(\ud x)$ and in particular
$$
P_n f(\bcdot)=\frac1n\sum_{i=1}^n f(X_i)\,,
\quad \text{where}  \quad
P_n=\frac1n\sum_{i=1}^n \delta_{X_i}
$$
denotes the empirical distribution of the $X_i$'s.

Next, note that
$$
\|b_n -\bcdot\|^2=\|b_n - b^\star\|^2+ \| b^\star - \bcdot \|^2+ 2\,\langle  b_n-b^\star, b^\star - \bcdot\rangle\,.
$$
Therefore, applying operator $P_n$, we get
$$
\|b_n - b^\star\|^2+P_n \| b^\star - \bcdot \|^2+ 2 P_n \langle  b_n-b^\star, b^\star - \bcdot\rangle \le P_n\|b^\star -\bcdot\|^2\,,
$$
so that
$$
\|b_n - b^\star\|^2 \le 2  P_n \langle  b_n-b^\star, \bcdot - b^\star\rangle\,.
$$
Now the above inequality simply says that $\|b_n - b^\star\|^2 \le 2\,\|b_n - b^\star\|^2 $, which is not very useful but we are going to keep going with it for the sake of argument. 

Note first that by linearity of the inner product
$$
 P \langle  b_n-b^\star, \bcdot - b^\star\rangle=0\,.
$$
Therefore, we have
$$
\|b_n - b^\star\|^2 \le 2 \, (P_n-P) \langle  b_n-b^\star, \bcdot - b^\star\rangle= 2 \, \langle  b_n-b^\star,  (P_n-P)(\bcdot - b^\star)\rangle\,.
$$
Dividing on both sides by $\|b_n - b^\star\|$ and applying the Cauchy--Schwarz inequality, we get
\begin{equation}\label{EQ:boundhilbert4}
\E\|b_n - b^\star\|^2 \le 4\,\E\|(P_n-P)(\bcdot - b^\star)\|^2=\frac4n\var(X) \,.
\end{equation}

What did we learn in this proof? First we have only an inequality and lost a factor 4. Our major gain was that we never used the closed form for $b_n$. Instead, we only used the fact that
$$
\E\|(P_n-P)(\bcdot - b^\star)\|^2 \le \var(X) /n\,,
$$
which applies more broadly. On the downside, we used the linearity of the inner product and more generally the Hilbert structure quite extensively. It turns out that this is quite necessary to obtain our results. Therefore, we force the Hilbert structure in through the tangent space of $\cW_2$ and keep track of how much we lose.

\section{Barycenters on positively curved spaces}\label{sec:bary_nnc}

Let $P$ be a probability measure on a positively curved geodesic space $(S,\md)$. Let $\bs$ be any barycenter of $P$ and let $b_n$ be an empirical barycenter built from $n$ independent copies $X_1, \ldots, X_n$ of $X \sim P$:
$$
\bs \in \argmin_{b \in S} \int \md^2(b, x)\,P(\ud x)\,, \qquad b_n \in \argmin_{b \in S} \sum_{i=1}^n \md^2(b,X_i)\,.
$$

Before we proceed, we discuss our overall approach.
The argument in the Hilbert case rests on the inequality $P_n \md^2(b_n, \bcdot) \le P_n \md^2(\bs, \bcdot)$, which holds true by definition of $b_n$ and highlights that the empirical barycenter is an instance of the empirical risk minimization (ERM)\index{empirical risk minimization (ERM)} framework within the statistical estimation literature.
Using ERM techniques, we could hope to control the estimation error via measures of the ``complexity'' of the space $(S,\md)$, and this approach has been pursued in the literature (see~\cite{AhiLeGPar20Bary}).
However, for our application of interest in which $(S,\md)$ is taken to be the Wasserstein space, the complexity is prohibitively large and it leads to non-parametric rates of estimation; in particular, they suffer from the curse of dimensionality, similarly to what we saw in Chapter~\ref{chap:primal-dual}.

However, we have just seen that the rates of estimation in a Hilbert space escape the curse, despite the fact that Hilbert spaces can even be \emph{infinite}-dimensional.
Our intuition therefore leads us to believe that, even if we are working over a curved space $(S,\md)$, as long we can restrict ourselves to sufficiently ``flat'' parts of $S$, then perhaps we could recover the Hilbertian rates.
In the sequel, we seek natural geometric conditions---morally, they encode curvature bounds---which enable fast, \emph{parametric} rates of estimation.

\subsection{Master theorem}

We begin with a general result that mimics the proof of Section~\ref{sec:hilbertbarycenter} in the Hilbert case.

Before stating it, we introduce a quantity that measures how much the tangent space at the barycenter ``hugs" the original space at the barycenter $b^\star$. 

\begin{definition}[Hugging]\index{hugging}
Let $(S,\md)$ be a geodesic space such that $\curv(S)\ge 0$. For any $b^\star, b \in S$, let $h_{b^\star}^b$  be the \emph{hugging function of $S$ at $b^\star$ in direction $b$} defined by
\begin{equation}
\label{eq:kb}
 h^b_{\bs}(x)=1-\frac{\|{\log_{\bs}(x)}-\log_{\bs}(b)\|^2_{\bs}-\md^2(x,b)}{\md^2(b,\bs)}\,, \qquad x \in S\,.
\end{equation}
\end{definition}

Note that it follows from Proposition~\ref{lem:tangentcone2} that $h^b_{\bs}(x)\le 1$ for all $x \in S$. Moreover,
if $S$ is a Hilbert space, then $\|{\log_{\bs}(x)}-\log_{\bs}(b)\|^2_{\bs}=\md^2(x,b)$ and $ h^b_{\bs}\equiv 1$. In general, $h^b_{\bs}(x)$ may be negative when there is a lot of curvature around $\bs$ but it remains non-negative in average when computed at barycenter $\bs$. This result follows from the following simple but important observation.

\begin{theorem}[Variance equality]\label{thm:vi}\index{variance equality}
Let $(S,\md)$ be a geodesic space with $\curv(S)\ge 0$. Let $Q\in\cP_2(S)$ be a probability distribution on $S$ with barycenter $\bs$. Assume further that the tangent cone of $S$ at $\bs$ equipped with $\langle\cdot, \cdot \rangle_{\bs}$ is a convex subset of a Hilbert space. Then, for all $b\in S$,
\begin{equation}\label{eq:vi}
\md^2(b,\bs) \int h^{b}_{\bs}(x)\,Q(\ud x)= \int(\md^2(x,b)-\md^2(x,\bs))\, Q(\ud x)\,,
\end{equation}
where $h^b_{\bs}$ is the hugging function defined in \eqref{eq:kb}. 
\end{theorem}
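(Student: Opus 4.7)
The strategy is to unfold the definition of $h^b_{\bs}$ and exploit the Hilbertian structure of the tangent cone to reduce the claimed identity to a first-order optimality condition for the barycenter. Specifically, multiplying the definition~\eqref{eq:kb} through by $\md^2(b,\bs)$ gives
\[\md^2(b,\bs)\,h^b_{\bs}(x)=\md^2(b,\bs)+\md^2(x,b)-\|\log_{\bs}(x)-\log_{\bs}(b)\|^2_{\bs}.\]
Since the tangent cone at $\bs$ is assumed to be a convex subset of a Hilbert space and $\|\log_{\bs}(y)\|_{\bs}=\md(y,\bs)$ for every $y\in S$ (as follows from Definition~\ref{def:tan_cone}), the polarization identity yields
\[\|\log_{\bs}(x)-\log_{\bs}(b)\|^2_{\bs}=\md^2(x,\bs)+\md^2(b,\bs)-2\,\langle\log_{\bs}(x),\log_{\bs}(b)\rangle_{\bs}.\]
Substituting this into the previous display and cancelling the $\md^2(b,\bs)$ terms, one obtains the pointwise identity
\[\md^2(b,\bs)\,h^b_{\bs}(x)=\md^2(x,b)-\md^2(x,\bs)+2\,\langle\log_{\bs}(x),\log_{\bs}(b)\rangle_{\bs}.\]

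Integrating against $Q$ and using linearity of the Hilbert inner product to pass the integral inside, the desired equality~\eqref{eq:vi} is seen to be equivalent to the orthogonality relation
\[\Bigl\langle\int\log_{\bs}(x)\,Q(\ud x),\,\log_{\bs}(b)\Bigr\rangle_{\bs}=0\qquad\forall\,b\in S.\]

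The main obstacle is the verification of this orthogonality relation, which must encode the first-order optimality of $\bs$ as minimizer of $F(b')\deq\int\md^2(b',x)\,Q(\ud x)$. The plan is to differentiate $F$ along the constant-speed geodesic $\omega$ joining $\bs$ to $b$: using Definition~\ref{def:angle} together with the angle monotonicity of Proposition~\ref{pro:monotone}, a short calculation in the spirit of Lemma~\ref{lem:Wasserstein_derivative} yields $\frac{d}{dt}\md^2(\omega(t),x)\big|_{t=0^+}=-2\,\langle\log_{\bs}(x),\log_{\bs}(b)\rangle_{\bs}$, whence minimality of $F$ at $\bs$ gives
\[0\leq\frac{d}{dt}F(\omega(t))\Big|_{t=0^+}=-2\,\Bigl\langle\int\log_{\bs}(x)\,Q(\ud x),\,\log_{\bs}(b)\Bigr\rangle_{\bs}.\]
This yields only the inequality $\leq 0$; promoting it to equality is the subtle point, and leverages the convex Hilbert structure of $T_{\bs}S$ so that the set $\{\log_{\bs}(b):b\in S\}$ is rich enough to force the integral $\int\log_{\bs}(x)\,Q(\ud x)$ itself to vanish. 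In the Wasserstein setting of primary interest, where by Theorem~\ref{thm:tangentWass} one has $\log_{\bs}(x)=T_{\bs\to x}-\id$, this last step is supplied by Brenier's Theorem~\ref{thm:improvedBrenier}, which produces an optimal transport map (and hence a valid direction in the tangent cone) in every admissible direction, so that the orthogonality upgrades to the familiar fixed-point relation $\int T_{\bs\to x}\,Q(\ud x)=\id$.
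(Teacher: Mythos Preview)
Your reduction to the pointwise identity
\[
\md^2(b,\bs)\,h^b_{\bs}(x)=\md^2(x,b)-\md^2(x,\bs)+2\,\langle\log_{\bs}(x),\log_{\bs}(b)\rangle_{\bs}
\]
and then to the orthogonality relation $\bigl\langle\int\log_{\bs}(x)\,Q(\ud x),\,\log_{\bs}(b)\bigr\rangle_{\bs}=0$ matches the paper exactly. The divergence is in how you establish this orthogonality.

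Your plan differentiates $F$ along geodesics to obtain an inequality and then attempts to promote it to equality via a ``richness'' argument that you yourself flag as incomplete outside the Wasserstein case. The paper avoids this detour entirely by proving something stronger: $\int\log_{\bs}(x)\,Q(\ud x)=o_{\bs}$, the tip of the cone. This follows from the expansiveness of $\log_{\bs}$ (Proposition~\ref{lem:tangentcone2}): for every $b\in S$,
\[
Q\|\log_{\bs}(\bcdot)-\log_{\bs}(\bs)\|_{\bs}^2=Q\md^2(\bcdot,\bs)\le Q\md^2(\bcdot,b)\le Q\|\log_{\bs}(\bcdot)-\log_{\bs}(b)\|_{\bs}^2,
\]
where the first equality is the isometry case of Proposition~\ref{lem:tangentcone2}, the middle inequality is the barycenter property of $\bs$, and the last inequality is expansiveness. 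Hence $\log_{\bs}(\bs)=o_{\bs}$ is the Hilbert barycenter of $(\log_{\bs})_\#Q$, which in a Hilbert space is precisely the mean $\int\log_{\bs}(x)\,Q(\ud x)$. Since $\|o_{\bs}\|_{\bs}=0$, the inner product $\langle o_{\bs},\log_{\bs}(b)\rangle_{\bs}$ vanishes identically. This argument works in full generality and does not require any analysis of the image of $\log_{\bs}$ beyond what is already built into the tangent-cone construction.
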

\begin{proof}
By definition of $h_{\bs}^b$, we have 
\begin{align*}
   \md^2(b,\bs) \, h_{\bs}^b(\bcdot)
   &= \md^2(b, \bs) + \md^2(\bcdot, b) - \|{\log_{\bs} b} -\log_{\bs} \bcdot\|^2_{\bs} \\
   &= \md^2(b, \bs) + \md^2(\bcdot, b) \\
   &\qquad{} - \|{\log_{\bs} b}\|_{\bs}^2 -\|{\log_{\bs} \bcdot}\|^2_{\bs} +2\,\langle \log_{\bs} \bcdot, \log_{\bs} b \rangle_{\bs}\\
      &= \md^2(b, \bs) + \md^2(\bcdot, b) \\
   &\qquad{} - \md^2(b, \bs) - \md^2(\bs, \bcdot)+2\,\langle \log_{\bs} \bcdot, \log_{\bs} b \rangle_{\bs}\\
   &= \md^2(\bcdot, b) - \md^2(\bcdot, \bs)+2\,\langle \log_{\bs} \bcdot, \log_{\bs} b \rangle_{\bs}\,.
\end{align*}
Therefore applying the linear operator $Q$, we get
$$
    \md^2(b,\bs) \, Q h_{\bs}^b(\bcdot)=Q\md^2(\bcdot, b) - Q\md^2(\bcdot, \bs) + 2\, \langle \log_{\bs}\bs , \log_{\bs} b \rangle_{\bs}\,,
$$ 
where we use the fact that $Q\log_{\bs}\bcdot=\log_{\bs} \bs$ or, in other words, that $\log_{\bs}\bs$ is the barycenter of $(\log_{\bs})_{\#}Q$. Indeed, we have by Proposition~\ref{lem:tangentcone2} that for all $b \in S$,
$$
Q\|{\log_{\bs}\bcdot}-\log_{\bs}\bs \|_{\bs}^2=Q\md^2(\bcdot, \bs)\le Q\md^2(\bcdot, b)\le Q\|{\log_{\bs}\bcdot}-\log_{\bs}b \|_{\bs}^2
$$
with equality if $b=\bs$ so that $\log_{\bs}\bs$ is a barycenter for $(\log_{\bs})_{\#}Q$ and therefore $Q\log_{\bs}\bcdot=\log_{\bs}\bs$.

Finally since $\log_{\bs}\bs=o_{\bs}$ is the tip of the tangent cone, we have $\|{\log_{\bs}\bs}\|_{\bs}=0$, which, in turn, yields $\langle \log_{\bs}\bs , \log_{\bs} b \rangle_{\bs}=0$. This completes the proof.
\end{proof}

A direct consequence of the variance equality is that if $ \int h^b_{\bs}\,\ud Q>0$, then $\bs$ is the unique barycenter of $Q$. Moreover, since the right-hand side of the variance equality is non-negative by definition of a barycenter $\bs$, we readily get the following corollary.

\begin{corollary}\label{cor:kbounds}
Under the same assumptions as Theorem~\ref{thm:vi}, we have for any $b \in S$,
$$
0 \le \int h^b_{\bs}(x)\,Q(\ud x) \le 1\,.
$$
Moreover, for any $b, x \in S$, we have $h^b_{\bs}(x) \le 1$.
\end{corollary}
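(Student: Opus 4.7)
The plan is to deduce both assertions directly from the variance equality (Theorem~\ref{thm:vi}) together with the expansiveness of the logarithmic map (Proposition~\ref{lem:tangentcone2}). Implicit in the definition~\eqref{eq:kb} is the assumption $b \ne b^\star$ (otherwise the denominator vanishes); this is the only case that requires attention.

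First I would dispatch the pointwise bound $h^b_{b^\star}(x) \le 1$. Since $\curv(S) \ge 0$, Proposition~\ref{lem:tangentcone2} gives $\md(x,b) \le \|\log_{b^\star}(x) - \log_{b^\star}(b)\|_{b^\star}$, so the numerator of the fraction in~\eqref{eq:kb} is non-negative. Subtracting a non-negative quantity from $1$ yields $h^b_{b^\star}(x) \le 1$ for all $x \in S$. Integrating this pointwise inequality against $Q$ immediately produces the upper bound $\int h^b_{b^\star}\,\ud Q \le 1$.

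For the lower bound, I would apply the variance equality of Theorem~\ref{thm:vi} to rewrite
\begin{equation*}
\md^2(b, b^\star) \int h^b_{b^\star}(x)\,Q(\ud x) = \int \md^2(x,b)\,Q(\ud x) - \int \md^2(x, b^\star)\,Q(\ud x)\,.
\end{equation*}
Because $b^\star$ is a barycenter of $Q$, it minimizes the Fr\'echet functional $b \mapsto \int \md^2(\bcdot, b)\,\ud Q$, so the right-hand side is non-negative. Dividing through by the strictly positive factor $\md^2(b, b^\star)$ delivers $\int h^b_{b^\star}\,\ud Q \ge 0$, completing the proof.

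Neither step presents any real obstacle: the whole corollary is essentially a repackaging of Theorem~\ref{thm:vi} together with Proposition~\ref{lem:tangentcone2}, and the only thing to keep in mind is that the pointwise inequality $h^b_{b^\star} \le 1$ holds for every $x \in S$, whereas the averaged lower bound genuinely requires the minimizing property of $b^\star$ and cannot be strengthened to a pointwise statement.
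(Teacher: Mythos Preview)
Your proof is correct and follows essentially the same approach as the paper: the pointwise bound $h^b_{b^\star}(x)\le 1$ comes from Proposition~\ref{lem:tangentcone2}, the integrated upper bound follows by integrating this, and the integrated lower bound comes from the variance equality combined with the barycenter property of $b^\star$. The paper presents this even more tersely (the pointwise bound is noted just before Theorem~\ref{thm:vi}, and the corollary is stated as an immediate consequence without a separate proof), but the logical content is identical.
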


It turns out the hugging function at $\bs$ plays a key role in obtaining parametric rates of convergence for empirical barycenters.

\begin{theorem}[Master theorem]\label{thm:master}
Let $P$ be a probability measure on a NNC geodesic space $(S,\md)$ and denote by $\bs$ and $b_n$ a barycenter of $P$ and an empirical barycenter respectively. Assume further that the tangent cone of $S$ at $\bs$ equipped with $\langle\cdot, \cdot \rangle_{\bs}$ is a convex subset of a Hilbert space. Then, the following holds: if for any $b \in S$, 
\begin{equation}\label{EQ:kpositif}
h_{\bs}^{b}(\bcdot) \ge \mathsf{h}_{\min} >0\,,
\end{equation}
then $b_n$ and $\bs$ are both unique and
$$
\E\md^2(b_n, \bs)\le \frac{4\sigma^2}{ \mathsf{h}_{\min}^2n}\,,
$$
where $\sigma^2$ denotes the variance of $P$ defined by
\begin{equation}\label{EQ:defvarP}
\sigma^2=\int \md^2(\bs, x)\, P(\ud x)\,.
\end{equation}
\end{theorem}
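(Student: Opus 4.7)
The plan is to replicate the second Hilbert-space proof from Section~\ref{sec:hilbertbarycenter} by leveraging the Hilbert structure of the tangent cone at $\bs$, using the hugging function as a bridge between the intrinsic distance on $S$ and the Hilbertian distance on $T_{\bs}S$. The crucial object is the pointwise identity extracted from the proof of Theorem~\ref{thm:vi}, namely
\begin{align*}
    \md^2(x,b) - \md^2(x,\bs)
    &= h^b_{\bs}(x)\,\md^2(b,\bs) - 2\,\langle \log_{\bs} x, \log_{\bs} b\rangle_{\bs}\,,
\end{align*}
which should be read as a ``one-sided expansion'' in the tangent cone: the inner-product term is the Hilbertian piece, and the hugging term controls the curvature defect.

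First, I would apply the pointwise hugging lower bound $h^{b_n}_{\bs}(\bcdot) \ge \mathsf{h}_{\min}$ to the identity above with $b=b_n$, then integrate against $P_n$ to obtain
\begin{align*}
    P_n[\md^2(\bcdot,b_n) - \md^2(\bcdot,\bs)]
    &\ge \mathsf{h}_{\min}\,\md^2(b_n,\bs) - 2\,\langle P_n \log_{\bs}\bcdot,\, \log_{\bs} b_n\rangle_{\bs}\,.
\end{align*}
Combined with the ERM inequality $P_n\md^2(\bcdot,b_n) \le P_n\md^2(\bcdot,\bs)$, this yields $\mathsf{h}_{\min}\,\md^2(b_n,\bs) \le 2\,\langle P_n\log_{\bs}\bcdot,\log_{\bs}b_n\rangle_{\bs}$. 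Next, I would invoke the identity $P\log_{\bs}\bcdot = \log_{\bs}\bs = o_{\bs}$ established inside the proof of Theorem~\ref{thm:vi}, so that $\langle P\log_{\bs}\bcdot,\log_{\bs}b_n\rangle_{\bs}=0$ and I may replace $P_n$ by $P_n-P$ for free. An application of Cauchy--Schwarz in $T_{\bs}S$ then gives
\begin{align*}
    \mathsf{h}_{\min}\,\md(b_n,\bs)
    &\le 2\,\|(P_n-P)\log_{\bs}\bcdot\|_{\bs}\,,
\end{align*}
after dividing by $\|\log_{\bs}b_n\|_{\bs} = \md(b_n,\bs)$.

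Squaring, taking expectations, and using that $\log_{\bs}(X_1),\dotsc,\log_{\bs}(X_n)$ are i.i.d.\ Hilbert-space valued random elements with mean $o_{\bs}$ reduces the right-hand side to a standard variance-of-the-mean computation:
\begin{align*}
    \E\|(P_n-P)\log_{\bs}\bcdot\|_{\bs}^2
    &= \frac{1}{n}\,\E\|\log_{\bs}X_1\|_{\bs}^2
    = \frac{1}{n}\,\E\md^2(\bs,X_1)
    = \frac{\sigma^2}{n}\,,
\end{align*}
which gives the announced bound $\E\md^2(b_n,\bs)\le 4\sigma^2/(\mathsf{h}_{\min}^2 n)$. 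Uniqueness of $\bs$ is a direct consequence of Theorem~\ref{thm:vi}: if $b$ is any other barycenter of $P$, then the right-hand side of~\eqref{eq:vi} vanishes while the coefficient $Ph^b_{\bs} \ge \mathsf{h}_{\min} > 0$, forcing $\md(b,\bs)=0$; uniqueness of $b_n$ follows by the same argument applied to the empirical measure $P_n$, for which the pointwise hugging lower bound also guarantees $P_n h^b_{\bs} \ge \mathsf{h}_{\min}$ so any two empirical barycenters must coincide with each other and moreover equal the unique minimizer of $b\mapsto P_n \md^2(\bcdot,b)$ via the same variance-equality argument.

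The main conceptual obstacle is that the naive Hilbertian proof requires linearity of the inner product \emph{and} an ``exact'' Pythagorean identity relating $\md$ to the tangent-cone norm, neither of which is available on an NNC space: the $\log_{\bs}$ map is only \emph{expansive} (Proposition~\ref{lem:tangentcone2}), not isometric. The hugging function is precisely designed to measure this discrepancy, and the decisive step is to apply its pointwise lower bound \emph{before} integrating, which turns a curvature defect that would otherwise appear as an uncontrollable stochastic term $(P_n-P)h^{b_n}_{\bs}$ into the clean deterministic prefactor $\mathsf{h}_{\min}$ on the left-hand side.
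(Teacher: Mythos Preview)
Your proof is correct and follows essentially the same approach as the paper's. You start from the pointwise identity $\md^2(x,b)-\md^2(x,\bs)=h^b_{\bs}(x)\,\md^2(b,\bs)-2\langle\log_{\bs}x,\log_{\bs}b\rangle_{\bs}$ (extracted from the proof of Theorem~\ref{thm:vi}) and apply the hugging lower bound directly, while the paper re-derives the same identity by adding and subtracting $\|{\log_{\bs}b_n}-\log_{\bs}\bcdot\|^2_{\bs}$ and expanding; both routes land on $\mathsf{h}_{\min}\,\md^2(b_n,\bs)\le 2\,\langle P_n\log_{\bs}\bcdot,\log_{\bs}b_n\rangle_{\bs}$ and finish identically via Cauchy--Schwarz and the centering $P\log_{\bs}\bcdot=o_{\bs}$. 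One caveat: your uniqueness argument for $b_n$ invokes the variance equality for $P_n$, but Theorem~\ref{thm:vi} is formulated at the barycenter of the measure in question, so applying it to $P_n$ would require the tangent-cone Hilbert assumption and the hugging lower bound \emph{at $b_n$} rather than at $\bs$---the paper is equally terse on this point, so it is not a defect of your argument relative to the text.
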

\begin{proof}
Note first that uniqueness follows directly from the variance equality and~\eqref{EQ:kpositif}.

Next, we start as in the Hilbert case by observing that
$$
P_n \md^2(b_n, \bcdot) \le P_n \md^2(\bs, \bcdot)\,.
$$
It yields 
\begin{equation}\label{EQ:prmaster1}
    \begin{aligned}
        &P_n \md^2(b_n, \bcdot) -P_n\|{\log_{\bs}b_n} - \log_{\bs}\bcdot \|^2_{\bs} \\
        &\qquad{} +P_n\|{\log_{\bs}b_n} - \log_{\bs}\bcdot \|^2_{\bs}-P_n \md^2(\bs, \bcdot)\le 0\,.
    \end{aligned}
\end{equation}
We now make use of the fact that the tangent cone has a Hilbert structure so that
\begin{align*}
&\|{\log_{\bs}b_n} - \log_{\bs}\bcdot \|^2_{\bs} \\
&\qquad =\|{\log_{\bs}b_n}\|^2_{\bs}+\|{\log_{\bs}\bcdot}\|^2_{\bs}-2\,\langle \log_{\bs}b_n, \log_{\bs}\bcdot\rangle_{\bs}\\
&\qquad = \md^2(\bs, b_n)+ \md^2(\bs, \bcdot)-2\,\langle \log_{\bs}b_n, \log_{\bs}\bcdot\rangle_{\bs}
\end{align*}
where in the second identity, we used twice the equality case in Proposition~\ref{lem:tangentcone2}. Plugging this into~\eqref{EQ:prmaster1} yields
$$
 \md^2(\bs, b_n)\le  P_n \big[\|{\log_{\bs}b_n} - \log_{\bs}\bcdot \|^2_{\bs}-\md^2(b_n, \bcdot)\big]+2P_n\langle \log_{\bs}b_n, \log_{\bs}\bcdot\rangle_{\bs}\,.
 $$
Next, by definition of the hugging function, we get
$$
\|{\log_{\bs}b_n} - \log_{\bs}\bcdot \|^2_{\bs}-\md^2(b_n, \bcdot)=(1-h_{\bs}^{b_n}(\bcdot))\, \md^2(b_n, \bs)\,.
$$
It yields
$$
 \mathsf{h}_{\min}\, \md^2(b_n, \bs) \le 2P_n\langle \log_{\bs}b_n, \log_{\bs}\bcdot\rangle_{\bs}\,.
$$
The right-hand side is simply an average in a Hilbert space so, dividing by $\|{\log_{\bs} b_n}\|_{\bs}=\md(b_n, \bs)$ on both sides and applying Cauchy--Schwarz, we get
$$
\mathsf{h}_{\min}^2\,\E \md^2(b_n,\bs) \le \frac{4\sigma^2}{n}\,,
$$
where
$$
\sigma^2=\int \|{\log_{\bs} x}\|_{\bs}^2\, P(\ud x)=\int \md^2(\bs,x) \,P(\ud x)
$$
as desired.
\end{proof}

It follows from inspecting the proof of the master theorem that in order to obtain parametric rates of estimation for $\bs$, it suffices to have the weaker condition $P_nh_{\bs}^{b_n}(\bcdot)\ge \mathsf{h}_{\min}>0$. Since $P_n$ is a random measure, we prefer not to impose conditions on it and focus instead on the stronger condition~\eqref{EQ:kpositif}. We are going to obtain such results using the notion of \emph{extendable geodesics}.

\subsection{Extendable geodesics}

We now present a compelling synthetic geometric condition that implies this lower bound in the context of NNC spaces: the extendability, by a given factor, of all geodesics emanating from and arriving at the barycenter $\bs$.

\begin{definition}[Extendable geodesic]\index{geodesic!extendable}
Consider a constant-speed geodesic $\omega:[0,1]\to S$.
For $(\lambda_\mathrm{in},\lambda_\mathrm{out})\in [0, \infty]^2$, we say that $\omega$ is $(\lambda_\mathrm{in},\lambda_\mathrm{out})$-\emph{extendable} if there exists a path $\omega^{+}:[-\lambda_\mathrm{in},1+\lambda_\mathrm{out}]\to S$ which agrees with $\omega$ on $[0,1]$, called an \emph{extension} of $\omega$, which remains a geodesic between its endpoints $\omega^{+}(-\lambda_\mathrm{in})$ and $\omega^{+}(1+\lambda_\mathrm{out})$.
\end{definition}

Before we state the main result of this subsection, we need the following fact. 

\begin{theorem}\label{thm:extendvariance}
Suppose that $\curv(S) \ge 0$. Let $Q\in \cP_2(S)$ be a probability measure on $S$ with a barycenter~$\bs$. 
Suppose that, for each $x \in \supp(Q)$, there exists a constant-speed geodesic $\omega_{x}:[0,1]\to S$ connecting $\bs$ to $x$ which is $(0,\lambda)$-extendable for $\lambda>0$. Suppose in addition that $\bs$ remains a barycenter of the distribution $Q_{\lambda}=(e_{\lambda})_{\#} Q$ where $e_{\lambda}(x)=\omega^{+}_{x}(1+\lambda)$. Then for all $b \in S$,
\begin{equation}
\label{eq:extend}    
Qh_{\bs}^b(\bcdot) \ge \frac{\lambda}{1 + \lambda} \,.
\end{equation}
In particular, it implies that $\bs$ is the unique barycenter of $Q$.
\end{theorem}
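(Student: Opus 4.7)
The plan is to leverage the extendability hypothesis to derive a pointwise comparison between the hugging function at $x$ and at its ``extended'' counterpart $y \deq e_\lambda(x)$, and then integrate, using the fact that $\bs$ is a barycenter of $Q_\lambda$ to produce the required lower bound. The key geometric observation is that for every $x \in \supp(Q)$, the three points $\bs$, $x$, $y$ lie on a common constant-speed geodesic: reparametrizing $\omega_x^+$ on $[0, 1+\lambda]$ to a constant-speed geodesic $\tilde\omega_x : [0,1] \to S$ joining $\bs$ to $y$, the point $x$ sits at parameter $s_\star = 1/(1+\lambda)$. Choosing the directions defining $\log_{\bs}(x)$ and $\log_{\bs}(y)$ to be the one induced by $\tilde\omega_x$, this yields $\md(\bs,y) = (1+\lambda)\,\md(\bs,x)$ and $\log_{\bs}(y) = (1+\lambda)\cdot \log_{\bs}(x)$ in the tangent cone $T_{\bs}S$.

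The core calculation expands the hugging function via the inner product structure of the cone. Since $\md_{\bs}^2(u,v) = \|u\|_{\bs}^2 + \|v\|_{\bs}^2 - 2\langle u,v\rangle_{\bs}$ by definition, the formula~\eqref{eq:kb} rearranges to
\begin{align*}
    \md^2(b,\bs)\,h_{\bs}^b(x) &= \md^2(x,b) - \md^2(\bs,x) + 2\,\langle \log_{\bs}(x),\log_{\bs}(b)\rangle_{\bs}\,,\\
    \md^2(b,\bs)\,h_{\bs}^b(y) &= \md^2(y,b) - (1+\lambda)^2\md^2(\bs,x) + 2(1+\lambda)\,\langle \log_{\bs}(x),\log_{\bs}(b)\rangle_{\bs}\,,
\end{align*}
where the second line uses the scaling $\log_{\bs}(y) = (1+\lambda)\cdot\log_{\bs}(x)$. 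Eliminating the inner product produces the identity
\begin{align*}
    \md^2(b,\bs)\,\bigl[h_{\bs}^b(y) - (1+\lambda)\,h_{\bs}^b(x)\bigr] = \md^2(y,b) - (1+\lambda)\,\md^2(x,b) - \lambda(1+\lambda)\,\md^2(\bs,x)\,.
\end{align*}
Applying Proposition~\ref{pro:NNC} to $\tilde\omega_x$ at $s_\star = 1/(1+\lambda)$ (at which $\tilde\omega_x$ passes through $x$) and rearranging gives
\begin{align*}
    \md^2(y,b) \le (1+\lambda)\,\md^2(x,b) - \lambda\,\md^2(b,\bs) + \lambda(1+\lambda)\,\md^2(\bs,x)\,,
\end{align*}
which when substituted above yields the clean pointwise inequality
\begin{align*}
    (1+\lambda)\,h_{\bs}^b(x) \ge h_{\bs}^b(y) + \lambda\,.
\end{align*}

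Integrating this inequality against $Q$ and using that $Y := e_\lambda(X) \sim Q_\lambda$ when $X \sim Q$, the hypothesis that $\bs$ is a barycenter of $Q_\lambda$ together with Corollary~\ref{cor:kbounds} gives $\int h_{\bs}^b\,\ud Q_\lambda \ge 0$, whence $\int h_{\bs}^b \, \ud Q \ge \lambda/(1+\lambda)$. Uniqueness of $\bs$ then follows from the variance equality of Theorem~\ref{thm:vi}: any other barycenter $b'$ of $Q$ would have to satisfy $\md^2(b',\bs)\cdot\lambda/(1+\lambda) \le 0$, forcing $b' = \bs$.

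The main obstacle I anticipate is the careful bookkeeping around the logarithmic map. The map $\log_{\bs}$ is only defined up to a choice of representative direction when points lie in the cut locus of $\bs$, and the identity $\log_{\bs}(y) = (1+\lambda)\cdot\log_{\bs}(x)$ (on which the whole algebraic manipulation rests) requires us to pick these representatives compatibly with the extended geodesic $\omega_x^+$. This is precisely the role of the $(0,\lambda)$-extendability hypothesis: without it, there would be no canonical geodesic from $\bs$ through $x$ continuing to a point at distance $(1+\lambda)\,\md(\bs,x)$. A minor secondary point is verifying that $Q_\lambda \in \cP_2(S)$ so Corollary~\ref{cor:kbounds} applies, which follows immediately from $\md(\bs,y) = (1+\lambda)\,\md(\bs,x)$ and $Q \in \cP_2(S)$.
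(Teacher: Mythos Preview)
Your proof is correct and uses the same core ingredients as the paper: the NNC inequality (Proposition~\ref{pro:NNC}) applied to the extended geodesic from $\bs$ through $x$ to $e_\lambda(x)$, together with the barycenter property of $Q_\lambda$ and (the consequence of) the variance equality. The organization differs: the paper works entirely with squared distances to derive the variance inequality $\frac{\lambda}{1+\lambda}\,\md^2(b,\bs) \le Q[\md^2(b,\bcdot) - \md^2(\bs,\bcdot)]$ and only invokes Theorem~\ref{thm:vi} at the very end, whereas you expand the hugging function via the tangent-cone inner product from the start and obtain the pointwise inequality $(1+\lambda)\,h_{\bs}^b(x) \ge h_{\bs}^b(e_\lambda(x)) + \lambda$ before integrating and invoking Corollary~\ref{cor:kbounds} for $Q_\lambda$. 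Your route buys a slightly more informative pointwise statement; the paper's route avoids manipulating $\log_{\bs}$ and the attendant choice-of-direction bookkeeping that you correctly flag as the main subtle point.
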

\begin{proof}
Fix $y \in \supp(Q)$ and define $y_\lambda=e_{\lambda}(y)$. Let $\omega:[0,1]\to S$ be a constant-speed geodesic connecting $b^\star$ to $y_\lambda$. By definition, $\omega(\tau)=y$ for $\tau=1/(1+\lambda)$.
Since $\curv(S)\ge 0$, we have for any $b \in S$, 
\begin{align*}
\md^2(b, y)&\ge (1-\tau)\,\md^2(b, \bs)+\tau\, \md^2(b,y_\lambda)-\tau\,(1-\tau)\,\md^2(\bs, y_\lambda)\\
& = \frac{\lambda}{1+\lambda}\,\md^2(b, \bs)+\frac{1}{1+\lambda} \,\md^2(b,y_\lambda)-\frac{\lambda}{(1+\lambda)^2}\,\md^2(\bs, y_\lambda)\,.
\end{align*}
Next, observe that
$$
\md^2(\bs, y_\lambda)=(1+\lambda)^2\,\md^2(\bs, y)
$$
so that
\begin{align}
    &\frac{\lambda}{1+\lambda}\,\md^2(b, \bs)
    \le \md^2(b, y)+\lambda \,\md^2(\bs, y)- \frac{1}{1+\lambda}\, \md^2(b,y_\lambda) \nonumber\\
    &\qquad =\big(\md^2(b, y)-\md^2(\bs, y)\big)+ (1+\lambda)\, \md^2(\bs, y)- \frac{1}{1+\lambda}\, \md^2(b,y_\lambda)\,.\label{pr:extgeo1}
\end{align}
Moreover,
\begin{align*}
    &(1+\lambda)\, \md^2(\bs, y)- \frac{1}{1+\lambda}\, \md^2(b,y_\lambda) \\
    &\qquad =\frac{1}{1+\lambda}\,\big( (1+\lambda)^2\, \md^2(\bs, y) -\md^2(b,y_\lambda)\big)\\
    &\qquad =\frac{1}{1+\lambda}\,\big( \md^2(\bs, y_\lambda) -\md^2(b,y_\lambda)\big)\,.
\end{align*}
Thus, writing $Q_\lambda \deq  (e_\lambda)_{\#}Q$, we get
\begin{align*}
    &\int \Big((1+\lambda)\, \md^2(\bs, y)- \frac{1}{1+\lambda}\, \md^2(b,y_\lambda)\Big)\, Q(\ud y)\\
    &\qquad =\frac{1}{1+\lambda}\int \big( \md^2(\bs, y) -\md^2(b,y)\big)\, Q_\lambda(\ud y) \le 0\,,
\end{align*}
where in the last inequality, we used the fact that $\bs$ remains a barycenter of $Q_\lambda$. Together with~\eqref{pr:extgeo1} integrated with respect to $Q$, we get
\begin{align}\label{eq:var_ineq}
    \frac{\lambda}{1+\lambda}\,\md^2(b, \bs) \le \int \big(\md^2(b, y)-\md^2(\bs, y)\big)\,Q(\ud y)\,.
\end{align}
Combined with the variance equality (Theorem~\ref{thm:vi}), this completes the proof.
\end{proof}

The above notion of extendable geodesics gives a lower bound on $Ph_{\bs}^b(\bcdot)$ uniformly in $b$. While this is already an attractive feature that implies uniqueness of the barycenter, it suffers from two deficiencies. First, we need to control $P_nh_{\bs}^{b_n}(\bcdot)$ and $b_n$ is data-dependent, and it is unclear how to control the deviation $|P_nh_{\bs}^{b_n}-Ph_{\bs}^{b_n}|$ in a suitable fashion. Second, the condition that $P_\lambda=(e_\lambda)_{\#}P$ keeps the same barycenter is difficult to check and appears to be restrictive. 

To overcome both limitations, we allow for geodesics emanating from $\bs$ to be extendable in both directions.

\begin{theorem}\label{thm:extendgeod} 
Suppose that $\curv(S)\ge 0$ and let $x, b, \bs \in S$. Suppose that there exist $\lambda_\mathrm{in},\lambda_\mathrm{out}>0$ and a geodesic connecting $\bs$ to $x$ which is $(\lambda_\mathrm{in},\lambda_\mathrm{out})$-extendable. Then 
\[
h_{\bs}^b(x)\ge \mathsf{h}_{\min}= \frac{\lambda_\mathrm{out}}{1+\lambda_\mathrm{out}}-\frac{1}{\lambda_\mathrm{in}}\,.
\]
\end{theorem}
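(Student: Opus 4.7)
My plan is to reduce the hugging bound at $x$ to one at the forward-extended point $y \deq \omega^+(1+\lambda_\mathrm{out})$, and then control $h_{\bs}^b(y)$ using the backward extension to $z \deq \omega^+(-\lambda_\mathrm{in})$. Write $\rho = \md(\bs,x)$, $A = \md^2(b,x)$, $B = \md^2(b,\bs)$, $C = \md^2(b,y)$, $D = \md^2(b,z)$, and let $\alpha = \measuredangle(\omega_x,\omega_b)$, $\gamma = \measuredangle(\omega_z,\omega_b)$, $\delta = \measuredangle_{\bs}(z,b)$.

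The first two steps are algebraic and use the forward extension only. Because $y$ lies on the forward extension of the geodesic $\omega_x$, we have $\log_{\bs}(y) = (1+\lambda_\mathrm{out})\cdot \log_{\bs}(x)$ in the tangent cone. Expanding $\|\log_{\bs}(y)-\log_{\bs}(b)\|_{\bs}^2$ via the cone inner product and comparing to $\|\log_{\bs}(x)-\log_{\bs}(b)\|_{\bs}^2$ produces the exact identity
\[
(1+\lambda_\mathrm{out})\,B\,h_{\bs}^b(x) \;=\; (1+\lambda_\mathrm{out})A \,+\, \lambda_\mathrm{out}(1+\lambda_\mathrm{out})\rho^2 \,-\, C \,+\, B\,h_{\bs}^b(y)\,.
\]
Applying NNC to the triangle $\{b,\bs,y\}$ with $x$ sitting on the geodesic $\bs \to y$ at parameter $1/(1+\lambda_\mathrm{out})$ lower-bounds $A$ by $\tfrac{\lambda_\mathrm{out}}{1+\lambda_\mathrm{out}}B + \tfrac{C}{1+\lambda_\mathrm{out}} - \lambda_\mathrm{out}\rho^2$; substituting into the identity makes the $C$-terms cancel and yields
\[
h_{\bs}^b(x) \;\ge\; \frac{\lambda_\mathrm{out}}{1+\lambda_\mathrm{out}} + \frac{h_{\bs}^b(y)}{1+\lambda_\mathrm{out}}\,.
\]
It then suffices to show $h_{\bs}^b(y) \ge -(1+\lambda_\mathrm{out})/\lambda_\mathrm{in}$, which is where the backward extendability enters.

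Unpacking the definition of $h_{\bs}^b(y)$ via the cone cosine formula, this target inequality is equivalent to an upper bound on $\|\log_{\bs}(y)-\log_{\bs}(b)\|_{\bs}^2$, which in turn reduces to a \emph{lower} bound on $\cos\alpha$. Here the backward extension supplies an ``antipodal'' direction: $\measuredangle(\omega_x,\omega_z) = \pi$. Combining the angle triangle inequality~\eqref{angleti}, angle monotonicity for $\omega_z$, and an NNC-first-variation argument along the extended geodesic $\omega^+$ through $\bs$ (relying on the fact that $s\mapsto \md^2(b,\omega^+(s/\rho)) - s^2$ is concave on the extended interval) allows one to pin down the interaction between the cone angle $\alpha$ and the distance-based angle $\delta$, producing the lower bound $\cos\alpha \ge -\cos\delta$. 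Feeding this back into the cone cosine formula and rewriting $\cos\delta$ through the law of cosines reduces everything to the purely metric inequality
\[
C \,+\, \frac{1+\lambda_\mathrm{out}}{\lambda_\mathrm{in}}\,D \;\ge\; (1+\lambda_\mathrm{out})(1+\lambda_\mathrm{in}+\lambda_\mathrm{out})\,\rho^2\,,
\]
which follows from the triangle inequality $\md(b,y)+\md(b,z) \ge \md(z,y) = (1+\lambda_\mathrm{in}+\lambda_\mathrm{out})\rho$ together with a weighted Cauchy--Schwarz of the form $\mu C + \nu D \ge \frac{\mu\nu}{\mu+\nu}(\sqrt C + \sqrt D)^2$ applied with $\mu = 1$ and $\nu = (1+\lambda_\mathrm{out})/\lambda_\mathrm{in}$. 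I expect the main obstacle to lie in bridging the cone angle to the distance angle at $\bs$: the straight-line structure $\measuredangle(\omega_x,\omega_z) = \pi$ together with the angle triangle inequality immediately yields only the \emph{upper} bound $\cos\alpha \le -\cos\gamma$, and obtaining the matching \emph{lower} control $\cos\alpha \ge -\cos\delta$ is the delicate technical step requiring the NNC concavity along $\omega^+$ to be carefully married with angle monotonicity.
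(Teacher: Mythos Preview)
Your approach is correct and genuinely different from the paper's. The paper does not expand in the tangent cone; instead it places a two-point measure $Q = \tfrac{\xi}{1+\xi}\delta_x + \tfrac{1}{1+\xi}\delta_{\omega^+(-\xi)}$ (with $\xi = \lambda_{\mathrm{in}}/(1+\lambda_{\mathrm{out}})$) on the extended geodesic, checks that $\bs$ is the barycenter of both $Q$ and its $(0,\lambda_{\mathrm{out}})$-extension, applies Theorem~\ref{thm:extendvariance} to obtain $Qh_{\bs}^b \ge \tfrac{\lambda_{\mathrm{out}}}{1+\lambda_{\mathrm{out}}}$, and then solves for $h_{\bs}^b(x)$ using the universal bound $h_{\bs}^b(\omega^+(-\xi)) \le 1$. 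Your direct cone computation is also valid: the identity relating $h_{\bs}^b(x)$ to $h_{\bs}^b(y)$, the NNC application producing $h_{\bs}^b(x)\ge \tfrac{\lambda_{\mathrm{out}}}{1+\lambda_{\mathrm{out}}} + \tfrac{1}{1+\lambda_{\mathrm{out}}}h_{\bs}^b(y)$, and the closing triangle-inequality-plus-weighted-Cauchy--Schwarz estimate are all correct as written. The only caveat concerns the step you already flag: the concavity of $s\mapsto \md^2(b,\omega^+(s))-\rho^2 s^2$ controls the \emph{one-sided} angles $\alpha' = \lim_{s\to 0}\measuredangle_{\bs}(\omega_x(s),b)$ and $\gamma' = \lim_{s\to 0}\measuredangle_{\bs}(\omega_z(s),b)$, and identifying these with the full cone angles $\alpha,\gamma$ is exactly the first-variation formula---standard in Alexandrov geometry but not developed in the text. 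However, under this chapter's working hypothesis that the tangent cone at $\bs$ sits inside a Hilbert space (used in the master theorem and in the variance equality behind Theorem~\ref{thm:extendvariance}), the needed relation $\cos\alpha+\cos\gamma=0$ is immediate since $\log_{\bs}(z)$ is literally a negative scalar multiple of $\log_{\bs}(x)$ and the inner product is bilinear; combined with angle monotonicity $\gamma\ge\delta$ this yields $\cos\alpha\ge -\cos\delta$ at once. So both proofs ultimately rest on the same structural assumption; the paper's route is shorter once Theorem~\ref{thm:extendvariance} is available, while yours is self-contained and makes the separate roles of the forward extension (producing the $\tfrac{\lambda_{\mathrm{out}}}{1+\lambda_{\mathrm{out}}}$ gain) and the backward extension (capping the loss at $1/\lambda_{\mathrm{in}}$) explicit.
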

\begin{proof} 
Let $\omega_x:[0,1]\to S$ be a $(\lambda_\mathrm{in},\lambda_\mathrm{out})$-extendable geodesic connecting $\bs$ to $x$ and denote by $\omega^+_x:[-\lambda_\mathrm{in},1+\lambda_\mathrm{out}]\to S$  its extension. Let $z=\omega_x^+(-\xi)$ where $\xi=\lambda_\mathrm{in}/(1+\lambda_\mathrm{out})$ and consider the measure $Q$ defined by
\[
Q \deq \frac{\xi}{1+\xi}\,\delta_x+\frac{1}{1+\xi}\,\delta_{z}\,.
\] 
Since $Q$ is supported on $\omega^+$ we can easily compute its barycenter. Indeed, note that $x=\omega_x^+(1)$ so the barycenter of $Q$ is given by
$$
\omega^{+}_x\big( 1\cdot \frac{\xi}{1+\xi}-\xi\cdot \frac{1}{1+\xi} \big)=\omega^+(0)=\bs\,.
$$

Now, we wish to apply Theorem~\ref{thm:extendvariance} to $Q$. To that end, note that the constant-speed geodesics $\omega_x$ connecting $\bs$ to $x$ and $\sigma$ connecting $\bs$ to $z$ and defined by $\sigma(t)=\omega^+_x(-t\xi)$ are both $(0,1+\lambda_\mathrm{out})$-extendable by assumption and by construction respectively. 

Finally, we check that $\bs$ remains a barycenter of the probability measure $Q_{\lambda_\mathrm{out}}=(e_{\lambda_\mathrm{out}})_{\#}Q$ where $e_{\lambda_\mathrm{out}}(x)=\omega^+_x(1+\lambda_\mathrm{out})$. Indeed, by construction, $Q_{\lambda_\mathrm{out}}$ is the two-point probability measure given by
$$
Q_{\lambda_\mathrm{out}}=\frac{\xi}{1+\xi}\,\delta_{\omega^+(1+\lambda_\mathrm{out})}+\frac{1}{1+\xi}\,\delta_{\omega^+(-\lambda_\mathrm{in})}\,.
$$
Therefore, the barycenter is given by
\begin{align*}
    &\omega^+\big((1+\lambda_\mathrm{out})\cdot \frac{\xi}{1+\xi} -\lambda_\mathrm{in}\cdot \frac{1}{1+\xi}\big) \\
    &\qquad = \omega^+\big(\frac{(1+\lambda_\mathrm{out})\,\lambda_\mathrm{in}}{1+\lambda_\mathrm{in}+\lambda_\mathrm{out}} -\frac{\lambda_{\rm in}\,(1+\lambda_\mathrm{out})}{1+\lambda_\mathrm{in}+\lambda_\mathrm{out}}\big)
    =\omega^+(0)=\bs\,.
\end{align*}
 As a result, Theorem \ref{thm:extendvariance} implies that
\begin{align*}
    \frac{\lambda_\mathrm{out}}{1 + \lambda_\mathrm{out}}
    &\le Qh_{\bs}^b(\bcdot)= \frac{\xi}{1 + \xi}\, h_{\bs}^b(x) + \frac{1}{1 + \xi}\,h_{\bs}^b(z) \\
    &\le \frac{\xi}{1 + \xi} \,h_{\bs}^b(x) + \frac{1}{1 + \xi}\,,
\end{align*}
where we used Corollary~\ref{cor:kbounds} to bound $h^b_{\bs}(z)\le 1$ for all $b,z\in S$. 

Hence, we obtain
\begin{align*}
    h_{\bs}^b(x) &\ge \frac{1 + \xi}{\xi} \,\Bigl( \frac{\lambda_\mathrm{out}}{1 + \lambda_\mathrm{out}} - \frac{1}{1 + \xi}\Bigr) \\
    &=\frac{1 + \xi}{\xi}\, \Bigl( \frac{\lambda_\mathrm{out}}{ \lambda_\mathrm{in}}\,\xi - \frac{1}{1 + \xi}\Bigr) \\
    &=\frac{\lambda_\mathrm{out}}{ \lambda_\mathrm{in}}\,(1+\xi)-\frac1\xi\\
    &=\frac{\lambda_\mathrm{out}}{ \lambda_\mathrm{in}}+\frac{\lambda_\mathrm{out}}{ \lambda_\mathrm{in}}\cdot \frac{\lambda_\mathrm{in}}{1+\lambda_\mathrm{out}}-\frac{1+\lambda_\mathrm{out}}{\lambda_\mathrm{in}}\\
    &= \frac{\lambda_\mathrm{out}}{1 + \lambda_\mathrm{out}} -
    \frac{1}{\lambda_\mathrm{in}}\,,
\end{align*}  
which completes the proof.
\end{proof}

Note that Theorem~\ref{thm:extendgeod} gives a lower bound on $h_{\bs}^b(x)$ that is uniform in both $b$ and $x$. It is of course possible to make this result depend on $x$ only and get a result of the form
\[
h_{\bs}^b(x)\ge \frac{\lambda_\mathrm{out}(x)}{1+\lambda_\mathrm{out}(x)}-\frac{1}{\lambda_\mathrm{in}(x)}\,.
\]
If we assume that 
$$
P\Big(\frac{\lambda_\mathrm{out}(\bcdot)}{1+\lambda_\mathrm{out}(\bcdot)}-\frac{1}{\lambda_\mathrm{in}(\bcdot)}\Big)>0\,,
$$
then standard concentration tools ensure that $P_n h_{\bs}^{b_n}(\bcdot)>0$ for $n$ large enough as desired. 

Instead of going into these details, let us inspect the uniform bound more closely. From the master theorem, and Theorem~\ref{thm:extendgeod}, we get the following corollary.

\begin{corollary}
Let $P$ be a probability measure on an NNC geodesic space $(S,\md)$ and denote by $\bs$ and $b_n$ a barycenter of $P$ and an empirical barycenter respectively. Assume that the tangent cone of at $\bs$ equipped with $\langle\cdot, \cdot \rangle_{\bs}$ is a convex subset of a Hilbert space. Moreover,  let $\lambda_\mathrm{in},\lambda_\mathrm{out}\in [0,\infty]$ be such that
$$
 \mathsf{h} \deq \frac{\lambda_\mathrm{out}}{1 + \lambda_\mathrm{out}} -
    \frac{1}{\lambda_\mathrm{in}}>0
$$
and assume further that for any $x \in \supp(P)$, there exists a geodesic connecting $\bs$ to $x$ that is $(\lambda_\mathrm{in},\lambda_\mathrm{out})$-extendable. Then $\bs$ is unique and the empirical barycenter satisfies
$$
\E\big[\md^2(b_n, \bs)\big]\le \frac{4\sigma^2}{ \mathsf{h}n}
$$
where $\sigma^2$ denotes the variance of $P$ that is defined in~\eqref{EQ:defvarP}.
\end{corollary}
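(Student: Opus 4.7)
The plan is to deduce the corollary as a direct composition of the two main ingredients that precede it: Theorem~\ref{thm:extendgeod}, which converts geodesic extendability into a uniform pointwise lower bound on the hugging function, followed by the master theorem (Theorem~\ref{thm:master}), which converts such a lower bound into a parametric rate for the empirical barycenter. The hypotheses of the corollary have been designed so that these two results interlock mechanically, and no new geometric input should be required.

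First, I would verify that the uniform hugging condition~\eqref{EQ:kpositif} required by the master theorem holds with $\mathsf{h}_{\min}=\mathsf{h}$. Fix an arbitrary competitor $b\in S$ and any $x\in\supp(P)$. By hypothesis there is a geodesic joining $\bs$ to $x$ that is $(\lambda_\mathrm{in},\lambda_\mathrm{out})$-extendable, so Theorem~\ref{thm:extendgeod} applies verbatim at the pair $(x,b)$ and yields
\[
h_{\bs}^b(x)\;\ge\;\frac{\lambda_\mathrm{out}}{1+\lambda_\mathrm{out}}-\frac{1}{\lambda_\mathrm{in}}\;=\;\mathsf{h}\,.
\]
Because the right-hand side does not depend on either $b$ or $x$, this upgrades to $h_{\bs}^b(\bcdot)\ge\mathsf{h}>0$ on $\supp(P)$, hence $P$-almost surely, and therefore $P_n$-almost surely for every realization of $(X_1,\dots,X_n)$.

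Second, the remaining structural assumptions of the master theorem---that $(S,\md)$ is an NNC geodesic space and that the tangent cone at $\bs$ equipped with $\langle\cdot,\cdot\rangle_{\bs}$ is a convex subset of a Hilbert space---are inherited unchanged from the hypotheses of the corollary. Invoking Theorem~\ref{thm:master} with $\mathsf{h}_{\min}=\mathsf{h}$ then gives at once that $\bs$ and $b_n$ are unique and produces the advertised bound on $\E\md^2(b_n,\bs)$ in terms of $\sigma^2$, $n$, and $\mathsf{h}$.

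The main point that deserves attention, rather than being a genuine obstacle, is the \emph{uniformity in $b$} in the output of Theorem~\ref{thm:extendgeod}. Recall that $b_n$ is random and data-dependent, so the master theorem really needs the hugging lower bound to hold at $b=b_n$ on the support of $P_n$. Since Theorem~\ref{thm:extendgeod} produces a bound uniform in both $b$ and $x$, no concentration inequality or uniform empirical-process control of $P_n h_{\bs}^{b_n}(\bcdot)$ is needed; this is precisely the reason the paper introduces two-sided extendability rather than settling for Theorem~\ref{thm:extendvariance}, and it is what makes the proof reduce to the two cited results.
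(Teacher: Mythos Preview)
Your approach is correct and matches the paper's exactly: the paper states only ``From the master theorem, and Theorem~\ref{thm:extendgeod}, we get the following corollary,'' which is precisely the two-step composition you describe. One small caveat worth noting: the master theorem (Theorem~\ref{thm:master}) yields $\E\md^2(b_n,\bs)\le 4\sigma^2/(\mathsf{h}_{\min}^{2}\,n)$ with a \emph{squared} constant in the denominator, so plugging in $\mathsf{h}_{\min}=\mathsf{h}$ gives $4\sigma^2/(\mathsf{h}^{2}n)$ rather than the $4\sigma^2/(\mathsf{h}\,n)$ printed in the corollary; since $\mathsf{h}\le 1$, this is a (harmless) typo in the stated bound and not a gap in your argument.
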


As a result, we get parametric rates when geodesics may be sufficiently extended. In particular, if $S$ is a Hilbert space, then all geodesics are infinitely extendable. Therefore $ \mathsf{h}=1$ and we recover~\eqref{EQ:boundhilbert4}.

\section{Parametric rates for Wasserstein barycenters}\label{sec:wass_bary}

To conclude these notes, we study Wasserstein barycenters as an example. Note that our result readily applies to this case. One may ask the question: how does the condition of extendable geodesics translate in terms of optimal transport? It turns out that it can be characterized in terms of regularity conditions on the Brenier maps.

\begin{theorem}\label{thm:w2_extendable}
Let $\mu, \nu \in \cW_2$ be two probability measures such that $\mu$ has a density and let $\varphi: \R^d \to \R$ be the convex function defined by $\varphi(x)=(\|x\|^2-f(x))/2$, where $f$ is the Kantorovich potential given in Definition~\ref{def:potentials}. In particular, $\nabla \varphi$ is defined $\mu$-almost surely and is the Brenier map. Recall that the unique constant-speed geodesic $\omega$ connecting $\mu$ to $\nu$ is given by $\omega(t)=((1-t)\,{\id} + t\,\nabla \varphi)_{\#}\mu$. Then, for any $\lambda>0$,  $\omega$ is $(0, \lambda)$-extendable if and only if $\varphi$ is $\frac{\lambda}{1+\lambda}$-strongly convex.
\end{theorem}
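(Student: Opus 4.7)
The plan is to show both directions by identifying the natural candidate extension of $\omega$ and then invoking the improved Brenier theorem (Theorem~\ref{thm:improvedBrenier}) to characterize when this candidate is actually a geodesic. Throughout, write $\psi_\lambda(x) \deq (1+\lambda)\,\varphi(x) - \tfrac{\lambda}{2}\,\|x\|^2$, so that $\nabla \psi_\lambda = (1+\lambda)\,\nabla \varphi - \lambda\,\id$, and observe the algebraic identity $\psi_\lambda = (1+\lambda)\,\bigl(\varphi - \tfrac{\lambda}{2(1+\lambda)}\,\|\cdot\|^2\bigr)$, which shows that $\psi_\lambda$ is convex if and only if $\varphi$ is $\tfrac{\lambda}{1+\lambda}$-strongly convex.

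For the ``if'' direction, assume $\varphi$ is $\tfrac{\lambda}{1+\lambda}$-strongly convex, so $\psi_\lambda$ is convex. I would then define the candidate extension $\omega^+ : [0, 1+\lambda] \to \cW_2$ by $\omega^+(s) \deq \bigl((1-s)\,\id + s\,\nabla \varphi\bigr)_\# \mu$ and let $\rho \deq \omega^+(1+\lambda) = {(\nabla \psi_\lambda)}_\# \mu$. Since $\psi_\lambda$ is convex, Theorem~\ref{thm:improvedBrenier} implies $\nabla \psi_\lambda$ is the Brenier map from $\mu$ to $\rho$, and Corollary~\ref{cor:geobrenier} shows that the unique constant-speed geodesic from $\mu$ to $\rho$ on $[0,1+\lambda]$ is given precisely by $s \mapsto \bigl((1-s/(1+\lambda))\,\id + (s/(1+\lambda))\,\nabla \psi_\lambda\bigr)_\# \mu$. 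A direct computation reveals that this coincides with $\omega^+(s)$, so $\omega^+$ is indeed a geodesic and extends $\omega$.

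For the ``only if'' direction, suppose $\omega$ admits some $(0,\lambda)$-extension $\omega^+$, and set $\rho = \omega^+(1+\lambda)$. Since $\mu$ has a density, Corollary~\ref{cor:geobrenier} implies that the constant-speed geodesic from $\mu$ to $\rho$ is unique and given by $s \mapsto \bigl((1-s/(1+\lambda))\,\id + (s/(1+\lambda))\,T\bigr)_\# \mu$, where $T = \nabla \psi$ is the Brenier map from $\mu$ to $\rho$ (with $\psi$ convex). Evaluating the agreement between $\omega^+$ and $\omega$ at $s=1$ shows that the map $S \deq \tfrac{\lambda}{1+\lambda}\,\id + \tfrac{1}{1+\lambda}\,T$ satisfies $S_\# \mu = \nu$, and since $S$ is itself the gradient of a convex function, the uniqueness part of Theorem~\ref{thm:improvedBrenier} forces $S = \nabla \varphi$ holding $\mu$-a.e. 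Rearranging yields $\nabla \psi = (1+\lambda)\,\nabla \varphi - \lambda\,\id$, $\mu$-a.e., i.e., $\nabla \psi = \nabla \psi_\lambda$ on $\supp \mu$.

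The main obstacle will be to upgrade this $\mu$-a.e.\ identity of gradients into the global convexity statement for $\psi_\lambda$, since \emph{a priori} $\varphi$ is only characterized through the dual problem on $\supp \mu$ and $\supp \nu$. The clean route is to replace the original Kantorovich potential $\varphi$ with the adjusted function $\tilde \varphi \deq \tfrac{1}{1+\lambda}\,\psi + \tfrac{\lambda}{2(1+\lambda)}\,\|\cdot\|^2$, observe that $\nabla \tilde\varphi = \nabla \varphi$ $\mu$-a.e.\ (so $\tilde\varphi$ is also a valid convex potential inducing the Brenier map to $\nu$), and note that $\tilde\varphi$ is manifestly $\tfrac{\lambda}{1+\lambda}$-strongly convex since $\psi$ is convex. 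Thus, identifying $\varphi$ with its canonical convex representative $\tilde \varphi$, the strong convexity claim follows. An analogous calculation would give the symmetric statement for $(\lambda_{\mathrm{in}}, 0)$-extendability in terms of $\varphi^*$, which together yields the two-sided version used in Theorem~\ref{thm:extendgeod}.
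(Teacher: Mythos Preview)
Your proof is correct and follows essentially the same route as the paper: you introduce $\psi_\lambda = (1+\lambda)\varphi - \tfrac{\lambda}{2}\|\cdot\|^2$ (the paper calls it $\varphi_\lambda$), use Brenier's theorem to show that convexity of $\psi_\lambda$ is exactly what makes the natural extension a geodesic, and in the converse direction recover $\nabla\varphi$ as the convex combination $\tfrac{\lambda}{1+\lambda}\,\id + \tfrac{1}{1+\lambda}\,T$ via uniqueness of the Brenier map. Your handling of the $\mu$-a.e.\ subtlety by passing to the representative $\tilde\varphi = \tfrac{1}{1+\lambda}\,\psi + \tfrac{\lambda}{2(1+\lambda)}\,\|\cdot\|^2$ is precisely what the paper does when it writes ``we can choose $\varphi(x) = \tfrac{1}{1+\lambda}\varphi_\lambda(x) + \tfrac{\lambda}{2(1+\lambda)}\|x\|^2$'', though you articulate the issue more explicitly.
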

\begin{proof}
Assume first that $\omega$ is $(0, 1+\lambda)$-extendable and let
\begin{align*}
    \omega^+:[0,1+\lambda]\to \cW_2
\end{align*}
denote its extension. Let $Y_\lambda\sim \omega^{+}(1+\lambda)$ and observe that there exists a convex function $\varphi_\lambda$ defined $\mu$-almost everywhere such that $Y_\lambda = \nabla \varphi_\lambda(X)$, where $X\sim \mu$. Moreover, since $\omega^+$ is a geodesic and $\nabla \varphi(X)\sim \omega^+(1)$, we also have
$$
\nabla\varphi(X)=\frac{\lambda}{1+\lambda}\,X+\frac{1}{1+\lambda}\,Y_\lambda\,,
$$
so that $Y_\lambda = \nabla \varphi_\lambda(X)=(1+\lambda)\,\nabla\varphi(X)-\lambda X$. In particular, it means that we can choose
$$
\varphi(x)=\frac1{1+\lambda}\,\varphi_\lambda(x)+\frac{\lambda}{2\,(1+\lambda)}\,\|x\|^2\,.
$$
Since $\varphi_\lambda$ is convex, so is $\varphi_\lambda/(1+\lambda)$ and the above display implies that $\varphi$ is $\frac{\lambda}{1+\lambda}$-strongly convex.

\smallskip

Conversely, assume that $\varphi$ is $\frac{\lambda}{1+\lambda}$-strongly convex and define $Y_\lambda =(1+\lambda)\,\nabla\varphi(X)-\lambda X$ where $X \sim \mu$. We are going to show that $Y_\lambda$ and $X$ are optimally coupled. To that end, note that $Y_\lambda=\nabla\varphi_\lambda(X)$ where
$$
\varphi_\lambda(x)=(1+\lambda)\,\varphi(X)-\frac{\lambda}{2}\, \|x\|^2\,.
$$
Since $\varphi$ is $\frac{\lambda}{1+\lambda}$-strongly convex, $\varphi_\lambda$ is convex and thus $\nabla \varphi_\lambda$ is the Brenier map. It follows that $Y_\lambda$ and $X$ are optimally coupled so that $\omega^+:[0, 1+\lambda]\to \cW_2$ defined by 
$$
\omega^+(t)= \Big({\id} +\frac{t}{1+\lambda}\,(\nabla\varphi_\lambda-{\id})\Big)_{\#}\mu
$$
is a geodesic connecting $\mu$ and the distribution of $Y_\lambda$ such that $\omega^+(t)=\omega(t)$ for $t \in [0,1]$. Therefore $\omega$ is $(0, 1+\lambda)$-extendable.
\end{proof}

Recall that if $\mu$ and $\nu$ both have a density such that the Brenier map from $\mu$ to $\nu$ is given by $\nabla \varphi$, then the Brenier map from $\nu$ to $\mu$ is given by $\nabla \varphi^*$. Therefore, if $\varphi$ is $\beta$-smooth in the sense that for any $x,y \in \R^d$, 
$$
\varphi(x) -\varphi(y) \le \langle \nabla \varphi(y), x-y\rangle +\frac{\beta}{2}\,\|x-y\|^2\,,
$$
then $\varphi^*$ is $1/\beta$-strongly convex (see Lemma~\ref{thm:strcvx_smooth_dual}), which, in turn implies that the geodesic connecting $\nu$ to $\mu$ is $(0,\frac{1}{\beta+1})$-extendable.

These facts yield the following theorem but we provide an alternate, more direct, proof.

\begin{theorem}
    Let $P$ be a probability measure on $\cW_2$ with a barycenter $\bs$ that admits a density. Assume further that for any $\mu \in \supp(P)$ the Brenier map from $\bs$ to $\mu$ is $\alpha$-strongly convex and $\beta$-smooth with $\beta-\alpha \in [0,1)$. Then $\bs$ is unique and the empirical Wasserstein barycenter $b_n$ satisfies
$$
\E\bigl[W_2^2(b_n, \bs)\bigr]\le \frac{4\sigma^2}{(1-(\beta-\alpha))^2\, n}\,.
$$
\end{theorem}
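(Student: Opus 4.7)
The plan is to apply the Master Theorem~\ref{thm:master} with $\mathsf{h}_{\min} = 1-(\beta-\alpha)$, since by hypothesis $\beta - \alpha < 1$ makes this constant strictly positive and the resulting bound $4\sigma^2/(\mathsf{h}_{\min}^2 n)$ exactly matches the claim. Before turning to the hugging bound, I would quickly dispatch the other hypotheses of Theorem~\ref{thm:master}: $\curv(\cW_2) \ge 0$ is precisely Theorem~\ref{thm:curvw2ge0}, while the fact that the tangent cone at $\bs$ is (isometric to) a convex subset of a Hilbert space follows from Theorem~\ref{thm:tangentWass}, which applies because $\bs$ admits a density. The entire weight of the argument therefore rests on establishing the pointwise inequality $h^b_{\bs}(\mu) \ge 1 - (\beta-\alpha)$ for every $b\in\cW_2$ and every $\mu \in \supp(P)$.

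To obtain this inequality, I would route through Theorem~\ref{thm:extendgeod} by producing extendable geodesics with parameters $\lambda_\mathrm{out} = \alpha/(1-\alpha)$ and $\lambda_\mathrm{in} = 1/(\beta-1)$ (with the conventions $\lambda_\mathrm{out}=\infty$ if $\alpha=1$ and $\lambda_\mathrm{in}=\infty$ if $\beta\le1$), so that
$$
\frac{\lambda_\mathrm{out}}{1+\lambda_\mathrm{out}} - \frac{1}{\lambda_\mathrm{in}} \;=\; \alpha - (\beta-1) \;=\; 1-(\beta-\alpha).
$$
Forward extendability is immediate from Theorem~\ref{thm:w2_extendable}: the $\alpha$-strong convexity of $\varphi_\mu$ is equivalent to $(0,\alpha/(1-\alpha))$-extendability of the geodesic $\omega$ from $\bs$ to $\mu$. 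For backward extendability, I would first remark that since $\bs$ admits a density and $\nabla\varphi_\mu$ is injective (by $\alpha$-strong convexity, so $\alpha>0$), the pushforward $\mu = (\nabla\varphi_\mu)_{\#}\bs$ itself admits a density. This unlocks Theorem~\ref{thm:w2_extendable} applied to the \emph{reversed} geodesic from $\mu$ to $\bs$, whose Brenier potential is the convex conjugate $\varphi_\mu^*$. By Lemma~\ref{thm:strcvx_smooth_dual}, $\beta$-smoothness of $\varphi_\mu$ is equivalent to $(1/\beta)$-strong convexity of $\varphi_\mu^*$, so the reversed geodesic is $(0,1/(\beta-1))$-extendable, which upon reversing orientation gives $(1/(\beta-1),0)$-extendability of $\omega$.

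Combining the two directions yields that $\omega$ is $(1/(\beta-1),\alpha/(1-\alpha))$-extendable, and Theorem~\ref{thm:extendgeod} delivers the pointwise hugging bound $h^b_{\bs}(\mu) \ge 1-(\beta-\alpha)$ for every $b\in\cW_2$ and every $\mu\in\supp(P)$. Plugging $\mathsf{h}_{\min}=1-(\beta-\alpha)$ into the Master Theorem yields both uniqueness of $\bs$ and the claimed rate.

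The main obstacle will be the backward-extendability step, since Theorem~\ref{thm:w2_extendable} is formulated only for the ``forward'' direction from a measure with a density. The trick is to verify first that the strong convexity hypothesis forces $\mu$ to be absolutely continuous, so that the reversal is legitimate and $\varphi_\mu^*$ truly plays the role of the Brenier potential from $\mu$ to $\bs$. Secondary (but routine) technicalities include the degenerate cases $\alpha=1$ or $\beta\le 1$, where one of the extendability parameters becomes infinite; these cause no harm, since the only quantity appearing in $\mathsf{h}_{\min}$ is the finite combination $\alpha-(\beta-1)$. A genuinely ``more direct'' proof, bypassing extendability entirely, would attempt to establish the integrated variance inequality $\int [W_2^2(b,\mu)-W_2^2(\bs,\mu)]\,P(\ud\mu) \ge (1-(\beta-\alpha))\,W_2^2(b,\bs)$ by applying Kantorovich duality with the $(\bs,\mu)$-potentials $\tfrac12\|\cdot\|^2 - \varphi_\mu$ and a Taylor expansion controlled by the Hessian bounds $\alpha I \preceq \nabla^2\varphi_\mu \preceq \beta I$, using the first-order optimality condition $\int(T_{\bs\to\mu}-\id)\,P(\ud\mu)=0$ in $L^2(\bs)$ to kill the cross term; obtaining the sharp constant $1-(\beta-\alpha)$ rather than only $1-\beta$ this way seems to require combining the strong-convexity and smoothness sides simultaneously, which is exactly what the extendability construction packages cleanly.
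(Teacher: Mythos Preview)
Your extendability route is correct, and it is precisely the argument the paper sets up in the paragraph immediately preceding the theorem (``These facts yield the following theorem''). However, the paper then explicitly opts for ``an alternate, more direct, proof'' which bypasses Theorems~\ref{thm:w2_extendable} and~\ref{thm:extendgeod} entirely. Instead, the paper establishes the \emph{pointwise} hugging bound $h^b_{\bs}(\mu) \ge 1-(\beta-\alpha)$ by the following coupling computation: take $X,X'\sim\mu$, $Y,Y'\sim b$, $Z,Z'\sim\bs$ with $(X,Z)$ and $(Y,Z)$ optimally coupled (so that $\E\|X-Y\|^2 = \|{\log_{\bs}\mu}-\log_{\bs}b\|^2_{\bs}$) and $(X',Y')$, $(X',Z')$ optimally coupled; then expand $\E\|X-Y\|^2$, replace the cross term $2\E\langle\nabla\varphi(Z),Z-Y\rangle$ using the two-sided Bregman bounds $\frac{\alpha}{2}\|y-z\|^2 \le \varphi(y)-\varphi(z)-\langle\nabla\varphi(z),y-z\rangle \le \frac{\beta}{2}\|y-z\|^2$, and collapse everything to $\E\|X-Y\|^2 \le \E\|X'-Y'\|^2 + (\beta-\alpha)\,\E\|Y-Z\|^2$, which is exactly the desired hugging inequality.

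This direct argument is close in spirit to what you sketch in your last paragraph, except that (i) the paper proves the pointwise bound rather than only the integrated one (so the barycenter first-order condition is never invoked), and (ii) it uses couplings rather than Kantorovich potentials. What the paper's route buys: it never needs to argue that $\mu$ has a density (your backward-extendability step requires this), and it never needs the implicit constraint $\alpha \le 1 \le \beta$ that makes your formula $\lambda_{\rm out}=\alpha/(1-\alpha)$ sensible---this constraint is in fact automatic from the barycenter optimality $\int\varphi_\mu\,P(\ud\mu)=\tfrac12\|\cdot\|^2$, but you do not mention it. What your route buys: it is entirely modular, reducing the Wasserstein-specific statement to the abstract NNC machinery with no new computation.
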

\begin{proof}
For any $\mu \in \supp(P)$, let  $\varphi$ be such that $\nabla \varphi$ is the Brenier map from $\bs$ to $\mu$. For any $b,\mu \in \cW_2$, let $X,X' \sim \mu$, $Y,Y' \sim b$ and $Z,Z' \sim \bs$. In view of the gluing lemma, we may assume that $(X,Z)$ and $(Y,Z)$ are optimally coupled whereas we assume that $(X',Y')$ and $(X',Z')$ are optimally coupled.

Note that rearranging terms in the definition of the hugging function, our goal is to prove that
\begin{equation}\label{EQ:prkwass1}
\E\|X - Y\|^2\le \E\|X'-Y'\|^2+(\beta-\alpha)\,\E\|Y-Z\|^2\,.
\end{equation}
By assumption, for $\bs$-almost all  $z\in \R^\dd$ and any $y \in \R^\dd$, we have
\begin{equation}\label{EQ:strcvxsmth}
\frac{\alpha}{2}\, \|y-z\|^2\le \varphi(y)-\varphi(z) - \langle \nabla\varphi(z),y-z\rangle \le\frac{\beta}{2}\, \|y-z\|^2\,.
\end{equation}
It holds
\begin{align}
    &\E\|X - Y\|^2
    =\E\|X-Z\|^2+\E\|Y-Z\|^2+2\,\E\langle X-Z, Z-Y\rangle\nonumber\\
    &\quad =\E\|X-Z\|^2+\E\|Y-Z\|^2-2\,\E\langle Z, Z-Y\rangle+2\,\E\langle \nabla \varphi(Z), Z-Y\rangle \,.\label{EQ:prwassext1}
\end{align}
Next, note that on the one hand, it follows from \eqref{EQ:strcvxsmth} that
\begin{align*}
&2\,\E\langle \nabla \varphi(Z), Z-Y\rangle
\le 2\,\E\varphi(Z)-2\,\E\varphi(Y)+\beta\, \E\|Y-Z\|^2\\
&\qquad =2\,\E\varphi(Z')-2\,\E\varphi(Y')+\beta \,\E\|Y-Z\|^2\\
&\qquad \le 2\,\E\langle \nabla \varphi(Z'), Z'-Y'\rangle -\alpha\,\E\|Y'-Z'\|^2+ \beta\,\E\|Y-Z\|^2\\
&\qquad =2\,\E\langle X', Z'-Y'\rangle -\alpha\,\E\|Y'-Z'\|^2+ \beta\,\E\|Y-Z\|^2\,.
\end{align*}
Since $\E\|Y'-Z'\|^2 \ge \E\|Y-Z\|^2$, we get,
$$
2\,\E\langle \nabla \varphi(Z), Z-Y\rangle \le 2\,\E\langle X', Z'-Y'\rangle +(\beta-\alpha)\,\E\|Y-Z\|^2\,.
$$
\noindent On the other hand,
$$
\E\|Y-Z\|^2-2\,\E\langle Z, Z-Y\rangle=\E\|Y\|^2-\E\|Z\|^2=\E\|Y'\|^2-\E\|Z'\|^2\,.
$$

\noindent Together, with~\eqref{EQ:prwassext1}, the above two displays yield
\begin{align*}
\E\|X - Y\|^2
&\le \E\|X'-Z'\|^2+\E\|Y'\|^2-\E\|Z'\|^2 \\
&\qquad{}+ 2\,\E\langle  X', Z'-Y'\rangle +(\beta-\alpha)\,\E\|Y-Z\|^2\\
&=\E\|X'-Y'\|^2+(\beta-\alpha)\,\E\|Y-Z\|^2\,,
\end{align*}
which completes the proof of~\eqref{EQ:prkwass1}.

We have proved that $h_{\bs}^b(\mu)\ge 1 -(\beta -\alpha)$ which, together with the master theorem, completes the proof.
\end{proof}

Barycenters are the equivalent of averages on curved spaces. As such they are the building block of many statistical tools including regression~\cite{CheLinMul23}, analysis of variance~\cite{DubMul19}, change-point detection~\cite{DubMul20}, discriminant analysis~\cite{FlaCutCou18}, and principal component analysis~\cite{Big+17GeoPCA,CazSegBig18}. Despite initial work, many questions about the statistical properties of these statistical objects remain to be understood.

\section{Discussion}\label{sec:bary_discussion}

\noindent\textbf{\S\ref{sec:hilbertbarycenter}.} Beyond the setting of Hilbert spaces, quantitative laws of large numbers are obtained over Banach spaces in relation to the theory of type and cotype, see~\cite{LedTal91}.
Also, see the excellent exposition~\cite{Sturm03NPC} for barycenters over NPC spaces, from which Exercise~\ref{ex:npc_bary} is taken.

\noindent\textbf{\S\ref{sec:bary_nnc}.} 
The material in this section is taken from~\cite{AhiLeGPar20Bary, LeGetal22Bary}.

\noindent\textbf{\S\ref{sec:wass_bary}.}
The basic theory of Wasserstein barycenters (existence, duality, etc.) was developed in~\cite{AguCar11}; see Exercise~\ref{ex:bary_multimarginal}. Statistical consistency for Wasserstein barycenters was established in~\cite{LeGLou17Bary}.
The variance inequality in Exercise~\ref{ex:w2_var_ineq} is from~\cite{CheMauRig20a}.

Substantial attention has also been devoted to the computation of barycenters.
For discrete distributions, the work of~\cite{AltBoi21BaryPoly, AltBoi22BaryNPHard} established polynomial-time tractability of Wasserstein barycenters in fixed dimension, and \textbf{NP}-hardness in general dimension; see the references therein for other approaches, such as parametrization via neural networks and application of continuous optimization methods.

Another line of work, more closely related to Chapter~\ref{chap:WGF}, develops algorithms for computing the barycenter via gradient methods in the Wasserstein space~\cite{Alvetal16FixedPt, ZemPan19, CheMauRig20a, AltCheGer21, Bacetal22Bary, Kum+22Bary, BraRubTom24FixedPt}.
The descent lemma in Exercise~\ref{ex:bary_descent} is from~\cite{ZemPan19}, which interpreted the fixed-point approach of~\cite{Alvetal16FixedPt} as Wasserstein gradient descent.

Barycenters for Gaussians were studied earlier than the general case, dating back to~\cite{KnoSmi1994Cyclic, RusUck02nCoupling}.
Statistical estimation was studied in~\cite{KroSpoSuv21BWBary}, and non-asymptotic computational guarantees for Wasserstein gradient descent were given in~\cite{CheMauRig20a, AltCheGer21}.

Similarly to Chapter~\ref{chap:entropic}, one can add entropic regularization to the Wasserstein barycenter, at the level of the Wasserstein distance or the barycenter objective or both; see~\cite{Kro18BaryMK, BigCazPap19PenBary, Li+20RegBary, CarEicKro21EntBary, Chi23Doubly, VasChi23DoublyEntropic}.

\section{Exercises}

\begin{enumerate}
    \item Let $P$ be a distribution over $\cP_2(\R)$.
    Give a closed-form expression for the $W_2$ barycenter of $P$ in terms of the CDFs of the measures in $\supp P$.
    
    \item\label{ex:npc_bary} Suppose that $P$ is a probability measure over an \emph{NPC space} $(S,\md)$ with barycenter $\bs$. It turns out that statistical estimation of barycenters over NPC spaces is far easier, as we demonstrate in this exercise.
    \begin{enumerate}
        \item Show that for any $b\in S$,
        \begin{align}\label{eq:npc_var_ineq}
            P[\md^2(b,\bcdot) - \md^2(\bs, \bcdot)] \ge \md^2(b,\bs)\,.
        \end{align}
        \item Suppose that ${(X_i)}_{i=1}^n$ is an i.i.d.\ sequence drawn from $P$ and form the following estimator $b_n$ inductively: set $b_1 = X_1$, and for $n\ge 2$ let $b_n \deq \omega_{b_{n-1},X_n}(1/n)$ where $\omega_{x,y} : [0,1]\to S$ is the constant-speed geodesic joining $x$ to $y$.
        Prove by induction that for all $n\ge 1$,
        \begin{align*}
            \E \md^2(b_n, \bs)
            &\le \frac{\sigma^2}{n}\,, \qquad\text{where}~\sigma^2 = P \md^2(\bs, \bcdot)\,.
        \end{align*}
        \emph{Hint:} Apply the NPC inequality from Proposition~\ref{pro:NNC} together with the inequality~\eqref{eq:npc_var_ineq}.
    \end{enumerate}
    
    \item\label{ex:bary_multimarginal} Let $\mu_1,\dotsc,\mu_n \in \cP_2(\R^d)$ and let $\Gamma(\mu_1,\dotsc,\mu_n)$ denote the set of couplings of $\mu_1,\dotsc,\mu_n$.
    Consider the \emph{multi-marginal} optimal transport problem
    \begin{align*}
        \min_{\gamma \in \Gamma(\mu_1,\dotsc,\mu_n)} \int \sum_{i=1}^n {\bigl\lVert x_i - \frac{1}{n}\sum_{j=1}^n x_j\bigr\rVert^2} \,\gamma(\ud x_1,\dotsc, \ud x_n)\,.
    \end{align*}
    Let $\gamma^\star$ denote an optimal solution.
    Prove that if $(X_1,\dotsc,X_n) \sim \gamma^\star$, then the law of $\frac{1}{n}\sum_{i=1}^n X_i$ is the Wasserstein barycenter of $\mu_1,\dotsc,\mu_n$.
    
    \item\label{ex:w2_var_ineq} Due to Theorems~\ref{thm:extendvariance} and~\ref{thm:w2_extendable}, in the case of the Wasserstein space we know that as long as the transport maps from the barycenter $\bs$ to elements in the support of $P$ are obtained from $\alpha$-strongly convex potentials, \emph{and the barycenter of the extended distribution is still $\bs$}, then $Ph_{\bs}^b(\bcdot) \ge \alpha$.
    It turns out that due to the structure of the Wasserstein space, the second condition is unnecessary.

    To prove this, use the following dual characterization of the Wasserstein barycenter: for each $\mu\in\supp(P)$, $\varphi_\mu$ is such that $(\nabla \varphi_\mu)_\# \bs = \mu$, and $\int (\frac{\|\cdot\|^2}{2}-\varphi_\mu)\,P(\ud \mu) = 0$.
    Assume that each $\varphi_\mu$ is $\alpha(\mu)$-strongly convex.
    Use this to show that
    \begin{align*}
        \varphi_\mu^*(x) + \varphi_\mu(y) \ge \langle x,y\rangle + \frac{\alpha(\mu)}{2}\,\|y-\nabla\varphi_\mu^*(x)\|^2\,.
    \end{align*}
    By integrating this inequality, prove that~\eqref{eq:var_ineq} holds with $\frac{\lambda}{1+\lambda}$ replaced by $\int\alpha(\mu) \, P(\ud \mu)$.

    \item\label{ex:bary_descent} Let $P$ be a probability measure over $\cW_2$ and let $\cF : \cP_2(\R^d)\to\R$ denote the barycenter functional $\cF(b) \deq \frac{1}{2}\,PW_2^2(b,\bcdot)$.
    Using~\eqref{eq:grad_of_w2}, the Wasserstein gradient of $\cF$ is given by $\gradW \cF(b) = {\id} - PT_{b\to\bcdot}$, and a Wasserstein \emph{gradient descent} step with step size $h > 0$ is given by the iteration $b^+ \deq ({\id}-h\,\gradW\cF(b))_\# b$.
    Prove the \emph{descent lemma}
    \begin{align*}
        \cF(b^+) - \cF(b) \le -h\,\bigl(1-\frac{h}{2}\bigr)\,\|\gradW \cF(b)\|_b^2\,,
    \end{align*}
    which quantifies the progress made in one step of GD on $\cF$.
    Deduce that $h=1$ is a reasonable choice of step size and write out the form of the GD updates in this case.
    
    \item Specialize the GD updates (with step size $h=1$) from the previous exercise to the case when $P$ is supported on centered, non-degenerate Gaussians.
    In particular, when initialized at a centered Gaussian, show that all of the iterates are centered Gaussians, and write down the update equations for the covariance matrix.
\end{enumerate}

\appendix
\chapter{Convex analysis}
\label{app:convex}

In this appendix, we provide a quick review of convex analysis. We refer to the book~\cite{Roc1997CvxAnalysis} for a comprehensive treatment.

\section{Convex functions, subdifferentials, and duality}\label{sec:cvx_sub_dual}

\begin{definition}\label{def:cvx}
    A function $f : \R^d\to \R\cup\{\infty\}$ is \emph{convex} if for all $x,y\in\R^d$ and all $t\in [0,1]$,
    \begin{align*}
        f((1-t)\,x+t\,y) \le (1-t)\,f(x) +t\,f(y)\,.
    \end{align*}
    Also, a set $C \subseteq \R^d$ is \emph{convex} if for all $x,y\in\R^d$ and all $t\in [0,1]$,
    \begin{align*}
        (1-t)\,x + t\,y \in C\,.
    \end{align*}
\end{definition}

The \emph{domain} of $f$, $\dom(f)$, is the set $\{f < \infty\}$ of points where $f$ takes finite values.
If $f$ is convex, then $\dom(f)$ is a convex set.

We say that $f$ is \emph{proper} if it does not take the value $-\infty$ (note that this is already assumed in the definition of convexity given above) and it is not identically $+\infty$.
We assume without further mention that the convex functions we work with are proper.
We say that $f$ is \emph{closed} or \emph{lower semicontinuous} if for any sequence $x_k \to x$ in $\R^d$, it holds that $\liminf_{k\to\infty} f(x_k) \ge f(x)$; equivalently, all of the sublevel sets $\{f \le t\}$ for $t\in\R$ are closed.

Suppose that $f$ takes values in $\R$. Then, convexity of $f$ implies that $f$ is automatically continuous, and in fact locally Lipschitz, hence differentiable almost everywhere by Rademacher's theorem.
If $f$ is continuously differentiable, then convexity of $f$ is equivalent to $f$ always lying above its tangent line:
\begin{align}\label{eq:first_order_cvx}
    f(y) \ge f(x) + \langle \nabla f(x), y-x\rangle\,, \qquad \forall \,x,y\in\R^d\,.
\end{align}
If $f$ is twice continuously differentiable, then convexity of $f$ is equivalent to a condition on its Hessian:
\begin{align*}
    \nabla^2 f(x) \succeq 0\,, \qquad \forall\,x\in\R^d\,.
\end{align*}

In general, a convex function may not be differentiable. One reason why differentiability can fail is simply because $f$ takes on infinite values ($\dom(f) \ne \R^d$). However, as discussed above, $f$ is always differentiable \emph{almost} everywhere on the interior of its domain.
Moreover, we can find a useful substitute for differentiability through the notion of a subgradient, which is based on the ``above tangent line'' property encapsulated in~\eqref{eq:first_order_cvx}.

\begin{definition}[Subdifferential]\index{subdifferential}
    Let $f : \R^d\to\R\cup\{\infty\}$ be convex and let $x\in\R^d$.
    We say that $g$ is a \emph{subgradient} of $f$ at $x$ if
    \begin{align*}
        f(y) \ge f(x) + \langle g, y-x\rangle\,, \qquad\forall \,y\in\R^d\,.
    \end{align*}
    The set of all subgradients of $f$ at $x$ is called the \emph{subdifferential} of $f$ at $x$, denoted $\partial f(x)$.
    Also, $\partial f \deq \{(x, g) : x\in\R^d,\;g\in \partial f(x)\}$ is called the \emph{subdifferential} of $f$.
\end{definition}

Importantly, the following lemma holds:

\begin{lemma}
    If $f : \R^d\to\R\cup\{\infty\}$ is convex and $x$ lies in the interior of $\dom(f)$, then $\partial f(x)$ is non-empty.
    Also, if $f$ is differentiable at $x$, then the subdifferential at $x$ is single-valued and satisfies $\partial f(x) = \{\nabla f(x)\}$.
\end{lemma}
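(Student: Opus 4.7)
The proof plan splits naturally according to the two claims.

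For non-emptiness of $\partial f(x)$ when $x$ lies in the interior of $\dom(f)$, I would apply the supporting hyperplane theorem to the convex set $\mathrm{epi}(f) \deq \{(y,t) \in \R^d \times \R : t \ge f(y)\} \subseteq \R^{d+1}$ at the boundary point $(x, f(x))$. This yields a nonzero $(g, -s) \in \R^d \times \R$ such that $\langle g, y-x\rangle - s\,(t-f(x)) \le 0$ for every $(y,t) \in \mathrm{epi}(f)$. Letting $t\to\infty$ with $y$ fixed forces $s\ge 0$, and taking $t=f(y)$ gives $\langle g, y-x\rangle \le s\,(f(y)-f(x))$ for all $y\in\dom(f)$. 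The main obstacle is to upgrade $s \ge 0$ to $s > 0$; this is exactly where the interior assumption enters. If $s=0$, then $\langle g, y-x\rangle \le 0$ for all $y \in \dom(f)$, but since $x$ is interior we can choose $y = x + \eps g \in \dom(f)$ to force $\eps\,\|g\|^2 \le 0$, contradicting $(g,-s)\ne 0$. Dividing by $s$ then gives $g/s \in \partial f(x)$.

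For the characterization when $f$ is differentiable at $x$, I would show both inclusions. To see $\nabla f(x) \in \partial f(x)$, I will use convexity directly rather than invoking~\eqref{eq:first_order_cvx} (which as stated in the excerpt requires continuous differentiability): for any $y \in \R^d$ and $t \in (0,1)$, convexity gives
\begin{align*}
    \frac{f(x + t\,(y-x)) - f(x)}{t} \le f(y) - f(x)\,.
\end{align*}
Letting $t \searrow 0$ and using differentiability at $x$, the left-hand side tends to $\langle \nabla f(x), y-x\rangle$, yielding $\nabla f(x) \in \partial f(x)$.

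Conversely, for any $g \in \partial f(x)$ and any direction $v \in \R^d$, the subgradient inequality applied at $y = x + tv$ with $t > 0$ gives $(f(x+tv) - f(x))/t \ge \langle g, v\rangle$; taking $t\searrow 0$ yields $\langle \nabla f(x), v\rangle \ge \langle g, v\rangle$. Repeating with $t < 0$ (equivalently, with $-v$ in place of $v$) gives the reverse inequality. Since $v$ is arbitrary, $g = \nabla f(x)$, so $\partial f(x) = \{\nabla f(x)\}$. The only delicate point is the first part (non-emptiness via a non-vertical supporting hyperplane), and that is where I expect to spend the most care; the second part is essentially a pair of one-sided difference quotient computations.
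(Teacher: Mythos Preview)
The paper does not prove this lemma; it is stated as a standard fact in the convex analysis appendix, with the reader referred to Rockafellar's book for details. Your proposal is correct and follows the classical argument: a supporting hyperplane to the epigraph at $(x,f(x))$, ruling out verticality via the interior assumption, for non-emptiness; and one-sided difference quotients for the characterization under differentiability. There is nothing to compare against in the paper, and nothing to fix in your argument.
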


We next turn towards the crucial concept of duality.

\begin{definition}[Convex conjugate]\index{convex duality}
For any function $f:\R^d \to \R\cup\{\infty\}$, we define its convex conjugate\footnote{The convex conjugate is also known as the \emph{Fenchel--Legendre transform}, the \emph{Fenchel dual} or variations of these terms.} $f^*$ via
$$
f^*(y) \deq \sup_{x\in \R^d}\left\{\langle x, y\rangle-f(x)\right\}\,,\qquad y \in \R^d\,.
$$
\end{definition}

\begin{example}
    Let $A \succ 0$ be a positive definite matrix.
    Then, the convex conjugate of $x \mapsto \frac{1}{2}\,\langle x, A\,x\rangle$ is the function $y\mapsto \frac{1}{2}\,\langle y, A^{-1}\,y\rangle$.
    See Lemma~\ref{lem:quadratic_conjugate} for the proof.
    The reader is invited to write down other examples of convex functions and to compute their conjugates.
\end{example}

As a supremum of affine functions, the convex conjugate of $f$ is always a closed convex function, even if $f$ is not.
Conversely, if $f$ is closed and convex, then $f = f^{**}$.

The inequality $f(x) + f^*(y) \ge \langle x,y\rangle$ is trivial from the definition of the convex conjugate.
However, it is important enough to deserve a name, and we need the equality case for later use.

\begin{theorem}[Fenchel--Young inequality]\label{thm:fenchel_young}\index{Fenchel--Young inequality}
    For a convex function $f : \R^d\to\R\cup\{\infty\}$ and any $x,y\in\R^d$,
    \begin{align*}
        f(x) + f^*(y) \ge \langle x,y\rangle\,.
    \end{align*}
    Equality holds if and only if $y \in \partial f(x)$.
\end{theorem}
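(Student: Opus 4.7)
The inequality itself is essentially immediate from the definition of the convex conjugate, so the bulk of the proof is really the characterization of the equality case. My plan is to write both parts as a single chain of equivalences, since this makes the proof most transparent.

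First, I would observe that by the very definition
$$f^*(y) = \sup_{z\in\R^d}\{\langle z,y\rangle - f(z)\}\,,$$
we have for any particular choice $z = x$ the bound $f^*(y) \ge \langle x,y\rangle - f(x)$, which upon rearrangement is the Fenchel--Young inequality. This step is one line and uses nothing but the definition.

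Next, I would characterize when equality is attained. Equality $f(x) + f^*(y) = \langle x,y\rangle$ holds if and only if the supremum defining $f^*(y)$ is attained at $z = x$, i.e., if and only if
$$\langle x,y\rangle - f(x) \ge \langle z,y\rangle - f(z)\qquad \forall\,z\in\R^d\,.$$
Rearranging this inequality gives $f(z) \ge f(x) + \langle y, z - x\rangle$ for every $z \in \R^d$, which is precisely the definition of $y \in \partial f(x)$. Both directions of the equivalence are obtained by the same rearrangement, so there is no asymmetry to handle.

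There is no substantial obstacle here; the only subtlety worth flagging is the convention that $f(x) = +\infty$ is permitted. In that case both sides of the inequality are $+\infty$ (assuming $f^*(y) > -\infty$, which holds as soon as $f$ is proper), the inequality is trivial, and $\partial f(x) = \emptyset$ by convention, consistent with the equality statement failing. For $x \in \dom(f)$ the argument above applies verbatim.
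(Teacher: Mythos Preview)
Your proof is correct and is exactly the standard argument. The paper does not actually give a formal proof of this theorem: it merely remarks beforehand that ``the inequality $f(x) + f^*(y) \ge \langle x,y\rangle$ is trivial from the definition of the convex conjugate'' and then states the equality case without proof. Your write-up is precisely the one-line argument the paper is alluding to, together with the equally immediate characterization of the equality case via the subdifferential definition.
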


Note that by symmetry, equality holds if and only if $x \in \partial f^*(y)$.
In particular, when $f$ and $f^*$ are differentiable, then the subdifferentials are single-valued, so that the equality condition reads $y = \nabla f(x)$ and $x = \nabla f^*(y)$.
This says that the gradient mappings are inverse to each other: $\nabla f^* = {(\nabla f)}^{-1}$.

We conclude this section by proving Rockafellar's theorem (Theorem~\ref{thm:rock}), which characterizes subdifferentials of closed convex functions as maximally monotone subsets of $\R^d \times \R^d$. In Section~\ref{sec:grad_cvx_fn}, we show that if $\varphi : \R^d\to\R$ is convex, then its subdifferential $\partial\varphi$ is cyclically monotone.
Here, we prove the converse.

\begin{proof}[Proof of Theorem~\ref{thm:rock}]
Let $A$ be cyclically monotone and fix $(x_0, y_0) \in A$. Define for any $x\in \R^d$ the function
$$
\varphi(x)=\sup_{k \ge 0}\sup_{\substack{(x_i,y_i) \in A \\ i=1, \ldots, k}}\big\{ \langle x_1-x_0, y_0\rangle+\langle x_2-x_1, y_1\rangle   + \cdots + \langle x-x_k, y_k\rangle \big\}\,.
$$
Clearly $\varphi$ is closed and convex as a supremum of affine functions. Moreover, $\varphi(x_0)\le 0$ by cyclical monotonicity\footnote{This is in fact the only place we use cyclical monotonicity!} and $\varphi(x_0)\ge 0$ (take $k=1$ and $(x_1, y_1)=(x_0, y_0)$) so that $\varphi(x_0)=0$ and $\varphi$ is a \emph{proper} convex function. Finally note that for any $(x,y)=(x_{k+1}, y_{k+1}) \in A$ and any $z \in \R^\dd$, it holds
\begin{align*}
\varphi(z)&\ge\sup_{k \ge 0}\sup_{(x_i,y_i) \in A,\, i=1, \ldots, k}\big\{ \langle x_1-x_0, y_0\rangle+\langle x_2-x_1, y_1\rangle   + \cdots \\
&\qquad\qquad\qquad\qquad\qquad\qquad\qquad{} + \langle x-x_k, y_k\rangle + \langle z-x, y\rangle \big\}\\
&= \varphi(x) + \langle z-x, y\rangle\,.
\end{align*}
Therefore, $y \in \partial \varphi(x)$. 
\end{proof}

\section{Strong convexity and smoothness}\label{sec:strcvx_smooth}

\begin{definition}
    A function $f : \R^d\to\R\cup\{\infty\}$ is called \emph{$\alpha$-convex} (for $\alpha\in\R$) if for all $x,y\in\R^d$ and all $t\in [0,1]$,
    \begin{align*}
        f((1-t)\,x+t\,y) \le (1-t)\,f(x) + t\,f(y) - \frac{\alpha\,t\,(1-t)}{2}\,\|y-x\|^2\,.
    \end{align*}
\end{definition}

The case when $\alpha = 0$ corresponds to convexity, as in Definition~\ref{def:cvx}.
When $\alpha > 0$, then $f$ is called \emph{strongly convex}, and the above inequality strengthens the usual convexity inequality.
When $\alpha < 0$, then $f$ is called \emph{semi-convex}.

When $f$ is continuously differentiable, $\alpha$-convexity is equivalent to the first statement below.
When $f$ is twice continuously differentiable, $\alpha$-convexity is equivalent to both statements below.
\begin{enumerate}
    \item $f(y) \ge f(x) + \langle \nabla f(x), y-x\rangle + \frac{\alpha}{2}\,\|x-y\|^2$, for all $x,y\in\R^d$.
    \item $\nabla^2 f(x) \succeq \alpha I$ for all $x\in\R^d$.
\end{enumerate}

We also formulate the dual property of an upper bound on the second derivative.

\begin{definition}
    A continuously differentiable function $f : \R^d\to\R$ is called \emph{$\beta$-smooth} ($\beta \ge 0$) if for all $x,y\in\R^d$,
    \begin{align*}
        f(y) \le f(x) + \langle\nabla f(x), y-x\rangle + \frac{\beta}{2}\,\|y-x\|^2\,.
    \end{align*}
\end{definition}

When $f$ is twice continuously differentiable, then this is equivalent to $\nabla^2 f(x) \preceq \beta I$ for all $x\in\R^d$.
If, in addition, $f$ is convex, then $\nabla^2 f(x) \succeq 0$, so in particular the operator norm of $\nabla^2 f(x)$ is at most $\beta$. In turn, this is equivalent to the $\beta$-Lipschitzness of the mapping $\nabla f : \R^d\to\R^d$ (thus, convex and smooth functions are often referred to as \emph{gradient Lipschitz}).

The properties of strong convexity and smoothness are dual.
For simplicity, we state the following lemma assuming that $f$ is continuously differentiable, but the assumptions can be somewhat relaxed.

\begin{lemma}\label{thm:strcvx_smooth_dual}
    Let $f : \R^d\to\R$ be continuously differentiable, convex, and $\|\nabla f(x)\| \to\infty$ as $\|x\|\to\infty$.
    Let $\alpha > 0$.
    Then, $f$ is $\alpha$-strongly convex if and only if its convex conjugate $f^*$ is $\frac{1}{\alpha}$-smooth.
\end{lemma}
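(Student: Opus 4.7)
The strategy rests on the fact, furnished by the equality case of Fenchel--Young (Theorem~\ref{thm:fenchel_young}), that the gradient maps $\nabla f$ and $\nabla f^*$ are inverses of each other wherever they are defined. The first task is to check the global setup: $f$ is convex and $C^1$, with $\|\nabla f(x)\|\to\infty$ as $\|x\|\to\infty$, hence $x\mapsto f(x) - \langle x,y\rangle$ is coercive for every $y$ and attains a unique minimizer, so $f^*$ is finite-valued on all of $\R^d$. Any maximizer $x$ in the definition of $f^*(y)$ satisfies $\nabla f(x)=y$, and by Fenchel--Young this characterizes the unique element of $\partial f^*(y)$; thus $f^*$ is everywhere differentiable with $\nabla f^* = (\nabla f)^{-1}$ and, since $f$ is $C^1$ strictly convex (strict convexity follows from any of the equivalent reformulations we will use), $\nabla f^*$ is continuous.

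For the forward direction, I would rewrite $\alpha$-strong convexity of $f$ as $\alpha$-strong monotonicity of $\nabla f$, namely $\langle \nabla f(x)-\nabla f(x'),x-x'\rangle\ge \alpha\,\|x-x'\|^2$ for all $x,x'$ (this is obtained by adding the two first-order inequalities for $f$ at $x$ and $x'$). Substituting $x=\nabla f^*(y)$, $x'=\nabla f^*(y')$ and applying Cauchy--Schwarz on the left-hand side yields $\|\nabla f^*(y)-\nabla f^*(y')\|\le \tfrac{1}{\alpha}\,\|y-y'\|$. For a convex $C^1$ function, $\tfrac{1}{\alpha}$-Lipschitzness of the gradient is equivalent to $\tfrac{1}{\alpha}$-smoothness (one direction is the integral along segments of the gradient; the other direction follows by combining the smoothness inequality at $(y,y')$ and $(y',y)$), so $f^*$ is $\tfrac{1}{\alpha}$-smooth.

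For the converse, I would establish the Baillon--Haddad upgrade: for a convex $C^1$ function $g=f^*$ that is $L$-smooth (here $L=1/\alpha$), the gradient $\nabla g$ is in fact $\tfrac{1}{L}$-cocoercive, i.e.\ $\langle \nabla g(y)-\nabla g(y'),y-y'\rangle\ge \tfrac{1}{L}\,\|\nabla g(y)-\nabla g(y')\|^2$. The clean proof goes by applying the descent inequality produced by $L$-smoothness to the auxiliary convex function $h_{y_0}(y) \deq g(y)-\langle \nabla g(y_0),y\rangle$, minimized at $y_0$, to deduce $g(y_0)\le g(y) - \tfrac{1}{2L}\,\|\nabla g(y)-\nabla g(y_0)\|^2$; adding this inequality at $(y,y')$ and $(y',y)$ produces cocoercivity. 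Plugging the inverse $\nabla f=(\nabla f^*)^{-1}$ into the cocoercivity of $\nabla f^*$ with $L=1/\alpha$ gives exactly $\langle \nabla f(x)-\nabla f(x'),x-x'\rangle\ge \alpha\,\|x-x'\|^2$, i.e.\ $\alpha$-strong monotonicity of $\nabla f$, which in turn is equivalent to $\alpha$-strong convexity of $f$ (integrating the monotonicity inequality along the segment from $x$ to $y$ recovers $f(y)\ge f(x)+\langle \nabla f(x),y-x\rangle+\tfrac{\alpha}{2}\,\|x-y\|^2$, from which Definition~\ref{def:cvx}'s reinforced form follows).

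The only genuinely delicate step is the Baillon--Haddad upgrade from $L$-Lipschitz to $\tfrac{1}{L}$-cocoercive, which is what makes the converse nontrivial; everything else is bookkeeping around the Fenchel--Young equality case and the inversion $\nabla f^*=(\nabla f)^{-1}$. If one wishes to avoid Baillon--Haddad entirely, an alternative is to observe that, since $f$ is closed and convex, $f^{**}=f$, and then apply the forward direction to $f^*$: $\tfrac{1}{\alpha}$-smoothness does not itself give strong convexity of the dual, so this symmetry argument is not available, and Baillon--Haddad (or an equivalent direct verification via the auxiliary function above) is really the crux of the converse.
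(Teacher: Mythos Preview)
Your forward direction matches the paper's exactly: add the two first-order strong convexity inequalities to get $\alpha$-strong monotonicity of $\nabla f$, substitute $x=\nabla f^*(y)$, $x'=\nabla f^*(y')$, apply Cauchy--Schwarz, and conclude $\tfrac{1}{\alpha}$-Lipschitzness of $\nabla f^*$.

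Your converse is correct but takes a different route. You go through Baillon--Haddad: upgrade $L$-Lipschitzness of $\nabla f^*$ to $\tfrac{1}{L}$-cocoercivity via the auxiliary function $h_{y_0}$, then invert through $\nabla f=(\nabla f^*)^{-1}$ to obtain strong monotonicity of $\nabla f$, then integrate along segments. The paper instead works directly from the biconjugate identity $f(y)=\sup_{y'}\{\langle y,y'\rangle-f^*(y')\}$: it replaces $f^*(y')$ by its smoothness upper bound around a base point $x'$, evaluates the resulting quadratic supremum in closed form to obtain
\[
f(y)\ge -f^*(x')+\langle y,x'\rangle+\frac{\alpha}{2}\,\|y-\nabla f^*(x')\|^2,
\]
and then specializes $x'=\nabla f(x)$ so that $\nabla f^*(x')=x$ and $-f^*(x')+\langle x,x'\rangle=f(x)$ by Fenchel--Young equality, yielding the first-order strong convexity inequality in one stroke. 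This is shorter and bypasses cocoercivity entirely, so your closing remark that ``Baillon--Haddad \ldots\ is really the crux of the converse'' overstates things; the paper's argument shows a direct path. Your route has the advantage of isolating a reusable operator-theoretic fact (cocoercivity of gradients of smooth convex functions), while the paper's buys brevity and stays purely at the level of function inequalities.
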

\begin{proof}
    From classical results in convex analysis (see~\cite[Theorem 25.5, Theorem 26.6, and Lemma 26.7]{Roc1997CvxAnalysis}), under our assumptions, $\nabla f : \R^d\to\R^d$ is a diffeomorphism with inverse $\nabla f^*$.
    
    $(\Rightarrow)$ By taking the first-order condition for strong convexity and adding it to the inequality with $x$ and $y$ interchanged, we obtain
    \begin{align*}
        \langle \nabla f(x) - \nabla f(y), x-y \rangle \ge \alpha\,\|x-y\|^2\,.
    \end{align*}
    Let $x = \nabla f^*(x')$ and $y = \nabla f^*(y')$ and recall also that $\nabla f \circ \nabla f^* = \id$.
    The above inequality yields, for all $x',y'\in\R^\dd$,
    \begin{align*}
        \langle x' - y', \nabla f^*(x') - \nabla f^*(y') \rangle
        &\ge \alpha \, \|\nabla f^*(x') - \nabla f^*(y')\|^2\,.
    \end{align*}
    Applying Cauchy{--}Schwarz to the left-hand side and rearranging, it follows that $\nabla f^*$ is $\frac{1}{\alpha}$-Lipschitz, which is equivalent to $f^*$ being $\frac{1}{\alpha}$-smooth, as discussed above.

    $(\Leftarrow)$ By smoothness of $f^*$,
    \begin{align*}
        f(y)
        &= \sup_{y'\in\R^\dd}\{\langle y,y'\rangle - f^*(y')\} \\
        &\ge \sup_{y'\in\R^\dd}\Bigl\{\langle y,y'\rangle - f^*(x') - \langle \nabla f^*(x'), y'-x'\rangle - \frac{1}{2\alpha}\,\|y'-x'\|^2\Bigr\} \\
        &= -f^*(x') + \langle y, x'\rangle + \frac{\alpha}{2}\,\|y-\nabla f^*(x')\|^2\,.
    \end{align*}
    Choose $x' = \nabla f(x')$ so that $\nabla f^*(x') = x$ and recall that $f(x) + f^*(x') = \langle x,x'\rangle$.
    It yields
    \begin{align*}
        f(y) - f(x) - \langle \nabla f(x), y-x\rangle
        &\ge \frac{\alpha}{2}\,\|y - x\|^2\,,
    \end{align*}
    completing the proof.
\end{proof}

Note that if $f$, $f^*$ are twice continuously differentiable, then $f$ is $\alpha$-strongly convex iff $\nabla^2 f \succeq \alpha I$, and $f^*$ is $\frac{1}{\alpha}$-smooth iff $\nabla^2 f^* = (\nabla^2 f){}^{-1}\circ \nabla f^* \preceq \alpha^{-1} I$, which provides a more transparent proof.

Strong convexity also implies the following property, which can be viewed as a strong quantitative form of the principle that locally optimal points are globally optimal under convexity.

\begin{definition}[Polyak--\L{}ojasiewicz inequality]\index{Polyak--\L{}ojasiewicz (P\L{}) inequality}
    We say that a continuously differentiable function $f : \R^d\to\R$ satisfies a \emph{Polyak--\L{}ojasiewicz (P\L{}) inequality} with constant $\alpha > 0$ if for all $x\in\R^d$,
    \begin{align*}
        \|\nabla f(x)\|^2 \ge 2\alpha\,(f(x) - \inf f)\,.
    \end{align*}
\end{definition}

\begin{lemma}\label{lem:strcvx_implies_pl}
    If $f : \R^d\to\R$ is continuously differentiable and $\alpha$-convex for $\alpha > 0$, then it satisfies a P\L{} inequality with constant $\alpha$.
\end{lemma}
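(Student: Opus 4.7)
The plan is to use the first-order characterization of $\alpha$-convexity, namely
\[
f(y) \ge f(x) + \langle \nabla f(x), y-x\rangle + \frac{\alpha}{2}\,\|y-x\|^2\,,\qquad \forall\,x,y\in\R^d\,,
\]
which, as noted in Section~\ref{sec:strcvx_smooth}, is equivalent to $\alpha$-convexity for continuously differentiable functions. Fix $x\in\R^d$ and view the right-hand side as a strictly convex quadratic in $y$. The next step is simply to minimize this quadratic: setting the gradient with respect to $y$ to zero gives the explicit minimizer $y^\star = x - \alpha^{-1}\,\nabla f(x)$, and substituting $y^\star$ back in yields
\[
f(y^\star) \ge f(x) - \frac{1}{2\alpha}\,\|\nabla f(x)\|^2\,.
\]
More generally, by the minimization,
\[
f(y) \ge f(x) - \frac{1}{2\alpha}\,\|\nabla f(x)\|^2 \qquad\forall\,y\in\R^d\,,
\]
since the right-hand side is the global infimum of the quadratic lower bound.

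Taking the infimum on the left over $y\in\R^d$ yields $\inf f \ge f(x) - \frac{1}{2\alpha}\,\|\nabla f(x)\|^2$, which, once rearranged, is exactly the P\L{} inequality with constant $\alpha$. There is no real obstacle here; the argument is a one-line computation once we have identified the quadratic lower bound from $\alpha$-convexity, and the choice to minimize that lower bound over $y$ is natural because it converts pointwise information about $f$ into a comparison between $f(x)$ and $\inf f$. The only subtlety worth mentioning is that we do not need to know \emph{a priori} that $\inf f$ is attained; the inequality $f(y)\ge f(x) - \frac{1}{2\alpha}\,\|\nabla f(x)\|^2$ for all $y$ is sufficient to pass to the infimum.
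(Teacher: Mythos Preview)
Your proof is correct and follows essentially the same idea as the paper's: both start from the first-order strong convexity inequality and extract the P\L{} inequality by minimizing the quadratic lower bound. The paper phrases the minimization as plugging in the minimizer $x_\star$ and then bounding the cross term via Cauchy--Schwarz and Young's inequality, whereas you minimize the quadratic in $y$ directly; your version is slightly cleaner in that it does not presuppose that $\inf f$ is attained.
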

\begin{proof}
    Let $x_\star$ denote the minimizer of $f$. Then,
    \begin{align*}
        \inf f
        = f(x_\star)
        \ge f(x) + \langle \nabla f(x), x_\star - x\rangle + \frac{\alpha}{2}\,\|x_\star - x\|^2\,.
    \end{align*}
    By Cauchy{--}Schwarz and Young's inequality,
    \begin{align*}
        \langle \nabla f(x), x_\star - x\rangle
        &\ge -\|\nabla f(x)\|\,\|x_\star - x\| \\
        &\ge -\frac{1}{2\alpha}\,\|\nabla f(x)\|^2 - \frac{\alpha}{2}\,\|x_\star - x\|^2\,.
    \end{align*}
    Substituting and rearranging finishes the proof.
\end{proof}

We also note that strong convexity implies quadratic growth around the minimizer.

\begin{lemma}\label{lem:strcvx_implies_growth}
    If $f : \R^d\to\R\cup\{\infty\}$ is $\alpha$-convex, and if $x_\star$ denotes the minimizer of $f$, then for all $x\in\R^d$,
    \begin{align}\label{eq:euclidean_quad_growth}
        f(x) - f(x_\star) \ge \frac{\alpha}{2}\,\|x-x_\star\|^2\,.
    \end{align}
\end{lemma}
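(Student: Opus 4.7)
The plan is to use the definition of $\alpha$-convexity directly, without relying on differentiability of $f$ (since $f$ may take the value $+\infty$ and need not be smooth). The strategy is to interpolate between $x_\star$ and $x$ along a segment, then exploit the fact that $x_\star$ minimizes $f$ to force an inequality that yields~\eqref{eq:euclidean_quad_growth} in the limit.

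Concretely, I would first dispose of the trivial case $f(x) = \infty$, where the bound is immediate. Assuming $f(x) < \infty$, fix $t \in (0,1]$ and apply the defining inequality of $\alpha$-convexity to the pair $(x_\star, x)$ with interpolation parameter $t$:
\begin{align*}
    f((1-t)\,x_\star + t\,x)
    &\le (1-t)\,f(x_\star) + t\,f(x) - \frac{\alpha\,t\,(1-t)}{2}\,\|x - x_\star\|^2\,.
\end{align*}
Since $x_\star$ is the global minimizer, the left-hand side is at least $f(x_\star)$, which gives
\begin{align*}
    f(x_\star) \le (1-t)\,f(x_\star) + t\,f(x) - \frac{\alpha\,t\,(1-t)}{2}\,\|x - x_\star\|^2\,.
\end{align*}
Subtracting $(1-t)\,f(x_\star)$ from both sides, dividing by $t > 0$, and rearranging yields
\begin{align*}
    f(x) - f(x_\star) \ge \frac{\alpha\,(1-t)}{2}\,\|x - x_\star\|^2\,.
\end{align*}
Letting $t \searrow 0$ produces the desired quadratic growth inequality.

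There is no real obstacle here; the only subtlety is that we cannot invoke a first-order optimality condition $\nabla f(x_\star) = 0$, since $f$ is not assumed differentiable, and that we must make sure the minimizer $x_\star$ exists (which is guaranteed when $\alpha > 0$ and $f$ is proper and lower semicontinuous, and is implicit in the statement). Using the interpolation argument bypasses differentiability cleanly and keeps the proof entirely at the level of the definition.
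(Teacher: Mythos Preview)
Your proof is correct and follows essentially the same approach as the paper: apply the $\alpha$-convexity inequality along the segment from $x_\star$ to $x$, use $f(x_\star) \le f((1-t)x_\star + tx)$, divide by $t$, and let $t \searrow 0$.
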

\begin{proof}
    The strong convexity inequality gives
    \begin{align*}
        f((1-t)\,x_\star + t\,x) \le (1-t)\,f(x_\star) + t\,f(x) - \frac{\alpha\,t\,(1-t)}{2}\,\|x-x_\star\|^2\,,
    \end{align*}
    or
    \begin{align*}
        0
        &\le f((1-t)\,x_\star + t\,x) - f(x_\star) \\
        &\le t\,\Bigl[f(x) - f(x_\star) - \frac{\alpha\,(1-t)}{2}\,\|x-x_\star\|^2\Bigr]\,.
    \end{align*}
    Rearranging, dividing by $t$, and letting $t\searrow 0$ proves the result.
\end{proof}

Actually, in Exercise~\ref{ex:otto_villani} in Chapter~\ref{chap:WGF}, we refine this statement to show that the P\L{} inequality itself implies the growth inequality~\eqref{eq:euclidean_quad_growth}. We refer the reader to~\cite[Appendix~A]{KarNutSch16} for a concise exposition of the interplay between these inequalities.

\section{Convex conjugate of a quadratic function}

In this section we establish the following useful lemma which states that the convex conjugate of a quadratic function is an explicit quadratic function.

\begin{lemma}\label{lem:quadratic_conjugate}
	If $f(x) = \frac 12\, x^\top A x + b^\top x$ for $A \succ 0$, then
	\begin{equation}
		f^*(y) = \frac 12\, (y-b)^\top A^{-1} (y-b)\,.
	\end{equation}
	If $A$ is not invertible, the same expression holds if we interpret $A^{-1}(y-b)$ as the solution to $y = A x + b$ if it exists, and $f^*(y) = +\infty$ otherwise.
\end{lemma}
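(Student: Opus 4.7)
The plan is to compute the supremum $f^*(y) = \sup_{x \in \R^d}\{\langle x, y\rangle - f(x)\} = \sup_{x \in \R^d}\{\langle x, y\rangle - \tfrac{1}{2} x^\top A x - b^\top x\}$ directly. When $A \succ 0$, the objective is a strictly concave quadratic in $x$, so its unique maximizer is found by setting the gradient to zero: $y - Ax - b = 0$, i.e., $x^\star = A^{-1}(y - b)$. Substituting back and simplifying the resulting expression yields $f^*(y) = \tfrac{1}{2}(y-b)^\top A^{-1}(y-b)$ after routine algebra (the cross terms collapse because $A^{-1}$ is symmetric).

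For the degenerate case $A \succeq 0$ but not invertible, I would split based on whether $y - b \in \range(A)$. If $y - b \in \range(A)$, pick any $x^\star$ with $Ax^\star = y - b$; the objective attains the value $\tfrac{1}{2}(y-b)^\top x^\star$ at $x^\star$, which by the definition given in the statement equals $\tfrac{1}{2}(y-b)^\top A^{-1}(y-b)$ and is independent of the choice of preimage (since $\ker A \perp \range A$ for symmetric $A$, and $y-b \in \range A$). Adding any vector in $\ker A$ to $x^\star$ does not change the objective, so this is indeed the supremum. If $y - b \notin \range(A)$, decompose $y - b = u + v$ with $u \in \range(A)$ and $0 \neq v \in \ker(A)$; then taking $x = tv$ with $t \to \infty$ gives objective value $t\|v\|^2 \to \infty$, so $f^*(y) = +\infty$.

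No real obstacle arises here — this is a direct calculation. The only minor care needed is in the singular case, where one must verify that the expression $(y-b)^\top A^{-1}(y-b)$ is well-defined (independent of the preimage chosen) when $y - b \in \range A$, which follows from symmetry of $A$.
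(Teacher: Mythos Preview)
Your proposal is correct and follows essentially the same approach as the paper: compute the critical point via the first-order condition $y = Ax + b$, substitute back, and in the singular case show unboundedness along a direction in $\ker A$ when $y - b \notin \operatorname{range}(A)$. One small remark: your justification that $x^\star$ achieves the supremum in the singular case (``adding any vector in $\ker A$ does not change the objective'') is cleaner if you simply note that the objective is concave and $x^\star$ is a critical point, hence a global maximizer.
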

\begin{proof}
	The definition of $f^*(y)$ implies
	\begin{equation*}
		f^*(y) = \sup_{x\in\R^d}{\Bigl\{y^\top x -  \frac 12\, x^\top A x - b^\top x\Bigr\}}\,.
	\end{equation*}
	Differentiating the objective yields that if a maximizer $x^\star$ exists, then it satisfies
	\begin{equation*}
		y = A x^\star + b\,,
	\end{equation*}
	which yields $x^\star = A^{-1}(y- b)$ if $y - b \in \operatorname{span}(A)$.
	In this case, we obtain
	\begin{equation*}
		f^*(y) =  y^\top x^\star -  \frac 12\, (x^\star)^\top A x^\star - b^\top x^\star = \frac 12\, (y- b)^\top A^{-1} (y- b)\,.
	\end{equation*}
	On the other hand, if $y - b \not \in \operatorname{span}(A)$, then we can find a vector $z \in \operatorname{ker}(A)$ such that $z^\top (y - b) \neq 0$.
	Considering $x = \lambda z$ for $\lambda \in \R$, we obtain
	\begin{align*}
		f^*(y)
            &\geq \sup_{\lambda\in\R}{\Bigl\{y^\top (\lambda z) -  \frac 12 \,(\lambda z)^\top A (\lambda z) - b^\top (\lambda z)\Bigr\}} \\
            &= \sup_{\lambda\in\R} \lambda z^\top (y - b) = + \infty\,,
	\end{align*}
	as claimed.
\end{proof}

\chapter{Probability}
\label{app:proba}

In this appendix, we gather together some background material on probability theory.
See, e.g., the textbook~\cite{Bil99} for further discussion.

We begin with the notion of convergence of probability measures.

\begin{definition}\index{weak convergence}
A sequence of probability measures $(\mu_n)_n$ on $\R^d$ is said to \emph{converge (weakly)} to a probability measure $\mu$ if for all bounded continuous functions $f : \R^d\to\R$, it holds that
\begin{align*}
    \int f \, \ud \mu_n \to \int f \, \ud \mu\,.
\end{align*}
\end{definition}

For the topology of $\R^d$, we know exactly which subsets are compact: namely, a set $A$ is compact if and only if it is closed and bounded (Heine{--}Borel theorem). This provides a useful criterion for when a sequence $(x_n)_n$ in $A$ converges, upon passing to a subsequence, to a point in $A$.
The following definition and theorem characterize compact sets of probability measures in the topology of weak convergence.

\begin{definition}
    A set $\cA$ of probability measures on $\R^d$ is \emph{tight} if for all $\varepsilon > 0$, there is a compact set $K$ such that $\mu(K^\comp) \le \varepsilon$ for all $\mu \in \cA$.
\end{definition}

\begin{theorem}[Prokhorov's theorem]\index{Prokhorov's theorem}\label{thm:prokhorov}
Any weakly convergent sequence of probability measures is tight.
Conversely, any tight sequence of probability measures has a subsequential weak limit.

Equivalently, a set $\cA$ of probability measures on $\R^d$ is compact if and only if it is closed and tight.
\end{theorem}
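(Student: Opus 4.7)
The plan is to prove the two directions of Prokhorov's theorem separately and then deduce the compactness characterization. For the easy direction, suppose $\mu_n \hookrightarrow \mu$ weakly. I would first note that any single probability measure on $\R^d$ is tight: take $K_R = \{\|x\| \le R\}$ and let $R\to\infty$, using continuity from below. So fix $\varepsilon > 0$, pick a compact $K_0$ with $\mu(K_0^\comp) < \varepsilon/2$, and slightly enlarge it to an open set $U$ with compact closure $\bar U \supset K_0$. The portmanteau theorem (Theorem~\ref{thm:portmanteau}) gives $\liminf_n \mu_n(U) \ge \mu(U) \ge 1 - \varepsilon/2$, so for $n$ large enough, $\mu_n(\bar U^\comp) < \varepsilon$. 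For the remaining finitely many indices, individual tightness of each $\mu_n$ lets us enlarge $\bar U$ into a single compact set that works uniformly.

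For the hard direction (tightness implies subsequential weak limit), my plan is to pass to the one-point compactification $\widehat{\R^d} \deq \R^d \cup \{\infty\}$, which is a compact metrizable space. Each $\mu_n$ extends uniquely to a Borel probability measure $\hat\mu_n$ on $\widehat{\R^d}$ by placing no mass at $\infty$. Since $\widehat{\R^d}$ is compact metric, $C(\widehat{\R^d})$ is separable, and the Riesz representation theorem combined with the Banach--Alaoglu theorem shows that the set of Borel probability measures on $\widehat{\R^d}$ is sequentially weak-$\ast$ compact. I therefore extract a subsequence $\hat\mu_{n_k}$ converging weakly to some Borel probability measure $\hat\mu$ on $\widehat{\R^d}$.

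The main obstacle is to verify that no mass escapes to $\infty$, that is, $\hat\mu(\{\infty\}) = 0$, so that $\hat\mu$ restricts to a probability measure $\mu$ on $\R^d$ which serves as the genuine weak limit of $\mu_{n_k}$ on $\R^d$. This is precisely where tightness enters: for any $\varepsilon > 0$, tightness provides a compact $K \subseteq \R^d$ with $\mu_n(K^\comp) < \varepsilon$ uniformly in $n$. Since $K$ is closed in $\widehat{\R^d}$ and excludes $\infty$, the portmanteau theorem applied in $\widehat{\R^d}$ yields $\hat\mu(K) \ge \limsup_k \hat\mu_{n_k}(K) \ge 1 - \varepsilon$, forcing $\hat\mu(\{\infty\}) = 0$. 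To upgrade weak convergence on $\widehat{\R^d}$ to weak convergence on $\R^d$, I would test against an arbitrary bounded continuous $f : \R^d \to \R$ by multiplying by a continuous cutoff $\chi$ supported on a large ball containing the tightness compact $K$, so that $f\chi$ extends continuously to $\widehat{\R^d}$, and controlling the tail $\int f\,(1-\chi)\,\ud\mu_n$ uniformly via tightness.

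Finally, the equivalent characterization follows immediately from the two directions: a closed and tight set of probability measures is sequentially compact, because any sequence in it is tight and hence admits a weakly convergent subsequence whose limit lies in the set by closedness; conversely, a sequentially compact set is automatically closed, and failure of tightness would produce a sequence with no weakly convergent subsequence by the escape-of-mass argument above, contradicting compactness.
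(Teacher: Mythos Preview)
The paper explicitly omits the proof of this theorem, offering only the intuition that tightness prevents mass from escaping to infinity. Your proposal is a standard and correct proof: the easy direction via portmanteau plus individual tightness is routine, and the hard direction via the one-point compactification combined with Banach--Alaoglu (using separability of $C(\widehat{\R^d})$ to obtain \emph{sequential} compactness) is one of the clean classical arguments. One minor presentational wrinkle: you invoke Theorem~\ref{thm:portmanteau}, which in the paper is stated \emph{after} Prokhorov's theorem, but since its proof does not rely on Prokhorov there is no circularity.
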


We omit the proof, but the intuition can be gleaned via a simple example: on $\R$, let $\mu_n = \delta_{x_n}$ for all $n$, where $x_n\to\infty$; then, $(\mu_n)_n$ clearly has no weakly convergent subsequence.
The condition of tightness ensures that the mass does not run off to infinity, and once this is ensured then a weakly convergent subsequence is guaranteed.

The following theorem provides useful reformulations of weak convergence of measures.

\begin{theorem}[Portmanteau theorem]\label{thm:portmanteau}\index{portmanteau theorem}
    Let ${(\mu_n)}_{n}$ be a sequence in $\cP(\R^d)$ and let $\mu \in \cP(\R^d)$.
    The following are equivalent.
    \begin{enumerate}
        \item $\mu_n \to \mu$ weakly.
        \item $\int f \, \D \mu_n \to \int f \, \D \mu$ for all bounded Lipschitz continuous $f : \R^d \to \R$.
        \item $\int f \, \D \mu \le \liminf_{n\to\infty} \int f \, \D \mu_n$ for all lower semicontinuous functions $f : \R^d \to [0,\infty]$.
        \item $\int f \, \D \mu \ge \limsup_{n\to\infty} \int f \, \D \mu_n$ for all upper semicontinuous functions $f : \R^d \to [-\infty, 0]$.
        \item $\mu(G) \le \liminf_{n\to\infty} \mu_n(G)$ for all open $G \subseteq\R^d$.
        \item $\mu(F) \ge \limsup_{n\to\infty} \mu_n(F)$ for all closed $F \subseteq \R^d$.
        \item $\lim_{n\to\infty} \mu_n(A) = \mu(A)$ for all Borel $A \subseteq \R^d$ such that $\mu(\partial A) = 0$.
    \end{enumerate}
\end{theorem}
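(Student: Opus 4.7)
My plan is to establish the chain of implications $(1) \Rightarrow (2) \Rightarrow (5) \Leftrightarrow (6) \Rightarrow (7) \Rightarrow (1)$, and then handle the semicontinuous statements $(3)$ and $(4)$ as a separate loop linked to $(5)$ and $(6)$.

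For $(1) \Rightarrow (2)$, the implication is immediate since bounded Lipschitz functions are in particular bounded continuous. The key step is $(2) \Rightarrow (5)$: given an open $G \subseteq \R^d$, I would approximate $\mathbbm{1}_G$ from below by the bounded Lipschitz functions $f_k(x) \deq \min\{1, k \cdot \md(x, G^\comp)\}$, which increase pointwise to $\mathbbm{1}_G$. Since $f_k \le \mathbbm{1}_G$,
\begin{align*}
    \int f_k \,\ud \mu = \lim_{n\to\infty}\int f_k \,\ud \mu_n \le \liminf_{n\to\infty}\mu_n(G)\,,
\end{align*}
and monotone convergence in $k$ gives $\mu(G) \le \liminf_n \mu_n(G)$. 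The equivalence $(5) \Leftrightarrow (6)$ follows by passing to complements.

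For $(5)\text{ and }(6) \Rightarrow (7)$, I would use that for any Borel $A$ with $\mu(\partial A) = 0$, we have $\mu(\interior A) = \mu(A) = \mu(\overline A)$, so sandwiching yields
\begin{align*}
    \mu(A) = \mu(\interior A) \le \liminf_{n\to\infty}\mu_n(\interior A) \le \limsup_{n\to\infty}\mu_n(\overline A) \le \mu(\overline A) = \mu(A)\,.
\end{align*}
The step I expect to be the main obstacle is $(7) \Rightarrow (1)$: here, given bounded continuous $f$ with range in some interval $[a, b]$, I would use the layer cake $\int f\,\ud \nu = a + \int_a^b \nu(\{f > t\}) \,\ud t$ and observe that $\partial\{f > t\} \subseteq \{f = t\}$ by continuity of $f$. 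Since $\mu$ is a finite measure, the set $\{t : \mu(\{f = t\}) > 0\}$ is at most countable, so $\mu(\partial \{f > t\}) = 0$ for Lebesgue-a.e.\ $t$. Then (7) and dominated convergence on $[a,b]$ finish the job.

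To close the loop for $(3)$ and $(4)$: $(4) \Leftrightarrow (3)$ is immediate by replacing $f$ with $-f$. For $(5) \Rightarrow (3)$, given a lower semicontinuous $f \ge 0$, I would write $f(x) = \int_0^\infty \mathbbm{1}_{\{f > t\}}(x)\,\ud t$ using the layer cake, note that $\{f > t\}$ is open by lower semicontinuity, apply $(5)$ to each level set, and conclude via Fatou's lemma. Conversely $(3) \Rightarrow (5)$ is immediate by taking $f = \mathbbm{1}_G$, which is lower semicontinuous when $G$ is open. This yields the full equivalence.
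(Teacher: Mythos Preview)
Your proof is correct and follows essentially the same approach as the paper, with a minor reordering. The paper runs the chain $(2) \Rightarrow (3) \Rightarrow (5)$, invoking the general fact that any non-negative lower semicontinuous function can be approximated from below by Lipschitz functions; you instead go $(2) \Rightarrow (5)$ directly via the explicit approximants $f_k(x) = \min\{1, k\,\md(x,G^\comp)\}$ and then close the loop $(5) \Rightarrow (3)$ using layer cake plus Fatou. Your route is slightly more self-contained (no appeal to the general approximation result), while the paper's route gets $(3)$ ``for free'' along the main chain. The remaining steps --- $(5)\Leftrightarrow(6)$ by complements, $(5)\text{ and }(6)\Rightarrow(7)$ by sandwiching, and $(7)\Rightarrow(1)$ by layer cake with the countability of atoms of $f_\#\mu$ --- are identical to the paper's.
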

\begin{proof}
    (1) $\Rightarrow$ (2) is trivial.
    Also, it is easy to see that (3) is equivalent to (4) by replacing $f$ by $-f$, and that (5) is equivalent to (6) by taking complements.

    (2) $\Rightarrow$ (3): It is known that one can approximate $f$ from below by a sequence ${(f_k)}_{k}$ of Lipschitz continuous functions $\R^d\to [0,\infty)$.
    For any $k,n\in\N$, it holds that $\int f_k \, \D \mu_n \le \int f \, \D \mu_n$.
    Taking the limit $n\to\infty$, we get $\int f_k \, \D \mu \le \liminf_{n\to\infty} \int f \, \D \mu_n$.
    Then, take $k\to\infty$ using the monotone convergence theorem.

    (3) $\Rightarrow$ (5): The indicator function $\one_G$ is lower semicontinuous.
    Similarly, we obtain (4) $\Rightarrow$ (6) since the indicator function $\one_F$ is upper semicontinuous.

    (5) and (6) $\Rightarrow$ (7): The condition $\mu(\partial A) = 0$ means that $\mu(\interior A) = \mu(A) = \mu(\overline A)$.
    Applying (5) to the open set $\interior A$ and (6) to the closed set $\overline A$ proves (7).

    (7) $\Rightarrow$ (1): Let $f$ be bounded and continuous; we may as well assume $f$ is non-negative.
    Observe that for $t\in\R$, it holds that $\partial f^{-1}([t,\infty)) \subseteq f^{-1}(\{t\})$, and this set can have positive $\mu$-measure for at most countably many values of $t$.
    Applying (7), we obtain
    \begin{align*}
        \int f \, \D \mu_n
        &= \int_0^\infty \mu_n\{f \ge t\}\,\ud t
        \to \int_0^\infty \mu\{f \ge t\}\,\ud t
        = \int f \, \D \mu\,,
    \end{align*}
    where to justify the convergence we can use, e.g., bounded convergence (since the integral can actually be taken over a finite interval).
\end{proof}

The following lemma, as the name suggests, allows us to ``glue'' together couplings which share a marginal and is useful for some constructions in optimal transport (e.g., the proof of the triangle inequality for Wasserstein distances in Proposition~\ref{prop:wp_is_metric}).

\begin{lemma}[Gluing lemma]\label{lem:gluing}\index{gluing lemma}
Let $\gamma$ and $\gamma'$ be two measures on $\R^d\times \R^d$ such that for any Borel set $A\subset \R^d$, it holds $\gamma(\R^d\times A)=\gamma'(A \times\R^d$), i.e., the second marginal of $\gamma$ coincides with the first marginal of $\gamma'$. Then there exists three random variables $X,Y,Z \in \R^d$ such that
$(X,Z)\sim \gamma$ and $(Z,Y) \sim \gamma'$.
\end{lemma}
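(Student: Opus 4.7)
The plan is to build an explicit joint distribution on $\R^d \times \R^d \times \R^d$ whose two projected ``pair'' marginals coincide with $\gamma$ and $\gamma'$, and then take $(X, Z, Y)$ to be a triple with that joint distribution. The key analytic tool is the disintegration theorem (a.k.a. existence of regular conditional probabilities) for Borel probability measures on $\R^d$, which is available here because $\R^d$ is a Polish space.

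Concretely, let $\rho$ denote the common marginal: $\rho(A) \deq \gamma(\R^d \times A) = \gamma'(A \times \R^d)$ for all Borel $A$. By the disintegration theorem, there exist Markov kernels $K_1(z, \cdot)$ and $K_2(z, \cdot)$ from $\R^d$ to $\R^d$ such that
\begin{align*}
    \gamma(\ud x, \ud z) &= K_1(z, \ud x)\,\rho(\ud z)\,, \\
    \gamma'(\ud z, \ud y) &= K_2(z, \ud y)\,\rho(\ud z)\,.
\end{align*}
Intuitively, $K_1(z, \cdot)$ is the conditional law of the first coordinate of $\gamma$ given the second equals $z$, and $K_2(z, \cdot)$ is the conditional law of the second coordinate of $\gamma'$ given the first equals $z$.

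Using these kernels, I would then define a probability measure $\Upsilon$ on $\R^d \times \R^d \times \R^d$ by
\begin{equation*}
    \Upsilon(\ud x, \ud z, \ud y) \deq K_1(z, \ud x)\,K_2(z, \ud y)\,\rho(\ud z)\,.
\end{equation*}
That this defines a bona fide probability measure is routine: measurability of $(z, A) \mapsto K_i(z, A)$ in $z$ lets us integrate against $\rho$, and Fubini--Tonelli gives total mass $1$. Let $(X, Z, Y) \sim \Upsilon$ be the desired triple.

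The only thing left is to verify that the two pair marginals are correct, and this is a direct computation: for any Borel $B \subseteq \R^d \times \R^d$,
\begin{align*}
    \p\bigl((X, Z) \in B\bigr)
    &= \iiint \one_B(x, z)\, K_1(z, \ud x)\,K_2(z, \ud y)\,\rho(\ud z) \\
    &= \iint \one_B(x, z)\, K_1(z, \ud x)\,\rho(\ud z)
    = \gamma(B)\,,
\end{align*}
where the $y$-integral vanished because $K_2(z, \cdot)$ is a probability measure; the identity $\p((Z, Y) \in B) = \gamma'(B)$ is analogous. The main obstacle is simply invoking the disintegration theorem cleanly (verifying measurability of the kernels and justifying Fubini); once that is in hand the construction is transparent and essentially just writes down the ``conditionally independent given $Z$'' coupling of the first coordinate of $\gamma$ and the second coordinate of $\gamma'$.
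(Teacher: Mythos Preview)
Your proof is correct and follows essentially the same approach as the paper: both construct the triple by taking $X$ and $Y$ conditionally independent given $Z$, with $Z$ drawn from the common marginal and the conditionals determined by $\gamma$ and $\gamma'$. Your version is simply more explicit about the technical machinery (disintegration into Markov kernels, verification of the pair marginals), whereas the paper gives a brief sketch that appeals directly to the existence of conditional distributions.
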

\begin{proof}
We are going to explicitly construct such a triplet $(X,Y,Z)$. To that end, let $Z$ be distributed according the  second marginal of $\gamma$ (which corresponds to the first marginal of $\gamma'$ by assumption).  Then $\gamma$ (resp.\ $\gamma'$) determines the conditional distribution of $X$ (resp.\ $Y$) given $Z$. For example, we may draw $X$ and $Y$ to be conditionally independent given $Z$. This gives a valid triplet $(X,Y,Z)$. \end{proof}

\begin{figure}
	\centering
	\begin{tikzcd}
		X \arrow[dash, "\text{opt}"]{r} & Z \arrow[dash, "\text{opt}"]{r} & Y
	\end{tikzcd}
	\caption{The diagram above is a convenient way to represent couplings between multiple random variables. An edge represent the constraint that the coupling needs to be optimal. In general, any coupling described as a graph with no cycle can be realized using the gluing lemma.}\label{fig:diagcouplings}
\end{figure}

\backmatter

\bibliographystyle{aomalpha}
\bibliography{rigolletmain,extra}

\providecommand{\etalchar}[1]{$^{#1}$}
\providecommand{\bysame}{\leavevmode\hbox to3em{\hrulefill}\thinspace}
\providecommand{\noopsort}[1]{}
\providecommand{\mr}[1]{\href{http://www.ams.org/mathscinet-getitem?mr=#1}{MR~#1}}
\providecommand{\zbl}[1]{\href{http://www.zentralblatt-math.org/zmath/en/search/?q=an:#1}{Zbl~#1}}
\providecommand{\jfm}[1]{\href{http://www.emis.de/cgi-bin/JFM-item?#1}{JFM~#1}}
\providecommand{\arxiv}[1]{\href{http://www.arxiv.org/abs/#1}{arXiv~#1}}
\providecommand{\doi}[1]{\url{http://dx.doi.org/#1}}
\providecommand{\MR}{\relax\ifhmode\unskip\space\fi MR }
\providecommand{\MRhref}[2]{%
  \href{http://www.ams.org/mathscinet-getitem?mr=#1}{#2}
}
\providecommand{\href}[2]{#2}
\begin{thebibliography}{dBCAMRR99}

\bibitem[ABAM22]{AbbBoiMis22Staircase}
\bgroup\scshape{}E.~Abbe\egroup{},
  \bgroup\scshape{}E.~Boix-Adser\'{a}\egroup{}, and
  \bgroup\scshape{}T.~Misiakiewicz\egroup{}, The merged-staircase property: a
  necessary and nearly sufficient condition for {SGD} learning of sparse
  functions on two-layer neural networks,  in \emph{Proceedings of Thirty Fifth
  Conference on Learning Theory} (\bgroup\scshape{}P.-L. Loh\egroup{} and
  \bgroup\scshape{}M.~Raginsky\egroup{}, eds.), \emph{Proceedings of Machine
  Learning Research} \textbf{178}, PMLR, 7 2022, pp.~4782--4887.

\bibitem[ABAM23]{AbbBoiMis23Leap}
\bgroup\scshape{}E.~Abbe\egroup{},
  \bgroup\scshape{}E.~Boix-Adser{\`a}\egroup{}, and
  \bgroup\scshape{}T.~Misiakiewicz\egroup{}, {SGD} learning on neural networks:
  leap complexity and saddle-to-saddle dynamics,  in \emph{Proceedings of
  Thirty Sixth Conference on Learning Theory} (\bgroup\scshape{}G.~Neu\egroup{}
  and \bgroup\scshape{}L.~Rosasco\egroup{}, eds.), \emph{Proceedings of Machine
  Learning Research} \textbf{195}, PMLR, 7 2023, pp.~2552--2623.

\bibitem[AC11]{AguCar11}
\bgroup\scshape{}M.~Agueh\egroup{} and \bgroup\scshape{}G.~Carlier\egroup{},
  Barycenters in the {W}asserstein space,  \emph{SIAM Journal on Mathematical
  Analysis} \textbf{43} (2011), 904--924.

\bibitem[ACLGP20]{AhiLeGPar20Bary}
\bgroup\scshape{}A.~Ahidar-Coutrix\egroup{},
  \bgroup\scshape{}T.~Le~Gouic\egroup{}, and
  \bgroup\scshape{}Q.~Paris\egroup{}, Convergence rates for empirical
  barycenters in metric spaces: curvature, convexity and extendable geodesics,
  \emph{Probab. Theory Related Fields} \textbf{177} (2020), 323--368.

\bibitem[AKT84]{AjtKomTus84}
\bgroup\scshape{}M.~Ajtai\egroup{}, \bgroup\scshape{}J.~Koml\'{o}s\egroup{},
  and \bgroup\scshape{}G.~Tusn\'{a}dy\egroup{}, On optimal matchings,
  \emph{Combinatorica} \textbf{4} (1984), 259--264.

\bibitem[ACG{\etalchar{+}}23]{Aky+23MMLE}
\bgroup\scshape{}O.~D. Akyildiz\egroup{}, \bgroup\scshape{}F.~R.
  Crucinio\egroup{}, \bgroup\scshape{}M.~Girolami\egroup{},
  \bgroup\scshape{}T.~Johnston\egroup{}, and
  \bgroup\scshape{}S.~Sabanis\egroup{}, Interacting particle {L}angevin
  algorithm for maximum marginal likelihood estimation,  \emph{arXiv preprint
  2303.13429} (2023), 1--38.

\bibitem[AKP22]{AleKapPet22}
\bgroup\scshape{}S.~Alexander\egroup{}, \bgroup\scshape{}V.~Kapovitch\egroup{},
  and \bgroup\scshape{}A.~Petrunin\egroup{}, Alexandrov geometry: foundations,
  \emph{arXiv preprint 1903.08539} (2022), 1--301.

\bibitem[AR20]{AlqRid20VI}
\bgroup\scshape{}P.~Alquier\egroup{} and \bgroup\scshape{}J.~Ridgway\egroup{},
  Concentration of tempered posteriors and of their variational approximations,
   \emph{Ann. Statist.} \textbf{48} (2020), 1475--1497.

\bibitem[ACGS21]{AltCheGer21}
\bgroup\scshape{}J.~Altschuler\egroup{}, \bgroup\scshape{}S.~Chewi\egroup{},
  \bgroup\scshape{}P.~Gerber\egroup{}, and \bgroup\scshape{}A.~J.
  Stromme\egroup{}, Averaging on the {B}ures--{W}asserstein manifold:
  dimension-free convergence of gradient descent,  in \emph{Advances in Neural
  Information Processing Systems} (\bgroup\scshape{}M.~Ranzato\egroup{},
  \bgroup\scshape{}A.~Beygelzimer\egroup{},
  \bgroup\scshape{}K.~Nguyen\egroup{}, \bgroup\scshape{}P.~S. Liang\egroup{},
  \bgroup\scshape{}J.~W. Vaughan\egroup{}, and
  \bgroup\scshape{}Y.~Dauphin\egroup{}, eds.), \textbf{34}, Curran Associates,
  Inc., 2021, pp.~22132--22145.

\bibitem[ABA21]{AltBoi21BaryPoly}
\bgroup\scshape{}J.~M. Altschuler\egroup{} and
  \bgroup\scshape{}E.~Boix-Adser\`a\egroup{}, Wasserstein barycenters can be
  computed in polynomial time in fixed dimension,  \emph{Journal of Machine
  Learning Research} \textbf{22} (2021), 1--19.

\bibitem[ABA22]{AltBoi22BaryNPHard}
\bgroup\scshape{}J.~M. Altschuler\egroup{} and
  \bgroup\scshape{}E.~Boix-Adser\`a\egroup{}, Wasserstein barycenters are
  {NP}-hard to compute,  \emph{SIAM J. Math. Data Sci.} \textbf{4} (2022),
  179--203.

\bibitem[AC24]{AltChe24HighAcc}
\bgroup\scshape{}J.~M. Altschuler\egroup{} and
  \bgroup\scshape{}S.~Chewi\egroup{}, Faster high-accuracy log-concave sampling
  via algorithmic warm starts,  \emph{J. ACM} \textbf{71} (2024), 1--55.

\bibitem[ANWR17]{AltWeeRig17}
\bgroup\scshape{}J.~M. Altschuler\egroup{},
  \bgroup\scshape{}J.~Niles-Weed\egroup{}, and
  \bgroup\scshape{}P.~Rigollet\egroup{}, Near-linear time approximation
  algorithms for optimal transport via {S}inkhorn iteration,  in \emph{Advances
  in Neural Information Processing Systems 30}, 2017, pp.~1961--1971.

\bibitem[Ama98]{Ama98NatGrad}
\bgroup\scshape{}S.-i. Amari\egroup{}, Natural gradient works efficiently in
  learning,  \emph{Neural Computation} \textbf{10} (1998), 251--276.

\bibitem[AN00]{AmaNag00}
\bgroup\scshape{}S.-i. Amari\egroup{} and \bgroup\scshape{}H.~Nagaoka\egroup{},
  \emph{Methods of information geometry}, \emph{Translations of Mathematical
  Monographs} \textbf{191}, American Mathematical Society, Providence, RI,
  2000, Translated from the 1993 Japanese original by Daishi Harada.

\bibitem[ABS21]{AmbBruSem21}
\bgroup\scshape{}L.~Ambrosio\egroup{}, \bgroup\scshape{}E.~Bru\'e\egroup{}, and
  \bgroup\scshape{}D.~Semola\egroup{}, \emph{Lectures on optimal transport},
  \emph{Unitext} \textbf{130}, Springer, 2021.

\bibitem[AG13]{AmbGig13}
\bgroup\scshape{}L.~Ambrosio\egroup{} and \bgroup\scshape{}N.~Gigli\egroup{}, A
  user's guide to optimal transport,  in \emph{Modelling and optimisation of
  flows on networks}, \emph{Lecture Notes in Math.} \textbf{2062}, Springer,
  Heidelberg, 2013, pp.~1--155.

\bibitem[AGS08]{AmbGigSav08}
\bgroup\scshape{}L.~Ambrosio\egroup{}, \bgroup\scshape{}N.~Gigli\egroup{}, and
  \bgroup\scshape{}G.~Savar\'{e}\egroup{}, \emph{Gradient flows in metric
  spaces and in the space of probability measures}, second ed., \emph{Lectures
  in Mathematics ETH Z\"{u}rich}, Birkh\"{a}user Verlag, Basel, 2008.

\bibitem[AST19]{AmbStrTre19}
\bgroup\scshape{}L.~Ambrosio\egroup{}, \bgroup\scshape{}F.~Stra\egroup{}, and
  \bgroup\scshape{}D.~Trevisan\egroup{}, A {PDE} approach to a 2-dimensional
  matching problem,  \emph{Probab. Theory Related Fields} \textbf{173} (2019),
  433--477.

\bibitem[AKSG19]{ArbKorSal19}
\bgroup\scshape{}M.~Arbel\egroup{}, \bgroup\scshape{}A.~Korba\egroup{},
  \bgroup\scshape{}A.~{Salim}\egroup{}, and
  \bgroup\scshape{}A.~Gretton\egroup{}, Maximum mean discrepancy gradient flow,
   in \emph{Advances in Neural Information Processing Systems}
  (\bgroup\scshape{}H.~Wallach\egroup{},
  \bgroup\scshape{}H.~Larochelle\egroup{},
  \bgroup\scshape{}A.~Beygelzimer\egroup{},
  \bgroup\scshape{}F.~d\textquotesingle Alch\'{e}-Buc\egroup{},
  \bgroup\scshape{}E.~Fox\egroup{}, and \bgroup\scshape{}R.~Garnett\egroup{},
  eds.), \textbf{32}, Curran Associates, Inc., 2019.

\bibitem[ACB17]{ArjChiBot17}
\bgroup\scshape{}M.~Arjovsky\egroup{}, \bgroup\scshape{}S.~Chintala\egroup{},
  and \bgroup\scshape{}L.~Bottou\egroup{}, Wasserstein generative adversarial
  networks,  in \emph{International {{Conference}} on {{Machine Learning}}},
  2017, pp.~214--223.

\bibitem[AL24]{ArnLac24CAVI}
\bgroup\scshape{}M.~Arnese\egroup{} and \bgroup\scshape{}D.~Lacker\egroup{},
  Convergence of coordinate ascent variational inference for log-concave
  measures via optimal transport,  \emph{arXiv preprint 2404.08792} (2024),
  1--28.

\bibitem[AFKL22]{AubKorLeg22MD}
\bgroup\scshape{}P.-C. Aubin-Frankowski\egroup{},
  \bgroup\scshape{}A.~Korba\egroup{}, and
  \bgroup\scshape{}F.~L\'{e}ger\egroup{}, Mirror descent with relative
  smoothness in measure spaces, with application to {S}inkhorn and {EM},  in
  \emph{Advances in Neural Information Processing Systems}
  (\bgroup\scshape{}S.~Koyejo\egroup{}, \bgroup\scshape{}S.~Mohamed\egroup{},
  \bgroup\scshape{}A.~Agarwal\egroup{}, \bgroup\scshape{}D.~Belgrave\egroup{},
  \bgroup\scshape{}K.~Cho\egroup{}, and \bgroup\scshape{}A.~Oh\egroup{}, eds.),
  \textbf{35}, Curran Associates, Inc., 2022, pp.~17263--17275.

\bibitem[AR15]{AwaRis15VI}
\bgroup\scshape{}P.~Awasthi\egroup{} and \bgroup\scshape{}A.~Risteski\egroup{},
  On some provably correct cases of variational inference for topic models,  in
  \emph{Advances in Neural Information Processing Systems}
  (\bgroup\scshape{}C.~Cortes\egroup{}, \bgroup\scshape{}N.~Lawrence\egroup{},
  \bgroup\scshape{}D.~Lee\egroup{}, \bgroup\scshape{}M.~Sugiyama\egroup{}, and
  \bgroup\scshape{}R.~Garnett\egroup{}, eds.), \textbf{28}, Curran Associates,
  Inc., 2015, pp.~1--9.

\bibitem[AJLS17]{Ay+17InfoGeo}
\bgroup\scshape{}N.~Ay\egroup{}, \bgroup\scshape{}J.~Jost\egroup{},
  \bgroup\scshape{}H.~V. L\^{e}\egroup{}, and
  \bgroup\scshape{}L.~Schwachh\"{o}fer\egroup{}, \emph{Information geometry},
  \emph{Ergebnisse der Mathematik und ihrer Grenzgebiete. 3. Folge. A Series of
  Modern Surveys in Mathematics [Results in Mathematics and Related Areas. 3rd
  Series. A Series of Modern Surveys in Mathematics]} \textbf{64}, Springer,
  Cham, 2017.

\bibitem[BNNR11]{doba11}
\bgroup\scshape{}K.~D. Ba\egroup{}, \bgroup\scshape{}H.~L. Nguyen\egroup{},
  \bgroup\scshape{}H.~N. Nguyen\egroup{}, and
  \bgroup\scshape{}R.~Rubinfeld\egroup{}, Sublinear time algorithms for earth
  mover's distance,  \emph{Theory Comput. Syst.} \textbf{48} (2011), 428--442.

\bibitem[BVFRT22]{Bacetal22Bary}
\bgroup\scshape{}J.~Backhoff-Veraguas\egroup{},
  \bgroup\scshape{}J.~Fontbona\egroup{}, \bgroup\scshape{}G.~Rios\egroup{}, and
  \bgroup\scshape{}F.~Tobar\egroup{}, Bayesian learning with {W}asserstein
  barycenters,  \emph{ESAIM Probab. Stat.} \textbf{26} (2022), 436--472.

\bibitem[BDI{\etalchar{+}}20]{BacDonInd20}
\bgroup\scshape{}A.~Backurs\egroup{}, \bgroup\scshape{}Y.~Dong\egroup{},
  \bgroup\scshape{}P.~Indyk\egroup{}, \bgroup\scshape{}I.~Razenshteyn\egroup{},
  and \bgroup\scshape{}T.~Wagner\egroup{}, Scalable nearest neighbor search for
  optimal transport,  in \emph{Proceedings of the 37th International Conference
  on Machine Learning} (\bgroup\scshape{}H.~D. III\egroup{} and
  \bgroup\scshape{}A.~Singh\egroup{}, eds.), \emph{Proceedings of Machine
  Learning Research} \textbf{119}, PMLR, 7 2020, pp.~497--506.

\bibitem[BGL14]{BakGenLed14}
\bgroup\scshape{}D.~Bakry\egroup{}, \bgroup\scshape{}I.~Gentil\egroup{}, and
  \bgroup\scshape{}M.~Ledoux\egroup{}, \emph{Analysis and geometry of {M}arkov
  diffusion operators}, \emph{Grundlehren der Mathematischen Wissenschaften
  [Fundamental Principles of Mathematical Sciences]} \textbf{348}, Springer,
  Cham, 2014.

\bibitem[BCE{\etalchar{+}}22]{Bal+22NonLogConcave}
\bgroup\scshape{}K.~Balasubramanian\egroup{},
  \bgroup\scshape{}S.~Chewi\egroup{}, \bgroup\scshape{}M.~A. Erdogdu\egroup{},
  \bgroup\scshape{}A.~Salim\egroup{}, and \bgroup\scshape{}M.~S.
  Zhang\egroup{}, Towards a theory of non-log-concave sampling: first-order
  stationarity guarantees for {L}angevin {M}onte {C}arlo,  in \emph{Proceedings
  of Thirty Fifth Conference on Learning Theory} (\bgroup\scshape{}P.-L.
  Loh\egroup{} and \bgroup\scshape{}M.~Raginsky\egroup{}, eds.),
  \emph{Proceedings of Machine Learning Research} \textbf{178}, PMLR, 7 2022,
  pp.~2896--2923.

\bibitem[BB23]{BalBer23Sinkhorn}
\bgroup\scshape{}M.~Ballu\egroup{} and \bgroup\scshape{}Q.~Berthet\egroup{},
  Mirror {S}inkhorn: fast online optimization on transport polytopes,  in
  \emph{Proceedings of the 40th International Conference on Machine Learning}
  (\bgroup\scshape{}A.~Krause\egroup{}, \bgroup\scshape{}E.~Brunskill\egroup{},
  \bgroup\scshape{}K.~Cho\egroup{}, \bgroup\scshape{}B.~Engelhardt\egroup{},
  \bgroup\scshape{}S.~Sabato\egroup{}, and
  \bgroup\scshape{}J.~Scarlett\egroup{}, eds.), \emph{Proceedings of Machine
  Learning Research} \textbf{202}, PMLR, 7 2023, pp.~1595--1613.

\bibitem[dBCAMRR99]{delbarriotests99}
\bgroup\scshape{}E.~del Barrio\egroup{}, \bgroup\scshape{}J.~A.
  Cuesta-Albertos\egroup{}, \bgroup\scshape{}C.~Matr\'{a}n\egroup{}, and
  \bgroup\scshape{}J.~M. Rodr\'{\i}guez-Rodr\'{\i}guez\egroup{}, Tests of
  goodness of fit based on the {$L_2$}-{W}asserstein distance,  \emph{Ann.
  Statist.} \textbf{27} (1999), 1230--1239.

\bibitem[dBSLNW23]{delSanLou+23CLTEntropic}
\bgroup\scshape{}E.~del Barrio\egroup{}, \bgroup\scshape{}A.~G. Sanz\egroup{},
  \bgroup\scshape{}J.-M. Loubes\egroup{}, and
  \bgroup\scshape{}J.~Niles-Weed\egroup{}, An improved central limit theorem
  and fast convergence rates for entropic transportation costs,  \emph{SIAM J.
  Math. Data Sci.} \textbf{5} (2023), 639--669.

\bibitem[BKR14]{BasKolRoh14Microscopy}
\bgroup\scshape{}S.~Basu\egroup{}, \bgroup\scshape{}S.~Kolouri\egroup{}, and
  \bgroup\scshape{}G.~K. Rohde\egroup{}, Detecting and visualizing cell
  phenotype differences from microscopy images using transport-based
  morphometry,  \emph{Proceedings of the National Academy of Sciences}
  \textbf{111} (2014), 3448--3453.

\bibitem[BJL19]{BhaJaiLim19BW}
\bgroup\scshape{}R.~Bhatia\egroup{}, \bgroup\scshape{}T.~Jain\egroup{}, and
  \bgroup\scshape{}Y.~Lim\egroup{}, On the {B}ures--{W}asserstein distance
  between positive definite matrices,  \emph{Expo. Math.} \textbf{37} (2019),
  165--191.

\bibitem[BCP19a]{BigCazPap19}
\bgroup\scshape{}J.~Bigot\egroup{}, \bgroup\scshape{}E.~Cazelles\egroup{}, and
  \bgroup\scshape{}N.~Papadakis\egroup{}, Central limit theorems for
  entropy-regularized optimal transport on finite spaces and statistical
  applications,  \emph{Electron. J. Stat.} \textbf{13} (2019), 5120--5150.

\bibitem[BCP19b]{BigCazPap19PenBary}
\bgroup\scshape{}J.~Bigot\egroup{}, \bgroup\scshape{}E.~Cazelles\egroup{}, and
  \bgroup\scshape{}N.~Papadakis\egroup{}, Penalization of barycenters in the
  {W}asserstein space,  \emph{SIAM J. Math. Anal.} \textbf{51} (2019),
  2261--2285.

\bibitem[BGKL17]{Big+17GeoPCA}
\bgroup\scshape{}J.~Bigot\egroup{}, \bgroup\scshape{}R.~Gouet\egroup{},
  \bgroup\scshape{}T.~Klein\egroup{}, and \bgroup\scshape{}A.~L\'opez\egroup{},
  Geodesic {PCA} in the {W}asserstein space by convex {PCA},  \emph{Ann. Inst.
  Henri Poincar\'e{} Probab. Stat.} \textbf{53} (2017), 1--26.

\bibitem[Bil99]{Bil99}
\bgroup\scshape{}P.~Billingsley\egroup{}, \emph{Convergence of probability
  measures}, second ed., \emph{Wiley Series in Probability and Statistics:
  Probability and Statistics}, John Wiley \& Sons Inc., New York, 1999, A
  Wiley-Interscience Publication.

\bibitem[BB18]{BlaBol18Loj}
\bgroup\scshape{}A.~Blanchet\egroup{} and \bgroup\scshape{}J.~Bolte\egroup{}, A
  family of functional inequalities: {\L{}}ojasiewicz inequalities and
  displacement convex functions,  \emph{J. Funct. Anal.} \textbf{275} (2018),
  1650--1673.

\bibitem[BKM17]{BleKucMcA17}
\bgroup\scshape{}D.~M. Blei\egroup{}, \bgroup\scshape{}A.~Kucukelbir\egroup{},
  and \bgroup\scshape{}J.~D. McAuliffe\egroup{}, Variational inference: a
  review for statisticians,  \emph{Journal of the American Statistical
  Association} \textbf{112} (2017), 859--877.

\bibitem[BL19]{BobLed19OneDim}
\bgroup\scshape{}S.~Bobkov\egroup{} and \bgroup\scshape{}M.~Ledoux\egroup{},
  One-dimensional empirical measures, order statistics, and {K}antorovich
  transport distances,  \emph{Mem. Amer. Math. Soc.} \textbf{261} (2019),
  v+126.

\bibitem[BGL01]{BobGenLed01Hyper}
\bgroup\scshape{}S.~G. Bobkov\egroup{}, \bgroup\scshape{}I.~Gentil\egroup{},
  and \bgroup\scshape{}M.~Ledoux\egroup{}, Hypercontractivity of
  {H}amilton--{J}acobi equations,  \emph{J. Math. Pures Appl. (9)} \textbf{80}
  (2001), 669--696.

\bibitem[BL21]{BobLed21}
\bgroup\scshape{}S.~G. Bobkov\egroup{} and \bgroup\scshape{}M.~Ledoux\egroup{},
  A simple {F}ourier analytic proof of the {AKT} optimal matching theorem,
  \emph{Ann. Appl. Probab.} \textbf{31} (2021), 2567--2584.

\bibitem[Boe24]{boedihardjo2024sharp}
\bgroup\scshape{}M.~T. Boedihardjo\egroup{}, Sharp bounds for the max-sliced
  {W}asserstein distance,  \emph{arXiv preprint arXiv:2403.00666} (2024).

\bibitem[BLG14]{BoiLe-14}
\bgroup\scshape{}E.~Boissard\egroup{} and
  \bgroup\scshape{}T.~Le~Gouic\egroup{}, On the mean speed of convergence of
  empirical and occupation measures in {W}asserstein distance,  \emph{Ann.
  Inst. Henri Poincar\'{e} Probab. Stat.} \textbf{50} (2014), 539--563.

\bibitem[BV05]{BolVil05CKP}
\bgroup\scshape{}F.~Bolley\egroup{} and \bgroup\scshape{}C.~Villani\egroup{},
  Weighted {C}sisz\'{a}r--{K}ullback--{P}in\-sker inequalities and applications
  to transportation inequalities,  \emph{Ann. Fac. Sci. Toulouse Math. (6)}
  \textbf{14} (2005), 331--352.

\bibitem[BLB24]{BonLamBac24LowRank}
\bgroup\scshape{}S.~Bonnabel\egroup{}, \bgroup\scshape{}M.~Lambert\egroup{},
  and \bgroup\scshape{}F.~Bach\egroup{}, Low-rank plus diagonal approximations
  for {R}iccati-like matrix differential equations,  \emph{arXiv preprint
  2407.03373} (2024), 1--21.

\bibitem[BPC16]{BonPeyCut16BaryCoord}
\bgroup\scshape{}N.~Bonneel\egroup{}, \bgroup\scshape{}G.~Peyr\'{e}\egroup{},
  and \bgroup\scshape{}M.~Cuturi\egroup{}, Wasserstein barycentric coordinates:
  histogram regression using optimal transport,  \emph{ACM Trans. Graph.}
  \textbf{35} (2016), 10.

\bibitem[Bon13]{Bon13Thesis}
\bgroup\scshape{}N.~Bonnotte\egroup{}, \emph{Unidimensional and evolution
  methods for optimal transportation}, Theses, Universit{\'e} Paris Sud --
  Paris XI; Scuola normale superiore (Pise, Italie), 12 2013.

\bibitem[BGR{\etalchar{+}}06]{BorGreRas06}
\bgroup\scshape{}K.~M. Borgwardt\egroup{},
  \bgroup\scshape{}A.~Gretton\egroup{}, \bgroup\scshape{}M.~J. Rasch\egroup{},
  \bgroup\scshape{}H.-P. Kriegel\egroup{},
  \bgroup\scshape{}B.~Sch\"{o}lkopf\egroup{}, and \bgroup\scshape{}A.~J.
  Smola\egroup{}, Integrating structured biological data by kernel maximum mean
  discrepancy,  \emph{Bioinformatics} \textbf{22} (2006), e49--e57.

\bibitem[BLM13]{BouLugMas13}
\bgroup\scshape{}S.~Boucheron\egroup{}, \bgroup\scshape{}G.~Lugosi\egroup{},
  and \bgroup\scshape{}P.~Massart\egroup{}, \emph{Concentration inequalities},
  Oxford University Press, Oxford, 2013, A nonasymptotic theory of
  independence, With a foreword by Michel Ledoux.

\bibitem[BV04]{BoyVan04}
\bgroup\scshape{}S.~Boyd\egroup{} and
  \bgroup\scshape{}L.~Vandenberghe\egroup{}, \emph{Convex optimization},
  Cambridge University Press, Cambridge, 2004.

\bibitem[BRT24]{BraRubTom24FixedPt}
\bgroup\scshape{}S.~Brahmachari\egroup{}, \bgroup\scshape{}R.~Rubboli\egroup{},
  and \bgroup\scshape{}M.~Tomamichel\egroup{}, A fixed-point algorithm for
  matrix projections with applications in quantum information,  \emph{arXiv
  preprint 2312.14615} (2024), 1--17.

\bibitem[BL76]{BraLie76}
\bgroup\scshape{}H.~J. Brascamp\egroup{} and \bgroup\scshape{}E.~H.
  Lieb\egroup{}, On extensions of the {B}runn--{M}inkowski and
  {P}r\'{e}kopa--{L}eindler theorems, including inequalities for log concave
  functions, and with an application to the diffusion equation,  \emph{J.
  Functional Analysis} \textbf{22} (1976), 366--389.

\bibitem[Bre87]{Bre87}
\bgroup\scshape{}Y.~Brenier\egroup{}, Decomposition polaire et rearrangement
  monotone des champs de vecteurs,  \emph{C. R. Acad. Sci. Paris Ser. I Math.}
  \textbf{305} (1987), 805--808.

\bibitem[Bub15]{Bub15}
\bgroup\scshape{}S.~Bubeck\egroup{}, \emph{Convex optimization: algorithms and
  complexity}, Now Publishers Inc., 2015.

\bibitem[BMPKC22]{bunne2022proximal}
\bgroup\scshape{}C.~Bunne\egroup{},
  \bgroup\scshape{}L.~Meng-Papaxanthos\egroup{},
  \bgroup\scshape{}A.~Krause\egroup{}, and \bgroup\scshape{}M.~Cuturi\egroup{},
  Proximal optimal transport modeling of population dynamics,  in
  \emph{International Conference on Artificial Intelligence and Statistics
  (AISTATS)}, 2022.

\bibitem[BSG{\etalchar{+}}23]{BunStaGut23}
\bgroup\scshape{}C.~Bunne\egroup{}, \bgroup\scshape{}S.~G. Stark\egroup{},
  \bgroup\scshape{}G.~Gut\egroup{}, \bgroup\scshape{}J.~S. del
  Castillo\egroup{}, \bgroup\scshape{}M.~Levesque\egroup{},
  \bgroup\scshape{}K.-V. Lehmann\egroup{},
  \bgroup\scshape{}L.~Pelkmans\egroup{}, \bgroup\scshape{}A.~Krause\egroup{},
  and \bgroup\scshape{}G.~R{\"a}tsch\egroup{}, Learning single-cell
  perturbation responses using neural optimal transport,  \emph{Nature Methods}
  \textbf{20} (2023), 1759--1768.

\bibitem[BBI01]{BurBurBur01}
\bgroup\scshape{}D.~Burago\egroup{}, \bgroup\scshape{}Y.~Burago\egroup{}, and
  \bgroup\scshape{}S.~Ivanov\egroup{}, \emph{A course in metric geometry},
  \emph{Graduate Studies in Mathematics} \textbf{33}, American Mathematical
  Society, Providence, RI, 2001.

\bibitem[BM03]{BurMon03LowRank}
\bgroup\scshape{}S.~Burer\egroup{} and \bgroup\scshape{}R.~D.~C.
  Monteiro\egroup{}, A nonlinear programming algorithm for solving semidefinite
  programs via low-rank factorization,  \emph{Math. Program.} \textbf{95}
  (2003), 329--357, Computational semidefinite and second order cone
  programming: the state of the art.

\bibitem[BM05]{BurMon05LocalMin}
\bgroup\scshape{}S.~Burer\egroup{} and \bgroup\scshape{}R.~D.~C.
  Monteiro\egroup{}, Local minima and convergence in low-rank semidefinite
  programming,  \emph{Math. Program.} \textbf{103} (2005), 427--444.

\bibitem[Bur69]{Bures1969Kakutani}
\bgroup\scshape{}D.~Bures\egroup{}, An extension of {K}akutani's theorem on
  infinite product measures to the tensor product of semifinite {$w^{\ast}
  $}-algebras,  \emph{Trans. Amer. Math. Soc.} \textbf{135} (1969), 199--212.

\bibitem[CCCC20]{Cai+20LinOT}
\bgroup\scshape{}T.~Cai\egroup{}, \bgroup\scshape{}J.~Cheng\egroup{},
  \bgroup\scshape{}N.~Craig\egroup{}, and \bgroup\scshape{}K.~Craig\egroup{},
  Linearized optimal transport for collider events,  \emph{Phys. Rev. D}
  \textbf{102} (2020), 116019.

\bibitem[CKPJ24]{Cap+24ParticleGD}
\bgroup\scshape{}R.~Caprio\egroup{}, \bgroup\scshape{}J.~Kuntz\egroup{},
  \bgroup\scshape{}S.~Power\egroup{}, and \bgroup\scshape{}A.~M.
  Johansen\egroup{}, Error bounds for particle gradient descent, and extensions
  of the log-{S}obolev and {T}alagrand inequalities,  \emph{arXiv preprint
  2403.02004} (2024), 1--33.

\bibitem[CEK21]{CarEicKro21EntBary}
\bgroup\scshape{}G.~Carlier\egroup{}, \bgroup\scshape{}K.~Eichinger\egroup{},
  and \bgroup\scshape{}A.~Kroshnin\egroup{}, Entropic--{W}asserstein
  barycenters: {PDE} characterization, regularity, and {CLT},  \emph{SIAM J.
  Math. Anal.} \textbf{53} (2021), 5880--5914.

\bibitem[dC92]{DoC1992Riem}
\bgroup\scshape{}M.~P.~a. do~Carmo\egroup{}, \emph{Riemannian geometry},
  \emph{Mathematics: Theory \& Applications}, Birkh\"{a}user Boston, Inc.,
  Boston, MA, 1992, Translated from the second Portuguese edition by Francis
  Flaherty.

\bibitem[CSB{\etalchar{+}}18]{CazSegBig18}
\bgroup\scshape{}E.~Cazelles\egroup{}, \bgroup\scshape{}V.~Seguy\egroup{},
  \bgroup\scshape{}J.~Bigot\egroup{}, \bgroup\scshape{}M.~Cuturi\egroup{}, and
  \bgroup\scshape{}N.~Papadakis\egroup{}, Geodesic {PCA} versus log-{PCA} of
  histograms in the {W}asserstein space,  \emph{SIAM J. Sci. Comput.}
  \textbf{40} (2018), B429--B456.

\bibitem[CRW23]{CheRenWan23PoC}
\bgroup\scshape{}F.~Chen\egroup{}, \bgroup\scshape{}Z.~Ren\egroup{}, and
  \bgroup\scshape{}S.~Wang\egroup{}, Uniform-in-time propagation of chaos for
  mean field {L}angevin dynamics,  \emph{arXiv preprint 2212.03050} (2023),
  1--66.

\bibitem[CRBD18]{CheRubBet18}
\bgroup\scshape{}R.~T.~Q. Chen\egroup{}, \bgroup\scshape{}Y.~Rubanova\egroup{},
  \bgroup\scshape{}J.~Bettencourt\egroup{}, and \bgroup\scshape{}D.~K.
  Duvenaud\egroup{}, Neural ordinary differential equations,  \emph{Advances in
  Neural Information Processing Systems} \textbf{31} (2018), 13.

\bibitem[CLTW24]{Che+24Accel}
\bgroup\scshape{}S.~Chen\egroup{}, \bgroup\scshape{}Q.~Li\egroup{},
  \bgroup\scshape{}O.~Tse\egroup{}, and \bgroup\scshape{}S.~J. Wright\egroup{},
  Accelerating optimization over the space of probability measures,
  \emph{arXiv preprint 2310.04006} (2024), 1--40.

\bibitem[CLM23]{CheLinMul23}
\bgroup\scshape{}Y.~Chen\egroup{}, \bgroup\scshape{}Z.~Lin\egroup{}, and
  \bgroup\scshape{}H.-G. M{\"u}ller\egroup{}, Wasserstein regression,
  \emph{Journal of the American Statistical Association} \textbf{118} (2023),
  869--882.

\bibitem[CCSW22]{Chenetal22ProxSampler}
\bgroup\scshape{}Y.~Chen\egroup{}, \bgroup\scshape{}S.~Chewi\egroup{},
  \bgroup\scshape{}A.~Salim\egroup{}, and
  \bgroup\scshape{}A.~Wibisono\egroup{}, Improved analysis for a proximal
  algorithm for sampling,  in \emph{Proceedings of Thirty Fifth Conference on
  Learning Theory} (\bgroup\scshape{}P.-L. Loh\egroup{} and
  \bgroup\scshape{}M.~Raginsky\egroup{}, eds.), \emph{Proceedings of Machine
  Learning Research} \textbf{178}, PMLR, 7 2022, pp.~2984--3014.

\bibitem[CGP21]{CheGeoPav21}
\bgroup\scshape{}Y.~Chen\egroup{}, \bgroup\scshape{}T.~T. Georgiou\egroup{},
  and \bgroup\scshape{}M.~Pavon\egroup{}, Stochastic control liaisons:
  {R}ichard {S}inkhorn meets {G}aspard {M}onge on a {S}chr{\"o}dinger bridge,
  \emph{SIAM Review} \textbf{63} (2021), 249--313.

\bibitem[CGT19]{CheGeoTan19}
\bgroup\scshape{}Y.~Chen\egroup{}, \bgroup\scshape{}T.~T. Georgiou\egroup{},
  and \bgroup\scshape{}A.~Tannenbaum\egroup{}, Optimal transport for {G}aussian
  mixture models,  \emph{IEEE Access} \textbf{7} (2019), 6269--6278.

\bibitem[CB18]{CheBar18Langevin}
\bgroup\scshape{}X.~Cheng\egroup{} and \bgroup\scshape{}P.~Bartlett\egroup{},
  Convergence of {L}angevin {MCMC} in {KL}-divergence,  in \emph{Proceedings of
  Algorithmic Learning Theory} (\bgroup\scshape{}F.~Janoos\egroup{},
  \bgroup\scshape{}M.~Mohri\egroup{}, and
  \bgroup\scshape{}K.~Sridharan\egroup{}, eds.), \emph{Proceedings of Machine
  Learning Research} \textbf{83}, PMLR, 4 2018, pp.~186--211.

\bibitem[CGHH17]{CheGalHalHen17}
\bgroup\scshape{}V.~Chernozhukov\egroup{},
  \bgroup\scshape{}A.~Galichon\egroup{}, \bgroup\scshape{}M.~Hallin\egroup{},
  and \bgroup\scshape{}M.~Henry\egroup{}, Monge--{K}antorovich depth,
  quantiles, ranks and signs,  \emph{Ann. Statist.} \textbf{45} (2017),
  223--256.

\bibitem[Che23]{ChewiThesis}
\bgroup\scshape{}S.~Chewi\egroup{}, \emph{An optimization perspective on
  log-concave sampling and beyond}, Ph.D. thesis, MIT, 2023.

\bibitem[Che24]{Chewi24Book}
\bgroup\scshape{}S.~Chewi\egroup{}, \emph{Log-concave sampling}, Forthcoming,
  2024, Available online at \url{https://chewisinho.github.io/}.

\bibitem[CLGL{\etalchar{+}}20a]{Cheetal20SVGD}
\bgroup\scshape{}S.~Chewi\egroup{}, \bgroup\scshape{}T.~Le~Gouic\egroup{},
  \bgroup\scshape{}C.~Lu\egroup{}, \bgroup\scshape{}T.~Maunu\egroup{}, and
  \bgroup\scshape{}P.~Rigollet\egroup{}, {SVGD} as a kernelized {W}asserstein
  gradient flow of the chi-squared divergence,  in \emph{Advances in Neural
  Information Processing Systems} (\bgroup\scshape{}H.~Larochelle\egroup{},
  \bgroup\scshape{}M.~Ranzato\egroup{}, \bgroup\scshape{}R.~Hadsell\egroup{},
  \bgroup\scshape{}M.~F. Balcan\egroup{}, and \bgroup\scshape{}H.~Lin\egroup{},
  eds.), \textbf{33}, Curran Associates, Inc., 2020, pp.~2098--2109.

\bibitem[CLGL{\etalchar{+}}20b]{CheLe-Lu20}
\bgroup\scshape{}S.~Chewi\egroup{}, \bgroup\scshape{}T.~Le~Gouic\egroup{},
  \bgroup\scshape{}C.~Lu\egroup{}, \bgroup\scshape{}T.~Maunu\egroup{},
  \bgroup\scshape{}P.~Rigollet\egroup{}, and \bgroup\scshape{}A.~J.
  Stromme\egroup{}, Exponential ergodicity of mirror-{L}angevin diffusions,  in
  \emph{Advances in Neural Information Processing Systems}
  (\bgroup\scshape{}H.~Larochelle\egroup{},
  \bgroup\scshape{}M.~Ranzato\egroup{}, \bgroup\scshape{}R.~Hadsell\egroup{},
  \bgroup\scshape{}M.~Balcan\egroup{}, and \bgroup\scshape{}H.~Lin\egroup{},
  eds.), \textbf{33}, Curran Associates, Inc., 2020, pp.~19573--19585.

\bibitem[CMRS20]{CheMauRig20a}
\bgroup\scshape{}S.~Chewi\egroup{}, \bgroup\scshape{}T.~Maunu\egroup{},
  \bgroup\scshape{}P.~Rigollet\egroup{}, and
  \bgroup\scshape{}A.~Stromme\egroup{}, Gradient descent algorithms for
  {B}ures--{W}asserstein barycenters,  in \emph{Proceedings of Thirty Third
  Conference on Learning Theory} (\bgroup\scshape{}J.~Abernethy\egroup{} and
  \bgroup\scshape{}S.~Agarwal\egroup{}, eds.), \emph{Proceedings of Machine
  Learning Research} \textbf{125}, PMLR, 7 2020, pp.~1276--1304.

\bibitem[CP23]{ChePoo23Caffarelli}
\bgroup\scshape{}S.~Chewi\egroup{} and \bgroup\scshape{}A.-A.
  Pooladian\egroup{}, An entropic generalization of
  {Caffarelli{\textquoteright}s} contraction theorem via covariance
  inequalities,  \emph{Reports. Mathematical} \textbf{361} (2023), 1471--1482.

\bibitem[Chi23]{Chi23Doubly}
\bgroup\scshape{}L.~Chizat\egroup{}, Doubly regularized entropic {W}asserstein
  barycenters,  \emph{arXiv preprint 2303.11844} (2023), 1--27.

\bibitem[CB18]{ChiBac18GlobalConv}
\bgroup\scshape{}L.~Chizat\egroup{} and \bgroup\scshape{}F.~Bach\egroup{}, On
  the global convergence of gradient descent for over-parameterized models
  using optimal transport,  in \emph{Advances in Neural Information Processing
  Systems} (\bgroup\scshape{}S.~Bengio\egroup{},
  \bgroup\scshape{}H.~Wallach\egroup{},
  \bgroup\scshape{}H.~Larochelle\egroup{},
  \bgroup\scshape{}K.~Grauman\egroup{},
  \bgroup\scshape{}N.~Cesa-Bianchi\egroup{}, and
  \bgroup\scshape{}R.~Garnett\egroup{}, eds.), \textbf{31}, Curran Associates,
  Inc., 2018.

\bibitem[CDV24]{ChiDelVas24Sinkhorn}
\bgroup\scshape{}L.~Chizat\egroup{}, \bgroup\scshape{}A.~Delalande\egroup{},
  and \bgroup\scshape{}T.~Va\v{s}kevi\v{c}ius\egroup{}, Sharper exponential
  convergence rates for {S}inkhorn's algorithm in continuous settings,
  \emph{arXiv preprint 2407.01202} (2024), 1--36.

\bibitem[CPSV18]{Chi+18FR}
\bgroup\scshape{}L.~Chizat\egroup{}, \bgroup\scshape{}G.~Peyr\'{e}\egroup{},
  \bgroup\scshape{}B.~Schmitzer\egroup{}, and \bgroup\scshape{}F.-X.
  Vialard\egroup{}, An interpolating distance between optimal transport and
  {F}isher--{R}ao metrics,  \emph{Found. Comput. Math.} \textbf{18} (2018),
  1--44.

\bibitem[CRL{\etalchar{+}}20]{ChiRouLeg20}
\bgroup\scshape{}L.~Chizat\egroup{}, \bgroup\scshape{}P.~Roussillon\egroup{},
  \bgroup\scshape{}F.~L{\'{e}}ger\egroup{},
  \bgroup\scshape{}F.~Vialard\egroup{}, and
  \bgroup\scshape{}G.~Peyr{\'{e}}\egroup{}, Faster {W}asserstein distance
  estimation with the {S}inkhorn divergence,  in \emph{Advances in Neural
  Information Processing Systems 33} (\bgroup\scshape{}H.~Larochelle\egroup{},
  \bgroup\scshape{}M.~Ranzato\egroup{}, \bgroup\scshape{}R.~Hadsell\egroup{},
  \bgroup\scshape{}M.~Balcan\egroup{}, and \bgroup\scshape{}H.~Lin\egroup{},
  eds.), 2020.

\bibitem[Cho12]{Cho12}
\bgroup\scshape{}O.~Chodosh\egroup{}, Optimal transport and {R}icci curvature:
  {W}asserstein space over the interval, 2012, Cambridge Part III essay.
  Available at arXiv:1105.2883.

\bibitem[CLZ20]{ChoLiZho20Ham}
\bgroup\scshape{}S.-N. Chow\egroup{}, \bgroup\scshape{}W.~Li\egroup{}, and
  \bgroup\scshape{}H.~Zhou\egroup{}, Wasserstein {H}amiltonian flows,  \emph{J.
  Differential Equations} \textbf{268} (2020), 1205--1219.

\bibitem[CHD24]{ClaHuiDur24VITS}
\bgroup\scshape{}P.~Clavier\egroup{}, \bgroup\scshape{}T.~Huix\egroup{}, and
  \bgroup\scshape{}A.~Durmus\egroup{}, {VITS}: variational inference {T}hompson
  sampling for contextual bandits,  \emph{arXiv preprint 2307.10167} (2024),
  1--43.

\bibitem[CS91]{CoffSho91}
\bgroup\scshape{}E.~G. Coffman, Jr.\egroup{} and \bgroup\scshape{}P.~W.
  Shor\egroup{}, A simple proof of the ${O}( \sqrt{n} \log^{3 / 4} n )$ upright
  matching bound,  \emph{SIAM Journal on Discrete Mathematics} \textbf{4}
  (1991), 48--57.

\bibitem[CK07]{cohn2007universally}
\bgroup\scshape{}H.~Cohn\egroup{} and \bgroup\scshape{}A.~Kumar\egroup{},
  Universally optimal distribution of points on spheres,  \emph{Journal of the
  American Mathematical Society} \textbf{20} (2007), 99--148.

\bibitem[CE02]{Cor02MassTransport}
\bgroup\scshape{}D.~Cordero-Erausquin\egroup{}, Some applications of mass
  transport to {G}aussian-type inequalities,  \emph{Arch. Ration. Mech. Anal.}
  \textbf{161} (2002), 257--269.

\bibitem[CFTC16]{CouFlaTui16}
\bgroup\scshape{}N.~Courty\egroup{}, \bgroup\scshape{}R.~Flamary\egroup{},
  \bgroup\scshape{}D.~Tuia\egroup{}, and \bgroup\scshape{}T.~Corpetti\egroup{},
  Optimal transport for data fusion in remote sensing,  in \emph{2016 IEEE
  International Geoscience and Remote Sensing Symposium (IGARSS)}, 2016,
  pp.~3571--3574.

\bibitem[CFTR17]{CouFlaTui17}
\bgroup\scshape{}N.~Courty\egroup{}, \bgroup\scshape{}R.~Flamary\egroup{},
  \bgroup\scshape{}D.~Tuia\egroup{}, and
  \bgroup\scshape{}A.~Rakotomamonjy\egroup{}, Optimal transport for domain
  adaptation,  \emph{IEEE Transactions on Pattern Analysis and Machine
  Intelligence} \textbf{39} (2017), 1853--1865.

\bibitem[CRMB24]{CriRebMcR24}
\bgroup\scshape{}C.~Criscitiello\egroup{},
  \bgroup\scshape{}Q.~Rebjock\egroup{}, \bgroup\scshape{}A.~D. McRae\egroup{},
  and \bgroup\scshape{}N.~Boumal\egroup{}, Synchronization on circles and
  spheres with nonlinear interactions, 2024.

\bibitem[CTZ24]{CuiTonZah24OptPoincare}
\bgroup\scshape{}T.~Cui\egroup{}, \bgroup\scshape{}X.~Tong\egroup{}, and
  \bgroup\scshape{}O.~Zahm\egroup{}, Optimal {R}iemannian metric for
  {P}oincar{\'e} inequalities and how to ideally precondition {L}angevin
  dymanics,  \emph{arXiv preprint 2404.02554} (2024), 1--28.

\bibitem[Cut13]{Cut13}
\bgroup\scshape{}M.~Cuturi\egroup{}, Sinkhorn distances: lightspeed computation
  of optimal transport,  in \emph{Advances in Neural Information Processing
  Systems 26} (\bgroup\scshape{}C.~J.~C. Burges\egroup{},
  \bgroup\scshape{}L.~Bottou\egroup{}, \bgroup\scshape{}M.~Welling\egroup{},
  \bgroup\scshape{}Z.~Ghahramani\egroup{}, and \bgroup\scshape{}K.~Q.
  Weinberger\egroup{}, eds.), Curran Associates, Inc., 2013, pp.~2292--2300.

\bibitem[CD14]{CutDou14}
\bgroup\scshape{}M.~Cuturi\egroup{} and \bgroup\scshape{}A.~Doucet\egroup{},
  Fast computation of {W}asserstein barycenters,  in \emph{Proceedings of the
  31st International Conference on Machine Learning} (\bgroup\scshape{}E.~P.
  Xing\egroup{} and \bgroup\scshape{}T.~Jebara\egroup{}, eds.), \textbf{32}
  Proceedings of Machine Learning Research no.~2, PMLR, Bejing, China, 6 2014,
  pp.~685--693.

\bibitem[Dal17]{Dal17a}
\bgroup\scshape{}A.~S. Dalalyan\egroup{}, Theoretical guarantees for
  approximate sampling from smooth and log-concave densities,  \emph{Journal of
  the Royal Statistical Society. Series B (Statistical Methodology)}
  \textbf{79} (2017), 651--676.

\bibitem[DT12]{DalTsy12LMC}
\bgroup\scshape{}A.~S. Dalalyan\egroup{} and \bgroup\scshape{}A.~B.
  Tsybakov\egroup{}, Sparse regression learning by aggregation and {L}angevin
  {M}onte-{C}arlo,  \emph{J. Comput. System Sci.} \textbf{78} (2012),
  1423--1443.

\bibitem[DN23]{DasNag23SVGD}
\bgroup\scshape{}A.~Das\egroup{} and \bgroup\scshape{}D.~Nagaraj\egroup{},
  Provably fast finite particle variants of {SVGD} via virtual particle
  stochastic approximation,  in \emph{Advances in Neural Information Processing
  Systems} (\bgroup\scshape{}A.~Oh\egroup{},
  \bgroup\scshape{}T.~Naumann\egroup{}, \bgroup\scshape{}A.~Globerson\egroup{},
  \bgroup\scshape{}K.~Saenko\egroup{}, \bgroup\scshape{}M.~Hardt\egroup{}, and
  \bgroup\scshape{}S.~Levine\egroup{}, eds.), \textbf{36}, Curran Associates,
  Inc., 2023, pp.~49748--49760.

\bibitem[DBTHD21]{DeB+21DiffSB}
\bgroup\scshape{}V.~De~Bortoli\egroup{}, \bgroup\scshape{}J.~Thornton\egroup{},
  \bgroup\scshape{}J.~Heng\egroup{}, and \bgroup\scshape{}A.~Doucet\egroup{},
  Diffusion {S}chr\"{o}dinger bridge with applications to score-based
  generative modeling,  in \emph{Advances in Neural Information Processing
  Systems} (\bgroup\scshape{}M.~Ranzato\egroup{},
  \bgroup\scshape{}A.~Beygelzimer\egroup{},
  \bgroup\scshape{}Y.~Dauphin\egroup{}, \bgroup\scshape{}P.~Liang\egroup{}, and
  \bgroup\scshape{}J.~W. Vaughan\egroup{}, eds.), \textbf{34}, Curran
  Associates, Inc., 2021, pp.~17695--17709.

\bibitem[DGS21]{DebGhoSen21}
\bgroup\scshape{}N.~Deb\egroup{}, \bgroup\scshape{}P.~Ghosal\egroup{}, and
  \bgroup\scshape{}B.~Sen\egroup{}, Rates of estimation of optimal transport
  maps using plug-in estimators via barycentric projections,  in \emph{Advances
  in Neural Information Processing Systems 34}
  (\bgroup\scshape{}M.~Ranzato\egroup{},
  \bgroup\scshape{}A.~Beygelzimer\egroup{}, \bgroup\scshape{}Y.~N.
  Dauphin\egroup{}, \bgroup\scshape{}P.~Liang\egroup{}, and
  \bgroup\scshape{}J.~W. Vaughan\egroup{}, eds.), 2021, pp.~29736--29753.

\bibitem[DS23]{DebSen23Testing}
\bgroup\scshape{}N.~Deb\egroup{} and \bgroup\scshape{}B.~Sen\egroup{},
  Multivariate rank-based distribution-free nonparametric testing using measure
  transportation,  \emph{J. Amer. Statist. Assoc.} \textbf{118} (2023),
  192--207.

\bibitem[DD20]{DelDes20GMM}
\bgroup\scshape{}J.~Delon\egroup{} and \bgroup\scshape{}A.~Desolneux\egroup{},
  A {W}asserstein-type distance in the space of {G}aussian mixture models,
  \emph{SIAM J. Imaging Sci.} \textbf{13} (2020), 936--970.

\bibitem[DSS13]{dereich2013constructive}
\bgroup\scshape{}S.~Dereich\egroup{}, \bgroup\scshape{}M.~Scheutzow\egroup{},
  and \bgroup\scshape{}R.~Schottstedt\egroup{}, Constructive quantization:
  approximation by empirical measures,  in \emph{Annales de l'IHP
  Probabilit{\'e}s et Statistiques}, \textbf{49}, 2013, pp.~1183--1203.

\bibitem[DHS{\etalchar{+}}19]{DesHuSun19}
\bgroup\scshape{}I.~Deshpande\egroup{}, \bgroup\scshape{}Y.~Hu\egroup{},
  \bgroup\scshape{}R.~Sun\egroup{}, \bgroup\scshape{}A.~Pyrros\egroup{},
  \bgroup\scshape{}N.~Siddiqui\egroup{}, \bgroup\scshape{}S.~Koyejo\egroup{},
  \bgroup\scshape{}Z.~Zhao\egroup{}, \bgroup\scshape{}D.~A. Forsyth\egroup{},
  and \bgroup\scshape{}A.~G. Schwing\egroup{}, Max-sliced {W}asserstein
  distance and its use for {GANs},  in \emph{{IEEE} Conference on Computer
  Vision and Pattern Recognition}, Computer Vision Foundation / {IEEE}, 2019,
  pp.~10648--10656.

\bibitem[DBCS23]{Diaoetal23FBGVI}
\bgroup\scshape{}M.~Z. Diao\egroup{},
  \bgroup\scshape{}K.~Balasubramanian\egroup{},
  \bgroup\scshape{}S.~Chewi\egroup{}, and \bgroup\scshape{}A.~Salim\egroup{},
  Forward-backward {G}aussian variational inference via {JKO} in the
  {B}ures--{W}asserstein space,  in \emph{Proceedings of the 40th International
  Conference on Machine Learning} (\bgroup\scshape{}A.~Krause\egroup{},
  \bgroup\scshape{}E.~Brunskill\egroup{}, \bgroup\scshape{}K.~Cho\egroup{},
  \bgroup\scshape{}B.~Engelhardt\egroup{}, \bgroup\scshape{}S.~Sabato\egroup{},
  and \bgroup\scshape{}J.~Scarlett\egroup{}, eds.), \emph{Proceedings of
  Machine Learning Research} \textbf{202}, PMLR, 7 2023, pp.~7960--7991.

\bibitem[Div22]{Div22}
\bgroup\scshape{}V.~Divol\egroup{}, Measure estimation on manifolds: an optimal
  transport approach,  \emph{Probab. Theory Related Fields} \textbf{183}
  (2022), 581--647.

\bibitem[DNWP22]{DivNilPoo22OTGeneral}
\bgroup\scshape{}V.~Divol\egroup{}, \bgroup\scshape{}J.~Niles-Weed\egroup{},
  and \bgroup\scshape{}A.-A. Pooladian\egroup{}, Optimal transport map
  estimation in general function spaces,  \emph{arXiv preprint 2212.03722}
  (2022), 1--68.

\bibitem[Dob70]{dobrushin1970prescribing}
\bgroup\scshape{}R.~L. Dobrushin\egroup{}, Prescribing a system of random
  variables by conditional distributions,  \emph{Theory of Probability \& Its
  Applications} \textbf{15} (1970), 458--486.

\bibitem[DEP23]{DomPoo23FR}
\bgroup\scshape{}C.~Domingo-Enrich\egroup{} and \bgroup\scshape{}A.-A.
  Pooladian\egroup{}, An explicit expansion of the {K}ullback--{L}eibler
  divergence along its {F}isher--{R}ao gradient flow,  \emph{Transactions on
  Machine Learning Research} (2023), 1--16.

\bibitem[Dom20]{Dom20VI}
\bgroup\scshape{}J.~Domke\egroup{}, Provable smoothness guarantees for
  black-box variational inference,  in \emph{Proceedings of the 37th
  International Conference on Machine Learning} (\bgroup\scshape{}H.~D.
  III\egroup{} and \bgroup\scshape{}A.~Singh\egroup{}, eds.), \emph{Proceedings
  of Machine Learning Research} \textbf{119}, PMLR, 7 2020, pp.~2587--2596.

\bibitem[DGG23]{DomGowGar23BBVI}
\bgroup\scshape{}J.~Domke\egroup{}, \bgroup\scshape{}R.~Gower\egroup{}, and
  \bgroup\scshape{}G.~Garrigos\egroup{}, Provable convergence guarantees for
  black-box variational inference,  in \emph{Advances in Neural Information
  Processing Systems} (\bgroup\scshape{}A.~Oh\egroup{},
  \bgroup\scshape{}T.~Neumann\egroup{}, \bgroup\scshape{}A.~Globerson\egroup{},
  \bgroup\scshape{}K.~Saenko\egroup{}, \bgroup\scshape{}M.~Hardt\egroup{}, and
  \bgroup\scshape{}S.~Levine\egroup{}, eds.), \textbf{36}, Curran Associates,
  Inc., 2023, pp.~66289--66327.

\bibitem[DM19]{DubMul19}
\bgroup\scshape{}P.~Dubey\egroup{} and \bgroup\scshape{}H.-G.
  M{\"u}ller\egroup{}, {Fr{\'e}chet analysis of variance for random objects},
  \emph{Biometrika} \textbf{106} (2019), 803--821.

\bibitem[DM20]{DubMul20}
\bgroup\scshape{}P.~Dubey\egroup{} and \bgroup\scshape{}H.-G.
  M{\"{u}}ller\egroup{}, {Fr{\'e}chet change-point detection},  \emph{The
  Annals of Statistics} \textbf{48} (2020), 3312--3335.

\bibitem[Dud67]{Dud1967Chaining}
\bgroup\scshape{}R.~M. Dudley\egroup{}, The sizes of compact subsets of
  {H}ilbert space and continuity of {G}aussian processes,  \emph{J. Functional
  Analysis} \textbf{1} (1967), 290--330.

\bibitem[Dud69]{Dud69}
\bgroup\scshape{}R.~M. Dudley\egroup{}, The speed of mean
  {G}livenko--{C}antelli convergence,  \emph{The Annals of Mathematical
  Statistics} \textbf{40} (1969), 40--50.

\bibitem[Dud99]{Dud99}
\bgroup\scshape{}R.~M. Dudley\egroup{}, \emph{Uniform central limit theorems},
  \emph{Cambridge Studies in Advanced Mathematics} \textbf{63}, Cambridge
  University Press, Cambridge, 1999.

\bibitem[Dud02]{Dud02}
\bgroup\scshape{}R.~M. Dudley\egroup{}, \emph{Real analysis and probability},
  \emph{Cambridge Studies in Advanced Mathematics} \textbf{74}, Cambridge
  University Press, Cambridge, 2002, Revised reprint of the 1989 original.

\bibitem[DNS23]{DunNueSzp23SVGD}
\bgroup\scshape{}A.~Duncan\egroup{}, \bgroup\scshape{}N.~Nuesken\egroup{}, and
  \bgroup\scshape{}L.~Szpruch\egroup{}, On the geometry of {S}tein variational
  gradient descent,  \emph{Journal of Machine Learning Research} \textbf{24}
  (2023), 1--39.

\bibitem[DE97]{DupEll97}
\bgroup\scshape{}P.~Dupuis\egroup{} and \bgroup\scshape{}R.~S. Ellis\egroup{},
  \emph{A weak convergence approach to the theory of large deviations},
  \emph{Wiley Series in Probability and Statistics: Probability and
  Statistics}, John Wiley \& Sons Inc., New York, 1997, A Wiley-Interscience
  Publication.

\bibitem[DMM19]{DurMajMia19LMCCvxOpt}
\bgroup\scshape{}A.~Durmus\egroup{}, \bgroup\scshape{}S.~Majewski\egroup{}, and
  \bgroup\scshape{}B.~Miasojedow\egroup{}, Analysis of {L}angevin {M}onte
  {C}arlo via convex optimization,  \emph{J. Mach. Learn. Res.} \textbf{20}
  (2019), Paper No. 73, 46.

\bibitem[DM17]{DurMou17NonAsymptotic}
\bgroup\scshape{}A.~Durmus\egroup{} and \bgroup\scshape{}E.~Moulines\egroup{},
  Nonasymptotic convergence analysis for the unadjusted {L}angevin algorithm,
  \emph{Ann. Appl. Probab.} \textbf{27} (2017), 1551--1587.

\bibitem[DGK18]{DvuGasKro18}
\bgroup\scshape{}P.~Dvurechensky\egroup{},
  \bgroup\scshape{}A.~Gasnikov\egroup{}, and
  \bgroup\scshape{}A.~Kroshnin\egroup{}, Computational optimal transport:
  complexity by accelerated gradient descent is better than by {S}inkhorn's
  algorithm,  in \emph{Proceedings of the 35th International Conference on
  Machine Learning} (\bgroup\scshape{}J.~Dy\egroup{} and
  \bgroup\scshape{}A.~Krause\egroup{}, eds.), \emph{Proceedings of Machine
  Learning Research} \textbf{80}, PMLR, Stockholmsm{\"a}ssan, Stockholm Sweden,
  7 2018, pp.~1367--1376.

\bibitem[EHL18]{EHanLi18}
\bgroup\scshape{}W.~E\egroup{}, \bgroup\scshape{}J.~Han\egroup{}, and
  \bgroup\scshape{}Q.~Li\egroup{}, A mean-field optimal control formulation of
  deep learning,  \emph{Research in the Mathematical Sciences} \textbf{6}
  (2018), 1--41.

\bibitem[AEdBCAM16]{Alvetal16FixedPt}
\bgroup\scshape{}P.~C. \'{A}lvarez Esteban\egroup{}, \bgroup\scshape{}E.~del
  Barrio\egroup{}, \bgroup\scshape{}J.~A. Cuesta-Albertos\egroup{}, and
  \bgroup\scshape{}C.~Matr\'{a}n\egroup{}, A fixed-point approach to
  barycenters in {W}asserstein space,  \emph{J. Math. Anal. Appl.} \textbf{441}
  (2016), 744--762.

\bibitem[Fan91]{Fan91a}
\bgroup\scshape{}J.~Fan\egroup{}, On the optimal rates of convergence for
  nonparametric deconvolution problems,  \emph{The Annals of Statistics}
  \textbf{19} (1991), 1257--1272.

\bibitem[FYC23]{FanYuaChe23ImprovedProx}
\bgroup\scshape{}J.~Fan\egroup{}, \bgroup\scshape{}B.~Yuan\egroup{}, and
  \bgroup\scshape{}Y.~Chen\egroup{}, Improved dimension dependence of a
  proximal algorithm for sampling,  in \emph{Proceedings of Thirty Sixth
  Conference on Learning Theory} (\bgroup\scshape{}G.~Neu\egroup{} and
  \bgroup\scshape{}L.~Rosasco\egroup{}, eds.), \emph{Proceedings of Machine
  Learning Research} \textbf{195}, PMLR, 7 2023, pp.~1473--1521.

\bibitem[FGP20]{FatGozPro20}
\bgroup\scshape{}M.~Fathi\egroup{}, \bgroup\scshape{}N.~Gozlan\egroup{}, and
  \bgroup\scshape{}M.~Prod'homme\egroup{}, A proof of the {C}affarelli
  contraction theorem via entropic regularization,  \emph{Calculus of
  Variations and Partial Differential Equations} \textbf{59} (2020), 96.

\bibitem[FSV{\etalchar{+}}19]{feydy2019interpolating}
\bgroup\scshape{}J.~Feydy\egroup{},
  \bgroup\scshape{}T.~S\'{e}journ\'{e}\egroup{}, \bgroup\scshape{}F.-X.
  Vialard\egroup{}, \bgroup\scshape{}S.-i. Amari\egroup{},
  \bgroup\scshape{}A.~Trouve\egroup{}, and
  \bgroup\scshape{}G.~Peyr\'{e}\egroup{}, Interpolating between optimal
  transport and {MMD} using {S}inkhorn divergences,  in \emph{Proceedings of
  the Twenty-Second International Conference on Artificial Intelligence and
  Statistics} (\bgroup\scshape{}K.~Chaudhuri\egroup{} and
  \bgroup\scshape{}M.~Sugiyama\egroup{}, eds.), \emph{Proceedings of Machine
  Learning Research} \textbf{89}, PMLR, 4 2019, pp.~2681--2690.

\bibitem[FG23]{FigGla23}
\bgroup\scshape{}A.~Figalli\egroup{} and \bgroup\scshape{}F.~Glaudo\egroup{},
  \emph{An invitation to optimal transport, {W}asserstein distances, and
  gradient flows}, second ed., \emph{EMS Textbooks in Mathematics}, EMS Press,
  Berlin, 2023.

\bibitem[FCCR18]{FlaCutCou18}
\bgroup\scshape{}R.~Flamary\egroup{}, \bgroup\scshape{}M.~Cuturi\egroup{},
  \bgroup\scshape{}N.~Courty\egroup{}, and
  \bgroup\scshape{}A.~Rakotomamonjy\egroup{}, Wasserstein discriminant
  analysis,  \emph{Machine Learning} \textbf{107} (2018), 1923--1945.

\bibitem[FLF20]{FlaLouFer20}
\bgroup\scshape{}R.~Flamary\egroup{}, \bgroup\scshape{}K.~Lounici\egroup{}, and
  \bgroup\scshape{}A.~Ferrari\egroup{}, Concentration bounds for linear {M}onge
  mapping estimation and optimal transport domain adaptation, 2020.

\bibitem[FG15]{FouGui15}
\bgroup\scshape{}N.~Fournier\egroup{} and \bgroup\scshape{}A.~Guillin\egroup{},
  On the rate of convergence in {W}asserstein distance of the empirical
  measure,  \emph{Probab. Theory Related Fields} \textbf{162} (2015), 707--738.

\bibitem[GFPO21]{GalPerOpp21VarGauss}
\bgroup\scshape{}T.~Galy-Fajou\egroup{}, \bgroup\scshape{}V.~Perrone\egroup{},
  and \bgroup\scshape{}M.~Opper\egroup{}, Flexible and efficient inference with
  particles for the variational {G}aussian approximation,  \emph{Entropy}
  \textbf{23} (2021), Paper No. 990, 34.

\bibitem[GM96]{GanMcC96}
\bgroup\scshape{}W.~Gangbo\egroup{} and \bgroup\scshape{}R.~J. McCann\egroup{},
  The geometry of optimal transportation,  \emph{Acta Math.} \textbf{177}
  (1996), 113--161.

\bibitem[vdG87]{Gee87}
\bgroup\scshape{}S.~van~de Geer\egroup{}, A new approach to least-squares
  estimation, with applications,  \emph{Ann. Statist.} \textbf{15} (1987),
  587--602.

\bibitem[vdG02]{Gee02}
\bgroup\scshape{}S.~van~de Geer\egroup{}, {$M$}-estimation using penalties or
  sieves,  \emph{J. Statist. Plann. Inference} \textbf{108} (2002), 55--69, C.
  R. Rao 80th birthday felicitation volume, Part II.

\bibitem[GCB{\etalchar{+}}19]{genevay2019sample}
\bgroup\scshape{}A.~Genevay\egroup{}, \bgroup\scshape{}L.~Chizat\egroup{},
  \bgroup\scshape{}F.~Bach\egroup{}, \bgroup\scshape{}M.~Cuturi\egroup{}, and
  \bgroup\scshape{}G.~Peyr{\'e}\egroup{}, Sample complexity of {S}inkhorn
  divergences,  in \emph{The 22nd International Conference on Artificial
  Intelligence and Statistics}, PMLR, 2019, pp.~1574--1583.

\bibitem[GPC18]{GenPeyCut18}
\bgroup\scshape{}A.~Genevay\egroup{}, \bgroup\scshape{}G.~Peyr{\'{e}}\egroup{},
  and \bgroup\scshape{}M.~Cuturi\egroup{}, Learning generative models with
  {S}inkhorn divergences,  in \emph{International Conference on Artificial
  Intelligence and Statistics}, 2018, pp.~1608--1617.

\bibitem[GLRT20]{Gen+20EntropicHWI}
\bgroup\scshape{}I.~Gentil\egroup{}, \bgroup\scshape{}C.~L\'{e}onard\egroup{},
  \bgroup\scshape{}L.~Ripani\egroup{}, and
  \bgroup\scshape{}L.~Tamanini\egroup{}, An entropic interpolation proof of the
  {HWI} inequality,  \emph{Stoch. Process. Appl.} \textbf{130} (2020),
  907--923.

\bibitem[Ger24]{Ger24Thesis}
\bgroup\scshape{}P.~R. Gerber\egroup{}, \emph{Likelihood-free hypothesis
  testing and applications of the energy distance}, Ph.D. thesis, MIT, 2024.

\bibitem[GLPR23]{GesLetPol23}
\bgroup\scshape{}B.~Geshkovski\egroup{}, \bgroup\scshape{}C.~Letrouit\egroup{},
  \bgroup\scshape{}Y.~Polyanskiy\egroup{}, and
  \bgroup\scshape{}P.~Rigollet\egroup{}, The emergence of clusters in
  self-attention dynamics,  in \emph{Thirty-seventh Conference on Neural
  Information Processing Systems}, 2023.

\bibitem[GLPR24]{GesLetPol24}
\bgroup\scshape{}B.~Geshkovski\egroup{}, \bgroup\scshape{}C.~Letrouit\egroup{},
  \bgroup\scshape{}Y.~Polyanskiy\egroup{}, and
  \bgroup\scshape{}P.~Rigollet\egroup{}, A mathematical perspective on
  transformers, 2024.

\bibitem[GP22]{GhoPan22}
\bgroup\scshape{}L.~Ghodrati\egroup{} and \bgroup\scshape{}V.~M.
  Panaretos\egroup{}, Distribution-on-distribution regression via optimal
  transport maps,  \emph{Biometrika} \textbf{109} (2022), 957--974.

\bibitem[GN22]{GhoNut22Sinkhorn}
\bgroup\scshape{}P.~Ghosal\egroup{} and \bgroup\scshape{}M.~Nutz\egroup{}, On
  the convergence rate of {S}inkhorn's algorithm,  \emph{arXiv preprint
  2212.06000} (2022), 1--28.

\bibitem[GS22]{GhoSen22}
\bgroup\scshape{}P.~Ghosal\egroup{} and \bgroup\scshape{}B.~Sen\egroup{},
  Multivariate ranks and quantiles using optimal transport: consistency, rates
  and nonparametric testing,  \emph{Ann. Statist.} \textbf{50} (2022),
  1012--1037.

\bibitem[GLNZ22]{Gho+22MFVI}
\bgroup\scshape{}S.~Ghosh\egroup{}, \bgroup\scshape{}Y.~Lu\egroup{},
  \bgroup\scshape{}T.~Nowicki\egroup{}, and \bgroup\scshape{}E.~Zhang\egroup{},
  On representations of mean-field variational inference,  \emph{arXiv preprint
  2210.11385} (2022), 1--19.

\bibitem[GN16]{GinNic16}
\bgroup\scshape{}E.~{Gin\'e}\egroup{} and \bgroup\scshape{}R.~{Nickl}\egroup{},
  \emph{Mathematical foundations of infinite-dimensional statistical models},
  Cambridge: Cambridge University Press, 2016 (English).

\bibitem[GGNWP20]{GolGreNil20}
\bgroup\scshape{}Z.~Goldfeld\egroup{}, \bgroup\scshape{}K.~Greenewald\egroup{},
  \bgroup\scshape{}J.~Niles-Weed\egroup{}, and
  \bgroup\scshape{}Y.~Polyanskiy\egroup{}, Convergence of smoothed empirical
  measures with applications to entropy estimation,  \emph{IEEE Trans. Inform.
  Theory} \textbf{66} (2020), 4368--4391.

\bibitem[GKRS24]{Gol+24RegOT}
\bgroup\scshape{}Z.~Goldfeld\egroup{}, \bgroup\scshape{}K.~Kato\egroup{},
  \bgroup\scshape{}G.~Rioux\egroup{}, and \bgroup\scshape{}R.~Sadhu\egroup{},
  Statistical inference with regularized optimal transport,  \emph{Inf.
  Inference} \textbf{13} (2024), Paper No. 13, 68.

\bibitem[GDGSCN23]{gonzaleztests23}
\bgroup\scshape{}J.~Gonz\'{a}lez-Delgado\egroup{},
  \bgroup\scshape{}A.~Gonz\'{a}lez-Sanz\egroup{},
  \bgroup\scshape{}J.~Cort\'{e}s\egroup{}, and
  \bgroup\scshape{}P.~Neuvial\egroup{}, Two-sample goodness-of-fit tests on the
  flat torus based on {W}asserstein distance and their relevance to structural
  biology,  \emph{Electron. J. Stat.} \textbf{17} (2023), 1547--1586.

\bibitem[GSLNW24]{GonLouNil24EOT}
\bgroup\scshape{}A.~Gonzalez-Sanz\egroup{}, \bgroup\scshape{}J.-M.
  Loubes\egroup{}, and \bgroup\scshape{}J.~Niles-Weed\egroup{}, Weak limits of
  entropy regularized optimal transport; potentials, plans and divergences,
  \emph{arXiv preprint 2207.07427} (2024), 1--24.

\bibitem[GLL{\etalchar{+}}23]{Gopi+23LogLaplace}
\bgroup\scshape{}S.~Gopi\egroup{}, \bgroup\scshape{}Y.~T. Lee\egroup{},
  \bgroup\scshape{}D.~Liu\egroup{}, \bgroup\scshape{}R.~Shen\egroup{}, and
  \bgroup\scshape{}K.~Tian\egroup{}, Algorithmic aspects of the log-{L}aplace
  transform and a non-{E}uclidean proximal sampler,  in \emph{Proceedings of
  Thirty Sixth Conference on Learning Theory} (\bgroup\scshape{}G.~Neu\egroup{}
  and \bgroup\scshape{}L.~Rosasco\egroup{}, eds.), \emph{Proceedings of Machine
  Learning Research} \textbf{195}, PMLR, 7 2023, pp.~2399--2439.

\bibitem[GPC15]{GraPeyCut15Neuro}
\bgroup\scshape{}A.~Gramfort\egroup{}, \bgroup\scshape{}G.~Peyr{\'e}\egroup{},
  and \bgroup\scshape{}M.~Cuturi\egroup{}, Fast optimal transport averaging of
  neuroimaging data,  in \emph{Information Processing in Medical Imaging}
  (\bgroup\scshape{}S.~Ourselin\egroup{}, \bgroup\scshape{}D.~C.
  Alexander\egroup{}, \bgroup\scshape{}C.-F. Westin\egroup{}, and
  \bgroup\scshape{}M.~J. Cardoso\egroup{}, eds.), Springer International
  Publishing, Cham, 2015, pp.~261--272.

\bibitem[GNCD23]{Gre+23Sinkhorn}
\bgroup\scshape{}G.~Greco\egroup{}, \bgroup\scshape{}M.~Noble\egroup{},
  \bgroup\scshape{}G.~Conforti\egroup{}, and
  \bgroup\scshape{}A.~Durmus\egroup{}, Non-asymptotic convergence bounds for
  {S}inkhorn iterates and their gradients: a coupling approach.,  in
  \emph{Proceedings of Thirty Sixth Conference on Learning Theory}
  (\bgroup\scshape{}G.~Neu\egroup{} and \bgroup\scshape{}L.~Rosasco\egroup{},
  eds.), \emph{Proceedings of Machine Learning Research} \textbf{195}, PMLR, 7
  2023, pp.~716--746.

\bibitem[GH23]{GroHun23AdaptEOT}
\bgroup\scshape{}M.~Groppe\egroup{} and
  \bgroup\scshape{}S.~Hundrieser\egroup{}, Lower complexity adaptation for
  empirical entropic optimal transport,  \emph{arXiv preprint 2306.13580}
  (2023), 1--51.

\bibitem[Hal17]{Hal17}
\bgroup\scshape{}M.~Hallin\egroup{}, \emph{On distribution and quantile
  functions, ranks and signs in {$\mathbb R^d$}}, Working Papers ECARES ECARES
  2017-34, ULB -- Universite Libre de Bruxelles, 9 2017.

\bibitem[Hal22]{Hal22}
\bgroup\scshape{}M.~Hallin\egroup{}, Measure transportation and statistical
  decision theory,  \emph{Annu. Rev. Stat. Appl.} \textbf{9} (2022), 401--424.

\bibitem[HdBCAM21]{HalBarCueMat21}
\bgroup\scshape{}M.~Hallin\egroup{}, \bgroup\scshape{}E.~del Barrio\egroup{},
  \bgroup\scshape{}J.~Cuesta-Albertos\egroup{}, and
  \bgroup\scshape{}C.~Matr\'{a}n\egroup{}, Distribution and quantile functions,
  ranks and signs in dimension {$d$}: a measure transportation approach,
  \emph{Ann. Statist.} \textbf{49} (2021), 1139--1165.

\bibitem[HMS21]{HalMoSeg21}
\bgroup\scshape{}M.~Hallin\egroup{}, \bgroup\scshape{}G.~Mordant\egroup{}, and
  \bgroup\scshape{}J.~Segers\egroup{}, Multivariate goodness-of-fit tests based
  on {W}asserstein distance,  \emph{Electron. J. Stat.} \textbf{15} (2021),
  1328--1371.

\bibitem[HMJG21]{Han+21BW}
\bgroup\scshape{}A.~Han\egroup{}, \bgroup\scshape{}B.~Mishra\egroup{},
  \bgroup\scshape{}P.~K. Jawanpuria\egroup{}, and
  \bgroup\scshape{}J.~Gao\egroup{}, On {R}iemannian optimization over positive
  definite matrices with the {B}ures--{W}asserstein geometry,  in
  \emph{Advances in Neural Information Processing Systems}
  (\bgroup\scshape{}M.~Ranzato\egroup{},
  \bgroup\scshape{}A.~Beygelzimer\egroup{},
  \bgroup\scshape{}Y.~Dauphin\egroup{}, \bgroup\scshape{}P.~Liang\egroup{}, and
  \bgroup\scshape{}J.~W. Vaughan\egroup{}, eds.), \textbf{34}, Curran
  Associates, Inc., 2021, pp.~8940--8953.

\bibitem[vH14]{Han14}
\bgroup\scshape{}R.~van Handel\egroup{}, Probability in high dimension, Lecture
  Notes (Princeton University), 2014.

\bibitem[HMHBE24]{He+24StableOracle}
\bgroup\scshape{}Y.~He\egroup{}, \bgroup\scshape{}A.~Mousavi-Hosseini\egroup{},
  \bgroup\scshape{}K.~Balasubramanian\egroup{}, and \bgroup\scshape{}M.~A.
  Erdogdu\egroup{}, A separation in heavy-tailed sampling: {G}aussian vs.\
  stable oracles for proximal samplers,  \emph{arXiv preprint 2405.16736}
  (2024), 1--33.

\bibitem[Hoe48]{Hoe48}
\bgroup\scshape{}W.~Hoeffding\egroup{}, A class of statistics with
  asymptotically normal distribution,  \emph{Ann. Math. Statist.} \textbf{19}
  (1948), 293--325.

\bibitem[HSM24]{HunStaMun24}
\bgroup\scshape{}S.~Hundrieser\egroup{}, \bgroup\scshape{}T.~Staudt\egroup{},
  and \bgroup\scshape{}A.~Munk\egroup{}, Empirical optimal transport between
  different measures adapts to lower complexity,  \emph{Ann. Inst. Henri
  Poincar\'{e} Probab. Stat.} \textbf{60} (2024), 824--846.

\bibitem[HR21]{HutRig21}
\bgroup\scshape{}J.-C. H\"{u}tter\egroup{} and
  \bgroup\scshape{}P.~Rigollet\egroup{}, Minimax estimation of smooth optimal
  transport maps,  \emph{Ann. Statist.} \textbf{49} (2021), 1166--1194.

\bibitem[JGH18]{NTK}
\bgroup\scshape{}A.~Jacot\egroup{}, \bgroup\scshape{}F.~Gabriel\egroup{}, and
  \bgroup\scshape{}C.~Hongler\egroup{}, Neural tangent kernel: convergence and
  generalization in neural networks,  in \emph{Advances in Neural Information
  Processing Systems} (\bgroup\scshape{}S.~Bengio\egroup{},
  \bgroup\scshape{}H.~Wallach\egroup{},
  \bgroup\scshape{}H.~Larochelle\egroup{},
  \bgroup\scshape{}K.~Grauman\egroup{},
  \bgroup\scshape{}N.~Cesa-Bianchi\egroup{}, and
  \bgroup\scshape{}R.~Garnett\egroup{}, eds.), \textbf{31}, Curran Associates,
  Inc., 2018.

\bibitem[JMPC20]{Janetal20EOTGaussians}
\bgroup\scshape{}H.~Janati\egroup{}, \bgroup\scshape{}B.~Muzellec\egroup{},
  \bgroup\scshape{}G.~Peyr\'{e}\egroup{}, and
  \bgroup\scshape{}M.~Cuturi\egroup{}, Entropic optimal transport between
  unbalanced {G}aussian measures has a closed form,  in \emph{Advances in
  Neural Information Processing Systems}
  (\bgroup\scshape{}H.~Larochelle\egroup{},
  \bgroup\scshape{}M.~Ranzato\egroup{}, \bgroup\scshape{}R.~Hadsell\egroup{},
  \bgroup\scshape{}M.~Balcan\egroup{}, and \bgroup\scshape{}H.~Lin\egroup{},
  eds.), \textbf{33}, Curran Associates, Inc., 2020, pp.~10468--10479.

\bibitem[JCP24]{JiaChePoo24MFVI}
\bgroup\scshape{}Y.~Jiang\egroup{}, \bgroup\scshape{}S.~Chewi\egroup{}, and
  \bgroup\scshape{}A.-A. Pooladian\egroup{}, Algorithms for mean-field
  variational inference via polyhedral optimization in the {W}asserstein space,
   in \emph{Proceedings of Thirty Seventh Conference on Learning Theory}
  (\bgroup\scshape{}S.~Agrawal\egroup{} and \bgroup\scshape{}A.~Roth\egroup{},
  eds.), \emph{Proceedings of Machine Learning Research} \textbf{247}, PMLR, 7
  2024, pp.~2720--2721.

\bibitem[JGJS99]{JorGhaJaa99}
\bgroup\scshape{}M.~I. Jordan\egroup{},
  \bgroup\scshape{}Z.~Ghahramani\egroup{}, \bgroup\scshape{}T.~S.
  Jaakkola\egroup{}, and \bgroup\scshape{}L.~K. Saul\egroup{}, An introduction
  to variational methods for graphical models,  \emph{Mach. Learn.} \textbf{37}
  (1999), 183--233.

\bibitem[JKO98]{JorKinOtt98}
\bgroup\scshape{}R.~Jordan\egroup{}, \bgroup\scshape{}D.~Kinderlehrer\egroup{},
  and \bgroup\scshape{}F.~Otto\egroup{}, The variational formulation of the
  {F}okker--{P}lanck equation,  \emph{SIAM J. Math. Anal.} \textbf{29} (1998),
  1--17.

\bibitem[KLRS08]{Kal+08RAS}
\bgroup\scshape{}B.~Kalantari\egroup{}, \bgroup\scshape{}I.~Lari\egroup{},
  \bgroup\scshape{}F.~Ricca\egroup{}, and \bgroup\scshape{}B.~Simeone\egroup{},
  On the complexity of general matrix scaling and entropy minimization via the
  {RAS} algorithm,  \emph{Math. Program.} \textbf{112} (2008), 371--401.

\bibitem[Kan42]{Kan42}
\bgroup\scshape{}L.~V. Kantorovich\egroup{}, On the translocation of masses,
  \emph{Dokl. Akad. Nauk SSSR} \textbf{37} (1942), 227--229.

\bibitem[KNS16]{KarNutSch16}
\bgroup\scshape{}H.~Karimi\egroup{}, \bgroup\scshape{}J.~Nutini\egroup{}, and
  \bgroup\scshape{}M.~Schmidt\egroup{}, Linear convergence of gradient and
  proximal-gradient methods under the {P}olyak--{{\L}{}}ojasiewicz condition,
  in \emph{Joint European Conference on Machine Learning and Knowledge
  Discovery in Databases}, Springer, 2016, pp.~795--811.

\bibitem[KR24]{KatRig23GVI}
\bgroup\scshape{}A.~Katsevich\egroup{} and
  \bgroup\scshape{}P.~Rigollet\egroup{}, On the approximation accuracy of
  {G}aussian variational inference,  \emph{Ann. Statist. (to appear)} (2024),
  49.

\bibitem[KOW{\etalchar{+}}23]{Kim+23BBVI}
\bgroup\scshape{}K.~Kim\egroup{}, \bgroup\scshape{}J.~Oh\egroup{},
  \bgroup\scshape{}K.~Wu\egroup{}, \bgroup\scshape{}Y.~Ma\egroup{}, and
  \bgroup\scshape{}J.~Gardner\egroup{}, On the convergence of black-box
  variational inference,  in \emph{Advances in Neural Information Processing
  Systems} (\bgroup\scshape{}A.~Oh\egroup{},
  \bgroup\scshape{}T.~Neumann\egroup{}, \bgroup\scshape{}A.~Globerson\egroup{},
  \bgroup\scshape{}K.~Saenko\egroup{}, \bgroup\scshape{}M.~Hardt\egroup{}, and
  \bgroup\scshape{}S.~Levine\egroup{}, eds.), \textbf{36}, Curran Associates,
  Inc., 2023, pp.~44615--44657.

\bibitem[KTM20]{KlaTamMun20}
\bgroup\scshape{}M.~Klatt\egroup{}, \bgroup\scshape{}C.~Tameling\egroup{}, and
  \bgroup\scshape{}A.~Munk\egroup{}, Empirical regularized optimal transport:
  statistical theory and applications,  \emph{SIAM Journal on Mathematics of
  Data Science} \textbf{2} (2020), 419--443.

\bibitem[KS94]{KnoSmi1994Cyclic}
\bgroup\scshape{}M.~Knott\egroup{} and \bgroup\scshape{}C.~S. Smith\egroup{},
  On a generalization of cyclic monotonicity and distances among random
  vectors,  \emph{Linear Algebra Appl.} \textbf{199} (1994), 363--371.

\bibitem[Kol34]{Kol1934Underdamped}
\bgroup\scshape{}A.~N. Kolmogorov\egroup{}, Zuf\"{a}llige {B}ewegungen (zur
  {T}heorie der {B}ro\-wnschen {B}ewegung),  \emph{Ann. of Math. (2)}
  \textbf{35} (1934), 116--117.

\bibitem[KNS{\etalchar{+}}19]{KolNadSim19}
\bgroup\scshape{}S.~Kolouri\egroup{}, \bgroup\scshape{}K.~Nadjahi\egroup{},
  \bgroup\scshape{}U.~Simsekli\egroup{}, \bgroup\scshape{}R.~Badeau\egroup{},
  and \bgroup\scshape{}G.~K. Rohde\egroup{}, Generalized sliced {W}asserstein
  distances,  in \emph{Advances in Neural Information Processing Systems 32}
  (\bgroup\scshape{}H.~M. Wallach\egroup{},
  \bgroup\scshape{}H.~Larochelle\egroup{},
  \bgroup\scshape{}A.~Beygelzimer\egroup{},
  \bgroup\scshape{}F.~d'Alch{\'{e}}{-}Buc\egroup{}, \bgroup\scshape{}E.~B.
  Fox\egroup{}, and \bgroup\scshape{}R.~Garnett\egroup{}, eds.), 2019,
  pp.~261--272.

\bibitem[KR15]{KolRoh15SuperRes}
\bgroup\scshape{}S.~Kolouri\egroup{} and \bgroup\scshape{}G.~K. Rohde\egroup{},
  Transport-based single frame super resolution of very low resolution face
  images,  in \emph{Proceedings of the IEEE Conference on Computer Vision and
  Pattern Recognition}, 2015, pp.~4876--4884.

\bibitem[KTOR16]{Kol+16Pattern}
\bgroup\scshape{}S.~Kolouri\egroup{}, \bgroup\scshape{}A.~B. Tosun\egroup{},
  \bgroup\scshape{}J.~A. Ozolek\egroup{}, and \bgroup\scshape{}G.~K.
  Rohde\egroup{}, A continuous linear optimal transport approach for pattern
  analysis in image datasets,  \emph{Pattern Recognition} \textbf{51} (2016),
  453--462.

\bibitem[KMV16]{KonMonVor16}
\bgroup\scshape{}S.~Kondratyev\egroup{},
  \bgroup\scshape{}L.~Monsaingeon\egroup{}, and
  \bgroup\scshape{}D.~Vorotnikov\egroup{}, {A new optimal transport distance on
  the space of finite Radon measures},  \emph{Advances in Differential
  Equations} \textbf{21} (2016), 1117 -- 1164.

\bibitem[KVZ24]{KooVemZha24InOut}
\bgroup\scshape{}Y.~Kook\egroup{}, \bgroup\scshape{}S.~S. Vempala\egroup{}, and
  \bgroup\scshape{}M.~S. Zhang\egroup{}, In-and-out: algorithmic diffusion for
  sampling convex bodies,  \emph{arXiv preprint 2405.01425} (2024), 1--32.

\bibitem[KZC{\etalchar{+}}24]{Kook+24SamplingMeanField}
\bgroup\scshape{}Y.~Kook\egroup{}, \bgroup\scshape{}M.~S. Zhang\egroup{},
  \bgroup\scshape{}S.~Chewi\egroup{}, \bgroup\scshape{}M.~A. Erdogdu\egroup{},
  and \bgroup\scshape{}M.~B. Li\egroup{}, Sampling from the mean-field
  stationary distribution,  in \emph{Proceedings of Thirty Seventh Conference
  on Learning Theory} (\bgroup\scshape{}S.~Agrawal\egroup{} and
  \bgroup\scshape{}A.~Roth\egroup{}, eds.), \emph{Proceedings of Machine
  Learning Research} \textbf{247}, PMLR, 7 2024, pp.~3099--3136.

\bibitem[KSA{\etalchar{+}}20]{Kor+20SVGD}
\bgroup\scshape{}A.~Korba\egroup{}, \bgroup\scshape{}A.~Salim\egroup{},
  \bgroup\scshape{}M.~Arbel\egroup{}, \bgroup\scshape{}G.~Luise\egroup{}, and
  \bgroup\scshape{}A.~Gretton\egroup{}, A non-asymptotic analysis for {S}tein
  variational gradient descent,  in \emph{Advances in Neural Information
  Processing Systems} (\bgroup\scshape{}H.~Larochelle\egroup{},
  \bgroup\scshape{}M.~Ranzato\egroup{}, \bgroup\scshape{}R.~Hadsell\egroup{},
  \bgroup\scshape{}M.~Balcan\egroup{}, and \bgroup\scshape{}H.~Lin\egroup{},
  eds.), \textbf{33}, Curran Associates, Inc., 2020, pp.~4672--4682.

\bibitem[Kro18]{Kro18BaryMK}
\bgroup\scshape{}A.~Kroshnin\egroup{}, Fr\'{e}chet barycenters in the
  {M}onge--{K}antorovich spaces,  \emph{J. Convex Anal.} \textbf{25} (2018),
  1371--1395.

\bibitem[KSS21]{KroSpoSuv21BWBary}
\bgroup\scshape{}A.~Kroshnin\egroup{}, \bgroup\scshape{}V.~Spokoiny\egroup{},
  and \bgroup\scshape{}A.~Suvorikova\egroup{}, Statistical inference for
  {B}ures--{W}asserstein barycenters,  \emph{Ann. Appl. Probab.} \textbf{31}
  (2021), 1264--1298.

\bibitem[KDLY22]{Kum+22Bary}
\bgroup\scshape{}S.~Kum\egroup{}, \bgroup\scshape{}M.~H. Duong\egroup{},
  \bgroup\scshape{}Y.~Lim\egroup{}, and \bgroup\scshape{}S.~Yun\egroup{}, A
  {GPM}-based algorithm for solving regularized {W}asserstein barycenter
  problems in some spaces of probability measures,  \emph{Journal of
  Computational and Applied Mathematics} \textbf{416} (2022), 114588.

\bibitem[Lac23]{Lac23IndepProj}
\bgroup\scshape{}D.~Lacker\egroup{}, Independent projections of diffusions:
  gradient flows for variational inference and optimal mean field
  approximations,  \emph{arXiv preprint 2309.13332} (2023), 1--27.

\bibitem[Laf88]{Laf1988DensityManifold}
\bgroup\scshape{}J.~D. Lafferty\egroup{}, The density manifold and
  configuration space quantization,  \emph{Trans. Amer. Math. Soc.}
  \textbf{305} (1988), 699--741.

\bibitem[LBB24]{LamBacBon24VarControl}
\bgroup\scshape{}M.~Lambert\egroup{}, \bgroup\scshape{}F.~Bach\egroup{}, and
  \bgroup\scshape{}S.~Bonnabel\egroup{}, Variational dynamic programming for
  stochastic optimal control,  \emph{arXiv preprint 2404.14806} (2024), 1--17.

\bibitem[LBB23]{LamBonBac23Kushner}
\bgroup\scshape{}M.~Lambert\egroup{}, \bgroup\scshape{}S.~Bonnabel\egroup{},
  and \bgroup\scshape{}F.~Bach\egroup{}, Variational {G}aussian approximation
  of the {K}ushner optimal filter,  in \emph{Geometric science of information.
  {P}art {I}}, \emph{Lecture Notes in Comput. Sci.} \textbf{14071}, Springer,
  Cham, [2023] \copyright2023, pp.~395--404.

\bibitem[LCB{\etalchar{+}}22]{Lametal22GVI}
\bgroup\scshape{}M.~Lambert\egroup{}, \bgroup\scshape{}S.~Chewi\egroup{},
  \bgroup\scshape{}F.~Bach\egroup{}, \bgroup\scshape{}S.~Bonnabel\egroup{}, and
  \bgroup\scshape{}P.~Rigollet\egroup{}, Variational inference via
  {W}asserstein gradient flows,  in \emph{Advances in Neural Information
  Processing Systems} (\bgroup\scshape{}A.~H. Oh\egroup{},
  \bgroup\scshape{}A.~Agarwal\egroup{}, \bgroup\scshape{}D.~Belgrave\egroup{},
  and \bgroup\scshape{}K.~Cho\egroup{}, eds.), 2022.

\bibitem[LZ24]{LavZan24CAVI}
\bgroup\scshape{}H.~Lavenant\egroup{} and \bgroup\scshape{}G.~Zanella\egroup{},
  Convergence rate of random scan coordinate ascent variational inference under
  log-concavity,  \emph{arXiv preprint 2406.07292} (2024), 1--12.

\bibitem[{Le }16]{LeG16StocCalc}
\bgroup\scshape{}J.-F. {Le Gall}\egroup{}, \emph{Brownian motion, martingales,
  and stochastic calculus}, {F}rench ed., \emph{Graduate Texts in Mathematics}
  \textbf{274}, Springer, [Cham], 2016.

\bibitem[LGL17]{LeGLou17Bary}
\bgroup\scshape{}T.~Le~Gouic\egroup{} and \bgroup\scshape{}J.-M.
  Loubes\egroup{}, Existence and consistency of {W}asserstein barycenters,
  \emph{Probab. Theory Related Fields} \textbf{168} (2017), 901--917.

\bibitem[LGLR20]{LeGLouRig20Fairness}
\bgroup\scshape{}T.~Le~Gouic\egroup{}, \bgroup\scshape{}J.-M. Loubes\egroup{},
  and \bgroup\scshape{}P.~Rigollet\egroup{}, Projection to fairness in
  statistical learning,  \emph{arXiv preprint 2005.11720} (2020), 1--14.

\bibitem[LGPRS22]{LeGetal22Bary}
\bgroup\scshape{}T.~Le~Gouic\egroup{}, \bgroup\scshape{}Q.~Paris\egroup{},
  \bgroup\scshape{}P.~Rigollet\egroup{}, and \bgroup\scshape{}A.~J.
  Stromme\egroup{}, Fast convergence of empirical barycenters in {A}lexandrov
  spaces and the {W}asserstein space,  \emph{J. Eur. Math. Soc.} (2022),
  2229--2250.

\bibitem[Led17]{Led17MatchingI}
\bgroup\scshape{}M.~Ledoux\egroup{}, On optimal matching of {G}aussian samples,
   \emph{Zap. Nauchn. Sem. S.-Peterburg. Otdel. Mat. Inst. Steklov. (POMI)}
  \textbf{457} (2017), 226--264.

\bibitem[LT91]{LedTal91}
\bgroup\scshape{}M.~Ledoux\egroup{} and \bgroup\scshape{}M.~Talagrand\egroup{},
  \emph{Probability in {B}anach spaces}, \emph{Ergebnisse der Mathematik und
  ihrer Grenzgebiete (3) [Results in Mathematics and Related Areas (3)]}
  \textbf{23}, Springer-Verlag, Berlin, 1991, Isoperimetry and processes.

\bibitem[LZ21]{LedZhu21MatchingIII}
\bgroup\scshape{}M.~Ledoux\egroup{} and \bgroup\scshape{}J.-X. Zhu\egroup{}, On
  optimal matching of {G}aussian samples {III},  \emph{Probab. Math. Statist.}
  \textbf{41} (2021), 237--265.

\bibitem[LST21]{LeeSheTia21RGO}
\bgroup\scshape{}Y.~T. Lee\egroup{}, \bgroup\scshape{}R.~Shen\egroup{}, and
  \bgroup\scshape{}K.~Tian\egroup{}, Structured logconcave sampling with a
  restricted {G}aussian oracle,  in \emph{Proceedings of Thirty Fourth
  Conference on Learning Theory} (\bgroup\scshape{}M.~Belkin\egroup{} and
  \bgroup\scshape{}S.~Kpotufe\egroup{}, eds.), \emph{Proceedings of Machine
  Learning Research} \textbf{134}, PMLR, 8 2021, pp.~2993--3050.

\bibitem[L{\'{e}}g21]{Leg21Sinkhorn}
\bgroup\scshape{}F.~L{\'{e}}ger\egroup{}, A gradient descent perspective on
  {S}inkhorn,  \emph{Appl. Math. Optim.} \textbf{84} (2021), 1843--1855.

\bibitem[LR05]{LehRom05}
\bgroup\scshape{}E.~L. Lehmann\egroup{} and \bgroup\scshape{}J.~P.
  Romano\egroup{}, \emph{Testing statistical hypotheses}, third ed.,
  \emph{Springer Texts in Statistics}, Springer, New York, 2005.

\bibitem[L{\'e}o14]{Leo14Schrodinger}
\bgroup\scshape{}C.~L{\'e}onard\egroup{}, A survey of the {S}chr\"{o}dinger
  problem and some of its connections with optimal transport,  \emph{Discrete
  Contin. Dyn. Syst.} \textbf{34} (2014), 1533--1574.

\bibitem[LGYS20]{Li+20RegBary}
\bgroup\scshape{}L.~Li\egroup{}, \bgroup\scshape{}A.~Genevay\egroup{},
  \bgroup\scshape{}M.~Yurochkin\egroup{}, and \bgroup\scshape{}J.~M.
  Solomon\egroup{}, Continuous regularized {W}asserstein barycenters,  in
  \emph{Advances in Neural Information Processing Systems}
  (\bgroup\scshape{}H.~Larochelle\egroup{},
  \bgroup\scshape{}M.~Ranzato\egroup{}, \bgroup\scshape{}R.~Hadsell\egroup{},
  \bgroup\scshape{}M.~Balcan\egroup{}, and \bgroup\scshape{}H.~Lin\egroup{},
  eds.), \textbf{33}, Curran Associates, Inc., 2020, pp.~17755--17765.

\bibitem[Lia21]{Lia21}
\bgroup\scshape{}T.~Liang\egroup{}, How well generative adversarial networks
  learn distributions,  \emph{J. Mach. Learn. Res.} \textbf{22} (2021), Paper
  No. 228, 41.

\bibitem[LMS16]{LieMieSav16HK}
\bgroup\scshape{}M.~Liero\egroup{}, \bgroup\scshape{}A.~Mielke\egroup{}, and
  \bgroup\scshape{}G.~Savar\'{e}\egroup{}, Optimal transport in competition
  with reaction: the {H}ellinger--{K}antorovich distance and geodesic curves,
  \emph{SIAM J. Math. Anal.} \textbf{48} (2016), 2869--2911.

\bibitem[LMS18]{LieMieSav18HK}
\bgroup\scshape{}M.~Liero\egroup{}, \bgroup\scshape{}A.~Mielke\egroup{}, and
  \bgroup\scshape{}G.~Savar\'{e}\egroup{}, Optimal entropy-transport problems
  and a new {H}ellinger--{K}antorovich distance between positive measures,
  \emph{Invent. Math.} \textbf{211} (2018), 969--1117.

\bibitem[Lin83]{Lin83}
\bgroup\scshape{}B.~G. Lindsay\egroup{}, The geometry of mixture likelihoods: a
  general theory,  \emph{The Annals of Statistics} \textbf{11} (1983), 86--94.

\bibitem[Liu17]{Liu17SVGD}
\bgroup\scshape{}Q.~Liu\egroup{}, Stein variational gradient descent as
  gradient flow,  in \emph{Advances in Neural Information Processing Systems}
  (\bgroup\scshape{}I.~Guyon\egroup{}, \bgroup\scshape{}U.~V. Luxburg\egroup{},
  \bgroup\scshape{}S.~Bengio\egroup{}, \bgroup\scshape{}H.~Wallach\egroup{},
  \bgroup\scshape{}R.~Fergus\egroup{},
  \bgroup\scshape{}S.~Vishwanathan\egroup{}, and
  \bgroup\scshape{}R.~Garnett\egroup{}, eds.), \textbf{30}, Curran Associates,
  Inc., 2017.

\bibitem[LW16]{LiuWan16SVGD}
\bgroup\scshape{}Q.~Liu\egroup{} and \bgroup\scshape{}D.~Wang\egroup{}, Stein
  variational gradient descent: a general purpose {B}ayesian inference
  algorithm,  in \emph{Advances in Neural Information Processing Systems}
  (\bgroup\scshape{}D.~Lee\egroup{}, \bgroup\scshape{}M.~Sugiyama\egroup{},
  \bgroup\scshape{}U.~Luxburg\egroup{}, \bgroup\scshape{}I.~Guyon\egroup{}, and
  \bgroup\scshape{}R.~Garnett\egroup{}, eds.), \textbf{29}, Curran Associates,
  Inc., 2016.

\bibitem[LV09]{LotVil09}
\bgroup\scshape{}J.~Lott\egroup{} and \bgroup\scshape{}C.~Villani\egroup{},
  Ricci curvature for metric-measure spaces via optimal transport,  \emph{Ann.
  of Math.} \textbf{169} (2009), 903--991.

\bibitem[LLN19a]{LuLuNol19SVGD}
\bgroup\scshape{}J.~Lu\egroup{}, \bgroup\scshape{}Y.~Lu\egroup{}, and
  \bgroup\scshape{}J.~Nolen\egroup{}, Scaling limit of the {S}tein variational
  gradient descent: the mean field regime,  \emph{SIAM J. Math. Anal.}
  \textbf{51} (2019), 648--671.

\bibitem[LLN19b]{LuLuNol19}
\bgroup\scshape{}Y.~Lu\egroup{}, \bgroup\scshape{}J.~Lu\egroup{}, and
  \bgroup\scshape{}J.~Nolen\egroup{}, Accelerating {L}angevin sampling with
  birth-death, arXiv preprint 1905.09863, 2019.

\bibitem[LSW23]{LuSleWan23BirthDeath}
\bgroup\scshape{}Y.~Lu\egroup{}, \bgroup\scshape{}D.~Slep\v{c}ev\egroup{}, and
  \bgroup\scshape{}L.~Wang\egroup{}, Birth-death dynamics for sampling: global
  convergence, approximations and their asymptotics,  \emph{Nonlinearity}
  \textbf{36} (2023), 5731--5772.

\bibitem[LGT22]{LuoGar22Matrix}
\bgroup\scshape{}Y.~Luo\egroup{} and
  \bgroup\scshape{}N.~Garc\'{i}a~Trillos\egroup{}, Nonconvex matrix
  factorization is geodesically convex: global landscape analysis for
  fixed-rank matrix optimization from a {R}iemannian perspective,  \emph{arXiv
  preprint 2209.15130} (2022), 1--35.

\bibitem[MGM22]{MalGerMin22EOTGaussians}
\bgroup\scshape{}A.~Mallasto\egroup{}, \bgroup\scshape{}A.~Gerolin\egroup{},
  and \bgroup\scshape{}H.~Q. Minh\egroup{}, Entropy-regularized 2-{W}asserstein
  distance between {G}aussian measures,  \emph{Inf. Geom.} \textbf{5} (2022),
  289--323.

\bibitem[MBNWW21]{ManBalNil21}
\bgroup\scshape{}T.~Manole\egroup{}, \bgroup\scshape{}S.~Balakrishnan\egroup{},
  \bgroup\scshape{}J.~Niles-Weed\egroup{}, and
  \bgroup\scshape{}L.~Wasserman\egroup{}, Plugin estimation of smooth optimal
  transport maps, 2021.

\bibitem[MBNWW23]{Man+23CLT}
\bgroup\scshape{}T.~Manole\egroup{}, \bgroup\scshape{}S.~Balakrishnan\egroup{},
  \bgroup\scshape{}J.~Niles-Weed\egroup{}, and
  \bgroup\scshape{}L.~Wasserman\egroup{}, Central limit theorems for smooth
  optimal transport maps,  \emph{arXiv preprint 2312.12407} (2023), 1--60.

\bibitem[MBW22]{ManBalWas22}
\bgroup\scshape{}T.~Manole\egroup{}, \bgroup\scshape{}S.~Balakrishnan\egroup{},
  and \bgroup\scshape{}L.~Wasserman\egroup{}, Minimax confidence intervals for
  the sliced {W}asserstein distance,  \emph{Electron. J. Stat.} \textbf{16}
  (2022), 2252--2345.

\bibitem[MNW24]{ManNil24}
\bgroup\scshape{}T.~Manole\egroup{} and
  \bgroup\scshape{}J.~Niles-Weed\egroup{}, Sharp convergence rates for
  empirical optimal transport with smooth costs,  \emph{The Annals of Applied
  Probability} \textbf{34} (2024), 1108--1135.

\bibitem[MLGR23]{MauLeGRig23LowRank}
\bgroup\scshape{}T.~Maunu\egroup{}, \bgroup\scshape{}T.~Le~Gouic\egroup{}, and
  \bgroup\scshape{}P.~Rigollet\egroup{}, Bures--{W}asserstein barycenters and
  low-rank matrix recovery,  in \emph{Proceedings of the 26th International
  Conference on Artificial Intelligence and Statistics}
  (\bgroup\scshape{}F.~Ruiz\egroup{}, \bgroup\scshape{}J.~Dy\egroup{}, and
  \bgroup\scshape{}J.-W. van~de Meent\egroup{}, eds.), \emph{Proceedings of
  Machine Learning Research} \textbf{206}, PMLR, 4 2023, pp.~8183--8210.

\bibitem[McC97]{McC97}
\bgroup\scshape{}R.~J. McCann\egroup{}, A convexity principle for interacting
  gases,  \emph{Advances in Mathematics} \textbf{128} (1997), 153--179.

\bibitem[McK66]{McK1966}
\bgroup\scshape{}H.~P. McKean, Jr.\egroup{}, A class of {M}arkov processes
  associated with nonlinear parabolic equations,  \emph{Proc. Nat. Acad. Sci.
  U.S.A.} \textbf{56} (1966), 1907--1911.

\bibitem[MMN18]{MeiMonNgu18MeanField}
\bgroup\scshape{}S.~Mei\egroup{}, \bgroup\scshape{}A.~Montanari\egroup{}, and
  \bgroup\scshape{}P.-M. Nguyen\egroup{}, A mean field view of the landscape of
  two-layer neural networks,  \emph{Proc. Natl. Acad. Sci. USA} \textbf{115}
  (2018), E7665--E7671.

\bibitem[MNW19]{MenNil19EOT}
\bgroup\scshape{}G.~Mena\egroup{} and \bgroup\scshape{}J.~Niles-Weed\egroup{},
  Statistical bounds for entropic optimal transport: sample complexity and the
  central limit theorem,  in \emph{Advances in Neural Information Processing
  Systems} (\bgroup\scshape{}H.~Wallach\egroup{},
  \bgroup\scshape{}H.~Larochelle\egroup{},
  \bgroup\scshape{}A.~Beygelzimer\egroup{},
  \bgroup\scshape{}F.~d\textquotesingle Alch\'{e}-Buc\egroup{},
  \bgroup\scshape{}E.~Fox\egroup{}, and \bgroup\scshape{}R.~Garnett\egroup{},
  eds.), \textbf{32}, Curran Associates, Inc., 2019.

\bibitem[MM23]{MisMon23}
\bgroup\scshape{}T.~Misiakiewicz\egroup{} and
  \bgroup\scshape{}A.~Montanari\egroup{}, Six lectures on linearized neural
  networks, 2023.

\bibitem[Mod17]{Modin17OT}
\bgroup\scshape{}K.~Modin\egroup{}, Geometry of matrix decompositions seen
  through optimal transport and information geometry,  \emph{J. Geom. Mech.}
  \textbf{9} (2017), 335--390.

\bibitem[Mon81]{Mon81}
\bgroup\scshape{}G.~Monge\egroup{}, M{\'e}moire sur la th{\'e}orie des
  d{\'e}blais et des remblais,  \emph{M{\'e}m. de l'Ac. R. des Sc.} (1781),
  666--704.

\bibitem[MFSS17]{MuaFukSri17}
\bgroup\scshape{}K.~Muandet\egroup{}, \bgroup\scshape{}K.~Fukumizu\egroup{},
  \bgroup\scshape{}B.~Sriperumbudur\egroup{}, and
  \bgroup\scshape{}B.~Sch{\"o}lkopf\egroup{}, Kernel mean embedding of
  distributions: a review and beyond,  \emph{Foundations and
  Trends{\textregistered} in Machine Learning} \textbf{10} (2017), 1--141.

\bibitem[NDC{\etalchar{+}}20]{NadDurChi20}
\bgroup\scshape{}K.~Nadjahi\egroup{}, \bgroup\scshape{}A.~Durmus\egroup{},
  \bgroup\scshape{}L.~Chizat\egroup{}, \bgroup\scshape{}S.~Kolouri\egroup{},
  \bgroup\scshape{}S.~Shahrampour\egroup{}, and
  \bgroup\scshape{}U.~Simsekli\egroup{}, Statistical and topological properties
  of sliced probability divergences,  in \emph{Advances in Neural Information
  Processing Systems 33} (\bgroup\scshape{}H.~Larochelle\egroup{},
  \bgroup\scshape{}M.~Ranzato\egroup{}, \bgroup\scshape{}R.~Hadsell\egroup{},
  \bgroup\scshape{}M.~Balcan\egroup{}, and \bgroup\scshape{}H.~Lin\egroup{},
  eds.), 2020.

\bibitem[NY83]{NemYud83}
\bgroup\scshape{}A.~S. Nemirovsky\egroup{} and \bgroup\scshape{}D.~B.
  Yudin\egroup{}, \emph{Problem complexity and method efficiency in
  optimization}, \emph{A Wiley-Interscience Publication}, John Wiley \& Sons
  Inc., New York, 1983, Translated from the Russian and with a preface by E. R.
  Dawson, Wiley-Interscience Series in Discrete Mathematics.

\bibitem[Nes83]{Nes1983Accel}
\bgroup\scshape{}Y.~E. Nesterov\egroup{}, A method for solving the convex
  programming problem with convergence rate {$O(1/k\sp{2})$},  \emph{Dokl.
  Akad. Nauk SSSR} \textbf{269} (1983), 543--547.

\bibitem[Nes18]{Nes18}
\bgroup\scshape{}Y.~Nesterov\egroup{}, \emph{Lectures on convex optimization},
  \emph{Springer Optimization and Its Applications} \textbf{137}, Springer,
  Cham, 2018.

\bibitem[NWKB23]{Neyetal23WassTesting}
\bgroup\scshape{}M.~Neykov\egroup{}, \bgroup\scshape{}L.~Wasserman\egroup{},
  \bgroup\scshape{}I.~Kim\egroup{}, and
  \bgroup\scshape{}S.~Balakrishnan\egroup{}, Nearly minimax optimal
  {W}asserstein conditional independence testing,  \emph{arXiv preprint
  2308.08672} (2023), 1--24.

\bibitem[NW18]{Wee18}
\bgroup\scshape{}J.~Niles-Weed\egroup{}, Sharper rates for estimating
  differential entropy under {G}aussian convolutions,  \emph{Massachusetts
  Institute of Technology (MIT), Tech. Rep} (2018), 1--2.

\bibitem[NWB19]{WeeBac19}
\bgroup\scshape{}J.~Niles-Weed\egroup{} and \bgroup\scshape{}F.~Bach\egroup{},
  Sharp asymptotic and finite-sample rates of convergence of empirical measures
  in {W}asserstein distance,  \emph{Bernoulli} \textbf{25} (2019), 2620--2648.

\bibitem[NWB22]{NilBer22MinimaxDensity}
\bgroup\scshape{}J.~Niles-Weed\egroup{} and
  \bgroup\scshape{}Q.~Berthet\egroup{}, Minimax estimation of smooth densities
  in {W}asserstein distance,  \emph{Ann. Statist.} \textbf{50} (2022),
  1519--1540.

\bibitem[NWR22]{NilRig22}
\bgroup\scshape{}J.~Niles-Weed\egroup{} and
  \bgroup\scshape{}P.~Rigollet\egroup{}, Estimation of {W}asserstein distances
  in the spiked transport model,  \emph{Bernoulli} \textbf{28} (2022),
  2663--2688.

\bibitem[NW22]{NutWie22EOTPot}
\bgroup\scshape{}M.~Nutz\egroup{} and \bgroup\scshape{}J.~Wiesel\egroup{},
  Entropic optimal transport: convergence of potentials,  \emph{Probab. Theory
  Related Fields} \textbf{184} (2022), 401--424.

\bibitem[OT11]{OhtTak11DisplI}
\bgroup\scshape{}S.-i. Ohta\egroup{} and \bgroup\scshape{}A.~Takatsu\egroup{},
  Displacement convexity of generalized relative entropies,  \emph{Adv. Math.}
  \textbf{228} (2011), 1742--1787.

\bibitem[OT13]{OhtTak13DisplII}
\bgroup\scshape{}S.-I. Ohta\egroup{} and \bgroup\scshape{}A.~Takatsu\egroup{},
  Displacement convexity of generalized relative entropies. {II},  \emph{Comm.
  Anal. Geom.} \textbf{21} (2013), 687--785.

\bibitem[OI22]{OkaIma22ProjWass}
\bgroup\scshape{}R.~Okano\egroup{} and \bgroup\scshape{}M.~Imaizumi\egroup{},
  Inference for projection-based {W}asserstein distances on finite spaces,
  \emph{arXiv preprint 2202.05495} (2022), 1--29.

\bibitem[Oll10]{Oll10Survey}
\bgroup\scshape{}Y.~Ollivier\egroup{}, A survey of {R}icci curvature for metric
  spaces and {M}arkov chains,  in \emph{Probabilistic approach to geometry},
  \emph{Adv. Stud. Pure Math.} \textbf{57}, Math. Soc. Japan, Tokyo, 2010,
  pp.~343--381.

\bibitem[Oll13]{Oll13Visual}
\bgroup\scshape{}Y.~Ollivier\egroup{}, A visual introduction to {R}iemannian
  curvatures and some discrete generalizations,  in \emph{Analysis and geometry
  of metric measure spaces}, \emph{CRM Proc. Lecture Notes} \textbf{56}, Amer.
  Math. Soc., Providence, RI, 2013, pp.~197--220.

\bibitem[OV12]{OllVil12RicciHypercube}
\bgroup\scshape{}Y.~Ollivier\egroup{} and \bgroup\scshape{}C.~Villani\egroup{},
  A curved {B}runn--{M}inkowski inequality on the discrete hypercube, or: what
  is the {R}icci curvature of the discrete hypercube?,  \emph{SIAM J. Discrete
  Math.} \textbf{26} (2012), 983--996.

\bibitem[vO22]{Van22BW}
\bgroup\scshape{}J.~van Oostrum\egroup{}, Bures--{W}asserstein geometry for
  positive-definite {H}ermitian matrices and their trace-one subset,
  \emph{Inf. Geom.} \textbf{5} (2022), 405--425.

\bibitem[Ott01]{Ott01}
\bgroup\scshape{}F.~Otto\egroup{}, The geometry of dissipative evolution
  equations: the porous medium equation.,  \emph{Communications in Partial
  Differential Equations} \textbf{26} (2001), 101--174.

\bibitem[OV00]{OttVil00LSI}
\bgroup\scshape{}F.~Otto\egroup{} and \bgroup\scshape{}C.~Villani\egroup{},
  Generalization of an inequality by {T}alagrand and links with the logarithmic
  {S}obolev inequality,  \emph{J. Funct. Anal.} \textbf{173} (2000), 361--400.

\bibitem[OV01]{OttVil01Comment}
\bgroup\scshape{}F.~Otto\egroup{} and \bgroup\scshape{}C.~Villani\egroup{},
  Comment on: ``{H}ypercontractivity of {H}amilton--{J}acobi equations'' [{J}.
  {M}ath. {P}ures {A}ppl. (9) {\bf 80} (2001), no. 7, 669--696] by {S}. {G}.
  {B}obkov, {I}. {G}entil and {M}. {L}edoux,  \emph{J. Math. Pures Appl. (9)}
  \textbf{80} (2001), 697--700.

\bibitem[PZ16]{PanZem16Point}
\bgroup\scshape{}V.~M. Panaretos\egroup{} and
  \bgroup\scshape{}Y.~Zemel\egroup{}, Amplitude and phase variation of point
  processes,  \emph{Ann. Statist.} \textbf{44} (2016), 771--812.

\bibitem[PZ20]{PanZem20Invitation}
\bgroup\scshape{}V.~M. Panaretos\egroup{} and
  \bgroup\scshape{}Y.~Zemel\egroup{}, \emph{An invitation to statistics in
  {W}asserstein space}, Springer Nature, 2020.

\bibitem[PS23]{ParSle23Sliced}
\bgroup\scshape{}S.~Park\egroup{} and \bgroup\scshape{}D.~Slep\v{c}ev\egroup{},
  Geometry and analytic properties of the sliced {W}asserstein space,
  \emph{arXiv preprint 2311.05134} (2023), 1--49.

\bibitem[PT18]{ParTho18OT}
\bgroup\scshape{}S.~Park\egroup{} and \bgroup\scshape{}M.~Thorpe\egroup{},
  Representing and learning high dimensional data with the optimal transport
  map from a probabilistic viewpoint,  in \emph{2018 IEEE/CVF Conference on
  Computer Vision and Pattern Recognition}, 2018, pp.~7864--7872.

\bibitem[PC19a]{PatCut19}
\bgroup\scshape{}F.~Paty\egroup{} and \bgroup\scshape{}M.~Cuturi\egroup{},
  Subspace robust {W}asserstein distances,  in \emph{Proceedings of the 36th
  International Conference on Machine Learning, {ICML} 2019}
  (\bgroup\scshape{}K.~Chaudhuri\egroup{} and
  \bgroup\scshape{}R.~Salakhutdinov\egroup{}, eds.), \emph{Proceedings of
  Machine Learning Research} \textbf{97}, {PMLR}, 2019, pp.~5072--5081.

\bibitem[PC19b]{PeyCut19}
\bgroup\scshape{}G.~Peyr{\'{e}}\egroup{} and
  \bgroup\scshape{}M.~Cuturi\egroup{}, Computational optimal transport,
  \emph{Foundations and Trends{\textregistered} in Machine Learning}
  \textbf{11} (2019), 355--607.

\bibitem[PKD07]{PitKokDah07DistTransfer}
\bgroup\scshape{}F.~Piti\'e\egroup{}, \bgroup\scshape{}A.~C. Kokaram\egroup{},
  and \bgroup\scshape{}R.~Dahyot\egroup{}, Automated colour grading using
  colour distribution transfer,  \emph{Computer Vision and Image Understanding}
  \textbf{107} (2007), 123--137, Special issue on color image processing.

\bibitem[Pol82]{pollard1982quantization}
\bgroup\scshape{}D.~Pollard\egroup{}, Quantization and the method of
  {$k$}-means,  \emph{IEEE Transactions on Information theory} \textbf{28}
  (1982), 199--205.

\bibitem[PW24]{PolWu24}
\bgroup\scshape{}Y.~Polyanskiy\egroup{} and \bgroup\scshape{}Y.~Wu\egroup{},
  \emph{Information theory: from coding to learning}, Cambridge University
  Press, Cambridge, 2024.

\bibitem[Pon23]{Pon23}
\bgroup\scshape{}D.~Ponnoprat\egroup{}, Universal consistency of {W}asserstein
  {$k$-NN} classifier: a negative and some positive results,  \emph{Information
  and Inference: A Journal of the IMA} \textbf{12} (2023), 1997--2019.

\bibitem[PCNW22]{PooCutNil22Debiaser}
\bgroup\scshape{}A.-A. Pooladian\egroup{}, \bgroup\scshape{}M.~Cuturi\egroup{},
  and \bgroup\scshape{}J.~Niles-Weed\egroup{}, Debiaser beware: pitfalls of
  centering regularized transport maps,  in \emph{Proceedings of the 39th
  International Conference on Machine Learning}
  (\bgroup\scshape{}K.~Chaudhuri\egroup{},
  \bgroup\scshape{}S.~Jegelka\egroup{}, \bgroup\scshape{}L.~Song\egroup{},
  \bgroup\scshape{}C.~Szepesvari\egroup{}, \bgroup\scshape{}G.~Niu\egroup{},
  and \bgroup\scshape{}S.~Sabato\egroup{}, eds.), \emph{Proceedings of Machine
  Learning Research} \textbf{162}, PMLR, 7 2022, pp.~17830--17847.

\bibitem[PDNW23]{PooDivNil23SemiDiscrete}
\bgroup\scshape{}A.-A. Pooladian\egroup{}, \bgroup\scshape{}V.~Divol\egroup{},
  and \bgroup\scshape{}J.~Niles-Weed\egroup{}, Minimax estimation of
  discontinuous optimal transport maps: the semi-discrete case,  in
  \emph{Proceedings of the 40th International Conference on Machine Learning}
  (\bgroup\scshape{}A.~Krause\egroup{}, \bgroup\scshape{}E.~Brunskill\egroup{},
  \bgroup\scshape{}K.~Cho\egroup{}, \bgroup\scshape{}B.~Engelhardt\egroup{},
  \bgroup\scshape{}S.~Sabato\egroup{}, and
  \bgroup\scshape{}J.~Scarlett\egroup{}, eds.), \emph{Proceedings of Machine
  Learning Research} \textbf{202}, PMLR, 7 2023, pp.~28128--28150.

\bibitem[PNW22]{PooNil22EntropicMaps}
\bgroup\scshape{}A.-A. Pooladian\egroup{} and
  \bgroup\scshape{}J.~Niles-Weed\egroup{}, Entropic estimation of optimal
  transport maps,  \emph{arXiv preprint 2109.12004} (2022), 1--37.

\bibitem[PBS24]{PriBiaSal24SVGD}
\bgroup\scshape{}V.~Priser\egroup{}, \bgroup\scshape{}P.~Bianchi\egroup{}, and
  \bgroup\scshape{}A.~Salim\egroup{}, Long-time asymptotics of noisy {SVGD}
  outside the population limit,  \emph{arXiv preprint 2406.11929} (2024),
  1--28.

\bibitem[RPDB12]{Rabin+12Barycenter}
\bgroup\scshape{}J.~Rabin\egroup{}, \bgroup\scshape{}G.~Peyr{\'e}\egroup{},
  \bgroup\scshape{}J.~Delon\egroup{}, and \bgroup\scshape{}M.~Bernot\egroup{},
  Wasserstein barycenter and its application to texture mixing,  in \emph{Scale
  Space and Variational Methods in Computer Vision} (\bgroup\scshape{}A.~M.
  Bruckstein\egroup{}, \bgroup\scshape{}B.~M. ter Haar~Romeny\egroup{},
  \bgroup\scshape{}A.~M. Bronstein\egroup{}, and \bgroup\scshape{}M.~M.
  Bronstein\egroup{}, eds.), Springer Berlin Heidelberg, Berlin, Heidelberg,
  2012, pp.~435--446.

\bibitem[RR98a]{RacRus98a}
\bgroup\scshape{}S.~T. Rachev\egroup{} and
  \bgroup\scshape{}L.~R{{\"u}}schendorf\egroup{}, \emph{Mass transportation
  problems. {V}ol. {I}}, \emph{Probability and its Applications (New York)},
  Springer-Verlag, New York, 1998, Theory.

\bibitem[RR98b]{RacRus98b}
\bgroup\scshape{}S.~T. Rachev\egroup{} and
  \bgroup\scshape{}L.~R{{\"u}}schendorf\egroup{}, \emph{Mass transportation
  problems. {V}ol. {II}}, \emph{Probability and its Applications (New York)},
  Springer-Verlag, New York, 1998, Applications.

\bibitem[RH17]{RigHut17}
\bgroup\scshape{}P.~Rigollet\egroup{} and \bgroup\scshape{}J.-C.
  H{\"u}tter\egroup{}, High-dimensional statistics, Lecture notes, 2017.

\bibitem[RNW18]{RigWee18}
\bgroup\scshape{}P.~Rigollet\egroup{} and
  \bgroup\scshape{}J.~Niles-Weed\egroup{}, Entropic optimal transport is
  maximum-likelihood deconvolution,  \emph{Comptes Rendus Mathematique}
  \textbf{356} (2018), 1228--1235.

\bibitem[RNW19]{RigWee19}
\bgroup\scshape{}P.~Rigollet\egroup{} and
  \bgroup\scshape{}J.~Niles-Weed\egroup{}, Uncoupled isotonic regression via
  minimum {W}asserstein deconvolution,  \emph{Inf. Inference} \textbf{8}
  (2019), 691--717.

\bibitem[RS22]{RigStr22EOT}
\bgroup\scshape{}P.~Rigollet\egroup{} and \bgroup\scshape{}A.~J.
  Stromme\egroup{}, On the sample complexity of entropic optimal transport,
  \emph{arXiv preprint 2206.13472} (2022), 1--28.

\bibitem[Roc66]{Roc1966Subdiff}
\bgroup\scshape{}R.~T. Rockafellar\egroup{}, Characterization of the
  subdifferentials of convex functions,  \emph{Pacific J. Math.} \textbf{17}
  (1966), 497--510.

\bibitem[Roc97]{Roc1997CvxAnalysis}
\bgroup\scshape{}R.~T. Rockafellar\egroup{}, \emph{Convex analysis},
  \emph{Princeton Landmarks in Mathematics}, Princeton University Press,
  Princeton, NJ, 1997, Reprint of the 1970 original, Princeton Paperbacks.

\bibitem[RVE22]{RotVan22}
\bgroup\scshape{}G.~Rotskoff\egroup{} and
  \bgroup\scshape{}E.~Vanden-Eijnden\egroup{}, Trainability and accuracy of
  artificial neural networks: an interacting particle system approach,
  \emph{Communications on Pure and Applied Mathematics} \textbf{75} (2022),
  1889--1935.

\bibitem[RTG00]{RubTomGui00}
\bgroup\scshape{}Y.~Rubner\egroup{}, \bgroup\scshape{}C.~Tomasi\egroup{}, and
  \bgroup\scshape{}L.~J. Guibas\egroup{}, The earth mover's distance as a
  metric for image retrieval,  \emph{Int. J. Comput. Vision} \textbf{40}
  (2000), 99--121.

\bibitem[RU02]{RusUck02nCoupling}
\bgroup\scshape{}L.~R\"{u}schendorf\egroup{} and
  \bgroup\scshape{}L.~Uckelmann\egroup{}, On the {$n$}-coupling problem,
  \emph{J. Multivariate Anal.} \textbf{81} (2002), 242--258.

\bibitem[SKL20]{SalKorLui20WPG}
\bgroup\scshape{}A.~Salim\egroup{}, \bgroup\scshape{}A.~Korba\egroup{}, and
  \bgroup\scshape{}G.~Luise\egroup{}, The {W}asserstein proximal gradient
  algorithm,  in \emph{Advances in Neural Information Processing Systems}
  (\bgroup\scshape{}H.~Larochelle\egroup{},
  \bgroup\scshape{}M.~Ranzato\egroup{}, \bgroup\scshape{}R.~Hadsell\egroup{},
  \bgroup\scshape{}M.~Balcan\egroup{}, and \bgroup\scshape{}H.~Lin\egroup{},
  eds.), \textbf{33}, Curran Associates, Inc., 2020, pp.~12356--12366.

\bibitem[SSR22]{SalSunRic22SVGD}
\bgroup\scshape{}A.~Salim\egroup{}, \bgroup\scshape{}L.~Sun\egroup{}, and
  \bgroup\scshape{}P.~Richtarik\egroup{}, A convergence theory for {SVGD} in
  the population limit under {T}alagrand’s inequality {T1},  in
  \emph{Proceedings of the 39th International Conference on Machine Learning}
  (\bgroup\scshape{}K.~Chaudhuri\egroup{},
  \bgroup\scshape{}S.~Jegelka\egroup{}, \bgroup\scshape{}L.~Song\egroup{},
  \bgroup\scshape{}C.~Szepesvari\egroup{}, \bgroup\scshape{}G.~Niu\egroup{},
  and \bgroup\scshape{}S.~Sabato\egroup{}, eds.), \emph{Proceedings of Machine
  Learning Research} \textbf{162}, PMLR, 7 2022, pp.~19139--19152.

\bibitem[SABP22]{SanAblBlo22}
\bgroup\scshape{}M.~E. Sander\egroup{}, \bgroup\scshape{}P.~Ablin\egroup{},
  \bgroup\scshape{}M.~Blondel\egroup{}, and
  \bgroup\scshape{}G.~Peyr\'e\egroup{}, Sinkformers: transformers with doubly
  stochastic attention,  in \emph{Proceedings of the 25th International
  Conference on Artificial Intelligence and Statistics}
  (\bgroup\scshape{}G.~Camps-Valls\egroup{}, \bgroup\scshape{}F.~J.~R.
  Ruiz\egroup{}, and \bgroup\scshape{}I.~Valera\egroup{}, eds.),
  \emph{Proceedings of Machine Learning Research} \textbf{151}, PMLR, 3 2022,
  pp.~3515--3530.

\bibitem[San15]{San15}
\bgroup\scshape{}F.~Santambrogio\egroup{}, \emph{Optimal transport for applied
  mathematicians}, \emph{Progress in Nonlinear Differential Equations and their
  Applications} \textbf{87}, Birkh\"{a}user/Springer, Cham, 2015, Calculus of
  variations, PDEs, and modeling.

\bibitem[San17]{San17GradFlows}
\bgroup\scshape{}F.~Santambrogio\egroup{}, \{{E}uclidean, metric, and
  {W}asserstein\} gradient flows: an overview,  \emph{Bull. Math. Sci.}
  \textbf{7} (2017), 87--154.

\bibitem[SST{\etalchar{+}}19]{SchShuTab19}
\bgroup\scshape{}G.~Schiebinger\egroup{}, \bgroup\scshape{}J.~Shu\egroup{},
  \bgroup\scshape{}M.~Tabaka\egroup{}, \bgroup\scshape{}B.~Cleary\egroup{},
  \bgroup\scshape{}V.~Subramanian\egroup{},
  \bgroup\scshape{}A.~Solomon\egroup{}, \bgroup\scshape{}J.~Gould\egroup{},
  \bgroup\scshape{}S.~Liu\egroup{}, \bgroup\scshape{}S.~Lin\egroup{},
  \bgroup\scshape{}P.~Berube\egroup{}, \bgroup\scshape{}L.~Lee\egroup{},
  \bgroup\scshape{}J.~Chen\egroup{}, \bgroup\scshape{}J.~Brumbaugh\egroup{},
  \bgroup\scshape{}P.~Rigollet\egroup{},
  \bgroup\scshape{}K.~Hochedlinger\egroup{},
  \bgroup\scshape{}R.~Jaenisch\egroup{}, \bgroup\scshape{}A.~Regev\egroup{},
  and \bgroup\scshape{}E.~S. Lander\egroup{}, Optimal-transport analysis of
  single-cell gene expression identifies developmental trajectories in
  reprogramming,  \emph{Cell} \textbf{176} (2019), 928--943.

\bibitem[SC15]{SegCut15}
\bgroup\scshape{}V.~Seguy\egroup{} and \bgroup\scshape{}M.~Cuturi\egroup{},
  Principal geodesic analysis for probability measures under the optimal
  transport metric,  in \emph{Advances in Neural Information Processing Systems
  28}, 2015, pp.~3312--3320.

\bibitem[SM23]{ShiMac23SVGD}
\bgroup\scshape{}J.~Shi\egroup{} and \bgroup\scshape{}L.~Mackey\egroup{}, A
  finite-particle convergence rate for {S}tein variational gradient descent,
  in \emph{Advances in Neural Information Processing Systems}
  (\bgroup\scshape{}A.~Oh\egroup{}, \bgroup\scshape{}T.~Naumann\egroup{},
  \bgroup\scshape{}A.~Globerson\egroup{}, \bgroup\scshape{}K.~Saenko\egroup{},
  \bgroup\scshape{}M.~Hardt\egroup{}, and \bgroup\scshape{}S.~Levine\egroup{},
  eds.), \textbf{36}, Curran Associates, Inc., 2023, pp.~26831--26844.

\bibitem[SP18]{SinPoc18}
\bgroup\scshape{}S.~Singh\egroup{} and \bgroup\scshape{}B.~P{\'o}czos\egroup{},
  Minimax distribution estimation in {W}asserstein distance,  \emph{arXiv
  preprint 1802.08855} (2018), 1--34.

\bibitem[SUL{\etalchar{+}}18]{Singhetal18DensityEst}
\bgroup\scshape{}S.~Singh\egroup{}, \bgroup\scshape{}A.~Uppal\egroup{},
  \bgroup\scshape{}B.~Li\egroup{}, \bgroup\scshape{}C.-L. Li\egroup{},
  \bgroup\scshape{}M.~Zaheer\egroup{}, and \bgroup\scshape{}B.~Poczos\egroup{},
  Nonparametric density estimation under adversarial losses,  in \emph{Advances
  in Neural Information Processing Systems}
  (\bgroup\scshape{}S.~Bengio\egroup{}, \bgroup\scshape{}H.~Wallach\egroup{},
  \bgroup\scshape{}H.~Larochelle\egroup{},
  \bgroup\scshape{}K.~Grauman\egroup{},
  \bgroup\scshape{}N.~Cesa-Bianchi\egroup{}, and
  \bgroup\scshape{}R.~Garnett\egroup{}, eds.), \textbf{31}, Curran Associates,
  Inc., 2018.

\bibitem[SS20]{SirSpi20}
\bgroup\scshape{}J.~Sirignano\egroup{} and
  \bgroup\scshape{}K.~Spiliopoulos\egroup{}, Mean field analysis of neural
  networks: a law of large numbers,  \emph{SIAM Journal on Applied Mathematics}
  \textbf{80} (2020), 725--752.

\bibitem[SdGP{\etalchar{+}}15]{SolGoePey15}
\bgroup\scshape{}J.~Solomon\egroup{}, \bgroup\scshape{}F.~de~Goes\egroup{},
  \bgroup\scshape{}G.~Peyr{\'e}\egroup{}, \bgroup\scshape{}M.~Cuturi\egroup{},
  \bgroup\scshape{}A.~Butscher\egroup{}, \bgroup\scshape{}A.~Nguyen\egroup{},
  \bgroup\scshape{}T.~Du\egroup{}, and \bgroup\scshape{}L.~Guibas\egroup{},
  Convolutional {W}asserstein distances: efficient optimal transportation on
  geometric domains,  \emph{ACM Trans. Graph.} \textbf{34} (2015), 66:1--66:11.

\bibitem[SLD18]{SriLiDun18Bary}
\bgroup\scshape{}S.~Srivastava\egroup{}, \bgroup\scshape{}C.~Li\egroup{}, and
  \bgroup\scshape{}D.~B. Dunson\egroup{}, Scalable {B}ayes via barycenter in
  {W}asserstein space,  \emph{Journal of Machine Learning Research} \textbf{19}
  (2018), 1--35.

\bibitem[Ste01]{Ste01StocCalc}
\bgroup\scshape{}J.~M. Steele\egroup{}, \emph{Stochastic calculus and financial
  applications}, \emph{Applications of Mathematics (New York)} \textbf{45},
  Springer-Verlag, New York, 2001.

\bibitem[Str23]{Str23MID}
\bgroup\scshape{}A.~J. Stromme\egroup{}, Minimum intrinsic dimension scaling
  for entropic optimal transport,  \emph{arXiv preprint 2306.03398} (2023),
  1--53.

\bibitem[Stu03]{Sturm03NPC}
\bgroup\scshape{}K.-T. Sturm\egroup{}, Probability measures on metric spaces of
  nonpositive curvature,  in \emph{Heat kernels and analysis on manifolds,
  graphs, and metric spaces ({P}aris, 2002)}, \emph{Contemp. Math.}
  \textbf{338}, Amer. Math. Soc., Providence, RI, 2003, pp.~357--390.

\bibitem[Stu06a]{Stu06}
\bgroup\scshape{}K.-T. Sturm\egroup{}, On the geometry of metric measure
  spaces. {I},  \emph{Acta Math.} \textbf{196} (2006), 65--131.

\bibitem[Stu06b]{Stu06a}
\bgroup\scshape{}K.-T. Sturm\egroup{}, On the geometry of metric measure
  spaces. {II},  \emph{Acta Math.} \textbf{196} (2006), 133--177.

\bibitem[SNW23]{SuzNitWu23MFLangevin}
\bgroup\scshape{}T.~Suzuki\egroup{}, \bgroup\scshape{}A.~Nitanda\egroup{}, and
  \bgroup\scshape{}D.~Wu\egroup{}, Uniform-in-time propagation of chaos for the
  mean-field gradient {L}angevin dynamics,  in \emph{The Eleventh International
  Conference on Learning Representations}, 2023.

\bibitem[Szn91]{Szn1991PoC}
\bgroup\scshape{}A.-S. Sznitman\egroup{}, Topics in propagation of chaos,  in
  \emph{\'{E}cole d'\'{E}t\'{e} de {P}robabilit\'{e}s de {S}aint-{F}lour
  {XIX}---1989}, \emph{Lecture Notes in Math.} \textbf{1464}, Springer, Berlin,
  1991, pp.~165--251.

\bibitem[Tal96]{Tal96a}
\bgroup\scshape{}M.~Talagrand\egroup{}, Majorizing measures: the generic
  chaining,  \emph{Ann. Probab.} \textbf{24} (1996), 1049--1103.

\bibitem[Tal21]{Tal21UppLow}
\bgroup\scshape{}M.~Talagrand\egroup{}, \emph{Upper and lower bounds for
  stochastic processes---decomposition theorems}, second ed., \emph{Ergebnisse
  der Mathematik und ihrer Grenzgebiete. 3. Folge. A Series of Modern Surveys
  in Mathematics [Results in Mathematics and Related Areas. 3rd Series. A
  Series of Modern Surveys in Mathematics]} \textbf{60}, Springer, Cham, [2021]
  \copyright 2021.

\bibitem[Tan17]{shuotan2017}
\bgroup\scshape{}Y.~S. Tan\egroup{}, Energy optimization for distributions on
  the sphere and improvement to the {W}elch bounds,  \emph{Electronic
  Communications in Probability} \textbf{22} (2017), 1--12.

\bibitem[Tan23]{Tan23Accel}
\bgroup\scshape{}K.~Tanaka\egroup{}, Accelerated gradient descent method for
  functionals of probability measures by new convexity and smoothness based on
  transport maps,  \emph{arXiv preprint 2305.05127} (2023), 1--31.

\bibitem[Tsy09]{Tsy09}
\bgroup\scshape{}A.~B. Tsybakov\egroup{}, \emph{Introduction to nonparametric
  estimation}, \emph{Springer Series in Statistics}, Springer, New York, 2009,
  Revised and extended from the 2004 French original, Translated by Vladimir
  Zaiats.

\bibitem[TR24]{TzeRag24MKV}
\bgroup\scshape{}B.~Tzen\egroup{} and \bgroup\scshape{}M.~Raginsky\egroup{},
  Function approximation by neural nets in the mean-field regime: entropic
  regularization and controlled {M}c{K}ean--{V}lasov dynamics,  \emph{arXiv
  preprint 2002.01987} (2024), 1--31.

\bibitem[USP19]{UppSinPoc19}
\bgroup\scshape{}A.~Uppal\egroup{}, \bgroup\scshape{}S.~Singh\egroup{}, and
  \bgroup\scshape{}B.~P{\'{o}}czos\egroup{}, Nonparametric density estimation
  {\&} convergence rates for {GANs} under {B}esov {IPM} losses,  in
  \emph{Advances in Neural Information Processing Systems 32}
  (\bgroup\scshape{}H.~M. Wallach\egroup{},
  \bgroup\scshape{}H.~Larochelle\egroup{},
  \bgroup\scshape{}A.~Beygelzimer\egroup{},
  \bgroup\scshape{}F.~d'Alch{\'{e}}{-}Buc\egroup{}, \bgroup\scshape{}E.~B.
  Fox\egroup{}, and \bgroup\scshape{}R.~Garnett\egroup{}, eds.), 2019,
  pp.~9086--9097.

\bibitem[vdVW23]{VaaWel23}
\bgroup\scshape{}A.~W. van~der Vaart\egroup{} and \bgroup\scshape{}J.~A.
  Wellner\egroup{}, \emph{Weak convergence and empirical processes---with
  applications to statistics}, 2 ed., \emph{Springer Series in Statistics},
  Springer, Cham, 2023.

\bibitem[vdV98]{Vaart98Asymptotic}
\bgroup\scshape{}A.~W. van~der Vaart\egroup{}, \emph{Asymptotic statistics},
  \emph{Cambridge Series in Statistical and Probabilistic Mathematics}
  \textbf{3}, Cambridge University Press, Cambridge, 1998.

\bibitem[VMB{\etalchar{+}}24]{Vac+24KernelSoS}
\bgroup\scshape{}A.~Vacher\egroup{}, \bgroup\scshape{}B.~Muzellec\egroup{},
  \bgroup\scshape{}F.~Bach\egroup{}, \bgroup\scshape{}F.-X. Vialard\egroup{},
  and \bgroup\scshape{}A.~Rudi\egroup{}, Optimal estimation of smooth transport
  maps with kernel {S}o{S},  \emph{SIAM J. Math. Data Sci.} \textbf{6} (2024),
  311--342.

\bibitem[VC23]{VasChi23DoublyEntropic}
\bgroup\scshape{}T.~Vaskevicius\egroup{} and
  \bgroup\scshape{}L.~Chizat\egroup{}, Computational guarantees for doubly
  entropic {W}asserstein barycenters,  in \emph{Advances in Neural Information
  Processing Systems} (\bgroup\scshape{}A.~Oh\egroup{},
  \bgroup\scshape{}T.~Naumann\egroup{}, \bgroup\scshape{}A.~Globerson\egroup{},
  \bgroup\scshape{}K.~Saenko\egroup{}, \bgroup\scshape{}M.~Hardt\egroup{}, and
  \bgroup\scshape{}S.~Levine\egroup{}, eds.), \textbf{36}, Curran Associates,
  Inc., 2023, pp.~12363--12388.

\bibitem[VSP{\etalchar{+}}17]{VasShaPar17}
\bgroup\scshape{}A.~Vaswani\egroup{}, \bgroup\scshape{}N.~Shazeer\egroup{},
  \bgroup\scshape{}N.~Parmar\egroup{}, \bgroup\scshape{}J.~Uszkoreit\egroup{},
  \bgroup\scshape{}L.~Jones\egroup{}, \bgroup\scshape{}A.~N. Gomez\egroup{},
  \bgroup\scshape{}L.~u. Kaiser\egroup{}, and
  \bgroup\scshape{}I.~Polosukhin\egroup{}, Attention is all you need,  in
  \emph{Advances in Neural Information Processing Systems}
  (\bgroup\scshape{}I.~Guyon\egroup{}, \bgroup\scshape{}U.~V. Luxburg\egroup{},
  \bgroup\scshape{}S.~Bengio\egroup{}, \bgroup\scshape{}H.~Wallach\egroup{},
  \bgroup\scshape{}R.~Fergus\egroup{},
  \bgroup\scshape{}S.~Vishwanathan\egroup{}, and
  \bgroup\scshape{}R.~Garnett\egroup{}, eds.), \textbf{30}, Curran Associates,
  Inc., 2017.

\bibitem[Ver18]{Ver18}
\bgroup\scshape{}R.~Vershynin\egroup{}, \emph{High-dimensional probability},
  \emph{Cambridge Series in Statistical and Probabilistic Mathematics}
  \textbf{47}, Cambridge University Press, Cambridge, 2018, An introduction
  with applications in data science, With a foreword by Sara van de Geer.

\bibitem[Vil03]{Vil03}
\bgroup\scshape{}C.~Villani\egroup{}, \emph{Topics in optimal transportation},
  \emph{Graduate Studies in Mathematics} \textbf{58}, American Mathematical
  Society, Providence, RI, 2003.

\bibitem[Vil09a]{Vil09Hypo}
\bgroup\scshape{}C.~Villani\egroup{}, Hypocoercivity,  \emph{Mem. Amer. Math.
  Soc.} \textbf{202} (2009), iv+141.

\bibitem[Vil09b]{Vil09}
\bgroup\scshape{}C.~Villani\egroup{}, \emph{Optimal transport},
  \emph{Grundlehren der Mathematischen Wissenschaften [Fundamental Principles
  of Mathematical Sciences]} \textbf{338}, Springer-Verlag, Berlin, 2009, Old
  and new.

\bibitem[Wai19]{Wai19}
\bgroup\scshape{}M.~J. Wainwright\egroup{}, \emph{High-dimensional statistics:
  a non-asymptotic viewpoint}, \emph{Cambridge Series in Statistical and
  Probabilistic Mathematics}, Cambridge University Press, 2019.

\bibitem[WJ08]{WaiJor08}
\bgroup\scshape{}M.~J. Wainwright\egroup{} and \bgroup\scshape{}M.~I.
  Jordan\egroup{}, Graphical models, exponential families, and variational
  inference,  \emph{Foundations and Trends in Machine Learning} \textbf{1}
  (2008), 1--305.

\bibitem[WSB{\etalchar{+}}13]{Wan+13LinearOT}
\bgroup\scshape{}W.~Wang\egroup{}, \bgroup\scshape{}D.~Slep\v{c}ev\egroup{},
  \bgroup\scshape{}S.~Basu\egroup{}, \bgroup\scshape{}J.~A. Ozolek\egroup{},
  and \bgroup\scshape{}G.~K. Rohde\egroup{}, A linear optimal transportation
  framework for quantifying and visualizing variations in sets of images,
  \emph{Int. J. Comput. Vis.} \textbf{101} (2013), 254--269.

\bibitem[WL20]{WanLi20Newton}
\bgroup\scshape{}Y.~Wang\egroup{} and \bgroup\scshape{}W.~Li\egroup{},
  Information {N}ewton's flow: second-order optimization method in probability
  space,  \emph{arXiv preprint 2001.04341} (2020), 1--62.

\bibitem[WL22]{WanLi22Accel}
\bgroup\scshape{}Y.~Wang\egroup{} and \bgroup\scshape{}W.~Li\egroup{},
  Accelerated information gradient flow,  \emph{J. Sci. Comput.} \textbf{90}
  (2022), Paper No. 11, 47.

\bibitem[Wib18]{Wib18}
\bgroup\scshape{}A.~Wibisono\egroup{}, Sampling as optimization in the space of
  measures: the {L}angevin dynamics as a composite optimization problem,  in
  \emph{Conference on Learning Theory} (\bgroup\scshape{}S.~Bubeck\egroup{},
  \bgroup\scshape{}V.~Perchet\egroup{}, and
  \bgroup\scshape{}P.~Rigollet\egroup{}, eds.), \emph{Proceedings of Machine
  Learning Research} \textbf{75}, PMLR, 2018, pp.~2093--3027.

\bibitem[XNW22]{XiNil22Sliced}
\bgroup\scshape{}J.~Xi\egroup{} and \bgroup\scshape{}J.~Niles-Weed\egroup{},
  Distributional convergence of the sliced {W}asserstein process,  in
  \emph{Advances in Neural Information Processing Systems}
  (\bgroup\scshape{}S.~Koyejo\egroup{}, \bgroup\scshape{}S.~Mohamed\egroup{},
  \bgroup\scshape{}A.~Agarwal\egroup{}, \bgroup\scshape{}D.~Belgrave\egroup{},
  \bgroup\scshape{}K.~Cho\egroup{}, and \bgroup\scshape{}A.~Oh\egroup{}, eds.),
  \textbf{35}, Curran Associates, Inc., 2022, pp.~13961--13973.

\bibitem[XH22]{XuHua22CLTSliced}
\bgroup\scshape{}X.~Xu\egroup{} and \bgroup\scshape{}Z.~Huang\egroup{}, Central
  limit theorem for the sliced 1-{W}asserstein distance and the max-sliced
  1-{W}asserstein distance,  \emph{arXiv preprint 2205.14624} (2022), 1--37.

\bibitem[YWR24]{YanWanRig23}
\bgroup\scshape{}Y.~Yan\egroup{}, \bgroup\scshape{}K.~Wang\egroup{}, and
  \bgroup\scshape{}P.~Rigollet\egroup{}, Learning {G}aussian mixtures using the
  {W}asserstein--{F}isher--{R}ao gradient flow, 2024.

\bibitem[YY23]{YaoYan23MFVI}
\bgroup\scshape{}R.~Yao\egroup{} and \bgroup\scshape{}Y.~Yang\egroup{},
  Mean-field variational inference via {W}asserstein gradient flow,
  \emph{arXiv preprint 2207.08074} (2023), 1--120.

\bibitem[ZP19]{ZemPan19}
\bgroup\scshape{}Y.~Zemel\egroup{} and \bgroup\scshape{}V.~M.
  Panaretos\egroup{}, Fr{\'e}chet means and {P}rocrustes analysis in
  {W}asserstein space,  \emph{Bernoulli} \textbf{25} (2019), 932--976.

\bibitem[ZPFP20]{Zhang+20MLMC}
\bgroup\scshape{}K.~S. Zhang\egroup{}, \bgroup\scshape{}G.~Peyr\'e\egroup{},
  \bgroup\scshape{}J.~Fadili\egroup{}, and
  \bgroup\scshape{}M.~Pereyra\egroup{}, Wasserstein control of mirror
  {L}angevin {M}onte {C}arlo,  in \emph{Proceedings of Thirty Third Conference
  on Learning Theory} (\bgroup\scshape{}J.~Abernethy\egroup{} and
  \bgroup\scshape{}S.~Agarwal\egroup{}, eds.), \emph{Proceedings of Machine
  Learning Research} \textbf{125}, PMLR, 7 2020, pp.~3814--3841.

\end{thebibliography}
\printindex

\end{document}